\documentclass{book}
\usepackage{amsmath,amsfonts,amsthm,amssymb,mathtools}

\usepackage{thmtools}
\usepackage{framed}   
\usepackage[usenames,dvipsnames]{xcolor}
\usepackage{fancyhdr,fancyvrb}
\usepackage{mathptmx,graphicx,listings}
\usepackage{makeidx}
\usepackage[linesnumbered,ruled]{algorithm2e}

\newenvironment{solution}[1][Solution]{\proof[#1]}{\endproof}
\DefineVerbatimEnvironment{mi3listing}{Verbatim}{fontsize=\footnotesize}
\DeclareMathOperator{\lcm}{lcm}
\DeclareMathOperator{\ord}{ord}

\newcommand{\floor}[1]{\lfloor #1 \rfloor}

\newcommand{\mn}[1]{\min{\{ #1 \}}}

\newcommand{\leg}[2]{\left(\frac{#1}{#2}\right)}
\newcommand{\dv}[2]{#1 | #2}
\newcommand{\ndv}[2]{#1\nmid#2}
\newcommand{\ndd}[2]{#1\nmid#2}  %2026 move it down and off

\newcommand{\mb}[1]{\mathbb{#1}}
\newcommand{\mf}[1]{\mathrm{#1}}

\definecolor{mgray}{RGB}{ 85, 85, 85}
\definecolor{mbrown}{RGB}{128,40, 0}    % Nice brown
\definecolor{mblue4}{RGB}{  0,  0,127}  % Navy Blue
\definecolor{mcrimson}{RGB}{175,30,30}  % Nice crimson
\definecolor{mgreen1}{RGB}{ 0, 90, 50}  % OK

\colorlet{head0}{Black}
\colorlet{headb4}{mblue4}
\colorlet{head7}{mcrimson}
\colorlet{headg1}{mgreen1}
\colorlet{rule9}{Gray}
\theoremstyle{plain}% default

 {\endMakeFramed}

 {\endMakeFramed}

 {\endMakeFramed}

\renewenvironment{leftbar}{%
 \MakeFramed {\advance\hsize-\width \FrameRestore}}%
 {\endMakeFramed}
\renewenvironment{leftbar}{%
 \MakeFramed {\advance\hsize-\width \FrameRestore}}%
 {\endMakeFramed}

\declaretheoremstyle[headfont=\sffamily\bfseries,%
 headfont=\bfseries,%
 notebraces={}{},%
 bodyfont=\sffamily\itshape,%
 notefont=\sffamily\bfseries,%
 headpunct={},%
 headformat=\color{headg1}\NAME~\NUMBER\hfill\NOTE\smallskip\linebreak,%
 preheadhook=\begin{leftbar},%
 postfoothook=\end{leftbar},%
 ]{mythm}
 \declaretheorem[name=Theorem,style=mythm, numberwithin=chapter]
 {thm}
\declaretheoremstyle[headfont=\sffamily\bfseries,%
 notefont=\sffamily\bfseries,%
 notebraces={}{},%
 headpunct=,%
 bodyfont=\normalfont,
 headformat=\color{headb4}\NAME~\NUMBER\hfill\NOTE\smallskip\linebreak,%
 preheadhook=\begin{leftbar},%
 postfoothook=\end{leftbar},%
 ]{mylem}
 \declaretheorem[name=Lemma,style=mylem, numberwithin=chapter]
 {lem}
\declaretheoremstyle[headfont=\sffamily\bfseries,%
 notefont=\sffamily\bfseries,%
 notebraces={}{},%
 headpunct=,%
 bodyfont=\normalfont,
 headformat=\color{headb4}\NAME~\NUMBER\hfill\NOTE\smallskip\linebreak,%
 preheadhook=\begin{leftbar},%
 postfoothook=\end{leftbar},%
 ]{myprp}
 \declaretheorem[name=Proposition,style=myprp, numberwithin=chapter]
 {prp}

\declaretheoremstyle[headfont=\sffamily\bfseries,%
 notefont=\sffamily\bfseries,%
 notebraces={}{},%
 headpunct=,%
 bodyfont=\normalfont,
 headformat=\color{headb4}\NAME~\NUMBER\hfill\NOTE\smallskip\linebreak,%
 preheadhook=\begin{leftbar},%
 postfoothook=\end{leftbar},%
 ]{mycor}
 \declaretheorem[name=Corollary,style=mycor, numberwithin=chapter]
 {cor}

\declaretheoremstyle[headfont=\sffamily\bfseries,%
 notefont=\sffamily\bfseries,%
 notebraces={}{},%
 headpunct=,%
 bodyfont=\normalfont,
 headformat=\color{head7}\NAME~\NUMBER\hfill\NOTE\smallskip\linebreak,%
 preheadhook=\begin{leftbar},%
 postfoothook=\end{leftbar},%
 ]{mydfn}
 \declaretheorem[name=Definition,style=mydfn, numberwithin=chapter]
 {dfn}

\declaretheoremstyle[headfont=\sffamily\bfseries,%
 notefont=\sffamily\bfseries,%
 notebraces={}{},%
 headpunct=,%
 bodyfont=\normalfont,
 headformat=\color{head0}\NAME~\NUMBER\hfill\NOTE\smallskip\linebreak,%
 preheadhook=\begin{leftbar},%
 postfoothook=\end{leftbar},%
 ]{mynte}
 \declaretheorem[name=Note,style=mynte, numberwithin=chapter]
 {nte}
 \declaretheorem[name=Fact,style=mynte, numberwithin=chapter]
 {fct}
 \declaretheorem[name=Question,style=mynte, numberwithin=chapter]
 {que}

\declaretheoremstyle[headfont=\sffamily\bfseries,%
 notefont=\sffamily\bfseries,%
 notebraces={}{},%
 headpunct=,%
 bodyfont=\normalfont,
 headformat=\color{head0}\NAME~\NUMBER\hfill\NOTE\smallskip\linebreak,%
 preheadhook=\begin{leftbar},%
 postfoothook=\end{leftbar},%
 ]{myexa}
 \declaretheorem[name=Example,style=myexa, numberwithin=section]
 {exa}

\begin{document}

% 1. frontmatter
%%%%%%%%%%%%%%%%%%%%%%%%%%%%%%%%%%%%%%%%%%%%%%%%%%%%%%%%%%%%%%%%%%%%
\frontmatter

% 2. titlepage
%%%%%%%%%%%%%%%%%%%%%%%%%%%%%%%%%%%%%%%%%%%%%%%%%%%%%%%%%%%%%%%%%%%%
\title{      Lectures notes on number theory for computer science} 

\author{{\sc \Large Alexandros V. Gerbessiotis} \\ \\
 {\sc \Large CS Department} \\
 {\sc \Large NJIT} \\
 {\sc \Large Newark, NJ 07102.}\\  \\
 {\tt \Large Email:} {\tt \Large alexg@njit.edu}  \\ 
% {\bf \copyright 2019-2026. Alex. Gerbessiotis. All rights reserved.}\\
 }
\date{Printed on \today}

% 3. Preface-Abstract-Other
%%%%%%%%%%%%%%%%%%%%%%%%%%%%%%%%%%%%%%%%%%%%%%%%%%%%%%%%%%%%%%%%%%%%

\maketitle
\setcounter{page}{2}

\chapter*{Preface}

This brief, in the form of an e-book, 
is a collection of notes
that cover elementary and medium
level number theory with a target audience of
primarily computer science students. 
It can be used in the number
theory portion of a discrete mathematics course,
or a course on the mathematical foundations of 
computer science, or as background material
for a cryptography course.

Thematically it is split into five areas that
map to chapters.  
The first chapter is introductory and covers
topics including divisibility,
prime numbers, and modular arithmetic including modular
linear equations. The second chapter covers additional
topics such as Euler's totient function, 
units and inverses, the Chinese remainder theorem, 
and Fermat's and Euler's theorems. 
The following chapter covers primitive roots, quadratic
residues, the Jacobi and Legendre symbols, Gauss's lemma
and Eisenstein's theorem, and briefly discusses applications
of number theory to cryptography.
The fourth chapter is focused on traditional primality
testing methods  covering Miller's algorithms,
Rabin's conversion of a Miller algorithm 
into a probabilistic primality test algorithm, 
Solovay-Strassen's algorithm and several other 
peripheral results including Carmichael
numbers and  the equivalence of Miller's two algorithms.
Finally the last brief chapter can be 
viewed as an introduction to more advanced elements 
of number theory and its coverage includes multiplicative 
functions, the M\"obius function, Dirichlet products and
Dirichlet and M\"obius inversions.

Different parts of this e-book  are for
freshman to senior undergraduate students in computing
and in particular computer science. 
Graduate students with limited exposure to number
theory can use it to acquire a background suitable
for typical cryptography courses at the master's level.

% 4. Table of contents
%%%%%%%%%%%%%%%%%%%%%%%%%%%%%%%%%%%%%%%%%%%%%%%%%%%%%%%%%%%%%%%%%%%%

\tableofcontents

%%%%%%%%%%%%%%%%%%%%%%%%%%%%%%%%%%%%%%%%%%%%%%%%%%%%%%%%%%%%%%%%%%%%

\mainmatter

% 5. Parts-Chapters
%%%%%%%%%%%%%%%%%%%%%%%%%%%%%%%%%%%%%%%%%%%%%%%%%%%%%%%%%%%%%%%%%%%%

\chapter{Introductory Number Theory}

\section{Numbers}

\begin{dfn}[Integers]
The set of integers
is denoted as $\mb{Z}$.
\[
 \mb{Z} = \{  \ldots  , -3 , -2 , -1 , 0 ,
                    1 ,     2,     3, \ldots \} .
\]
\end{dfn}

\begin{dfn}[Natural integer numbers]
A {\bf natural integer number}, or natural number,
or ordinal number is a non-negative integer number.
The set of natural numbers is denoted as $\mb{N}$.
\[
 \mb{N} = \{ 0, 1 , 2, 3, \ldots \} .
\]
\end{dfn}

This definition varies in different textbooks.
Although we define a natural integer number as a
non-negative integer number,
several alternative sources describe a natural integer
number as a positive integer number thus excluding zero.

\begin{dfn}[Positive and negative integers]
The set of positive integers
is denoted as $\mb{Z}_+$,
the set of negative integers
is denoted as $\mb{Z}_-$,
and the set of non-zero integers
is denoted as $\mb{Z}^*$.
\[
 \mb{Z}_+ = \{ 1 , 2, 3, \ldots \} .
\]
\[
 \mb{Z}_- = \{ -1 , -2, -3, \ldots \} .
\]
\[
 \mb{Z}^* = \{ \pm 1 , \pm 2, \pm 3, \ldots \} .
\]
\end{dfn}

For the positive and negative integers one may also
use $ \mb{Z}_+^*$ and $ \mb{Z}_-^*$.
$\mb{Z}$ is the set of integers (positive, negative or zero).
For a positive integer $n>0$ the set
$\mb{Z}_n$ sometimes denoted as $\mb{Z}/n\mb{Z}$ or $\mb{Z}/n$
is the set of integers modulo $n$, thus representing the $n$
equivalence classes that the integers of $\mb{Z}$ can be
split into depending on the remainder of their division by $n$.

Integer one is an integer, a rational and algebraic and
thus a real number.
Number $4/5$  is rational and algebraic and thus a real number.
Number $\sqrt{2}$ is irrational, algebraic and thus a real number.
Number $\pi$ is irrational, transcendental and thus a real number.

\section{Divisibility and compositeness}

\begin{dfn}[Divisibility]
The symbol for divide is $|$.
Let $a \in \mb{Z} , b \in \mb{Z} , a \neq 0$.
We write $\dv{a}{b}$, $a \neq 0$, and read
$a$ divides $b$,
if there exists an integer $q \in \mb{Z}$ such that $b=aq$.
\end{dfn}

Integer $a$ is then a divisor or factor of $b$
and     $b$ is a multiple of $a$.
When $a$ divides $b$ we can also say $b$ is divided (evenly)
by $a$ or $b$ is divisible by $a$,
or equivalently the division of $b$ by $a$ leaves a remainder of zero.
For $\dv{a}{b}$, $a$ is never a zero; $b$  can be zero.

For a division of $b$ divided by $a$,
integer $b$ is the dividend in such a division.
Integer $a$ is the divisor of $b$ also known as a
factor of $b$.
The quotient $q$ is $q=b/a = \floor{b/a}$, and the integer remainder
$r$ is $r=b-a*q$. The remainder is unique if $0 \leq r < |b|$.
If $\dv{a}{b}$ then $q=b/a$ is an integer and $r=0$.

We say $a$ does not divide $b$ if there is no 
such $q \in \mb{Z}$ such that $b=aq$.
We then write $\ndv{a}{b}$.

\begin{dfn}
If $a$ does not divide $b$ we write $\ndv{a}{b}$ instead.
\end{dfn}

\begin{exa}
The following integers divide 50.
\[
 1, 2, 5, 10, 25, 50.
\]
We usually write down the positive divisors. The negative
divisors are implied. Therefore a complete list of divisors
would be as follows.
\[
\pm 1,\pm 2,\pm 5,\pm 10,\pm 25,\pm 50.
\]
\end{exa}

\begin{dfn}[Trivial divisor]
The trivial divisors of $n$ are $1$ and $n$.
If one includes negative numbers, they are $1$, $-1$, $n$ and $-n$.
\end{dfn}

\begin{dfn}[Odd, even]
An integer $n$ is even if it is a multiple of two.
Otherwise it is an odd integer.
\end{dfn}

\begin{exa}
Every integer $a$ is a divisor of $0$ that is,
$\forall a \in \mb{Z}$ we
have $\dv{a}{0}$.
$0$ is a divisor of itself and only itself.
\end{exa}
\begin{solution}
Since $0=a \cdot 0$ the first claim follows: $a$ is a divisor of $0$. 
Since $0=0 \cdot q$, integer 0 is a divisor of 0.
There is no way for any $d \neq 0$ to have $d= 0 \cdot q$. 
Thus 0 cannot be the divisor of any $d \neq 0$.
Thus 0 only divides 0; moreover, 0 is a multiple of every integer!
\end{solution}

\begin{exa}   
Both $5$ and $-5$ are divisors of $5$.
Both $1$ and $-1$ are divisors of $5$.
\end{exa}    
\begin{solution}
It is $5=5\cdot 1$ and $5=(-5)(-1)$. Replacing $5$ by $a$,
for every integer $a$, both $\pm a$ and $\pm 1$ are divisors of $a$.
Thus  integer $5$ has four divisors $+1,-1,+5,-5$.
And so so does $-5$. And in fact any $b\neq 0,1,-1$ has at least
four divisors which are $\pm b, \pm 1$. 
0 has an infinite number of divisors (in facts its
set of divisors is $\mb{Z}$).
Only $+1, -1$ have two divisors each (including the other one of the
pair). $+1$ is the positive unit and $-1$ the negative unit 
and collectively are known as the {    units} of $\mb{Z}$.
\end{solution}

\begin{exa}      
If $\dv{a}{b}$ then $\dv{-a}{b} , \quad \dv{a}{-b} , \quad \dv{a}{-b}$.
\end{exa}      
\begin{solution}
If $b=aq$ then $b = (-a)(-q)$ and $-b=a(-q)$ and $-b=a(-q)$.
\end{solution}

\begin{exa}      
If $\dv{a}{b}$ and $\dv{a}{c}$,
then $\dv{a}{b \pm c}$.
\end{exa}      
\begin{solution}
We have from $\dv{a}{b}$ that $b = a m$ for some $m \in \mb{Z}$.
Likewise $c= a n$ for some $n \in \mb{Z}$.
Adding or subtracting we have $b \pm c = a (m \pm n)$ and
the result follows.
\end{solution}

\begin{exa}      
If $\dv{a}{b}$ and $b \neq 0$ then $|a|\leq |b|$.
\end{exa}      
\begin{solution}
 Since  $\dv{a}{b}$  we have that $b = a m$ for some $m \in \mb{Z}$.
Since $b \neq 0$ we have $a \neq 0$ as well. Furthermore $m \neq 0$
as well. Thus $|m| \geq 1$. And thus $|b|=|am|=|a||m| \geq |a|$.
\end{solution}

\bigskip
   %THEOREM 11 %%
\begin{thm}[Properties of divisibility]
\label{podiv}
\label{p5div}
If $a,b,c,d,k,m \in \mb{Z}$, then
\begin{itemize}
\item[(a)] $\dv{1}{a}$. $\dv{a}{0}$ and $\dv{0}{0}$.  
Moreover,           $\dv{0}{a}$ implies $a=0$.
\item[(b1)] $\dv{a}{a}$ and of course $\dv{\pm a}{\pm a}$.
\item[(b2)] $\dv{a}{b} \wedge \dv{b}{c} \Rightarrow \dv{a}{c}$.
\item[(c1)] If $\dv{a}{b}$ then for every $k \in \mb{Z}$, $\dv{a}{kb}$.
\item[(c2)] If $\dv{a}{b}$ then for every $k \in \mb{Z}$, $\dv{ka}{kb}$.
\item[(d1)] if $\dv{a}{b}$ and $\dv{a}{c}$ then $\dv{a}{b+c}$.
\item[(d2) ] if $\dv{a}{b}$ and $\dv{a}{c}$ then $\dv{a}{b-c}$.
\item[(d3) ] if $\dv{a}{b}$ and $\dv{a}{c}$ then $\dv{a}{kb \pm mc}$.
\item[(f1) ] if $\dv{a}{b}$ and $b \neq 0$ then $|a|\leq |b|$.
\item[(f2) ] if $\dv{a}{b}$ and $a,b > 0$ then $a\leq b$.
\item[(f3) ] if $\dv{a}{b}$ and $\dv{b}{a}$ then $|a|=|b|$.
\item[(g) ] if $\dv{ka}{kb}$ and $k\neq 0$ then $\dv{a}{b}$.
\end{itemize}
\end{thm}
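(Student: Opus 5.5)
Every item of Theorem~\ref{podiv} reduces to the definition of divisibility: $\dv{a}{b}$ means $a\neq 0$ and $b=aq$ for some $q\in\mb{Z}$. For each item I will write the hypotheses as explicit equations with integer witnesses, manipulate them algebraically, and exhibit a new integer witness for the conclusion. The order is (a), (b), (c), (d), then (f), and finally (g), because (f3) builds on (f1).

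For (a), I use $a=1\cdot a$, $0=a\cdot 0$ and $0=0\cdot 0$, exactly as in the worked example preceding the theorem. If $\dv{0}{a}$, then $a=0\cdot q=0$. (The statement also writes $\dv{0}{0}$; I will follow the paper's own convention from the example that $0$ divides $0$, even though the definition requires a nonzero divisor. The same caveat means the statements implicitly assume the divisor is nonzero wherever the definition demands it.) For (b1), $a=a\cdot 1$ and $\pm a=(\mp a)(\mp 1)$ give the claims. For (b2), from $b=aq$ and $c=bq'$ I get $c=a(qq')$. For (c1), $kb=a(kq)$. For (c2), $kb=(ka)q$, which needs $k\neq 0$ whenever the divisor must be nonzero. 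For (d3), from $b=aq$ and $c=aq'$ I get $kb\pm mc=a(kq\pm mq')$; items (d1) and (d2) are the special cases $k=m=1$, as in the earlier example.

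For (f1), I repeat the argument of the earlier example. From $b=aq$ with $b\neq 0$ it follows that $q\neq 0$, so $|q|\geq 1$ and $|b|=|a||q|\geq |a|$. Item (f2) follows because $a,b>0$ gives $|a|=a$ and $|b|=b$. For (f3), if either $a$ or $b$ is zero then, by (a), both are zero and the claim is trivial. Otherwise, applying (f1) twice gives $|a|\leq |b|$ and $|b|\leq|a|$. For (g), from $kb=(ka)q$ I get $k(b-aq)=0$, and $k\neq 0$ forces $b=aq$ because $\mb{Z}$ has no zero divisors.

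No step here is a real obstacle. The only places needing care are the zero cases: the $\dv{0}{0}$ convention, $k=0$ in (c2), and the zero case in (f3). In (f1) the key point is deducing $|q|\geq 1$ from $q\neq 0$, and in (g) it is the cancellation law in $\mb{Z}$.
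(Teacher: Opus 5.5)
Your proposal is correct and follows essentially the same route as the paper: each item is proved by writing the hypotheses with explicit integer witnesses and exhibiting a witness for the conclusion, with (f1) via $|q|\geq 1$, (f3) by applying (f1) twice, and (g) by cancelling the nonzero $k$. Your separate treatment of the zero case in (f3) is slightly more careful than the paper, which applies (f1) there without checking its $b\neq 0$ hypothesis.
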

\begin{proof}
$ $\\\noindent
(a) $a = 1 \cdot a$ implies $\dv{1}{a}$. 
    $0 = a \cdot 0$ implies $\dv{a}{0}$.
    $0 = 0 \cdot 0$ implies $\dv{0}{0}$,
and $a = 0 \cdot q$ implies $a=0$ and this concludes the case.

%%%%%%%%%%%%
\noindent
(b1) $a  = a \cdot 1   $  implies $\dv{a}{a}$.
Moreover
     $-a = a \cdot (-1)$  concludes the case.

\noindent
(b2) If $\dv{a}{b$}, then $b=a q$, for some $q \in \mb{Z}$. 
     If $\dv{b}{c$}, then $c=b r$, for some $r \in \mb{Z}$. 
     This implies $c= b \cdot r = (a\cdot q) \cdot r = a \cdot (qr)$
i.e. $ \dv{a}{c}$.

%%%%%%%%%%%%
\noindent
(c1) If $\dv{a}{b$}, then $b=a q$, for some $q \in \mb{Z}$. Then
$kb=kaq= a (kq)$ i.e. $\dv{a}{kb}$.

\noindent
(c2) If $\dv{a}{b$}, then $b=a q$, for some $q \in \mb{Z}$. Then
$kb=kaq= (ka) q$ i.e. $\dv{ka}{kb}$.

%%%%%%%%%%%%
\noindent
(d1) If $\dv{a}{b$}, then $b=a q$, for some $q \in \mb{Z}$.
     If $\dv{a}{c$}, then $c=a r$, for some $r \in \mb{Z}$. 
Then $b+c = a (q+r)$ implies $\dv{a}{b+c}$.

\noindent
(d2) Moreover for the $a,b,c$ of (d1) we have
     $b-c = a (q-r)$ implies $\dv{a}{b-c}$.

\noindent
(d3) Furthermore for the $a,b,c$ of (d1) we have
     $kb \pm  mc = a (kq \pm mr)$ implies $\dv{a}{kb \pm mc}$.

%%%%%%%%%%%%
\noindent
(f1)   If $\dv{a}{b}$ then $b=aq$. Then $|b|=|aq|=|a| \cdot |q|$.
If $b \neq 0$ then $|b| > 0$. 
(Absolute values are positive or zero.) 
This implies that $|a| > 0$ and $|q|>0$.
The latter is equivalent to  $|q| \geq 1$. 
Then $|b|= |a| \cdot |q| \geq |a| \cdot 1  \geq |a|$.
Equivalently $|a| \leq |b|$.

\noindent
(f2)  If all of $a,b$ are positive we can drop the absolute values
from (f1) concluding $a \leq b$.

\noindent
(f3) From (f1) we have $|a| \leq |b|$. If $\dv{b}{a}$ we can
likewise conclude that $|b| \leq |a|$ thus deriving $ |a|=|b|$.

\noindent
(g) if $\dv{ka}{kb}$ and $k\neq 0$ then $kb=(ka)q$. For non-zero
$k$ dividing both sides we have $b=a\cdot q$ and thus $\dv{a}{b}$.
%(vi) Given that $\dv{a}{b}$ and $\dv{b}{a}$, if $b=0$ since $\dv{b}{a}$, 
%it means $a=0$ as well.  Likewise if $a=0$ then $b=0$ as well. 
%(So the case one of $a$,$b$ is zero implying that the other is zero
%and thus $|a|=|b|$ is resolved.)
%For the remainder, let us assume that
%$a \neq 0 , b \neq 0$. Then by way of (v) we have $|a| \leq |b|$ and
%swapping the roles of $a$ and $b$, also $|b| \leq |a|$. 
%Both imply that $|a|=|b|$ and thus $a=\pm b$ or
%$b = \pm a$.
\end{proof}

\begin{thm}[Uniqueness]
\label{uniqu}
For every $a \in \mb{Z}^* , b \in \mb{Z}$, if $\dv{a}{b}$, 
there is a unique integer $q \in \mb{Z}$ such that  $b=aq$.
\end{thm}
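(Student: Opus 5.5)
Existence of $q$ is immediate from the definition of divisibility: $\dv{a}{b}$ means there is some $q \in \mb{Z}$ with $b=aq$. So the whole content of the statement is uniqueness, and I would prove it in the standard way. Suppose $q_1, q_2 \in \mb{Z}$ both satisfy $b=aq_1$ and $b=aq_2$, and show $q_1=q_2$.

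Subtracting the two equations gives $aq_1 - aq_2 = b-b = 0$, which by distributivity is
\[
a(q_1-q_2)=0 .
\]
Now $a \in \mb{Z}^*$, so $a\neq 0$. The integers have no zero divisors: if a product of two integers is zero, one factor is zero. Hence $q_1-q_2=0$, that is, $q_1=q_2$.

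If one prefers to stay strictly within the tools of Theorem~\ref{podiv}, there is an alternative route through (f1). Suppose $q_1 \neq q_2$. Then $a(q_1-q_2)=0$ shows $\dv{(q_1-q_2)}{0}$, which is no contradiction by itself. The better route is direct: $|a(q_1-q_2)| = |a|\,|q_1-q_2| \geq |a| \cdot 1 > 0$, because $|a|\geq 1$ and $|q_1-q_2|\geq 1$. This contradicts $a(q_1-q_2)=0$. This is the same absolute-value argument used in the proof of (f1), so it is consistent with the paper's style.

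There is no real obstacle. The only point needing care is to use the hypothesis $a\neq 0$ explicitly. It is exactly what fails for $a=0$, $b=0$, where every $q$ satisfies $0=0\cdot q$. That is why the statement restricts $a$ to $\mb{Z}^*$, and I would remark on this after the argument.
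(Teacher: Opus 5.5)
Your proof is correct and is essentially the paper's own argument: both subtract the two representations to get $a(q_1-q_2)=0$, then use $a\neq 0$ to force $q_1=q_2$. The only differences are cosmetic — the paper phrases it as a contradiction with an assumed second quotient, and your absolute-value variant is valid but not needed.
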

\begin{proof}
Suppose that $\dv{a}{b}$ i.e.  $b=aq$ with 
$q\in \mb{Z}$ and $a\in \mb{Z}^*$ i.e.  $a\neq 0$. If $q$ is not
unique, then there might exist a $q_1$ such that $b=aq_1$ and $q \neq q_1$.
Then $b=aq = a q_1$ implies $a(q-q_1 ) = 0$. Since $a\neq 0$, it must be
$q-q_1 =0$. Then $q=q_1$ but this contradicts to the existence of 
$q_1 \neq q$.
\end{proof}

\newpage

\section{Primes}

\begin{dfn}[Prime numbers]
A natural number $p>1$ is a prime number if it is not
the product of two smaller natural numbers.
\end{dfn}

For example for natural number $5$ the only product that gives
$5$ is $5 \cdot 1$. In that product $5$ is not a smaller
natural number than $5$ and thus $5$ is prime.
Natural number $6$ is not a prime number: $6=2 \cdot 3$
and both $2,3$ are smaller (natural numbers) than $6$.

%\begin{dfn}[Prime numbers are the units]
%An integer $p \in \mb{Z}^*$ is a prime (number) if and only if its only 
%divisors are the units $+1$ and $-1$ and $p$, and $-p$.
%\end{dfn}
%
%Based on the definition $1$ and $-1$ have two divisors each.
%Any other prime $p$, where $p \neq 1$ has four divisors each.
%This is summarized below. Under this definition the units $1$ 
%and $-1$ of $\mb{Z}$ are not considered prime numbers.

\begin{dfn}[Prime]
\label{prime}
An integer $p \in \mb{Z}_+^*$ is a prime (number) if and only 
if $p\neq 1$ and its only positive divisors are 1 and $p$.
\end{dfn}

%\begin{dfn}[Prime numbers]
An integer $p \in \mb{Z}^*$ is a prime (number) if and only 
if $p\neq \pm 1$ and its only divisors are the units $+1$
and $-1$ of $\mb{Z}$ and $p$, and $-p$ 
%(i.e. four integers for an integer $p \neq 0,-1,+1$)).
%\end{dfn}

A number that is not prime and not a unit ($+1$ or $-1$) is
a composite number.

\begin{dfn}[Composite]
\label{composite}
An integer $n  \in \mb{Z}^*$ such that $n \neq \pm 1$, it is either
a prime or  a composite  (integer) number.
\end{dfn}

\begin{dfn}[1 is a unit]
1 is neither a prime number nor a composite number.
It is a unit.
\end{dfn}

\begin{lem}[Composite]
An integer $n \in \mb{Z_+}^*$  is composite
if and only if it has a factor $a$ such that $1< a< n$.
Then there is another factor $q$ such that  $1< q< n$,
and $q=n/a$.
\end{lem}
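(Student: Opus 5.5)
We prove the two directions of the equivalence separately and then derive the statement about the cofactor $q$. Throughout, $n$ is a positive integer; by the definition of composite, $n \neq 1$ and $n$ is not prime.

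\emph{($\Rightarrow$)} Suppose $n$ is composite. Since $n \neq 1$, $n > 1$. Because $n$ is not prime, Definition~\ref{prime} fails. The clause $p \neq 1$ holds, so the failure must be that $n$ has a positive divisor $a$ other than $1$ and $n$.

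\begin{itemize}
\item Since $a>0$ and $a \neq 1$, we have $a>1$.
\item By Theorem~\ref{podiv}(f2), $a,n>0$ and $\dv{a}{n}$ give $a \le n$. Together with $a\neq n$, this gives $a<n$.
\end{itemize}

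Hence $1<a<n$.

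\emph{($\Leftarrow$)} Suppose $n$ has a factor $a$ with $1<a<n$. Then $n>1$, so $n \neq \pm 1$. Moreover, $a$ is a positive divisor of $n$ different from both $1$ and $n$, so $n$ is not prime by Definition~\ref{prime}. By Definition~\ref{composite}, a nonzero non-unit that is not prime is composite.

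\emph{The cofactor $q$.} Given $\dv{a}{n}$ with $1<a<n$, write $n = aq$; by Theorem~\ref{uniqu}, $q=n/a$ is the unique such integer. Clearly $\dv{q}{n}$, and $q>0$ because $n,a>0$.

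\begin{itemize}
\item If $q=1$, then $n=a$, contradicting $a<n$. Since $q$ is a positive integer, this gives $q>1$.
\item If $q \ge n$, then $n = aq \ge aq$ with $a>1$ gives $aq > q \ge n$, a contradiction. Hence $q<n$.
\end{itemize}

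Therefore $1<q<n$.

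The only delicate point is reading Definition~\ref{composite}, which is phrased loosely, as saying that a composite number is a positive integer other than $1$ that is not prime. The rest is routine use of Theorem~\ref{podiv}(f2) and Theorem~\ref{uniqu}.
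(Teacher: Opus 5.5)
Your proposal is correct and follows essentially the same route as the paper. It uses (f2) to get $a \le n$ and hence $1<a<n$, then writes $n=aq$ and derives $q>0$, $q\neq 1$ (since $a\neq n$) and $q<n$ (since $a>1$). You also prove the converse direction, which the paper's proof leaves implicit. One typo needs fixing: in the $q<n$ step, ``$n = aq \ge aq$'' should read $n = aq > q \ge n$.
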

\begin{proof}
If $n$ is composite, then $n$ is a multiple of integer
$a$ that is neither $1$ nor $n$. 
%(Between $|a|$ and $-|a|$ we pick the 
%positive choice and call it $a$.)
Then there exists $q$ such that $n=aq$ for some integer $q$.
If $a$ is neither $1$ nor $n$ then $1 < a < n$. (We also
used (f2) from Theorem~\ref{podiv}.)
Since $n=aq$, $q$ is also positive and $q>1$. 
Since $a>1$, we have  $n=aq>q$ implies
$q<n$. And since $a$ is not $n$ then $q$ cannot be $1$.
\end{proof}

\begin{thm}[Composite with a prime factor]
\label{compo}
\label{prfactor}
An integer $n \in \mb{Z}^*$ with $n>1$ has
a prime factor $p$ such that $\dv{p}{n}$.
\end{thm}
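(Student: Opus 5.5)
The plan is to use the well-ordering principle (equivalently, strong induction on $n$), combined with the Composite lemma and the transitivity property (b2) of Theorem~\ref{podiv}. Every integer $n>1$ is either prime or composite by Definition~\ref{composite}. This gives the case split.

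\emph{Case 1: $n$ is prime.} Take $p=n$. By (b1) of Theorem~\ref{podiv} we have $\dv{n}{n}$, so $\dv{p}{n}$ and we are done.

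\emph{Case 2: $n$ is composite.} Consider the set
\[
S=\{a \in \mb{Z}_+ : 1<a<n,\ \dv{a}{n}\}.
\]
By the Composite lemma, $S$ is nonempty. By well-ordering it has a least element $p$. I then claim that $p$ is prime. Suppose instead that $p$ were composite. Applying the Composite lemma to $p$ gives a factor $b$ with $1<b<p$ and $\dv{b}{p}$. Since $\dv{p}{n}$, transitivity (b2) gives $\dv{b}{n}$. Also $1<b<p<n$, so $b\in S$ with $b<p$. This contradicts the minimality of $p$. Hence $p$ is prime, and $\dv{p}{n}$.

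Alternatively, the same argument can be phrased as strong induction. Assume every $m$ with $1<m<n$ has a prime factor. If $n$ is composite, write $n=aq$ with $1<a<n$. The inductive hypothesis gives a prime $p$ with $\dv{p}{a}$, and (b2) then gives $\dv{p}{n}$.

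The main point needing care is ensuring the descent terminates, that is, justifying the use of well-ordering of $\mb{N}$ (or strong induction), which the paper has not stated explicitly. I would simply invoke it as a basic property of the natural numbers. Everything else follows directly from the lemma and (b2).
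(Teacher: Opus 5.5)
Your proof is correct. Your strong-induction variant is exactly the paper's argument. The paper takes the set $Q$ of integers greater than one having no prime factor and lets $m$ be its least element. It observes that $m$ cannot be prime (since $\dv{m}{m}$), writes $m=qr$ with $1<q,r<m$, and uses the minimality of $m$ together with transitivity (b2) to get a prime factor of $m$, a contradiction.

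Your main argument is a mild but genuine variation. Instead of a minimal counterexample over all integers, you apply the minimum principle to the set $S$ of nontrivial divisors of the fixed $n$, and show that its least element is prime. This gives a slightly stronger statement: the least divisor of $n$ exceeding $1$ is itself prime. That fact, for instance, yields the prime factor $\le\sqrt{n}$ of Theorem~\ref{prorprfactor} in one step.

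It also removes the worry you raise about well-ordering. Since $S\subseteq\{2,\ldots,n-1\}$ is finite, its minimum exists simply because every finite nonempty set of integers has a least element. The paper's route applies well-ordering to a subset of $\mb{N}$ not known in advance to be finite, so there the principle (or strong induction) is genuinely needed. The paper invokes it implicitly when it says ``there is a minimum element.''
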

\begin{proof}
Let $Q$ be the set of natural integer numbers greater than one, that
have no prime factors. We shall show that $Q= \emptyset$.
$ $ \\ $ $
Say $Q$ is not empty. Then there is a minimum element say $m \in Q$.
Since $\dv{m}{m}$, and $m$ has no prime factors, $m$ cannot be a prime
number. Thus $m$ is composite and let $m=qr$, where $1 < q,r <m$.
Since either $q$ or $r$ is $<m$, and $m$ is the smallest element of
$Q$ it means $q \not\in Q$. By definition $q$ has a prime factor $p$
i.e. $\dv{p}{q}$ and $\dv{q}{m}$. By transitivity $\dv{p}{m}$.
The latter contradicts the fact that $m$ being in $Q$ it should
not have prime factors (such as $p$). Thus $Q$ has no minimum
element $m$ and thus it is empty!
%If $n$ is even a prime factor is 2. For the
%remainder we assume that $n$ is an odd composite number.
%Let $A$ be the set of odd composite integers that have 
%no prime factors (divisors). 
%Assume $A$ is not empty or we are done.
%$ $ \\ $ $
%By the Well-Ordered Set Principle (W.O.S.P) set $A$ 
%has a minimum and let it be $m$. 
%Since $m$ is composite there exists $a, b$ such
%that $m=a \cdot b$ and both $a,b< m$.
%$ $ \\  $ $
%{\bf Case 1.} Factors $a,b$ of $m$ are prime numbers:
%this can't be the case since $A$ does not contain
%integers and thus integer $m$ with prime factors.
%$ $ \\  $ $
%{\bf Case 2.} Factor $a,b$ of $m$ are composite
%numbers. Both $a,b$ are composite and $a<m, b<m$.
%Therefore they can't belong to $A$ that is
%$a \not\in A , b \not\in A$. Therefore both
%$a$ and $b$ have a prime factor; call $p$
%such a prime factor. Since $m=ab$ we have
%$m= p q$ for some $q\in \mb{Z}$. Then this
%means $m$ has a prime factor: then $m$ should
%not be in $A$. Contradiction.
%Thus $A$ must be empty and the theorem is proven.
\end{proof}

\begin{thm}[A factor less than $\sqrt{n}$ for $n$]
\label{prorprfactor}
For a composite integer $n>1$ one of its prime 
factors(divisors) is less than or equal to $\sqrt{n}$.
\end{thm}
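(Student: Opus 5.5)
The plan is to first obtain a factor (not necessarily prime) that is at most $\sqrt{n}$, and then pass to a prime factor of it using Theorem~\ref{prfactor}.

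Since $n>1$ is composite, the preceding Lemma (Composite) gives a factorization $n=aq$ with $1<a<n$ and $1<q<n$. By symmetry we may assume $a\le q$; otherwise swap the roles of $a$ and $q$. We then claim $a\le\sqrt{n}$. Suppose instead that $a>\sqrt{n}$. Then $q\ge a>\sqrt{n}$ as well, and multiplying the two inequalities (all quantities are positive) gives $n=aq>\sqrt{n}\cdot\sqrt{n}=n$, which is a contradiction. Hence $1<a\le\sqrt{n}$.

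Next we apply Theorem~\ref{prfactor} to $a$. Since $a>1$, it has a prime factor $p$ with $\dv{p}{a}$. By transitivity, (b2) of Theorem~\ref{podiv}, we get $\dv{p}{n}$. Since $p$ and $a$ are both positive, (f2) of Theorem~\ref{podiv} gives $p\le a\le\sqrt{n}$. Thus $p$ is a prime factor of $n$ with $p\le\sqrt{n}$, as required.

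None of these steps is difficult. The only point needing care is the middle one: we must choose the smaller of the two complementary factors, and we must keep track of positivity so that the inequalities can legitimately be multiplied. If one prefers to avoid $\sqrt{n}$ in the argument, the same step can be phrased with squares: from $a\le q$ we get $a^2\le aq=n$, and hence $a\le\sqrt{n}$.
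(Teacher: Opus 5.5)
Your proposal is correct and follows essentially the same route as the paper. Both arguments take a factorization $n=aq$ with $a\le q$, show $a\le\sqrt{n}$ because otherwise $aq>\sqrt{n}\cdot\sqrt{n}=n$, and then pass to a prime factor $p$ of $a$ via Theorem~\ref{prfactor}, concluding $p\mid n$ and $p\le a\le\sqrt{n}$. Your explicit use of the Composite lemma and of (b2) and (f2) of Theorem~\ref{podiv} only spells out steps the paper leaves implicit.
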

\begin{proof}
%By Lemma~\ref{compo} we have $n = p q$ where $p$ is a
%prime factor. If $p \leq \sqrt{n}$ we are done.
%Otherwise $p > \sqrt{n}$. Then $q$ must be
%$q \leq \sqrt{n}$, since otherwise
%$p  > \sqrt{n}$ and $q>   \sqrt{n}$ would lead
%to $pq > n $ that is, $n>n$ an impossibily.
%Now if $q \leq \sqrt{n}$ indeed and $q$ is prime we
%are done. Otherwise $q$ is composite and by
%Lemma~\ref{compo} it has a prime factor $r$,
%where $r < q < \sqrt{n}$. If $r$ divides $q$ and since
%$q$ divides $n$ we also have that $\dv{r}{n}$.
%We have just found a prime factor of $n$,
%less than or equal to $\sqrt{n}$  
%and this is $r$!
Let $a,b$ be such that  $n=ab$.
We have $1 < a \leq b < n$. If $a > \sqrt{n}$ then $b \geq a >\sqrt{n}$
and therefore $n=ab > \sqrt{n} \cdot \sqrt{n} =n$, which is impossible.
It follows that $1 < a \leq \sqrt{n}$.
Thus $a$ has a prime factor $p$
by   Lemma~\ref{compo} and $p \leq a$.
Since $\dv{p}{a}$ and $\dv{a}{n}$ we conclude that $\dv{p}{n}$,
with $p \leq a \leq \sqrt{n}$.
\end{proof}

\begin{exa}
Show that if $\dv{2}{ab}$ for $a,b \in \mb{Z}$,  then
$\dv{2}{a}$ or $\dv{2}{b}$.
\end{exa}
\begin{solution}
If neither $\dv{2}{a}$ nor $\dv{2}{b}$, then
$a,b$ are odd numbers and thus $a=2n+1$ and $b=2m+1$.
Then since $ab$ is an even number by way
of $\dv{2}{ab}$ we obtain
\[
a b = (2n+1)(2m+1) = 4mn +2m +2n +1 = 2(2mn+m+n)+1,
\]
which is clearly an odd integer number and thus
not even contradicting the fact that $ab$ is even!
\end{solution}

\begin{exa}
Prime numbers  are irreducible.
Thus if $p$ is a prime number then $p$ cannot be written
in the form $p=qr$ where $q,r$ are both non units.
(In other words for an irreducible number either $q$
or $r$ is a unit.)
\end{exa}
\begin{solution}
Note 0 cannot be a prime number therefore $p \neq 0$.
Say prime number $p$ is not irreducible.
If a prime number $p$ is not irreducible, then $p=qr$ where
neither $q$ nor $r$ is a unit.
Then $\dv{p}{qr}$.
Moreover $\dv{q}{p}$ and $\dv{r}{p}$,
and therefore $|q|\leq|p|$ and $|r| \leq |p|$.
Then $|q| \leq |p|$ and $|r| \leq |p|$.
We claim $\ndv{p}{q}$ and $\ndv{p}{r}$.
If this was not the case (say the former)
then $\dv{p}{q}$ i.e. $|p|\leq |q|$ which with  $|q| \leq |p|$
implies $q= \pm p$ and then $r=\pm 1$. This contradicts
the assumption that $r$ is not a unit.
\end{solution}

\subsection{Inifinitely many primes}

\begin{lem}[A theorem by Euclid]
There are infinitely many prime numbers.
\end{lem}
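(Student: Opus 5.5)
The plan is Euclid's classical argument by contradiction, using Theorem~\ref{prfactor} as the only substantive earlier result. Suppose there are only finitely many primes, and list them all as $p_1, p_2, \ldots, p_k$. This list is non-empty, since $2$ is prime (its only positive divisors are $1$ and $2$), so $k \geq 1$. Form the integer
\[
N = p_1 p_2 \cdots p_k + 1 .
\]
Each $p_i \geq 2$, so $N \geq 3 > 1$.

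Since $N>1$, Theorem~\ref{prfactor} provides a prime $p$ with $\dv{p}{N}$. Because the list is assumed to contain every prime, $p = p_j$ for some $j$. Then $p_j$ also divides the product $P = p_1 p_2 \cdots p_k$, by (c1) of Theorem~\ref{podiv}. By (d2) of Theorem~\ref{podiv}, $p_j$ therefore divides $N - P = 1$.

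The step needing most care is ruling out $\dv{p_j}{1}$. By (f2) of Theorem~\ref{podiv}, $\dv{p_j}{1}$ with both positive gives $p_j \leq 1$. This contradicts Definition~\ref{prime}, which requires a prime to satisfy $p_j \neq 1$, and hence $p_j \geq 2$. So no finite list can contain all primes, and there are infinitely many.

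I would also add a remark that $N$ itself need not be prime. For example, $2\cdot3\cdot5\cdot7\cdot11\cdot13+1 = 30031 = 59 \cdot 509$. The argument only needs some prime factor of $N$ lying outside the list, which is exactly why Theorem~\ref{prfactor} is invoked rather than asserting that $N$ is prime.
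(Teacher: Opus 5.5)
Your proof is correct and is essentially the paper's own Euclid argument. The only difference is that the paper splits into the cases $N$ prime (a new prime larger than every $p_i$) and $N$ composite (take a prime factor). You instead apply Theorem~\ref{prfactor} to $N>1$ directly, which handles both cases at once, exactly as the paper itself does later in Theorem~\ref{imp}.
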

\begin{proof}
If there are only finite prime numbers and let them ALL
be $p_1 , \ldots , p_m$, where $p_1= 2$.
Then form integer $n = p_1 \cdot p_2 \cdot \ldots \cdot p_m +1$.
There are two possibilities for $n$: (a) it is a prime number,
(b) it is not a prime number.

\noindent{\bf Case 1.} If $n$ is a prime number
Then $n = 2 \cdot \ldots \cdot p_i \cdot \ldots  \cdot p_m + 1 > 
  > 2p_i +1 > p_i $
given that all $p_i > 1$. We have just found one more prime number
beyond the $m$ ones that were declared ALL that there are: a
contradiction. 

\noindent{\bf Case 2.} If $n$ is a composite number, let $p$ be
a prime factor of $n$, i.e. $\dv{p}{n}$ with $p>1$. 
Such a $p$ exists by the previous Lemma (composite
with a prime factor). This $p$ cannot be one of the $m$
$p_1 , \ldots , p_m$. Why ? if $p$ was say $p=p_i$ then
$\dv{p}{p_i}$ implies by Theorem~\ref{podiv} (c1,d3) that
$\dv{p}{p_1 \cdot p_2 \cdot \ldots \cdot p_m}$.
Since $\dv{p}{n}$ and 
$n = p_1 \cdot p_2 \cdot \ldots \cdot p_m +1$
the $p$ divides their difference i.e. $\dv{p}{1}$ by
Theorem~\ref{podiv} (d2).
This means $p$ is one by Theorem~\ref{podiv} (f2).
But one is a unit not a prime number (plus also the fact
that $p>1$).
\end{proof}

\newpage

\section{Integer division}

   %THEOREM 13 %%
\begin{thm}[Division]
\label{divis} %pdiv
For $a \in \mb{Z}$ and $b \in \mb{Z}^*$ there exist unique integers
$q \in \mb{Z}$ and $r \in \mb{Z}$ such that \\
\[
a=b q+ r \quad , \quad 0 \leq r < |b| .
\]
\end{thm}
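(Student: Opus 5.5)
The plan is to prove existence by the well-ordering principle, which the paper already used in Theorem~\ref{prfactor}, and uniqueness by a direct divisibility and size argument based on Theorem~\ref{podiv}(f1).

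\textbf{Existence.} Consider the set
\[
S=\{\, a-bk \;:\; k\in\mb{Z},\ a-bk\geq 0 \,\}.
\]
First I show $S$ is nonempty. Take $k=-|a|\cdot \frac{|b|}{b}$, so that $bk=-|a||b|$. Then
\[
a-bk=a+|a||b|\geq a+|a|\geq 0,
\]
using $|b|\geq 1$. By the well-ordering principle $S$ has a least element $r=a-bq\geq 0$, and this defines $q$.

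It remains to show $r<|b|$. Suppose instead $r\geq |b|$. Then
\[
r-|b| = a-b\left(q+\tfrac{|b|}{b}\right)
\]
is a nonnegative element of $S$ (the quantity $q+|b|/b=q\pm1$ is an integer) that is strictly smaller than $r$. This contradicts minimality, so $0\leq r<|b|$. The only delicate point is handling the sign of $b$, which the factor $|b|/b=\pm1$ absorbs uniformly; alternatively one can split into the cases $b>0$ and $b<0$.

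\textbf{Uniqueness.} Suppose $a=bq+r=bq_1+r_1$ with $0\leq r,r_1<|b|$. Then $b(q-q_1)=r_1-r$, so $\dv{b}{r_1-r}$. From the bounds on $r$ and $r_1$ we get $|r_1-r|<|b|$. If $r_1-r\neq 0$, Theorem~\ref{podiv}(f1) would give $|b|\leq|r_1-r|$, a contradiction. Hence $r=r_1$, and then $b(q-q_1)=0$ with $b\neq0$ forces $q=q_1$, exactly as in Theorem~\ref{uniqu}.

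I expect the main obstacle to be the existence step, specifically exhibiting an explicit element of $S$ and writing the step $r\mapsto r-|b|$ correctly for negative $b$. The uniqueness part is routine.
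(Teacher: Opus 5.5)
Your proposal is correct and follows essentially the same route as the paper: apply well-ordering to the nonnegative values $a-bk$, obtain a contradiction by subtracting $|b|$ from the least element, and derive uniqueness from $b(q-q_1)=r_1-r$ together with a size bound. The only difference is that you absorb the sign of $b$ through the factor $|b|/b=\pm 1$ (and invoke Theorem~\ref{podiv}(f1) for uniqueness) instead of the paper's three-case split, which has the small benefit of making uniqueness explicit for negative $b$ as well.
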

\begin{proof}
$ $ \\ $ $
{\bf Case 1 ($a\geq 0, b > 0$).}
Let $A$ be the set of all non-negative integers $a-bi$, where 
$i$ is such that $a-bi \geq 0$. 
For $i=0$, $a$ belongs to $A$, and thus $A$ is not empty. 
By the Well-ordered set principle $A$ has a 
minimum and let it be $r$. Since $r$ is in $A$, we 
have $r \geq 0$ and $r=a-bi$ for some integer $i$. 
All it remains to show is that 
$r < |b|=b$. Say that $r \geq b$ instead. Then $r-b\geq 0$.
Moreover
\[
r - b = (a-bi) - b = a -  b (i+1).
\]
Thus $r-b \geq 0$ and is of the form 
$a-b i^\prime$, with $i^\prime = i+1$.
Thus it belongs to $A$. Moreover $r-b$ is less than 
$r$, $r-b<r$ since $b$ is positive. We have found an 
element smaller than the minimum
element of $A$. This contradicts to the choice 
of $r$ as being the minimum;
we reached contradiction because we assume $r \geq b$.
Thus $r < b$. This thus establishes that  $0 \leq r < b$.
$ $ \\ $ $
We have yet to prove the pair $(q,r)$ is unique.
Let is not be and let another pair be 
$(q^\prime , r^\prime )$. Then
$a=bq+r = b q^\prime + r^\prime$,
where $0 \leq r , r^\prime < b$.
This gives $b(q - q^\prime ) = r^\prime -r $, and
           $|b(q - q^\prime )| = |r^\prime -r |$.
Adding $0 \leq  r$ and $r^\prime < b$ we get $r^\prime < r +b$ or
equivalently $r^\prime -r < b$. Thus $|b (q-q^\prime )| < b$.
As $b > 0$ we have $|q-q^\prime| < 1$. This can only be possible,
for $q=q^\prime$ since $q, q^\prime$ are integer. 
Then from $a=bq+r = b q^\prime + r^\prime$ if $q=q^\prime$,
we deduce also that $r=r^\prime$.
This answers positively the question about
the uniqueness of the pair $(q,r)$ for case 1.
$ $ \\ $ $
\smallskip\noindent
{\bf Case 2 ($a< 0 , b>0$).} Similarly as before $A$ 
is not empty because $a-bi \geq 0$ contains at least 
one element $a-ba \geq 0$, and the
rest of the discussion is similar to case 1.
$ $ \\ $ $ 
\smallskip\noindent
{\bf Case 3 ($a \in \mb{Z} , b<0$).} Then $|b| >0$, 
and of course $|b|=-b$.
By way of cases 1 and 2 for $a$ and $|b|$ we have that
$a=|b| q + r$, where $0 \leq r < |b|$. This is
equivalent to $a=(-b) q + r$, where $0 \leq r < |b|$. This is
equivalent to $a= b (-q)+ r$, where $0 \leq r < |b|$. The
claim is satisfied for case 3 as well.
$ $ \\ $ $
Conclusion.
For a pair $(a,b \neq 0)$  there is a unique pair 
$(q,r)$ with $a=bq+r$ and $0\leq r < |b|$.
\end{proof}

\begin{exa}
Find the quotient and the remainder of the division
of 37 by 5.
Then find the quotient and the remainder of the division
of -37 by 5.
\end{exa}
\begin{solution}
$ $ \\ $ $
(a) For 37 we have the following
\[
  37 = 5 \cdot 7 + 2.
\]
37 is the divident, 5 is the divisor and 7 the quotient.
The remainder is 2. Note that $0 \leq 2 < |5|$.
$ $ \\ $ $
(b) For $-37$ we have the following
\[
 -37 = 5 \cdot (-8) + 3.
\]
$-37$ is the divident, 5 is the divisor and $-8$ the quotient.
The remainder is 3. Note that $0 \leq 3 < |5|$.
One can also observe that
\[
 -37 = 5 \cdot (-7) - 2.
\]
However only the remainder $3$ is one that satisfies
the $0 \leq 3 < |5|$ requirement and is unique.
\end{solution}

\bigskip %THEOREM 14 %%
\begin{thm}[Division results]
\label{divir}
For $a,b$  as in division  ($b\neq 0$), where
$a=bq+r$ and $0 \leq r < |b|$, we have the following.
\begin{itemize}
\item[(i)]  if $\dv{d}{a}$ and $\dv{d}{b}$ then $\dv{d}{r}$.
\item[(ii)] if $\dv{d}{r}$ and $\dv{d}{b}$ then $\dv{d}{a}$.
\end{itemize}
\end{thm}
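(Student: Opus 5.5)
The plan is to rewrite the division identity $a=bq+r$ so that the quantity whose divisibility we want appears as an integer linear combination of the two quantities assumed divisible by $d$, and then apply the linear-combination property (d3) of Theorem~\ref{podiv}. Throughout, $d\neq 0$ is implicit in the notation $\dv{d}{\cdot}$, and $q,r$ are the integers supplied by Theorem~\ref{divis}.

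For part (i), I solve the division equation for the remainder, obtaining $r=a-bq$, which has the form $1\cdot a-q\cdot b$. Since $\dv{d}{a}$ and $\dv{d}{b}$, property (d3) of Theorem~\ref{podiv} with $k=1$ and $m=q$ gives $\dv{d}{a-qb}$, that is, $\dv{d}{r}$. As a more elementary alternative, I can write $a=dm$ and $b=dn$, so that $r=dm-dnq=d(m-nq)$, and the integer $m-nq$ witnesses $\dv{d}{r}$ directly.

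For part (ii), I use the identity $a=bq+r$ as it stands, which is $q\cdot b+1\cdot r$. From $\dv{d}{b}$ and $\dv{d}{r}$, property (d3) with $k=q$ and $m=1$ (or (c1) followed by (d1)) gives $\dv{d}{a}$.

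I do not expect a real obstacle. The only points to note are that the bound $0\leq r<|b|$ is never used, so the argument holds for any representation $a=bq+r$, and that the sign of $b$ does not matter. Together, (i) and (ii) show that the common divisors of $(a,b)$ coincide with those of $(b,r)$, which is the form in which the result will later feed into the Euclidean algorithm.
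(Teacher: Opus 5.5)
Your proof is correct and essentially the paper's argument. You write $r=a-bq$ and $a=bq+r$ as integer combinations of quantities divisible by $d$ and apply the closure properties (c1), (d1)--(d3) of Theorem~\ref{podiv}, just as the paper passes from $d\mid bq$ to $d\mid a-bq$ and $d\mid bq+r$; you are also right that the bound $0\le r<|b|$ is never needed.
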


\begin{proof}
$ $ \\ $ $
(i) If $\dv{d}{a} and \dv{d}{b}$ then from the latter
$\dv{d}{bq}$ and thus from the former
$\dv{d}{a-bq}$ leading (given that $a=bq+r$) to $\dv{d}{r}$.
$ $ \\ $ $
(ii) If $\dv{d}{r},\dv{d}{b}$ the $\dv{d}{bq}$ and thus
$\dv{d}{bq+r}$ which leads to $\dv{d}{a}$.
\end{proof}

\bigskip %THEOREM 15 %%
\begin{thm}[Remainder of division by $b$]
Let $a,A \in \mb{Z}$ with $\dv{b}{a}$ and $\dv{b}{A}$. 
%For $a,A \in \mb{Z}$, with $\ndv{b}{a}$ and $\ndv{b}{A}$. 
The remainders of the divisions of $a, A$ with $b$
are the same if 
and only if $a-A$ is a multiple of $b$.
\end{thm}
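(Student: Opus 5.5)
The plan is to rely only on the Division theorem (Theorem~\ref{divis}) together with its uniqueness clause. First a remark on the hypothesis. As literally stated ($\dv{b}{a}$ and $\dv{b}{A}$), both remainders are $0$, and $a-A$ is a multiple of $b$ by Theorem~\ref{podiv}(d2). So both sides of the equivalence hold and the statement is immediate. I will therefore prove the evidently intended general statement, for arbitrary $a,A\in\mb{Z}$ and $b\in\mb{Z}^*$; this covers the stated case as well. Note that $b\neq 0$ is needed in the stated case too, since $\dv{b}{a}$ already requires it and the Division theorem needs it.

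Apply Theorem~\ref{divis} twice to write $a=bq+r$ and $A=bQ+R$ with $0\leq r<|b|$ and $0\leq R<|b|$. Subtracting gives the key identity
\[
a-A=b(q-Q)+(r-R).
\]

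\emph{($\Rightarrow$)} If $r=R$, the identity reads $a-A=b(q-Q)$, so $\dv{b}{a-A}$ by definition.

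\emph{($\Leftarrow$)} Suppose $a-A=bk$ for some $k\in\mb{Z}$. Then $r-R=b(k-q+Q)$, so $\dv{b}{r-R}$. Both remainders lie in $[0,|b|)$, hence $-|b|<r-R<|b|$, that is, $|r-R|<|b|$. If $r-R\neq 0$, then Theorem~\ref{podiv}(f1) would give $|b|\leq|r-R|$, a contradiction. Hence $r=R$.

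The main point of care is this last step, the bound $|r-R|<|b|$ combined with (f1). An alternative for the same step is to invoke uniqueness directly: $a=A+bk=b(Q+k)+R$ with $0\leq R<|b|$, so by the uniqueness part of Theorem~\ref{divis} the pair $(Q+k,R)$ equals $(q,r)$, giving $r=R$. I would present this uniqueness argument as the main route, since it is the cleanest, and keep the (f1) argument as a remark.
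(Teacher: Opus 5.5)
Your proposal is correct and follows the paper's proof: for $(\Rightarrow)$ the paper also subtracts to get $a-A=b(q-Q)$, and for $(\Leftarrow)$ your main route of rewriting $a=b(Q+k)+R$ and applying the uniqueness clause of Theorem~\ref{divis} is exactly the paper's argument. You are also right that the hypotheses $\dv{b}{a}$ and $\dv{b}{A}$ make the statement trivial; the paper's proof never uses them, so it establishes the general statement for arbitrary $a,A$, as yours does.
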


\begin{proof}
\noindent{\bf $\Rightarrow$.} If $a=bq+r$ and $A=bQ + R$, and we
are given $r=R$, the $a-A=bq+r-qQ-r = b(q-Q)$. Since $q-Q\in \mb{Z}$,
we conclude that $\dv{b}{a-A}$.
$ $ \\ $ $
\noindent{\bf $\Leftarrow$.} 
%Say that $\dv{b}{a-A}$.
%Then let $a=bq+r$ and $A=bQ+R$, where $0 \leq r, R < |b|$.
%We have $a-A=b(q-Q) +(r-R)$. Since obviously $\dv{b}{b(q-Q)}$
%and also by assumption $\dv{b}{a-A}$ we conclude that 
%$\dv{b}{(a-A)-b(q-Q)}$ i.e. $\dv{b}{r-R}$ and also $\dv{|b|}{r-R}$.
%This would imply that $|b| \leq |r-R|$ or $|r-R|=0$.
%But $r,R < |b|$ which means that $r-R$ or $|r-R|$ are such 
%that $|r-R| < |b|$.
%Thus the former $|b| \leq |r-R|$ is false 
%because otherwise $|b| \leq |r-R|$ and $|r-R| < |b|$ lead by transitivity
%to the nonsense $|b| < |b|$.  We must have
%$|r-R|=0$ which implies $r=R$ since $R,r \geq 0$.
%
 Another way to prove $\Leftarrow$ it is directly as follows.
 Say that $\dv{b}{a-A}$, i.e. $a-A=b m$ for some $m \in \mb{Z}$.
 Then let $a=bq+r$ and $A=bQ+R$, where $0 \leq r, R < |b|$.
 From the former $ a-A = bm$ and $A=bQ+R$ we have
 $a-(bQ+R) = b m$ implying $a= b(m+Q) +R$. This latter equality
 implies $q = m+Q$ and $r=R$ as $R$ is such that $0 \leq R < |b|)$.
 Result is proven.
\end{proof}

\begin{exa}
Show the following: if $\dv{a^2}{b^2}$ then $\dv{a}{b}$,
for every $a,b \in \mb{Z}_+$.
\end{exa}
\begin{solution}
A simple solution that relies on rational numbers vs
irrational numbers is as follows.
If $\dv{a^2}{b^2}$ then there exists an integer $k$ such
that $b^2 = a^2 k$. Then, $a^2 / b^2 = k$ and therefore
$(b/a) = \sqrt{k}$. Therefore since $a,b$ are integer,
then $b/a$ is a rational number. $\sqrt{k}$ is algebraic
but also rational; it can't be irrational since that
would make $b/a$ irrational contradicting to the rationality
of $b/a$. Since $\sqrt{k}$ is rational, and $k$ integer,
there can only be that $\sqrt{k}$ is also integer.
Then $b=a  \sqrt{k}$ and thus $\dv{a}{b}$.
\end{solution}

\begin{exa}
Show the following: if $\dv{a^2}{b^2}$ then $\dv{a}{b}$,
for every $a,b \in \mb{Z}_+$.
\end{exa}
\begin{solution}
Let for the sake of contradiction that $\ndv{a}{b}$.
$  $  \\  $ $
\begin{equation}
\label{a2b2a}
\dv{a^2}{b^2} \Rightarrow \exists a_2 : b^2 = a^2 a_2 .
\end{equation}
Consider,
\[
 B(a) = \{  bt : t \in \mb{Z}_+ , \dv{a^2}{b^2} \wedge \dv{a}{bt} \} .
\]
Let $k$ be the smallest value such that $bk \in B(a)$.
$B(a)$ is not empty.
Since  $\dv{a}{b^2}$ for $t=b$ we have $bt = b^2 \in B(a)$.
The former is true because
because $\dv{a^2}{b^2}$ implies
$b^2 = a^2 r$ and thus $\dv{a}{a^2 r}$ implies
$\dv{a}{b^2}$ i.e. $\dv{a}{bt}$ for $t=b$.
Since $k$ is the smallest $t$ that generates the min
element of $B(a)$ by the W.O.S.P.
we have by way of $\dv{a}{bt}$, $t=k$.
\begin{equation}
\label{a2b2b}
bk = a a_1
\end{equation}
Consider
\begin{equation}
\label{a2b2c}
 a b a_1 = ab \frac{bk}{a} = b^2 k = a^2 a_2 k.
\end{equation}
We further conclude by dividing by $a \neq 0$,
the following
\begin{equation}
\label{a2b2d}
 a b a_1 =  a^2 a_2 k \Rightarrow ba_1 = a a_2 k.
\end{equation}
By the division theorem for $a_1$ and $k$ we have the following.
\begin{equation}
\label{a2b2e}
a_1 = k q +r , 0\leq r < k \Rightarrow
 r= a_1 -k q \Rightarrow
 br= ba_1 -b k q \Rightarrow
 br= aa_2 k -a a_1 q \Rightarrow
 br= a (a_2 k - a_1 q) \Rightarrow \dv{a}{br}
\end{equation}
In the previous derivation we uses in sequence
Eq.(\ref{a2b2d}),
Eq.(\ref{a2b2b}).
Therefore $br$ is in $A$ since $\dv{a^2}{b^2}$
and $\dv{a}{br}$. Since $r < k$ we have that
$br$ is smaller than the smallest element of $B(a)$
which is $bk$. This leads to contradiction.
\end{solution}

\begin{lem}
Integer $d$ such that $\dv{d}{n}$ if and only if the
remainder of the division of $n$ by $d$ is $0$.
\end{lem}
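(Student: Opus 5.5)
We prove the two directions separately, relying only on Theorem~\ref{divis} (Division) and its uniqueness clause. Throughout, $d \neq 0$: this is forced by the definition of divisibility, and it is also needed for the division of $n$ by $d$ to make sense. Theorem~\ref{divis} gives unique integers $q,r$ with $n = dq + r$ and $0 \leq r < |d|$. The claim is that $\dv{d}{n}$ holds exactly when this $r$ is $0$.

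For the direction $\Rightarrow$, assume $\dv{d}{n}$, so $n = dk$ for some $k \in \mb{Z}$. Then $n = dk + 0$, and the pair $(k,0)$ satisfies the requirements of Theorem~\ref{divis}, because $0 \leq 0 < |d|$ when $d \neq 0$. The division theorem also says the pair $(q,r)$ is unique, so $(q,r) = (k,0)$ and the remainder is $r = 0$.

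For the direction $\Leftarrow$, assume the remainder is $0$. Then $n = dq + 0 = dq$ with $q \in \mb{Z}$, which is precisely the definition of $\dv{d}{n}$. Alternatively, Theorem~\ref{divir}(ii) applies with $\dv{d}{0}$ and $\dv{d}{d}$, and gives $\dv{d}{n}$ directly.

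The only step needing care is the forward direction. There we must invoke the uniqueness part of Theorem~\ref{divis}, and not just exhibit some representation $n = dk + 0$. This is what guarantees that the remainder obtained from the division is actually $0$, rather than some other representation happening to have remainder $0$.
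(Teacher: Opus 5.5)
Your proof is correct and takes essentially the same route as the paper. The forward direction writes $n = dk + 0$ and uses the uniqueness clause of the division theorem to conclude $r = 0$, and the converse reads $n = dq$ directly as $\dv{d}{n}$; your explicit restriction to $d \neq 0$ and the alternative via Theorem~\ref{divir}(ii) are harmless additions.
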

\begin{proof}
$ $\\
\noindent
$\Rightarrow$ (only-if).
$ $ \\ $ $
If $d$ is such that $\dv{d}{n}$ this means
$n=qd$ that is $n=qd+0$. Given the 
uniqueness of $q,r$ from Theorem~\ref{divis} we 
conclude $q=q$ and $r=0$.
$ $ \\ $ $
$\Leftarrow$ (if).
$ $ \\ $ $
If $d$ is such that $n=d \cdot q + r$ with $r=0$,
then $n =d \cdot q$ which implies $\dv{d}{n}$.
\end{proof}

\section{Greatest common divisor}

\begin{dfn}[Greatest common divisor]
For integers $a,b \in \mb{Z}$, $\gcd(a,b)$ is the {\bf greatest
common divisor} of $a$ and $b$ if and only if $\dv{\gcd(a,b)}{a}$
and $\dv{\gcd(a,b)}{b}$ and every other divisor $c$ of $a,b$ is such
that $c \leq \gcd(a,b)$.
Therefore,
\begin{itemize}
\item[(i)]  $\dv{\gcd(a,b)}{a}$ and $\dv{\gcd(a,b)}{b}$,
\item[(ii)] $\dv{c}{a}$ and $\dv{c}{b}$ $\Rightarrow$ $c \leq \gcd(a,b)$.
\end{itemize}
Moreover, $\gcd(a,b) \leq |a|$ and $\gcd(a,b) \leq |b|$.
\end{dfn}

Note that if $\dv{c}{a}$ then by Theorem~\ref{podiv}(v) we have $|c| \leq |a|$.
Likewise if $\dv{c}{b}$ we have $|c|\leq |b|$. Combining the two we have
$|c| \leq \max(|a|,|b|)$. Thus the set of common divisors of $a,b$ is finite.
A finite set of integers always has a maximum, and thus there is a
uniquest largest integer $d >0$ such that $\dv{d}{a}$ and $\dv{d}{b}$. We
call $d$ the {\bf greatest common divisor} of $a,b$ and denote it by
$\gcd(a,b)$ thus $d=\gcd(a,b)$.

\begin{exa}
(a) The greatest common divisor of $1$ and $n$ is $1$.\\
(b) If $\dv{a}{b}$ then $\gcd(a,b)=b$.\\
(c) $\gcd(5,15)=5$. $\gcd(30, 105) = 15$. \\
(d) The common divisors of 30 and 105 are 
$\{ 1, 3, 5, 15, \}$. If we include negative numbers then it is
$\{ \pm 1, \pm 3, \pm 5, \pm 15, \}$, twice as many.
\end{exa}

\begin{nte}
The $\gcd(0,0)$ is not defined as the set of common divisors is an infinite 
set and it does not have a maximum.
In the remainder when $\gcd(a,b)$ is considered, we would assume
that $a \neq 0$ or $b \neq 0$ (or both). Thus it cannot be that
both $a$ and $b$ are zero.
One way to express this is by having
$|a|+|b| \neq 0$.
\end{nte}

\begin{dfn}
Let $S(a)$ be the set of divisors of $a$.
\end{dfn}

\begin{fct}[Simple GCD facts]
\label{sgcdf}
Let $a,b \in \mb{Z}$ such that $|a|+|b| \neq 0$. Then
the following apply.
\begin{itemize}
\item[(i)   ]  $\gcd(a,b) > 0$ and also $\gcd(a,b) \geq 1$.
\item[(ii)  ]  $\gcd(a,b)= \gcd(|a|,|b|)$.
\item[(iii) ]  $\gcd(a,b)= \gcd(b,a)$.
\item[(iv)  ]  $\gcd(a,1)=1$.
\item[(v)   ]  $\gcd(a,0)= |a|$ for all $a\neq 0$.
\item[(vi)  ]  $\gcd(a,b)=|a|$ if and only if $\dv{a}{b}$.
\end{itemize}
\end{fct}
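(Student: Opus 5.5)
Each item follows directly from the definition of $\gcd$ together with Theorem~\ref{podiv}. The tool used throughout is that the set $S(a)\cap S(b)$ of common divisors is finite (because $|a|+|b|\neq 0$) and that $\gcd(a,b)$ is its maximum. So to compute a gcd, it suffices to identify the common divisor set and read off its largest element.

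For (i), note that $\dv{1}{a}$ and $\dv{1}{b}$ by Theorem~\ref{podiv}(a). Hence $1$ is a common divisor and $\gcd(a,b)\geq 1>0$.

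For (ii), Theorem~\ref{podiv} and the example preceding it show that $\dv{c}{a}$ if and only if $\dv{c}{-a}$. Therefore $S(a)=S(|a|)$ and $S(b)=S(|b|)$. The two pairs have the same set of common divisors, hence the same maximum.

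For (iii), the common divisor set is symmetric in $a$ and $b$, so $\gcd(a,b)=\gcd(b,a)$.

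For (iv), any common divisor $c$ of $a$ and $1$ satisfies $|c|\leq 1$ by Theorem~\ref{podiv}(f1), so $c\leq 1$. Combined with (i), this gives $\gcd(a,1)=1$.

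For (v), every integer divides $0$, so the common divisors of $a$ and $0$ are exactly $S(a)$. By (f1) every $c\in S(a)$ has $c\leq |a|$, and $|a|\in S(a)$ by (b1). Hence $\gcd(a,0)=|a|$.

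For (vi), I will assume $a\neq 0$, since otherwise $|a|=0$ cannot equal a positive gcd. If $\gcd(a,b)=|a|$, then $\dv{|a|}{b}$, and hence $\dv{a}{b}$. Conversely, suppose $\dv{a}{b}$. Then $|a|$ divides both $a$ and $b$, so $|a|\leq\gcd(a,b)$. On the other side, $\gcd(a,b)$ divides $a\neq 0$, so (f1) gives $\gcd(a,b)\leq |a|$. Together these give equality.

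The only delicate point is the degenerate case in (vi), where $a=0$ and $b\neq 0$. There $\gcd(0,b)=|b|\neq 0=|a|$, while $\dv{0}{b}$ fails by Theorem~\ref{podiv}(a). So both sides of the equivalence are false, and it still holds. I will mention this briefly rather than exclude the case.
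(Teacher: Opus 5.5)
Your proof is correct and follows the paper's argument item by item: throughout, you work with the common-divisor set $S(a)\cap S(b)$ and its maximum. In (vi) you are a bit cleaner than the paper, since you get $|a|\leq\gcd(a,b)$ directly from the maximality clause of the definition rather than via $|a|\mid\gcd(a,b)$ (which the paper only justifies later, through B\'ezout), and you also check the degenerate case $a=0$ explicitly.
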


\begin{proof} 
$ $\\

\noindent
(i) Obviously. $gcd(a,b)$ is the maximum of the
common divisors of $a$ and $b$ i.e. the maximum elements of 
$S(a) \cap S(b)$. One positive element of this set of common
divisors is 1 and thus $gcd(a,b) \geq 1$ in addition to $\gcd(a,b) >0$.

\noindent
(ii) Since $S(a) = S(|a|)$ and $S(b)=S(|b|)$ we have that
$S(a) \cap S(b) = S(|a|) \cap S(|b|)$. Thus
$\gcd(a,b)$ is equal to $\gcd(|a|,|b|)$ since the set
of common divisors are equal to each other.

\noindent
(iii) It is a consequence of the fact that $S(a) \cap S(b) = S(b) \cap S(a)$.

\noindent
(iv) Since $\dv{1}{a}$ trivially, and the largest divisor of 1 is 1
itself the result follows. (Note that $S(1) = \{ -1,+1 \}$.)

\noindent
(v) $S(a) \cap S(0)  = S(a) \cap \mb{Z} = S(a)$. The result follows as
the largest element of $S(a)$ is $|a|$.

\noindent
(vi) 
$ $ \\ $ $
$\Rightarrow$. If $\gcd(a,b)=|a|$ then $\dv{|a|}{a}$ and $\dv{|a|}{b}$.
For the latter there exist $q$ such that $b=q|a|$. If $a$ is non-negative,
the $b=qa$ as well. If $a$ is negative $b=q(-a) = (-q) a$.
The former concludes $\dv{a}{b}$ and so does the latter.
$ $ \\ $ $
$\Leftarrow$. If $\dv{a}{b}$ then $\dv{-a}{b}$ and thus $\dv{|a|}{b}$. 
Since trivially $\dv{a}{a}$ we conclude that 
$\dv{-a}{a}$ and thus $\dv{|a|}{a}$.  
Then $\dv{|a|}{\gcd(a,b)}$. 
This by Theorem~\ref{podiv}(v) implies $|a| \leq \gcd(a,b)$ 
(the gcd is always positive thus no absolute value sign around it is needed).
By definition $\dv{\gcd(a,b)}{a}$ and thus $\dv{\gcd(a,b)}{-a}$.
Thus $|\gcd(a,b)| \leq |a|$.
Combining $|\gcd(a,b)| \leq |a|$ and  $|a| \leq \gcd(a,b)$ the result follows.
\end{proof}

\bigskip %THEOREM 19 %%
\begin{thm}[GCD divided]
\label{gcddi}
If $d=\gcd(a,b)$ then $\gcd(\frac{a}{d}, \frac{b}{d} ) =1 $.
\end{thm}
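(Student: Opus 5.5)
The plan is a proof by contradiction that uses only the definition of the greatest common divisor and the properties of divisibility in Theorem~\ref{podiv}. Assume $|a|+|b|\neq 0$, so that $d=\gcd(a,b)$ is defined and $d\geq 1$ by Fact~\ref{sgcdf}(i). Since $\dv{d}{a}$ and $\dv{d}{b}$, there are integers $a'$ and $b'$ with $a=da'$ and $b=db'$. By Theorem~\ref{uniqu} these integers are unique, so $\frac{a}{d}=a'$ and $\frac{b}{d}=b'$ are well-defined integers. Also $a'$ and $b'$ are not both zero, because otherwise $a=b=0$. Hence $e=\gcd(a',b')$ exists, and $e\geq 1$ by Fact~\ref{sgcdf}(i).

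Suppose, for contradiction, that $e>1$. From $\dv{e}{a'}$ we can write $a'=ek$, and from $\dv{e}{b'}$ we can write $b'=em$, for some integers $k,m$. Multiplying by $d$ gives $a=da'=(de)k$ and $b=db'=(de)m$, so $\dv{de}{a}$ and $\dv{de}{b}$; this is also Theorem~\ref{podiv}(c2) applied with multiplier $d$. Thus $de$ is a common divisor of $a$ and $b$. Part (ii) of the definition of the greatest common divisor then gives $de\leq d$.

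On the other hand, $d\geq 1$ and $e\geq 2$ give $de\geq 2d>d$. This contradicts $de\leq d$, so $e=1$, which is the claim.

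The only step that needs care is making sure the comparison really uses the \emph{maximality} clause of the gcd definition and that all quantities are positive. In particular, $e>1$ must translate into $de>d$, which holds because $d>0$. Everything else is routine substitution.
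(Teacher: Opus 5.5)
Your proof is correct, but it takes a different route from the paper. The paper argues through Euclid's algorithm (Theorem~\ref{gcd}). With last nonzero remainder $r_k=d$, it propagates $\dv{d}{r_i}$ back up the chain of divisions to $b$ and $a$. It then divides every division equation by $d$ and reads the scaled chain as Euclid's algorithm on $(a/d,b/d)$, whose last nonzero remainder is $r_k/d=1$. You use only the maximality clause in the definition of $\gcd$: a common divisor $e\geq 2$ of $a/d$ and $b/d$ would make $de>d$ a common divisor of $a$ and $b$. Your route is shorter and self-contained in three ways. First, it does not lean on Theorem~\ref{gcd}, which the paper states only afterwards. Second, it covers every pair with $|a|+|b|\neq 0$ (negative entries, a zero entry, $a\leq b$) without the reduction to $a>b>0$ that the algorithmic proof tacitly needs. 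Third, it avoids an unstated step in the paper's argument: that the divided equations really are the division steps for $(a/d,b/d)$, which requires checking $0\leq r_i/d<r_{i-1}/d$ and appealing to uniqueness of division (Theorem~\ref{divis}). What the paper's route buys in return is algorithmic information: Euclid's algorithm on $(a/d,b/d)$ runs through exactly the same quotients as on $(a,b)$, with every remainder scaled by $1/d$.
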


\begin{proof}
If $d=\gcd(a,b)$ then by Theorem~\ref{gcd} we have that $d=r_k$.
By the last equation of $\gcd$'s we have $\gcd(r_{k-1},r_k )  = r_k = d$.
Given that $d$ divides itself ($r_k =d$) we also have that $\dv{d}{r_{k-1}}$.
From the prior expression, now that we have $\dv{d}{r_k}$ and $\dv{d}{r_{k-1}}$
using $\gcd(r_{k-2},r_{k-1}) =\gcd(r_{k-1},r_k ) = r_k =d$ we have that
$\dv{d}{r_{k-2}}$ as well. Continuing likewise we have $\dv{d}{r_2}$ and 
$\dv{d}{r_1}$. 
Likewise from the third equation of Theorem~\ref{gcd} we have $\dv{d}{r}$
and from the second $\dv{d}{b}$. 
Concluding from the first (division) equation we have
$\dv{d}{a}$. We can thus divide by $d$ all equations. This shows that
$\gcd(\frac{a}{d}, \frac{b}{d} ) = \frac{d}{d} =1$ as all 
remainders will be divided by $d=r_k$ as well.
\end{proof}

         %THEOREM 17 %%
\begin{thm}
\label{t17}
\label{gcddiv}
If $x,y \in \mb{Z}$, then
\[
  \gcd(y,x) =  \gcd(x, y) = \gcd( x, xq+y)
\]
for all integers $q \in \mb{Z}$.
\end{thm}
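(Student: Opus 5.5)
The plan is to show that the pairs $(x,y)$ and $(x,xq+y)$ have exactly the same set of common divisors. Since $\gcd$ is defined as the maximum of that set, equal sets give equal greatest common divisors. In the notation of the paper, I will prove
\[
S(x)\cap S(y) = S(x)\cap S(xq+y)
\]
for every $q\in\mb{Z}$. The first equality, $\gcd(y,x)=\gcd(x,y)$, is Fact~\ref{sgcdf}(iii). I will cite it directly, or note again that $S(y)\cap S(x)=S(x)\cap S(y)$.

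\emph{Inclusion $\subseteq$.} Let $c$ satisfy $\dv{c}{x}$ and $\dv{c}{y}$. By Theorem~\ref{podiv}(d3) with coefficients $k=q$ and $m=1$, we get $\dv{c}{qx+y}$. Hence $c$ is a common divisor of $x$ and $xq+y$.

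\emph{Inclusion $\supseteq$.} Let $\dv{c}{x}$ and $\dv{c}{xq+y}$. Apply Theorem~\ref{podiv}(d3) again, now to the pair $xq+y$ and $x$ with coefficients $1$ and $q$ and the minus sign. This gives $\dv{c}{(xq+y)-qx}$, that is, $\dv{c}{y}$. So $c$ is a common divisor of $x$ and $y$.

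Both inclusions together give equality of the two sets. Their maxima therefore coincide, which yields $\gcd(x,y)=\gcd(x,xq+y)$.

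The only step requiring care is the side condition from the Note: the gcd is defined only when $|a|+|b|\neq 0$. I must check that $(x,y)\neq(0,0)$ holds exactly when $(x,xq+y)\neq(0,0)$.
\begin{itemize}
\item If $x\neq 0$, both pairs are nonzero.
\item If $x=0$, then $xq+y=y$, so both pairs are nonzero precisely when $y\neq0$.
\end{itemize}
Thus both sides of the identity are defined under the same hypothesis. Apart from this bookkeeping, the argument is a routine application of Theorem~\ref{podiv}. I do not expect a real obstacle, provided the inclusion $\supseteq$ uses the subtraction form of (d3) and not just (d1).
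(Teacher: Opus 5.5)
Your proposal is correct and is essentially the paper's argument. Both rest on the same two divisibility steps ($\dv{c}{x},\ \dv{c}{y} \Rightarrow \dv{c}{xq+y}$, and $\dv{c}{x},\ \dv{c}{xq+y} \Rightarrow \dv{c}{y}$): the paper applies them only to $d=\gcd(x,y)$ and $d_1=\gcd(x,xq+y)$ to get $d\le d_1$ and $d_1\le d$, while you apply them to an arbitrary common divisor to get equality of the full sets of common divisors, and you also cover two points the paper leaves implicit (that $(x,y)\neq(0,0)$ if and only if $(x,xq+y)\neq(0,0)$, and $\gcd(y,x)=\gcd(x,y)$ via Fact~\ref{sgcdf}(iii)).
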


\begin{proof}
Let $d=\gcd(x,y)$. 
Let $d_1 = \gcd( x, xq+y)$.
If $\dv{d}{x}$ and $\dv{d}{y}$ by divisilibity
we have $x=dx_1$ and thus $xq=d x_1 q$ and then
we have $\dv{d}{xq}$ and $\dv{d}{y}$. Thus $\dv{d}{xq+y}$ 
in addition to $\dv{d}{x}$. 
Thus $d$ is a common divisor of $xq+y$ and $x$. 
Thus $d \leq d_1 = \gcd(x,xq+y)$.
For $d_1 = \gcd(x,xq+y)$ we have $\dv{d_1}{x}$ and $\dv{d_1}{xq+y}$. 
From the former, we conclude that  $\dv{d_1}{xq}$; combining 
it with the latter we have $\dv{d_1}{xq+y-xq}$ i.e.
$\dv{d_1}{y}$. Thus $\dv{d_1}{x}$ and $\dv{d_1}{y}$ and 
therefore $d_1$ is a common
divisor of $x,y$. Thus $d_1 \leq d=\gcd(x,y)$.
By way of $d_1 \leq d$ and $d \leq d_1$ we conclude $d=d_1$.
\end{proof}

From Theorem~\ref{divis} in order to compute the 
$\gcd(a,b)$ we formulate the division operation.
\[
a = b q + r
\]
where $0 \leq r < |a|$.
Then by way of Theorem~\ref{t17} 
\[
\gcd(a,b)=\gcd(b,a) = 
\gcd(b, bq+r) \overset{Th.~\ref{t17}}{=} \gcd(b, r).
\]

For calculating the $\gcd(a,b)$ at a minimum $|a|+|b| \neq 0$ that is,
we can't determine the gcd of two numbers that are both 0. 
The set of common divisors is 
then $\mb{Z}$ and has no maximum.
If $b=0$ then $\gcd(a,b) = a$.
If $a=b$ then $\gcd(a,b) = a$.
Thus in general we need to
compute $\gcd(a,b)$ for $a>b$ or $b>a$.
If $b<a$ then we rename (swap) $a$ and $b$ so
that they become $a>b$.
We may assume without loss of generality that $a>b$.

\subsection{The gcd algorithm by Euclid}

\bigskip %THEOREM 18 %%
\begin{thm}[GCD: Euclid's algorithm]
\label{gcd}
For $a > b $, $b \neq 0$ 
the  $\gcd(a,b)$ can be calculated  using
iterated division, as follows.
\begin{align*}
  a    &= b    q   +r     & 0\leq r   < |b|&& \gcd(a,b)&= \gcd(b, r)   \\
  b    &= r    q_1 + r_1  & 0\leq r_1 <  r && \gcd(b,r)&= \gcd(r, r_1 ) \\
  r    &= r_1  q_2 + r_2  & 0\leq r_2 < r_1&& \gcd(r,r_1)&=\gcd(r_1, r_2 ) \\
       &\ldots              &           &&             &&\\
r_{k-2}&= r_{k-1}q_k +r_k & 0\leq r_k < r_{k-1}&&\gcd(r_{k-2},r_{k-1}) &= 
       \gcd(r_{k-1},r_k ) \\
r_{k-1}&= r_k  q_{k+1} + 0 &&&\gcd(r_{k-1},r_k ) &= \gcd(r_k , 0 ) = r_k 
\end{align*}
The {\bf last non-zero remainder} is the gcd:
\[
\gcd(a,b) = r_k.
\]
Moreover, for  $a>b$ the sequence of
steps is finite as the sequence of remainders is a
decreasing sequence of positive numbers eventually
reaching zero since
$ |b| > r > r_1 > r_2 > \ldots > r_{k-1} > r_k >0$.
\end{thm}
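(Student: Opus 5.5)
The statement has two parts: the iteration terminates, and the last non-zero remainder $r_k$ equals $\gcd(a,b)$. I will prove termination first, then chain the gcd equalities recorded in the right-hand column of the display.

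\textbf{Termination.} Each line is an application of Theorem~\ref{divis}, with the previous divisor as the new dividend and the previous remainder as the new divisor. This is legitimate as long as the divisor is non-zero. The first division, of $a$ by $b\neq 0$, gives $0\le r<|b|$. While the current remainder is non-zero, the next division is allowed and gives a strictly smaller non-negative remainder. So as long as no zero appears, the remainders form a strictly decreasing sequence of natural numbers
\[
|b|>r>r_1>r_2>\cdots .
\]
By the well-ordered set principle (as used in Theorem~\ref{compo} and Theorem~\ref{divis}), such a sequence cannot be infinite; concretely, its length is at most $|b|$. Hence some remainder is $0$. Call the last non-zero one $r_k$, so the final line reads $r_{k-1}=r_kq_{k+1}+0$. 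The degenerate case $r=0$ can be handled separately: then $\dv{b}{a}$, and $\gcd(a,b)=\gcd(b,0)=|b|$ by Fact~\ref{sgcdf}(v).

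\textbf{Each gcd equality.} Every line has the form $x=yq+z$, and I want $\gcd(x,y)=\gcd(y,z)$. Writing $x=yq+z$, Theorem~\ref{t17} gives
\[
\gcd(x,y)=\gcd(y,x)=\gcd(y,\,yq+z)=\gcd(y,z),
\]
where the last step uses Theorem~\ref{t17} with multiplier $-q$ (so that $y(-q)+(yq+z)=z$). The hypothesis $|x|+|y|\neq 0$ holds throughout because every divisor used is non-zero. For the last line, Fact~\ref{sgcdf}(v) gives $\gcd(r_k,0)=r_k$, since $r_k>0$. Chaining the equalities down the right-hand column yields $\gcd(a,b)=r_k$.

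\textbf{Main obstacle.} The only delicate point is making termination rigorous rather than appealing to "eventually". I will phrase it as an induction on the size of the current divisor: claim that for every pair $(x,y)$ with $y>0$, the procedure started at $(x,y)$ halts and returns $\gcd(x,y)$. Strong induction on $y$ does this: the step from $(x,y)$ passes to $(y,z)$ with $z<y$, and $z=0$ is the base case. This packages termination and correctness into one argument.

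A small fix is also needed for the first line when $b<0$. By Fact~\ref{sgcdf}(ii) one may replace $a,b$ by $|a|,|b|$ from the start, so I will assume $b>0$ without loss of generality.
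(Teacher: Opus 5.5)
Your proposal is correct and is essentially the paper's own ``top-to-bottom'' proof: termination holds because the remainders form a strictly decreasing sequence of non-negative integers below $|b|$, and correctness follows by chaining $\gcd(a,b)=\gcd(b,r)=\cdots=\gcd(r_k,0)=r_k$ through Theorem~\ref{t17} (the paper also gives a divisibility-chain version, showing $r_k$ is a common divisor of $a,b$ and that every common divisor divides $r_k$). One minor quibble: for $b<0$ your reduction to $|a|,|b|$ changes the remainder sequence (e.g.\ $a=-2$, $b=-3$ gives $r=1$, whereas $|a|=2$, $|b|=3$ gives $r=2$), so it is cleaner to drop that reduction, since Theorem~\ref{t17} handles the first line for either sign of $b$ and all later remainders are already non-negative.
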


\begin{proof}
$ $ \\ $  $
{Finite number of steps.} 
$ $ \\ $ $
The sequence of remainders
is a decreasing sequence starting below $|b|$. Thus after
no more that $|b|$ steps it will reach a remainder of $0$.
$ $ \\ $ $
{Special and trivial cases.}
$ $ \\ $ $
If the numbers are negative we consider their absolute values
and continue.
If $a=b=0$, there is no $\gcd(a,b)$ as $S(0) = \mb{Z}$.
If $a>b=0$, then $\gcd(a,b) = a$.
If $b>a=0$, then $\gcd(a,b) = b$.
We deal with some more trivial cases next.
Otherwise, if $a=b$ then $\gcd(a,b) = a =b$.
$ $ \\ $ $
{Bottom-to-top proof.}
$ $ \\ $ $
Starting from the bottom $\dv{r_k}{r_{k-1}}$.
Then from the penultimate equation we have $r_k$ dividing
both itself and $r_{k-1}$ and thus $\dv{r_k}{r_{k-1}q_k + r_k}$.
Thus $\dv{r_k}{r_{k-2}}$. Working likewise we
show that $r_k$ divides $r$ and $b$ of the first equation
and thus also divides $a$.  Thus $r_k$ divides both $a,b$.
Therefore $r_k \leq \gcd(a,b)$.
Pick an arbitrary integer $d$ dividing $a$ and $b$.
Working downwards  we show that $d$ divides $r_k$ as well.
Thus $d \leq r_k$. Set $d=\gcd(a,b)$. This translates
into $\gcd(a,b) \leq r_k$; combined with $r_k \leq \gcd(a,b)$
leads to $r_k = \gcd(a,b)$, as needed.
$ $ \\ $ $ 
{ Top-to-bottom proof: start with $a>b>0$.}
$ $ \\ $ $
If $b>a>0$, we swap $a$ and $b$.
Thus for all remaining cases we assume that  $a>b>0$ in the remainder.
$ $ \\ $ $
Therefore, the algorithm computes $\gcd(a,b)$
for $a > b$ and $b \neq 0$. Furthermore, $|a|+|b| \neq 0$.
It is obvious from the statement that
by using Theorem~\ref{gcddiv}  the following apply.
\[
\gcd(a,b) = \gcd(b,r) = \gcd(r, r_1 )   = \gcd(r_1 , r_2 ) =
            \ldots = \gcd(r_{i-1},r_i ) = \ldots
          = \gcd(r_{k-1},r_k ) = \gcd(r_k , 0 ) =  r_k .
\]
That is, the last, non-zero remainder, is the gcd of $a,b$.
Moreover the sequence of remainders purely decreases i.e.
$|b| > r > r_1 > r_2 > \ldots > r_{k-1} > r_k$.
Thus after a finite number of no more than  $b$ steps
$r_k$ will be determined.
$ $ \\ $ $
We use Theorem~\ref{divis} repeteadly starting with
$x=b$ and $y=a$.
\[
\gcd(x,y) = \gcd(y,x   ) \Rightarrow
\gcd(a,b) = \gcd(b,a) \Rightarrow
\gcd(a,b) = \gcd(b,bq+r)  ,
\]
and continuing with $x=b$ and $y=r$ and proceeding
similarly.
\[
\gcd(x,y) = \gcd(x,xq+y ) \Rightarrow
\gcd(b,r) = \gcd(b,bq+r ) \Rightarrow
\]
It is obvious from the statement of the theorem and the
previous derivations that
\[
\gcd(a,b) = \gcd(b,r) = \gcd(r, r_1 )   = \gcd(r_1 , r_2 ) = 
            \ldots = \gcd(r_{i-1},r_i ) = \ldots
          = \gcd(r_{k-1},r_k ) = \gcd(r_k , 0 ) =  r_k
\]
That is, the last, non-zero remainder, is the gcd.
Moreover the sequence of remainders purely decreases 
$|b| > r > r_1 > r_2 > \ldots > r_{k-1} > r_k$. 
Thus after a finite number of no more than  $|b|$ steps 
$r_k$ will be determined.
\end{proof}

For the golden ration $\phi$ we have that 
$\phi \approx  1.6183$, and $\lg{\phi} \approx 0.6942$ and
$1/\lg{\phi} \approx 1.44 $.

\begin{lem}
(a) In Euclid's algorithm the maximum number of division
steps $k+2$ is for $a,b$ that are Fibonacci numbers.
$ $ \\ $ $
(b) For $a,b \in \mb{Z}$, with  $a>b >0$
 the number of divisions in Euclid's Algorithm is
at most   $\lg{b} / \lg{\phi} + 2 \approx 1.44 \lg{b} +2$,
where $\phi = (1+ \sqrt{5})/2$ is the golden ratio.
\end{lem}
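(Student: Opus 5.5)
I would prove both parts with the classical Lamé argument, working backwards from the last division step of the iteration in Theorem~\ref{gcd}. Rename the sequence as $r_{-2}=a$, $r_{-1}=b$, $r_0=r$, $r_1,\ldots ,r_k$, with $r_{k+1}=0$. The number of divisions is then $n=k+2$. Each division reads $r_{i-2}=r_{i-1}q_i+r_i$ with $q_i\geq 1$, because $r_{i-1}<r_{i-2}$. The last one, $r_{k-1}=r_kq_{k+1}$, has $q_{k+1}\geq 2$, because $r_{k-1}>r_k$.

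The key claim, proved by induction upward from the bottom equation, is that the $j$-th remainder counted from the end dominates a Fibonacci number. With $F_1=F_2=1$ and $F_{m+1}=F_m+F_{m-1}$, I would show $r_k\geq 1=F_2$, $r_{k-1}\geq 2r_k\geq 2=F_3$, and in general $r_{k-j}\geq F_{j+2}$. The inductive step is
\[
r_{i-2}=r_{i-1}q_i+r_i\geq r_{i-1}+r_i ,
\]
which uses only $q_i\geq 1$ and the induction hypothesis on the two previous remainders. Carrying this up to the top gives $b\geq F_{n+1}$ and $a\geq F_{n+2}$.

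For part (a), observe that every inequality above becomes an equality exactly when all quotients $q_i=1$ (except the last one, equal to $2$) and $r_k=1$. For consecutive Fibonacci numbers $a=F_{n+2}$, $b=F_{n+1}$, the identity $F_{m+1}=1\cdot F_m+F_{m-1}$ shows that Euclid's algorithm walks down the Fibonacci sequence, so $n$ steps are realised. Moreover, by the bound $b\geq F_{n+1}$, no smaller $b$ can force $n$ steps. This is what ``maximum number of steps is attained for Fibonacci numbers'' means; I would state it in that form.

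For part (b), I need the lower bound $F_{m}\geq \phi^{m-2}$. It follows by induction from $\phi^2=\phi+1$:
\[
\phi^{m-2}+\phi^{m-3}=\phi^{m-3}(\phi+1)=\phi^{m-1}.
\]
The base cases are $F_1=1\geq\phi^{-1}$ and $F_2=1=\phi^0$. Then $b\geq F_{n+1}\geq \phi^{n-1}$, so $n\leq 1+\lg b/\lg\phi$, which is within the stated $\lg b/\lg\phi+2\approx 1.44\lg b+2$.

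The main obstacle is bookkeeping rather than mathematics: the paper's indexing ($r,r_1,\ldots$ versus the count ``$k+2$'') and its edge cases must be aligned carefully. These include $b\mid a$, where there is only one division and $r=0$, and the case $k=0$. Off-by-one slips in matching the Fibonacci indices to the step count are the likely source of error, so I would check small cases such as $(a,b)=(3,2)$ and $(5,3)$ explicitly against the formula.
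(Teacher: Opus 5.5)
Your proposal is correct and follows essentially the same Lam\'e-type argument as the paper: you bound the inputs below by consecutive Fibonacci numbers, then apply $F_m\ge\phi^{m-2}$, proved by induction from $\phi^2=\phi+1$. Your backward induction $r_{i-2}\ge r_{i-1}+r_i$ actually proves the step $b\ge F_{n+1}$, which the paper only asserts through its remark that quotients equal to one are the smallest possible; your consistent indexing also gives the sharper bound $n\le 1+\lg b/\lg\phi$, in line with the paper's closing remark that the bound is really $1.44\lg b+1$.
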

\begin{proof}
$ $ \\ $ $
The worst-case number of division steps is for two
consecutive Fibonacci numbers,  say $a=F_n$ and $b=F_{n-1}$,
where $F_0 = 0$ and $F_1 =1$.
Then the first division is,
$F_n = F_{n-1}+F_{n-2}$, $n>1$,
and the last division
$F_2 = F_1 + F_0$, where $F_0 =0$ and $F_1 =1 $.
All quotients generated are one, the smallest possible
quotient.
The total number of divisions is $n-1$.
A bound would be provided for $n-1$ with respect to $b$.
$ $ \\ $ $
Note that $F_n \geq \phi^{n-2}$ for all $n \geq 2$.
This follows from $\phi^2 = \phi +1 $ and induction on $n$.
Clearly $F_2 = 1 \geq \phi^0$. Moreover
\[
F_n = F_{n-1}+F_{n-2} \geq \phi^{n-3} + \phi^{n-4}
                      \geq (1+ \phi )   \phi^{n-4}
                      \geq     \phi^2   \phi^{n-4}
                      \geq              \phi^{n-2}.
\]
If the first division involves $a>b$, this means
\[
b \geq F_{n-1} \geq \phi^{n-3}
\Leftrightarrow
  n -1 \leq \lg{b} / \lg{\phi} +2 .
\]
Then the number of division steps $n-1$ is upper bounded
by $1.44 \lg{b} +2$.
Of course the last division step is trivial, so the
bound is in fact at most
$1.44 \lg{b} +1$.
\end{proof}

The worst-case (i.e. longest) division is for 
two consecutive Fibonacci numbers
$F_n = F_{n-1}+F_{n-2}$, $n>1$, with  $F_0 =0$ 
and $F_1 =1 $ generate a quotient 
of 1 every iteration! For $F_n$ the number 
of divisions is $n-2$ as the last
division is $F_2 = F_1 + F_0 = F_1 + 0$. The 
$n$-th Fibonacci number exceeds
$\phi^{n-2}$.

\begin{cor}
For $a,b < N$ the number of divisions in Euclid's Algorithm 
will
be less than $\lg{N} / \lg{\phi} \approx 1.48 \lg{N}$.
\end{cor}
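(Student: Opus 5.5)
The plan is to bound $a$, not $b$, from below by a Fibonacci number in terms of the number of divisions $m$, and then compare with $N$. We may assume $a>b>0$, since the paper's discussion before Theorem~\ref{gcd} allows swapping, and the trivial cases ($b=0$, or $a=b$ with a single division, where $a\geq 1$ forces $N\geq 2>\phi$) satisfy the bound directly.

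Write the Euclid sequence of Theorem~\ref{gcd} as $x_0=a$, $x_1=b$, $x_2=r$, $x_3=r_1,\ldots$, ending with $x_m=r_k$ the last nonzero remainder and $x_{m+1}=0$. Division number $i$, for $1\le i\le m$, reads $x_{i-1}=x_i q_i + x_{i+1}$. So $m$ is exactly the total number of divisions performed, including the final trivial one.

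The key step is a Lamé-type estimate proved by backward induction:
\[
x_{m+1-j}\geq F_{j+1}\qquad (1\le j\le m+1).
\]
\begin{itemize}
\item For $j=1$: $x_m\geq 1=F_2$, since it is a nonzero remainder.
\item For $j=2$: the sequence is strictly decreasing, so $x_{m-1}>x_m\geq 1$ and hence $x_{m-1}\geq 2=F_3$. When $m=1$ this is just $a>b\geq 1$.
\item Inductive step: every quotient satisfies $q_i\geq 1$ because $x_{i-1}>x_i$. Therefore $x_{i-1}\geq x_i+x_{i+1}$, which gives $x_{m+1-j}\geq F_{j}+F_{j-1}=F_{j+1}$.
\end{itemize}
Taking $j=m+1$ yields $a=x_0\geq F_{m+2}$.

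Next combine this with the inequality $F_n\geq\phi^{n-2}$ established in the preceding lemma. This gives
\[
N>a\geq F_{m+2}\geq \phi^{m},
\]
so $m<\lg N/\lg\phi$, which is exactly the claimed strict bound on all divisions. The constant evaluates to $1/\lg\phi\approx 1.44$, consistent with the previous lemma; the figure $1.48$ in the statement is simply a looser approximation of it.

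The main obstacle is the indexing in the induction. The bound must be attached to $a$ (giving $F_{m+2}$) rather than to $b$ (which gives only $F_{m+1}$, hence $\phi^{m-1}$ and a bound off by one). The base case $x_{m-1}\geq 2$ must also be justified from strict decrease rather than from $q\geq 1$ alone. I would verify this carefully with the Fibonacci example $a=F_{m+2}$, $b=F_{m+1}$, which uses exactly $m$ divisions and shows the estimate is tight.
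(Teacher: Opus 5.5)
Your proof is correct, and it establishes the strict bound exactly as stated, which the paper's own argument does not quite reach.

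Both proofs rest on the same Lam\'e-type idea: an $m$-division run forces the inputs to dominate consecutive Fibonacci numbers, and this is combined with $F_n\geq\phi^{n-2}$. They differ in two respects.

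First, the paper merely asserts that ``in the worst case'' $a\geq F_{n+2}$ and $b\geq F_{n+1}$. You prove this by backward induction from $q_i\geq 1$ and the strict decrease $x_0>x_1>\cdots>x_m\geq 1$.

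Second, the paper attaches the estimate to $b$: from $b\geq F_{n+1}\geq\phi^{n-1}$ it gets $n-1\leq \lg b/\lg\phi$. This has the merit of depending only on the smaller argument, which is useful when $b\ll a$. But after substituting $b<N$ it only gives $n<\lg N/\lg\phi+1$ (the paper writes $n\leq 1.471\lg N+1$), one more than the corollary claims. Attaching the estimate to $a$, as you do, gives $\phi^m\leq F_{m+2}\leq a<N$, which is exactly the stated inequality. Your example $a=F_{m+2}$, $b=F_{m+1}$ shows the bound is essentially sharp. Your reading of the constant is also right: $1/\lg\phi\approx 1.44$, so $1.48$ is just a looser figure.

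One small remark: the reduction to $a>b$ is not a mere normalization but part of what makes the statement true. If the algorithm performed the quotient-zero division $a=b\cdot 0+a$ when $a<b$, the count would grow by one and the bound can fail. For example, $a=3$, $b=5$, $N=6$ gives four divisions, while $\lg 6/\lg\phi\approx 3.72$. So you are right to invoke the paper's swapping convention explicitly.
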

\begin{proof}
We can do a bit better by bounding the number of divisions in terms of
$b$ only.
In the worst case, for an $n$ step division $a \geq F_{n+2}$ and
$b \geq F_{n+1}$. But $F_{n+1} \geq \phi^{n-1}$. Thus
$n-1 \leq \lg{b} / \lg{\phi}$. $\lg{\phi} \approx 0.687$ and thus
$n-1 \leq 1.471\lg{b}$ i.e. $n \leq 1.471 \lg{N} +1$.
\end{proof}

\section{Extended GCD}

\bigskip %THEOREM 20 %%
\begin{thm}[Extended-GCD]
\label{egcd}
{\bf B\'ezout identity.}
Let $a,b \in \mb{Z}$, where $|a|+|b| \neq 0$.
Let $d=\gcd(a,b)$ then there exist integers $x,y$ such that
\begin{equation}
\label{egcd0}
   d= ax+by
\end{equation}
Moreover, $d$ is the smallest positive integer that can 
be written as a linear combination of $a,b$.
\end{thm}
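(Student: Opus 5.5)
I will use the standard well-ordering argument, which proves both claims of Theorem~\ref{egcd} at once. A back-substitution through the equations of Theorem~\ref{gcd} would also give the existence of $x,y$, but the minimality claim comes more naturally this way.

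First I consider the set
\[
L = \{ ax+by : x,y \in \mb{Z},\ ax+by > 0 \}.
\]
Since $|a|+|b| \neq 0$, at least one of $a,b$ is non-zero. Taking $x=\pm 1, y=0$ (or $x=0, y=\pm 1$), matched to the sign of that entry, shows $|a| \in L$ or $|b| \in L$, so $L \neq \emptyset$. By the Well-ordered set principle (used already in Theorem~\ref{divis} and Theorem~\ref{prfactor}), $L$ has a minimum element. Call it $s = ax_0+by_0 > 0$.

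Next I show that $s$ is a common divisor of $a$ and $b$. By Theorem~\ref{divis}, write $a = sq + r$ with $0 \leq r < s$. Then
\[
r = a - sq = a(1-qx_0) + b(-qy_0),
\]
so $r$ is again a linear combination of $a$ and $b$. If $r>0$, then $r \in L$ and $r<s$, contradicting the minimality of $s$. Hence $r=0$ and $\dv{s}{a}$. The same argument with $b$ in place of $a$ gives $\dv{s}{b}$. Therefore $s$ is a positive common divisor, and by the definition of the gcd, $s \leq d$.

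Conversely, $\dv{d}{a}$ and $\dv{d}{b}$, so by Theorem~\ref{podiv} (d3) we get $\dv{d}{ax_0+by_0} = s$. Since $s>0$ and $d>0$, Theorem~\ref{podiv} (f2) gives $d \leq s$. Thus $d = s = ax_0+by_0$, which proves~(\ref{egcd0}) with $x=x_0$, $y=y_0$.

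The minimality claim now follows immediately. Every positive linear combination $ax+by$ lies in $L$, so it is at least $\min L = s = d$. The step needing the most care is the remainder argument: I must check that $r$ keeps the form $ax+by$ and that the case $r=0$ is excluded from $L$ only because $L$ was defined to contain positive elements. The sign bookkeeping when $a$ or $b$ is negative is absorbed by allowing arbitrary integers $x,y$.
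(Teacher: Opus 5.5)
Your proof is correct and follows essentially the same route as the paper. The paper also takes the minimum $d'$ of the positive combinations $au+bv$ via the well-ordering principle, shows $d'$ divides $a$ and $b$ with the same remainder argument, and concludes from $d'\le d$ and $d\le d'$. The only cosmetic differences are that the paper shows nonemptiness with $u=a$, $v=b$ (giving $a^2+b^2>0$) instead of $\pm1,0$, and that you state the minimality claim explicitly, whereas the paper leaves it implicit in $d=\min A$.
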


\begin{proof}
Consider 
\[
A= \{ au+bv | u,v \in \mb{Z}  \ \wedge \  au+bv > 0 \}.
\]
Note that for $u=a$, $v=b$, $ a\cdot a + b \cdot b > 0$ 
since $|a|+|b| \neq 0$.  Set $A$ has thus at least one 
element and it is not empty. Thus from the well-ordered
set principle there must exist a minimum element for $A$ and 
let it be $d^\prime$.  Since $d^\prime \in A$ there exist 
$x , y$ such that 
\[
a x + b y = d^\prime .
\]
$ $ \\  $  $
\noindent
{A. Show $d^\prime$ is a divisor of $a$.}
We first show that $\dv{d^\prime}{a}$. Let
us form the division operation for the two integers i.e.
\[
 a = d^\prime q^\prime + r^\prime
\]
where $0 \leq r^\prime < d^\prime$. Then we have that
\[
 r^\prime = a - d^\prime q^\prime = a -(ax+by) q^\prime = 
a \cdot (1- x q^\prime ) + b \cdot (-  y q^\prime ).
\]
Note that because of the division of $a$ by $d^\prime$ we 
know that $0 \leq r^prime$.
We examine two cases.
$ $ \\ $ $ 
\noindent
{Case 1:  $0 < r^\prime$.} 
$ $ \\ $ $ 
If $r^\prime > 0$ then it is a 
member of $A$ since 
$r^\prime =  a \cdot (1- x q^\prime ) +  b \cdot (-  y q^\prime )$.
But this cannot happen since $r^\prime < d^\prime$ and 
$d^\prime$ is the minimum element of $A$.
It would imply the existence of an element of $A$ (i.e. $r^\prime$) 
smaller than the minimum element of $A$ (i.e. $d^\prime$)!
$ $ \\ $ $
\noindent
{Case 2: $0=r^\prime$.} The only other possibility is that 
$r^\prime = 0$.  But then, $a = d^\prime q^\prime + r^\prime$ 
implies $a = d^\prime q^\prime $ i.e.
$d^\prime$ is a divisor of $a$.
$ $ \\ $ $
{B.  Show $d^\prime$ is a divisor of $b$.} Similar to 
the case involving $a$.
$ $ \\ $ $
{ Conclusion. }
$ $ \\ $ $
Thus $d^\prime $ divides $a$ and $b$ and is a common
divisor of $a,b$.  It should be that $d^\prime \leq d$
since $d$ is the gcd of $a$ and $b$
and is the greatest common divisor of $a,b$.
Furthermore, $\dv{d}{a}$ and $\dv{d}{b}$ and therefore
$\dv{d}{ax+by}$ and thus $\dv{d}{d^\prime}$. This
implies $d \leq d^\prime$
%If $g$ is another common divisor of $a$ and $b$
%we conclude  from $a x + b y = d^\prime$ that $\dv{g}{ax+by}$ i.e.
%$\dv{g}{d^\prime}$.
%Thus $g \leq d^\prime$.
%Thus any common divisor of $a,b$ such
%as $g$ is no more than $d^\prime$. Such a common divisor
%also includes the gcd of $a$ and $b$ and thus $d \leq d^\prime$.
This $d \leq d^\prime$ along with the $d^\prime \leq d$ shows
that $d=d^\prime$.
%(We also proved that every common divisor of $a,b$ also divides the
%gcd of $a,b$.)
In conclusion
\[
  d = \gcd(a,b) =  d^\prime = a x + b y.
\]
\end{proof}

\subsection{Some corollaries}

\begin{cor}
\label{egcd1}
Let $a,b \in \mb{Z}$, where $|a|+|b| \neq 0$.
Let $d=\gcd(a,b)$.
Show that set $A$ defined as follows
\[
A= \{ au+bv | u,v \in \mb{Z} \ \wedge \        au+bv > 0 \},
\]
contains multiples of $d$.
\end{cor}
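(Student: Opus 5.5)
The statement is ambiguous as written. The natural strong reading is that $A$ consists \emph{exactly} of the positive multiples of $d$, that is, $A=\{dk \mid k\in\mb{Z}_+\}$. I would prove this two-sided version, since it contains the weaker claim that every element of $A$ is a multiple of $d$. The proof is a double inclusion, and it relies only on Theorem~\ref{egcd} (B\'ezout) and the divisibility properties of Theorem~\ref{podiv}.

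\emph{Every element of $A$ is a positive multiple of $d$.} Take an element $n=au+bv>0$ of $A$. Since $d=\gcd(a,b)$, we have $\dv{d}{a}$ and $\dv{d}{b}$. Property (d3) of Theorem~\ref{podiv}, with $k=u$ and $m=v$, then gives $\dv{d}{au+bv}$, so $n=dk$ for some integer $k$. Because $n>0$ and $d>0$ (Fact~\ref{sgcdf}(i)), the quotient satisfies $k\geq 1$. Hence $A\subseteq\{dk \mid k\in\mb{Z}_+\}$.

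\emph{Every positive multiple of $d$ lies in $A$.} By Theorem~\ref{egcd} there are integers $x,y$ with $d=ax+by$. For any $k\in\mb{Z}_+$, multiply by $k$ to get
\[
dk=a(kx)+b(ky).
\]
This exhibits $dk$ as a linear combination of $a,b$ with integer coefficients $u=kx$ and $v=ky$. Since $dk>0$, we conclude $dk\in A$. In particular, $d$ itself is the minimum of $A$, which matches the second claim of Theorem~\ref{egcd}.

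Neither inclusion is difficult. The only steps needing care are checking positivity, so that the elements really satisfy the condition $au+bv>0$ in the definition of $A$, and using $|a|+|b|\neq 0$ to guarantee that $d$ is defined and positive. Everything else is a direct application of Theorem~\ref{egcd} and Theorem~\ref{podiv}.
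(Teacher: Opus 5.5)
Your proof is correct, and your first inclusion is exactly the paper's argument: $\dv{d}{a}$ and $\dv{d}{b}$ give $\dv{d}{au+bv}$. The paper stops there and cites Theorem~\ref{egcd} only for the fact that $d$ is the least element of $A$. Your second inclusion, which scales the B\'ezout identity by $k$, goes further and establishes the sharper characterization $A=\{dk \mid k\in\mb{Z}_+\}$; the fact $d=\min A$ is then just the case $k=1$.
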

\begin{proof}
By Theorem~\ref{egcd},
the smallest positive element of $A$
was proven to be $d$.
Moreover every element $z$ of $A$ is of the form $z=au+bv$.
Since $\dv{d}{a}$ and $\dv{d}{b}$ we have that $\dv{d}{au+bv}$
that is $\dv{d}{z}$. Thus every element of $A$ is a multiple of $d$.
\end{proof}

\begin{cor}
\label{egcd2}
Let $a,b \in \mb{Z}$, where $|a|+|b| \neq 0$.
Let $a,b$ be such that $\gcd(a,b)=1$. Then there exist
$x, y \in \mb{Z}$ such that
\[
 ax+by = 1.
\]
\end{cor}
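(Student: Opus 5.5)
This is an immediate specialization of Theorem~\ref{egcd} (the B\'ezout identity). The plan is to check that the hypotheses of that theorem hold and then substitute $d=1$.

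The hypotheses of Corollary~\ref{egcd2} are $a,b\in\mb{Z}$ with $|a|+|b|\neq 0$. These are exactly the hypotheses of Theorem~\ref{egcd}, so that theorem applies to the pair $(a,b)$. It gives, for $d=\gcd(a,b)$, integers $x,y$ with $d=ax+by$, as in Eq.~(\ref{egcd0}). Since we are told $\gcd(a,b)=1$, substituting $d=1$ gives $ax+by=1$ for those same integers $x,y$, which is the claim.

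For a more self-contained argument, one can repeat the well-ordering step directly. Take the set $A=\{au+bv : u,v\in\mb{Z},\ au+bv>0\}$; it is nonempty because $a\cdot a+b\cdot b>0$. Let $d'$ be its minimum. The division argument of Theorem~\ref{egcd} shows that $d'$ divides both $a$ and $b$. Hence $d'\le\gcd(a,b)=1$, and since $d'>0$ this forces $d'=1$. So $1=ax+by$ for some integers $x,y$.

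There is no real obstacle here. The only point needing care is confirming that the nondegeneracy condition $|a|+|b|\neq 0$ is present, so that the gcd is defined and $A$ is nonempty. That condition is already included in the corollary's hypotheses.
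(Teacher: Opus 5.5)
Your proposal is correct and matches the paper's proof: both apply Theorem~\ref{egcd} to obtain $ax+by=\gcd(a,b)$ and then substitute $\gcd(a,b)=1$. Your optional well-ordering variant simply reruns the proof of that theorem. It is also correct, but it adds no new idea.
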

\begin{proof}
From Theorem~\ref{egcd} and Corollary~\ref{egcd1}
we know that there exist $x,y$
such that $ax+by = \gcd(a,b)$. Since $a,b$ are such that
$\gcd(a,b)=1$,  we obtain $ax+by =1$, as needed.
\end{proof}

\begin{cor}
\label{egcd3}
Let $\dv{a}{bc}$ and $\gcd(a,b)=1$. Then
$\dv{a}{c}$.
\end{cor}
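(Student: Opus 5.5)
The plan is to derive this Euclid-type lemma directly from the B\'ezout identity in the form of Corollary~\ref{egcd2}. First I check its hypothesis. Since $\dv{a}{bc}$, the definition of divisibility gives $a \neq 0$, so $|a|+|b| \neq 0$. Combined with $\gcd(a,b)=1$, Corollary~\ref{egcd2} then supplies integers $x,y$ with
\[
 ax+by = 1 .
\]

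Next I multiply this identity through by $c$, obtaining
\[
 c = acx + bcy .
\]
Now I show that $a$ divides each term on the right-hand side. For the first term, $\dv{a}{a}$ by Theorem~\ref{podiv}(b1), so $\dv{a}{a(cx)}$ by Theorem~\ref{podiv}(c1). For the second term, the hypothesis $\dv{a}{bc}$ together with Theorem~\ref{podiv}(c1) gives $\dv{a}{(bc)y}$. Theorem~\ref{podiv}(d1), or equivalently (d3) with multipliers $cx$ and $y$, then yields $\dv{a}{acx+bcy}$, which is $\dv{a}{c}$.

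I do not expect a real obstacle here. The only point needing care is making sure Corollary~\ref{egcd2} actually applies, and that rests on the nondegeneracy condition $|a|+|b|\neq 0$, which follows from $a\neq 0$. The case $c=0$ needs no separate treatment, since $\dv{a}{0}$ holds by Theorem~\ref{podiv}(a) and the argument above covers it anyway.
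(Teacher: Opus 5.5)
Your proposal is correct and matches the paper's proof. Both obtain $ax+by=1$ from Corollary~\ref{egcd2}, multiply through by $c$, and note that $a$ divides both $acx$ and $bcy$. Your explicit check that $a\neq 0$ (so that $|a|+|b|\neq 0$ and Corollary~\ref{egcd2} applies) is a small point of care that the paper leaves implicit.
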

\begin{proof}
From the previous problem we have that there exist
$x,y $ such that $ax+by =1$. Then,
\[
ac x + bc y = c.
\]
Since we are given $\dv{a}{bc}$ we conclude $\dv{a}{bcy}$.
Moreover, $\dv{a}{acx}$. Then
$\dv{a}{acx+bcy}= \dv{a}{c}$, as needed.
\end{proof}

\begin{cor}
If $d=\gcd(a,b)$ then $S(a) \cap S(b) = S(d)$.
(In other words, $\dv{m}{a} \mf{\; and \;} \dv{m}{b} \iff \dv{m}{d}$.)
\end{cor}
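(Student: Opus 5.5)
The plan is to prove the two inclusions $S(d) \subseteq S(a)\cap S(b)$ and $S(a)\cap S(b) \subseteq S(d)$ separately, using only the transitivity of divisibility and the B\'ezout identity of Theorem~\ref{egcd}. Throughout we assume $|a|+|b|\neq 0$, so that $d=\gcd(a,b)$ is defined and positive by Fact~\ref{sgcdf}(i).

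For the easy inclusion, take $m \in S(d)$, that is, $\dv{m}{d}$. By the definition of the gcd we have $\dv{d}{a}$ and $\dv{d}{b}$. Transitivity, Theorem~\ref{podiv}(b2), then gives $\dv{m}{a}$ and $\dv{m}{b}$, so $m \in S(a)\cap S(b)$.

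For the reverse inclusion, take $m$ with $\dv{m}{a}$ and $\dv{m}{b}$. Theorem~\ref{egcd} supplies integers $x,y$ with $d = ax+by$. Theorem~\ref{podiv}(d3) with $k=x$ and $m'=y$ gives $\dv{m}{ax+by}$, that is, $\dv{m}{d}$. Hence $m \in S(d)$.

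The only real content is this second inclusion. The definition of the gcd only says that a common divisor $m$ satisfies $m \le d$, not $\dv{m}{d}$, so an order comparison must be upgraded to divisibility. That upgrade is exactly what B\'ezout's identity provides; it is the step I would flag as the key one, and it is already available. The remaining care is bookkeeping. Negative divisors $m$ are handled automatically, since divisibility is insensitive to sign. The degenerate case where one of $a,b$ is zero also needs no separate treatment, because Theorem~\ref{egcd} was stated for all $a,b$ with $|a|+|b|\neq 0$.
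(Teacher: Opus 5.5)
Your proof is correct and follows the paper's argument: the inclusion $S(a)\cap S(b)\subseteq S(d)$ comes from writing $d=ax+by$ via B\'ezout (Theorem~\ref{egcd}), and the inclusion $S(d)\subseteq S(a)\cap S(b)$ comes from transitivity of divisibility. Your explicit assumption $|a|+|b|\neq 0$, needed for $d$ to be defined, is a small addition that the paper leaves implicit.
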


\begin{proof}
Every common divisor of $a$ and $b$ is a divisor of $d=\gcd(a,b)$ since
$d=d^\prime = ax+by$ derived and used in Theorem~\ref{egcd}.
That is $S(a) \cap S(b) \subseteq S(d)$.

Moreover for $t \in S(d)$ then $\dv{t}{d}$ and since $\dv{d}{a}$ and
$\dv{d}{b}$ by transitivity we have $\dv{t}{a}$ and $\dv{t}{b}$.
The former show that $t\in S(a)$ and the latter that $t\in S(b)$.
Both of them show that $t \in S(a) \cap S(b)$.
Thus $S(d) \subseteq S(a) \cap S(b)$.
\end{proof}

\begin{cor}
Let $a,b \in \mb{Z}$, $|a|+|b| \neq 0$.
Let $m \in \mb{Z}_+^*$.
Then 
\[
 \gcd( ma,mb ) = m \gcd( a,b). 
\]
\end{cor}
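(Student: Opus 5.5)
The plan is to show the two inequalities $m\gcd(a,b) \leq \gcd(ma,mb)$ and $\gcd(ma,mb) \leq m\gcd(a,b)$ separately. Throughout we write $d=\gcd(a,b)$ and $D=\gcd(ma,mb)$. Both are well defined: $|a|+|b|\neq 0$ and $m>0$ give $|ma|+|mb|\neq 0$.

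\emph{First inequality, $md \leq D$.} Since $\dv{d}{a}$ and $\dv{d}{b}$, Theorem~\ref{podiv}(c2) gives $\dv{md}{ma}$ and $\dv{md}{mb}$. So $md$ is a common divisor of $ma$ and $mb$, and the corollary immediately preceding (common divisors of two integers are exactly the divisors of their gcd) gives $\dv{md}{D}$. Both numbers are positive, so Theorem~\ref{podiv}(f2) yields $md\leq D$.

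\emph{Second inequality, $D \leq md$.} Here I use B\'ezout, Theorem~\ref{egcd}. Choose $x,y$ with $d=ax+by$ and multiply by $m$ to get $md = (ma)x + (mb)y$. Since $\dv{D}{ma}$ and $\dv{D}{mb}$, Theorem~\ref{podiv}(d3) gives $\dv{D}{md}$. Because $md>0$, Theorem~\ref{podiv}(f2) gives $D\leq md$. Combining the two inequalities, $D=md$, as claimed.

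The only delicate point is the second inequality. A direct attempt would note that $\dv{m}{D}$ and then try to show $D/m$ divides $a$ and $b$, and $\dv{m}{D}$ itself already needs a B\'ezout-type argument. Going through the linear combination $md=(ma)x+(mb)y$ avoids that detour entirely. An alternative route is the characterization in Theorem~\ref{egcd} of $D$ as the smallest positive value of $(ma)u+(mb)v=m(au+bv)$: this quantity is $m$ times a positive combination of $a,b$, so its minimum is $m$ times the minimum $d$. I would mention that as a remark, but keep the divisibility argument as the main proof.
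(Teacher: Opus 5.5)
Your proof is correct, but the second half takes a different route from the paper's. The first half agrees: the paper gets $md \le D$ straight from the maximality clause in the definition of $\gcd$. Your detour through the preceding corollary and (f2) is therefore harmless but unnecessary.

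For $D \le md$ the routes differ:
\begin{itemize}
\item \emph{Paper:} it gets $\dv{m}{D}$ from the preceding corollary ($S(a)\cap S(b)=S(d)$, applied to the common divisor $m$ of $ma,mb$) and writes $D=mq$. It then cancels $m$ via Theorem~\ref{podiv}(g) to get $\dv{q}{a}$ and $\dv{q}{b}$, and concludes $q\le d$.
\item \emph{You:} you scale one B\'ezout identity to $md=(ma)x+(mb)y$ and read off $\dv{D}{md}$ by (d3).
\end{itemize}

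Both arguments rest on B\'ezout in the end, since that corollary was itself proved from B\'ezout. So in the context of these notes, the step $\dv{m}{D}$ that you call a detour is only a one-line citation, not an extra argument.

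What your version buys is economy. It uses a single identity and no cancellation, and it yields mutual divisibility $\dv{md}{D}$ and $\dv{D}{md}$ outright, so (f3) would also finish the proof.

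What the paper's version buys is a structural explanation: it exhibits $D/m$ explicitly as a common divisor of $a$ and $b$. This is the same pattern the notes reuse for $\lcm(am,bm)=m\lcm(a,b)$, where $Y=my$ and $y$ is shown to be a common multiple.

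Your closing remark, that $D$ is the least positive value of $m(au+bv)$ and hence $m$ times the least positive value of $au+bv$, is also valid because $m>0$.
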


\begin{proof}
Let $d=\gcd(a,b)$. Since $\dv{d}{a}$ we have $a=dq$ and thus
$ma=(md)q$. Thus $\dv{md}{ma}$. Likewise $\dv{md}{mb}$.
Thus $md \leq \gcd(mb,ma)$.

Since $\dv{m}{ma}$ and $\dv{m}{mb}$ we have $\dv{m}{\gcd(ma,mb)}$.
Thus $\gcd(ma,mb) = m q$ for some $q$.
Moreover $\dv{\gcd{ma,mb}}{ma}$ implies $\dv{mq}{ma}$ i.e $\dv{q}{a}$.
Likewise $\dv{q}{b}$. Thus $\dv{q}{d}$ i.e. $q\leq d$.
We conclude that $\gcd(ma,mb) =mq \leq md$.
From $md \leq \gcd(mb,ma)$ previously and
     $\gcd(ma,mb) =mq \leq md $ the corollary follows.
\end{proof}

\begin{exa}
Calculate the $\gcd(30,105)$.
\end{exa}
\begin{solution}
We note that $30 <105$ so swapping takes place
\[
\begin{array}{lll}
               & \gcd(30,105)=        &:\gcd(105,30 )=   \\
               & 105=30 \cdot 3 +15   &:\gcd( 30,15) =   \\
               & 30 = 15\cdot 2 + 0   &:\gcd(15,0)   =   \\
               &                      &: 15              
\end{array}
\]
Therefore $\gcd(30,105)=15$.

\end{solution}

\begin{exa}
For $a=105$ and $b=30$ determine  $x,y$ such that
\[
  ax + b y = \gcd(a,b).
\]
\end{exa}
\begin{solution}
We observe that $\gcd(a,b)=\gcd(280,151)=1$.
We use Euclid's algorithm for the gcd as follows.
\[
\begin{array}{lll}
   105         &=&    30 \cdot 3  +  15 \\
    30         &=&    15 \cdot 2  +   0 \\
\end{array}
\]
The first non-zero remainder is $15$.
Thus $\gcd(105,30 )=15$.
We then use the extended GCD to find $x,y$ as follows.
\[
\begin{array}{lll}
   15          &=&   105 -  30 \cdot 3  \\
\end{array}
\]
We work from the last equation upwards, one step and one level
at a time.
\begin{eqnarray*}
 15 &=&  105- 30\cdot 3 \\
    &=&  105 \cdot 1 + 30 \cdot (-3).
\end{eqnarray*}
Therefore $x=1$ and $y=-3$.
\end{solution}

\subsection{More gcd results}

\bigskip %COROLLARY  %%
\begin{cor}
For three $a,b,c \in \mb{Z}$, we have $\gcd(a,b,c) = \gcd( (a,b),c)
= \gcd(d,c)$, where $d=\gcd(a,b)$.
\end{cor}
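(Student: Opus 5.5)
The approach is to compare sets of common divisors, as in Fact~\ref{sgcdf} and Theorem~\ref{t17}. I take $\gcd(a,b,c)$ to mean the largest integer dividing all three of $a,b,c$; it is well defined when not all of $a,b,c$ are zero. The notation $\gcd((a,b),c)$ is read as $\gcd(d,c)$ with $d=\gcd(a,b)$. To make $d$ defined, I assume $|a|+|b|\neq 0$. The case $a=b=0$ is handled separately: there $\gcd(a,b,c)=|c|$ by Fact~\ref{sgcdf}(v), and the convention $\gcd(0,c)=|c|$ makes the right-hand side agree.

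The key step is the set identity
\[
S(a)\cap S(b)\cap S(c) = S(d)\cap S(c).
\]
It follows at once from the corollary stating that $S(a)\cap S(b)=S(d)$ when $d=\gcd(a,b)$; intersecting both sides with $S(c)$ gives the identity. That corollary in turn rests on B\'ezout (Theorem~\ref{egcd}): any common divisor $m$ of $a,b$ divides $ax+by=d$. Conversely, any divisor of $d$ divides $a$ and $b$ by transitivity, Theorem~\ref{podiv}(b2).

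Next, both sides of the identity are finite sets of integers containing $1$. This holds because $d\neq 0$, so every element is bounded in absolute value by $d$ (Theorem~\ref{podiv}(f1)). Hence both sides have the same maximum. The maximum of the left side is $\gcd(a,b,c)$ by definition, and the maximum of the right side is $\gcd(d,c)$, which proves the equality.

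I expect no real obstacle in this argument. The only delicate point is that $\gcd(a,b,c)$ was never formally defined for three arguments, so I would state that definition explicitly at the start. The degenerate zero cases also need care, since $d$ is undefined when $a=b=0$. The main mathematical content is already contained in the corollary $S(a)\cap S(b)=S(d)$.
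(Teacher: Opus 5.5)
Your proof is correct and is the same argument as the paper's. The paper's one-line proof also rewrites the set of common divisors as $(S(a)\cap S(b))\cap S(c)$ and takes its maximum, tacitly relying on the preceding corollary $S(a)\cap S(b)=S(d)$, which you invoke explicitly. Your finiteness remark via $d\neq 0$ and your treatment of the degenerate case $a=b=0$ only spell out what the paper leaves unsaid; that case needs $c\neq 0$ and a convention $\gcd(0,0)=0$, which the paper does not adopt.
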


\begin{proof}
$\gcd(a,b,c)$ belongs to $S(a) \cap S(b) \cap S(c) = 
\left( S(a) \cap S(b) \right) \cap S(c)$. The result
follows.
\end{proof}

\bigskip %COROLLARY  %%
\begin{cor}
For $p$ a prime, and $a \in \mb{Z}^*$, $\ndv{p}{a}$
if and only if $\gcd(a,p)=1$.
\end{cor}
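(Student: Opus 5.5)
The plan is to prove the two directions separately. Both rest on the fact that the gcd is a positive common divisor, and that a prime $p$ has only the positive divisors $1$ and $p$ (Definition~\ref{prime}).

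For the direction $\ndv{p}{a} \Rightarrow \gcd(a,p)=1$, let $d=\gcd(a,p)$. Since $a\neq 0$, the gcd is defined, and by Fact~\ref{sgcdf}(i) we have $d\geq 1$. By definition $\dv{d}{p}$, and $d$ is positive, so Definition~\ref{prime} forces $d=1$ or $d=p$. The case $d=p$ is impossible, because $d$ also divides $a$, which would give $\dv{p}{a}$ and contradict the hypothesis. Hence $d=1$.

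For the converse $\gcd(a,p)=1 \Rightarrow \ndv{p}{a}$, I will argue by contraposition. Suppose $\dv{p}{a}$. Since also $\dv{p}{p}$ by Theorem~\ref{podiv}(b1), $p$ is a common divisor of $a$ and $p$. Part (ii) of the gcd definition then gives $p\leq \gcd(a,p)=1$. This contradicts $p>1$, which holds because $p\neq 1$ is a positive prime. Alternatively, one can invoke Fact~\ref{sgcdf}(vi) with the roles arranged as $\gcd(p,a)=|p|=p$ when $\dv{p}{a}$, using symmetry from Fact~\ref{sgcdf}(iii).

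I do not expect a real obstacle. The only point needing care is the step in the first direction that restricts $d$ to the positive divisors of $p$, where we must use that the gcd is positive so that $d=-1$ and $d=-p$ are excluded. That is exactly what Fact~\ref{sgcdf}(i) supplies.
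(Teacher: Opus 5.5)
Your proof is correct and follows essentially the same route as the paper: for $\ndv{p}{a}\Rightarrow\gcd(a,p)=1$ the paper lists $S(p)=\{\pm 1,\pm p\}$ and observes that only $\pm 1$ survive as common divisors, which is the same observation as your argument that the positive gcd must be $1$ or $p$. For the converse the paper uses the fact that a common divisor divides the gcd to get $\dv{p}{1}$, whereas you use the maximality clause of the gcd definition to get $p\le 1$. That is an equally valid variant, and slightly more self-contained because it needs only the definition of the gcd rather than the B\'ezout-based corollary.
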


\begin{proof}
If $p$ is a prime $S(p) = \{ -1, +1, -p , +p \}$,
then $p$ does not divide $a$ means that neither $p$ nor $-p$ are
divisors of $a$. The only possible divisors are +1 and -1.
Thus $S(a) \cap S(p) = \{ +1 , -1 \}$. Thus the gcd is 1, and
thus $\gcd(a,p)=1$ as needed.

If $\gcd(a,p)=1$, then $p$ cannot divide $a$. This is because if
$\dv{p}{a}$ since obviously $\dv{p}{p}$ we would have from a prior
property that $\dv{p}{\gcd(a,p)}$ and (in fact $\gcd(a,p)=|p|$).
For this to happen $p$ should be $-1$ or $+1$. But $p$ is prime
and this can't happen.
\end{proof}

\bigskip %COROLLARY  %%
\begin{cor}
For $a,b,m \in \mb{Z}^*$ and $\gcd(a,b)=1$ and $\dv{a}{bm}$ then
$\dv{a}{m}$.
\end{cor}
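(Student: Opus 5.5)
This is Corollary~\ref{egcd3} again, stated for non-zero $a,b,m$. The plan is to invoke the B\'ezout identity directly.

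Since $a,b \in \mb{Z}^*$, we have $|a|+|b| \neq 0$, so Theorem~\ref{egcd} applies. Its special case, Corollary~\ref{egcd2}, together with the hypothesis $\gcd(a,b)=1$, gives integers $x,y$ with
\[
ax + by = 1 .
\]
Multiplying both sides by $m$ gives
\[
m = a(mx) + (bm)y .
\]

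Next, observe that $\dv{a}{a(mx)}$ trivially, by Theorem~\ref{podiv}(c1). By hypothesis $\dv{a}{bm}$, so (c1) again gives $\dv{a}{(bm)y}$. Theorem~\ref{podiv}(d1) (or (d3) with $k=mx$, $m=y$) then yields $\dv{a}{a(mx)+(bm)y}$, that is, $\dv{a}{m}$.

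There is no real obstacle here. The only point needing care is confirming the hypothesis $|a|+|b|\neq 0$ of Theorem~\ref{egcd}, which is immediate since $a \neq 0$. Alternatively, one can simply cite Corollary~\ref{egcd3} with $c=m$.
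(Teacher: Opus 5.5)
Your proof is correct and is essentially the paper's argument: B\'ezout gives $ax+by=1$, you multiply by $m$, and you conclude $\dv{a}{axm+bmy}$, i.e.\ $\dv{a}{m}$. At the step for the second term you correctly use the hypothesis $\dv{a}{bm}$; the paper's text has a slip there and writes $\dv{a}{m}$ instead.
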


\begin{proof}
Since $\gcd(a,b)=1$ we have $1=ax+by$ for some $x,y$.
Multiplying by $m$ we get $m=axm+bmy$. Obviously
$\dv{a}{axm}$. Since $\dv{a}{m}$ we have $\dv{a}{bmy}$ as well.
Thus $\dv{a}{axm+bmy}$ i.e. $\dv{a}{m}$.
\end{proof}

\bigskip %COROLLARY  %%
\begin{cor}
For $a,b \in \mb{Z}^*$ and prime $p$ is such that $\dv{p}{ab}$,
then $p$ divides either $a$ or $b$.
\end{cor}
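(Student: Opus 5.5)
The plan is to reduce the statement to the previous corollary (if $\gcd(a,b)=1$ and $\dv{a}{bm}$ then $\dv{a}{m}$), or equivalently to Corollary~\ref{egcd3}, by splitting on whether $p$ divides $a$. We assume $\dv{p}{ab}$ with $p$ prime and $a,b \in \mb{Z}^*$. If $\dv{p}{a}$ there is nothing to prove, so the only real case is $\ndv{p}{a}$, where we must show $\dv{p}{b}$.

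In that case, the corollary just proved for primes ($\ndv{p}{a}$ if and only if $\gcd(a,p)=1$) gives $\gcd(p,a)=1$; we use the symmetry $\gcd(a,p)=\gcd(p,a)$ from Fact~\ref{sgcdf}(iii). We then apply Corollary~\ref{egcd3} with the roles $a \mapsto p$, $b \mapsto a$, $c \mapsto b$: from $\dv{p}{ab}$ and $\gcd(p,a)=1$ we conclude $\dv{p}{b}$.

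If one prefers to avoid citing the corollary, the same step can be unfolded through the B\'ezout identity (Theorem~\ref{egcd}, Corollary~\ref{egcd2}). We write $1 = px + ay$ for some integers $x,y$ and multiply by $b$ to get $b = pbx + aby$. Then $\dv{p}{pbx}$ trivially and $\dv{p}{aby}$ by Theorem~\ref{podiv}(c1) applied to $\dv{p}{ab}$. Finally $\dv{p}{b}$ follows from Theorem~\ref{podiv}(d1).

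There is no substantial obstacle. The only step needing care is the passage from $\ndv{p}{a}$ to $\gcd(a,p)=1$, which rests on $p$ having no positive divisors other than $1$ and $p$ (Definition~\ref{prime}). That passage is already handled by the preceding corollary, and we need only keep the hypothesis $a \neq 0$ so that the gcd is defined.
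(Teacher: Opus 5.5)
Your proposal is correct and matches the paper's proof: assume $p \nmid a$, use the preceding corollary on primes to get $\gcd(p,a)=1$, then apply the coprime-divisibility corollary to $p \mid ab$ to conclude $p \mid b$. Your B\'ezout unfolding ($b = pbx + aby$) only makes that last step explicit.
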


\begin{proof}
Say $p$ does not divide $a$. By a prior theorem since $p$ is
prime this means $\gcd(p,a)= \gcd(a,p)=1$. Since $\dv{p}{ab}$
by the previous theorem we have $\dv{p}{b}$.
\end{proof}

\bigskip %COROLLARY  %%
\begin{cor}
\label{thm25}
If $\gcd(a,b)=1$ and $\dv{a}{c}$ and $\dv{b}{c}$ then $\dv{ab}{c}$.
\end{cor}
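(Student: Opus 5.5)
The plan is to combine the B\'ezout identity (Theorem~\ref{egcd}, in the form of Corollary~\ref{egcd2}) with the divisibility hypotheses. An alternative route also works: write $c = am$ and apply Corollary~\ref{egcd3} to $\dv{b}{am}$.

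The direct route runs as follows. Since $\gcd(a,b)=1$, Corollary~\ref{egcd2} gives integers $x,y$ with
\[
ax+by=1 .
\]
Multiplying through by $c$ yields $c = acx + bcy$. From $\dv{b}{c}$ write $c = bn$ for some $n\in\mb{Z}$, and from $\dv{a}{c}$ write $c = am$ for some $m\in\mb{Z}$. Substitute the second expression into the term $acx$ and the first into the term $bcy$:
\[
c = a(bn)x + b(am)y = ab\,(nx+my).
\]
Since $nx+my\in\mb{Z}$, this exhibits $c$ as a multiple of $ab$, so $\dv{ab}{c}$. One could equally argue that $\dv{ab}{acx}$ and $\dv{ab}{bcy}$ and then invoke Theorem~\ref{podiv}(d1).

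There is no real obstacle. The only point needing care is the well-definedness of $\dv{ab}{c}$: the hypotheses $\dv{a}{c}$ and $\dv{b}{c}$ already force $a,b\neq 0$, hence $ab\neq 0$, and $|a|+|b|\neq 0$ holds so that Corollary~\ref{egcd2} applies. The key step is simply to cross-substitute, using the factor from $\dv{b}{c}$ in the $a$-term and the factor from $\dv{a}{c}$ in the $b$-term, so that each term picks up the product $ab$.
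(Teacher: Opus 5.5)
Your proposal is correct and follows the paper's proof: both take the B\'ezout identity $ax+by=1$, multiply through by $c$, and substitute $c=bn$ into $acx$ and $c=am$ into $bcy$ so that each term is a multiple of $ab$.
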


\begin{proof}
If $\gcd(a,b)=1$ we have $1=ax+by$. Then $c= acx+bcy$.
We have $\dv{a}{c}$ i.e. $c=aq$. Then $cb=abq$ and thus $bcy=ab(qy)$.
The latter implies $\dv{ab}{bcy}$.
We also have $\dv{b}{c}$ i.e. $c=br$. Then $ac=abr$. Thus $acx=ab(rx)$.
The latter implies $\dv{ab}{acx}$.
Thus $\dv{ab}{bcy}$ and $\dv{ab}{acx}$ imply $\dv{ab}{acx+bcy}$.
Therefore $\dv{ab}{c}$.
\end{proof}

\subsection{Relatively prime integers}

\begin{dfn}
For two integers $a,b \in \mb{Z}$ if $\gcd(a,b) =1$ the 
two integers $a$ and $b$ are called {\bf relatively prime}.
\end{dfn}

\subsection{Extended GCD calculation}

\begin{lem}
Let $a \in \mb{N}^{*}$, $b \in \mb{Z}$ such that
$a > b$, $b \neq 0$.
$ $ \\ $ $
Then algorithm ExtendedGCD(a,b)  returns
(x,y) such that
\[
  ax + b y = \gcd(a,b) = d .
\]
Furthermore show that
if $d=1$ and $b>0$ then
$ y \bmod a$ and
$ x \bmod b$ are the inverses
of
$b \bmod a$ and
$a \bmod b$ respectively.
\end{lem}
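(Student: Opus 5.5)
The algorithm ExtendedGCD is not written out before the statement, so I take it to be the standard recursive one. ExtendedGCD$(a,0)$ returns $(1,0)$. Otherwise it computes $a=bq+r$ with $0\le r<|b|$ by Theorem~\ref{divis}, recursively obtains $(x',y')=$ ExtendedGCD$(b,r)$, and returns $(x,y)=(y',\,x'-qy')$.

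The plan is to prove correctness by strong induction on the second argument $|b|$. This is the same quantity that strictly decreases along Euclid's remainder sequence in Theorem~\ref{gcd}, so the recursion terminates after finitely many calls.

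\emph{Base case.} The second argument is $0$ and the first argument $a'$ is positive, so $a'\cdot 1+0\cdot 0=a'=\gcd(a',0)$ by Fact~\ref{sgcdf}(v).

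\emph{Inductive step.} By the induction hypothesis applied to $(b,r)$, since $r<|b|$, we have $bx'+ry'=\gcd(b,r)$. By Theorem~\ref{t17}, $\gcd(b,r)=\gcd(a,b)=d$. Substituting $r=a-bq$ gives
\[
 d=bx'+(a-bq)y'=a y'+b(x'-qy'),
\]
so the returned pair $(x,y)=(y',x'-qy')$ satisfies $ax+by=d$.

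A few hypotheses need checking along the way. In the recursive call the first argument is $b$, which has replaced $a$, and we need $b>r$; this holds because $r<|b|$. The first argument remains nonzero throughout, and the sign of the original $b$ is harmless because Theorem~\ref{divis} allows $b<0$. Alternatively, one can reduce to $|b|$ by Fact~\ref{sgcdf}(ii) and negate $y$ at the end.

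For the inverse claim, assume $d=1$ and $b>0$, so $ax+by=1$.
\begin{itemize}
\item Reducing modulo $a$: $ax$ is a multiple of $a$, so $by\equiv 1 \pmod a$. Hence $(y\bmod a)(b\bmod a)\equiv 1\pmod a$, i.e.\ $y\bmod a$ is the inverse of $b\bmod a$.
\item Reducing modulo $b$: symmetrically, $ax\equiv 1\pmod b$, so $x\bmod b$ is the inverse of $a\bmod b$.
\end{itemize}
Here the hypothesis $b>0$ makes ``mod $b$'' a well-defined positive modulus. The only facts used are that a product's residue depends only on the residues of its factors, which follows from the remainder theorem above (equal remainders iff the difference is a multiple), and that $\gcd=1$ excludes $a=1$ or $b=1$ issues only trivially. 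If $b=1$, everything is $0$ modulo $1$, which I will note as a degenerate case.

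The main obstacle is not the algebra but pinning down the algorithm precisely, since it is not given before the statement. In particular, the base case and the sign conventions must be matched so that the induction hypothesis applies to the pair $(b,r)$ under the lemma's hypotheses $a>b$, $b\neq 0$. I would handle this by proving a slightly stronger statement, valid for any pair with first argument nonzero, and then specializing to the lemma.
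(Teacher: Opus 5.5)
\textbf{The algorithm you proved is not the paper's.} The paper's ExtendedGCD is not the recursive routine you reconstructed. It is the iterative Algorithm~\ref{egcdAlg}: it starts from $(x,y,d,k,l,m)=(1,0,a,0,1,b)$ and, while $m>0$, sets $t=\floor{d/m}$ and $(x,y,d,k,l,m)\leftarrow(k,l,m,x-tk,y-tl,d-tm)$. The paper's proof then says only ``Details Omitted''.

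The natural proof for this program is a loop invariant, not an induction over recursive calls:
\begin{itemize}
\item $ax+by=d$ and $ak+bl=m$ hold initially, and are preserved because $a(x-tk)+b(y-tl)=d-tm$;
\item $\gcd(d,m)=\gcd(a,b)$ is preserved by Theorem~\ref{t17};
\item when $b>0$, the new $m$ is the remainder of $d$ by the old $m$, so $m$ strictly decreases through nonnegative integers. The loop exits with $m=0$, and then $d=\gcd(d,0)=\gcd(a,b)$.
\end{itemize}
Your induction is the back-substitution form of the same Euclidean computation; on the paper's example $(280,105)$ both versions return $(x,y)=(-1,3)$. It is a correct proof for your algorithm when $b>0$, but it certifies a different program. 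The invariant argument applies to the stated algorithm directly, carrying two B\'ezout representations forward instead of unwinding them from the bottom of the remainder sequence.

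\textbf{Your handling of $b<0$ is wrong.} The claim that the sign of $b$ is harmless fails when $b\mid a$. For $a=6$, $b=-3$, your recursion reaches ExtendedGCD$(-3,0)$, whose first argument is negative. The base case then yields $-3$ rather than $\gcd(-3,0)=3$, and the final output satisfies $6\cdot 0+(-3)\cdot 1=-3$. Your check ``$b>r$ because $r<|b|$'' is also false for $b<0$, and the stronger statement ``valid for any pair with first argument nonzero'' already fails at the base case.

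For the paper's algorithm the situation is worse. With $b<0$ the guard $m>0$ fails immediately and $(1,0,a)$ is returned, so $a=3$, $b=-2$ gives $d=3\neq\gcd(3,-2)=1$. So the lemma as stated only holds for $b>0$, which is the hypothesis you already use for the inverse part. Replacing $b$ by $|b|$ and negating $y$ modifies the algorithm rather than proving the claim for it.

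Your derivation of the inverse statement from $ax+by=1$, by reducing modulo $a$ and modulo $b$, is fine.
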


\begin{proof}
$ $ \\ $ $ 
\begin{algorithm}[H]
\KwIn{$a,b$, where $a \in \mb{N}^{*}$, $b \in \mb{Z}$, $a>b$, $b\neq 0$}
\KwOut{$(x,y,d)$}
 $(x,y,d,k,l,m) = (1,0,a,0,1,b)$ ;

 \While{$m >  0$}{
     $t = d / m $;

     $(x,y,d,k,l,m) =
      (k,l,m,x-t \cdot k, y-t \cdot l, d- t \cdot m) $\;
 }
 \Return{$(x,y,d)$};
\caption{Extendedgcd(a,b)}
\label{egcdAlg}
\end{algorithm}
Details Omitted.
\end{proof}

\begin{exa}
Trace Extendedgcd(a,b) for $a=280$ and $b=105$.
\end{exa}
\begin{solution}
\[
\begin{array}{lllll}
   \gcd(280,105)  &  &                         \\
(x,y,d,k,l,m)     &=& (1,0,280,0,1,105)  &,&
                   t = 280/105 = 2. \\
(x,y,d,k,l,m)     &=& (0,1,105, 1-2*0, 0-2*1 , 280-2*105) &,& \\
                  &=& (0,1,105,  1   ,  -2   , 70       ) &.& \\
(x,y,d,k,l,m)     &=& (0,1,105,  1   ,  -2   , 70       ) &,&
                   t = 105/70 =1. \\
(x,y,d,k,l,m)     &=& (1,-2,70, 0-1*1, 1-1*(-2) , 105-1*70 ) &,& \\
                  &=& (1,-2,70, -1   ,  3    , 35       ) &.&  \\
(x,y,d,k,l,m)     &=&  (1,-2,70, -1   ,  3    , 35       ) &,&
                   t = 70 / 35=2. \\
(x,y,d,k,l,m)     &=& (-1,3,35, 1-2*(-1),-2-2*( 3) , 70 -2*35 ) &,& \\
                  &=& (-1,3,35,  3   ,  -8   ,  0       ) &.& \\
(x,y,d)           &=& (-1,3,35)                           &.& \\
\end{array}
\]
\end{solution}

\newpage

\section{Least common multiples}

\noindent
\begin{dfn}[Least common multiple :lcm]
For $a,b \in \mb{Z}$ we denote by $\lcm(a,b)$ the
{\bf least common multiple } of $a$ and $b$,
the least positive integer divisible by both $a,b$.
\end{dfn}

For $a,b \in \mb{Z}$ only
the positive multiples are  then considered.

\begin{prp}
Let $a,b \in \mb{Z}$ are such that $|a|+|b| \neq 0$,
and let $n= \lcm(a,b)$.
If $m$ is a common multiple of $a,b$ show
that $\dv{n}{m}$.
\end{prp}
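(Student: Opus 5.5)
The plan is to use the division theorem (Theorem~\ref{divis}) together with the minimality of $n=\lcm(a,b)$. This mirrors the argument used for the Extended-GCD Theorem~\ref{egcd}, where a remainder was shown to be zero because it would otherwise be a smaller element of a set whose minimum had already been fixed.

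First, we handle degenerate cases. If one of $a,b$ is zero, say $a=0$, then the only multiple of $0$ is $0$, so there is no positive common multiple. With the convention implicit in the definition, $\lcm$ is then not really defined as a least positive integer. For $m=0$ the claim $\dv{n}{0}$ holds trivially by Theorem~\ref{podiv}(a). So I will assume $a,b\neq 0$. In that case $|ab|$ is a positive common multiple, so the set of positive common multiples is nonempty and $n$ exists by the well-ordered set principle. Since divisibility is insensitive to sign (Theorem~\ref{podiv}, and the earlier example that $\dv{a}{b}$ implies $\dv{a}{-b}$), we may also take $m\geq 0$ if convenient, although this is not needed.

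Next comes the main step. Since $n>0$, divide $m$ by $n$ using Theorem~\ref{divis}, obtaining $m=nq+r$ with $0\leq r<n$. Now $\dv{a}{m}$ and $\dv{a}{n}$, so by Theorem~\ref{podiv}(c1) and (d2) we get $\dv{a}{m-nq}$, that is, $\dv{a}{r}$. The same argument with $b$ gives $\dv{b}{r}$. Thus $r$ is a common multiple of $a$ and $b$ with $0\leq r<n$. If $r>0$, then $r$ would be a positive common multiple smaller than the least one, $n$, which is a contradiction. Hence $r=0$, so $m=nq$ and $\dv{n}{m}$.

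The only delicate point is the setup rather than the computation: we must make sure $n$ is well defined and positive, so that the division is legitimate and the minimality argument applies. This requires excluding the case where $a$ or $b$ is zero, which the hypothesis $|a|+|b|\neq 0$ alone does not rule out. I would state this explicitly at the start, and then the remainder argument goes through routinely.
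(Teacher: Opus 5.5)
Your proof is correct and is essentially the paper's argument: divide $m$ by $n=\lcm(a,b)$, note that the remainder $r$ is a common multiple of $a$ and $b$ with $0\leq r<n$, and conclude $r=0$ by the minimality of $n$. Your handling of the case where $a$ or $b$ is zero, and your remark that $|ab|$ guarantees $n$ exists when $a,b\neq 0$, are, if anything, a bit more careful than the paper's one-line remark that the statement is trivial in that case.
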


\noindent
This says that any common multiple of $a,b$
is a multiple of the least common multiple (of $a,b$).

\noindent
If $T(a)$ is the set of (positive) multiples of $a$,
then $T(a) = T(-a) = T(|a|)$.
Moreover $T(a) \cap T(b) = T(|a|) \cap T(|b|)$.
Therefore $\lcm(a,b)=\lcm(|a|,|b|)$.

\begin{proof}
If one of $a,b$ is equal to zero, then all multiples
of zero is zero, common muliples can only be zero
and
the statement is true trivially.
$ $ \\ $ $
Otherwise divide $m$ by $n$.
\[
    m = n q + r,  0\leq r < n.
\]
We are given that $m$ is a common multiple of $a,b$
that is $\dv{a}{m}$ and $\dv{b}{m}$.
Moreover $n$ is the lcm of $a,b$
that is $\dv{a}{n}$ and $\dv{b}{n}$.
Therefore $\dv{a}{r}$ and $\dv{b}{r}$.
Thus $r$ is a common multiple of $a,b$.
Moreover $r< n$ and the common multiple $r$ is positive
and LESS than the least common multiple $a,b$,
a contradiction unless $r=0$.
Then $m=nq$ and thus $\dv{n}{m}$.
\end{proof}

Moreover $\lcm(ma,mb) = m \lcm(a,b)$.

\begin{prp}
For $a, b \in \mb{Z}$ we have
$\lcm(am,bm) = m \lcm(a,b)$.
\end{prp}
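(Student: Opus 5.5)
The plan is to mirror the proof already given for $\gcd(ma,mb) = m\gcd(a,b)$, but with the divisibilities reversed, and to use the preceding proposition that every common multiple is divisible by the $\lcm$.

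First I dispose of the degenerate cases. If $m=0$, or if one of $a,b$ is zero, both sides involve only the zero multiple and the statement is trivial, as in the preceding proposition. Since $T(x)=T(|x|)$ gives $\lcm(x,y)=\lcm(|x|,|y|)$, I may assume $a,b,m>0$. For negative $m$ the identity is read as $|m|\lcm(a,b)$. Set $n=\lcm(a,b)$ and $N=\lcm(am,bm)$.

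I then show $\dv{N}{mn}$. From $\dv{a}{n}$ we have $n=aq$, so $mn=(am)q$ and $\dv{am}{mn}$ by Theorem~\ref{podiv}(c2). Likewise $\dv{bm}{mn}$. Hence $mn$ is a common multiple of $am$ and $bm$, and the preceding proposition gives $\dv{N}{mn}$, so $N\leq mn$ by (f2).

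Next I show $\dv{mn}{N}$. Since $\dv{m}{am}$ and $\dv{am}{N}$, transitivity gives $\dv{m}{N}$, so $N=mk$ for some positive integer $k$. From $\dv{am}{mk}$ and $m\neq 0$, Theorem~\ref{podiv}(g) yields $\dv{a}{k}$; similarly $\dv{b}{k}$. Thus $k$ is a positive common multiple of $a,b$. The preceding proposition then gives $\dv{n}{k}$, hence $n\leq k$ and $mn\leq mk=N$.

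Combining $N\leq mn$ and $mn\leq N$ gives $N=mn$. The only delicate step is extracting the factor $m$ from $N$, namely proving $\dv{m}{N}$ before cancelling $m$ via (g). Everything else follows directly from the earlier results.
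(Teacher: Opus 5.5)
Your proof is correct and follows essentially the same route as the paper. Both arguments show $\lcm(am,bm)\le m\lcm(a,b)$ because $m\lcm(a,b)$ is a common multiple of $am$ and $bm$, and both obtain the reverse inequality by writing $\lcm(am,bm)=mk$ and cancelling $m$ to see that $k$ is a common multiple of $a$ and $b$. Your version is slightly more careful than the paper's: you justify $m \mid \lcm(am,bm)$ by transitivity, where the paper simply asserts it, and you treat the zero case and the negative-$m$ case, where the identity as literally stated needs $|m|$.
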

\begin{proof}
$ $ \\ $ $
(a)
Let $X=\lcm(a,b)$ and $Y=\lcm(am,bm)$. We are to show
that $Y=Xm$. It suffices to show $Y \geq Xm$ and $Y \leq Xm$.
$ $ \\ $ $
Since $X=\lcm(a,b)$ then $a/X$ and $b/X$.
Therefore $X=a d_1 $ and $X=b d_2$ for some $d_1 , d_2$.
Consequently $Xm =mad_1$ and $Xm=mbd_2$ respectively.
Furthermore $\dv{ma}{Xm}$ and $\dv{mb}{Xm}$. Thefore,
since $Y=\lcm(am,bm)$, we have $Xm \geq Y$.
$ $ \\ $ $
Consider $Y=\lcm(am,bm)$. Them $\dv{m}{Y}$, and
therefore there exists a  $y$ such that
$Y=m  y$.
\[
Y=\lcm(am,bm) \Rightarrow \dv{m}{Y}
              \Rightarrow \exists y : Y=my.
\]
Combining the latter with $Y=my$ we obtain
\[
Y=my , Y=amd_3 ,  Y=bmd_4
\Rightarrow  my=amd_3 , \quad my = bmd_4
\Rightarrow   y=a d_3 , \quad  y = b d_4
\Rightarrow   \dv{a}{y}, \quad \dv{b}{y},
\]
\[
\Rightarrow   \dv{a}{y}, \quad \dv{b}{y},
\Rightarrow   y \geq X
\Rightarrow   my \geq mX
\Rightarrow   Y  \geq Xm.
\]
From $Xm \geq Y$ and $Y\geq mX$ we conclude
$Y=Xm$ as needed.
\end{proof}

\bigskip %THEOREM 26 %%
\begin{thm}
\label{lcm1}
Let $a,b \in \mb{N}$. If $\gcd(a,b)=1$ then $\lcm(a,b)=ab$.
\end{thm}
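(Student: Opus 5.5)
The plan is to show two inequalities, $\lcm(a,b) \leq ab$ and $ab \leq \lcm(a,b)$, in the same style as the preceding proposition on $\lcm(am,bm)$. First the degenerate case is set aside. If one of $a,b$ is $0$, then $\gcd(a,b)=1$ forces the other to be $1$ by Fact~\ref{sgcdf}(v). The identity then reads $\lcm(a,b)=0=ab$, under the convention implicit in the earlier proposition that the only common multiple involving $0$ is $0$. So from now on we assume $a,b \geq 1$, hence $ab>0$.

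For the first inequality, note that $ab = a\cdot b = b \cdot a$. So $\dv{a}{ab}$ and $\dv{b}{ab}$, and $ab$ is a positive common multiple of $a$ and $b$. Since $\lcm(a,b)$ is the least positive common multiple, we get $\lcm(a,b) \leq ab$.

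For the reverse inequality, let $n=\lcm(a,b)$. Then $\dv{a}{n}$ and $\dv{b}{n}$ with $\gcd(a,b)=1$. Corollary~\ref{thm25} applies directly and gives $\dv{ab}{n}$. Since $n>0$ and $ab>0$, Theorem~\ref{podiv}(f2) yields $ab \leq n$. Combining the two inequalities gives $\lcm(a,b)=ab$.

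An alternative route for the second step, if one preferred to avoid Corollary~\ref{thm25}, is as follows. Write $n = a k$. From $\dv{b}{ak}$ and $\gcd(a,b)=1$, Corollary~\ref{egcd3} gives $\dv{b}{k}$, so $k \geq b$ and $n \geq ab$. I expect no real obstacle here. The only step needing care is the $\gcd$-based divisibility, which is already packaged in Corollary~\ref{thm25} (itself resting on B\'ezout, Theorem~\ref{egcd}), together with the bookkeeping of the zero case, since $\mb{N}$ here includes $0$.
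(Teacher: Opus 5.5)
Your proposal is correct and follows essentially the same route as the paper: $\lcm(a,b)\le ab$ because $ab$ is a common multiple, and $ab \mid \lcm(a,b)$ from B\'ezout, so $ab\le\lcm(a,b)$. The only difference is that you cite Corollary~\ref{thm25} where the paper redoes that B\'ezout computation inline. Your extra treatment of the zero case, which needs the stated convention $\lcm(0,1)=0$, is a harmless addition the paper omits.
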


\begin{proof}
Let $m=\lcm(a,b)$. Since $\gcd(a,b)=1$ we have $1=ax+by$.
Then $m=axm+bym$. Since $\dv{a}{m}$ and $\dv{b}{m}$.
Then $m=ap$ and $m=bq$. Moreover substituting to the previous
equation we have $m=ax(bq)+by(ap)=ab(xq)+ab(yp)= ab (xq+yp)$.
Thus $\dv{ab}{m}$. This implie $ab \leq m$.
Moreover $m$ is the least common multiple of $a,b$.
One such multiple is $ab$. Thus $ab \geq m$.
The $ab\leq m$ and the just shown $ab \geq m$ implies
$m=ab$.
\end{proof}

\begin{exa}
Proof Theorem~\ref{lcm1} using other arguments.
\end{exa}
\begin{solution}
 Let $d=\gcd(a,b)$. Let $a_1 = a/d$ and $b_1 = b/d$.
Then $1 = d/d= \gcd(a/d,b/d) = \gcd(a_1, b_1 )$.
From the previous problem, part (b), we have that
for $\gcd(a/d,b/d)=1$,
\[
\frac{1}{d} \lcm(a,b) = \lcm(a_1 , b_1 )= \lcm( a/d , b/d)=
  ab /d^2 = \frac{a \cdot b}{\gcd(a,b)}
\Leftrightarrow   \lcm(a,b) \cdot \gcd(a,b) = a \cdot b.
\]
\end{solution}

\bigskip %THEOREM 27 %%
\begin{thm}
Let $a,b \in \mb{N}$. Then $\gcd(a,b) \cdot \lcm(a,b)=ab$.
\end{thm}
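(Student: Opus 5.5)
The plan is to reduce to the coprime case already settled in Theorem~\ref{lcm1}, using the scaling property $\lcm(am,bm)=m\,\lcm(a,b)$ proved just above together with Theorem~\ref{gcddi}.

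First I dispose of the degenerate cases. If one of $a,b$ is zero, both sides are zero under the convention that the only common multiple is $0$ (as in the proof of the first proposition on lcm). So assume $a,b>0$.

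Next, set $d=\gcd(a,b)$ and write $a=d a_1$, $b=d b_1$. By Theorem~\ref{gcddi}, $\gcd(a_1,b_1)=1$. Theorem~\ref{lcm1} then gives $\lcm(a_1,b_1)=a_1b_1$. Applying the proposition $\lcm(am,bm)=m\,\lcm(a,b)$ with $m=d$ yields
\[
\lcm(a,b)=\lcm(da_1,db_1)=d\,\lcm(a_1,b_1)=d\,a_1b_1 .
\]
Multiplying by $d$ gives $\gcd(a,b)\cdot\lcm(a,b)=d^2a_1b_1=(da_1)(db_1)=ab$, which is the claim.

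The only delicate point is that Theorem~\ref{gcddi}'s written proof leans on the Euclid chain. If I prefer a self-contained justification of $\gcd(a_1,b_1)=1$, I would use B\'ezout (Theorem~\ref{egcd}): from $d=ax+by$, dividing by $d$ gives $1=a_1x+b_1y$, so any common divisor of $a_1,b_1$ divides $1$. This step, verifying coprimality of the reduced pair, is the main thing to get right; the rest is direct substitution into earlier results.
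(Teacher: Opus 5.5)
Your proof is correct and is essentially the paper's argument: set $d=\gcd(a,b)$, pass to the coprime pair $a_1=a/d$, $b_1=b/d$, use Theorem~\ref{lcm1} to get $\lcm(a_1,b_1)=ab/d^2$, and rescale by $d$. Your version is in fact more complete, because the paper's proof stops at $\lcm(a_1,b_1)=ab/d^2$ and leaves the rescaling $\lcm(a,b)=d\,\lcm(a_1,b_1)$ implicit, whereas you supply it from the proposition $\lcm(am,bm)=m\,\lcm(a,b)$. The only loose end is the case $a=b=0$, where the left-hand side is not zero but undefined, since $\gcd(0,0)$ is undefined.
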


\begin{proof}
Let $d=\gcd(a,b)$. Let $a_1 = a/d$ and $b_1 = b/d$.
Then $\gcd(a/d,b/d) = \gcd (a_1, b_1 ) =1$.
From the previous theorem we have $\lcm( a_1 , b_1 )=
\lcm( a/d , b/d)=  ab /d^2$.
\end{proof}

%38
If $T(a)$ is the set of (positive) multiples of $a$, then
$T(a) = T(-a) = T(|a|)$. Moreover $T(a) \cap T(b) = T(|a|) \cap T(|b|)$.
Therefore $\lcm(a,b)=\lcm(|a|,|b|)$.

\begin{cor}
For $a,b \in \mb{Z}^*$ if $\dv{a}{b}$, then $\lcm(a,b)=b$.
\end{cor}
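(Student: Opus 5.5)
The plan is to verify the definition of $\lcm$ directly. That is, we show that $b$ is a positive common multiple of $a$ and $b$, and that no positive common multiple is smaller than it. First we reduce to positive arguments. Just before the statement the paper observes that $T(a)=T(|a|)$ and hence $\lcm(a,b)=\lcm(|a|,|b|)$. Moreover $\dv{a}{b}$ implies $\dv{|a|}{|b|}$, by the sign manipulations in Theorem~\ref{podiv} and the examples preceding it. So we may assume $a,b>0$.

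Strictly, the statement only holds for $b>0$; for negative $b$ the correct value is $\lcm(a,b)=|b|$. I will note this explicitly and prove the claim in the form $\lcm(a,b)=|b|$, which coincides with $b$ once $b>0$.

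The argument with $a,b>0$ has two steps.

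\emph{$b$ is a common multiple.} We have $\dv{b}{b}$ by Theorem~\ref{podiv}(b1), and $\dv{a}{b}$ by hypothesis. Since $b>0$, it lies in $T(a)\cap T(b)$, and therefore $\lcm(a,b)\leq b$.

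\emph{$b$ is the least one.} Let $m$ be any positive common multiple of $a$ and $b$. In particular $\dv{b}{m}$ with $m\neq 0$, so Theorem~\ref{podiv}(f2) (or (f1)) gives $b\leq m$. Taking $m=\lcm(a,b)$ yields $b\leq\lcm(a,b)$. Combined with the first step, this gives equality.

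There is no real obstacle here; the only delicate point is the sign convention. The $\lcm$ is defined to be positive, while $b$ is only assumed nonzero. I expect to handle this by the reduction to $|a|,|b|$ described above. An alternative route is the preceding proposition that every common multiple is divisible by $n=\lcm(a,b)$: it gives $\dv{n}{b}$, and together with $\dv{b}{n}$ this yields $n=|b|$ by Theorem~\ref{podiv}(f3). That route would serve equally well as a cross-check.
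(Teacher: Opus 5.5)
Your proof is correct and takes the same route as the paper's. Because $\dv{a}{b}$, the number $b$ (or $|b|$) is a positive common multiple, and every positive multiple of $b$ is at least $|b|$, so it is the least one. Your remark that the statement should read $\lcm(a,b)=|b|$ when $b<0$ is also right; the paper's own proof hedges the same way with ``$b$ or $|b|$ is in $T(a)$''.
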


\begin{proof}
If $\dv{a}{b}$, then any element $t \in T(b)$ is $t\geq b$.
Moreover $b$ or $|b|$ is in $T(a)$. The result follows.
\end{proof}

\begin{thm}[lcm of $a_1 , \ldots , a_{n}$]
Let $a_1 , a_2 , \ldots , a_n \in \mb{Z}$.
Let
\begin{eqnarray}
\label{lcmp1}
\lcm( a_1     , a_2 )     &=& r_1     \nonumber \\
\lcm( r_1     , a_3 )     &=& r_2     \nonumber \\
 \ldots                   &=& \ldots            \\
\lcm( r_{n-3} , a_{n-1} ) &=& r_{n-2} \nonumber \\
\lcm( r_{n-2} , a_n )     &=& r_{n-1} \nonumber
\end{eqnarray}
Then show that
\begin{equation}
\label{lcmp2}
\lcm( a_1 , a_2 , \ldots , a_n) = r_{n-1}.
\end{equation}
\end{thm}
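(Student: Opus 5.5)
I would prove the claim by induction on $n$, using the earlier Proposition that every common multiple of $a,b$ is a multiple of $\lcm(a,b)$. We work with nonzero $a_i$ (otherwise the only common multiple is $0$ and the statement degenerates), and with $\lcm(a_1,\ldots,a_n)$ understood as the least positive integer divisible by every $a_i$. The key intermediate statement I would prove for each $k=2,\ldots,n$ is
\[
\dv{r_{k-1}}{m} \iff \big(\dv{a_i}{m} \mbox{ for all } 1\leq i\leq k\big) \qquad \mbox{for every } m\in\mb{Z}.
\]
In words, the multiples of $r_{k-1}$ are exactly the common multiples of $a_1,\ldots,a_k$. Once this holds for $k=n$, the least positive common multiple of $a_1,\ldots,a_n$ is the least positive multiple of $r_{n-1}$, which is $r_{n-1}$ itself. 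That gives (\ref{lcmp2}).

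For the base case $k=2$ we have $r_1=\lcm(a_1,a_2)$. The direction ``$\Leftarrow$'' is exactly the earlier Proposition. The direction ``$\Rightarrow$'' follows by transitivity (Theorem~\ref{podiv}(b2)), since $\dv{a_1}{r_1}$ and $\dv{a_2}{r_1}$.

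For the inductive step, assume the equivalence for $k$, and recall that $r_k=\lcm(r_{k-1},a_{k+1})$.

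``$\Rightarrow$'': Suppose $\dv{r_k}{m}$. Then $\dv{r_{k-1}}{m}$ and $\dv{a_{k+1}}{m}$ by transitivity. By the inductive hypothesis, $\dv{r_{k-1}}{m}$ gives $\dv{a_i}{m}$ for all $i\le k$, so every $a_i$ with $i\le k+1$ divides $m$.

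``$\Leftarrow$'': Suppose $\dv{a_i}{m}$ for all $i\le k+1$. The inductive hypothesis gives $\dv{r_{k-1}}{m}$, so $m$ is a common multiple of $r_{k-1}$ and $a_{k+1}$. The earlier Proposition then yields $\dv{r_k}{m}$.

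I expect no serious obstacle. The step needing care is ``$\Leftarrow$'' in the inductive step, because it requires the earlier Proposition for arbitrary integers $m$, not only positive ones. This is fine: that Proposition's proof uses only the division theorem (Theorem~\ref{divis}) and works for any integer $m$, with $m=0$ trivial. The one remaining technicality is that all $r_k$ are positive and nonzero, which holds by definition of $\lcm$ when the $a_i$ are nonzero. This is needed so that ``the least positive multiple of $r_{n-1}$ is $r_{n-1}$'' makes sense via Theorem~\ref{podiv}(f2).
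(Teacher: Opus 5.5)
Your proof is correct and follows essentially the same route as the paper. The paper also uses transitivity along the chain $r_1, r_2, \ldots$ to show that $r_{n-1}$ is a common multiple of all the $a_i$, and an induction with the two-argument Proposition (every common multiple of $a,b$ is a multiple of their least common multiple) to show that $r_{n-1}$ divides every common multiple; you just package both directions into a single inductive biconditional, which is a slightly cleaner presentation of the same argument.
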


\begin{proof}
$ $ \\ $ $
We first work top to bottom.
It is the case that
$\dv{r_1}{r_2}$,
$\dv{r_2}{r_3}$,
$\ldots $,
$\dv{r_{n-1}}{r_n}$.
It is clear that $\dv{a_1}{r_1}$ and since
$\dv{r_1}{r_2}$ we have $\dv{a_1}{r_2}$.
Proceeding similarly we have
$\dv{a_1}{r_i}$ for all $i=1 , \ldots , n-1$.
Likewise we can show $\dv{a_i}{r_{n-1}}$,
and thus $r_{n-1}$ is a common multiple of
$a_1 , \ldots , a_n$ and thus by the previous
problem
$\lcm( a_1 , a_2 , \ldots , a_n) $ divides
$r_{n-1}$ or equivalently,
\[
\lcm( a_1 , a_2 , \ldots , a_n) \leq r_{n-1} .
\]
$ $ \\ $ $
Let $c$ be a common multiple of $a_1 , \ldots a_n$.
Then $\dv{a_i}{c}$ for all $i$.
In particular $\dv{a_1}{c}$ and $\dv{a_2}{c}$.
Thus $c$ is a common multiple of $a_1 , a_2$.
From the previous problem this implies
that $\lcm(a_1 , a_2)$ which is $r_1$ divides
$c$. Thus $\dv{r_1}{c}$.
From the following equation we have
$\dv{r_1}{c}$ just established and $\dv{a_3}{c}$
since $c$ is a common multiple of all $a_i$.
With a similar argument as before we conclude
$\dv{r_2}{c}$.
By induction we have $\dv{r_i}{c}$ including
$\dv{r_{n-1}}{c}$, which generates a
\[
 r_{n-1} \leq c.
\]
Combining the two inequalities derived we have
\[
\lcm( a_1 , a_2 , \ldots , a_n) \leq r_{n-1} \leq c,
\]
for every common multiple $c$ of $a_1 , \ldots , a_n$,
and $r_{n-1}$ is one such common multiple.
The only possibility is that
$r_{n-1} =\lcm( a_1 , a_2 , \ldots , a_n)$,
as needed,
that is there is no common multiple smaller than
$r_{n-1}$ because $r_{n-1}$ is the least common
multiples of $a_1 , \ldots , a_n$.
\end{proof}

\newpage

\section{Diophantine equations}

\bigskip %THEOREM 28 %%
\begin{thm}
\label{diot}
Let $a,b \in \mb{Z}^*$.
The linear Diophnatine equation
\begin{equation}
\label{dio}
ax+by = c
\end{equation}
has an integer solution $x,y$ if and only if 
$\dv{d}{c}$, where $d=\gcd(a,b)$.
If a particular solution $(x_0 ,y_0 )$ exists, then there 
are infinitely
many solution of the form
\[
  x = x_0 + \frac{mb}{d}  , \quad\quad y = y_0 - \frac{ma}{d},
\]
where $m \in mb{Z}$.
\end{thm}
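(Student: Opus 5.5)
I will split the argument into the existence criterion (both directions) and the description of the solution family. Throughout, $a,b\in\mb{Z}^*$, so $|a|+|b|\neq 0$ and $d=\gcd(a,b)$ is defined and positive.

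\emph{Necessity.} Suppose integers $x,y$ satisfy $ax+by=c$. Since $\dv{d}{a}$ and $\dv{d}{b}$, Theorem~\ref{podiv}(d3) gives $\dv{d}{ax+by}$, that is, $\dv{d}{c}$.

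\emph{Sufficiency.} Suppose $\dv{d}{c}$ and write $c=dk$. By the B\'ezout identity of Theorem~\ref{egcd}, there are integers $u,v$ with $au+bv=d$. Multiplying through by $k$ gives $a(uk)+b(vk)=c$. Hence $(x_0,y_0)=(uk,vk)$ is a particular solution. Euclid's algorithm together with Extendedgcd makes this construction effective.

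\emph{The solution family.} Given a particular solution $(x_0,y_0)$, I first check that every pair
\[
x=x_0+\frac{mb}{d}, \qquad y=y_0-\frac{ma}{d}, \qquad m\in\mb{Z},
\]
consists of integers, since $d$ divides both $a$ and $b$. Substituting gives
\[
a x_0+\frac{abm}{d}+b y_0-\frac{abm}{d}=c,
\]
so each pair is a solution. Because $b\neq 0$, the value of $x$ changes with $m$, so these pairs are pairwise distinct and there are infinitely many of them.

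To show these are all the solutions, I would take an arbitrary solution $(x,y)$ and subtract the particular one, giving $a(x-x_0)=b(y_0-y)$. Dividing by $d$ and setting $a_1=a/d$, $b_1=b/d$, this becomes $a_1(x-x_0)=b_1(y_0-y)$, where $\gcd(a_1,b_1)=1$ by Theorem~\ref{gcddi}. Then $\dv{b_1}{a_1(x-x_0)}$, and Corollary~\ref{egcd3} yields $\dv{b_1}{x-x_0}$. Write $x-x_0=mb_1$. Substituting back and cancelling $b_1\neq 0$ gives $y_0-y=ma_1$. This is exactly the stated form.

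I expect this completeness step to be the main obstacle. The point needing care is that the common factor $d$ must be divided out before applying Corollary~\ref{egcd3}; applying it directly to $a$ and $b$ fails when $d>1$.
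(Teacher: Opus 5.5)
Your proof is correct and follows the paper's argument essentially step for step. Necessity comes from $d \mid ax+by$, sufficiency from scaling the B\'ezout identity $au+bv=d$ by $c/d$, and completeness from dividing $a(x-x_0)=b(y_0-y)$ by $d$ and using $\gcd(a/d,b/d)=1$ to get $b/d \mid x-x_0$. You also check two things the paper leaves implicit: that every pair in the stated family is a solution, and that distinct values of $m$ give distinct pairs, so there really are infinitely many solutions.
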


\begin{proof}
$ $ \\ $ $
$\Rightarrow$. Let $x,y$ be a solution for $ax+by=c$.
Then since $d=\gcd(a,b)$ we have $\dv{d}{a}$ and $\dv{d}{b}$
and thus $\dv{d}{ax+by}$ implying $\dv{d}{c}$. If $\ndv{d}{c}$
obviously there are no $x,y$.
$ $ \\ $ $
$\Leftarrow$.
%By way of Theorem~\ref{egcd} there exist $x,y$ such that
Let $\dv{d}{c}$. By way of Theorem~\ref{egcd} 
of the extended GCD algorithm or the definition of $\gcd(a,b)$
there exist $x,y$ such that
\begin{equation}
\label{dio1}
ax+by = d
\end{equation}
where $d=\gcd(a,b)$. Since $\dv{d}{c}$ there exists a $q$ such that
$c=dq$. Multiplying Equation~\ref{dio1} by $q$ we get
\begin{equation}
\label{dio2}
a(xq)+b(yq) = dq=c
\end{equation}
Thus a solution has been found through Theorem~\ref{egcd}. 
The solution is $x_0 = xq$ and $y_0 = yq$. 
Let $(x_0 , y_0)$ be a solution pair. Suppose there is another
solution pair $(x,y)$. Then
\begin{equation}
\label{dio3}
a x_0 +b y_0  = c  \quad \quad , \quad \quad a x   + b y   = c 
\end{equation}
Subtracting one from the other we get
\begin{equation}
\label{dio4}
 a(x_0 -x) + b (y_0 -y) = 0 \Rightarrow  a(x_0 - x) = b (y-y_0 ).
\end{equation}
For $d=\gcd(a,b)$, $a_1 = a/d$ and $b_1 = b/d$ are both integer.
Moreover $\gcd( a_1  , b_1 )=1$ from a prior result.
Dividing by $d$ equation~\ref{dio4} we have.
\begin{equation}
\label{dio5}
 \frac{a}{d}(x_0 - x) = \frac{b}{d} (y-y_0 ) \Rightarrow
      a_1   (x_0 - x) =   b_1       (y-y_0 ) 
\end{equation}
Since $\gcd(a_1  , b_1 )=1$, since $\dv{a_1}{a_1 (x_0 -x)}$ it 
means $\dv{a_1}{b_1 (y-y_0)}$. Being relatively prime $a_1, b_1$ this
is equivalent to $\dv{a_1}{y-y_0}$. Likewise $\dv{b_1}{x-x_0}$.
From the latter we obtain that there exists, $m$  such that
\begin{eqnarray*}
\dv{b_1}{x-x_0} &\Rightarrow&  x-x_0 = m b_1 \\
                &\Rightarrow&  x = x_0 + m \frac{b}{d}
\end{eqnarray*}
From Eq.(\ref{dio4}) and $x-x_0 = m b_1$ we obtain the following
\begin{eqnarray*}
x-x_0 = m b_1  , \quad
a_1   (x_0 - x) =   b_1       (y-y_0 )
 &\Rightarrow& - m a_1 b_1 = b_1 (y - y_0 )  \\
 &\Rightarrow&  y = y_0 - m \frac{a}{d}.
\end{eqnarray*}
The result thus follows.
\end{proof}

\begin{cor}
If $\gcd(a,b,c)=1$ and $\gcd(a,b) =d > 1$ then Equation~\ref{dio} has
no solution.
\end{cor}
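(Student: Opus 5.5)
The plan is to argue by contradiction, reducing everything to the solvability criterion of Theorem~\ref{diot}. Here $a,b\in\mb{Z}^*$ as in Equation~\ref{dio}, and $d=\gcd(a,b)>1$. Theorem~\ref{diot} says that $ax+by=c$ has an integer solution if and only if $\dv{d}{c}$. So I will suppose a solution $(x_0,y_0)$ exists. By the ``$\Rightarrow$'' direction of Theorem~\ref{diot} (or directly, since $\dv{d}{a}$ and $\dv{d}{b}$ give $\dv{d}{ax_0+by_0}$ by Theorem~\ref{podiv}(d3)), this forces $\dv{d}{c}$.

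The next step is to turn $\dv{d}{c}$ into a statement about $\gcd(a,b,c)$. I will use the earlier corollary $\gcd(a,b,c)=\gcd(\gcd(a,b),c)=\gcd(d,c)$. Since $\dv{d}{c}$ and $d>0$, Fact~\ref{sgcdf}(vi) gives $\gcd(d,c)=|d|=d$. As a more elementary alternative, $d$ is a common divisor of $a$, $b$ and $c$, hence $d\leq\gcd(a,b,c)$. Either way I obtain $\gcd(a,b,c)\geq d>1$.

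This contradicts the hypothesis $\gcd(a,b,c)=1$, so no solution can exist, and the corollary follows.

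There is no real obstacle here; the only point needing care is that $\gcd(a,b,c)$ is well defined and $\geq 1$. This holds because $a\neq 0$, so the set of common divisors is finite and contains $1$. One must also note that $d$ is positive, so that $\dv{d}{c}$ yields $\gcd(d,c)=d$ rather than $|d|$ with a sign issue.
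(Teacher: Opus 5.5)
Your proof is correct and follows the paper's argument. Assume a solution exists, deduce $d \mid c$ from $d \mid a$ and $d \mid b$, and note that $d>1$ is then a common divisor of $a,b,c$, which contradicts $\gcd(a,b,c)=1$; the only difference is that the paper concludes with $d \mid 1$, while you conclude $\gcd(a,b,c)\geq d>1$, which fits the paper's definition of the gcd as the largest common divisor.
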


\begin{proof}
Say that Equation~\ref{dio} has an integer solution $(x,y)$ i.e.
$ax+by =c$. Since $d=\gcd(a,b)$ it means $\dv{d}{a}$ and $\dv{d}{b}$.
Then $\dv{d}{ax+by}$ i.e. $\dv{d}{c}$. If $d$ is a common divisor of
$a,b,c$ it means $\dv{d}{1}$. Then it can only be $d=1$. This contradicts
the fact $d>1$. Thus Equation~\ref{dio} cannot have any solution.
\end{proof}

\begin{cor}
\label{edio}
If $\gcd(a,b)=1$ then Equation~\ref{dio} has one solution.
\end{cor}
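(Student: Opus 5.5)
The corollary claims that when $\gcd(a,b)=1$ the equation $ax+by=c$ of Equation~\ref{dio} has a solution. Since it says ``one solution'' right after Theorem~\ref{diot}, I read it as an existence statement: there is at least one integer solution $(x,y)$, for every $c\in\mb{Z}$. (The solution is not unique; Theorem~\ref{diot} gives the whole infinite family.)

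The plan is to reduce to Theorem~\ref{diot}. With $d=\gcd(a,b)=1$ we have $\dv{1}{c}$ for every integer $c$, by Theorem~\ref{podiv}(a). So the divisibility condition of Theorem~\ref{diot} always holds, and a solution exists.

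To make this constructive, as the paper prefers, I would use Corollary~\ref{egcd2} directly. Since $\gcd(a,b)=1$, there are integers $x_1,y_1$ with $ax_1+by_1=1$, and these can be computed by Extendedgcd. Multiplying through by $c$ gives
\[
a(cx_1)+b(cy_1)=c,
\]
so $(x_0,y_0)=(cx_1,cy_1)$ is a particular solution. This is the same step as Equation~\ref{dio2} in the proof of Theorem~\ref{diot}, now with $q=c$.

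Finally, I would note that Theorem~\ref{diot} with $d=1$ describes all solutions as $x=x_0+mb$ and $y=y_0-ma$ for $m\in\mb{Z}$. So ``one solution'' should be read as ``at least one'', and the solutions are unique only modulo $|b|$ in $x$ (and modulo $|a|$ in $y$). There is no real obstacle here. The only point needing care is this interpretation of ``one''. Theorem~\ref{diot} also assumes $a,b\in\mb{Z}^*$, but that is not needed here: Corollary~\ref{egcd2} only requires $|a|+|b|\neq 0$, which already holds when $\gcd(a,b)=1$.
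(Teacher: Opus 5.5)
Your proposal is correct and follows the paper's route: the paper gives no separate proof, treating the corollary as the $d=1$ case of Theorem~\ref{diot} (where $1\mid c$ always holds). The particular solution $(cx_1,cy_1)$ obtained from a B\'ezout pair for $ax+by=1$ is the same construction the paper uses in the proof of the very next corollary. Your reading of ``one solution'' as existence, with all solutions given by $x_0+mb$ and $y_0-ma$, is the sensible one, since the integer solution is never literally unique.
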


\begin{cor}
Let $a,b \in \mb{Z}^*$.
The linear Diophantine equation
\begin{equation}
\label{dio10}
ax+by = c
\end{equation}
has an integer solution $x,y$ if and only 
if $\dv{d}{c}$, where $d=\gcd(a,b)$.
If a particular solution $(x_1 ,y_1 )$ exists for $ax+by=d$,
many solutions of the form
\[
  x = \frac{c}{d} x_1 + m \cdot \frac{b}{d}  , \quad\quad
  y = \frac{c}{d} y_1 - m \cdot \frac{a}{d},
\]
exist, where $m \in \mb{Z}$.
\end{cor}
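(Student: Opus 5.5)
The plan is to deduce this corollary almost entirely from Theorem~\ref{diot} and Theorem~\ref{egcd}, so the work reduces to three short steps.

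\emph{Step 1: the solvability criterion.} The statement that $ax+by=c$ has an integer solution if and only if $\dv{d}{c}$ is word for word the first part of Theorem~\ref{diot}, so I will simply cite it. For completeness, the necessity direction is one line: $\dv{d}{a}$ and $\dv{d}{b}$ give $\dv{d}{ax+by}=c$ by Theorem~\ref{podiv}(d3).

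\emph{Step 2: a particular solution.} Suppose $\dv{d}{c}$ and $(x_1,y_1)$ solves $ax+by=d$. Such a pair exists by the B\'ezout identity, Theorem~\ref{egcd}. Write $c=dq$ with $q=c/d\in\mb{Z}$. Multiplying $ax_1+by_1=d$ by $q$ gives
\[
a\Big(\tfrac{c}{d}x_1\Big)+b\Big(\tfrac{c}{d}y_1\Big)=c .
\]
Hence $(x_0,y_0)=(\tfrac{c}{d}x_1,\tfrac{c}{d}y_1)$ is a particular integer solution of Equation~\ref{dio10}.

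\emph{Step 3: the family of solutions.} Apply the second part of Theorem~\ref{diot} to the particular solution $(x_0,y_0)$. Every pair
\[
x=x_0+m\tfrac{b}{d},\qquad y=y_0-m\tfrac{a}{d},\qquad m\in\mb{Z},
\]
is a solution; substituting $x_0,y_0$ gives exactly the displayed form in the corollary.

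To make this step self-contained I will also verify directly that these pairs are solutions:
\[
a\Big(\tfrac{c}{d}x_1+m\tfrac{b}{d}\Big)+b\Big(\tfrac{c}{d}y_1-m\tfrac{a}{d}\Big)=\tfrac{c}{d}(ax_1+by_1)+m\tfrac{ab-ba}{d}=c .
\]
The quantities $b/d$ and $a/d$ are integers because $d$ divides both $a$ and $b$, so every such pair is an integer pair.

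\emph{Main obstacle.} The only delicate point is the word ``many''. Since $a,b\in\mb{Z}^*$, we have $b/d\neq 0$, so distinct values of $m$ give distinct values of $x$, and the solutions are indeed infinitely many. If one also wants to claim that these are \emph{all} the solutions, I would reuse the argument already given in the proof of Theorem~\ref{diot}:
\begin{itemize}
\item subtract the two equations to get $a_1(x_0-x)=b_1(y-y_0)$, where $a_1=a/d$ and $b_1=b/d$;
\item use $\gcd(a_1,b_1)=1$ (Theorem~\ref{gcddi}) and Corollary~\ref{egcd3} to conclude $\dv{b_1}{x-x_0}$;
\item solve for $y$ in terms of $m$.
\end{itemize}
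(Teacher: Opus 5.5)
Your proposal is correct and follows essentially the paper's own argument. Like the paper, you take the B\'ezout pair $(x_1,y_1)$ from Theorem~\ref{egcd}, scale it by $q=c/d$ to get the particular solution $(\tfrac{c}{d}x_1,\tfrac{c}{d}y_1)$, and invoke Theorem~\ref{diot} for the family $x_0+m\tfrac{b}{d}$, $y_0-m\tfrac{a}{d}$; your direct substitution check and the observation that $b/d\neq 0$ forces infinitely many solutions are small additions the paper leaves implicit.
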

\begin{proof}
Using the extended GCD algorithm,
by way of Eq.(\ref{dio2}), $x_1 , y_1$ are solutions
of $ax+by = \gcd(a,b)$. If $\dv{d}{c}$ then there exists
a $q \in \mb{Z}$ such that $c=dq$.
Multiplying Eq.(\ref{dio2}) by $q$ we derive
Eq.(\ref{dio3}) and thus $x_0 = x_1 q$ and $y_0 = y_1 q$ are
solutions of Eq.(\ref{dio3}).
Therefore a solution $(x_1 , y_1)$ of Eq.(\ref{dio2})
gives rise to a solution
\[
x_0 = x_1 q = \frac{c}{d} x_1  , \quad
y_0 = y_1 q = \frac{c}{d} y_1
\]
of Eq.(\ref{dio3}), and using the previous
problem many solutions of the form
\[
  x =
            x_0 + m \cdot \frac{b}{d}  =
\frac{c}{d} x_1 + m \cdot \frac{b}{d}  , \quad\quad
  y =
            y_0 - m \cdot \frac{a}{d} =
\frac{c}{d} y_1 - m \cdot \frac{a}{d},
\]
exist, where $m \in \mb{Z}$.
\end{proof}

\begin{cor}
If $\gcd(a,b,c)=1$ and $\gcd(a,b) =d > 1$ 
then Equation~\ref{dio} has no solution.
\end{cor}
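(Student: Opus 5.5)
Take $d=\gcd(a,b)>1$ and argue by contradiction. Suppose Equation~\ref{dio} has an integer solution $(x,y)$, so that $ax+by=c$. Since $\dv{d}{a}$ and $\dv{d}{b}$, Theorem~\ref{podiv}(d3) gives $\dv{d}{ax+by}$, that is, $\dv{d}{c}$. This is also the ``only if'' direction of Theorem~\ref{diot}, which we may simply invoke.

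With $\dv{d}{a}$, $\dv{d}{b}$ and $\dv{d}{c}$ in hand, $d$ is a common divisor of all three integers. By the corollary $\gcd(a,b,c)=\gcd(\gcd(a,b),c)=\gcd(d,c)$, together with the characterization $S(a)\cap S(b)=S(\gcd(a,b))$, every common divisor of $a,b,c$ divides $\gcd(a,b,c)$. Hence $\dv{d}{\gcd(a,b,c)}=1$. Theorem~\ref{podiv}(f2) then gives $d\leq 1$, and since a gcd is positive (Fact~\ref{sgcdf}(i)) we get $d=1$. This contradicts $d>1$, so no solution exists.

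The only step needing care is passing from ``$d$ is a common divisor of $a,b,c$'' to ``$\dv{d}{1}$''. If one uses only the definition of gcd as a maximum, the conclusion is merely $d\leq \gcd(a,b,c)=1$. That already suffices, since it contradicts $d>1$ directly, so either route closes the argument. I would state the inequality version to avoid relying on the divisibility form for three arguments.

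Finally, the hypotheses make sense: $\gcd(a,b)$ is defined because $a,b\in\mb{Z}^*$ in Theorem~\ref{diot}, and $\gcd(a,b,c)$ is defined for the same reason.
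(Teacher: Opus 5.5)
Your proof is correct and follows the paper's argument: assuming a solution $ax+by=c$, you get $d\mid c$ from $d\mid a$ and $d\mid b$, so $d$ is a common divisor of $a,b,c$ and hence $d\mid 1$, contradicting $d>1$. Your remark that the weaker conclusion $d\le\gcd(a,b,c)=1$, which follows from the gcd being a maximum, already suffices is only a minor refinement of the paper's step ``$d$ divides $1$''.
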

\begin{proof}
Say that Equation~\ref{dio} has an integer solution $(x,y)$ i.e.
$ax+by =c$. Since $d=\gcd(a,b)$ it means $\dv{d}{a}$ and $\dv{d}{b}$.
Then $\dv{d}{ax+by}$ i.e. $\dv{d}{c}$. If $d$ is a common divisor of
$a,b,c$ it means $\dv{d}{1}$. Then it can only be $d=1$. This contradicts
the fact $d>1$. Thus Equation~\ref{dio} cannot have any solution.
If $\gcd(a,b)=1$ then Equation~\ref{dio} has one solution.
\end{proof}

\newpage

\begin{exa}      
Show $\gcd( 1024, 640 ) =128$ we have
\end{exa}      

\begin{proof}
\begin{eqnarray*}
  1024 &=& 640 \cdot 1 + 384 \\
   640 &=& 384 \cdot 1 + 256 \\
   384 &=& 256 \cdot 1 + 128 \\
   256 &=& 128 \cdot 2 + 0  
\end{eqnarray*}
Obviously $\gcd(1024,640)= 128$, the last non zero remainder.
Reversing the order of the equation we have.
\begin{eqnarray*}
   128 &=&  384 + 256 \cdot (-1) \\
       &=&  384 + (640 + 384 \cdot (-1) ) \cdot (-1) \\
       &=&  640 \cdot (-1) + 384 \cdot (2) \\
       &=&  640 \cdot (-1) + (1024 + 640 \cdot (-1) ) \cdot 2 \\
       &=&  640 \cdot (-3) + 1024 \cdot 2 
\end{eqnarray*}
Therefore $\gcd(1024,640) = 128 = 1024 \cdot 2 + 640 \cdot (-3)$.
\end{proof}

\begin{exa}      
Find the solutions, if any, of  $1024x+640y= 256$.
\end{exa}      

\begin{proof}
By the previous example $\gcd(1024,640)=d=128$. It is $\dv{128}{256}$.
Thus one solution of the Diophantine is 
$x_0 = 2 \cdot (256/128) = 4$ and $y_0 = (-3) \cdot (256/128) = -6$.

Other solutions are
\[
x = 4 + m (640/128) = 4+5m  \quad , \quad y = -6 -8m
\]
A simple calculation confirms the latter solutions
\[
1024(4+5m) + 640(-6-8m) = 256.
\]
\end{proof}

The previous method outlined in Corollary~\ref{edio} is tedious.
A better approach for solving $ ax+by = d$, where $d=\gcd(a,b)$ starts with.

\[
\left[ \begin{array}{c|cc}
     a & 1 & 0 \\
     b & 0 & 1 \\  
       \end{array} \right]
\]

The first column are the the Dividend ($a$) and Divisor ($b$) of the
division operation. Eventually through repeated division with remainder
(the previous vertical operations become horizontal) will generate a
matrix such as the one below. Its first row entries contains
the $\gcd(a,b)$ and $x,y$.

\[
\left[ \begin{array}{c|cc}
   \gcd(a,b)   & x & y \\
     0         & ? & ? \\  
       \end{array} \right]
\]

\newpage
\begin{exa}      
Show this for $a=1024 , b= 640 $ and division
\begin{eqnarray*}
  1024 &=& 640 \cdot 1 + 384 \\
   640 &=& 384 \cdot 1 + 256 \\
   384 &=& 256 \cdot 1 + 128 \\
   256 &=& 128 \cdot 2 + 0  
\end{eqnarray*}
\end{exa}      

\begin{proof}
\begin{eqnarray*}
\left[ \begin{array}{r|rr}
  1024 & 1 & 0 \\
   640 & 0 & 1 \\  
       \end{array} \right] 
&\rightarrow&
\left[ \begin{array}{r|rr}
   384 & 1 & -1 \\
   640 & 0 &  1 \\  
       \end{array} \right]  \\
&\rightarrow&
\left[ \begin{array}{r|rr}
   384 & 1 & -1 \\
   256 &-1 &  2 \\  
       \end{array} \right]  \\
&\rightarrow&
\left[ \begin{array}{r|rr}
   128 & 2 & -3 \\
   256 &-1 &  2 \\  
       \end{array} \right]  \\
&\rightarrow&
\left[ \begin{array}{r|rr}
   128 & 2 & -3 \\
    0  &-5 & -8 \\  
       \end{array} \right]  \\
\end{eqnarray*}
In the first transition, we subtract the second row from the first per
the first division.
In the second transition, we subtract the first row from the second per
the second division.
In the third  transition, we subtract the second row from the first  per
the third  division.
In the fourth transition, we subtract twice the first row from the second  per
the fourth  division. (It is twice because the quotient in this case is a 2.)

\end{proof}

There is yet another matrix form representation of the Extended-Euclid's
algorithms (i.e. Extended GCD). For this in Theorem~\ref{gcd} we rewrite
the first line as in $a=bq+r = b q_0 + r_0 $,
and the second line $b=rq_1 + r_1 = r_0 q_1 +r_1 $. 
The remaining lines remain the same.
The product of $k+2$ matrices can be computed as a 
$2 \times 2$ matrix with entries $x_1 , \ldots x_4$. 
\begin{eqnarray*}
\left[  \begin{array}{r}
   a \\
   b \\
       \end{array} \right] &=&
\left[ \begin{array}{rr}
    q_0 & 1 \\
    1   & 0  \\
       \end{array} \right]   \times
\left[  \begin{array}{r}
   b   \\
   r_0 \\
       \end{array} \right]  =
\left[ \begin{array}{rr}
    q_0 & 1 \\
    1   & 0  \\
       \end{array} \right]   \times
\left[ \begin{array}{rr}
    q_1 & 1 \\
    1   & 0  \\
       \end{array} \right]   \times
\left[  \begin{array}{r}
   r_0 \\
   r_1 \\
       \end{array} \right]  = \ldots  \\
&=& 
\left[ \begin{array}{rr}
    q_0 & 1 \\
    1   & 0  \\
       \end{array} \right]   \times
\left[ \begin{array}{rr}
    q_1 & 1 \\
    1   & 0  \\
       \end{array} \right]   \times \ldots \times
\left[ \begin{array}{rr}
    q_{k+1} & 1 \\
    1   & 0  \\
       \end{array} \right]   \times
\left[  \begin{array}{r}
   r_k \\
    0  \\
       \end{array} \right]  
\\ &=&
   \prod_{i=0}^{i=k+1}
\left[ \begin{array}{rr}
    q_{i} & 1 \\
    1     & 0  \\
       \end{array} \right]   \times
\left[  \begin{array}{r}
   r_k \\
    0  \\
       \end{array} \right]   =
\left[ \begin{array}{rr}
    x_1   & x_2 \\
    x_3   & x_4  \\
       \end{array} \right]   \times
\left[  \begin{array}{r}
   r_k \\
    0  \\
       \end{array} \right]   \\
 &\Rightarrow& \\
\left[  \begin{array}{r}
   r_k \\
    0  \\
       \end{array} \right]   &=&
\left[ \begin{array}{rr}
    x_1   & x_2 \\
    x_3   & x_4  \\
       \end{array} \right]^{-1}   \times
\left[  \begin{array}{r}
    a  \\
    b  \\
       \end{array} \right]   \\
&\Rightarrow& \\
\left[  \begin{array}{r}
   \gcd(a,b) \\
    0  \\
       \end{array} \right]   &=&
\left[ \begin{array}{rr}
     x    &  y  \\
     *    &  *   \\
       \end{array} \right]        \times
\left[  \begin{array}{r}
    a  \\
    b  \\
       \end{array} \right]    \Rightarrow \gcd(a,b) = ax + by.
\end{eqnarray*}

\newpage
\begin{exa}      
Show this for $a=1024 , b= 640 $ and division
\begin{eqnarray*}
  1024 &=& 640 \cdot 1 + 384 \\
   640 &=& 384 \cdot 1 + 256 \\
   384 &=& 256 \cdot 1 + 128 \\
   256 &=& 128 \cdot 2 + 0  
\end{eqnarray*}
\end{exa}      
\begin{proof}
\begin{eqnarray*}
\left[  \begin{array}{r}
   r_k \\
    0  \\
       \end{array} \right]   &=&
\left[ \begin{array}{rr}
    x_1   & x_2 \\
    x_3   & x_4  \\
       \end{array} \right]^{-1}   \times
\left[  \begin{array}{r}
    a  \\
    b  \\
       \end{array} \right]   , \quad\quad
\left[ \begin{array}{rr}
    x_1   & x_2 \\
    x_3   & x_4  \\
       \end{array} \right]   =
\left[ \begin{array}{rr}
    1     & 1   \\
    1     & 0   \\
       \end{array} \right]        \times
\left[ \begin{array}{rr}
    1     & 1   \\
    1     & 0   \\
       \end{array} \right]        \times
\left[ \begin{array}{rr}
    1     & 1   \\
    1     & 0   \\
       \end{array} \right]        \times
\left[ \begin{array}{rr}
    2     & 1   \\
    1     & 0   \\
       \end{array} \right]  =        
\left[ \begin{array}{rr}
    8     & 3   \\
    5     & 2   \\
       \end{array} \right] 
\end{eqnarray*}

Then,
\begin{eqnarray*}
\left[  \begin{array}{r}
   r_k \\
    0  \\
       \end{array} \right]   &=&
\left[ \begin{array}{rr}
     8    &  3  \\
     5    &  2   \\
       \end{array} \right]^{-1}   \times
\left[  \begin{array}{r}
    1024  \\
    640  \\
       \end{array} \right]   =
\left[ \begin{array}{rr}
     2    & -3  \\
    -5    &  8   \\
       \end{array} \right]        \times
\left[  \begin{array}{r}
    1024  \\
    640  \\
       \end{array} \right]   
\end{eqnarray*}

We just need to find the top element of the vector i.e. its
$r_k$ values. Obviously $r_k = 128= 1024 \cdot 2 + 640 \cdot (-3) $.
At the same time $(x,y)=(2,-3)$ as well.
\end{proof}

\bigskip %THEOREM 29 %%
\begin{thm}
For $a,p \in \mb{Z}^*$ such that $\gcd(a,p)=1$, there is a unique
$x$ such that $a a^\prime \equiv 1 \pmod p$.
\end{thm}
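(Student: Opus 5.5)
The statement has a typo: it names the unknown $x$ and then writes $a^\prime$. I read it as asserting a unique $a^\prime$ modulo $p$, that is, unique in $\{0,1,\ldots,|p|-1\}$, with $a a^\prime \equiv 1 \pmod p$. Literal uniqueness over $\mb{Z}$ is false, since $a^\prime + kp$ works too. The plan is to get existence from B\'ezout and uniqueness from cancellation via Corollary~\ref{egcd3}.

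\emph{Existence.} Since $\gcd(a,p)=1$ and $a,p \in \mb{Z}^*$, Corollary~\ref{egcd2} (equivalently, Theorem~\ref{egcd} run through the Extendedgcd algorithm) gives integers $x,y$ with $ax+py=1$. Then $ax-1 = p(-y)$, so $\dv{p}{ax-1}$, which is exactly $ax \equiv 1 \pmod p$. Next, divide $x$ by $p$ using Theorem~\ref{divis} to write $x = pq + a^\prime$ with $0 \le a^\prime < |p|$. Then $aa^\prime - 1 = (ax-1) - apq$ is a multiple of $p$ by Theorem~\ref{podiv}(d3), so this reduced $a^\prime$ is still an inverse.

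\emph{Uniqueness.} Suppose $a^\prime$ and $a^{\prime\prime}$ both lie in $\{0,\ldots,|p|-1\}$ and both satisfy the congruence. Subtracting the two relations gives $\dv{p}{a(a^\prime - a^{\prime\prime})}$. Since $\gcd(p,a)=1$, Corollary~\ref{egcd3} yields $\dv{p}{a^\prime - a^{\prime\prime}}$. But $|a^\prime - a^{\prime\prime}| < |p|$, so Theorem~\ref{podiv}(f1) forces $a^\prime - a^{\prime\prime} = 0$. Alternatively, I could note that both have the same remainder on division by $p$ and use the remainder theorem proved earlier.

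The only delicate point is the one flagged at the start: uniqueness must be stated modulo $p$, or within the residue range $0 \le a^\prime < |p|$. I would write the proof with this interpretation made explicit and remark that every integer $a^\prime + kp$ is also a solution. Everything else is a direct application of B\'ezout and the cancellation corollary.
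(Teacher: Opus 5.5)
Your proof is correct, and its existence half is the paper's argument: B\'ezout gives $ax+py=1$, so $\dv{p}{ax-1}$, and then $x$ is reduced modulo $p$. You cite Corollary~\ref{egcd2} for the B\'ezout step; the paper cites Theorem~\ref{gcd}, and yours is the more accurate reference.

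The two proofs differ after that point. The paper never argues uniqueness. Once it has reduced $x$, it only remarks that $\gcd(a^\prime,p)=1$, i.e.\ that the inverse is itself a unit, which is a different fact. Your cancellation step supplies exactly the uniqueness modulo $p$ that the statement asserts: $\dv{p}{a(a^\prime-a^{\prime\prime})}$, then Corollary~\ref{egcd3} gives $\dv{p}{a^\prime-a^{\prime\prime}}$, and $|a^\prime-a^{\prime\prime}|<|p|$ forces $a^\prime=a^{\prime\prime}$. Your proof is therefore the more complete of the two.

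You are also right to insist that uniqueness only makes sense within a residue range or modulo $p$, since the statement as worded is sloppy on this. Note that the fallback you mention, the remainder theorem, still needs your cancellation step first, so the cancellation is the essential ingredient either way.

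The paper's extra observation is not needed for the theorem as stated. If you want it anyway, it follows immediately: any common divisor of $a^\prime$ and $p$ divides both $aa^\prime$ and $aa^\prime-1$, hence divides $1$.
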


\begin{proof}
Since $\gcd(a,p)=1$ we have by Theorem~\ref{gcd} that
there exists $x,y$ such that $ax+py =1 =\gcd(a,p)$.
Furthermore $-py = ax-1$. Since $\dv{p}{py}$ we have $\dv{p}{ax-1}$.
Thus $ax \equiv 1 \pmod p$. Consider $a^\prime = x \pmod p$.
It is still $a a^\prime \equiv \pmod p$. Furthermore any divisor
of $a^\prime$ and $m$ must also divide $1$, i.e. $\gcd(a^\prime , p)=1$.
\end{proof}

\newpage

\section{Fundamental theorem of arithmetic}

\subsection{Unique factorization}

\begin{thm}
Let $n \in \mb{N}$ with $n>1$.
Then $n$ can be expressed as the product of prime
numbers (natural numbers with two divisors one, and itself).
The representation is unique up to a permutation of the
ordering of the primes.
\end{thm}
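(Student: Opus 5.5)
The proof splits into existence and uniqueness, and both go by strong induction, phrased through the well-ordered set principle the paper already uses.

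\textbf{Existence.} Let $Q$ be the set of natural numbers $n>1$ that cannot be written as a product of primes, and suppose $Q$ is nonempty. Let $m$ be its minimum element.
\begin{itemize}
\item[(1)] $m$ is not prime, since a prime is a product of one prime. So $m$ is composite.
\item[(2)] By the Lemma on composites, $m=aq$ with $1<a,q<m$.
\item[(3)] By minimality of $m$, neither $a$ nor $q$ lies in $Q$, so each is a product of primes.
\item[(4)] Concatenating the two products writes $m$ as a product of primes, contradicting $m\in Q$.
\end{itemize}
Hence $Q=\emptyset$. One could instead peel off a prime factor using Theorem~\ref{prfactor}, writing $n=pn'$ with $n'<n$, but the argument above is cleaner.

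\textbf{Uniqueness.} The key tool is Euclid's lemma, already stated among the gcd corollaries: if a prime $p$ divides $ab$, then $p$ divides $a$ or $b$. First extend it by induction on the number of factors: if $p \mid q_1 q_2\cdots q_s$, then $p\mid q_j$ for some $j$. The inductive step applies the corollary to $a=q_1$ and $b=q_2\cdots q_s$. If moreover each $q_j$ is prime, then $p\mid q_j$ with $p>1$ forces $p=q_j$, because the only positive divisors of $q_j$ are $1$ and $q_j$ (Definition~\ref{prime}).

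Now suppose $n=p_1\cdots p_r=q_1\cdots q_s$ are two prime factorizations. Induct on $n$, or equivalently on $r$, by taking a minimal counterexample.
\begin{itemize}
\item[(1)] Since $p_1\mid n=q_1\cdots q_s$, the extended lemma gives $p_1=q_j$ for some $j$.
\item[(2)] Cancel this common factor, which is legitimate since $p_1\neq 0$ (property (g) of Theorem~\ref{podiv}, or just uniqueness of quotients from Theorem~\ref{uniqu}). This gives $p_2\cdots p_r=\prod_{i\neq j}q_i$, a number $n/p_1<n$.
\item[(3)] If $n/p_1=1$, then both remaining products are empty. A nonempty product of primes is $>1$, so $r=s=1$ and the two factorizations agree.
\item[(4)] Otherwise $n/p_1>1$, and the induction hypothesis says the two remaining lists agree up to permutation. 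Restoring $p_1=q_j$ gives the same conclusion for the original lists.
\end{itemize}

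The main obstacle is the uniqueness half. Everything rests on Euclid's lemma, which in turn rests on B\'ezout (Theorem~\ref{egcd}); the paper supplies that, so the remaining care is in stating precisely what ``unique up to permutation'' means. I would make it precise as equality of the multisets of primes, so that after cancelling the matched pair $p_1=q_j$ the induction applies to the shorter lists. The bookkeeping with the empty product in the base case also needs care.
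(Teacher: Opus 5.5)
Your proof is correct, and it differs from the paper's mainly in how the uniqueness step matches primes. Existence is the same strong induction in a different form. The paper peels off one prime factor $p$ supplied by Theorem~\ref{prfactor} and applies the hypothesis to $n/p$. You split a minimal counterexample as $m=aq$ with $1<a,q<m$ and use minimality on both factors.

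For uniqueness, the paper sorts both lists, $p_1\le\cdots\le p_r$ and $q_1\le\cdots\le q_s$, and inducts on $r$. It pins down the \emph{largest} primes by a sandwich: $p_r\mid q_i$ forces $p_r=q_i\le q_s$, symmetrically $q_s=p_j\le p_r$, hence $p_r=q_s$, and this common prime is then cancelled. You instead match an arbitrary $p_1$ with some $q_j$ through the multi-factor Euclid lemma and cancel, with no ordering at all.

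Your version needs only one application of Euclid's lemma per step and gives \emph{up to permutation} directly as multiset equality. It also makes explicit a step the paper leaves implicit: that $p_r$ dividing $q_1\cdots q_s$ forces $p_r\mid q_i$ for some $i$. The two-factor corollary is available before the theorem, but the paper states the multi-factor form only afterwards, as Lemma~\ref{pfactor}, and does not cite Euclid's lemma in this proof. What the paper's sorting buys is a canonical form, so its conclusion reads literally as $r=s$ and $p_i=q_i$ for all $i$. The price is running the divisibility argument in both directions.
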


We usually represent the prime numbers from the
smallest (left-most side) to the largest (right-most side).
If a prime number $p$ appears $k$ times we also write $p^k$.
If $n$ itself is a prime number the expression
"product" is used degeneratively. The product of one prime is 
the prime itself.

\begin{proof}
$ $ \\ $ $
We prove the result by induction: $n>1$ becomes $n\geq 2$.
$ $ \\ $ $
{\bf Base case $n=2$.} Integer $n=2$ is the degenerate case
of $n=2$ where there is only one prime factor of $n=2$ and this
is itself.
$ $ \\ $ $
{\bf Induction hypothesis: $H(m)$, $2 \leq m < n$.}
For $n \geq 2$ we have that
for every integer $m$ such that $2 \leq m < n$, then $m$
can be expressed as the products of primes as noted in the
statement of the problem.
$ $ \\ $ $
{\bf Inductive step: $n$} Consider integer $n$.
We distinguish two cases: (a) $n$ is a prime number
and we are done, (b) $n$ is not a prime number.
In the latter case there exists by Proposition~\ref{compo} a
prime factor of $1<p < n$. Call then $P=n/p$. Then
$1< P < n$. By the induction hypothesis $P$ can be expressed
as a product of primes, and so does $n$, since $n= P \cdot p$,
where $P$ is a product of primes then, and $p$ is also a prime.
$ $ \\ $ $
{\bf Uniqueness of the representation.}
Let $p_1 p_2 \ldots p_r$ is a representation and
$q_1 q_2 \ldots q_s$ is another representation of $n$.
\[
n = p_1 p_2 \ldots p_{r-1} p_r = q_1 q_2 \ldots q_{s-1} q_s ,
\]
and let
$1< p_1 \leq p_2 \leq \ldots \leq p_r$,
and
$1< q_1 \leq q_2 \leq \ldots \leq q_r$,
and furthermore $1 \leq r \leq s$.
By using induction we will show that
$r=s$ and $p_i = q_i$ for all $i=1, 2, \ldots , r(=s)$.
Induction is on $r$.
$ $ \\ $ $
{\bf Base case $r=1$.} We have
\[
n = p_1                = q_1 q_2 \ldots q_s .
\]
On the left-hand side we have $p_1$ a prime number.
If $s > 1$ on the right-hand side we have the product of
at least two primes. This is a contradiction. Therefore $s=1$.
Then $p_1 =q_1$ and we are done.
$ $ \\ $ $
{\bf Induction hypothesis.}
We assume that the result is true for $r-1$, where $r\geq 2$.
Consider the following.
Consider the following.
\[
n = p_1 p_2 \ldots p_{r-1} p_r = q_1 q_2 \ldots q_{s-1} q_s ,
\]
$p_r$ is a factor of $n$. It also thus divides the $q$ product.
Since $p_r$ is a prime number and all $q_i$ are prime numbers
it can't by that $\dv{p_r}{q_i}$ and $p_r < q_i$ because then
$q_i$ would have a prime factor  and could not be a prime number
itself. Thus the only way $\dv{p_r}{q_i}$ is for $p_r = q_i$.
In other words $\dv{p_r}{q_i}$ implies $p_r \leq q_i$.
We might also work out similarly to show that $q_s \leq p_j$.
Then
\[
q_s \leq p_j \leq p_r \leq q_i \leq q_s
\]
implies $p_r =q_s$. Since both $p_r > 1 , q_s >1$ we
divide the expression for $n$ by $p_r =q_s$. We then
obtain
\[
 p_1 p_2 \ldots p_{r-1}  = q_1 q_2 \ldots q_{s-1} ,
\]
and by the induction hypothesis $r-1=s-1$
and $p_1 =q_1 $, $\ldots $, $  p_{r-1}= q_{s-1}$.
\end{proof}

\subsection{Fundamental theorem}

\medskip
\begin{lem}
\label{pfactor}
For a prime number $p$ if $\dv{p}{p_1 p_2}$ then
$\dv{p}{p_1}$ or $\dv{p}{p_2}$. This can be generalized
for $\dv{p}{p_1 p_2 p_3 \ldots}$.
\end{lem}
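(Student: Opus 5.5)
The plan is to reduce the two-factor case to the corollaries of Theorem~\ref{egcd} (B\'ezout), and then obtain the general case by induction on the number of factors. Although the statement writes $p_1,p_2$, I would not need $p_1,p_2$ to be prime; the argument works for arbitrary nonzero integers $p_1,p_2$.

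\textbf{Two factors.} Suppose $\dv{p}{p_1 p_2}$. If $\dv{p}{p_1}$ we are done, so assume $\ndv{p}{p_1}$. By the earlier corollary stating that for a prime $p$, $\ndv{p}{a}$ if and only if $\gcd(a,p)=1$, we get $\gcd(p,p_1)=1$. Corollary~\ref{egcd2} then gives integers $x,y$ with $px+p_1y=1$. Multiplying by $p_2$ yields $p_2 = p p_2 x + p_1p_2 y$. Now $p$ divides $pp_2x$, and it divides $p_1p_2y$ by (c1) of Theorem~\ref{podiv}. Hence by (d1), $\dv{p}{p_2}$. Alternatively, one can cite Corollary~\ref{egcd3} directly with $a=p$, $b=p_1$, $c=p_2$. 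If $p_1$ and $p_2$ happen to be primes, then $\dv{p}{p_1}$ forces $p=p_1$, since the only positive divisors of $p_1$ are $1$ and $p_1$ (Definition~\ref{prime}), and $p\neq 1$.

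\textbf{General case.} I would prove by induction on $k\ge 1$ that $\dv{p}{p_1p_2\cdots p_k}$ implies $\dv{p}{p_i}$ for some $i$. The base case $k=1$ is trivial. For the inductive step, write the product as $(p_1\cdots p_{k-1})\cdot p_k$ and apply the two-factor case with first factor $p_1\cdots p_{k-1}$. Either $\dv{p}{p_k}$, or $\dv{p}{p_1\cdots p_{k-1}}$, and in the latter case the induction hypothesis gives some $i\le k-1$ with $\dv{p}{p_i}$.

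\textbf{Main obstacle.} The only real obstacle is making sure that the step $\ndv{p}{p_1}\Rightarrow \gcd(p,p_1)=1$ is justified. It uses only that the positive divisors of $p$ are $1$ and $p$, and the corollary cited above already handles this. The rest is routine, and the earlier convention that factors are nonzero (so that the gcd is defined) should be noted.
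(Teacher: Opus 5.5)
Your proof is correct and follows the same route as the paper. From $\ndv{p}{p_1}$ you get $\gcd(p,p_1)=1$, the B\'ezout-based Corollary~\ref{egcd3} then gives $\dv{p}{p_2}$, and induction handles more factors; you simply write out the B\'ezout computation and the inductive step that the paper leaves implicit.
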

\begin{proof}
If $\dv{p}{p_1 p_2}$ given that $p$ is prime its only
positive divisors are $1,p$. If $\ndv{p}{p1}$, then
$\gcd(p,p_1 )=1$. Thus from a prior result $\dv{p}{p_2}$.
By induction we can prove its generalization.
\end{proof}

\bigskip %THEOREM 32 %%
\begin{thm}[Fundamental theorem of arithmetic]
\label{fund}
If $n>1$ there there exists unique prime numbers
$p_1 < \ldots < p_k$ and natural integers $a_1 , \ldots , a_k >0$
such that
\[
  n = p_1^{a_1} p_2^{a_2} \ldots p_k^{a_k}.
\]
\end{thm}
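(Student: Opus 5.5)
The plan is to derive Theorem~\ref{fund} from the unique factorization theorem proved just before it. That theorem gives existence and uniqueness of a factorization into primes, up to order. The task here is mostly to group equal primes and check that the grouped form is unique.

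\textbf{Existence.} By the unique factorization theorem, write $n = q_1 q_2 \cdots q_s$ with primes ordered so that $q_1 \leq q_2 \leq \cdots \leq q_s$. Let $p_1 < p_2 < \cdots < p_k$ be the distinct values among the $q_j$. Let $a_i \geq 1$ be the number of indices $j$ with $q_j = p_i$. Since the $q_j$ are sorted, equal primes form consecutive blocks. Each block of length $a_i$ contributes the factor $p_i^{a_i}$, so $n = p_1^{a_1} \cdots p_k^{a_k}$ with all $a_i > 0$ and the $p_i$ strictly increasing.

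\textbf{Uniqueness.} Suppose also $n = r_1^{b_1} \cdots r_l^{b_l}$ with primes $r_1 < \cdots < r_l$ and all $b_j > 0$. Expand each prime power into repeated factors. This gives two nondecreasing sequences of primes, each with product $n$. By the uniqueness part of the unique factorization theorem, the two sequences have the same length and agree term by term.

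Agreement of the sequences means the sets of distinct primes coincide, so $k = l$ and $p_i = r_i$ because both lists are strictly increasing. It also means each prime occurs the same number of times in both sequences, so $a_i = b_i$.

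As an independent check of $a_i = b_i$, one can argue directly with Lemma~\ref{pfactor}. If $a_i < b_i$, cancel $p_i^{a_i}$ from both sides. Then $p_i$ divides the right-hand side but divides none of the remaining prime powers on the left, which contradicts Lemma~\ref{pfactor}.

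\textbf{Main obstacle.} The only delicate point is bookkeeping. The earlier uniqueness proof is stated for sorted sequences, so I must make sure the expanded prime-power forms are sorted before invoking it. They are, because the bases are strictly increasing. The rest is routine.
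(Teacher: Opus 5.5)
Your argument is correct, but it takes a different route from the paper. You treat Theorem~\ref{fund} as a corollary of the unique-factorization theorem stated just before it. All the number theory is outsourced to that result, and what remains is bookkeeping: grouping a sorted prime sequence into blocks $p_i^{a_i}$, and observing that equal sorted sequences give equal bases and exponents.

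The paper instead reproves everything from scratch. For existence, it repeatedly splits off the smallest prime factor via Theorem~\ref{prfactor} until the cofactor is prime. For uniqueness, it uses Lemma~\ref{pfactor} to force $r_1=q_i$ for some $i$, matches the smallest primes $r_1=q_1$, cancels, and iterates. A leftover product of primes equal to $1$ then forces the two lengths to agree.

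Each route buys something different:
\begin{itemize}
\item Your route is shorter. It is also more explicit about the passage to exponents: the paper never groups equal primes or checks $a_i=b_i$, leaving that implicit in the term-by-term agreement.
\item The paper's route is self-contained and makes Euclid's lemma visible as the key ingredient.
\end{itemize}

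Your route does not actually avoid that lemma. The uniqueness proof of the earlier theorem uses it tacitly, at the step where $p_r$ dividing $q_1\cdots q_s$ is turned into $p_r\mid q_i$ for some $i$. So both proofs ultimately rest on Lemma~\ref{pfactor}, or on the earlier corollary that a prime dividing $ab$ divides $a$ or $b$. Your ``independent check'', cancelling $p_i^{a_i}$ and invoking Lemma~\ref{pfactor}, is essentially the paper's cancellation argument applied one prime at a time.
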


\begin{proof}
%Let $Q$ be the set of integers not conforming to the precondition of
%Theorem~\ref{fund} i.e. integers with no unique factorization.
%
%If $Q$ is not empty by the well-ordered set principle $Q$ has a
%minimum element and let it be $m$. If $m$ is prime then
%it is its own factorization and we have a violation
%by the fact  $m$ being inside $Q$. Thus a prime $m$ cannot be in $Q$. 
%In other words
%$m$ is a composite, and only composites are in $Q$. 
%Then $m=qr$ where $1< q \leq r <m$.
%By the minimality of $m$, both $q,r$ cannot be in $Q$ that is they 
%have unique factorizations as products of prime. 
%This means that $m =qr$ is the product of primes.
%
%The only way $m$ is in violation is that it has two or more
%different prime factorizations.
%
%Let $m=p_1^{a_1}p_2^{a_2}\ldots p_k^{a_k}=q_1^{b_1}q_2^{b_2}\ldots q_k^{b_t}$.
% where $p_1 < \ldots   < p_k$ and $q_1 < \ldots < q_t$ are all primes.
%Since $p_1$ is prime and $\dv{p_1}{m}$ it follow from Lemma~\ref{pfactor}
%that $\dv{p_1}{q_i}$. All $p_* , q_*$ are prime and this translates into
% $p_1=q_i$. Consider $M=m/p_1$. We have
%$M =p_1^{a_1-1}p_2^{a_2}\ldots p_k^{a_k} =
%    q_1^{b_1}  q_2^{b_2}\ldots q_i^{b_i -1}\ldots q_k^{b_t}$.
% $M<m$. Thus $M$ cannot be a member of $Q$ and thus the two
% factorization of $M$ in terms of the $p_*$'s or in terms of the
% $q_*$ must be identical. This implies that $k=t$ and
% $p_i = q_i$ for all indexes $i$. Moreover $a_i = b_i$ as well.
% But them $m$'s factorization would also be unqiue.
% That is $m$ should not be in $Q$. Thus $Q$ should be empty!
If $n$ is prime this is true easily.
Let $n$ be a composite natural number. Then by Theorem~\ref{prfactor}
it has a prime factor and let its smallest one be $q_1$ i.e.
\[
n= q_1 \cdot a_1 , \quad  \mbox{\ with\ } a_1 < n.
\]
If $a_1$ is prime we have decomposed $n$ into two prime numbers.
If $a_1$ is composite and  by Theorem~\ref{prfactor} it has
a prime factor and let its smallest one be $q_2$ i.e.
\[
a_1 = q_2 \cdot a_2 ,\quad  \mbox{\  with\ } a_2 < a_1.
\]
\[
n = q_1 a_1 = q_1 ( q_2 a_2 ) = q_1 q_2 a_2.
\]
If $a_2$ is composite we repeat this until he hit a $a_{k-1} = q_k$.
That way
\[
n =  q_1 q_2  \ldots q_{k-1} q_k.
\]
Let us assume that there is another factorization of $n$
\[
n =  r_1 r_2  \ldots r_{t-1} r_t.
\]
The we have
\[
  q_1 q_2  \ldots q_{k-1} q_k  =  r_1 r_2  \ldots r_{t-1} r_t.
\]
$r_1$ divides the right hand side. It also divides the left-hand side.
By Lemma~\ref{pfactor} one of $q_i$ divided by $r_1$, i.e. $\dv{r_1}{q_i}$.
Because all of $r_1 , q_i$ are prime numbers this can only mean 
$r_1 =q_i$ for some $i$.  The smallest $r_i$ is $r_1$. 
The smallest $q_i$ is $q_1$. Thus $r_1 =q_1$.
Because primes are $\neq 0$, we can factor out $r_1$.
\[
  q_2  \ldots q_{k-1} q_k  =   r_2  \ldots r_{t-1} r_t.
\]
Continuing likewise if without loss of generality $k < t$ we 
will eventually have
\[
  1  =   r_{k+1}  \ldots r_{t-1} r_t.
\]
Prime numbers are $>1$. Their product cannot be equal to $1$. 
Thus this can only mean that $k=t$ as well.
\end{proof}

\subsection{Finding the gcd and lcm}

\bigskip %THEOREM 33 %%
\begin{thm}[GCD UF]
Let $a_i  , b_i  \in \mb{N}$ fo all $i$. 
Moreover $a,b \geq 1$.
If
\[
  a = p_1^{a_1} p_2^{a_2} \ldots p_k^{a_k},  \quad
  b = p_1^{b_1} p_2^{b_2} \ldots p_k^{b_k},  \quad a_k , b_k \geq 0.
\]
then in order to find $d=\gcd(a,b)$ we have
\[
d=\gcd(a,b) =  p_1^{c_1} p_2^{c_2}\ldots p_k^{c_k}  =
               p_1^{\min(a_1 ,b_1 )}p_2^{\min(a_2 ,b_2 )} \ldots 
                                    p_k^{\min(a_k ,b_k )},
\]
where $c_i = \min (a_i , b_i ) \geq 0$,
and then in
order to find $m=\lcm(a,b)$ we have
\[
m=\lcm(a,b) =  p_1^{d_1} p_2^{d_2}\ldots p_k^{d_k}  =
               p_1^{\max(a_1 ,b_1 )}p_2^{\max(a_2 ,b_2 )} \ldots
                                    p_k^{\max(a_k ,b_k )},
\]
where $d_i = \max (a_i , b_i ) \geq 0$,
\end{thm}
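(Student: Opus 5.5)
The plan is to first establish one characterization of divisibility in terms of exponents, and then read off both formulas from it. The key lemma is the following. For $e = p_1^{e_1}\cdots p_k^{e_k}$ and $f = p_1^{f_1}\cdots p_k^{f_k}$ with all exponents $\geq 0$, we have $\dv{e}{f}$ if and only if $e_i \leq f_i$ for every $i$. Moreover, every positive divisor of $f$ has the form $p_1^{g_1}\cdots p_k^{g_k}$ with $0\le g_i\le f_i$.

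The ``if'' direction is immediate. We can write $f = e\cdot p_1^{f_1-e_1}\cdots p_k^{f_k-e_k}$, and every exponent here is nonnegative.

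For ``only if'', suppose $f = e q$ with $q>0$. Any prime $r$ dividing $q$ also divides $f$. By Lemma~\ref{pfactor}, $r$ then divides some $p_i$, so $r=p_i$. Hence $q$ involves only the primes $p_1,\dots,p_k$, say $q=p_1^{h_1}\cdots p_k^{h_k}$. This gives $f = p_1^{e_1+h_1}\cdots p_k^{e_k+h_k}$. By the uniqueness part of the Fundamental Theorem of Arithmetic (Theorem~\ref{fund}), $f_i = e_i + h_i \ge e_i$. The same argument also yields the form of every divisor. The degenerate case where $f=1$ and all exponents are $0$ is trivial.

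For the gcd, set $d = \prod p_i^{c_i}$ with $c_i=\min(a_i,b_i)$. Since $c_i\le a_i$ and $c_i\le b_i$, the lemma gives $\dv{d}{a}$ and $\dv{d}{b}$. Now take any positive common divisor $c$ of $a$ and $b$. By the lemma, $c=\prod p_i^{g_i}$ with $g_i\le a_i$ and $g_i\le b_i$, so $g_i\le c_i$. Thus $\dv{c}{d}$, and therefore $c\le d$ by Theorem~\ref{podiv}(f2). Negative common divisors are smaller than $d$ automatically. So $d$ is the greatest common divisor.

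For the lcm, set $m=\prod p_i^{d_i}$ with $d_i=\max(a_i,b_i)$. Since $a_i\le d_i$ and $b_i\le d_i$, the lemma shows $m$ is a common multiple of $a$ and $b$. Let $M$ be any positive common multiple. I expect this to be the main obstacle: $M$ may contain primes outside $\{p_1,\dots,p_k\}$, so the lemma does not apply to $M$ directly. To handle this, factor $M$ by Theorem~\ref{fund} over the enlarged prime list $p_1,\dots,p_k,p_{k+1},\dots,p_t$, giving $a$, $b$ and $m$ exponent $0$ on the extra primes. The lemma holds verbatim over this enlarged list. Then $\dv{a}{M}$ and $\dv{b}{M}$ force the exponent of $p_i$ in $M$ to be at least $a_i$ and at least $b_i$, hence at least $d_i$. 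Therefore $\dv{m}{M}$, so $m\le M$, and $m$ is the least common multiple.

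As a check, the identity $\min(a_i,b_i)+\max(a_i,b_i)=a_i+b_i$ recovers the earlier result $\gcd(a,b)\cdot\lcm(a,b)=ab$.
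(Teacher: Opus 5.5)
Your proof is correct and takes the same approach as the paper. You show that $\prod p_i^{\min(a_i,b_i)}$ is a common divisor, then that every common divisor has exponents bounded by these minima and so divides it; the lcm is handled the same way. You also supply two steps the paper leaves implicit in ``we work similarly for the lcm'': the exponent characterization of divisibility (via Lemma~\ref{pfactor} and uniqueness of factorization), and the treatment of extra primes that can appear in an arbitrary common multiple.
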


\begin{proof}
We shall show that $D =  p_1^{c_1} p_2^{c_2}\ldots p_k^{c_k}$ satisfy 
the GCD properties.  Because $c_i \leq a_i$ for all $i$, 
we conclude $\dv{D}{a}$.  Because $c_i \leq b_i$ for all $i$, 
we conclude $\dv{D}{b}$.
Let $g$ by any common divisor of $a$ and $b$.  If $g$ divides $a$,then
Then $g =  p_1^{g_1} p_2^{g_2}\ldots p_k^{g_k}$ with $g_i \leq a_i$.
If $g$ divides $b$,then also $g_i \leq b_i$, i.e. $g_i \leq\min(a_i , b_i)$.
This means that $\dv{g}{D}$. This is equivalent to also having $g\leq D$. 
Thus any common divisor $g$ of $a,b$ is $g\leq D$. This means
$D$ is  $d=\gcd(a,b)$.
$ $ \\ $ $
We work similarly for the lcm.
\end{proof}

\begin{cor}
Let $a_i \geq 0 , b_i \geq 0$. Moreover $a,b \geq 1$.
Let
\[
  a = p_1^{a_1} p_2^{a_2} \ldots p_k^{a_k},  \quad
  b = p_1^{b_1} p_2^{b_2} \ldots p_k^{b_k},  \quad a_i , b_i \geq 0,
 i= 1 , \ldots , k.
\]
Then 
\[
 \lcm(a,b) \cdot \gcd(a,b) = a \cdot b.
\]
\end{cor}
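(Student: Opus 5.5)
The plan is to apply the preceding theorem (GCD UF) directly and reduce the claim to an identity about exponents. Write $a = p_1^{a_1}\cdots p_k^{a_k}$ and $b = p_1^{b_1}\cdots p_k^{b_k}$ over a common list of primes, allowing zero exponents; by the fundamental theorem (Theorem~\ref{fund}) both factorizations exist. The theorem just proved then gives
\[
\gcd(a,b)=\prod_{i=1}^{k} p_i^{\min(a_i,b_i)},\qquad
\lcm(a,b)=\prod_{i=1}^{k} p_i^{\max(a_i,b_i)} .
\]

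The key step is the elementary identity $\min(x,y)+\max(x,y)=x+y$ for real $x,y$. I would verify it by the two cases $x\le y$ and $x>y$: in the first case the minimum is $x$ and the maximum is $y$, and in the second they are swapped. Multiplying the two displayed products prime by prime and adding exponents gives
\[
\lcm(a,b)\cdot\gcd(a,b)=\prod_{i=1}^{k} p_i^{\min(a_i,b_i)+\max(a_i,b_i)}=\prod_{i=1}^{k} p_i^{a_i+b_i}=a\cdot b .
\]

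The degenerate case $a=1$ or $b=1$ needs a remark. There all exponents of that number are zero, so the formulas still hold, since the empty-exponent product is $1$. Alternatively, it follows directly because $\gcd(1,b)=1$ and $\lcm(1,b)=b$.

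I see no real obstacle. The only point needing care is that the GCD UF theorem is stated with both numbers written over the same primes $p_1,\dots,p_k$. So I would state explicitly that $\{p_i\}$ is the union of the primes dividing $a$ or $b$, with zero exponents inserted where needed. Uniqueness in Theorem~\ref{fund} guarantees that the exponents $a_i,b_i$ are well defined.
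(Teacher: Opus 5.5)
Your proposal is correct and follows the paper's proof essentially step for step. Both arguments take the $\min$/$\max$ exponent formulas from the GCD UF theorem and use $p_i^{\min(a_i,b_i)}p_i^{\max(a_i,b_i)}=p_i^{a_i}p_i^{b_i}$ prime by prime to conclude $\lcm(a,b)\gcd(a,b)=ab$; your remarks on the common prime list with zero exponents and on the case $a=1$ or $b=1$ are sound details the paper leaves implicit.
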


\begin{proof}
By the previous problem
we have
$c_i = \min (a_i , b_i ) \geq 0$, and
$d_i = \max (a_i , b_i ) \geq 0$, and
\[
 p_i^{c_i} p_i^{d_i} =
 p_i^{\min (a_i , b_i )}
 p_i^{\max (a_i , b_i )}
 = p_i^{a_i} p_i^{b_i}.
\]
Therefore
\[
\lcm(a,b) \gcd(a,b) = \prod_i p_i^{c_i} p_i^{d_i}
                    = \prod_i p_i^{a_i} p_i^{b_i}
                    = \prod_i p_i^{a_i} \prod_i  p_i^{b_i}
                    = a \cdot b.
\]
\end{proof}

\begin{exa}
Find the gcd and lcm of 256, and 8192
\end{exa}
\begin{solution}
We have $a= 256=2^8 =p_1^8$ and $b=8192=2^{13} =p_1^{13}$,
where $p_1 =2$ and $a_1 =8$ and $b_1 = 13$.
Then
$c_1 = \min (a_i , b_i ) = 8   $, and
$d_1 = \max (a_i , b_i ) = 13  $, and
$\gcd(a,b) =2^{c_1} =  2^8 = 256$
and
$\lcm(a,b) =2^{d_1} =  2^{13} = 8192$.
\end{solution}

\begin{exa}
Find the gcd and lcm of 60, and 630.
\end{exa}
\begin{solution}
We have
\[
a= 60 = 2^2 \cdot 3 \cdot 5 ,
\quad
b=630 = 2 \cdot 3^2 \cdot 5 \cdot 7
\]
Then
\[
\gcd(a,b) = \gcd(60,630) =
  2^{\min(1,2)} 3^{\min(1,2)} 5^{\min(1,1)} 7^{\min(0,1)} = 2^1 3^1 5^1 = 30.
\]
\[
\lcm(a,b) = \lcm(60,630) =
  2^{\max(1,2)} 3^{\max(1,2)} 5^{\max(1,1)} 7^{\max(1,1)} = 2^2 3^2 5^1 7^1 
= 1260.
\]
\end{solution}

\subsection{Prime number theorem}

Euclid's theorem reproved.

\bigskip %THEOREM 34 %%
\begin{thm}[Infinitely many primes]
\label{imp}
There are infinitely many prime numbers distinct from each other.
\end{thm}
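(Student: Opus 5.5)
The plan is to reprove Euclid's theorem using the Fundamental theorem of arithmetic (Theorem~\ref{fund}) and Theorem~\ref{prfactor}, avoiding the case split of the earlier proof. I argue by contradiction: suppose the set of primes is finite, say $p_1 < p_2 < \ldots < p_m$ with $p_1 = 2$, and these are \emph{all} the primes. Form the integer
\[
N = p_1 p_2 \cdots p_m + 1 .
\]
Since every $p_i \geq 2$, we have $N \geq 3 > 1$.

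The key step is to apply Theorem~\ref{prfactor} to $N$. This gives a prime $p$ with $\dv{p}{N}$; equivalently, one may take any prime appearing in the factorization $N = q_1^{e_1}\cdots q_k^{e_k}$ guaranteed by Theorem~\ref{fund}. By the finiteness assumption, $p = p_i$ for some $i$. Then $\dv{p}{p_1 \cdots p_m}$ by Theorem~\ref{podiv}(c1). Together with $\dv{p}{N}$, property (d2) of Theorem~\ref{podiv} gives $\dv{p}{N - p_1\cdots p_m}$, that is, $\dv{p}{1}$. By (f2), $p \leq 1$, which contradicts $p > 1$. Hence the list $p_1,\ldots,p_m$ cannot contain every prime, and there are infinitely many primes.

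To address ``distinct from each other'' in the statement, I will phrase the argument constructively as well. Given any finite set of distinct primes, the prime factor $p$ of $N$ is distinct from each of them, by the same divisibility argument. Iterating this produces an infinite sequence of pairwise distinct primes.

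I do not expect a real obstacle. The only point needing care is to use Theorem~\ref{prfactor} (or Theorem~\ref{fund}) for \emph{every} $N>1$, not only for composite $N$. That single step removes the separate ``$N$ prime'' case from the earlier proof, since if $N$ is itself prime, $p=N$ works and the same contradiction follows.
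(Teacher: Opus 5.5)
Your proposal is correct and follows the paper's proof essentially step for step: assume finitely many primes $p_1<\cdots<p_n$, apply Theorem~\ref{prfactor} to $N=p_1\cdots p_n+1>1$ to obtain a prime divisor $p$, and show that $p=p_i$ would force $p \mid 1$. The paper also applies Theorem~\ref{prfactor} directly to any $N>1$, so it likewise avoids the prime/composite case split of the earlier Euclid lemma; your remark about iterating the construction to get infinitely many distinct primes is a harmless addition.
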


\begin{proof}
Suppose that there are finitely many prime numbers i.e.
\[
p_1 < p_2 < \ldots < p_n
\]
that is $n$ distinct prime numbers exist. Then form
the product $N= p_1 p2 \ldots p_n +1$. Since $N>1$
by Theorem~\ref{prfactor} there is at least one 
prime $p$ dividing $N$. This $p$ cannot be any of the
$p_1 \ldots p_n$. Why ? Say $p=p_i$ for some $i$.
Then $\dv{p}{N}$ and $\dv{p}{p_1 \ldots p_n}$ which
would imply $\dv{p}{N-p1\ldots p_n}$. The latter
implies $\dv{p}{1}$ i.e. $p\leq 1$ but given $p$
is a prime number we must have $p>1$. A contradiction.
Thus $p$ is a prime number other than the ones
of  the finite  group $p_1 , \ldots , p_n$.
\end{proof}

%\bigskip
%\begin{dfn}
%Let $\pi (n)$ be the number of prime numbers less than or equal
%to $n$. Then $\pi(n) \rightarrow \infty $ as $n\rightarrow \infty$.
%\end{dfn}

\bigskip %THEOREM 35 %%
\begin{thm}[Prime number theorem]
\label{pnt}
Let $\pi (n)$ be the number of prime numbers less than or equal
to $n$. Then $\pi(n) \Rightarrow \infty $ as $n\Rightarrow \infty$.
It is $\pi(n) \approx n / \ln{n}$, or equivalently the following
holds.
\begin{equation}
\label{pn}
        \lim_{n \rightarrow \infty} \frac{\pi (n)}{ n / \ln{n}} =1.
\end{equation}
\end{thm}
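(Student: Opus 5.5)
The first claim, $\pi(n)\rightarrow\infty$, is immediate from Theorem~\ref{imp}: there are infinitely many primes, so $\pi(n)$ is non-decreasing and unbounded. The content is the asymptotic (\ref{pn}). The elementary tools of this chapter (divisibility, the Fundamental Theorem~\ref{fund}, gcd arguments) are not enough for it, so I would import a small amount of complex analysis. I would follow Newman's short analytic proof as streamlined by Zagier. The plan works with Chebyshev's function $\theta(x)=\sum_{p\leq x}\ln p$ instead of $\pi$ directly, proves $\theta(x)\sim x$, and then transfers the result back to $\pi$.

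The first step is an elementary Chebyshev bound $\theta(x)=O(x)$. Every prime $p$ with $n<p\leq 2n$ divides $\binom{2n}{n}$, and $\binom{2n}{n}\leq 2^{2n}$. Taking logarithms gives $\theta(2n)-\theta(n)\leq 2n\ln 2$, and summing over dyadic blocks gives $\theta(x)\leq Cx$. The only number theory needed here is that a prime dividing a product divides a factor (Lemma~\ref{pfactor}).

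Next I would introduce $\zeta(s)=\sum n^{-s}$ and $\Phi(s)=\sum_p (\ln p)\,p^{-s}$ for $\Re s>1$. Unique factorization (Theorem~\ref{fund}) gives the Euler product $\zeta(s)=\prod_p(1-p^{-s})^{-1}$, and logarithmic differentiation relates $-\zeta'/\zeta$ to $\Phi$ up to a function holomorphic on $\Re s>1/2$. I would show three things:
\begin{itemize}
\item $(s-1)\zeta(s)$ extends holomorphically to $\Re s>0$;
\item $\zeta(s)\neq 0$ for $\Re s\geq 1$;
\item consequently $\Phi(s)-\frac{1}{s-1}$ is holomorphic on $\Re s\geq 1$.
\end{itemize}
The non-vanishing on $\Re s=1$ is where I expect the main obstacle to lie. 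I would try the standard multiplicity argument first. If $\zeta$ had a zero of order $\mu$ at $1+i\alpha$ and order $\nu$ at $1+2i\alpha$, then evaluating residues of $\Phi$ in the nonnegative expression $\sum_p \frac{\ln p}{p^{1+\epsilon}}(p^{i\alpha/2}+p^{-i\alpha/2})^4$ gives $6-8\mu-2\nu\geq 0$, which forces $\mu=0$.

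The final analytic step is Newman's Tauberian lemma. If $f$ is bounded and locally integrable, and $\int_0^\infty f(t)e^{-zt}\,dt$ extends holomorphically to $\Re z\geq 0$, then $\int_0^\infty f(t)\,dt$ converges. I would apply it to $f(t)=\theta(e^t)e^{-t}-1$, which is bounded by the first step and has Laplace transform $\Phi(z+1)/(z+1)-1/z$. The lemma itself is proved by a contour integral over a cleverly chosen semicircle, using the bound on $f$ and a limiting argument. The conclusion is that $\int_1^\infty \frac{\theta(x)-x}{x^2}\,dx$ converges. From this I get $\theta(x)\sim x$ by contradiction: if $\theta(x)\geq\lambda x$ with $\lambda>1$ for arbitrarily large $x$, then since $\theta$ is monotone the integral over $[x,\lambda x]$ stays bounded below by a positive constant, contradicting convergence. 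The case $\lambda<1$ is symmetric.

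Finally I would pass from $\theta$ to $\pi$. On one side, $\theta(x)\leq\pi(x)\ln x$. On the other, for any $\epsilon>0$,
\[
\theta(x)\geq\sum_{x^{1-\epsilon}<p\leq x}\ln p\geq(1-\epsilon)\ln x\,\bigl(\pi(x)-x^{1-\epsilon}\bigr).
\]
Dividing by $x$ and letting $x\to\infty$, then $\epsilon\to 0$, yields (\ref{pn}).
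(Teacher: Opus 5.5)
Your outline is correct. It is the Newman--Zagier proof, and each step goes through as you state it:
\begin{itemize}
\item the reduction of $\pi(n)\to\infty$ to Theorem~\ref{imp};
\item the bound $\theta(x)=O(x)$ from $\binom{2n}{n}\le 4^n$;
\item the relation between $\Phi$ and $-\zeta'/\zeta$;
\item the inequality $6-8\mu-2\nu\ge 0$ forcing $\zeta(1+i\alpha)\neq 0$;
\item Newman's lemma applied to $f(t)=\theta(e^t)e^{-t}-1$, with transform $\Phi(z+1)/(z+1)-1/z$;
\item the monotonicity argument giving $\theta(x)\sim x$;
\item the $x^{1-\epsilon}$ split that returns you to $\pi$.
\end{itemize}

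The paper, by contrast, offers no argument: its proof reads ``Omitted''. It then uses the theorem only as a black box, in Theorem~\ref{pnt2} and in the density $P(E_2^\prime)\approx 1/\ln n$ behind the Bayes bound for the probabilistic primality tests.

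What your route buys is an actual, comparatively short proof. What it costs is a sizeable import of complex analysis that the notes never develop: analytic continuation, Cauchy's theorem, and convergence of the Euler product on $\Re s>1$. The step that carries most of that weight is the contour estimate in Newman's lemma, with kernel $e^{zT}(1+z^2/R^2)/z$ on the boundary of $\{|z|\le R,\ \Re z\ge -\delta\}$. You only describe that step, and it is the one you would have to write out in full.

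If you wanted something closer to the notes' own toolkit, the elementary Selberg--Erd\H{o}s proof runs on M\"obius inversion and Dirichlet convolution, which the last chapter develops. It is, however, considerably longer.

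One small precision in your first step. Lemma~\ref{pfactor} shows that each prime $n<p\le 2n$ divides $(2n)!$ but not $(n!)^2$, hence divides $\binom{2n}{n}$. You also need that the product of these pairwise coprime primes divides $\binom{2n}{n}$, which is Corollary~\ref{thm25} iterated.
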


\begin{proof}
Omitted.
\end{proof}

\begin{thm}
\label{pnt2}
Let $\pi (n)$ be the number of prime numbers less than or equal
to $n$.
Let $p_n$ be the $n$-th prime number.
The following applies.
\begin{equation}
\label{pofn}
        \lim_{n \rightarrow \infty} \frac{\pi (n)}{ n / \ln{n}} =1
        \Leftrightarrow
        \lim_{n \rightarrow \infty} \frac{p_n}{ n  \ln{n}} =1
\end{equation}
\end{thm}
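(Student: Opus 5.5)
The plan is to prove the two implications separately. Both rest on the identity $\pi(p_n)=n$ and on the monotonicity of $\pi$: for every $x$ with $p_n \le x < p_{n+1}$ we have $\pi(x)=n$. Theorem~\ref{imp} guarantees that $p_n$ is defined for every $n$ and that $p_n\to\infty$. We will freely use the fact that $\ln$ is increasing and that $\ln(ab)=\ln a+\ln b$.

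\emph{Forward direction ($\Rightarrow$).}
\begin{itemize}
\item[(1)] Substitute $x=p_n$ into $\pi(x)\ln x/x\to 1$. Since $\pi(p_n)=n$, this gives
\[
\frac{n\ln p_n}{p_n}\to 1 .
\]
\item[(2)] Take logarithms: $\ln n+\ln\ln p_n-\ln p_n\to 0$. Divide by $\ln p_n$ to obtain
\[
\frac{\ln n}{\ln p_n}+\frac{\ln\ln p_n}{\ln p_n}\to 1 .
\]
The second term tends to $0$, because $\ln y/y\to 0$ with $y=\ln p_n\to\infty$. Hence $\ln n/\ln p_n\to 1$.
\item[(3)] Write
\[
\frac{p_n}{n\ln n}=\frac{p_n}{n\ln p_n}\cdot\frac{\ln p_n}{\ln n}.
\]
Both factors tend to $1$, so $p_n/(n\ln n)\to 1$.
\end{itemize}

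\emph{Reverse direction ($\Leftarrow$).}
\begin{itemize}
\item[(1)] Given real $x\ge 2$, let $n=\pi(x)$, so that $p_n\le x<p_{n+1}$. Since $n\to\infty$ as $x\to\infty$, we get the sandwich
\[
\frac{n\ln p_n}{p_{n+1}}\;\le\;\frac{\pi(x)\ln x}{x}\;\le\;\frac{n\ln p_{n+1}}{p_n}.
\]
Here we use that $x\mapsto x/\ln x$ is increasing for $x\ge e$; alternatively, bound the numerator and the denominator separately.
\item[(2)] From $p_n\sim n\ln n$ we get $\ln p_n=\ln n+\ln\ln n+o(1)$, and therefore $\ln p_n/\ln n\to 1$. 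The same holds for $p_{n+1}$, since $\ln(n+1)/\ln n\to 1$.
\item[(3)] The ratio satisfies
\[
\frac{p_{n+1}}{p_n}=\frac{p_{n+1}}{(n+1)\ln(n+1)}\cdot\frac{(n+1)\ln(n+1)}{n\ln n}\cdot\frac{n\ln n}{p_n}\to 1 .
\]
\item[(4)] Both ends of the sandwich are then of the form
\[
\frac{n\ln n\,(1+o(1))}{n\ln n\,(1+o(1))}\to 1 ,
\]
so the squeeze gives the claim.
\end{itemize}

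\emph{Main obstacle.} The reverse direction is the delicate part: the hypothesis only controls $\pi$ at the primes, while the conclusion concerns all real (or integer) arguments. The squeeze requires showing $p_{n+1}/p_n\to 1$, which is a consequence of the hypothesis rather than an independent fact. We must also handle the logarithms with care, converting the statement $p_n\sim n\ln n$ into $\ln p_n\sim\ln n$. A smaller subtlety in the forward direction is justifying that $\ln\ln p_n/\ln p_n\to 0$, which is the elementary limit $\ln y/y\to 0$.
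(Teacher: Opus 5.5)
Your proof is correct. In the forward direction you use the same two ingredients as the paper, namely the substitution $\pi(p_n)=n$ and the logarithmic comparison showing that the ratio of logarithms tends to $1$, but you apply them in the opposite order. The paper first derives $\ln\pi(n)/\ln n\to1$ from the prime number theorem, multiplies to get $\pi(n)\ln\pi(n)/n\to1$, and only then specializes to $n=p_n$; you specialize first and take logarithms afterwards.

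Your order has a concrete advantage: you divide by $\ln p_n$ before discarding the term $\ln\ln p_n/\ln p_n$. The paper's chain instead drops $\ln\ln n/\ln n$ while that term is still multiplied by $\ln n$. As written, this asserts the false limit $\ln\pi(n)-\ln n\to0$, although the paper's conclusion $\ln\pi(n)/\ln n\to1$ is still right.

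For the reverse direction the paper only says ``shown similarly.'' Simply reversing its chain gives $\pi(x)\ln x/x\to1$ only along the subsequence $x=p_n$. Your sandwich $p_n\le x<p_{n+1}$, combined with the fact $p_{n+1}/p_n\to1$ that you derive from the hypothesis, is exactly the interpolation step needed to pass to all $x$. So your write-up is more complete than the paper's on this point.
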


\begin{proof}
$ $ \\ $ $
$\Rightarrow$.
We start with Eq.(\ref{pn}).
\begin{eqnarray}
\label{pn1}
 \lim_{n \rightarrow \infty} \frac{\pi (n)}{ n / \ln{n}} =
 \lim_{n \rightarrow \infty} \frac{\pi (n) \ln{n}}{ n } &=& 1 \nonumber \\
 \lim_{n \rightarrow \infty} \ln{\left( \frac{\pi (n) \ln{n}}{ n }\right)}
&=& \ln{1} \nonumber \\
 \lim_{n \rightarrow \infty}
   \left(  \ln{\left( \pi (n) \right)} +
           \ln{\left( \ln{n}  \right)} -
                      \ln{n} \right)
&=& 0 \nonumber \\
 \lim_{n \rightarrow \infty} \ln{n} \cdot
   \left(  \frac{\ln{\pi (n)}}{\ln{n}} +
           \frac{\ln{\left( \ln{n}  \right)}}{\ln{n}} -
                      1 \right)
&=& 0 \nonumber \\
 \lim_{n \rightarrow \infty} \ln{n} \cdot
   \left(  \frac{\ln{\pi (n)}}{\ln{n}} - 1 \right)
&=& 0 \nonumber \\
 \lim_{n \rightarrow \infty}
    \ln{\pi (n)}
&=&
 \lim_{n \rightarrow \infty}   \ln{n}
 \nonumber \\
 \lim_{n \rightarrow \infty}
    \frac{\ln{\pi (n)}}{\ln{n}}
&=& 1
\end{eqnarray}
We multiply Eq.(\ref{pn}) with Eq.(\ref{pn1}) to derive the following.
\begin{eqnarray}
\label{pn2}
 \lim_{n \rightarrow \infty}
    \frac{\ln{\pi (n)}}{\ln{n}} =1 &\wedge &
        \lim_{n \rightarrow \infty} \frac{\pi (n)}{ n / \ln{n}} =1
\Leftrightarrow \nonumber \\
 \lim_{n \rightarrow \infty}
 \frac{\pi (n) \cdot {\ln{\pi (n)}}}{n}
 &=& 1
\end{eqnarray}
In the latter Eq.(\ref{pn2}) we substitute
$n= p_n$ and $\pi (n) = \pi (p_n ) = n$.
We then obtain the following.
\begin{eqnarray}
\label{pn3}
 \lim_{n \rightarrow \infty}
 \frac{\pi (n) \cdot {\ln{\pi (n)}}}{n}
 &=& 1 \Leftrightarrow \nonumber \\
 \lim_{n \rightarrow \infty}
 \frac{n \cdot \ln{n}}{p_n}
 &=& 1 .
\end{eqnarray}
This concludes this case, after an inversion.
$ $ \\ $ $
$\Leftarrow$.
Shown similarly.
\end{proof}

\subsection{Mersenne numbers}

\begin{dfn}[Mersenne numbers]
The Mersenne number of order $k$ is denoted as $M_k = 2^k -1$,
where $k$ is a prime number.
\end{dfn}

\begin{thm}[Mersenne prime numbers]
If  $M_k = 2^k -1$ is a prime number, then
$M_k$ is a Mersenne prime (number).
\end{thm}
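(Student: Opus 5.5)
As written, the statement is nearly definitional: a Mersenne prime is a Mersenne number $M_k=2^k-1$ that happens to be prime. So the only substantive content I can attach to it is the classical companion fact that is implicitly needed for $M_k$ to be called a Mersenne number at all. If $2^k-1$ is prime, then $k$ itself must be prime, so requiring $k$ prime in the definition costs nothing. My plan is to prove this contrapositive form and then read off the statement from the definition of $M_k$.

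First I handle the degenerate cases. If $k=1$, then $2^k-1=1$, which is a unit and not a prime. So $k\geq 2$. Next, suppose $k$ is composite. By the Lemma on composites, $k=ab$ with $1<a<k$ and $1<b<k$. I then use the algebraic factorization
\[
2^{ab}-1=(2^a-1)\left(2^{a(b-1)}+2^{a(b-2)}+\cdots+2^a+1\right),
\]
which follows from the telescoping identity $x^b-1=(x-1)(x^{b-1}+\cdots+1)$ with $x=2^a$. This shows $\dv{(2^a-1)}{(2^k-1)}$.

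The step that needs care is checking that this divisor is nontrivial, so that Definition~\ref{prime} is violated. Since $a\geq 2$, we have $2^a-1\geq 3>1$. Since $a<k$, we have $2^a-1<2^k-1$. So $2^k-1$ has a factor strictly between $1$ and itself, and by the Lemma on composites it is composite. Hence if $2^k-1$ is prime, $k$ is prime, and $2^k-1$ is exactly a Mersenne number $M_k$ in the sense of the definition. Being prime, it is a Mersenne prime.

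I expect no real obstacle here. The one thing to flag is that the converse fails: $M_{11}=2047=23\cdot 89$. So the argument should not claim that every $M_k$ with $k$ prime is prime, only that primality of $2^k-1$ forces $k$ to be prime.
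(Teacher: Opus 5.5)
Your argument is correct, but it proves more than the paper does. The paper gives no proof of this statement. Its definition only introduces $M_k=2^k-1$ for prime $k$, so the hypothesis that $M_k$ is prime already carries the primality of $k$. The ``theorem'' then simply assigns the name Mersenne prime, so it functions as a definition.

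You instead read the statement for an arbitrary positive integer $k$. You prove the classical lemma that $2^k-1$ prime forces $k$ prime, using $2^{ab}-1=(2^a-1)(2^{a(b-1)}+\cdots+2^a+1)$ together with the correct nontriviality check $3\le 2^a-1<2^k-1$. This is the same geometric-series factorization the paper uses in the proof of Proposition~\ref{poly3}.

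What your route buys is a justification of the paper's convention of restricting $k$ to primes. That restriction loses no primes of the form $2^k-1$, so ``Mersenne prime'' and ``prime of the form $2^k-1$'' coincide. The paper's reading needs nothing beyond the definition, but it also explains nothing. Your caveat that the converse fails, via $2047=23\cdot 89$, matches the paper's own example.

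One small point: you tacitly take $k\ge 1$ when you dispose of $k=1$. If $k=0$ is allowed, $2^0-1=0$ is not prime either, so the conclusion is unaffected.
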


\begin{exa}
Find the first eight Mersenne numbers.
\end{exa}
\begin{solution}
The first eight Mersenne numbers are as follows.
\[
2^2 -1 =3, \quad
2^3 -1 =7, \quad
2^5 -1 =31, \quad
2^7 -1 =127, \quad
\]
\[
2^{11} -1 =2047, \quad
2^{13} -1 =8191, \quad
2^{17} -1 =131071, \quad
2^{19} -1 =524287, \quad
\]
\end{solution}

\begin{exa}
Find the first five Mersenne prime numbers.
\end{exa}
\begin{solution}
The first five  Mersenne primes are as follows.
\[
2^2 -1 =3, \quad
2^3 -1 =7, \quad
2^5 -1 =31, \quad
2^7 -1 =127, \quad
2^{13} -1 =8191, \quad
\]
Note that 2047 is missing from the second list.
This is because $2047 = 23 \cdot 89$.
One can verify 131071 and 524287 are both
Mersenne primes as well.
\end{solution}

\subsection{Fermat numbers}

\begin{dfn}[Fermat numbers]
The Fermat   number of order $k$ is denoted as $F_k = 2^{2^k}+1$,
$k \geq 0$.
\end{dfn}

\begin{thm}[Fermat prime numbers]
If  $F_k $ is a prime number, then
$F_k$ is a prime Fermat  number,
otherwise it is a composite Fermat number.
\end{thm}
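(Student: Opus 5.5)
The statement is essentially a naming convention, so the plan is to check that the dichotomy ``prime Fermat number'' versus ``composite Fermat number'' is exhaustive and well defined for every $k \geq 0$. What has to be ruled out is that some $F_k$ falls outside both categories, i.e.\ that $F_k$ is $0$, a unit $\pm 1$, or negative. Once that is done, Definition~\ref{prime} and Definition~\ref{composite} do the rest.

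\textbf{Step 1.} I will show $F_k \geq 3$ for all $k \geq 0$. Since $k \geq 0$, we have $2^k \geq 1$, hence $2^{2^k} \geq 2$ and $F_k = 2^{2^k}+1 \geq 3$. In particular $F_k \in \mb{Z}_+^*$ and $F_k \neq \pm 1$.

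\textbf{Step 2.} By Definition~\ref{composite}, any $n \in \mb{Z}^*$ with $n \neq \pm 1$ is either prime or composite. I will apply this to $n = F_k$. If its only positive divisors are $1$ and $F_k$, then it is prime in the sense of Definition~\ref{prime}, and we call it a prime Fermat number. Otherwise, by the Lemma on composites, it has a factor $a$ with $1 < a < F_k$; it is then composite, and we call it a composite Fermat number. By Theorem~\ref{prfactor} it then also has a prime factor. The two cases are mutually exclusive, because a prime has no factor strictly between $1$ and itself.

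\textbf{Step 3 (optional).} To show that both labels are actually used, I will exhibit one instance of each. $F_0 = 3$, $F_1 = 5$, $F_2 = 17$, $F_3 = 257$ and $F_4 = 65537$ are prime; for $F_4$ this can be checked with Theorem~\ref{prorprfactor} by trial division by primes up to $\sqrt{65537} \approx 256$. For the composite case I will cite Euler's factorization $F_5 = 641 \cdot 6700417$.

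The only real obstacle is conceptual rather than technical: making sure the degenerate cases ($F_k$ being a unit) cannot arise, which Step~1 settles. The verification of primality of $F_4$ in Step~3 is the only laborious computation, and it is not required for the statement itself.
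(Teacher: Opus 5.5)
Your proposal is correct. It differs from the paper only in that it supplies an argument at all: the paper gives no proof, since the ``theorem'' is really a definition of the terms \emph{prime Fermat number} and \emph{composite Fermat number}. It is followed immediately by examples listing $F_0,\dots,F_5$ and the factorization $F_5 = 641\cdot 6700417$.

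Your Step~1 ($2^k \geq 1 \Rightarrow F_k \geq 3$) is the one piece of content the paper leaves implicit. It guarantees that no $F_k$ is $0$ or a unit, so Definition~\ref{composite} applies and every Fermat number receives exactly one of the two labels.

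Your Step~3 coincides with the paper's examples. The paper simply asserts that $F_0,\dots,F_4$ are prime, whereas you name a method of verification.

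If you want to shorten the trial division for $F_4$, use orders. A prime $p \mid F_k$ satisfies $2^{2^k}\equiv -1 \pmod p$, hence $\ord_p(2)=2^{k+1}$, and Corollary~\ref{cor52} then gives $2^{k+1} \mid p-1$. For $F_4$ the only primes $\equiv 1 \pmod{32}$ below $\sqrt{65537}$ are $97$ and $193$. Neither divides $65537$: the remainders are $62$ and $110$.
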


\begin{exa}
Find the first five Fermat numbers.
\end{exa}
\begin{solution}
\[
F_0 = 3 , \quad F_1 = 5, \quad F_2 = 17 , \quad F_3 = 257,
\]
\[
F_4 = 65537 , \quad F_5 = 4294967297.
\]
All first give  are prime Fermat numbers (to mychecking).
But for the last one
\[
4294967297 = 641 * 6700417,
\]
we conclude $F_5$ is a composite Fermat number.
\end{solution}

\begin{exa}
Find the first five   prime Fermat numbers.
\end{exa}
\begin{solution}
Consider,
\[
(F_n -1)^2 +1 = (2^{2^n} +1 -1)^2 +1  = 2^{2 \cdot 2^n} +1 = F_{n+1} .
\]
Moreover
\[
F_{n+1} -2 = 2^{2^{n+1}} -1 = ( 2^{2^n} +1 ) ( 2^{2^n} -1 ) = F_n (F_n -2).
\]
By induction
\[
F_{n+1} -2 =F_n \cdot (F_n -2) = F_n \cdot \left( F_{n-1} (F_{n-1} -2) \right)
           =F_n \cdot  F_{n-1}  (F_{n-1} -2)
           = F_n F_{n-1} \ldots F_0 .
\]
Therefore
\[
\dv{F_k}{F_{n+1}-2},  \quad 0\leq k \leq n.
\]
\end{solution}

\newpage

\section{A short review of algebra definitions}

We recite some definitions.
This section can be skipped as needed.

\begin{dfn}[Equivalence relation]
A binary relation $R$ on $X$ is an equivalence relation on $X$
if for every $x,y,z \in X$ the following properties are
satisfied.
\begin{itemize}
\item (a) Reflexivity $xRx$ for every $x \in X$,
\item (b) symmetry $xRy$ then $yRx$, and
\item (c) transitivity $xRy$ and $yRz$ then $xRz$.
\end{itemize}
\end{dfn}

\begin{dfn}[Equivalence classes]
In an equivalence relation we can decompose $X$ into
disjoint subsets known as equivalence classes.
Then for each $x \in X$ the set of elements equivalent to $x$
define
\[
      [ x / R ]       = \{ y \in X : xRy \}.
\]
Then
\[
p : X \rightarrow X/R \ \mathrm{with} \  p(x) = [ x ].
\]
\end{dfn}

\noindent
For an equivalence relation $R$ if $aRb$ the $[a]=[b]$.
(It is the same as $[a/R]=[b/R]$.)

\begin{dfn}[Residue classes modulo $n$]
Let $n \in \mb{N}$.
The set of all residue classes modulo $n$ or congruences
mod $n$ (or modulo $n$) is
denoted in simple form by $\mb{Z}/n\mb{Z}$.
\[
 \mb{Z}/n\mb{Z} = \{ 0, 1 , 2, \ldots , n-1 \} .
\]
This is supposed to mean the following.
\[
 \mb{Z}/n\mb{Z} = \{ (0)_n , (1)_n , (2)_n , \ldots , (n-1)_n \} .
\]
\end{dfn}

We read this set `` $\mb{Z}$ modulo $n$ '' or `` $\mb{Z}$ mod $n$ ''.
Sometimes we write $\mb{Z}_n$ instead of $\mb{Z}/n\mb{Z}$.
The latter notation explains set $\mb{Z}$ is split by $n$ into
$n$ equivalence classes.  Two integer $x,y$ belong to the same
equivalence class if
%$ x \equiv y \pmod n$ or
$(x-y)$ or $(y-x)$ is a
multiple of $n$. The equivalence class of
$x$ is $x \bmod n$.

\subsection{Binary operators and operations}

\begin{dfn}[Binary operator]
A binary operator defines a binary operation.
\end{dfn}

A binary operation is the operation implied by a binary operator.

\begin{dfn}[Binary operation]
For a set $S$ and a binary operator $\oplus$, a
binary operation is a rule that maps    to each
pair $(x,y) \in S \times S$ a unique $z \in S$.
\end{dfn}

Operations $+$, $*$ are binary operations for a
set $S = \mb{N}$ or $S = \mb{R}$.
In all definitions below we assume $S$ is non-empty.

\subsection{Groupoids and semigroups}

\begin{dfn}[Magma/Groupoid]
Let $S$ be a set, and $\oplus$ be a
closed operator $S \times S \mapsto S$ defining
a total function  as follows.
\[
\forall (x,y) \in S \times S:  (x,y) \mapsto x \oplus y \in S .
\]
The structure $S$ equipped with $\oplus$ as structured
is a magma. We then say $(S, \oplus )$ defines  a magma,
also known as a groupoid.
\end{dfn}

Operator $\oplus$, and when introduced operator $\otimes$,
are closed operators $S \times S \mapsto S$ defining
a total function  as follows.
\[
\forall (x,y) \in S \times S:  (x,y) \mapsto x \oplus y \in S .
\]
This is also known as a closure property.

\begin{dfn}[Semigroup]
Let $S$ be a non-empty set, and $\oplus$ be an operator
$\oplus : S \times S \mapsto S$.
If operator $\oplus$ has the closure property and is associative,
then the structure $S$ equipped with $\oplus$ as structured
is a semigroup.
\begin{enumerate}
\item Closure property: $ \forall x \in S, \forall y \in S:
x \oplus y \in S$.
\item Associativity : $ (x \oplus y) \oplus z = x \oplus (y \oplus z)
\quad \forall x \in S, \forall y \in S, \forall z \in S$,
\end{enumerate}
We then say equivalently that $(S, \oplus )$ defines  a semigroup.
\end{dfn}

A monoid is a semigroup with an identity element.

\begin{dfn}[Monoid]
An $(S, \oplus )$ that is a semigroup it becomes a monoid if
there exists an element $e$ in $S$ such that
\[
\forall x \in S : e \oplus x = x \oplus e = x.
\]
Then  $(S, \oplus )$ is called a monoid.
\end{dfn}

A monoid equipped with an inverse (every element has
an inverse element) is a group.

\subsection{Groups}

\begin{dfn}[Group]
An $(S, \oplus )$ that is a monoid it becomes a group
if every element of $S$ has an inverse element.
\[
\forall x \in S , \exists y \in S : x \oplus y = y \oplus x = e,
\]
where $e$ is the identity element of the underlying monoid.
Then  $(S, \oplus )$ is called a group.
\end{dfn}

Below we give a standalone definition of a group.

\begin{dfn}[Group]
An $(S, \oplus )$ with $S$ non-empty is a group if  it has the
following properties.
\begin{enumerate}
\item Closure property: $ \forall x \in S, \forall y \in S:
x \oplus y \in S$.
\item Associativity : $ (x \oplus y) \oplus z = x \oplus (y \oplus z)
\quad \forall x \in S, \forall y \in S, \forall z \in S$,
\item Identity: there exists an $e \in S$ such that
$ \forall x \in S : e \oplus x = x \oplus e = x$.
\item Inverse:
$\forall x \in S , \exists y \in S : x \oplus y = y \oplus x = e$.
\end{enumerate}
Then  $(S, \oplus )$ is called a group.
\end{dfn}

\begin{exa}
For $S=\mb{Z}$ and $\oplus = +$ we define $e=0$.
The inverse $y$ of $x$ is then denoted as $-x$.
$(\mb{Z}, +)$ is a group. It is also an Abelian group,
defined next.
\end{exa}

\begin{dfn}[Commutative or Abelian group]
An $(S, \oplus )$ that is a group becomes a commutative
group if
\[
\forall x \in S,  \forall y \in S : x \oplus y = y \oplus x.
\]
Then  $(S, \oplus )$ is called a commutative group,
also known as an Abelian group.
\end{dfn}

$(\mb{R}, +)$ is also a commutative (abelian) group.

\begin{exa}
For $S=\mb{R}$ and $\oplus = +$ we define $e=0$
The inverse $y$ of $x$ is then denoted as $-x$.
$(\mb{Z}, +)$ is a group.
\end{exa}

\begin{exa}
For an $\oplus = +$ operator we use  0 as its identity
element that is, $e=0$. We denote the inverse of $a$
as $-a$.
For an $\oplus = *$ operator we use  1 as its identity
element that is, $e=1$.
We denote the inverse of an $a \neq 0$ as
$a^{-1}$ or $1/a$.
\end{exa}
A group with operator $\oplus = +$ is known as
an additive group.
A group with operator $\oplus = *$ is known as
a multiplicative group.

\begin{dfn}[Order of a group]
The order of a group $(S, \oplus)$ is the number of its elements,
and it is denoted $|S|$ (cardinality symbol).
\end{dfn}

\begin{dfn}[Subgroup]
For a group $(S, \oplus)$ and a non-empty subset $T$ of
$S$ that is $T \subseteq S$, we say $T$ a sugroup of $S$
if $(T, \oplus)$ is a group.
\end{dfn}

The order of a group is the cardinality of its underlying
set $S$.

$(\mb{R},  +    )$ is also an abelian group.
$(\mb{Z}_+,  +    )$ is not an abelian group. There is
no identity element then.
$(\mb{Z}_+,  *    )$ is not an abelian group. There is
an identity element then which is $1$, but there is no inverse
for most elements (other than one)!
$(\mb{Q}_+,  *    )$ is  an abelian group.
$(\mb{R}_+,  *    )$ is  an abelian group.
$(\mb{Q}^* , *    )$ is  an abelian group.
$(\mb{R}^* , *    )$ is  an abelian group.

\begin{exa}
For $S=\mb{R} - \{ 0 \} = \mb{R}^{x}$
and $\oplus = \cdot$ we define $e=1$
The inverse $y$ of $x$ is then denoted as $1/x$.
$(\mb{R}^{x}, \cdot )$ is a group.
\end{exa}

$(\mb{R}^{x}, \cdot )$ is also an abelian group.

\begin{dfn}[Cyclic group]
For a group $(S, \otimes)$  and $a \in S$,
the elements $a^i$, where $i$ is an integer, form a subgroup
of $S$, called the subgroup generated by $a$.
A group is cyclic if there is an element $a \in S$ that
the subgroup generated by $a$ is $S$ itself.
We can the write $S$ as follows.
\[
 S = \{ e , a , a^2 , \ldots , a^{k-1} \},
\]
where $e$ is the identity of the group,
and $k$ is the smallest (positive) integer such that
$a^k =e $.
\end{dfn}

If $S$ is infinite,
\[
 S = \{ \ldots a^{-2}, a^{-1} , e , a , a^2 , \ldots  \},
\]

\subsection{Rings and Integral domains}

\begin{dfn}[Ring]
A ring is an algebraic structure consisting of a
set $S$ in which two binary closed operations
are defined say $\oplus , \otimes$.
Then, $(S, \oplus , \otimes)$ is a ring
if (a) $(S, \oplus )$ is an abelian group,
(b) $(S , \otimes )$ is a semigroup,
and
(c) $ \otimes$ is distributive over $\oplus$,
with $e$ being the identity element of $(S, \oplus )$.
A ring denoted as  $(S, \oplus, \otimes )$
has the following properties.
\begin{enumerate}
\item Closure property for $\oplus$:
      $ \forall x \in S, \forall y \in S: x \oplus y \in S$.
\item Associativity for $\oplus$:
      $\forall x \in S, \forall y \in S, \forall z \in S,
       (x \oplus y) \oplus z = x \oplus (y \oplus z) $
\item  Commutativity for $\oplus$:
      $ \forall x \in S , \forall y \in S : x \oplus y = y \oplus x$.
\item Identity for $\oplus$: there exists an $e=0 \in S$ such that
      $ \forall x \in S : 0 \oplus x = x \oplus 0 = x$.
\item Additive inverse for $\oplus$:
      $\forall x \in S ,\exists y=-x \in S:x \oplus y=y \oplus x=e$.
\item Closure property for $\otimes$:
      $ \forall x \in S, \forall y \in S: x \otimes y \in S$.
\item Associativity for $\otimes$:
      $ \forall x \in S,  \forall y \in S, \forall z \in S,
      (x \otimes y) \otimes z = x \otimes (y \otimes z) $.
\item Distribution of $\otimes$ with respect to $\oplus$: \\
      $ \forall x \in S , \forall y \in S , \forall z \in S :
      x \otimes (y \oplus z ) = (x \otimes y) \oplus (x \otimes z)
      \wedge
      (x \oplus y ) \otimes z = (x \otimes z) \oplus (y \otimes z).$
%
%\item Identity for $\otimes$: there exists an $e=1 \in S$ such that
%      $ \forall x \in S : 1 \oplus x = x \oplus 1 = x$.
%
%
\end{enumerate}
Then  $(S, \oplus , \otimes )$ is called a ring.
\end{dfn}

All of $\mb{Z}, \mb{Q}, \mb{R}, \mb{C}$ are rings;
in fact they are commutative rings for $\otimes$ being commutative
as well.
Non-commutative rings are $n \times n$ matrices
whose elements are from the rings above.

\begin{dfn}[Commutative ring]
$(S, \oplus , \otimes )$ is called a commutative ring if
$(S, \oplus , \otimes )$ is a ring and
$\otimes$ is also commutative.
\[
\forall x \in S, \forall y \in S : x \otimes y = y \otimes x.
\]
\end{dfn}

\begin{dfn}[Ring with identity]
$(S, \oplus , \otimes )$ is called a ring with identity if
$(S, \oplus , \otimes )$ is a ring and
$\otimes$ supports an identity element let us call it $1$.
\[
\text{Identity for}\  \otimes: \exists e=1 \in S:
       \forall x \in S : 1 \otimes x = x \otimes 1 = x.
\]
\end{dfn}

\begin{dfn}[Integral domain]
Let $(S, \oplus , \times  )$ be a non-zero
commutative ring with $0= e \neq 1$, where
$e$ or $0$ is the identity of $\oplus$ and
$1$ is the identity of $\otimes$,
that has no zero divisors  which is equivalent
to the following.
\[
a \neq 0 , b \neq 0  \Rightarrow c=ab \neq 0
\ \text{or equivalently} \
 c=ab =0 \Rightarrow  a=0 \vee b=0.
\]
Then the structure
$(S, \oplus , \otimes )$ is called an
 integral domain.
\end{dfn}

\begin{dfn}[Division ring]
A division ring $(S, \oplus , \otimes )$  is a
ring with identity $1 \neq 0$
with an inverse element over $\otimes$, that is
\[
\forall x \in S, x \neq 0, \exists a \in S : ax=xa=1.
\]
\end{dfn}

\begin{dfn}[Ideal]
For a ring $(S, \oplus , \otimes)$,
a left ideal $I$ of $S$ is a subset of $S$ that
is a subgroup of the structure $(S, \oplus )$ of the ring
that is closed under left multiplication over
elements of $S$. Thefore $e \in I$, $e=0$ being
the identity of $(S, \oplus)$, and for every
$a,b \in I$ and $x \in S$ the following are true.
\[
a + b \in I, \quad
-a    \in I, \quad
x a   \in I .
\]
A right ideal is defined similarly.
A two-sided ideal is a left and right ideal;
oftentimes it is called an ideal.
If the ring is commutative, then the left, right, and
two-sided ideals coincide and the term ideal identifies
all three of them.  Then the ideal $I$ is an abelian subgroup.
\end{dfn}

\subsection{Fields}

A field can be defined as an extension of a ring as follows.

\begin{dfn}[Field from ring]
A field
$(S, \oplus , \otimes)$
is a division ring that supports a commutative $\otimes$.
\[
\forall x \in S, \forall y \in S : x \otimes y = y \otimes x.
\]
\end{dfn}

A field can also be defined as an extension of two groups  as follows.

\begin{dfn}[Field from groups]
For a field $(S, \oplus , \otimes )$,
$(S, \oplus )$ is an abelian group,
$(S, \otimes )$ is an abelian group but with an inverse
defined only for $S- \{ 0 \}$.
Moreover  $\otimes$ is distributive with respect to $\oplus$.
\end{dfn}

An equivalent standalone definition of a field follows.

\begin{dfn}[Field]
$(S, \oplus , \otimes)$ is a field with two binary
operators $\oplus , \otimes$ if $S$ has
at least two elements, there exist $1 \neq 0$
where $0$ is the identity of $\oplus$,
and   $1$ is the identity of $\otimes$
and it supports the following properties.
\begin{enumerate}
\item Closure property for $\oplus$:
      $ \forall x \in S, \forall y \in S: x \oplus y \in S$.
\item Associativity for $\oplus$:
      $\forall x \in S, \forall y \in S, \forall z \in S,
       (x \oplus y) \oplus z = x \oplus (y \oplus z) $
\item  Commutativity for $\oplus$:
      $ \forall x \in S , \forall y \in S : x \oplus y = y \oplus x$.
\item (Additive) Identity for $\oplus$: there exists an $e=0 \in S$ such that
      $ \forall x \in S : 0 \oplus x = x \oplus 0 = x$.
\item Additive inverse for $\oplus$:
      $\forall x \in S ,\exists y=-x \in S:x \oplus y=y \oplus x=e$.
\item Closure property for $\otimes$:
      $ \forall x \in S, \forall y \in S: x \otimes y \in S$.
\item Associativity for $\otimes$:
      $ \forall x \in S,  \forall y \in S, \forall z \in S,
      (x \otimes y) \otimes z = x \otimes (y \otimes z) $.
\item  Commutativity for $\otimes$:
      $ \forall x \in S , \forall y \in S : x \otimes y = y \otimes x$.
\item (Multiplicative) Identity for $\otimes$: there exists
      an $e=1 \in S$, $1 \neq 0$ such that
      $ \forall x \in S :  x \otimes 1 = 1 \otimes x = x$.
\item (Multiplicative) Inverse for $\otimes$:
      $\forall x \in S , \exists y=-x \in S:x \otimes y=y \otimes x=e$.
\item Distribution of $\otimes$ with respect to $\oplus$:
      $ \forall x \in S , \forall y \in S , \forall z in S $:
\[
      x \otimes (y \oplus z ) = (x \otimes y) \oplus (x \otimes z),
\]
\[
      (x \oplus y ) \otimes z = (x \otimes z) \oplus (y \otimes z).
\]
%
%\item Identity for $\otimes$: there exists an $e=1 \in S$ such that
%      $ \forall x \in S : 1 \oplus x = x \oplus 1 = x$.
%
%
\end{enumerate}
Then  $(S, \oplus , \otimes )$ is called a field.
\end{dfn}

Roughly speaking for a field
$(S, \oplus , \otimes )$,
$(S, \oplus )$ is an abelian group,
$(S, \otimes )$ is short of an abelian group, with
``short'' defined that only elements $x \neq e$ i.e.
$x \neq 0$ have a multiplicative inverse $y= 1/x = x^{-1}$,
that is,  $(S - \{ e \} , \otimes )$ is a group.

\begin{dfn}[Finite field]
A finite field is a field that has a finite
number of elements or equivalently $|S| < \infty$.
The order of a field is the number of its elements.
\end{dfn}

\begin{thm}[Galois]
Finite fields exists for an order $q$ that is a prime
power that is $q=p^k$, where $k\geq 1$.
\end{thm}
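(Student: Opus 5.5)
The statement has two halves, and I would treat them separately. The first is that a finite field must have prime-power order. The second is that for every prime power $q=p^k$ a field of that order exists. The first half is elementary and uses only the algebra definitions just reviewed plus the divisibility facts of this chapter. The second half needs a construction, and I expect it to be the main obstacle.

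\emph{Order is a prime power.} Let $(S,\oplus,\otimes)$ be a finite field and consider the elements $1, 1\oplus 1, 1\oplus 1\oplus 1,\ldots$. Since $S$ is finite, two of these coincide, so some positive multiple $n\cdot 1$ equals $0$; let $p$ be the least such $n$. Then $p$ is prime. Otherwise $p=ab$ with $1<a,b<p$. By distributivity $(a\cdot 1)\otimes(b\cdot 1)=p\cdot 1=0$. A field has no zero divisors, since a nonzero $x$ has an inverse and $xy=0$ then gives $y=0$. Hence $a\cdot 1=0$ or $b\cdot 1=0$, contradicting minimality. So the set $F_p=\{0,1,\ldots,(p-1)\cdot 1\}$ is a copy of $\mb{Z}/p\mb{Z}$ inside $S$. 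It is a field because every nonzero class is invertible mod $p$: this is the theorem on inverses proved via Theorem~\ref{egcd}, applicable since $\gcd(a,p)=1$. Now $S$ is a vector space over $F_p$, with scalar multiplication given by $\otimes$; the axioms follow from associativity and distributivity. Being finite, $S$ has a finite spanning set, hence a basis $v_1,\ldots,v_k$. Every element is uniquely $\sum c_i v_i$ with $c_i\in F_p$, so $|S|=p^k$.

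\emph{Existence for $q=p^k$.} For $k=1$, $\mb{Z}/p\mb{Z}$ works by the same inverse argument. For $k>1$ I would take an irreducible polynomial $f$ of degree $k$ over $\mb{Z}/p\mb{Z}$. I would then form the ring of polynomials modulo $f$, whose elements are the $p^k$ residues of degree $<k$. Invertibility of nonzero residues follows exactly as for integers. There is a division algorithm for polynomials, the analogue of Theorem~\ref{divis}, and hence an extended Euclid, the analogue of Theorem~\ref{egcd}, for polynomials. Irreducibility of $f$ gives $\gcd(g,f)=1$ for every nonzero $g$ of degree $<k$, so $gu+fv=1$ for some $u,v$, and $u$ inverts $g$.

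The hard step is proving that an irreducible $f$ of degree $k$ exists for every $k$. I would first try a counting argument. Let $N_d$ be the number of monic irreducibles of degree $d$. Unique factorization of polynomials, proved like Theorem~\ref{fund}, gives $p^k=\sum_{d\mid k} dN_d$. From this, $kN_k \geq p^k-\sum_{d\le k/2}p^d > 0$. As a fallback I would use a splitting-field argument instead: adjoin roots of $x^{p^k}-x$ and show the roots form a field of size $p^k$. That route needs Frobenius $x\mapsto x^p$ being additive and the derivative test for distinct roots, so I regard it as the less self-contained option.
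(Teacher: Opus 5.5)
The paper states this theorem without proof; it is quoted as a known fact in the review of algebraic structures. So there is nothing to compare against, and your proposal has to stand on its own. Most of it does. Your first half is correct, and it also covers the converse that the paper's wording leaves implicit: the additive order $p$ of $1$ is prime because a field has no zero divisors, and $S$ is a finite-dimensional vector space over the prime subfield, hence $|S|=p^k$. The construction of a field of order $p^k$ as $(\mb{Z}/p\mb{Z})[x]/(f)$ for an irreducible $f$ of degree $k$ is also fine. Your closing estimate is right once the counting identity is available. Applying the identity at level $d$ gives $dN_d\le p^d$, and $\sum_{d=1}^{\lfloor k/2\rfloor}p^d<p^{\lfloor k/2\rfloor+1}\le p^k$.

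The gap is the step you yourself flag as hard: $p^k=\sum_{\dv{d}{k}}dN_d$ does not follow from unique factorization by direct counting. Counting monic polynomials of degree $k$ by the degrees of their irreducible factors gives only a sum over partitions of $k$ of products of binomial coefficients $\binom{N_d+m_d-1}{m_d}$, where $m_d$ is the number of factors of degree $d$. Turning that into a divisor sum needs an additional idea. The elementary one is to encode unique factorization as an identity of formal power series,
\[
\frac{1}{1-pt}=\sum_{n\ge 0}p^n t^n=\prod_{d\ge 1}\left(1-t^d\right)^{-N_d},
\]
where the product makes sense because each coefficient involves only finitely many factors. Applying the logarithmic derivative $F\mapsto tF'/F$ to both sides yields
\[
\frac{pt}{1-pt}=\sum_{d\ge 1}dN_d\,\frac{t^d}{1-t^d},
\]
and comparing coefficients of $t^k$ gives the identity. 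The other standard derivation writes $x^{p^k}-x$ as the product of all monic irreducibles whose degree divides $k$ and compares degrees. That route requires additivity of $x\mapsto x^p$ and the derivative test for repeated roots, which are exactly the tools you set aside when you called the splitting-field route less self-contained. So your preference for the counting route is justified only if you supply the generating-function argument. With it, the proof is complete and entirely elementary.
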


A finite field is also called a Galois field and denoted
$GF(q)$.

\newpage

\section{Modular arithmetic}

\begin{nte}
The terms congruence and residue, 
congruence class and residue class will be 
used interchangeably in the remainder.
\end{nte}

\begin{dfn}[Congruence modulo $n$]
Let $n \in \mb{Z}_+^*$. 
For $a,b \in \mb{Z}$ we say
$a \equiv b \pmod n$ if $n / (a-b)$,
or equivalently, if the remainder of the
division of $a$ by $n$ is equal 
to the remainder of the division of $b$ by $n$.
\[
\forall a,b \in \mb{Z} \quad   a \equiv b \pmod n
\quad \Leftrightarrow
\quad
   \dv{n}{a-b}.
\]
\end{dfn}

Congruence modulo $n$ and congruence mod $n$ mean
the same thing.
We can read $a \equiv b \pmod n $  by saying that
''a is congruent to b modulo n''.
Then the ''difference of a and b is a multiple of n''
or ''n divides the difference of a and b''.
Note that several times the $\equiv$ is replaced by $=$
and the parentheses around the mod are dropped.

\bigskip %THEOREM 37 %%
\begin{prp}[Modular division]
\label{moddiv}
Let $n \in \mb{N}$ and $n>1$ and let $a\in \mb{Z}$.
Then there exists a unique integer $0\leq r < n$ such that 
\[
a \equiv r \pmod n.
\]
\end{prp}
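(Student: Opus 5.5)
The plan is to derive both halves of the statement directly from the Division theorem (Theorem~\ref{divis}) and the definition of congruence modulo $n$.

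\emph{Existence.} Apply Theorem~\ref{divis} with dividend $a$ and divisor $n$. Since $n>1$, we have $n \in \mb{Z}^*$ and $|n|=n$. The theorem gives integers $q,r$ with $a = nq + r$ and $0 \leq r < n$. Then $a - r = nq$, so $\dv{n}{a-r}$, which by definition means $a \equiv r \pmod n$. The remainder $r$ is the required integer.

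\emph{Uniqueness.} Suppose $0 \leq r, r' < n$ both satisfy $a \equiv r \pmod n$ and $a \equiv r' \pmod n$. Then $\dv{n}{a-r}$ and $\dv{n}{a-r'}$. Theorem~\ref{podiv}(d2) gives $\dv{n}{(a-r')-(a-r)}$, that is, $\dv{n}{r-r'}$. The bounds give $-n < r - r' < n$, so $|r-r'| < n$. If $r \neq r'$, Theorem~\ref{podiv}(f1) would force $n \leq |r-r'|$, a contradiction. Hence $r = r'$.

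As an alternative route to uniqueness, write $a - r = nm$, so $a = nm + r$ with $0 \leq r < n$. The uniqueness part of Theorem~\ref{divis} then identifies $r$ with the division remainder. I expect neither step to be a real obstacle. The only point needing care is the bound $|r-r'|<n$, which is exactly what lets (f1) force $r-r'=0$.
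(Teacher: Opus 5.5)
Your proof is correct and follows the paper's approach: existence comes from Theorem~\ref{divis} with divisor $n$, giving $a-r=nq$.

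For uniqueness, the paper simply appeals to the uniqueness clause of Theorem~\ref{divis}, which is your alternative route. Your direct argument via Theorem~\ref{podiv}(d2) and (f1) spells out the step the paper leaves implicit, namely that any $r$ with $0\le r<n$ and $a\equiv r \pmod n$ yields a representation $a=nm+r$.
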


\begin{proof}
If $n > 1   $ and $a \in \mb{Z}$ division (Theorem~\ref{divis}) implies
that there are unique $q,r$ such that 
$a = n q+ r$, with $0 \leq r < |n|$. If in addition $n$ is positive
(e.g. $n > 1$), then $0 \leq r < n$.
Then $a-r = n q$ i.e. $a \equiv r \pmod n$.
\end{proof}

\begin{dfn}[Least residue]
If $a \equiv b \pmod{n}$ we refer to $b$ as the
residue of $a$ modulo $n$. For a $b$ such that
$0 \leq b < n$, we refer to $b$ as the least non-negative
residue of $a$ modulo $n$.
\end{dfn}

\begin{nte}
Oftentimes the least non-negative residue of $a$ modulo
$n$ is denoted as $a \bmod n$.
Therefore
\[
 a \equiv b \pmod n  \Leftrightarrow  a \bmod n = b \bmod n.
\]
\end{nte}

\begin{lem}
\label{eqrlem}
Let $n \in \mb{Z}_+$, and $a,b \in \mb{Z}$.
The following conditions are equivalent.
\begin{enumerate}
\item $a \equiv b \pmod n$,
\item $a=b+nk$, for some $k \in \mb{Z}$, and
\item $\dv{n}{(a-b)}$.
\end{enumerate}
\end{lem}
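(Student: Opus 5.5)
The plan is to prove the cycle of implications (1) $\Rightarrow$ (3) $\Rightarrow$ (2) $\Rightarrow$ (1). The definition of congruence modulo $n$ states $a \equiv b \pmod n$ if and only if $\dv{n}{a-b}$. It also gives an equivalent formulation in terms of equal remainders, so I will be careful about which one I treat as primary. I take the divisibility form $\dv{n}{a-b}$ as the definition; then (1) $\Leftrightarrow$ (3) is immediate. To cover the remainder form as well, I will invoke the earlier theorem on remainders of division by $b$: the remainders of $a$ and $b$ upon division by $n$ agree if and only if $a-b$ is a multiple of $n$. This is where uniqueness in the division theorem (Theorem~\ref{divis}) is used.

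For (3) $\Rightarrow$ (2), suppose $\dv{n}{a-b}$. By the definition of divisibility there is $k \in \mb{Z}$ with $a-b = nk$. Rearranging gives $a = b + nk$, which is (2).

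For (2) $\Rightarrow$ (1), or equivalently (2) $\Rightarrow$ (3), suppose $a = b + nk$ for some $k\in\mb{Z}$. Then $a - b = nk$, so $\dv{n}{a-b}$ directly from the definition of divisibility, and hence $a \equiv b \pmod n$. If one prefers the remainder formulation, write $b = nq + r$ with $0 \le r < n$ by Theorem~\ref{divis}. Then $a = n(q+k) + r$ with the same bound on $r$, so by uniqueness $a$ and $b$ leave the same remainder $r$.

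I expect no real obstacle. The only point needing care is the equal-remainders formulation of congruence, where one must appeal to the uniqueness part of Theorem~\ref{divis} rather than just existence. Everything else is a one-line rearrangement of $a-b=nk$.
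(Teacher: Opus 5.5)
Your proof is correct and matches the paper's argument in substance: both rest on the rearrangement $a-b=nk \iff a=b+nk$. Your remainder-form step (write $b=nq+r$, so $a=n(q+k)+r$, then invoke uniqueness in Theorem~\ref{divis}) is exactly the paper's proof of (2)$\Rightarrow$(1); the only difference is that the paper treats the equal-remainders formulation as the working meaning of (1), whereas you take divisibility as primary. One small point: do the one-line computation directly, as the paper does ($a=a_1n+m$, $b=b_1n+m$ gives $a-b=(a_1-b_1)n$), rather than citing the earlier theorem on remainders of division by $b$, because that theorem's printed hypotheses $\dv{b}{a}$ and $\dv{b}{A}$ are stray and do not literally cover arbitrary $a,b$.
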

\begin{proof}
$ $ \\ $ $
1. Since $a \equiv b \pmod n$, there exist $a_1 , b_1 ,m \in \mb{Z}$
such as: $a=a_1 n + m$, $b= b_1 n + m$, where $0 \leq m < n$.
Then, $a-b= (a_1 -a_2 )n$ i.e. $a=b + (a_1 - a_2 )n $ and
therefore $a=b+nk$, where $k=(a_1 - a_2 )$, thus proving 2.
Furthermore, $a-b= (a_1 -a_2 )n$ is by definition equivalent to
$\dv{n}{(a-b)}$ thus proving 3.
$ $ \\ $ $
2. Let $a=b+nk$ for some $k$. Let $b= n q +r$ for some $0\leq r < n$.
The $a= nk + nq + r = n(k+q) + r$. Therefore $a \equiv b \pmod n$
thus proving 1. Moreover $a=b+nk$ implies $a-b=nk$ which is
equivalent to $\dv{n}{(a-b)}$ thus proving 3.
$ $ \\ $ $
3. Let $\dv{n}{(a-b)}$. This mean $a-b = n k$ for some integer $k$.
This is 2, and from 2 we might derive 1 as well.
\end{proof}

%Note that several times the $\equiv$ is replaced by $=$
%and the parentheses around the mod are dropped.
%This defines the modulo operation, the integer remainder
%of an integer division.

Note that the $|$ (divisible/divisibility relation) is reflexive
and transitive but not symmetric, and thus it is not an equivalence
relation.
In fact if $\dv{a}{b}$ and $\dv{b}{a}$ we have $|a|=|b|$ or for
positive integers $a=b$.

\bigskip %THEOREM 36 %%
\begin{cor}[Properties of modular arithmetic]
\label{pmod}
Let $n \in \mb{N}$ with  $n>1$.
The $\bmod n$ operation has the following properties.
\begin{itemize}
\item[1.] {\bf Reflexive}  $a\equiv a \pmod n$.
\item[2.] {\bf Symmetric}  $a\equiv b \pmod n \iff b \equiv a \pmod n$.
\item[3.] {\bf Transitive} If $a\equiv b \pmod n$ and $b\equiv c \pmod n$,
then  $a\equiv c \pmod n$. 
\item[4.] {\bf Translation} 
              If  $a   \equiv b   \pmod n$ then for any integer $c$,
then
                           $a+c \equiv b+c \pmod n$.
\item[5.] {\bf Scaling} 
              If  $a   \equiv b   \pmod n$ then for any integer $c$,
then
                           $a\cdot c \equiv b\cdot c \pmod n$.
\item[6.] {\bf Additivity}
               If  $a_1 \equiv b_1 \pmod n$ and $a_2 \equiv b_2 \pmod n$,
then
                   $a_1 + a_2 \equiv b_1 + b_2  \pmod n$.
\item[7.] {\bf Subtractivity}
               If  $a_1 \equiv b_1 \pmod n$ and $a_2 \equiv b_2 \pmod n$,
then
                   $a_1 - a_2 \equiv b_1 - b_2  \pmod n$.
\item[8.] {\bf Multiplicativity}
               If  $a_1 \equiv b_1 \pmod n$ and $a_2 \equiv b_2 \pmod n$,
then
                   $a_1 \cdot a_2 \equiv b_1 \cdot b_2  \pmod n$.
\item[9.] {\bf Exponentiativity}
              If  $a   \equiv b   \pmod n$ then for any integer $c>0$,
then
                           $a^c \equiv b^c \pmod n$.
\end{itemize}
\end{cor}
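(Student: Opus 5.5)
The plan is to reduce every item to the characterization in Lemma~\ref{eqrlem}: $a \equiv b \pmod n$ if and only if $\dv{n}{(a-b)}$, equivalently $a = b + nk$ for some $k \in \mb{Z}$. Each property then follows from the properties of divisibility in Theorem~\ref{podiv}. I would take the items in order, since later ones reuse earlier ones.

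Items 1--3 establish that congruence is an equivalence relation.
\begin{itemize}
\item[1.] Reflexivity: $a-a=0$ and $\dv{n}{0}$ by Theorem~\ref{podiv}(a).
\item[2.] Symmetry: if $a-b=nk$ then $b-a=n(-k)$; this direction suffices, since the converse is the same statement with the roles of $a$ and $b$ swapped.
\item[3.] Transitivity: from $\dv{n}{a-b}$ and $\dv{n}{b-c}$, property (d1) gives $\dv{n}{(a-b)+(b-c)}=a-c$.
\end{itemize}

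Items 4 and 5 are one-line computations.
\begin{itemize}
\item[4.] Translation: $(a+c)-(b+c)=a-b$.
\item[5.] Scaling: $ac-bc=(a-b)c$, and (c1) gives $\dv{n}{(a-b)c}$.
\end{itemize}

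Items 6 and 7 follow by writing the sum or difference of the two differences. We have $(a_1\pm a_2)-(b_1\pm b_2)=(a_1-b_1)\pm(a_2-b_2)$, and (d1)/(d2) finish. Alternatively, item 6 follows from two translations (item 4) chained by transitivity (item 3).

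Item 8 (multiplicativity) needs the one small idea in the proof: insert a mixed term. Write
\[
a_1a_2-b_1b_2 = a_2(a_1-b_1) + b_1(a_2-b_2),
\]
and apply (d3) with $k=a_2$, $m=b_1$. Equivalently, scale twice: $a_1a_2\equiv b_1a_2$ and $b_1a_2\equiv b_1b_2$, then use transitivity.

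Item 9 (exponentiativity) goes by induction on $c\ge 1$. The base case $c=1$ is the hypothesis. For the step, apply item 8 to $a^{c}\equiv b^{c}$ and $a\equiv b$ to get $a^{c+1}\equiv b^{c+1}$.

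None of these steps is a real obstacle. The only points needing care are choosing the right decomposition in item 8 and making the induction in item 9 explicit rather than informal.
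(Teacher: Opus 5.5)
Your proposal is correct and follows essentially the same route as the paper: every item is reduced to $n \mid (a-b)$, equivalently $a = b + kn$, and then finished by elementary divisibility. The only differences are cosmetic: for item 8 you use the mixed-term identity $a_1a_2-b_1b_2 = a_2(a_1-b_1)+b_1(a_2-b_2)$ where the paper expands $(kn+b_1)(ln+b_2)$ directly, and for item 9 you give an explicit induction via item 8 where the paper appeals to the binomial theorem and mentions induction only in passing.
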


\begin{proof}
$ $ \\ $ $
Properties (1)-(3) follow from the proof of
Lemma~\ref{eqrlem}.
$ $ \\ $ $
(1)  For $\dv{n}{a-a}$ i.e. $\dv{n}{0}$ obviously.
$ $ \\ $ $
(2)  If $\dv{n}{a-b}$ then $\dv{n}{b-a}$ obviously.
$ $ \\ $ $
(3)
If $\dv{n}{a-b}$ and $\dv{n}{b-c}$ we have respectively
$a-b = k n$ and $b-c = m n$ for some $k,m \in \mb{Z}$.
Adding the two together
we get $a-c= (k+m)n$ i.e. $\dv{n}{a-c}$
i.e. $a \equiv c \pmod n$.
$ $ \\ $ $
(4) The translation property follows similarly.
If  $a   \equiv b   \pmod n$ then $a=kn+b$.
Then $(a+c) = kn + (b+c)$ or
     $(a-c) = kn + (b-c)$  and the result follows.
$ $ \\ $ $
(5) The scaling property follows similarly.
If  $a   \equiv b   \pmod n$ then $a=kn+b$.
Then $(a \cdot c) = (kn + b) \cdot c$ or
     $ac = (kc)n + b c $ and the result follows.
$ $ \\ $ $
(6)-(7) They follow similarly to (4)-(5).
If  $a_1   \equiv b_1   \pmod n$ then $a_1 =kn+b_1$.
If  $a_2   \equiv b_2   \pmod n$ then $a_2 =ln+b_2$.
Then, $a_1 + a_2 \equiv b_1 + b_2 \pmod n $,
or
$a_1 - a_2 \equiv b_1 - b_2 \pmod n $,
$ $ \\ $ $
(8) Moreover,
$a_1 a_2 = (k b_2 + l b_1 + kln)n + b_1 b_2$ and
thus  $a_1 \cdot a_2 \equiv b_1 \cdot b_2 \pmod n $.
$ $ \\ $ $
(9) For $a   \equiv b   \pmod n$ implying $a=kn+b$,
we use the binomial theorem to show that
\[
a^c = (kn+b)^c  \equiv b^c \pmod n ,
\]
as needed.
Or it can follow by induction and the scaling property (4).
The result then follows.
\end{proof}

A note on division.  Operation $\bmod n$ was used in the context
of modular addition, subtraction and multiplication. We have been
silent about division. This is because of the following.
$4 \not\equiv 2 \pmod 6$ yet
$3\cdot 4 \equiv 3 \cdot 2 \pmod 6 $.
Also $2 \cdot 3 \equiv 0         \pmod 6$
yet $2     \not\equiv       0 \pmod 6$
and $3     \not\equiv       0 \pmod 6$.

\begin{exa}      
Find $3^{49} \pmod {19}$.
\end{exa}      

\begin{proof}
Repeated squares can help avoiding doing 49 multiplications.
The binary representation of 49 is \\
$49=(110001)_2$. Or in other words $49 = 2^{5} + 2^{4} + 2^{0}$.
Then $3^{49} = 3^{32} \times 3^{16} \times 3^{1} =
               3^{2^5} \times 3^{2^4} \times 3^{2^0}$.
\begin{eqnarray*}
    3^1    &\equiv& 3          \pmod {19} \\
    3^2    &\equiv& 9          \pmod {19} \\
    3^4    &\equiv&81 \equiv 5 \pmod {19} \\
    3^8    &\equiv&25 \equiv 6 \pmod {19} \\
    3^{16} &\equiv&36 \equiv 17\pmod {19} \\
    3^{32} &\equiv&289\equiv 4 \pmod {19} \\
\end{eqnarray*}
Note that $3^{16} \equiv 36 \equiv 17 \equiv -2 \pmod {19}$.
Then $3^{32} \equiv 4 \pmod {19}$ does not need to deal with 
a $17^2 = 289$!

We then combine the powers of 2 in the exponent of three
as dictacted by the binary representation of 49.
That is  $3^{49} \equiv  3^1   \pmod {19} \cdot 3^{16} \pmod {19} \cdot
                         3^{32}\pmod {19} 
                 \equiv  3 \cdot 17 \cdot 4 
                 \equiv          13 \cdot 4 
                 \equiv 14 \pmod {19}$

However, a  nice trick might have worked better if a 1 or -1 was
encountered earlier.
\begin{eqnarray*}
    3^1    &\equiv& 3          \pmod {19} \\
    3^2    &\equiv& 9          \pmod {19} \\
    3^3    &\equiv&27 \equiv 8 \pmod {19} \\
    3^4    &\equiv&24 \equiv 5 \pmod {19} \\
    3^5    &\equiv&15          \pmod {19} \\
    3^6    &\equiv&45 \equiv 7 \pmod {19} \\
    3^7    &\equiv&21 \equiv 2 \pmod {19} \\
    3^8    &\equiv& 6 \equiv 6 \pmod {19} \\
    3^9    &\equiv&18 \equiv -1\pmod {19} \\
\end{eqnarray*}
The $3^{49} = 3^{45} \cdot 3^{4} = ( 3^{9} )^5 \cdot 3^{4} \equiv
   (-1)(-1)(-1)(-1)(-1) 5 \equiv -5 \equiv 14 \pmod{19}$
\end{proof}

\bigskip %THEOREM 38 %%
\begin{thm}[Cancellation law]
If $ac \equiv bc \pmod n$,
and $\gcd(n,c)=1$ then $a\equiv b \pmod n$.
\end{thm}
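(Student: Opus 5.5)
The plan is to turn the congruence into a divisibility statement and then invoke the coprime-divisibility result already proved, Corollary~\ref{egcd3}, which says that $\dv{a}{bc}$ together with $\gcd(a,b)=1$ gives $\dv{a}{c}$.

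First, by Lemma~\ref{eqrlem} the hypothesis $ac \equiv bc \pmod n$ is equivalent to $\dv{n}{ac-bc}$. Factoring the difference gives $\dv{n}{c(a-b)}$.

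Second, we are given $\gcd(n,c)=1$. Apply Corollary~\ref{egcd3} with the roles $a \to n$, $b \to c$ and $c \to a-b$. This yields $\dv{n}{a-b}$.

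Finally, Lemma~\ref{eqrlem} converts $\dv{n}{a-b}$ back into $a \equiv b \pmod n$, which is the claim.

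A self-contained alternative avoids citing the corollary by name. By Corollary~\ref{egcd2} there are integers $x,y$ with $nx+cy=1$. Multiplying by $a-b$ gives
\[
a-b = n x(a-b) + c(a-b) y .
\]
Here $n$ divides the first term trivially, and it divides the second term because $\dv{n}{c(a-b)}$. Hence $n$ divides the sum by Theorem~\ref{podiv}(d3), so $\dv{n}{a-b}$.

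I expect no real obstacle. The only step needing care is the correct instantiation of Corollary~\ref{egcd3}: the coprimality must be between the modulus $n$ and the factor $c$ being cancelled, not involve $a-b$. I would also note that the hypotheses of Corollary~\ref{egcd2} hold, since $|n|+|c|\neq 0$ because $n>0$.
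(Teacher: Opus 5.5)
Your proof is correct and matches the paper's: both rewrite the hypothesis as $n \mid c(a-b)$ and use $\gcd(n,c)=1$ to conclude $n \mid a-b$. The paper applies that coprime-divisibility fact without naming it, whereas you cite Corollary~\ref{egcd3} explicitly. Your B\'ezout-based alternative is just that corollary's proof written out.
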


\begin{proof}
It is $ac+qn = bc +r n$ i.e. $ac-bc = sn$ for $s=r-q$ for some integer
$r,q$. Thus $\dv{n}{ac-bc}$ or $\dv{n}{(a-b)c}$. Since $\gcd(n,c)=1$,
we have that $\dv{n}{a-b}$. This implies $ a \equiv b \pmod n$.
%and also $a \equiv b \pmod n$ as  just used/introduced.
\end{proof}

\subsection{Modular linear equations}

Consider the ring $( \mb{Z}/n\mb{Z} , + , \cdot )$.

\bigskip %THEOREM 39 %%
\begin{thm}[Linear congruences]
\label{tlincon}
The modular equation (linear congruence)
\begin{equation}
 a x \equiv b \pmod n.
\end{equation}
has a solution if and only if $\dv{\gcd(a,n)}{b}$.
\end{thm}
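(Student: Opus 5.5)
The plan is to reduce the congruence to the linear Diophantine equation of Theorem~\ref{diot} and then read off both directions from that theorem. By Lemma~\ref{eqrlem}, $ax \equiv b \pmod n$ holds for an integer $x$ if and only if $\dv{n}{ax-b}$. That in turn holds if and only if there is an integer $y$ with $ax - b = -ny$, that is, $ax + ny = b$. So the congruence has a solution $x$ exactly when the Diophantine equation $ax+ny=b$ in the unknowns $x,y$ has an integer solution.

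\emph{Only if.} Suppose $x_0$ satisfies $a x_0 \equiv b \pmod n$. Then $b = ax_0 + ny_0$ for some $y_0\in\mb{Z}$. Set $d=\gcd(a,n)$. Since $\dv{d}{a}$ and $\dv{d}{n}$, property (d3) of Theorem~\ref{podiv} gives $\dv{d}{ax_0+ny_0}$, i.e.\ $\dv{d}{b}$. This direction needs no hypothesis beyond $|a|+|n|\neq 0$, which holds because $n\geq 1$.

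\emph{If.} Suppose $\dv{d}{b}$, and write $b=dq$. By the B\'ezout identity (Theorem~\ref{egcd}) there are integers $u,v$ with $au+nv=d$. Multiplying by $q$ gives $a(uq)+n(vq)=b$. Put $x_0=uq$; then $ax_0 - b = -n(vq)$, so $\dv{n}{ax_0-b}$ and $ax_0\equiv b \pmod n$. One may also reduce $x_0$ by Proposition~\ref{moddiv} to get a representative with $0\le x_0<n$.

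The only point needing care is the degenerate case. Theorem~\ref{diot} is stated for $a,b\in\mb{Z}^*$, so if $a=0$ I will not invoke it and will use Theorem~\ref{egcd} directly as above. Theorem~\ref{egcd} only requires $|a|+|n|\neq 0$, which holds since $n\ge1$, so it covers $a=0$ as well. In that case $d=n$, and the statement reduces to the triviality that $0\equiv b \pmod n$ iff $\dv{n}{b}$. No other step is expected to cause difficulty.
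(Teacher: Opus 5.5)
Your proof is correct and follows the paper's route: rewrite the congruence as the Diophantine equation $ax+ny=b$ and apply the Diophantine/B\'ezout criterion. Where the paper simply cites Theorem~\ref{diot}, you reprove both directions inline; your separate handling of $a=0$ is a small improvement, since Theorem~\ref{diot} is stated only for nonzero coefficients, whereas Theorem~\ref{egcd} requires only $|a|+|n|\neq 0$.
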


If $\ndv{\gcd(a,n)}{b}$ the linear
congruence has NO solutions.
The linear congruence if it has one solution, then
it has an infinite number of solutions in $\mb{Z}$.

\begin{proof}
We can rewrite the congruence as a Diophantine equation
as follows.
\begin{equation}
\label{tlincone}
\exists k : \  ax -b = k n \Rightarrow   a x + n (-k)  = b.
\end{equation}
Then, by way of Theorem~\ref{diot},
%, Eq.~(\ref{tlincon}) or its equivalent 
Eq.(\ref{tlincone})
has a solution if and only if $\dv{d}{b}$, where $d=\gcd(a,n)$.
Moreover if one solution is $(x_0 , k_0 )$ there are
more solutions of the form
\[
  x = x_0 + \frac{mn}{d}  , \quad\quad k = k_0 - \frac{ma}{d},
\]
for $m \in \mb{Z}$.
%From $ ax \equiv b \pmod n$ we have that $ax-b = kn$ i.e.
%$ax+kn = b$. 
%The solution of this Diophantine equation
%exists for $\dv{\gcd(a,n)}{b}$.
\end{proof}

Since $d=\gcd(a,n)$ then $n/d$ and $a/d$ are integers above.
If $d=1=\gcd(a,n)$ then there is only one solution mod $n$;
all other solutions are equivalent to this mod $n$.

%HERE2
A self-contained proof can be stated as follows.
Consider the ring $( \mb{Z}/n\mb{Z} , + , \cdot )$.
\begin{thm}[Linear congruence redo]
\label{modeq1}
The modular equation
\[
 ax \equiv b \pmod n,
\]
has a solution if and only if $d=\gcd(a,n)$ is such that
$\dv{d}{b}$.
\end{thm}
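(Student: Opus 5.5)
The plan is a self-contained proof that avoids Theorem~\ref{diot} and uses only the B\'ezout identity (Theorem~\ref{egcd}) and basic divisibility (Theorem~\ref{podiv}). Throughout, let $d=\gcd(a,n)$. This is defined because $n\geq 1$ gives $|a|+|n|\neq 0$. I will prove the two directions separately, each straight from the definition of congruence, $a x\equiv b \pmod n \iff \dv{n}{ax-b}$.

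\emph{Necessity.} Suppose $x_0$ satisfies $ax_0\equiv b\pmod n$. Then $ax_0-b=kn$ for some $k\in\mb{Z}$, so $b=ax_0-kn$. Since $\dv{d}{a}$ and $\dv{d}{n}$, property (d3) of Theorem~\ref{podiv} gives $\dv{d}{ax_0-kn}$, that is, $\dv{d}{b}$.

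\emph{Sufficiency.} Suppose $\dv{d}{b}$ and write $b=dq$. By Theorem~\ref{egcd} there are integers $u,v$ with $au+nv=d$. Multiplying by $q$ gives $a(uq)+n(vq)=b$. Hence $a(uq)-b=n(-vq)$, so $\dv{n}{a(uq)-b}$, and $x_0=uq$ solves the congruence. Reducing $x_0$ modulo $n$ with Proposition~\ref{moddiv} gives a solution in $\{0,\ldots,n-1\}$, i.e.\ a solution in $\mb{Z}/n\mb{Z}$, which matches the ring setting stated before the theorem.

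I expect no real obstacle. The only point that needs care is making sure the B\'ezout coefficients exist in this setting, namely that $\gcd(a,n)$ is defined; this holds because $n>0$, and it also covers the case $a=0$, where $d=n$. Optionally, I would also note that $x_0+m\,n/d$ are solutions for every $m\in\mb{Z}$, because $a\cdot m\,n/d=n\cdot m\,(a/d)\equiv 0\pmod n$. This is consistent with the remark after Theorem~\ref{tlincon}, but the statement does not require it.
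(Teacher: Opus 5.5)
Your proof is correct and matches the paper's: the necessity direction is identical, and for sufficiency the paper takes an inverse $y$ of $a/d$ modulo $n/d$ (using $\gcd(a/d,n/d)=1$) and sets $x=(b/d)y$. That is exactly your $x_0=uq$, since dividing the B\'ezout identity $au+nv=d$ by $d$ shows $u$ is such an inverse. The only difference is that you apply Theorem~\ref{egcd} to $(a,n)$ directly, instead of going through Theorem~\ref{gcddi} and the existence of units modulo $n/d$.
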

\begin{proof}
$ $ \\ $ $
$\Rightarrow$.
$ $ \\ $ $
Consider $d=\gcd(a,n)$ thus implying $\dv{d}{a}$
and $\dv{d}{n}$. If $ ax \equiv b \pmod n$ has a solution,
then there exists a $k \in \mb{Z}$ such that
$ax-b = k n$. By way of $\dv{d}{a}$ , $\dv{d}{n}$,
we have $\dv{d}{ax-kn}$ and thus $\dv{d}{b}$.
$ $ \\ $ $
$\Leftarrow$.
$ $ \\ $ $
For $d=\gcd(a,n)$ it implies $\gcd(a/d,n/d)=1$.
Then $a/d$ is a unit mod $n/d$. Therefore there exists a $y$
such that
\[
  (a/d) y \equiv 1 \pmod{(n/d)}
\Rightarrow
   (a/d)y -1 = k (n/d),
\]
for some integer $k \in \mb{Z}$.
If we multiply this equation by $b$ we derive the following
one.
\[
   (a/d)y -1 = k (n/d)
\Rightarrow
   a (b/d)y - b = k (b/d) n
\Rightarrow
   a (b/d)y  = b+   k (b/d) n .
\]
Since $\dv{d}{b}$ we have that $b/d$ is an integer and thus we
further derive the following.
\[
   a (b/d)y  = b+   k (b/d) n
\Rightarrow
   a x       = b+   K  n ,
\]
where $x= by/d$ and $K=k(b/d)$. The solution to the
modular equation is $x=by/d$, where $y$ is
the solution of the modular equation
$ (a/d) y \equiv 1 \pmod{(n/d)}$.
\end{proof}

\bigskip %THEOREM 39 %%
\begin{thm}[Linear congruences with $d=1$]
\label{tlincon1}
Let $1=d=\gcd(a,n)$. If $\dv{d}{b}$ the linear congruence
below has one solution mod $n$.
\begin{equation}
 a x \equiv b \pmod n.
\end{equation}
\end{thm}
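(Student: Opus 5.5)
The plan is to split the statement into existence and uniqueness modulo $n$, treating each with an earlier result.

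\emph{Existence.} Since $d=1$ trivially divides $b$, Theorem~\ref{tlincon} (or Theorem~\ref{modeq1}) already gives at least one integer solution $x_0$. To make this concrete, I would apply Corollary~\ref{egcd2} to $\gcd(a,n)=1$, which gives $u,v$ with $au+nv=1$. Multiplying by $b$ gives $a(ub)+n(vb)=b$, so $x_0=ub$ satisfies $ax_0\equiv b \pmod n$. By Proposition~\ref{moddiv} we may replace $x_0$ by its least non-negative residue, which still solves the congruence by the scaling and transitivity properties of Corollary~\ref{pmod}.

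\emph{Uniqueness modulo $n$.} Suppose $x_1$ and $x_2$ both satisfy $ax\equiv b \pmod n$. By symmetry and transitivity (Corollary~\ref{pmod}), $ax_1\equiv ax_2 \pmod n$. Since $\gcd(n,a)=1$, the cancellation law gives $x_1\equiv x_2 \pmod n$. So all solutions lie in one residue class, and by Proposition~\ref{moddiv} there is exactly one representative in $\{0,\ldots,n-1\}$. As a cross-check, the same conclusion follows from the parametrization in the proof of Theorem~\ref{tlincon}: with $d=1$ the solutions are $x=x_0+mn$, which are all congruent to $x_0$ modulo $n$.

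\emph{Main difficulty.} There is no real obstacle. The only care needed is in the wording ``one solution mod $n$'': I would show both that a solution exists and that any two solutions are congruent, rather than claiming a single integer solution. I would also check that the cancellation law is invoked with its hypothesis in the correct order, namely $\gcd(n,a)=\gcd(a,n)$, which holds by Fact~\ref{sgcdf}(iii).
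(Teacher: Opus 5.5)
Your proof is correct, and its existence half is the paper's argument: B\'ezout coefficients for $\gcd(a,n)=1$, scaled by $b$. The paper obtains those coefficients from Theorem~\ref{diot}, while you obtain them from Corollary~\ref{egcd2}.

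The two routes differ only in the uniqueness step. The paper rewrites the congruence as $ax+n(-k)=b$. It then reads uniqueness off the general-solution formula $x=x_0+mn/d$ of Theorem~\ref{diot}, which for $d=1$ becomes $x\equiv x_0 \pmod n$; you list this only as a cross-check. Your main argument instead applies the cancellation law to $ax_1\equiv ax_2 \pmod n$.

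Your route is more direct. It proves exactly what ``one solution mod $n$'' requires, namely that any two solutions are congruent. It needs neither the auxiliary unknown $k$ nor the full description of the Diophantine solution set. The paper's route, in exchange, exhibits the whole solution set $\{x_0+mn : m\in\mb{Z}\}$ at once. It also sets up the template the paper reuses for $d>1$ in Theorem~\ref{tlincond}.

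Underneath, both routes rest on the same fact. The cancellation law and the parametrization in Theorem~\ref{diot} are each proved from Corollary~\ref{egcd3}.
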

\begin{proof}
$ $ \\ $ $
We can rewrite the congruence as a Diophantine equation
as follows.
\[
\exists k : \  ax -b = k n \Rightarrow   a x + n (-k)  = b.
\]
If $d=1=\gcd(a,n)$
by way of Theorem~\ref{diot}, we have that
\[
 a x + n k = 1
\]
has a solution $( x^\prime , k^\prime )$ since $\gcd(a,n)=1$ divides
the 1 of the right hand side.
\[
 a x^\prime + n k^\prime = 1.
\]
Then multiplying by $b$ both sides of it we obtain the
following.
\[
 a (bx^\prime ) + n (bk^\prime ) = b
\]
Therefore the linear congruence
\[
ax  \equiv b \pmod n \Leftrightarrow
 a x + n (-k)  = b  ,
\]
has a solution $x_0 = bx^\prime$, $k_0 = -bk^\prime$.
\[
 a x_0 + n (-k_0 )  = b  .
\]
Other solutions of the linear congruence are
\[
  x = x_0 + \frac{mn}{d}  , \quad\quad k = k_0 - \frac{ma}{d},
\]
for $m \in \mb{Z}$. Since $d=1$ all these are as follows.
\[
  x = x_0 + mn , \quad\quad k = k_0 - ma,
\]
and thus all other $x$ solutions of the
congruence are $x \equiv x_0 \mod n$.
Congruence-wise there is only one solution $(x_0 )_n$ i.e.
$x_0$.
\end{proof}

\begin{thm}[Linear congruences with $d>1$]
Let $d=\gcd(a,n) >1$. If $\dv{d}{b}$ the linear congruence
below has exactly $d$ solutions mod $n$.
\label{tlincond}
\begin{equation}
 a x \equiv b \pmod n.
\end{equation}
\end{thm}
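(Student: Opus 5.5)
I will reduce the congruence to one with coprime coefficients and then count its lifts modulo $n$. Since $\dv{d}{b}$, Theorem~\ref{modeq1} (or Theorem~\ref{tlincon}) already guarantees that a solution exists. Write $a=da_1$, $b=db_1$, $n=dn_1$, where $\gcd(a_1,n_1)=1$ by Theorem~\ref{gcddi}.

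The first step is to show that
\[
ax\equiv b \pmod n \iff a_1x\equiv b_1 \pmod{n_1}.
\]
Indeed, $\dv{n}{ax-b}$ means $ax-b=kn$, that is $d(a_1x-b_1)=d\,kn_1$. Cancelling $d\neq 0$ (property (g) of Theorem~\ref{podiv}) gives $\dv{n_1}{a_1x-b_1}$. The converse follows by multiplying back by $d$.

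By Theorem~\ref{tlincon1}, applied with modulus $n_1$ and $\gcd(a_1,n_1)=1$, the reduced congruence has exactly one solution $x_0$ modulo $n_1$. Hence the integer solutions of the original congruence are precisely $x=x_0+mn_1$ with $m\in\mb{Z}$. This matches the general form $x_0+mn/d$ given in Theorem~\ref{diot}.

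The counting step is the only real content. I will show that the $d$ values $x_0+mn_1$, for $m=0,1,\ldots,d-1$, are pairwise incongruent modulo $n$, and that every solution is congruent modulo $n$ to one of them.

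For incongruence, suppose $x_0+m n_1\equiv x_0+m'n_1 \pmod n$ with $0\le m'<m\le d-1$. Then $\dv{dn_1}{(m-m')n_1}$, so $\dv{d}{m-m'}$. But $0<m-m'<d$, which contradicts (f2) of Theorem~\ref{podiv}.

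For exhaustiveness, take any $m\in\mb{Z}$ and divide by $d$ using Theorem~\ref{divis}: $m=dq+s$ with $0\le s<d$. Then
\[
x_0+mn_1=x_0+sn_1+qn\equiv x_0+sn_1 \pmod n.
\]
So the solutions modulo $n$ are exactly the $d$ residue classes of $x_0,\,x_0+n/d,\,\ldots,\,x_0+(d-1)n/d$. Taking $0\le x_0<n_1$, these are moreover distinct least residues in $[0,n)$.
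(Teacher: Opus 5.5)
Your proof is correct and is essentially the paper's argument. The counting step is exactly what the paper does: the $x_0+m\,n/d$ are pairwise incongruent for $0\le m<d$ because $d\mid m-m'$ with $0<m-m'<d$, and an arbitrary index is reduced modulo $d$ by the division theorem. The only differences are that you obtain the parametrization $x_0+m\,n/d$ by reducing to $a_1x\equiv b_1 \pmod{n_1}$ and invoking Theorem~\ref{tlincon1} (where the paper reads it off Theorem~\ref{diot} applied to $ax+n(-k)=b$), and that your division step explicitly covers negative $m$, which the paper's treatment of ``general $t\geq d$'' leaves implicit.
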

\begin{proof}
$ $ \\ $ $
We can rewrite the congruence as a Diophantine equation
as follows.
\[
\exists k : \  ax -b = k n \Rightarrow   a x + n (-k)  = b.
\]
If $d=\gcd(a,n) >1$
by way of Theorem~\ref{diot}, we have that
\[
 a x + n k = b
\]
has a solution $( x^\prime , k^\prime )$ since $\dv{d}{b}$.
Therefore we have the following.
\[
 a x^\prime + n  k^\prime  = b.
\]
Therefore the linear congruence
\[
ax \equiv b \pmod n \Leftrightarrow  ax + n (-k) = b
\]
has a solution $x_0 = x^\prime $ and $k_0 = - k^\prime$.
\[
 ax_0 + n (- k_0 ) = b.
\]
Other solutions of the linear congruence are, again from
Theorem~\ref{diot},
\[
  x = x_0      + \frac{mn}{d}  , \quad\quad k = k_0      - \frac{ma}{d},
\]
for $m =0 , \ldots , d-1$.
This is because
\[
a (x_0      + \frac{mn}{d} )
=
a x_0     + \frac{amn}{d}
=
a x_0     + nm\frac{a}{d}
=
a x_0      + K n
\equiv  b \pmod n,
\]
since $\frac{a}{d}$ is an integer by way of $d=\gcd(a,n)$ which
implies $\dv{d}{a}$ and thus $ K= m a /d$ is also an integer.
Note that
\[
 a x_0      + n (- k^\prime ) = b \Rightarrow
 a x_0      \equiv b \pmod n.
\]
$ $ \\  $ $
Consider the solutions   $x_i = x_0      + (in)/d$ for
$i=0, \ldots d-1$.
Any two solutions $x_i , x_j$ with $0\leq i < j < d$ are
such that $x_i \not\equiv x_j \pmod n$.
To prove this, say
$x_i \equiv x_j \pmod n$.
Then
\[
x_i \equiv x_j \pmod n \Rightarrow
 (i-j)n /d \equiv 0 \pmod n \Rightarrow
 (i-j)n/d = k n \Rightarrow
 (i-j) = k d,
\]
for some integer $k$. Since $\dv{d}{kd}$
we have $\dv{d}{i-j}$ and thus $d \leq |i-j|$
which is impossible
since $|i-j| < d$.
Therefore it must be that
$x_i \not\equiv x_j \pmod n$ for $i\neq j < d$.
$ $ \\ $ $
Consider now a solution for general $t \geq d$.
\[
 x_t =  x_0      + (tn)/d .
\]
Let $t= Qd +R $, where $0 \leq R < d$.
We obtain the following
\begin{eqnarray*}
x_t &=&  x_0      + (tn)/d  \\
    &=&  x_0      + (Qd+R)n/d \\
    &=&  x_0      + Qn+ (Rn)/d \\
    &=&  x_0      + Qn+ (Rn)/d \\
    &=&  x_0      + (Rn)/d \pmod n \\
\end{eqnarray*}
implying that any  other solution $x_t$ for general
$t$ maps modulo $n$ to a solution
$x^\prime + (in)/d$ where $i=R$ is in the range
$0 \leq i=R < d$.
This concludes the proof that the number of
solutions modulo $n$ is indeed $d = \gcd(a,n)$.
\end{proof}

%HERE

The proof technique in the previous theorem involving
the relationship between $x_i$ and $x_j$ is to be
used in the proof of Fermat's little theorem.
We will show that for prime $p$, and an integer
$a$ such that $\gcd(a,p)=1$ we have $a^{p-1} \equiv 1 \pmod p$.
In the proof we form $ia$ and $ja$ to 
show that $ia \not\equiv ja \pmod p$,
for $i \neq j$. Moreover $ia$ for all $i=1, \ldots , p-1$
form a permutation of $1,2, \ldots , p-1$ and
thus 
$\prod_{i=1}^{p-1} ia = a^{p-1} (p-1)! \equiv (p-1)! \pmod p$.
Given that $p$ is prime and $\gcd(i,p)=1$, for all $i=1, \ldots , p-1$,
we conclude $a^{p-1} \equiv 1 \pmod p$.
Moreover $a^p \equiv a \pmod p$.

%We can then generalize that for all $a$ we have $a^p \equiv a \pmod p$.
%For $a$ such that $\gcd(a,p)=1$ the result involving Fermat's
%little theorem is used.
%If $\gcd(a,p) \neq 1$, given that $p$ is prime implies
%$\dv{p}{a}$. Then $a^p -a$ is still a multiple of $p$
%and therefore $a^p \equiv a \pmod p$.

\bigskip %THEOREM 40 %%
\begin{thm}
\label{modn}
The modular equation
\[
ax \equiv 1 \pmod n
\]
has a solution if and only if $\gcd(a,n)=1$.
\end{thm}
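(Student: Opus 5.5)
The plan is to treat Theorem~\ref{modn} as the special case $b=1$ of the linear congruence results already established. I will nonetheless give a short self-contained argument for each direction, so the proof does not depend on the somewhat informal proofs of Theorems~\ref{tlincon} and~\ref{modeq1}.

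For the direction ``if'', assume $\gcd(a,n)=1$. By Corollary~\ref{egcd2}, the B\'ezout identity of Theorem~\ref{egcd} specialized to relatively prime integers, there exist integers $x,y$ with $ax+ny=1$. Then $ax-1=n(-y)$, so $\dv{n}{ax-1}$. By Lemma~\ref{eqrlem} (the equivalence of 1 and 3) this is exactly $ax\equiv 1 \pmod n$, and $x$ is the desired solution. If one wants a solution in the range $0\le x<n$, Proposition~\ref{moddiv} replaces $x$ by its least non-negative residue; transitivity and scaling from Corollary~\ref{pmod} preserve the congruence.

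For the direction ``only if'', assume some integer $x$ satisfies $ax\equiv 1\pmod n$. By Lemma~\ref{eqrlem} there is $k\in\mb{Z}$ with $ax-1=kn$, that is, $ax+n(-k)=1$. Let $d=\gcd(a,n)$. Since $\dv{d}{a}$ and $\dv{d}{n}$, Theorem~\ref{podiv}(d3) gives $\dv{d}{ax+n(-k)}$, hence $\dv{d}{1}$. By Theorem~\ref{podiv}(f2) we get $d\le 1$, and Fact~\ref{sgcdf}(i) gives $d\ge 1$, so $d=1$. Alternatively, Theorem~\ref{egcd} says $d$ is the smallest positive integer of the form $au+nv$, and $1$ is such a combination, so $d\le 1$.

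There is no real obstacle. The only point needing care is that $\gcd(a,n)$ must be defined, which holds because $n\ge 1$ ensures $|a|+|n|\neq 0$. One should also note that the argument is just Theorem~\ref{modeq1} with $b=1$, where $\dv{\gcd(a,n)}{1}$ is equivalent to $\gcd(a,n)=1$ because the gcd is a positive divisor of $1$.
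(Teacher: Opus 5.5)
Your proof is correct and is essentially the paper's argument: the paper specializes the preceding linear-congruence criterion to $b=1$ and notes that $\gcd(a,n)\mid 1$ together with $\gcd(a,n)\ge 1$ forces $\gcd(a,n)=1$, which is what you do after writing out that criterion's two directions (B\'ezout for existence, $d \mid ax-kn=1$ for necessity). Going directly through Corollary~\ref{egcd2} also keeps your proof independent of Theorem~\ref{modeq1}, whose existence direction already assumes that $a/d$ is a unit modulo $n/d$, i.e.\ the very statement being proved.
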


\begin{proof}
The $\dv{\gcd(a,n)}{b}$ of the previous theorem for $b=1$
becomes $\dv{\gcd(a,n)}{1}$. Thus $\gcd(a,n) \leq 1$.
The gcd is always a positive integer i.e. $\gcd(a,n) \geq 1$.
Thus $\gcd(a,n)=1$ as needed. 
\end{proof}

\bigskip %THEOREM 41 %%
\begin{thm}
The modular equation, for prime $p$,
\[
ax \equiv 1 \pmod p
\]
has a solution for $x$ if $\ndv{p}{a}$.
\end{thm}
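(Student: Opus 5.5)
The plan is to reduce the statement to Theorem~\ref{modn}. That theorem says $ax \equiv 1 \pmod n$ is solvable exactly when $\gcd(a,n)=1$. So it suffices to show that, for a prime $p$, the hypothesis $\ndv{p}{a}$ forces $\gcd(a,p)=1$.

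The key step is the corollary stated earlier in the section on more gcd results. For a prime $p$ and $a \in \mb{Z}^*$, we have $\ndv{p}{a}$ if and only if $\gcd(a,p)=1$. I will apply it in the forward direction. The reasoning behind it is short: the positive divisors of $p$ are only $1$ and $p$ (Definition~\ref{prime}), and $p$ is excluded as a common divisor because $\ndv{p}{a}$. Hence $S(a)\cap S(p)=\{\pm 1\}$ and the gcd is $1$.

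One small point needs checking before the corollary can be used: it is stated for $a \in \mb{Z}^*$. If $a=0$, then $\dv{p}{0}$, which contradicts the hypothesis $\ndv{p}{a}$. So automatically $a\neq 0$ and the corollary applies.

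With $\gcd(a,p)=1$ in hand, I will invoke Theorem~\ref{modn} with $n=p$ to get a solution $x$. To be more explicit, Corollary~\ref{egcd2} gives integers $x,y$ with $ax+py=1$. Then $\dv{p}{ax-1}$, so $ax\equiv 1 \pmod p$, and $x \bmod p$ is the desired solution.

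I do not expect a real obstacle here. The only care needed is the edge case $a=0$, which the hypothesis already rules out, and noting that $p>1$ so that $\pmod p$ is meaningful.
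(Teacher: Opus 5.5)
Your proposal is correct and takes the same route as the paper. Both arguments reduce to Theorem~\ref{modn} and conclude $\gcd(a,p)=1$ because the only positive divisors of the prime $p$ are $1$ and $p$, and $\ndv{p}{a}$ excludes $p$; your treatment of $a=0$ and the explicit B\'ezout construction via Corollary~\ref{egcd2} make explicit what the paper leaves implicit.
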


\begin{proof}
By the previous theorem for a solution to exist
$\gcd(a,p)=1$. Since $p$ is a prime its only 
positive divisors are $1$ and $p$. Given that
$p$ cannot divide $a$, we have that the
$\gcd(a,p)=1$. The result follows.
\end{proof}

Equivalently, it can be stated as follows.

\bigskip %THEOREM 42 %%
\begin{thm}
If $p$ is a prime number,
the modular equation, for prime $p$ and $\ndv{p}{a}$,
then there exists an $x$ such that $1\leq x \leq p-1$ 
such that the modular equation,
\[
ax \equiv 1 \pmod p
\]
has a solution for $x$.
The $x$ is sometimes denoted as the inverse of $a$ modulo
$p$ i.e.  $a^{-1}$. 
\end{thm}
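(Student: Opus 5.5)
The plan is to reduce the statement to the preceding theorem and then normalize the solution into the range $1,\ldots,p-1$. The key steps, in order, are: coprimality, existence of some integer solution, reduction modulo $p$, and ruling out the residue $0$.

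First, since $p$ is prime its only positive divisors are $1$ and $p$. So $\ndv{p}{a}$ forces $\gcd(a,p)=1$; this is the corollary stating that $\ndv{p}{a}$ if and only if $\gcd(a,p)=1$. By Theorem~\ref{modn}, or directly by Corollary~\ref{egcd2}, there are integers $x^\prime, y$ with $ax^\prime + py = 1$. Hence $\dv{p}{ax^\prime - 1}$, that is, $ax^\prime \equiv 1 \pmod p$.

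Next, we move $x^\prime$ into the required range. By Proposition~\ref{moddiv} there is a unique $x$ with $0 \leq x < p$ and $x \equiv x^\prime \pmod p$. By the scaling property (5) of Corollary~\ref{pmod}, $ax \equiv ax^\prime \pmod p$. Transitivity then gives $ax \equiv 1 \pmod p$.

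The only real point to check is that $x \neq 0$, so that $1 \leq x \leq p-1$. I would argue by contradiction. If $x=0$, then $ax \equiv 0 \pmod p$, and combining this with $ax\equiv 1$ gives $0 \equiv 1 \pmod p$. That means $\dv{p}{1}$, so $p \leq 1$ by Theorem~\ref{podiv}(f2), contradicting $p>1$. Therefore $1 \leq x \leq p-1$, and this $x$ is the inverse $a^{-1}$ modulo $p$.

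If desired, one can also add uniqueness in this range. Suppose two such values $x_1, x_2$ both satisfy the congruence. Then $ax_1\equiv ax_2 \pmod p$, and the cancellation law with $\gcd(a,p)=1$ gives $x_1\equiv x_2 \pmod p$. Since both lie in $[1,p-1]$, they are equal.
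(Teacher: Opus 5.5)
Your proof is correct and follows essentially the paper's route. The paper presents this statement as a restatement of the preceding theorem, whose proof likewise notes that $\ndv{p}{a}$ forces $\gcd(a,p)=1$ and then invokes the solvability of $ax\equiv 1 \pmod n$ for $\gcd(a,n)=1$, i.e.\ the B\'ezout identity. Your explicit reduction of the solution into $\{1,\ldots,p-1\}$ (ruling out $x=0$ since it would give $\dv{p}{1}$) fills in a step the paper leaves implicit, and your uniqueness remark is a correct addition.
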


The $a$ is called a {    unit} modulo $p$,
as it has an inverse. An $a$ that is not a unit is called
a {    zero divisor} modulo $p$.

\medskip
\begin{dfn}
The $a$ such that $a \cdot a^{-1} \equiv 1 \pmod p$  
is called a {\bf unit} modulo $n$, as it has an inverse. 
\end{dfn}

\smallskip
\begin{dfn}
The $a$ such that  there does not exist an $a^{-1}$
such that $a \cdot a^{-1} \equiv 1 \pmod p$  
is called a {\bf zero divisor} modulo $p$.
\end{dfn}

\bigskip %THEOREM 43 %%
\begin{thm}
Let $n>1$ be an integer and $\ndv{n}{a}$. The following are
equivalent.
\begin{itemize}
\item[(a)] $a$ is a zero divisor $\pmod n$,
\item[(b)] $a$ has no inverse $\pmod n$,
\item[(c)] there exists a $x\in Z$ such that $\ndv{n}{x}$ and
$ax \equiv  0 \pmod n$.
\end{itemize}
\end{thm}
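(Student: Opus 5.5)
The plan is to prove the cycle (a) $\Rightarrow$ (b) $\Rightarrow$ (c) $\Rightarrow$ (a). The (a)/(b) link is definitional: the paper defines a zero divisor as an element with no inverse, so (a) and (b) are the same condition read modulo $n$. This gives (a) $\Leftrightarrow$ (b) at once, and the real content is (b) $\Leftrightarrow$ (c). The tool for this is Theorem~\ref{modn}, which says that $ax\equiv 1 \pmod n$ is solvable exactly when $\gcd(a,n)=1$. Condition (b) is therefore equivalent to $d=\gcd(a,n)>1$.

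For (b) $\Rightarrow$ (c), assume $d=\gcd(a,n)>1$ and take $x=n/d$, which is an integer. Since $d>1$, we have $1\le x<n$, so $\ndv{n}{x}$ (Theorem~\ref{podiv}(f2)). Also $ax = (a/d)\,n$, and $a/d$ is an integer, so $\dv{n}{ax}$, that is, $ax\equiv 0 \pmod n$. This gives the required witness.

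For (c) $\Rightarrow$ (b), I argue by contradiction. Suppose $a$ has an inverse $a'$ with $aa'\equiv 1\pmod n$, and suppose $x$ satisfies $ax\equiv 0 \pmod n$ with $\ndv{n}{x}$. Multiplying by $a'$ and using the scaling and transitivity properties of Corollary~\ref{pmod} gives $x\equiv a'ax \equiv a'\cdot 0 \equiv 0 \pmod n$, so $\dv{n}{x}$, which is a contradiction. Alternatively, invertibility gives $\gcd(a,n)=1$ by Theorem~\ref{modn}, and the cancellation law applied to $ax\equiv 0\cdot a$ yields $x\equiv 0$.

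The only delicate point is the step producing $x$ with $\ndv{n}{x}$, specifically checking that $x=n/d$ is not a multiple of $n$. This is exactly where $d>1$ is used. The hypothesis $\ndv{n}{a}$ is not needed for the logic; it only excludes $a\equiv 0$, where (a) and (b) hold trivially, and it guarantees that $d<n$, so $x=n/d>1$ is a genuine nontrivial witness.
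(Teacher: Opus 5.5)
Your proposal is correct and follows essentially the same route as the paper: (a)$\Leftrightarrow$(b) by the definition of a zero divisor; (b)$\Rightarrow$(c) by taking $x=n/d$ with $d=\gcd(a,n)>1$, obtained from Theorem~\ref{modn}; and (c)$\Rightarrow$(b) by multiplying $ax\equiv 0$ by a putative inverse to force $x\equiv 0 \pmod n$. Your explicit check that $1\le n/d<n$, so that $\ndv{n}{n/d}$, supplies the step that the paper's write-up leaves garbled.
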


\begin{proof}
Statements (a) and (b) are true and equivalent by the prior
definition and introduction of unit and zero divisor.

Suppose that $(b)$ is true and $a$ has no inverse.
By Theorem~\ref{modn} $\gcd(a,n) >1$. Let $\gcd(a,n)=d>1$.
The $a=dr$ and $n=ds$ for some integer $r,s$.
For $1< < z < n$ we have $z \not\equiv \pmod n$. Furthermore,
$a s =  (dr)s = (ds)r = n r \equiv 0 \pmod n$. Statement (c)
follows from Statement (b).

Suppose that statement (c) is true. There there exists
an $s\in Z$ such that $\ndv{n}{a}$ and $as \equiv 0 \pmod n$.
We are going to prove $a$ has no inverse. Let us assume that
$a$ has an inverse, then $ a a^{-1} \equiv 1 \pmod n$, and then
\[
 0 \equiv as \equiv as a^{-1} \equiv (aa^{-1})s \equiv s \pmod n.
\]
The latter implies that $\dv{n}{s}$ that contradicts the
assumption that $\ndv{n}{s}$! Thus $a$ has no inverse
and statement (b) is true coming from (c). Thus statements
(b) and (c) are equivalent.
\end{proof}

\bigskip %THEOREM 44 %%
\begin{thm}
For prime $p$, $ab \equiv 0 \pmod p$ implies  either
$a\equiv 0 \pmod p$
or
$b\equiv 0 \pmod p$.
Thus for a prime $p$ there no zero divisors other than 0
$\pmod p$.
\end{thm}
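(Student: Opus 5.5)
The plan is to reduce the congruence statement to the divisibility property of primes proved earlier (the corollary that if a prime $p$ satisfies $\dv{p}{ab}$ then $p$ divides $a$ or $b$, equivalently Lemma~\ref{pfactor}). Translating via Lemma~\ref{eqrlem}, $ab \equiv 0 \pmod p$ means exactly $\dv{p}{ab-0}$, i.e. $\dv{p}{ab}$. Likewise $a \equiv 0 \pmod p$ is equivalent to $\dv{p}{a}$, and $b \equiv 0 \pmod p$ to $\dv{p}{b}$.

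The argument I would write splits on whether $\dv{p}{a}$. If $\dv{p}{a}$ we are done, since $a\equiv 0 \pmod p$. Otherwise $\ndv{p}{a}$. By the corollary characterizing non-divisibility by a prime, this gives $\gcd(a,p)=1$. Then the corollary ``$\gcd(a,b)=1$ and $\dv{a}{bm}$ imply $\dv{a}{m}$'' (or Corollary~\ref{egcd3}), applied with $\dv{p}{ab}$, yields $\dv{p}{b}$, i.e. $b\equiv 0 \pmod p$. As a self-contained alternative I would write $1=ax+py$ by Theorem~\ref{egcd} and multiply by $b$ to get $b=abx+pby$, both terms being multiples of $p$.

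For the second sentence, ``no zero divisors other than $0$'': take any $a\not\equiv 0 \pmod p$. If $ax\equiv 0 \pmod p$, the first part forces $x\equiv 0\pmod p$, i.e. $\dv{p}{x}$. So condition (c) of the preceding theorem fails for $a$, and by that equivalence $a$ is not a zero divisor; equivalently $a$ is a unit by Theorem~\ref{modn}, since $\gcd(a,p)=1$.

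The only real care point is the corner case $a=0$ or $b=0$. The earlier corollaries were stated for $a,b\in\mb{Z}^*$, but if one of them is $0$ the conclusion holds trivially, so I would dispose of that case first. Nothing else is an obstacle; the proof is a direct translation of the prime divisibility property.
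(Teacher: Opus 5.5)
Your proof is correct and follows the paper's intended route. The paper gives no separate proof of this theorem. It treats it as the congruence restatement of its earlier corollary that a prime dividing $ab$ divides $a$ or $b$, which it proves exactly as you do: $\ndv{p}{a}$ gives $\gcd(a,p)=1$, and then the B\'ezout-based Corollary~\ref{egcd3} yields $\dv{p}{b}$. Your handling of the zero-divisor clause via Theorem~\ref{modn}, showing that every $a\not\equiv 0 \pmod p$ is a unit, matches the paper's definition of a zero divisor as a non-unit.
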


From Theorem~\ref{modeq1} we conclude the following.

\begin{cor}
The modular equation
\begin{equation}
\label{modn1}
ax \equiv 1 \pmod n
\end{equation}
has a solution if and only if $\gcd(a,n)=1$.
\end{cor}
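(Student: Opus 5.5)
The plan is to specialize Theorem~\ref{modeq1} (equivalently Theorem~\ref{tlincon}) to the right-hand side $b=1$, and then show directly that the resulting divisibility condition $\dv{\gcd(a,n)}{1}$ is the same as $\gcd(a,n)=1$. Equation~(\ref{modn1}) is exactly the congruence $ax\equiv b \pmod n$ with $b=1$. So Theorem~\ref{modeq1} says it is solvable if and only if $d=\gcd(a,n)$ satisfies $\dv{d}{1}$. Throughout, $n>1$ and $a\in\mb{Z}$, so $|a|+|n|\neq 0$ and the gcd is defined.

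For the direction ``solvable $\Rightarrow \gcd(a,n)=1$'', the steps are as follows.
\begin{itemize}
\item[(1)] Start from a solution $x$ and write $ax-1=kn$ for some $k\in\mb{Z}$.
\item[(2)] Since $\dv{d}{a}$ and $\dv{d}{n}$, Theorem~\ref{podiv}(d3) gives $\dv{d}{ax-kn}$, that is, $\dv{d}{1}$.
\item[(3)] By Theorem~\ref{podiv}(f2), $d\leq 1$.
\item[(4)] Fact~\ref{sgcdf}(i) gives $d\geq 1$, so $d=1$.
\end{itemize}
This is the same argument as in Theorem~\ref{modn}.

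For the converse, assume $\gcd(a,n)=1$. Corollary~\ref{egcd2} (B\'ezout) gives integers $x,y$ with $ax+ny=1$. Then $ax-1=n(-y)$, so $\dv{n}{ax-1}$, and by the definition of congruence (Lemma~\ref{eqrlem}) $ax\equiv 1 \pmod n$. Hence $x$ is a solution. If a representative in $\{0,\ldots,n-1\}$ is wanted, reduce $x$ using Proposition~\ref{moddiv}.

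Nothing here is a real obstacle; the proof only assembles earlier results. The one point needing care is the step from $\dv{d}{1}$ to $d=1$. It relies on $d$ being positive, and the paper guarantees this through Fact~\ref{sgcdf}(i), so that step should be cited explicitly.
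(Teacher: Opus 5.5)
Your proposal is correct and follows the paper's route: the paper obtains this corollary by specializing Theorem~\ref{modeq1} to $b=1$ and, as in its proof of Theorem~\ref{modn}, observing that $\gcd(a,n)\mid 1$ together with $\gcd(a,n)\geq 1$ forces $\gcd(a,n)=1$. Proving the converse directly from B\'ezout (Corollary~\ref{egcd2}) is a sound choice. The paper's $\Leftarrow$ direction of Theorem~\ref{modeq1} appeals to $a/d$ being a unit mod $n/d$, which for $d=1$ is exactly the claim being proved, and your argument avoids that circularity.
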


\begin{cor}
The modular equation
\begin{equation}
\label{modn2}
ax \equiv 1 \pmod p,
\end{equation}
for prime $p$
has a solution for $x$ if and only if $\ndv{p}{a}$.
\end{cor}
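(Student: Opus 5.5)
The statement to prove is that $ax \equiv 1 \pmod p$ is solvable if and only if $\ndv{p}{a}$. The plan is to reduce it to the preceding corollary, which says $ax \equiv 1 \pmod n$ is solvable iff $\gcd(a,n)=1$. Taking $n=p$, it then suffices to show that $\gcd(a,p)=1 \iff \ndv{p}{a}$ for a prime $p$. This is exactly the earlier corollary on primes: for $p$ prime and $a \in \mb{Z}^*$, $\ndv{p}{a}$ if and only if $\gcd(a,p)=1$.

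\textbf{Sufficiency.} Assume $\ndv{p}{a}$. Then $a \neq 0$, because every integer divides $0$; hence $a \in \mb{Z}^*$. The prime corollary gives $\gcd(a,p)=1$. Theorem~\ref{egcd} (B\'ezout) then supplies integers $x,y$ with $ax+py=1$. This means $\dv{p}{ax-1}$, that is, $ax \equiv 1 \pmod p$. Equivalently, one can cite Theorem~\ref{modn} at this point.

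\textbf{Necessity.} Assume $ax \equiv 1 \pmod p$ and, for contradiction, $\dv{p}{a}$. Then $\dv{p}{ax}$, and also $\dv{p}{ax-1}$ by the congruence. By Theorem~\ref{podiv}(d2), $p$ divides the difference, so $\dv{p}{1}$. Theorem~\ref{podiv}(f2) then forces $p \le 1$, which contradicts the primality of $p$.

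The only delicate point is the edge case $a=0$. The prime corollary was stated only for $a \in \mb{Z}^*$, so the case $a=0$ has to be handled by hand. It causes no difficulty: $\dv{p}{0}$ always holds, and $0\cdot x \equiv 1 \pmod p$ is impossible because $\ndv{p}{1}$. So both sides of the equivalence are false when $a=0$, and the statement holds there as well. With that case settled, the proof is a direct citation of the two corollaries.
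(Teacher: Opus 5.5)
Your proof is correct and follows the same route the paper takes: the corollary comes from the gcd criterion for $ax\equiv 1 \pmod n$ (Theorem~\ref{modeq1} with $b=1$, i.e.\ Theorem~\ref{modn}), combined with the fact that for prime $p$ one has $\gcd(a,p)=1$ exactly when $\ndv{p}{a}$. Your separate treatment of $a=0$ is harmless but unnecessary: your sufficiency argument already forces $a\neq 0$, and your necessity argument never uses $a\neq 0$.
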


\section{Residue classes or congruence classes}

Note that the $|$ (divisible/divisibility relation) is reflexive
and transitive but not symmetric, and thus it is not an equivalence
relation.
In fact if $\dv{a}{b}$ and $\dv{b}{a}$ we have $|a|=|b|$ or for
positive integers $a=b$.

\begin{lem}[Congruence is an equivalence relation]
The congruence $\bmod n$ is an equivalence relation:  it is
reflexive, symmetric and transitive on $\mb{Z}$.
\end{lem}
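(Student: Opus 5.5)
The plan is to work directly from the definition of congruence modulo $n$: $a \equiv b \pmod n$ if and only if $\dv{n}{a-b}$, which is condition 3 of Lemma~\ref{eqrlem}. We then check the three defining properties of an equivalence relation on $\mb{Z}$ one by one, using only the divisibility facts of Theorem~\ref{podiv}. Since Corollary~\ref{pmod} already lists reflexivity, symmetry and transitivity as items 1--3, one could simply cite it. To keep the lemma self-contained, however, I will give the short divisibility arguments explicitly.

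\textbf{Reflexivity.} For any $a \in \mb{Z}$ we have $a-a=0$. By Theorem~\ref{podiv}(a), $\dv{n}{0}$, so $a \equiv a \pmod n$.

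\textbf{Symmetry.} Suppose $a \equiv b \pmod n$, so $a-b = nk$ for some $k\in\mb{Z}$. Then $b-a = n(-k)$, hence $\dv{n}{b-a}$ and $b \equiv a \pmod n$. Equivalently, this is Theorem~\ref{podiv}(c1) applied with multiplier $-1$.

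\textbf{Transitivity.} Suppose $a \equiv b \pmod n$ and $b \equiv c \pmod n$, so $\dv{n}{a-b}$ and $\dv{n}{b-c}$. Since $a-c = (a-b)+(b-c)$, Theorem~\ref{podiv}(d1) gives $\dv{n}{a-c}$, that is, $a \equiv c \pmod n$.

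There is no real obstacle here. The only point needing care is to work consistently with the divisibility form of the definition, rather than the ``equal remainders'' form. If one prefers the remainder form, the lemma is immediate anyway, because equality of least non-negative residues (Proposition~\ref{moddiv}) is an equality of integers, and equality is trivially reflexive, symmetric and transitive. I will add a closing remark that, as a consequence, $\mb{Z}$ is partitioned into the $n$ residue classes $[0],[1],\ldots,[n-1]$ of $\mb{Z}/n\mb{Z}$, by Proposition~\ref{moddiv}.
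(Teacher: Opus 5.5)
Your proof is correct and follows the paper's argument essentially verbatim: reflexivity from $a-a=0\cdot n$, symmetry by negating the multiplier ($b-a=n(-k)$), and transitivity by adding $a-b=kn$ and $b-c=ln$. Routing these steps through the divisibility properties of Theorem~\ref{podiv} instead of writing out the explicit multiples is only a cosmetic difference.
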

\begin{proof}
$ $ \\ $ $
1. It is straightforward to show $\bmod{n}$ is reflexive.
For every $a$ we have $a \equiv a \bmod n$ since $a-a=0 \cdot n$.
$ $ \\ $ $
2. It is also symmetric since if $a \equiv b \pmod n$ this
implies $a-b = n k$ for some integer $k$. Then $b-a= n (-k)$
and therefore $b \equiv a \pmod n$.
$ $ \\ $ $
3. Furthermore, $a \equiv b \pmod n$ and $b \equiv c \pmod n$
imply $ a-b = k n $ and $b -c = l n$ for some integers $k,l$.
Therefore adding up the equalities we obtain
$a-c = (k+l) n$ anf therefore $a \equiv c \pmod n$ thus
proving transitivity.
\end{proof}

\begin{dfn}[Congruence or residue class]
The residue class or congruence class of $a$ mod $n$
or $a$ modulo $n$ is denoted as $(a)_n$.
%If $a \equiv b \pmod{n}$,  and $b$ is  the (least)
%residue of $a$ modulo $n$.
\end{dfn}

\begin{dfn}[Congruence or residue classes]
Let $n \in \mb{Z}_+$.
We define classes $(a)_n$ (or $[a]_n$), for all
$a=0, 1, \ldots , n-1$, $a \in \mb{Z}$.
\[
 (a)_n = \{ b \in \mb{Z} | b \equiv a \pmod n \}
       = \{ b= a + k n | k \in \mb{Z} \}.
\]
$(0)_n , (1)_n , \ldots , (n-1)_n$ are the $n$ equivalence
class modulo $n$. Every integer $b$ belongs to one of those
classes depending on its remainder after a division with $n$.
Thus $b \in (a)_n$ if and only if $b \equiv a \pmod n$.
\end{dfn}

One might use $(a)_n$ or $[a]_n$, or $a+n\mb{Z}$,
or $a \bmod n$ to denote the equivalence class
$a$ modulo (or mod) $n$.
%The $a \equiv b \pmod n$ is equivalent to $\dv{n}{a-c}$ or
%the equivalence class $a \bmod n$ is equal to
%the equivalence class $c \bmod n$.
Arithmetic on equivalence classes is known as modular arithmetic.

\begin{exa}
Integers $\bmod $ 3 can fall into three equivalence
classes. Describe them.
\end{exa}
\begin{solution}
\[
(0)_3 = \{  \ldots,  -3, 0 , +3 , +6 , \ldots  \},
\]
then
\[
(1)_3 = \{  \ldots,  -2, 1 , +4 , +7 , \ldots  \},
\]
and finally,
\[
(2)_3 = \{  \ldots,  -1, 2 , +5 , +8 , \ldots  \}.
\]
Integers $5$ and $8$ belong to $(2)_3$ because the remainder of
the integer division of 5 by 3 is a 2. And so is the
remainder of the division of 8 by 3.
\end{solution}

\begin{dfn}[Equivalence classes $\bmod n$]
Let $\mb{Z}_n$ or $\mb{Z}/n\mb{Z}$
be the set of equivalence classes of integers modulo $n$.
That is,
\[
 \mb{Z}/n\mb{Z} =
 \mb{Z}_n = \{ (0)_n , (1)_n , \ldots , (n-1)_n \}.
\]
If $a\in (k)_n$ then $a= nq +k$ i.e. $a \equiv k \pmod n$.
If $b\in (m)_n$ then $b= nr +m$ i.e. $b \equiv m \pmod n$.
Then $(a+b) \in (k+m)_n$. Naturally $(k+m)_n$ is $((k+m)\pmod n)_n$.
Moreover $(ab) \in (km)_n$.
We can then define operations on the elements of $Z_n$ as follows.
\begin{itemize}
\item[1.] {\bf (Addition)} $(k)_n +(m)_n = (k+m)_n$, or
$(k+n \mb{Z})+(m+n\mb{Z}) = (k+m) + n \mb{Z}$.
\item[2.] {\bf (Multiplication)} $(k)_n (m)_n = (km)_n$, or
$(k+n \mb{Z})\cdot (m+n\mb{Z}) = (k \cdot m) + n \mb{Z}$.
\item[3.] {\bf (Additivity)} If $(a_1)_n = (b_1)_n$ and
                               $(a_2)_n = (b_2)_n$ then
                 $ (a_1 + a_2 )_n = (b_1 + b_2 )_n$.
\item[4.] {\bf (Multiplicativity)} If $(a_1)_n = (b_1)_n$ and
                               $(a_2)_n = (b_2)_n$ then
                 $ (a_1 \cdot a_2 )_n = (b_1 \cdot b_2 )_n$.
\item[5.] {\bf (Exponentiativity)} If $(a_1)_n = (b_1)_n$ and
                               $c \in \mb{Z}_+$,
                 $ (a_1^c )_n = (b_1^c )_n$.
\end{itemize}
\end{dfn}

\begin{nte}
Several times, instead of using the correct notation
\[
\mb{Z}/n\mb{Z}= \mb{Z}_n = \{ (0)_n , (1)_n , \ldots , (n-1)_n \},
\]
one can use a simplified notation, as follows.
\[
\mb{Z}/n\mb{Z} = \mb{Z}_n = \{ 0 , 1 , \ldots , n-1 \}.
\]
\end{nte}

\noindent
Thus  we won't use the correct notation $(5)_7 +(3)_7 = (1)_7$.
Instead we will write $ 5 + 3 \equiv 1 \pmod 7$.

\begin{thm}
For a positive integer $n$,
\begin{enumerate}
\item  $(a)_n = (b)_n$ if and only if $a \equiv b \pmod n$,
\item For $0\leq a,b<n$ either
$(a)_n = (b)_n$ or $(a)_n \neq (b)_n$,
\item There are exaclty $n$ residue class mod $n$ and no more
and they
cover all of $\mb{Z}$.
\end{enumerate}
\end{thm}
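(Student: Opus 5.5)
The plan is to argue directly from the definition $(a)_n = \{ b \in \mb{Z} \mid b \equiv a \pmod n\}$, using that congruence mod $n$ is an equivalence relation (the preceding lemma, together with Corollary~\ref{pmod}) and the uniqueness in Proposition~\ref{moddiv}. Item 2 is the real content: for $0 \le a,b < n$ the classes are either equal or \emph{disjoint}. I will read it that way, since literally ``equal or not equal'' is a tautology. Each item is handled in turn.

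For item 1, first suppose $(a)_n = (b)_n$. Reflexivity gives $a \in (a)_n$, so $a \in (b)_n$, i.e.\ $a \equiv b \pmod n$. Conversely, suppose $a \equiv b \pmod n$ and take any $c \in (a)_n$, so $c \equiv a$. Transitivity gives $c \equiv b$, so $c \in (b)_n$. Symmetry gives the reverse inclusion, hence equality.

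For item 2, suppose $(a)_n$ and $(b)_n$ share an element $c$. Then $c \equiv a$ and $c \equiv b$, so by symmetry and transitivity $a \equiv b \pmod n$. By item 1 this gives $(a)_n = (b)_n$. Hence two classes are either identical or disjoint.

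For item 3, there are two things to show.
\begin{itemize}
\item[(i)] The classes $(0)_n, \ldots, (n-1)_n$ are pairwise distinct. If $(a)_n = (b)_n$ with $0 \le a,b < n$, then $n \mid a-b$ by item 1 and Lemma~\ref{eqrlem}, while $|a-b| < n$. The only such multiple of $n$ is $0$, so $a = b$; this is the uniqueness clause of Proposition~\ref{moddiv}.
\item[(ii)] Every integer lies in one of them, and every class equals one of them. Given $c \in \mb{Z}$, Proposition~\ref{moddiv} yields $r$ with $0 \le r < n$ and $c \equiv r$, so $c \in (r)_n$ and $(c)_n = (r)_n$ by item 1.
\end{itemize}
Hence the classes cover $\mb{Z}$, and there are exactly $n$ of them.

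The main obstacle is interpretive rather than technical. It lies in fixing the intended meaning of item 2 (disjoint or equal) and in keeping the count ``exactly $n$, no more'' tied to the uniqueness of the remainder. I would make that reliance explicit by invoking Theorem~\ref{divis} through Proposition~\ref{moddiv}.
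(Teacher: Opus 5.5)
Your proof is correct and follows essentially the same route as the paper: item 1 from the equivalence-relation properties of congruence, item 2 by showing that a common element of the two classes forces $a \equiv b \pmod n$, and item 3 via the division theorem. Your version is slightly tighter in two places. You use $a \in (a)_n$ for the forward direction of item 1, and you explicitly justify $i \not\equiv j \pmod n$ for distinct $0 \le i,j < n$ via $|i-j|<n$, which the paper merely asserts.
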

\begin{proof}
$ $ \\ $ $
(1) Consider $(a)_n = (b)_n$. Thus for an element $x$
$x \in (a)_n$ if and only if $x \in (b)_n$.
If $ x \in (a)_n$ then $x =nk +a$. If $x \in (b)_n$ then
$x=nl+b$ for some integer $k,l$. Therefore $nk+a = nl+b$
which leads to $a-b = n (l-k)$ and therefore
$a \equiv b \pmod n$.
Consider $x \in (a)_n$. Then $x \equiv a \pmod n$.
Since $a \equiv b \pmod n$ by transitivity $x \equiv b \pmod n$,
and thus $x \in (b)_n$. Therefore $(a)_n \subseteq (b)_n$.
Similarly we show $(b)_n \subseteq (a)_n$, and the result
follows.
$ $ \\ $ $
(2) Consider $a, b$. If $a \equiv b \pmod n$ then $(a)_n = (b)_n$
from part (1). Otherwise $a \not\equiv \pmod n$. Then
$(a)_n \neq (b)_n$. If this was not the case, then there would
exist $x \in (a)_n \cap (b)_n$. Arguments similar to the proof
in (1) then lead to $a \equiv b \pmod n$, a contradiction.
$ $ \\ $ $
(3) Consider integer $a \in \mn{Z}$ and take the integer
division of $a$ by $n$: $a=nq+r$ where $q$ is the quotient
and $r$ the remainder, where $0 \leq r < n$.
Then $a \equiv r \pmod n$, and thus $a \in (r)_n$.
For $0 \leq i,j < n$, $i\neq j$ we have $i \not\equiv j \pmod n$.
Thus those $n$ congruence classes ($(0)_n , \ldots (n-1)_n$
divide $\mb{Z}$.
\end{proof}

\begin{exa}
Ring $\mb{Z}/6\mb{Z}$  is not an integral domain.
Consider $2 \not\equiv 0 \pmod 6$.
Consider $3 \not\equiv 0 \pmod 6$.
Then
 $2 \cdot 3 \equiv 0 \pmod 6$.
\end{exa}

\begin{exa}
$6\mb{Z}$ is an ideal $I$ (the set of multiples of $6$).
Then $\mb{Z}/6\mb{Z}$  is the set of cosets
\[
\{ a + I : a \in \mb{Z} \} ,
\]
where operations $ + , \cdot $ are defined as follows.
\[
(a+I) + (b+I) = (a+b) + I ,
\]
\[
(a+I) \cdot (b+I) = (a\cdot b) + I .
\]
Two elements $a,b$ are equivalent $a \equiv b$
or equivalently $a+I = b+ I$ if $a-b \in I$.
If $\mb{Z}$ is a ring, $\mb{Z}/6\mb{Z}$ is a quotient
ring also known as a factor ring.
\end{exa}

\subsection{Complete system of congruences}

\begin{dfn}[Complete system of congruences]
Let $n \in \mb{Z}_+$, and $A$ be a set of integers.
$A$ is a complete set of congruences mod $n$ if and only if
\begin{itemize}
\item $|A|=n$, and
\item $A$ contains exactly one element from each one of the
$n$ congruence classes mod $n$.
\end{itemize}
\end{dfn}

\begin{exa}
Let $n=3$. Then $A=\{ 9, 4, 5 \}$ is a complete system
of congruences (residues).
Note
$9 \equiv 0 \pmod 3$,
$4 \equiv 1 \pmod 3$, and
$5 \equiv 2 \pmod 3$.
\end{exa}

\begin{thm}
Let $n \in \mb{Z}_+$, and $A$ be a set of integers.
$A$ is a complete set of congruences mod $n$ if and only if
\begin{itemize}
\item $|A|=n$, and
\item no two elements of $A$ are congruent mod $n$.
\end{itemize}
\end{thm}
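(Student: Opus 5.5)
We prove the two directions separately, using the fact established in the preceding theorem that the $n$ residue classes $(0)_n,(1)_n,\ldots,(n-1)_n$ are pairwise disjoint and cover all of $\mb{Z}$. The main tool is a counting (pigeonhole) argument on the map sending each element of $A$ to its residue class.

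\textbf{Forward direction ($\Rightarrow$).} Suppose $A$ is a complete system of congruences mod $n$. Then $|A|=n$ directly from the definition. Suppose, for contradiction, that two distinct elements $a\neq a'$ of $A$ satisfy $a\equiv a' \pmod n$. By part (1) of the previous theorem, $(a)_n=(a')_n$. So this one class contains two elements of $A$, contradicting the requirement that $A$ contains exactly one element from each class. Hence no two elements of $A$ are congruent mod $n$.

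\textbf{Reverse direction ($\Leftarrow$).} Assume $|A|=n$ and that no two distinct elements of $A$ are congruent mod $n$. Define $f:A\rightarrow \mb{Z}_n$ by $f(a)=(a)_n$; this is well defined because every integer lies in exactly one of the $n$ classes.
\begin{itemize}
\item[(i)] $f$ is injective: if $f(a)=f(a')$, then $a\equiv a'\pmod n$ by part (1) of the previous theorem, so $a=a'$ by hypothesis.
\item[(ii)] $f$ is surjective: an injective map between two finite sets of the same size $n$ must be onto. If some class were missed, the $n$ elements of $A$ would map into at most $n-1$ classes, and by the pigeonhole principle two of them would share a class, contradicting (i).
\end{itemize}
Surjectivity says every class contains at least one element of $A$, and injectivity says it contains at most one. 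So every class contains exactly one element of $A$, and $A$ is a complete system of congruences mod $n$.

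The only step needing care is (ii), the passage from injectivity to surjectivity. It relies on $\mb{Z}_n$ having exactly $n$ classes (part (3) of the previous theorem) and on the pigeonhole principle; otherwise the argument is routine.
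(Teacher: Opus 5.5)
Your proof is correct and follows the same route as the paper: the forward direction comes straight from the definition, and the converse is the counting argument that $n$ pairwise incongruent integers must meet all $n$ residue classes. You make explicit, through the injective map $a \mapsto (a)_n$ and the pigeonhole principle, the step the paper states in a single line.
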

\begin{proof}
$ $ \\ $ $
$\Rightarrow$. If $A$ is a complete set of congruences mod $n$
the two conditions above are obviously true.
$ $ \\ $ $
$\Leftarrow$. If no two elements of $A$ are congruent
mod $n$, then they belong to different congruence classes
mod $n$. Given that $A$ has $n$ elements all the congruence
classes are represented in $A$ and thus $A$ is a complete
system of congruences.
\end{proof}

\subsection{Reduced system of congruences}

A congruence class $(b)_n$ is relatively prime to $n$ if and only
if $\gcd(b,n)=1$.

\begin{dfn}
Let $n \in \mb{Z}_+$.Then
$\phi (n)$ is the number of congruence
classes modulo $n$ which are relatively prime to $n$.
Then the set of integers
\[
\left\{ a_1 , a_2 , \ldots , a_{\phi(n)} \right\}
\]
defines a reduced system of congruences if the set contains
exactly one element from each congruence class mod $n$ that
is relatively prime to $n$.
\end{dfn}

\begin{dfn}[Reduced system of congruences]
Let $n \in \mb{Z}_+$, then $\phi (n)$ is the number of congruence
classes modulo $n$ which are relatively prime to $n$.
A congruence class $(b)_n$ is relatively prime to $n$ if and only
if $\gcd(b,n)=1$.
Then the set of integers
\[
\left\{ a_1 , a_2 , \ldots , a_{\phi(n)} \right\}
\]
defines a reduced system of congruences if the set contains
exactly one element from each congruence class mod $n$ that
is relatively prime to $n$.
\end{dfn}

\begin{exa}
Let $n=10$. Start with
\[
\{ 0,1,2,3,4,5,6,7,8,9 \}
\]
Delete $0,2,4,5,6,8$ to obtain
\[
\{ 1,3,7,9 \}
\]
since $0,2,4,5,6,8$ are not relatively prime to $10$.
This provides a reduced system of congruences
Another one is
\[
\{ 11,23,37,-40 \} ,
\]
with $11 \in (1)_{10}$ and so on.
\end{exa}

\begin{thm}
Let $n \in \mb{Z}_+$. Let $A$ be a set of integers.
Then $A$ is a reduced system of congruences mod $n$ if and
only if
\begin{enumerate}
\item $A$ has exactly $\phi (n)$ elements,
\item no two elements $a,b$ are congruent mod $n$
(i.e. $a \not\equiv b \pmod n \quad , \forall a,b \in A$)
\item each element of $A$ is relatively prime to $n$
(i.e. $\forall a \in A, \quad \gcd(a,n)=1$).
\end{enumerate}
\end{thm}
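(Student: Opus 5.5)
We prove the two directions separately, following the pattern of the analogous characterization of complete systems of congruences given just above. The one fact we need about congruence classes is this: if $a \equiv b \pmod n$, then $\gcd(a,n)=\gcd(b,n)$. To see it, write $a = b + kn$ (Lemma~\ref{eqrlem}) and apply Theorem~\ref{gcddiv}, which gives $\gcd(n,a)=\gcd(n, nk+b)=\gcd(n,b)$. Consequently, being relatively prime to $n$ is a property of the whole class $(b)_n$, so the phrase ``congruence class relatively prime to $n$'' is well defined.

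\emph{Only-if direction.} Suppose $A$ is a reduced system of congruences. By definition it contains exactly one element from each of the $\phi(n)$ classes that are relatively prime to $n$, and nothing else. It therefore has exactly $\phi(n)$ elements, which gives (1). Two distinct elements lie in different classes, so they are not congruent mod $n$, which gives (2); here condition (2) is understood for $a\neq b$. Each element lies in a class $(b)_n$ with $\gcd(b,n)=1$, and by the preliminary fact $\gcd(a,n)=\gcd(b,n)=1$, which gives (3).

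\emph{If direction.} Assume (1)--(3). By (3) and the preliminary fact, every $a\in A$ lies in a class relatively prime to $n$. By (2), distinct elements of $A$ lie in distinct classes, so the map sending $a\in A$ to its class $(a)_n$ is injective into the set of classes relatively prime to $n$. That set has exactly $\phi(n)$ elements by definition of $\phi$. By (1) the map is an injection from a set of size $\phi(n)$ into a set of size $\phi(n)$, hence a bijection by the pigeonhole principle. So every class relatively prime to $n$ contains exactly one element of $A$, and $A$ is a reduced system.

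The only step needing care is the preliminary invariance of $\gcd(\cdot,n)$ on a congruence class. Without it, condition (3) and the definition (which refers to classes, via representatives $b$) would not match. Once it is in place, the rest is the same counting argument used for complete systems.
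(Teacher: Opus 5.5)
Your proof is correct and takes the same route as the paper. The forward direction is read off from the definition, and the converse is the pigeonhole count: $\phi(n)$ pairwise incongruent elements, all in classes relatively prime to $n$, must hit every such class exactly once. Your preliminary observation that $\gcd(\cdot,n)$ is constant on a congruence class (via Lemma~\ref{eqrlem} and Theorem~\ref{gcddiv}) makes explicit a well-definedness point the paper uses tacitly.
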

\begin{proof}
A reduced set of congruences possesses those properties by
default. Let us prove the converse.
Let $A$ be a set of integers having those three properties.
Because no two elements of $A$ are congruent mod $n$, it means
they belong to difference congruence classes. All classes
of $A$'s elements are relatively prime to $n$. There are
$\phi (n)$ elements or classes represented in $A$.
Thus the $\phi (n)$ elements of $A$ represent all $\phi (n)$
classes whose elements are relatively prime to $n$.
Thus $A$ is a reduced set of congruences.
\end{proof}

\begin{prp}
If $A$ is a reduced set of congruences, then $kA$ is also
one for $\gcd(k,n)=1$.
\end{prp}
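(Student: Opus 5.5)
The plan is to check the three conditions of the preceding theorem, which characterizes reduced systems of congruences, for the set $kA = \{ka_1, ka_2, \ldots, ka_{\phi(n)}\}$. Here $A=\{a_1,\ldots,a_{\phi(n)}\}$ is the given reduced system and $\gcd(k,n)=1$. If $kA$ has exactly $\phi(n)$ elements, its elements are pairwise incongruent mod $n$, and each is relatively prime to $n$, that theorem gives the conclusion directly.

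\textbf{Relative primality.} I would handle this first. Fix $a_i \in A$, so $\gcd(a_i,n)=1$, and we are given $\gcd(k,n)=1$. By Corollary~\ref{egcd2} there are integers with $a_i x + n y = 1$ and $k u + n v = 1$. Multiplying the two identities gives
\[
(ka_i)(xu) + n\,(a_i x v + k u y + n y v) = 1 .
\]
Any common divisor of $ka_i$ and $n$ therefore divides $1$, so $\gcd(ka_i,n)=1$. An alternative route: a prime dividing both $n$ and $ka_i$ would, by Lemma~\ref{pfactor}, divide $k$ or $a_i$, which contradicts one of the two gcd hypotheses.

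\textbf{Incongruence and cardinality.} Suppose $ka_i \equiv ka_j \pmod n$. Since $\gcd(k,n)=1$, the cancellation law gives $a_i \equiv a_j \pmod n$. Elements of $A$ are pairwise incongruent, so $i=j$. This yields the second condition. It also shows that the map $a_i \mapsto ka_i$ is injective, even at the level of integers, so $kA$ has exactly $\phi(n)$ distinct elements, which is the first condition.

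The argument is routine. The only step needing any care is the relative primality of $ka_i$ and $n$, since the paper has not stated the fact ``$\gcd(a,n)=\gcd(b,n)=1 \Rightarrow \gcd(ab,n)=1$'' explicitly. I would supply it inline with the B\'ezout product computation above rather than cite it.
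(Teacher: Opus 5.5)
The paper states this proposition without proof, but your argument is correct and complete, and it is essentially the argument the paper itself uses inside the first proof of Euler's theorem (Theorem~\ref{euler}). There the products $aa_i$ are shown to be units via the explicit inverse $xx_i$, and pairwise incongruent by multiplying through by the inverse of $a$; your B\'ezout product and your appeal to the cancellation law are the same two facts phrased in gcd language, and finishing with the three-condition characterization of reduced systems is exactly right.
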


\begin{exa}[Euler's theorem]
Let $n \in \mb{Z}_+$ and $a$ such that $\gcd(a,n)=1$.
The following is true.
\[
 a^{\phi (n)} \equiv 1 \pmod n .
\]
\end{exa}
\begin{solution}
If $x_1 , \ldots , x_{\phi (n)}$ is a reduced system
of congruences mod $n$, then
If $a x_1 , \ldots , a x_{\phi (n)}$ is also a reduced system
of congruences mod $n$. Then
we have the following
\begin{eqnarray*}
 x_1 \ldots x_{\phi (n)} &\equiv& a x_1 , \ldots , a x_{\phi (n)}
\pmod n \\
 x_1 \ldots x_{\phi (n)} &\equiv& a^{\phi(n)} x_1 , \ldots ,  x_{\phi (n)} 
\pmod n \\
a^{\phi(n)} &\equiv& 1 \pmod n.
\end{eqnarray*}
\end{solution}

\subsection{Modular equation redo}

The following variant of Theorem~\ref{modeq2} is stated.
\begin{thm}
\label{modeq2}
The modular equation
\[
ax \equiv b \pmod n
\]
has a solution if and only if $\dv{\gcd(a,n)}{b}$.
Let $\gcd(a,n)=d$, and
$a_1 =a /d , b_1 =b/d, n_1 = n/d$.
If $x$ is a solution of the
modular equation then, we have the following
\[
  x \equiv  b_1 a_1^{\phi (n_1) -1 } \pmod{n_1}.
\]
\end{thm}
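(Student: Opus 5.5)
The existence part, that $ax \equiv b \pmod n$ is solvable if and only if $\dv{\gcd(a,n)}{b}$, is exactly Theorem~\ref{modeq1} (equivalently Theorem~\ref{tlincon}), so I would simply cite it. The real content is the explicit formula. The plan is to reduce the congruence modulo $n$ to a congruence modulo $n_1$ with coprime coefficient, and then invert $a_1$ by means of Euler's theorem.

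\textbf{Step 1: reduce modulo $n_1$.} Assume $x$ is a solution, so $ax - b = kn$ for some $k \in \mb{Z}$. Since $d = \gcd(a,n)$ divides $a$ and $n$, and by the existence part also $b$, I divide the equation by $d$ to get $a_1 x - b_1 = k n_1$. This says
\[
a_1 x \equiv b_1 \pmod{n_1}.
\]
Theorem~\ref{gcddi} gives $\gcd(a_1, n_1) = \gcd(a/d, n/d) = 1$.

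\textbf{Step 2: invert $a_1$ with Euler's theorem.} By Euler's theorem (the example following the reduced-system proposition), $a_1^{\phi(n_1)} \equiv 1 \pmod{n_1}$, and this needs $n_1 \geq 1$. I multiply both sides of $a_1 x \equiv b_1 \pmod{n_1}$ by $a_1^{\phi(n_1)-1}$. This is legitimate by the scaling and multiplicativity properties of Corollary~\ref{pmod}, and the exponent is nonnegative because $\phi(n_1) \geq 1$. The result is
\[
x \equiv a_1^{\phi(n_1)} x \equiv b_1 a_1^{\phi(n_1)-1} \pmod{n_1},
\]
which is the claimed formula.

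\textbf{Step 3: the converse.} To confirm that the formula describes the solutions and is not just a necessary condition, I reverse the argument. Take any $x \equiv b_1 a_1^{\phi(n_1)-1} \pmod{n_1}$. Then $a_1 x \equiv b_1 a_1^{\phi(n_1)} \equiv b_1 \pmod{n_1}$. Multiplying the relation $a_1 x - b_1 = m n_1$ by $d$ gives $ax - b = mn$. This is consistent with the $d$ solutions modulo $n$ found in Theorem~\ref{tlincond}: all of them lie in a single class modulo $n_1$.

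\textbf{Main obstacle: the edge case $n_1 = 1$.} The computations are routine, so the only delicate point is this degenerate case. When $n_1 = 1$, that is $d = n$ or $\dv{n}{a}$, Euler's theorem is vacuous with $\phi(1) = 1$, and every integer is congruent modulo $1$. I would check separately that the statement holds trivially there. I would also note that division by $d$ requires $|a| + |n| \neq 0$, which holds since $n \geq 1$.
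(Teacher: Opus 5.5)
Your proposal is correct and follows the paper's proof. Both divide $ax-b=kn$ by $d$ to get $a_1x\equiv b_1 \pmod{n_1}$, then multiply by $a_1^{\phi(n_1)-1}$ using Euler's theorem. Your explicit note that $\gcd(a_1,n_1)=1$ (needed for Euler's theorem, left implicit in the paper) and your converse check are small but welcome additions.
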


\begin{proof}
$ $ \\ $ $
From $ ax \equiv b \pmod n$ we have that $ax-b = kn$ i.e.
$ax+kn = b$, for some $k \in \mb{Z}$. The
solution of this Diophantine equation
exists for $\dv{\gcd(a,n)}{b}$.
Let  $\gcd(a,n)=d$, and
$a_1 =a /d , b_1 =b/d, n_1 = n/d$.
$ $ \\ $ $
If $\dv{d}{b}$ then we have the following for some $k \in \mb{Z}$.
\begin{eqnarray*}
  ax                  &\equiv& b \pmod{n}         \Leftrightarrow \\
           (a_1 d ) x &\equiv& b_1 d \pmod{n_1 d} \Leftrightarrow \\
 (a_1 d ) x - b_1 d   &=&                n_1 d k  \Leftrightarrow \\
 a_1  x - b_1         &=& n_1  k                  \Leftrightarrow \\
 a_1  x               &\equiv&   b_1 \pmod{n_1} .
\end{eqnarray*}
By Euler's theorem it is
$a_1^{\phi (n_1 )} \equiv 1 \pmod{n_1}$.
We multiply the modular equation above by $a_1^{\phi (n_1 ) -1} $.
\begin{eqnarray*}
 a_1 x               &\equiv&   b_1 \pmod{n_1}  \\
 a_1 x a_1^{\phi (n_1 )-1} &\equiv& b_1 a_1^{\phi (n_1 )-1}\pmod{n_1}\\
  x a_1^{\phi (n_1 )} &\equiv& b_1 a_1^{\phi (n_1 )-1}\pmod{n_1}\\
  x                   &\equiv& b_1 a_1^{\phi (n_1 )-1}\pmod{n_1}.
\end{eqnarray*}
The result follows.
\end{proof}

\subsection{Congruence summary}

\begin{prp}
\label{abelplus}
The set
$\mb{Z}/n\mb{Z}$
of integers mod $n$
satisfies
the following properties over addition.
\begin{enumerate}
\item Closure property:
$(i)_n + (j)_n \in \mb{Z}/n\mb{Z} \quad \forall (i)_n , (j)_n
\in \mb{Z}/n\mb{Z}$.
\item Associativity property:
$( (i)_n + (j)_n )+ (k)_n = (i)_n + ( (j)_n + (k)_n )  \quad
\forall (i)_n , (j)_n , (k)_n \in \mb{Z}/n\mb{Z}$.
\item Commutativity property:
$(i)_n + (j)_n = (j)_n + (i)_n  \quad
\forall (i)_n , (j)_n \in \mb{Z}/n\mb{Z}$.
\item Identity that is $(0)_n$,
$\forall (i)_n \in \mb{Z}/n\mb{Z} :
(i)_n + (0)_n = (0)_n +(i)_n = (i)_n$.
\item (Additive) inverse property:
$\forall (i)_n \in \mb{Z}/n\mb{Z} \quad \exists (j)_n \in \mb{Z}/n\mb{Z} :
(i)_n + (j)_n = (j)_n +(i)_n = (0)_n$. Moreover $(j)_n = (-i)_n = -(i)_n$.
\end{enumerate}
\end{prp}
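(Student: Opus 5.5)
The plan is to reduce every property of $(\mb{Z}/n\mb{Z},+)$ to the corresponding property of $(\mb{Z},+)$. The bridge is the definition of addition on classes, $(i)_n + (j)_n = (i+j)_n$. The one point needing care is that this definition does not depend on the chosen representatives. That is exactly item 3 (Additivity) of the definition of equivalence classes mod $n$, which in turn rests on property 6 of Corollary~\ref{pmod}. I would first recall this well-definedness explicitly: if $(a_1)_n=(b_1)_n$ and $(a_2)_n=(b_2)_n$, then $a_1\equiv b_1$ and $a_2\equiv b_2 \pmod n$ by part (1) of the theorem characterizing when $(a)_n=(b)_n$. Additivity then gives $a_1+a_2\equiv b_1+b_2\pmod n$, so $(a_1+a_2)_n=(b_1+b_2)_n$. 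This is the only genuinely non-routine step, and everything after it is bookkeeping.

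\textbf{Closure.} The class $(i+j)_n$ is a residue class mod $n$. By Proposition~\ref{moddiv}, $i+j\equiv r\pmod n$ for a unique $0\le r<n$, so $(i+j)_n=(r)_n$, which is one of $(0)_n,\ldots,(n-1)_n$. Hence $(i)_n+(j)_n\in\mb{Z}/n\mb{Z}$.

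\textbf{Associativity and commutativity.} Apply the definition twice and use associativity of integer addition:
\[
((i)_n+(j)_n)+(k)_n=(i+j)_n+(k)_n=((i+j)+k)_n=(i+(j+k))_n=(i)_n+((j)_n+(k)_n).
\]
Commutativity is the same argument with $i+j=j+i$ in $\mb{Z}$.

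\textbf{Identity.} We have $(i)_n+(0)_n=(i+0)_n=(i)_n$, and likewise on the left, since $0$ is the additive identity of $\mb{Z}$.

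\textbf{Inverse.} Take $(j)_n=(-i)_n$. Then $(i)_n+(-i)_n=(i+(-i))_n=(0)_n$, and by commutativity $(-i)_n+(i)_n=(0)_n$ as well. If a representative in $\{0,\ldots,n-1\}$ is wanted, note that $-i\equiv n-i\pmod n$, so $(-i)_n=(n-i)_n$ (with $(0)_n$ for $i=0$). The notation $-(i)_n$ is then justified by uniqueness of inverses. If $(j)_n$ and $(j')_n$ are both inverses of $(i)_n$, then associativity and the identity give
\[
(j)_n=(j)_n+((i)_n+(j')_n)=((j)_n+(i)_n)+(j')_n=(j')_n.
\]

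Together these show that $(\mb{Z}/n\mb{Z},+)$ is an abelian group.
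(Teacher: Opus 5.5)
Your argument is correct and follows the route the paper intends. The paper gives no separate proof of Proposition~\ref{abelplus}; it relies on the definition of class addition $(k)_n+(m)_n=(k+m)_n$ and its Additivity item (property 6 of Corollary~\ref{pmod}) for well-definedness, which you make explicit before reducing each axiom to $(\mb{Z},+)$. One cosmetic point: Corollary~\ref{pmod} and Proposition~\ref{moddiv} are stated for $n>1$, so you should dismiss the degenerate one-element case $n=1$ separately.
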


\begin{prp}
\label{abeltimes}
The set
$\mb{Z}/n\mb{Z}$
of integers mod $n$
satisfies
the following properties over multiplication.
\begin{enumerate}
\item Closure property:
$(i)_n \cdot (j)_n \in \mb{Z}/n\mb{Z} \quad \forall (i)_n , (j)_n
\in \mb{Z}/n\mb{Z}$.
\item Associativity property:
$( (i)_n \cdot (j)_n ) \cdot  (k)_n =
(i)_n \cdot ( (j)_n \cdot (k)_n )  \quad
\forall (i)_n , (j)_n , (k)_n \in \mb{Z}/n\mb{Z}$.
\item Commutativity property:
$(i)_n \cdot (j)_n = (j)_n \cdot (i)_n  \quad
\forall (i)_n , (j)_n \in \mb{Z}/n\mb{Z}$.
\item Identity that is $(1)_n$,
$\forall (i)_n \in \mb{Z}/n\mb{Z} :
(i)_n \cdot  (1)_n = (1)_n \cdot (i)_n = (i)_n$.
\item Distribution property of multiplication over addition:
$
(i)_n \cdot ( (j)_n  +    (k)_n )   =
((i)_n \cdot  (j)_n)  + ((i)_n \cdot   (k)_n )$.
\end{enumerate}
\end{prp}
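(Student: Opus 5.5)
The plan is to prove each of the five properties of Proposition~\ref{abeltimes} by reducing it to the corresponding property of ordinary integer multiplication and addition in $\mb{Z}$. The bridge is the definition of the operations on classes, $(k)_n \cdot (m)_n = (km)_n$ and $(k)_n + (m)_n = (k+m)_n$.

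The first step, which I expect to be the only real obstacle, is to check that these operations are well defined. The product of two classes must not depend on which representatives are chosen. Suppose $(a_1)_n=(b_1)_n$ and $(a_2)_n=(b_2)_n$. By part (1) of the theorem on residue classes, this means $a_1\equiv b_1$ and $a_2\equiv b_2 \pmod n$. Multiplicativity (property 8 of Corollary~\ref{pmod}) then gives $a_1a_2\equiv b_1b_2 \pmod n$, so $(a_1a_2)_n=(b_1b_2)_n$. The same argument with additivity (property 6) handles addition.

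Closure is then immediate. For any integers $i,j$, the product $ij$ lies in exactly one class $(r)_n$ with $0\le r<n$, by Proposition~\ref{moddiv} (equivalently, part (3) of that theorem). Hence $(i)_n\cdot(j)_n=(ij)_n=(r)_n\in\mb{Z}/n\mb{Z}$.

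The remaining identities all follow by unfolding the definition, applying the integer identity, and folding back:
\[
((i)_n(j)_n)(k)_n=(ij)_n(k)_n=((ij)k)_n=(i(jk))_n=(i)_n(jk)_n=(i)_n((j)_n(k)_n).
\]
The same pattern gives the rest:
\begin{itemize}
\item commutativity, from $ij=ji$;
\item the identity, from $i\cdot 1=1\cdot i=i$, so that $(i)_n(1)_n=(i)_n$;
\item distributivity, from $(i)_n((j)_n+(k)_n)=(i)_n(j+k)_n=(i(j+k))_n=(ij+ik)_n=(ij)_n+(ik)_n$, where the last equality uses the definition of addition of classes.
\end{itemize}

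Each of these equalities between classes is legitimate precisely because of the well-definedness established in the first step. That is why I would carry out that step first and treat it carefully; everything after it is routine.
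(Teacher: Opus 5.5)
Your argument is correct. The paper states this proposition without an explicit proof and rests it on the same ingredients you use: the definition of the class operations $(k)_n(m)_n=(km)_n$ and $(k)_n+(m)_n=(k+m)_n$, and the additivity and multiplicativity properties of Corollary~\ref{pmod}, which guarantee that these operations are well defined. Your route of checking well-definedness first and then unfolding each identity to the integer case is therefore the intended one.

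The only nit is that Corollary~\ref{pmod} and Proposition~\ref{moddiv} are stated for $n>1$. The case $n=1$ is trivial, since $\mb{Z}/1\mb{Z}$ has a single class.
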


Therefore $(\mb{Z}/n\mb{Z} , +, \cdot)$ is a ring,
since $(\mb{Z}/n\mb{Z} , +)$ is an abelian group,
$(\mb{Z}/n\mb{Z} , \cdot )$ is a semigroup, and $\cdot$
is distributive over $+$. Furthermore, $\cdot$ is commutative thus
making  $(\mb{Z}/n\mb{Z} , +, \cdot)$ a commutative ring.

\begin{prp}[$\mb{Z}_n$ or $\mb{Z}/n\mb{Z}$ is a commutative ring.]
$\mb{Z}_n$ or $\mb{Z}/n\mb{Z}$  is such that
$( \mb{Z}/n\mb{Z} , + , \cdot )$ is a commutative
ring.
\end{prp}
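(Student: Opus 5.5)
The plan is to check the ring axioms from the definition of a ring one by one for $(\mb{Z}/n\mb{Z}, +, \cdot)$, using the operations $(k)_n + (m)_n = (k+m)_n$ and $(k)_n (m)_n = (km)_n$. Most of the work is already done in Proposition~\ref{abelplus} and Proposition~\ref{abeltimes}. First, however, the operations must be shown to be \emph{well defined}, that is, independent of the representatives chosen for each class. I would do this first, since every later axiom quietly relies on it.

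\textbf{Well definedness.} Suppose $(a_1)_n = (b_1)_n$ and $(a_2)_n = (b_2)_n$. By the theorem on residue classes (part 1), this means $a_1 \equiv b_1 \pmod n$ and $a_2 \equiv b_2 \pmod n$. Additivity and multiplicativity in Corollary~\ref{pmod} (items 6 and 8) then give $a_1 + a_2 \equiv b_1 + b_2 \pmod n$ and $a_1 a_2 \equiv b_1 b_2 \pmod n$. Applying part 1 of the same theorem again, $(a_1+a_2)_n = (b_1+b_2)_n$ and $(a_1a_2)_n = (b_1b_2)_n$. So both $+$ and $\cdot$ are well defined, total maps $\mb{Z}_n \times \mb{Z}_n \to \mb{Z}_n$, which is the closure property. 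Here I use that every integer lies in exactly one of the $n$ classes (part 3).

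\textbf{Transferring the axioms from $\mb{Z}$.} Each remaining axiom is an identity between classes. I would prove each one by picking representatives, applying the corresponding identity in the commutative ring $\mb{Z}$, and mapping the result back to classes. For example, associativity of addition follows from
\[
((i)_n+(j)_n)+(k)_n = ((i+j)+k)_n = (i+(j+k))_n = (i)_n+((j)_n+(k)_n).
\]
The same pattern handles commutativity of $+$, associativity and commutativity of $\cdot$, and left distributivity, $(i)_n((j)_n+(k)_n) = (i(j+k))_n = (ij+ik)_n$. Right distributivity then follows from left distributivity together with commutativity. The additive identity is $(0)_n$, and the additive inverse of $(i)_n$ is $(-i)_n$, which equals $(n-i)_n$ when $0 \le i < n$. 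All of these are the items listed in Proposition~\ref{abelplus} and Proposition~\ref{abeltimes}, so I would cite those and justify them in this uniform way.

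\textbf{Conclusion.} The first of these propositions makes $(\mb{Z}_n,+)$ an abelian group. The second makes $(\mb{Z}_n,\cdot)$ a commutative semigroup, in fact a monoid with identity $(1)_n$, and gives distributivity. By the definition of a ring and of a commutative ring, $(\mb{Z}/n\mb{Z},+,\cdot)$ is therefore a commutative ring (with identity). The only step that needs genuine care is the well-definedness argument; the rest is routine transfer from $\mb{Z}$.
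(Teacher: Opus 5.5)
Your proposal is correct and follows the paper's route. The paper's justification is the remark just before the statement, which combines Propositions~\ref{abelplus} and~\ref{abeltimes}: $(\mb{Z}/n\mb{Z},+)$ is an abelian group, $(\mb{Z}/n\mb{Z},\cdot)$ is a commutative semigroup, and $\cdot$ distributes over $+$. Your additions make explicit what the paper leaves unstated, namely well-definedness of the class operations via Corollary~\ref{pmod} and the transfer of each axiom from $\mb{Z}$ through representatives.
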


\begin{prp}
The ring $(\mb{Z}/n\mb{Z} , +, \cdot)$ is a field for
a prime integer $n$.
\end{prp}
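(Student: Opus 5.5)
Since the paper has already shown that $(\mb{Z}/n\mb{Z},+,\cdot)$ is a commutative ring with identity (Propositions~\ref{abelplus} and \ref{abeltimes}), I will show that for a prime $n=p$ it meets the remaining requirements of the standalone field definition. These are $1\neq 0$ and the existence of a multiplicative inverse for every nonzero class. The distribution law on the right follows from the one on the left together with commutativity of multiplication, so no new work is needed there.

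First, $1 \neq 0$. Since $p$ is prime, $p>1$ (Definition~\ref{prime}), so $\ndv{p}{1-0}$. Hence $1\not\equiv 0 \pmod p$, i.e.\ $(1)_p\neq(0)_p$, and the set has at least two elements.

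Second, the inverses. Take a nonzero class $(a)_p$, so $1\le a\le p-1$ and therefore $\ndv{p}{a}$. By the corollary stating that for a prime $p$ we have $\ndv{p}{a}$ iff $\gcd(a,p)=1$, we get $\gcd(a,p)=1$. Theorem~\ref{modn} (or directly Corollary~\ref{egcd2} via the B\'ezout identity $ax+py=1$) then gives an $x$ with $ax\equiv 1\pmod p$. Hence $(a)_p(x)_p=(1)_p$, and by commutativity $(x)_p(a)_p=(1)_p$. Reducing $x$ mod $p$ puts the inverse in $\{1,\dots,p-1\}$; it cannot be $0$, since $a\cdot 0\equiv 0\not\equiv 1$.

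Together these give that $(\mb{Z}/p\mb{Z}\setminus\{(0)_p\},\cdot)$ is an abelian group, so $\mb{Z}/p\mb{Z}$ is a field. There is no real obstacle here. The only point needing care is that the operations on classes are well defined, independent of the representatives chosen. That is exactly the additivity and multiplicativity in Corollary~\ref{pmod}, which I will cite.

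For completeness I might add the converse remark that $\mb{Z}/n\mb{Z}$ is not a field when $n$ is composite. If $n=ab$ with $1<a,b<n$, then $a$ is a zero divisor and so has no inverse, as in the $\mb{Z}/6\mb{Z}$ example. The statement itself, however, only requires the forward direction.
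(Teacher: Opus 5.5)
Your proof is correct and follows essentially the paper's route. The paper proves this in the later theorem that $\mathbb{Z}/n\mathbb{Z}$ is a field if and only if $n$ is prime: it takes the additive group and ring structure from Propositions~\ref{abelplus} and~\ref{abeltimes}, and reduces everything to inverses of nonzero classes mod a prime, drawn from the earlier results on $ax\equiv 1 \pmod p$, exactly as you do via $\gcd(a,p)=1$ and B\'ezout. You are more explicit than the paper about $1\neq 0$ and well-definedness; the one detail you leave unstated is closure of the nonzero classes under multiplication, which your ``abelian group'' sentence needs (the division-ring form of the definition does not), and it follows in one line by multiplying $(a)_p(b)_p=(0)_p$ by $(a)_p^{-1}$.
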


\newpage

\chapter{Intermediate Number Theory}

\section{Euler's totient function}

\begin{dfn}[Euler's $\phi$ function]
For any $n \in \mb{N}$,  let 
\[
n= p_1^{a_1} p_2^{a_2} \ldots p_k^{a_k} ,
\]
where $p_1, p_2, \ldots , p_k$ are $k$ distinct
prime numbers, $k \geq 1$, and 
$a_1 , a_2 , \ldots a_k \in \mb{N}$
with $a_i \geq 1$.
We define $\phi (n)$ as follows.
\[
 \phi (n) = 
p_1^{a_1 -1} ( p_1 -1 )
p_2^{a_2 -1} ( p_2 -1 )
\ldots
p_k^{a_k -1} ( p_k -1 ) .
\]
\end{dfn}

Moreover if we define $\phi (1) = 1$ then we
can write $ \phi (n)$ as follows as well.
\[
 \phi (n) = n
   \left( 1 - \frac{1}{p_1} \right)
   \left( 1 - \frac{1}{p_2} \right)
    \ldots
   \left( 1 - \frac{1}{p_k} \right).
\]

\begin{lem}
\label{phil1}
If $p,q$ are prime numbers and then $\gcd(p,q)=1$,
then
\[
 \phi (pq) = \phi (p) \phi (q).
\]
\end{lem}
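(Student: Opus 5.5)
The plan is to work directly from the definition of $\phi$ through prime factorizations, given at the start of this section. The hypothesis $\gcd(p,q)=1$ for primes $p,q$ simply says $p\neq q$: if $p=q$ then $\gcd(p,q)=p>1$. So the product $pq$ has the factorization $p^1 q^1$ into two \emph{distinct} primes. By the uniqueness part of the Fundamental theorem of arithmetic (Theorem~\ref{fund}), this is the factorization to which the definition of $\phi$ applies.

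The steps, in order, are as follows.
\begin{enumerate}
\item Evaluate the definition for a single prime. The factorization of $p$ is $p^1$, so $k=1$ and $a_1=1$, giving $\phi(p)=p^{0}(p-1)=p-1$. Likewise $\phi(q)=q-1$.
\item Evaluate the definition for $pq$. Take $k=2$, $p_1=p$, $p_2=q$ (ordered so that $p_1<p_2$; by commutativity the order does not matter) and $a_1=a_2=1$. This gives
\[
\phi(pq)=p^{0}(p-1)\,q^{0}(q-1)=(p-1)(q-1).
\]
\item Compare the two computations: $(p-1)(q-1)=\phi(p)\phi(q)$, which is the claim.
\end{enumerate}

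As an independent sanity check, one could also count directly. The integers in $\{0,\ldots,pq-1\}$ that are not relatively prime to $pq$ are exactly the multiples of $p$ or of $q$. There are $q$ multiples of $p$ and $p$ multiples of $q$, and by Corollary~\ref{thm25} the only common multiple in this range is $0$. Inclusion–exclusion then gives
\[
pq-p-q+1=(p-1)(q-1),
\]
which matches the reduced-system interpretation of $\phi$ from the previous chapter.

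The only step needing care is the first observation, that $\gcd(p,q)=1$ forces $p\neq q$. This matters because otherwise $pq=p^2$, and then $\phi(p^2)=p(p-1)\neq(p-1)^2$. Once distinctness is noted, everything else is substitution into the definition.
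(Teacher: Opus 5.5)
Your proof is correct, but your main argument takes a different route from the paper's. You substitute directly into the product formula that this section uses as the \emph{definition} of $\phi$, after correctly noting that $\gcd(p,q)=1$ forces $p\neq q$, so that $pq=p^1q^1$ is a factorization into distinct primes. The paper instead gives the inclusion--exclusion count that you treat only as a ``sanity check'': it counts the multiples of $p$ and of $q$ among $1,\ldots,pq$, corrects for $pq$ being subtracted twice, and obtains $pq-p-q+1=(p-1)(q-1)$.

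Under the formula definition your substitution is complete and is the shortest possible argument. However, it makes the lemma essentially tautological and says nothing about $\phi$ as a count of residues coprime to $n$. That counting interpretation is the one actually used later ($|\mb{U}_n|=\phi(n)$, Euler's theorem, RSA), and the paper's argument is what ties the formula to it.

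Your version of the count is, if anything, slightly more careful than the paper's. By working in $\{0,\ldots,pq-1\}$ and invoking Corollary~\ref{thm25} to show that $0$ is the only common multiple, you justify the overlap term, which the paper merely asserts for $pq$.
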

\begin{proof}
The number of integers relatively prime to $pq$ are
the multiples of $p$, the multiple of $q$, and if
we include in this count $pq$ make sure it is not included
twice. Thus out of the $pq$ integers $1, 2, \ldots , pq$.
There are $pq/p= q$ multiples of $p$ (including $pq$)
and
there are $pq/q= p$ multiples of $q$ (including $pq$),
and thus
the number of integers relatively prive to $pq$ are
\[
pq - p - q +1,
\]
where the $+1$ is because in $-p$ and $-q$ the contribution
of $pq$ was subracted twice and we adjust with a $+1$
accordingly.
Then
\[
pq - p - q +1 = (p-1) (q-1) = \phi (p) \phi (q).
\]
\end{proof}

\begin{lem}
\label{phil2}
Show that for any $N,M \in \mb{N}$
with $N= p^n$ and $M=p^m$ with $p$ a prime number,
and $n,m \in \mb{N}$ with  $n \geq 0 , m \geq 0$,
we have that
\[
 \phi (N \cdot M ) =
\phi (N) \cdot \phi (M) \cdot
                     \frac{d}{\phi (d)},
\]
where $d= \gcd(N,M)$.
\end{lem}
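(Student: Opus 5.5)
We will verify the identity by direct computation from the definition of $\phi$ on prime powers, splitting into cases according to whether an exponent is zero. Throughout, $N=p^n$ and $M=p^m$, so $NM=p^{n+m}$ and $d=\gcd(N,M)=p^{\min(n,m)}$. The last fact follows from the GCD UF theorem (or directly, since the divisors of $p^k$ are the powers $p^j$ with $j\le k$). We use $\phi(1)=1$ and, for $k\ge 1$, $\phi(p^k)=p^{k-1}(p-1)$ from the definition of Euler's $\phi$ function.

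\emph{Degenerate case.} Suppose $\min(n,m)=0$, say $n=0$ by the symmetry of the statement in $N$ and $M$. Then $N=1$ and $d=1$, so $\phi(N)=1$ and $d/\phi(d)=1/1=1$. The right-hand side becomes $\phi(M)$, and the left-hand side is $\phi(1\cdot M)=\phi(M)$, so the identity holds. This covers $n=m=0$ as well.

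\emph{Main case.} Suppose $n,m\ge 1$, and set $k=\min(n,m)\ge 1$. Then
\[
\phi(N)\phi(M)=p^{n-1}(p-1)\,p^{m-1}(p-1)=p^{n+m-2}(p-1)^2
\]
and
\[
\frac{d}{\phi(d)}=\frac{p^{k}}{p^{k-1}(p-1)}=\frac{p}{p-1}.
\]
Multiplying these gives $p^{n+m-1}(p-1)$. Since $n+m\ge 2\ge 1$, this is exactly $\phi(p^{n+m})=\phi(NM)$. Note that the ratio $d/\phi(d)=p/(p-1)$ does not depend on $k$, provided $k\ge1$.

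The only subtle point is the bookkeeping of the zero-exponent cases, where the formula $p^{a-1}(p-1)$ does not apply. This is why we treat $\min(n,m)=0$ separately, using the convention $\phi(1)=1$. Beyond that, the argument is routine algebra with exponents.
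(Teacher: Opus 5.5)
Your proof is correct and follows the paper's argument essentially verbatim. Like the paper, you split into the zero-exponent case, handled via $\phi(1)=1$ and $d=1$, and the case $n,m\ge 1$, where both sides come out to $p^{n+m-1}(p-1)$. Your exponent bookkeeping is in fact cleaner than the paper's: its display writes $\phi(N)\phi(M)$ as $p^{n}(p-1)\,p^{m}(p-1)$ instead of $p^{n-1}(p-1)\,p^{m-1}(p-1)$.
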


\noindent
{\bf Note.} $\phi (1)=1$.

\begin{proof}
$ $ \\ $ $
If $n=0$ then $N=p^0 = 1$, $\gcd(N,M)=1$,
and therefore
\[
  \phi (N \cdot M ) = \phi ( M )    \wedge
\phi (N) \cdot \phi (M) \cdot
                     \frac{d}{\phi (d)} =
   \phi(1) \phi(M) \cdot \frac{1}{\phi(1)} = \phi (M),
\]
and the result is true.
$ $ \\ $ $
If $m=0$ a similar result is obtained.
$ $ \\ $ $
If $n > 0, m > 0$ then
\[
\phi (N \cdot M ) = \phi (p^{n+m} ) =
                  = p^{n+m-1} (p-1)
                  = \left( p^{k+l-1} (p-1) \right),
\]
where $k=\max{(n,m)}$ and $l= \min{(n,m)}$.
On the other hand
\[
\phi (N) \cdot \phi (M) \cdot
                     \frac{d}{\phi (d)} =
 p^n (p-1) p^m (p-1) \frac{p^l}{p^{l-1} (p-1)} =
 p^{n+m-1} (p-1).
\]
The result is then proven.
\end{proof}

\begin{lem}
\label{phil3}
Show that for any $N,M \in \mb{N}$ we have that
\[
 \phi (N \cdot M ) = \phi (N) \cdot \phi (M) \cdot
                     \frac{d}{\phi (d)},
\]
where $d= \gcd(N,M)$.
\end{lem}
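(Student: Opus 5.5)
The plan is to reduce Lemma~\ref{phil3} to the prime-power case already settled in Lemma~\ref{phil2}, using the explicit product formula in the definition of Euler's $\phi$ function. The cases $N=1$ or $M=1$ are immediate, since then $d=1$ and $\phi(1)=1$. Otherwise, I write both numbers over a common list of distinct primes, as in the GCD UF theorem:
\[
N=\prod_{i=1}^{k} p_i^{n_i}, \qquad M=\prod_{i=1}^{k} p_i^{m_i}, \qquad n_i,m_i\geq 0 .
\]
Then $NM=\prod_i p_i^{n_i+m_i}$, and by the GCD UF theorem $d=\gcd(N,M)=\prod_i p_i^{\min(n_i,m_i)}$.

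The key intermediate fact is that the definition of $\phi$ is a product of independent per-prime factors. Concretely, for any $x=\prod_i p_i^{e_i}$ with $e_i\geq 0$,
\[
\phi(x)=\prod_{i=1}^{k}\phi\!\left(p_i^{e_i}\right),
\]
where factors with $e_i=0$ contribute $\phi(1)=1$. This is read directly off the definition: primes with exponent zero simply do not appear, and each prime that does appear contributes $p_i^{e_i-1}(p_i-1)=\phi(p_i^{e_i})$. By uniqueness of factorization (Theorem~\ref{fund}), this per-prime reading is well defined. I will state this as a short claim first.

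Applying the claim to each of $NM$, $N$, $M$ and $d$ gives
\[
\phi(N)\phi(M)\frac{d}{\phi(d)}=\prod_{i=1}^{k}\phi\!\left(p_i^{n_i}\right)\phi\!\left(p_i^{m_i}\right)\frac{p_i^{\min(n_i,m_i)}}{\phi\!\left(p_i^{\min(n_i,m_i)}\right)} .
\]
The $i$-th factor is exactly the right-hand side of Lemma~\ref{phil2} with $N=p_i^{n_i}$ and $M=p_i^{m_i}$, whose gcd is $p_i^{\min(n_i,m_i)}$. That lemma turns it into $\phi(p_i^{n_i+m_i})$, and by the claim the product of these factors is $\phi(NM)$.

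I expect the main obstacle to be bookkeeping rather than mathematics. The exponent-zero cases must be handled uniformly, with $\phi(1)=1$ so that $d/\phi(d)$ factors correctly even when $\gcd=1$. One must also justify that the definition of $\phi$ splits over the common prime list; Lemma~\ref{phil2} already covers $n=0$ or $m=0$ per prime, so the uniform treatment goes through.
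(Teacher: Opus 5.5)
Your argument is correct, but it is organized differently from the paper's. The paper never invokes Lemma~\ref{phil2}. It writes $N$ and $M$ over a common list of primes, substitutes $p_i^{e-1}(p_i-1)$ for every prime-power factor of $\phi(N)$, $\phi(M)$ and $\phi(d)$ at once, and collects exponents in one chain using $\max(n_i,m_i)+\min(n_i,m_i)=n_i+m_i$. You instead isolate the splitting $\phi(x)=\prod_i\phi(p_i^{e_i})$ and then apply Lemma~\ref{phil2} prime by prime, so the per-prime algebra is done once and reused.

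What your organization buys is a correct treatment of zero exponents. The paper's chain uses $p_i^{e-1}(p_i-1)$ even when $e=0$, which gives $(p_i-1)/p_i$ rather than $\phi(1)=1$. Its displayed expansions of $\phi(N)$, $\phi(M)$ and $\phi(d)$ are therefore individually false whenever some listed $p_i$ fails to divide $N$ or $M$. The final identity survives only because that spurious factor occurs in both $\phi(N)\phi(M)$ and $\phi(d)$ and cancels. Your claim, which uses $\phi(1)=1$ together with the explicit $n=0$ and $m=0$ cases in Lemma~\ref{phil2}, never writes a false intermediate equality.

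The paper's route, in exchange, is a self-contained computation that does not depend on Lemma~\ref{phil2}. Your separate treatment of $N=1$ or $M=1$ is harmless but redundant, since the uniform per-prime argument already covers it.
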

\begin{proof}
Let
$N=
  p_1^{n_1}
  p_2^{n_2}
  \ldots
  p_r^{n_r}
$
and
$M=
  p_1^{m_1}
  p_2^{m_2}
  \ldots
  p_r^{m_r}
$
where $n_i \geq 0$ and $m_i \geq 0$.
Let $l_i = \min{(n_i ,m_i )}$, $i=1, \ldots , r$.
Let $k_i = \max{(n_i ,m_i )}$, $i=1, \ldots , r$.
Then $k_i + l_i = n_i + m_i$.
Then we have the following.
\begin{eqnarray*}
 \phi (N) \cdot \phi (M) \cdot \frac{d}{\phi (d)}
&=&
 \phi \left( p_1^{n_1 } p_2^{n_2 } \ldots p_r^{n_r } \right)
\cdot
 \phi \left( p_1^{m_1 } p_2^{m_2 } \ldots p_r^{m_r } \right)
\cdot
 \frac{       p_1^{l_1} p_2^{l_2} \ldots p_r^{l_r} }{
       \phi ( p_1^{l_1} p_2^{l_2} \ldots p_r^{l_r} ) } \\
&=&
 p_1^{n_1 -1 } p_2^{n_2 -1 } \ldots p_r^{n_r -1 } \cdot
 ( p_1  -1 )(p_2 -1 ) \ldots ( p_r -1)      \\
&\cdot&
 p_1^{m_1 -1 } p_2^{m_2 -1 } \ldots p_r^{m_r -1 } \cdot
 ( p_1  -1 )(p_2 -1 ) \ldots ( p_r -1 )      \\
 &\cdot&
 \frac{       p_1^{l_1} p_2^{l_2} \ldots p_r^{l_r} }{
        p_1^{l_1 -1} (p_1 -1)  p_2^{l_2 -1 } (p_2 -1)
        \ldots p_r^{l_r-1} (p_r -1) } \\
&=&
 p_1^{k_1 +l_1 -2 } p_2^{k_2 +l_2 -2} \ldots p_r^{k_r +l_r -2 } \cdot
 ( p_1  -1 )^2 (p_2 -1 )^2  \ldots ( p_r -1 )^2       \cdot
 \frac{       p_1 p_2 \ldots p_r }{
         (p_1 -1)  (p_2 -1) \ldots (p_r -1) }  \\
&=&
 p_1^{k_1 +l_1 -1 } p_2^{k_2 +l_2 -1} \ldots p_r^{k_r +l_r -1 } \cdot
 ( p_1  -1 )   (p_2 -1 )    \ldots ( p_r -1 )         \\
&=&
 p_1^{n_1 +m_1 -1 } p_2^{n_2 +m_2 -1} \ldots p_r^{n_r +m_r -1 } \cdot
 ( p_1  -1 )   (p_2 -1 )    \ldots ( p_r -1 )        \\
&=&
 \phi (N \cdot M ).
\end{eqnarray*}
The result is proven.
\end{proof}

\begin{thm}
\label{phit1}
Let $N,m \in \mb{N}$ and $N,m > 0$.
Then there are $m \cdot \phi (N)$ integers relatively 
prime to $N$ among $1,2, \ldots , N \cdot m -1, N \cdot m$.
\end{thm}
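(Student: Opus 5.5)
The plan is to split the range $1,2,\ldots,Nm$ into $m$ consecutive blocks of length $N$, and to show that each block contains exactly $\phi(N)$ integers relatively prime to $N$. Concretely, for $j=0,1,\ldots,m-1$ let
\[
B_j=\{\, jN+1,\ jN+2,\ \ldots,\ jN+N \,\}.
\]
These blocks are pairwise disjoint and their union is $\{1,\ldots,Nm\}$. The count asked for in Theorem~\ref{phit1} is therefore the sum over $j$ of the number of elements of $B_j$ coprime to $N$. If each block contributes $\phi(N)$, the total is $m\cdot\phi(N)$.

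The key observation is that $\gcd(x,N)$ depends only on the residue class of $x$ modulo $N$. By Theorem~\ref{gcddiv} we have $\gcd(N, jN+r)=\gcd(N,r)$ for every integer $j$. So $jN+r$ is relatively prime to $N$ if and only if $r$ is, for $r=1,\ldots,N$. Hence the map $r\mapsto jN+r$ is a bijection from $B_0=\{1,\ldots,N\}$ onto $B_j$ that preserves coprimality with $N$. Each block therefore contains exactly as many integers coprime to $N$ as $\{1,\ldots,N\}$ does.

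It remains to identify that common count with $\phi(N)$. Here I would invoke the definition of $\phi(N)$ used in the section on reduced systems of congruences: $\phi(N)$ is the number of congruence classes modulo $N$ that are relatively prime to $N$. The set $\{1,\ldots,N\}$ is a complete system of congruences modulo $N$, since $N\equiv 0$, so it contains exactly one representative of each class. Consequently it contains exactly $\phi(N)$ integers coprime to $N$. The case $N=1$ is consistent with this, since $\phi(1)=1$ and every integer is coprime to $1$.

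The main obstacle is this last identification, because the chapter defines $\phi(N)$ by the product formula $\prod p_i^{a_i-1}(p_i-1)$ rather than as a count. If one insists on the product formula, one has to show that it equals the number of integers in $\{1,\ldots,N\}$ coprime to $N$. I would handle this by inclusion--exclusion over the distinct primes $p_1,\ldots,p_k$ dividing $N$. The number of multiples of $p_{i_1}\cdots p_{i_s}$ in $\{1,\ldots,N\}$ is $N/(p_{i_1}\cdots p_{i_s})$, so inclusion--exclusion gives the count as $N\prod_i(1-1/p_i)$. This generalizes the argument of Lemma~\ref{phil1}, and the product equals the stated formula. Once this is established, the block decomposition above finishes the proof.
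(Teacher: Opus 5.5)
Your proof is correct, but it takes a different route from the paper.

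\textbf{The paper's route.} The paper inducts on the number $r$ of distinct primes dividing $N$. It writes $N=n\,p_r^{n_r}$ and applies the induction hypothesis to $n$ with the enlarged multiplier $p_r^{n_r}m$. This gives $p_r^{n_r}m\,\phi(n)$ integers in $\{1,\ldots,Nm\}$ coprime to $n$. It then discards those divisible by $p_r$ and uses the product formula $\phi(N)=p_r^{n_r-1}(p_r-1)\phi(n)$. The case $m=1$ (that $\phi(N)$ counts the integers in $\{1,\ldots,N\}$ coprime to $N$) is only obtained afterwards, as a corollary.

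\textbf{Your route.} You go the other way. You first prove periodicity, via $\gcd(N,jN+r)=\gcd(N,r)$, which reduces everything to the single block $\{1,\ldots,N\}$. You then settle that block by inclusion--exclusion over all primes of $N$ at once.

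\textbf{Comparison.} Your route is more transparent and fully justified. The paper asserts, without argument, that exactly $(p_r^{n_r}m)\phi(n)/p_r$ of those integers are multiples of $p_r$. That step needs a second use of the induction hypothesis on $y$ with $p_r y\le Nm$, together with $\gcd(p_r,n)=1$. Your bijection between blocks needs nothing of the sort. In exchange, the paper's one-prime-at-a-time sieve avoids the general inclusion--exclusion principle, which the notes never develop. It also matches the multiplicative shape of the product formula.

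\textbf{A remark on redundancy.} Each half of your argument suffices on its own for one of the two definitions of $\phi$:
\begin{itemize}
\item For the congruence-class definition, the blocks plus the complete residue system finish the proof without any inclusion--exclusion.
\item For the product formula, you can run inclusion--exclusion directly on $\{1,\ldots,Nm\}$. Every $d\mid N$ has exactly $Nm/d$ multiples there, so you get $Nm\prod_i(1-1/p_i)=m\,\phi(N)$ in one step without the blocks.
\end{itemize}
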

\begin{proof}
\[
N=
  p_1^{n_1}
  p_2^{n_2}
  \ldots
  p_{r-1}^{n_{r-1}}  ,
  p_r^{n_r}  ,
\]
where $n_i  >   0$, $r>0$ and $1 < p_1 < p_2 < \ldots < p_r $.
The result uses induction on $r$.
$ $ \\ $ $
{\bf Base case $r=0$.} Then $N=1$, and $\phi (N) =1$.
True by inspection.
The number of integers in $1,2, \ldots , m-1,m$ that are
relatively prime to $N=1$ is $m$, obviously.
$ $ \\ $ $
{\bf Induction hypothesis: $H(k)$, $k<r$}.
Let us assume that the theorem is true for an
\[
n =
  p_1^{n_1}
  p_2^{n_2}
  \ldots
  p_{r-1}^{n_{r-1}}  ,
\]
Then $Nm = \left( p_r^{n_r} m \right) n $ and by the induction
hypothesis among the integers
$1,2, \ldots , N \cdot m -1, N \cdot m$ there
are $ \left( p_r^{n_r} m \right) \phi (n)$ integers
relatively prime to $n$ such as $x$, where $\gcd(x,n)=1$.
We now want to find the number of integers $y$ relatively
prime to $N=n p_r^{n_r}$, where $\gcd(y,N)=1$. If $y$
is relatively prime to $N$ is also so  to $n$. But if it
is to $n$ it might not be so for $N$: it might be a multiple
of $p_r$. Therefore we need to discard those $x$ that are not
relatively prime to $p_r^{n_r}$, i.e. all multiples of $p_r$.
Since $p_r$ is a prime number,
the number that are NOT relatively prime to
$p_r^{n_r} $ are the multiple of $p_r$.
Among the $ \left( p_r^{n_r} m \right) \phi (n)$ integers
$ \left( p_r^{n_r} m \right) \phi (n) / p_r$ of them are
NOT relative prime to $p_r^{n_r} $ as they are multiple of $p_r$
and thus the number of integers relatively prime to $N$
in the range $1,2, \ldots , N \cdot m -1, N \cdot m$
is
\[
\left( p_r^{n_r} m \right) \phi (n) -
\left( p_r^{n_r} m \right) \phi (n) / p_r
= m  p_r^{n_r -1} \left( p_r -1 \right) \phi (n)
= m  \phi (N ).
\]
The result has been proven.
\end{proof}

\begin{cor}
From Theorem~\ref{phit1} conclude that
the number of integers relatively
prime to $N$ is $\phi (N)$.
\end{cor}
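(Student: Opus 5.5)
The plan is to specialize Theorem~\ref{phit1} to $m=1$. With $m=1$ the range $1,2,\ldots,N\cdot m$ is exactly $1,2,\ldots,N$. Theorem~\ref{phit1} then says that exactly $1\cdot\phi(N)=\phi(N)$ integers in this range are relatively prime to $N$, where $\phi(N)$ is the product formula from the definition of Euler's $\phi$ function. The only work is to make sure that ``the number of integers relatively prime to $N$'' is read correctly, namely as a count of residue classes modulo $N$. Concretely, I would proceed in the following steps.

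First, I would note that $\{1,2,\ldots,N\}$ is a complete system of congruences mod $N$. It has $N$ elements, and no two of them are congruent, since their differences have absolute value less than $N$. So each congruence class $(b)_N$ has exactly one representative in this set.

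Second, I would check that being relatively prime to $N$ is a property of the class. If $b\equiv b' \pmod N$, then $b'=b+kN$, and Theorem~\ref{gcddiv} gives $\gcd(b',N)=\gcd(b,N)$. Hence the number of classes relatively prime to $N$ equals the number of integers in $\{1,\ldots,N\}$ relatively prime to $N$. By Theorem~\ref{phit1} with $m=1$, this number is $\phi(N)$. This shows that the formula-defined $\phi$ agrees with the counting $\phi$ used in the definition of a reduced system of congruences.

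Third, I would handle the edge case $N=1$ separately: $\phi(1)=1$, and the single integer $1$ is relatively prime to $1$.

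The main obstacle is not this corollary but the reliance on Theorem~\ref{phit1}. Its inductive step asserts that exactly a $1/p_r$ fraction of the integers coprime to $n$ are multiples of $p_r$, and that claim is not justified there. If I had to shore it up, I would argue by the Chinese-remainder-style observation that, as $x$ runs over a block of $p_r$ consecutive multiples of $n$ shifted by a fixed residue, the values $x \bmod p_r$ run over all residues exactly once. Since $\gcd(n,p_r)=1$, multiplication by $n$ permutes the residues mod $p_r$. For the corollary itself, though, I simply invoke Theorem~\ref{phit1} as stated.
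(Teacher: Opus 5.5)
Your proposal is correct and takes the same approach as the paper, whose entire proof is to set $m=1$ in Theorem~\ref{phit1} so that the range becomes $1,\ldots,N$. Your additional steps are valid but go beyond what the paper writes: that $\{1,\ldots,N\}$ is a complete system mod $N$, and that coprimality to $N$ depends only on the class, by Theorem~\ref{gcddiv}. Your repair of the unjustified $1/p_r$ count in Theorem~\ref{phit1}'s inductive step is also sound; it works because $p_r \mid p_r^{n_r}m$ and $n$ permutes the residues mod $p_r$.
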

\begin{proof}
For the Corollary set $m=1$ in Theorem~\ref{phit1}.
\end{proof}

\bigskip %THEOREM 51 %%
\begin{thm}
\label{thm51}
If $\gcd(m,n)=1$ then $\phi(mn)=\phi(m) \phi(n)$.
\end{thm}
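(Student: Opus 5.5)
The quickest route is through Lemma~\ref{phil3}, which already gives, for arbitrary $N,M \in \mb{N}$,
\[
 \phi (N \cdot M ) = \phi (N) \cdot \phi (M) \cdot \frac{d}{\phi (d)}, \quad d=\gcd(N,M).
\]
I will apply it with $N=m$ and $M=n$. The hypothesis $\gcd(m,n)=1$ gives $d=1$. Using the convention $\phi(1)=1$, the correction factor $d/\phi(d)$ becomes $1/1=1$, and the identity collapses to $\phi(mn)=\phi(m)\phi(n)$.

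The main thing to check is that Lemma~\ref{phil3} really covers this case. In its proof $N$ and $M$ are written over a common list of primes $p_1,\ldots,p_r$ with exponents $n_i,m_i \geq 0$. When $\gcd(m,n)=1$, for every $i$ at least one of $n_i,m_i$ is $0$, so every $l_i=\min(n_i,m_i)$ is $0$.

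The displayed computation in that proof, however, uses factors such as $p_i^{n_i-1}(p_i-1)$ and $p_i^{l_i-1}(p_i-1)$. These are only the correct value of $\phi$ on the $p_i$-part when the exponent is at least $1$; for exponent $0$ the $p_i$-part is $1$ and contributes the factor $\phi(1)=1$. So I will not rely on that computation as written. Instead I will redo it directly in the coprime case.

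Let $P$ be the set of primes dividing $m$ and $Q$ the set of primes dividing $n$. Coprimality means $P \cap Q=\emptyset$, so the prime factorization of $mn$ (unique by Theorem~\ref{fund}) is the factorization of $m$ followed by that of $n$, with disjoint primes. The definition of $\phi$ is a product over the distinct primes of $mn$ of $p^{a-1}(p-1)$. This product therefore splits as the product over $P$, which is $\phi(m)$, times the product over $Q$, which is $\phi(n)$. The trivial cases $m=1$ or $n=1$ follow immediately from $\phi(1)=1$.

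As an independent check, I could also use Theorem~\ref{phit1}: among $1,\ldots,mn$ there are $n\phi(m)$ integers coprime to $m$. The coprime-to-$n$ condition is then handled by a CRT-style count, but this is unnecessary given the factorization argument above.

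The only delicate point is the exponent-$0$ bookkeeping. The factorization argument handles it cleanly, so that is the argument I will present.
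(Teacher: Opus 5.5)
Your proof is correct for the definition of $\phi$ you use, but it takes a genuinely different route from the paper's.

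You take the closed-form definition of $\phi$ from the start of Chapter~2 and let unique factorization do all the work. $\gcd(m,n)=1$ makes the prime supports of $m$ and $n$ disjoint, so the factorization of $mn$ is the concatenation of those of $m$ and $n$, and the defining product splits. Your caution about the displayed computation in Lemma~\ref{phil3} is fair, although its conclusion does hold: at a prime with $n_i=0$, the spurious factor $p_i^{-1}(p_i-1)$ from $\phi(N)$ cancels the identical one from $\phi(d)$ in the denominator.

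The paper instead reads $\phi(n)$ as $|\mb{U}_n|$ and argues via the CRT. The map $A\mapsto(A \bmod m, A \bmod n)$ is injective modulo $mn$ because $\gcd(m,n)=1$, and it hits every pair by Corollary~\ref{cor7}. By Proposition~\ref{uniteq} it sends units exactly onto pairs of units, so $|\mb{U}_{mn}|=|\mb{U}_m|\,|\mb{U}_n|$.

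What each route buys:
\begin{itemize}
\item Yours is shorter and needs no CRT. But with the formula as the definition, the theorem is almost a tautology and says nothing about counting units.
\item The paper's establishes multiplicativity of the counting function directly. The product-formula corollary after Corollary~\ref{cor11} combines it with the prime-power count to \emph{derive} the closed formula for $|\mb{U}_n|$. Under your reading, that corollary merely restates the definition.
\end{itemize}

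If you want your argument to cover $\phi(n)=|\mb{U}_n|$, you must add the bridge explicitly: the corollary to Theorem~\ref{phit1}, which says the formula counts the residues coprime to $N$. The genuine counting content then lives in that theorem rather than in your proof.
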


\begin{proof}
If $a$ is a unit $\pmod {mn}$ it means $\gcd(a,mn)=1$.
Using a Chinese remainder theorem-based method
consider function $g$ defined on 
$\mb{Z}_{mn} \rightarrow \mb{Z}_m \times \mb{Z}_n$.
\[
g(A) = ( A \pmod m , A \pmod n ).
\]
we will show that $g$ is  a one-to-one and onto bijection.
Note that if $A$ is a unit $\pmod {mn}$ then by Proposition~\ref{uniteq}
it is a unit $\pmod m$ and a unit $\pmod n$. Consider $G(A_1 )=G(A_2 )$.
Then $ A_1 \equiv A_2 \pmod m$
and  $ A_1 \equiv A_2 \pmod n$.
Thus $\dv{m}{A_1 - A_2}$ and
     $\dv{n}{A_1 - A_2}$. If $\gcd(m,n)=1$ as it is,
then $\dv{mn}{A_1 - A_2}$ as well implying $A_1 \equiv A_2 \pmod {mn}$. 
Say $A_1$ is a unit $\pmod m$ and $A_2$ a unit $\pmod n$. Then
from $\gcd(m,n)=1$ and say Corollary~\ref{cor7} there is a unique
$A \pmod {mn}$ such that $A \equiv A_1 \pmod m$ and
                        $A \equiv A_2 \pmod n$.
Thys $g(A) = (A_1 , A_2 ) = (A \pmod m , A \pmod n )$.
Thus the two sets $\mb{Z}_{mn}$ and $\mb{Z}_m \times \mb{Z}_n$ have the same
number of elements thus $\phi (mn) = \phi(m) \phi(n)$.
\end{proof}

\begin{cor}
\label{cor11}
For $p^k$, $k>1$, the number of units of $\mb{U}_{p^k}$ is $ p^k$
 minus the multiples of $p$ which is $p^{k-1}$. 
Thus $|\mb{U}_{p^k}| = p^k - p^{k-1} = p^{k-1} (p-1)$.
Therefore $\phi(p^k) = p^k -p^{k-1}$.
\end{cor}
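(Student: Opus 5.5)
We will prove Corollary~\ref{cor11} by counting directly. Using the notation of the statement, $\mb{U}_{p^k}$ is the set of units modulo $p^k$. As a set of representatives of the classes of $\mb{Z}_{p^k}$ we take the integers $1,2,\ldots,p^k$; this is a complete system of congruences, since its $p^k$ elements are pairwise incongruent.

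First we characterise the units. By Theorem~\ref{modn}, a class $(a)_{p^k}$ is a unit exactly when $\gcd(a,p^k)=1$. We claim that, because $p$ is prime, $\gcd(a,p^k)=1$ holds if and only if $\ndv{p}{a}$:
\begin{itemize}
\item[(i)] If $\dv{p}{a}$, then $p$ is a common divisor of $a$ and $p^k$, so $\gcd(a,p^k)\geq p>1$.
\item[(ii)] If $\gcd(a,p^k)=d>1$, then $d$ has a prime factor $q$ by Theorem~\ref{prfactor}. Since $\dv{q}{p^k}$, Lemma~\ref{pfactor} applied to the product $p\cdot p\cdots p$ gives $\dv{q}{p}$, so $q=p$. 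Hence $\dv{p}{a}$.
\end{itemize}
Alternatively, step (ii) follows at once from the unique factorisation of Theorem~\ref{fund}, because every divisor $d>1$ of $p^k$ has the form $p^j$ with $j\geq 1$.

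Next we count the non-units among $1,\ldots,p^k$. These are the multiples of $p$ in that range, namely $p\cdot 1, p\cdot 2,\ldots,p\cdot p^{k-1}$, and there are exactly $p^{k-1}$ of them. Removing them leaves $|\mb{U}_{p^k}| = p^k-p^{k-1}=p^{k-1}(p-1)$. By definition, $\phi(p^k)$ is the number of congruence classes relatively prime to $p^k$, so $\phi(p^k)=p^k-p^{k-1}$. This agrees with the closed formula in the definition of Euler's $\phi$ function, taken with a single prime.

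The only step that needs real care is the equivalence $\gcd(a,p^k)=1 \iff \ndv{p}{a}$, specifically direction (ii). It relies on $p$ being prime through Lemma~\ref{pfactor} or Theorem~\ref{fund}. The rest of the argument is routine counting. Finally, the argument never uses the hypothesis $k>1$, so the formula also holds for $k=1$, where it gives $\phi(p)=p-1$.
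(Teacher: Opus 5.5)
Your proof is correct and follows the same route as the paper, which simply counts the units of $\mb{U}_{p^k}$ as the $p^k$ residues minus the $p^{k-1}$ multiples of $p$. You additionally justify the step the paper leaves implicit, namely that $\gcd(a,p^k)=1$ if and only if $p \nmid a$, and you correctly observe that the hypothesis $k>1$ is never needed.
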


\begin{cor}
If $n = p_1^{a_1} p_2^{a_2} \ldots p_k^{a_k}$,
then 
\[
 \phi (n) = n
   \left( 1 - \frac{1}{p_1} \right)
   \left( 1 - \frac{1}{p_2} \right)
    \ldots
   \left( 1 - \frac{1}{p_k} \right).
\]
\end{cor}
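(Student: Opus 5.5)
The plan is to derive the product formula from the multiplicativity result, Theorem~\ref{thm51}, together with the prime-power count of Corollary~\ref{cor11}. The proof has three steps: split $n$ into pairwise coprime prime-power factors, evaluate $\phi$ on each factor, and multiply the results back together.

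\textbf{Step 1: multiplicativity over all factors.} Write $n = p_1^{a_1} p_2^{a_2}\ldots p_k^{a_k}$ with distinct primes $p_i$ and $a_i\geq 1$. We prove by induction on $k$ that $\phi(n)=\prod_{i=1}^k \phi(p_i^{a_i})$.
\begin{itemize}
\item[(a)] The case $k=1$ is trivial.
\item[(b)] For the inductive step, set $m = p_1^{a_1}\ldots p_{k-1}^{a_{k-1}}$, so that $n = m\,p_k^{a_k}$.
\item[(c)] Check that $\gcd(m,p_k^{a_k})=1$. By the GCD UF theorem the gcd is $\prod p_i^{\min(\cdot,\cdot)}$. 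Every exponent pair has one zero, since $p_k$ does not occur in $m$ and the other primes do not occur in $p_k^{a_k}$. Hence the gcd is $1$.
\item[(d)] Theorem~\ref{thm51} now gives $\phi(n)=\phi(m)\phi(p_k^{a_k})$, and the induction hypothesis applied to $m$ finishes the step.
\end{itemize}
The case $n=1$ (empty product) is covered by the convention $\phi(1)=1$.

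\textbf{Step 2: prime powers.} By Corollary~\ref{cor11}, $\phi(p^a)=p^a-p^{a-1}=p^{a}\left(1-\frac{1}{p}\right)$ for $a>1$. For $a=1$ the same formula holds, since every one of $1,\ldots,p$ except $p$ itself is coprime to $p$, giving $\phi(p)=p-1$.

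\textbf{Step 3: combine.} Substituting Step 2 into Step 1 gives
\[
\phi(n)=\prod_{i=1}^k p_i^{a_i}\left(1-\frac{1}{p_i}\right)=n\prod_{i=1}^k\left(1-\frac{1}{p_i}\right).
\]
This is the claimed formula. It also agrees with the defining expression $\prod p_i^{a_i-1}(p_i-1)$.

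The main point needing care is Step 1(c), the coprimality of the split. The GCD UF theorem handles it directly. Alternatively, any prime dividing both $m$ and $p_k^{a_k}$ must equal $p_k$ by Lemma~\ref{pfactor}, and $p_k$ does not divide $m$ by uniqueness of factorization.
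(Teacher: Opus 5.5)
Your proposal is correct and follows the paper's own route: use Theorem~\ref{thm51} to split $\phi(n)$ into $\prod_i \phi(p_i^{a_i})$, then apply Corollary~\ref{cor11} to each prime-power factor. The only difference is that you make explicit the induction on $k$, the coprimality of the factors, and the $a_i=1$ case, all of which the paper's two-line proof leaves implicit.
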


\begin{proof}
By way of Theorem~\ref{thm51},
$\phi(n) = \phi (p_1^{a_1}) \ldots  \phi ( p_k^{a_k} )$.
Furthermore, from Corollary~\ref{cor11} we have
\[
\phi(n) = \phi (p_1^{a_1}) \ldots  \phi ( p_k^{a_k} ) =
          p_1^{a_1} - p_1^{a_1 -1} \ldots p_k^{a_k} - p_k^{a_k -1}  =
     n    (1- 1/ p_1 ) \ldots (1- 1/ p_k ).
\]
\end{proof}

\subsection{More on the totient function}

\begin{prp}
For all integer $n >0$ the following applies.
\[
  n = \sum_{\dv{d}{n}} \phi (d).
\]
\end{prp}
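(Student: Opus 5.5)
The plan is to partition the set $\{1,2,\ldots,n\}$ according to the value of the greatest common divisor with $n$, and count each part using $\phi$. For each positive divisor $d$ of $n$, let
\[
C_d = \{ m : 1 \leq m \leq n, \ \gcd(m,n) = d \}.
\]
Every $m$ with $1\leq m\leq n$ has a well-defined $\gcd(m,n)$, which is a positive divisor of $n$. Hence the sets $C_d$, for $\dv{d}{n}$, are pairwise disjoint and their union is all of $\{1,\ldots,n\}$. It follows that $n=\sum_{\dv{d}{n}} |C_d|$.

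The key step is to show $|C_d| = \phi(n/d)$. Take $m\in C_d$. Since $\dv{d}{m}$, write $m=dk$ with $1\leq k\leq n/d$. By Theorem~\ref{gcddi} (GCD divided), $\gcd(m/d,n/d)=1$, so $\gcd(k,n/d)=1$.

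Conversely, suppose $1\leq k\leq n/d$ and $\gcd(k,n/d)=1$. Then by the corollary $\gcd(ma,mb)=m\gcd(a,b)$ we get
\[
\gcd(dk,n)=\gcd(dk,\, d\cdot (n/d)) = d\cdot\gcd(k,n/d)=d,
\]
so $dk\in C_d$. Thus $m\mapsto m/d$ is a bijection between $C_d$ and the integers in $\{1,\ldots,n/d\}$ relatively prime to $n/d$. By the corollary to Theorem~\ref{phit1} (with $m=1$), the latter set has exactly $\phi(n/d)$ elements.

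This gives
\[
n=\sum_{\dv{d}{n}}\phi(n/d).
\]
As $d$ runs over the positive divisors of $n$, so does $n/d$, so reindexing yields $n=\sum_{\dv{d}{n}}\phi(d)$.

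The only point needing care is the edge case $d=n$ (equivalently $n/d=1$). Here $C_n=\{n\}$, and we need $\phi(1)=1$. This holds by the convention adopted after the definition of $\phi$, and it is consistent with the counting interpretation, since $1$ is the only integer in $\{1\}$ and $\gcd(1,1)=1$. For $n=1$ the sum is just $\phi(1)=1$. I do not expect a real obstacle; the main thing is to verify the converse direction of the bijection cleanly using the gcd-scaling corollary.
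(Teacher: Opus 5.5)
Your proof is correct and is essentially the paper's argument: partition $\{1,\ldots,n\}$ by the value $d=\gcd(m,n)$, identify each class with the integers in $\{1,\ldots,n/d\}$ coprime to $n/d$ (so it has $\phi(n/d)$ elements), and reindex via $d\mapsto n/d$. You are somewhat more careful than the paper: you justify both directions of the bijection (via Theorem~\ref{gcddi} and the gcd-scaling corollary), and you explicitly handle the $\phi(1)=1$ convention.
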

\begin{proof}
For $d=1,2, \ldots , n$ define
\begin{equation}
\label{phin1}
 S_d = \{ 1\leq a \leq n : \gcd(a,n)= d \}
\end{equation}
If $d\neq 1$ and $\ndv{d}{n}$ then $S_d = \emptyset$.
It is straightforward to conclude
\begin{equation}
\label{phin2}
 \sum_{d=1}^{n} |S_d | = n
\end{equation}
From Eq.(\ref{phin1}) we have the following, given that $\dv{d}{a}$
and thus $a/d$ is an integer and so is $n/d$.
\begin{equation}
\label{phin3}
 S_d = \{ 1\leq a \leq n : \gcd(a,n)= d \}
     = \{ 1\leq \frac{a}{d} \leq \frac{n}{d} :
          \gcd(\frac{a}{d},\frac{n}{d})= 1 \} .
\end{equation}
From the latter we conclude $|S_d| = \phi(n/d)$.
Eq.(\ref{phin3}) by way of Eq.(\ref{phin2}) gives
the following.
\begin{equation}
\label{phin4}
n =
 \sum_{d=1}^{n} |S_d | =
 \sum_{d=1}^{n} \phi(n/d) =
 \sum_{\dv{d}{n}, \dv{\frac{n}{d}}{n}} \phi(n/d) =
 \sum_{           \dv{\frac{n}{d}}{n}} \phi(n/d) =
 \sum_{\dv{D}{n}} \phi(D) =
 \sum_{\dv{d}{n}} \phi(d) .
\end{equation}
The last few equations involved letter relabelings.
\end{proof}

\begin{exa}
A slightly shorter version of the proof follows.
\end{exa}
\begin{solution}
Consider an integer $k$ among $1,2, \ldots , n$.
Let $\gcd(k,n) =d$.
Note that  $\gcd(k,n) =d$ implies
$\gcd(k/d, n/d)=\gcd( m,n/d) = 1$.
Let $k/d= m$, or in other words $k=dm$.
Then we insert $k$
into set $S_d = \{ m d : 1 \leq m \leq n/d , \gcd(m,n/d)=1 \}$.
The cardinality of $S_d$
is $|S_d | = \phi (n/d)$.
Then
\[
|\cup_{d : \dv{d}{n}} S_d | = |\{ 1,2, \ldots , n \}| =n
  \Leftrightarrow
\sum_{\dv{d}{n}} |S_d| = n
  \Leftrightarrow
\sum_{\dv{d}{n}} \phi (n/d) = n
  \Leftrightarrow
\sum_{\dv{d}{n}} \phi (d) = n ,
\]
sice $n/d$ is a divisor of $n$ if and only if
$d$ is a divisor of $n$.
\end{solution}

%%%%%% Modular equations

\section{Units}

Let $n$ be a positive integer.
The set of all residue classes mod $n$ is
denoted as $\mb{Z}/n\mb{Z}$, and rarely
$\mb{Z}_n$ or $\mb{Z}/n$.
It is the set of integers modulo $n$, thus representing
the $n$ equivalence classes that the integers of $\mb{Z}$ can be
split into depending on the remainder of their division by $n$.
$\mb{Z}_n$ is a cyclic group under addition, and a commutative
ring under multiplication and addition.
The ring is a field for a prime $n$.

\subsection{Units in $\mb{Z}$ }

\begin{dfn}[\bf{Units in $\mb{Z}$}]
A divisor of one is called a unit.
The only units of $\mb{Z}$ are $+1$ and $-1$.
\end{dfn}

\begin{dfn}[\bf{Alternative definition of a unit in $\mb{Z}$}]
An element $a$ in $\mb{Z}$ that has an inverse in $\mb{Z}$
is also known as a unit.
Only $+1$ and $-1$ have inverses in $\mb{Z}$.
\end{dfn}

In $\mb{Z}$, $+1$ or $-1$ are indeed the
only invertible elements of $\mb{Z}$.

\begin{prp}
There are no other units in $\mb{Z}$, other than $+1$ and
$-1$.
\end{prp}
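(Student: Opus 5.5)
Let $a \in \mb{Z}$ be a unit, meaning (by the alternative definition) that there is some $b \in \mb{Z}$ with $ab = 1$, or equivalently (by the first definition) that $\dv{a}{1}$. The plan is to show that these conditions force $a \in \{+1,-1\}$, and conversely that $\pm 1$ are indeed units. I will first check that the two definitions agree. If $ab=1$ then $1 = a\cdot b$, so $\dv{a}{1}$. If $\dv{a}{1}$, then $1 = aq$ for some $q \in \mb{Z}$, and $q$ is an inverse of $a$. So it suffices to determine all divisors of $1$.

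The main step invokes Theorem~\ref{podiv}(f1). Since $\dv{a}{1}$ and $1 \neq 0$, we get $|a| \leq |1| = 1$. Also $a \neq 0$, because by Theorem~\ref{podiv}(a) $\dv{0}{1}$ would force $1=0$; equivalently, $0\cdot b = 0 \neq 1$ for every $b$. Hence $1 \leq |a| \leq 1$, so $|a|=1$ and $a \in \{+1,-1\}$.

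For the converse, note $1\cdot 1 = 1$ and $(-1)(-1)=1$, so both $+1$ and $-1$ have inverses in $\mb{Z}$, namely themselves. There is essentially no obstacle here. The only point needing care is to rule out $a=0$ explicitly before applying the bound $|a|\leq 1$, since otherwise the inequality alone would admit $a=0$.
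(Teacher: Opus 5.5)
Your proof is correct and is essentially the paper's argument. Both proofs rest on the size constraint $|a|\,|q|=1$ forced by $1=aq$. The paper argues by contradiction: if $|u|\geq 2$ then $|q|\leq 1/2$, so $q=0$. You instead bound $|a|\leq 1$ directly via Theorem~\ref{podiv}(f1). Your separate exclusion of $a=0$ and your checks that the two definitions of unit agree and that $\pm 1$ are units are harmless extras the paper leaves implicit.
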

\begin{proof}
Let $u$
be a third unit in $\mb{Z}$ with  $u \neq \pm 1$.
Then $ 1= u q$ for some integer $q \in \mb{Z}$.
Since $u \neq \pm 1$ and integer $|u| \geq 2$,
noting $1=uq$ implies $u \neq 0$ as well. But
$|u|\geq 2 $ and $uq=1$ implies $| q| \leq 1/2$
and thus $q=0$. Then $uq=0$, a contradiction
to $uq=1$.
\end{proof}

\subsection{Inverses}

\begin{dfn}
Let $n \in \mb{Z}_+$ and consider $\mb{Z}/n\mb{Z} $
(also known as $\mb{Z}_n$). Let $a \in \mb{Z}_n$.
The inverse $b$ of $a$ mod $n$ is defined as
\[
  b \cdot a \equiv 1 \pmod n.
\]
We can then say that $a$ and $b$ are  multiplicative
inverses of one another.
\end{dfn}

We write $b$ as $1/a$ or $a^{-1}$.
Note that implicit in this definition is that
$a,b \in \mb{Z}/n\mb{Z} $ or equivalently
$ 1 \leq a, b <n$.
We might use in the remainder for simplicity $\mb{Z}_n$
for $\mb{Z}/n\mb{Z} $.

\begin{prp}
The inverse of $a$ mod $n$ exists if and only if
$\gcd(a,n)=1$.
\end{prp}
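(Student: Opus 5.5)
The plan is to prove the two directions separately. Both rest on the B\'ezout identity (Theorem~\ref{egcd}, specialized in Corollary~\ref{egcd2}) and on the equivalence in Lemma~\ref{eqrlem} between a congruence and a divisibility statement. Throughout, an inverse of $a$ mod $n$ means an integer $b$ with $ba \equiv 1 \pmod n$. If a representative in the range $1 \leq b < n$ is wanted, as in the remark after the definition, it is obtained at the end by reducing $b$ mod $n$ (Proposition~\ref{moddiv}); congruence is preserved under multiplication (Corollary~\ref{pmod}, property 8), so the reduced $b$ still works.

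\emph{If} $\gcd(a,n)=1$: since $n \geq 1$ we have $|a|+|n| \neq 0$, so Corollary~\ref{egcd2} gives integers $x,y$ with $ax+ny=1$. Then $ax-1 = n(-y)$, so $\dv{n}{ax-1}$, which by Lemma~\ref{eqrlem} means $xa \equiv 1 \pmod n$. Setting $b = x \bmod n$ gives the inverse. The degenerate case $n=1$ is harmless, since every congruence holds mod $1$.

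\emph{Only if}: suppose $ba \equiv 1 \pmod n$. By Lemma~\ref{eqrlem} there is a $k$ with $ba - 1 = kn$, so $ba + n(-k) = 1$. Let $d=\gcd(a,n)$. Since $\dv{d}{a}$ and $\dv{d}{n}$, Theorem~\ref{podiv}(d3) gives $\dv{d}{ba-kn}$, that is $\dv{d}{1}$. By Theorem~\ref{podiv}(f2) this forces $d \leq 1$, and Fact~\ref{sgcdf}(i) gives $d \geq 1$, so $d=1$. Alternatively, this direction is exactly Theorem~\ref{modn}, which characterizes solvability of $ax \equiv 1 \pmod n$, and can simply be cited.

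There is no real obstacle here. The only delicate point is bookkeeping about what counts as ``the'' inverse. The statement speaks of existence, so uniqueness is not required. If one wants it, it follows from the Cancellation law: if $ba \equiv b'a \equiv 1$, then since $\gcd(a,n)=1$ we get $b \equiv b' \pmod n$. I would mention this as a remark but not make it part of the proof.
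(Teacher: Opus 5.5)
Your proof is correct and takes essentially the paper's approach. The paper gives no separate argument for this proposition; it calls the result a byproduct of the extended GCD (B\'ezout) and of the Diophantine/linear-congruence results, which amount to Theorem~\ref{modn}, and your two directions write out that B\'ezout argument explicitly, with sound handling of the reduction mod $n$ and the optional uniqueness remark.
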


This is a byproduct of the extended GCD or results derived
for a diophantine equation discussion.
We can prove then a more general result.

\begin{cor}
Let $n>1$ be an integer and $\ndv{n}{a}$. The following are
\begin{itemize}
\item[(a)] $a$ is a zero divisor $\pmod n$,
\item[(b)] $a$ has no inverse $\pmod n$,
\item[(c)] there exists a $s\in Z$ such that $\ndv{n}{s}$ and
$as \equiv  0 \pmod n$.
\end{itemize}
\end{cor}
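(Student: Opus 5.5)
We prove the equivalences in the cycle (a)$\Leftrightarrow$(b), (b)$\Rightarrow$(c), (c)$\Rightarrow$(b), as in the earlier theorem with the same three statements. The tools are the definition of unit and zero divisor modulo $n$, the proposition just stated (the inverse of $a$ mod $n$ exists if and only if $\gcd(a,n)=1$, equivalently Theorem~\ref{modn}), and Fact~\ref{sgcdf}.

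\emph{(a)$\Leftrightarrow$(b).} By the definition given after Theorem~\ref{modn}, a zero divisor modulo $n$ is exactly an element with no inverse modulo $n$. So this equivalence is immediate from the definitions.

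\emph{(b)$\Rightarrow$(c).} If $a$ has no inverse mod $n$, the proposition gives $d=\gcd(a,n)>1$. We write $a=dr$ and $n=ds$ and take the witness $s=n/d$. Then $as=(dr)s=(ds)r=nr\equiv 0 \pmod n$. Since $d>1$ we have $0<s<n$, and since $s$ is a positive integer smaller than $n$, Theorem~\ref{podiv}(f2) gives $\ndv{n}{s}$. The one subtlety is making sure $s$ is nonzero modulo $n$; this is exactly where $d>1$ is used.

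\emph{(c)$\Rightarrow$(b).} Suppose some $s$ with $\ndv{n}{s}$ satisfies $as\equiv 0\pmod n$, and assume for contradiction that $a$ has an inverse $a^{-1}$. Using the multiplicativity and scaling properties of Corollary~\ref{pmod}, we get $0\equiv a^{-1}(as)\equiv (a^{-1}a)s\equiv s\pmod n$, so $\dv{n}{s}$, which is a contradiction. Hence $a$ has no inverse.

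All steps are routine. The main care point is (b)$\Rightarrow$(c), where the chosen witness $s=n/d$ must be shown not to be divisible by $n$. The hypothesis $\ndv{n}{a}$ only excludes the trivial case $a\equiv 0$ and is not otherwise needed.
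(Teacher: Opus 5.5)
Your proposal is correct and matches the paper's proof step for step: (a)$\Leftrightarrow$(b) from the definition of a zero divisor, (b)$\Rightarrow$(c) with $d=\gcd(a,n)>1$ and the witness $s=n/d$, and (c)$\Rightarrow$(b) by multiplying $as\equiv 0$ by a putative inverse. Your explicit check that $0<n/d<n$, so that $\ndv{n}{s}$ follows from Theorem~\ref{podiv}(f2), supplies the step the paper only gestures at in its garbled remark about $1<z<n$.
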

\begin{proof}
Statements (a) and (b) are true and equivalent by the prior
definition and introduction of unit and zero divisor.
$ $ \\ $ $
Suppose that $(b)$ is true and $a$ has no inverse.
By Theorem~\ref{modn} $\gcd(a,n) >1$. Let $\gcd(a,n)=d>1$.
The $a=dr$ and $n=ds$ for some integer $r,s$.
For $1< < z < n$ we have $z \not\equiv \pmod n$. Furthermore,
$a s =  (dr)s = (ds)r = n r \equiv 0 \pmod n$. Statement (c)
follows from Statement (b).
$ $ \\ $ $
Suppose that statement (c) is true. There there exists
an $s\in Z$ such that $\ndv{n}{a}$ and $as \equiv 0 \pmod n$.
We are going to prove $a$ has no inverse. Let us assume that
$a$ has an inverse, then $ a a^{-1} \equiv 1 \pmod n$, and then
\[
 0 \equiv as \equiv as a^{-1} \equiv (aa^{-1})s \equiv s \pmod n.
\]
The latter implies that $\dv{n}{s}$ that contradicts the
assumption that $\ndv{n}{s}$! Thus $a$ has no inverse
and statement (b) is true coming from (c). Thus statements
(b) and (c) are equivalent.
For prime $p$, $ab \equiv 0 \pmod p$ implies  either
$a\equiv 0 \pmod p$
or
$b\equiv 0 \pmod p$.
Thus for a prime $p$ there no zero divisors other than 0
$\pmod p$.
\end{proof}

\begin{prp}
There are $\phi (n)$ numbers $a$ in $\mb{Z}_n$  for
which the inverse $a^{-1} = 1/a$ exists.
\end{prp}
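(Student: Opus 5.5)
The plan is to combine two facts: the preceding proposition that the inverse of $a$ modulo $n$ exists exactly when $\gcd(a,n)=1$, and the definition of $\phi(n)$ as the number of residue classes modulo $n$ that are relatively prime to $n$. The count of invertible elements of $\mb{Z}_n$ then becomes the count of classes relatively prime to $n$, which is $\phi(n)$ by definition. The corollary to Theorem~\ref{phit1} also identifies this number with the product formula in the definition of Euler's $\phi$ function.

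First, I would check that the property ``$a$ has an inverse mod $n$'' depends only on the class $(a)_n$. If $a\equiv a' \pmod n$ and $ba\equiv 1 \pmod n$, then $ba'\equiv 1 \pmod n$ by the multiplicativity property of Corollary~\ref{pmod}. Equivalently, $\gcd(a,n)=\gcd(a',n)$, since $a'=a+kn$ and Theorem~\ref{gcddiv} applies. So it makes sense to count invertible classes, and we may use the representatives $0,1,\ldots,n-1$.

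Second, for each representative $a$, the proposition ``the inverse of $a$ mod $n$ exists iff $\gcd(a,n)=1$'' applies. I would prove both directions quickly. If $\gcd(a,n)=1$, Theorem~\ref{egcd} (B\'ezout) gives $ax+ny=1$, so $ax\equiv 1 \pmod n$; reducing $x$ modulo $n$ by Proposition~\ref{moddiv} puts the inverse in $\mb{Z}_n$. Conversely, $ab\equiv 1 \pmod n$ means $ab-1=kn$, so any common divisor of $a$ and $n$ divides $1$; alternatively, this is Theorem~\ref{modn}. The set of invertible elements is therefore exactly $\{a\in\mb{Z}_n : \gcd(a,n)=1\}$. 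This set is a reduced system of congruences, and its size is $\phi(n)$ by definition, equivalently by the corollary of Theorem~\ref{phit1} with $m=1$.

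The main obstacle is not any single step. It is reconciling the two notions of $\phi$ in the notes: the definition in the section on reduced systems, and the product formula in the definition of Euler's $\phi$ function. I would use the corollary of Theorem~\ref{phit1} to bridge them. I would also handle two edge cases explicitly. For $n=1$, the single class $0$ is invertible and $\phi(1)=1$. For $n>1$, $a=0$ is never invertible, consistent with $\gcd(0,n)=n>1$.
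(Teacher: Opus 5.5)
Your proposal is correct and takes the same route as the paper. The paper justifies the statement in one line: it combines the preceding criterion (an inverse of $a$ modulo $n$ exists iff $\gcd(a,n)=1$) with the fact that $\phi(n)$ residues are relatively prime to $n$. Your additional checks fill in details the paper leaves implicit: that invertibility depends only on the class $(a)_n$, the B\'ezout argument for both directions, and the edge cases $n=1$ and $a=0$, which reconcile counting over $0,\ldots,n-1$ with counting over $1,\ldots,n$ in the corollary of Theorem~\ref{phit1}.
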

This is a consequence of the fact that there are
$\phi(n)$ natural numbers less that $n$ that are relatively
prime to $n$.

\begin{exa}
For $n=10$ there are only $\phi (10) = 4$, $a$'s with
inverses. These are $a= 1,3,7,9$ with $a^{-1} = 1, 7,3, 9$
respectively.
\end{exa}

\begin{thm}
The ring $(\mb{Z}/n\mb{Z} , +, \cdot)$ is a field
if and only if $n$ is a prime number.
\end{thm}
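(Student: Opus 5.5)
The plan is to prove the two directions separately, using that $(\mb{Z}/n\mb{Z},+,\cdot)$ is already a commutative ring with identity $(1)_n$ (the propositions summarizing congruences). So being a field reduces to two conditions: $(1)_n \neq (0)_n$, and every nonzero class has a multiplicative inverse. Throughout I take $n>1$. For $n=1$ the ring has the single element $(0)_1=(1)_1$, which violates the requirement $1\neq 0$, and $1$ is not prime; so the statement holds trivially there.

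\emph{($\Leftarrow$) $n=p$ prime.} Since $p>1$, the classes $(0)_p$ and $(1)_p$ are distinct. Take a nonzero class $(a)_p$ with $1\le a\le p-1$. Then $\ndv{p}{a}$, so by the corollary ``$\ndv{p}{a}$ iff $\gcd(a,p)=1$'' we get $\gcd(a,p)=1$. Theorem~\ref{modn}, or equivalently the corollary for Eq.~(\ref{modn2}), then yields an $x$ with $ax\equiv 1 \pmod p$. Hence $(x)_p$ is the inverse of $(a)_p$, and commutativity makes it two-sided. Combining this with Propositions~\ref{abelplus} and \ref{abeltimes} gives all the field axioms.

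\emph{($\Rightarrow$) contrapositive.} Suppose $n>1$ is composite. The Lemma on composites gives $n=ab$ with $1<a,b<n$. Then $(a)_n\neq(0)_n$ and $(b)_n\neq(0)_n$, while $(a)_n(b)_n=(n)_n=(0)_n$, so $(a)_n$ is a zero divisor. If $(a)_n$ had an inverse $(c)_n$, multiplying $ab\equiv 0$ by $c$ would give $b\equiv 0\pmod n$, which is a contradiction; this is exactly the (b)$\Leftrightarrow$(c) argument of the earlier theorem on zero divisors. As an alternative route, $\gcd(a,n)=a>1$, so Theorem~\ref{modn} also shows that no inverse exists. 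Therefore $\mb{Z}/n\mb{Z}$ is not a field.

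No step is genuinely hard; everything reduces to Theorem~\ref{modn}. The only care needed is bookkeeping: checking $1\neq 0$ (which is where the case $n=1$ is excluded), and noting that the field definition demands inverses only for nonzero elements, so $(0)_n$ must be left out.
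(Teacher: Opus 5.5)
Your proof is correct and follows the same route as the paper. Both use Propositions~\ref{abelplus} and \ref{abeltimes} to reduce the field property to invertibility of the nonzero classes, and then link invertibility to primality through the earlier result on solvability of $ax\equiv 1 \pmod n$ (Theorem~\ref{modn}). The paper only says this last step is ``derived from prior results'', so your explicit composite case (via $n=ab$ with $1<a,b<n$), your treatment of $n=1$, and your check of the axiom $1\neq 0$ supply details the paper leaves out.
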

\begin{proof}
From Proposition~(\ref{abelplus}) we have that
$\mb{Z}/n\mb{Z}$
is an abelian group over $+$.
From Proposition~(\ref{abelplus}) we have that
$\mb{Z}/n\mb{Z}$ minus the $0$ (additive identity)
is an abelian group over $\cdot$,
if a (multiplicative) inverse property is true i.e.
$\forall (i)_n \in \mb{Z}/n\mb{Z} \quad \exists (j)_n \in \mb{Z}/n\mb{Z} :
(i)_n \cdot (j)_n = (j)_n \cdot (i)_n = (1)_n$.
Moreover $(j)_n = 1/ (i)_n = (i)_n^{-1}$.
For the latter to be the case $n$ must be a prime number.
(This is derived from prior results.)
\end{proof}

\subsection{Units in $\mb{Z}/n\mb{Z}$}

Consider $\mb{Z}/n\mb{Z}$ also known as $\mb{Z}_n$.
Any $g \in \mb{Z}/n\mb{Z}$ that is invertible and thus
$gx \equiv 1 \pmod n$ has a solution for $x$, the inverse
of $g$ moduloe $n$, is known as a unit of
$\mb{Z}_n$ and thus of $ \mb{Z}/n\mb{Z}$.
We further conclude that $g$ is a unit if and only
if $\gcd(g,n)=1$. Then $g x \equiv 1 \pmod n$ has a
solution for $x = g^{-1}$.

\begin{dfn}[\bf{Units in $\mb{Z}/n\mb{Z}$}]
Any $g \in \mb{Z}/n\mb{Z}$ that has an inverse
modulo $n$, that is, there exists an $x$
that satisfies the following modular equation
\[
gx \equiv 1 \pmod n ,
\]
is called a unit modulo $n$.
The $x$ is also known as the inverse of $g$ modulo $n$.
Moreover $\gcd(g,n)=1$.
\end{dfn}

\begin{dfn}[\bf{Zero divisors in $\mb{Z}_n$}]
An $g$ that is not a unit  is called a zero divisor
modulo $n$.
\end{dfn}

\begin{dfn}[\bf{Set of units $\mb{U}_n$}]
The set of units of $\mb{Z}_n$ is denoted as
$\mb{U}_n$.
It is also denoted as     $\mb{Z}_n^x$.
It is also denoted as     $(\mb{Z}/n\mb{Z})^x$.
It is also denoted as     $\mb{Z}_n^*$.
\end{dfn}

\begin{prp}
\label{uniteq}
Let $m, n >1$ be integer. The following two statements
are equivalent.
\begin{itemize}
\item[(i)]  $a$ is a unit $\pmod {mn}$.
\item[(ii)] $a$ is a unit $\pmod m$ and $a$ is a unit $\pmod n$.
\end{itemize}
\end{prp}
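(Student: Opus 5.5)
The plan is to reduce both directions to the gcd characterization of units. Earlier we saw (Theorem~\ref{modn}, and the proposition on inverses) that $a$ is a unit modulo $k$ if and only if $\gcd(a,k)=1$. So the statement becomes: $\gcd(a,mn)=1$ if and only if $\gcd(a,m)=1$ and $\gcd(a,n)=1$. No coprimality of $m,n$ is needed for this.

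For (i)$\Rightarrow$(ii), I would argue directly with inverses rather than gcds. If $ab\equiv 1 \pmod{mn}$, then $\dv{mn}{ab-1}$. Since $\dv{m}{mn}$, transitivity (Theorem~\ref{podiv}(b2)) gives $\dv{m}{ab-1}$, so $ab\equiv 1\pmod m$. The same argument gives $ab\equiv 1 \pmod n$. Hence the same $b$ serves as an inverse modulo $m$ and modulo $n$.

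For (ii)$\Rightarrow$(i), which is the only direction with content, I would take $\gcd(a,m)=1$ and $\gcd(a,n)=1$. By Corollary~\ref{egcd2}, there exist integers with $ax+my=1$ and $au+nv=1$. Multiplying the two identities gives
\[
1=(ax+my)(au+nv)=a(xau+xnv+myu)+mn(yv),
\]
so $a\cdot(xau+xnv+myu)\equiv 1 \pmod{mn}$. This exhibits an explicit inverse, and $a$ is a unit modulo $mn$.

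As an alternative for this step, suppose a prime $p$ divides $\gcd(a,mn)$ (it exists by Theorem~\ref{prfactor} when the gcd is $>1$). Then $\dv{p}{mn}$, so $p$ divides $m$ or $n$ by the prime-divisor corollary. That makes $p$ a common divisor of $a$ and $m$, or of $a$ and $n$, contradicting (ii).

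The main obstacle is only this second direction. The product-of-Bézout-identities trick handles it cleanly, so I expect no real difficulty. I would also note that the inverse modulo $mn$ can be reduced to its least residue in $\mb{Z}_{mn}$ if one insists that units live in $\{1,\ldots,mn-1\}$.
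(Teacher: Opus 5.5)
Your proof is correct, but your main argument for (ii)$\Rightarrow$(i) differs from the paper's. The paper works entirely through gcds. For (i)$\Rightarrow$(ii) it notes that a common divisor $d>1$ of $a$ and $m$ would also divide $mn$, contradicting $\gcd(a,mn)=1$. For (ii)$\Rightarrow$(i) it takes a prime $p$ dividing $\gcd(a,mn)>1$, uses that $p\mid mn$ forces $p\mid m$ or $p\mid n$, and reaches a contradiction; this is exactly your ``alternative'' paragraph. Your primary route instead multiplies the two B\'ezout identities to produce an explicit inverse $xau+xnv+myu$ of $a$ modulo $mn$. In (i)$\Rightarrow$(ii) you also observe that the very same inverse works modulo $m$ and modulo $n$, which the paper's gcd argument does not show. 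Your version is constructive and needs neither the existence of prime factors nor the prime-divisor property. You could even skip the gcd characterization entirely, since $ax\equiv 1 \pmod m$ already gives $ax+my=1$ for some integer $y$. The paper's argument is a short existence-by-contradiction proof that relies on the gcd characterization of units and on Euclid's lemma, both already available at that point. It gives no formula for the inverse.
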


\begin{proof}
$ $ \\  $ $
$(i) \Rightarrow (ii)$.
If $a$ is a unit $\pmod {mn}$ then it means 
$\gcd(a,mn)=1$.
We claim that this implies that  
$\gcd(a,m)=\gcd(a,n)=1$. 
If this was not so, and say $\gcd(a,m)=d >1$,
then  $d$ becomes a common divisor of $a$
and $m$ (and consequently of of $mn$ as well). 
This would imply
$\gcd(a,mn) >1$, a contradiction to $\gcd(a,mn)=1$.
$ $ \\  $ $
Then $\gcd(a,m)=\gcd(a,n)=1$.
Therefore $ax \equiv 1 \pmod m$ has a solution for $x$
and thus $a$ is unit mod $m$.
Likewise, $ay \equiv 1 \pmod n$ has a solution for $y$
and thus $a$ is unit mod $n$. Case completed.
$ $ \\ $ $
$(ii) \Rightarrow (i)$.
If $a$ is a unit mod $m$ then $\gcd(a,m)=1$.
If $a$ is a unit mod $n$ then $\gcd(a,n)=1$.
The former imply $\gcd(a,mn)=1$.
$ $ \\ $ $
If the latter was not so, then $\gcd(a,mn)=d >1$.
There is a
prime factor $p$ of $d$ i.e. $\dv{p}{d}$.
Then $\dv{p}{a}$ and thus $\dv{p}{mn}$. The latter
implies $\dv{p}{m}$ or $\dv{p}{n}$. One
or the other combined with $\dv{p}{a}$ implies
that $\dv{p}{\gcd(a,m)}$ or $\dv{p}{\gcd(a,n)}$
contradicting that $\gcd(a,m)=\gcd(a,n)=1$.
The result then follows.
\end{proof}

\subsection{The totient function as cardinality of a set}

\begin{dfn}[\bf{Cardinality} of $\mb{U}_n$]
The set of units $\mb{U}_n$ has cardinality
$\phi (n)$, where $\phi (n)$ is Euler's
totient function.
\end{dfn}

Let $p$ be a prime. Every non-zero element of
$\mb{Z}_p$ i.e. $1, \ldots , p-1$ is relatively
prime to $p$, and thus $\phi(p)=p-1$.
Moreover $|\mb{U}_p|=\phi(p)=p-1$.

\subsection{Units of rings}

\begin{dfn}[Set of units of ring $S$ is $S^{x}$]
Let $(S, + , \cdot )$ be a ring, and let $x \in S$.
We say that $x$ is invertible or equivalently that
$x$ is a unit, if there exists a $y \in S$ such that
\[
   x \cdot  y = y \cdot  x = 1,
\]
where the multiplication $\cdot$ implies the multiplicative
operation of $S$, and $1$ is the identity element of $S$
over $\cdot$.
The set of units of $S$ is denoted as $S^{x}$.
\end{dfn}

Note that for $x$, the $y$ such that
$ x \cdot  y = y \cdot  x = 1$ is uniquely defined.
For there was a $z$ such that
$ x \cdot  z = z \cdot  x = 1$,
then
$y = y (x z) = (y x) z = z$. The unique $y$ that is
the inverse of $x$ is sometimes denoted as $x^{-1}$.

\begin{prp}
For a ring $(S, + , \cdot )$,
we have that $(S^{x}, \cdot )$ is a group.
\end{prp}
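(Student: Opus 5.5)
We verify the four group axioms of the standalone group definition for the set $S^{x}$ with the multiplication $\cdot$ inherited from the ring. Associativity is inherited from $(S,\cdot)$ being a semigroup (ring axiom 7), so nothing needs proving there. The work lies in closure, the identity, and inverses, each of which needs a short check that the relevant element actually lies in $S^{x}$ and not merely in $S$.

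\emph{Identity.} We need $S$ to have a multiplicative identity $1$, which is implicit in the definition of a unit above. Since $1\cdot 1 = 1$, the element $1$ is its own inverse, so $1 \in S^{x}$. In particular $S^{x}$ is non-empty. Moreover $1\cdot x = x\cdot 1 = x$ holds for all $x\in S$, hence for all $x \in S^{x}$.

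\emph{Closure.} Let $x, z \in S^{x}$ with inverses $x^{-1}, z^{-1}$. The candidate inverse of $xz$ is $z^{-1}x^{-1}$, in reversed order since $S$ need not be commutative. Using associativity,
\[
(xz)(z^{-1}x^{-1}) = x(zz^{-1})x^{-1} = x\cdot 1\cdot x^{-1} = 1,
\qquad
(z^{-1}x^{-1})(xz) = z^{-1}(x^{-1}x)z = 1 .
\]
Hence $xz \in S^{x}$. Both products must be checked, since a one-sided inverse does not suffice in a general ring.

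\emph{Inverses.} For $x \in S^{x}$, the defining relation $x\cdot x^{-1} = x^{-1}\cdot x = 1$ is symmetric in $x$ and $x^{-1}$. Therefore $x^{-1}$ is itself a unit with inverse $x$, so $x^{-1}\in S^{x}$, and it serves as the group inverse. The uniqueness argument given just before the proposition shows this inverse is well defined.

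None of these steps is a real obstacle. The only point needing care is closure, where the reversed order $z^{-1}x^{-1}$ and the two-sided check are required. Finally, commutativity is not claimed: $S^{x}$ is abelian exactly when $S$ is commutative on units, as in the case $\mb{U}_n = (\mb{Z}/n\mb{Z})^{x}$.
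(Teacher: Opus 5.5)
Your proof is correct. The paper states this proposition without proof, but your argument is the same one it gives for $\mb{U}_p$ in Lemma~\ref{uplem1}: the inverse $b^{-1}a^{-1}$ is checked on both sides and associativity is inherited. You carry it out for a general, possibly noncommutative ring and make the identity and inverse axioms explicit, and you rightly note that a multiplicative identity must be assumed, since the paper's ring definition does not include one.
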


\begin{dfn}[Division ring]
Let $(S, + , \cdot )$ be a ring.
If all of its non-zero elements are invertible,
then $S$ is a division ring. Moreover, if $S$ is
commutative then $S$ is a field.
\end{dfn}

\begin{lem}
If $a   \in \mb{Z}/n\mb{Z} $ is a unit modulo $n$ then
$\gcd(a,n)=1$.
\end{lem}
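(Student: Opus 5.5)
Since $a$ is a unit modulo $n$, there is some $x$ with $ax \equiv 1 \pmod n$ by the definition of units in $\mb{Z}/n\mb{Z}$. By Lemma~\ref{eqrlem} this congruence is equivalent to $\dv{n}{ax-1}$, so there is an integer $k$ with $ax - 1 = kn$. Rearranged, this reads
\[
 ax + n(-k) = 1 .
\]
The plan is to use this B\'ezout-type identity to show that no integer larger than $1$ divides both $a$ and $n$.

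Let $d=\gcd(a,n)$; it exists because $n>0$, so $|a|+|n|\neq 0$. Then $\dv{d}{a}$ and $\dv{d}{n}$. Property (d3) of Theorem~\ref{podiv} gives $\dv{d}{ax+n(-k)}$, that is, $\dv{d}{1}$. By Fact~\ref{sgcdf}(i) we have $d\geq 1$, and by (f2) of Theorem~\ref{podiv} we have $d\leq 1$. Hence $d=1$.

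As a cross-check, one can reach the same conclusion in a single line from Theorem~\ref{egcd}. There $\gcd(a,n)$ is the smallest positive integer expressible as $au+nv$. Since $1$ has such an expression, $\gcd(a,n)\leq 1$, and therefore $\gcd(a,n)=1$. Alternatively, the corollary stating that the modular equation $ax\equiv 1 \pmod n$ is solvable if and only if $\gcd(a,n)=1$ already yields the lemma directly. I would still present the elementary divisibility argument above so that the proof is self-contained.

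No real obstacle is expected. The only care needed is in translating ``unit'' into an explicit integer identity $ax-1=kn$, and in recording that $n\geq 1$, so that the gcd is defined and positive.
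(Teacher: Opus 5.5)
Your proof is correct: rewriting $ax\equiv 1\pmod n$ as the integer identity $ax+n(-k)=1$ and observing that $\gcd(a,n)$ must divide the right-hand side is the whole argument. Your use of Fact~\ref{sgcdf}(i) and Theorem~\ref{podiv}(f2) to pin $d$ at $1$ is sound.

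The paper states this lemma without a separate proof. The fact is already contained in Theorem~\ref{modn}, which reaches it via Theorem~\ref{tlincon} and the necessity direction of Theorem~\ref{diot}; that is the same divisibility argument you carry out inline, so your approach is essentially the paper's.
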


\newpage

\section{Chinese remainder theorem}

% 11 and 20 
% 101 = 11 * 9 + 2
% 101 = 20 * 5 + 1
% 321 = 11 * 29 +2 , 321= 20 * 16 +1
\begin{exa}
A farmer has some pounds sugar. If the farmer
puts them into bags of 11 pounds the farmer can fit enough 
full bags but then is left with 2 spare pounds. 
If the farmer uses 20 pound bags then is
also left with 1 spare pound. How many pounds of sugar 
does the farmer have?
\end{exa}

Say the farmer has 321 pounds of sugar. This amount 
needs 29 11-pound bags and there 2 spare pounds. 
If the farmer uses 20-pound bags
the farmer can fill 16 bags and is left with 1 pound.
Is it a solution.

\begin{exa}
What if the farmer has 101 pounds of sugar ?
$101= 11\cdot  9 + 2 =  5 \cdot 20+1$.
\end{exa}

The only solution in $0 \ldots 219$ seems to be 101.
But beyond that range,
another solution is $101+220$, $100+2\cdot 220$, and so on.

\bigskip %THEOREM 45 %%
\begin{thm}[Chinese remainder theorem]
\label{crt}
Let $n_1 , \ldots , n_k$ are pairwise prime numbers
i.e. $\gcd(n_i , n_j )=1$ for $i\neq j$.
Let $a_1 , \ldots , a_k \in \mb{Z}$.
There is a unique $A \pmod {n_1 \ldots n_k}$ such that
it satisfies all of the modular equation below.
\begin{eqnarray*}
   A &\equiv& a_1 \pmod {n_1} \\
   A &\equiv& a_2 \pmod {n_2} \\
     & \ldots & \\
   A &\equiv& a_k \pmod {n_k} \\
\end{eqnarray*}
Moreover if there are two solutions $A, a$, then
\[
 A \equiv a \pmod{n_1 \ldots n_k}
\Rightarrow
 A \equiv a \pmod{N},
\]
where $N= n_1 n_2 \ldots n_k$.
\end{thm}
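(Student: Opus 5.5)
The plan is to prove existence by an explicit construction and uniqueness by a divisibility argument. Throughout, "pairwise prime" is read as pairwise relatively prime. Let $N=n_1 n_2\ldots n_k$ and, for each $i$, set $N_i = N/n_i$, the product of all moduli except $n_i$.

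\textbf{Step 1: $\gcd(N_i,n_i)=1$.} Suppose a prime $p$ divided both $N_i$ and $n_i$. By Lemma~\ref{pfactor}, generalized to products, $p$ would divide some $n_j$ with $j\neq i$. Then $p$ would be a common divisor of $n_i$ and $n_j$, contradicting $\gcd(n_i,n_j)=1$. Hence no prime divides both, and by Theorem~\ref{prfactor} the gcd cannot exceed $1$.

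\textbf{Step 2: build the solution.} Since $\gcd(N_i,n_i)=1$, Theorem~\ref{modn} (equivalently the Extended-GCD, Theorem~\ref{egcd}) gives $y_i$ with $N_i y_i \equiv 1 \pmod{n_i}$. Define
\[
A = \sum_{i=1}^{k} a_i N_i y_i .
\]
Reduce $A$ modulo $n_j$. For every $i\neq j$ we have $\dv{n_j}{N_i}$, so those terms vanish. The remaining term is $a_j N_j y_j \equiv a_j \pmod{n_j}$, using the scaling and additivity properties of Corollary~\ref{pmod}. So $A$ satisfies all $k$ congruences, and so does every integer congruent to $A$ modulo $N$.

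\textbf{Step 3: uniqueness modulo $N$.} Let $A$ and $a$ both satisfy the system. Then $\dv{n_i}{A-a}$ for each $i$. I will show $\dv{N}{A-a}$ by induction on $k$, using Corollary~\ref{thm25}: if $\gcd(x,y)=1$, $\dv{x}{c}$ and $\dv{y}{c}$, then $\dv{xy}{c}$. For the inductive step, apply it with $x=n_1\cdots n_{m}$ and $y=n_{m+1}$. The needed fact $\gcd(n_1\cdots n_m, n_{m+1})=1$ follows by the same prime argument as in Step 1. The conclusion is $A\equiv a \pmod N$, which is exactly the "moreover" clause and the uniqueness of $A \pmod N$.

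The main obstacle is the coprimality of a product with a single modulus, used in both Step 1 and Step 3. Everything else is a direct application of results already established. I will handle it once, as a small auxiliary claim proved via Lemma~\ref{pfactor}, before the main argument.
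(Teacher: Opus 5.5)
Your proposal is correct and follows the paper's proof: you use the same construction $A=\sum_i a_i N_i y_i$ with $N_i y_i\equiv 1 \pmod{n_i}$, and you get uniqueness from $\dv{n_i}{A-a}$ together with Corollary~\ref{thm25}. Two of your steps fill in what the paper only asserts: the prime-factor argument that $\gcd(N_i,n_i)=1$, and the induction that extends Corollary~\ref{thm25} from two moduli to $k$ moduli.
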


\begin{proof}
Let $N_j$, for $j=1, \ldots , k$ contain all $n_i$ except $n_j$.
That is
\[
  N_j = n_1 \ldots  n_{j-1} n_{j+1} \ldots n_k .
\]
We have $\dv{n_i}{N_j}$ for all $i \neq j$.
We have that $\gcd (N_j , n_j ) =1 $. This is because
$\gcd( n_j , n_i ) = 1$ for all $i \neq j$ as they are
pairwise prime . Since $\gcd( N_j , n_j)=1$ we have
that there exists an integer $x_j$ such that
\[
 N_j x_j \equiv 1 \pmod {n_j} ,
\]
for all $j =1 , \ldots , k$.
We then form the following expression.
\[
   A = a_1 N_1 x_1 + \ldots + a_i N_i x_i + \ldots + a_k N_k x_k .
\]
Consider $n_j$ and $N_j$.
It is $\dv{n_i}{N_j}$ for all $i \neq j$. Thus all
the terms $a_j N_j x_j$ are multiples of $n_i$ for $j \neq i$.
For the term $a_i N_i x_i$ this is note the case as
$\gcd(n_i , N_i ) =1 $. But we have that $N_i x_i \equiv 1 \pmod {n_i}$.
Thus $a_i N_i x_i \equiv a_i \pmod {n_i}$. The second equivalence
below follows
\[
  A \equiv a_1 \cdot 0 + a_2 \cdot 0 + \ldots + 
           a_i \cdot N_i \cdot x_i   + \ldots  + a_k \cdot 0  \pmod {n_i}
    \equiv  a_i \pmod {n_i} 
\]
This is true for all $i$ and the following applies.
\[
\forall 1 \leq i \leq k :\quad  A \equiv a_i \pmod {n_i}.
\]
Say that there is another solution $a$ i.e.
$a \equiv A \equiv a_i \pmod {n_i}$ or all $i$.
This would mean that $\dv{n_i}{A-a}$. Since $n_i$
are pairwise prime by Theorem~\ref{thm25} we have
\[
 \dv{n_1 n_2 \ldots n_k}{A-a}.
\]
This means $A-a \equiv \pmod {n_1 n_2 \ldots n_k}$.
\end{proof}

\begin{cor}
\noindent
Let $n_1 , \ldots , n_k$ be  pairwise prime numbers
i.e. $\gcd(n_i , n_j )=1$ for $i\neq j$, $i,j = 1, \ldots , k$.
Then $A \equiv a \pmod{n_i}$, $i=j = 1, \ldots , k$ if and only
if
\[
 A \equiv a \pmod{N},
\]
where $N= n_1 n_2 \ldots n_k$.
\end{cor}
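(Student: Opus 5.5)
The statement is a biconditional: $A \equiv a \pmod{n_i}$ for every $i=1,\ldots,k$ holds if and only if $A \equiv a \pmod N$ with $N=n_1 n_2\ldots n_k$. I would translate both sides into divisibility statements via Lemma~\ref{eqrlem}. The left side says $\dv{n_i}{A-a}$ for every $i$, and the right side says $\dv{N}{A-a}$. Each direction then becomes a short divisibility argument.

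For ($\Leftarrow$), I assume $\dv{N}{A-a}$. Since $n_i$ is one of the factors of $N$, we have $N = n_i \cdot \prod_{j\neq i} n_j$, so $\dv{n_i}{N}$. Transitivity, Theorem~\ref{podiv}(b2), then gives $\dv{n_i}{A-a}$, that is, $A\equiv a \pmod{n_i}$ for each $i$. This direction does not need the pairwise coprimality.

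For ($\Rightarrow$), I assume $\dv{n_i}{A-a}$ for all $i$ and show $\dv{n_1\cdots n_m}{A-a}$ by induction on $m$, for $m=1,\ldots,k$. The base case $m=1$ is the hypothesis. For the inductive step, put $P_m=n_1\cdots n_m$ and suppose $\dv{P_m}{A-a}$; we also have $\dv{n_{m+1}}{A-a}$. To apply Corollary~\ref{thm25} with $a=P_m$, $b=n_{m+1}$ and $c=A-a$, I need $\gcd(P_m,n_{m+1})=1$. This coprimality is the only step with real content, and I regard it as the main obstacle.

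To prove $\gcd(P_m,n_{m+1})=1$, I would argue by contradiction. If the gcd were $>1$, it would have a prime factor $p$ by Theorem~\ref{prfactor}. Then $\dv{p}{n_{m+1}}$ and $\dv{p}{P_m}$, and by Lemma~\ref{pfactor} the latter gives $\dv{p}{n_j}$ for some $j\le m$. But then $p$ is a common divisor of $n_j$ and $n_{m+1}$ exceeding $1$, which contradicts $\gcd(n_j,n_{m+1})=1$. (The same fact is also implicit in the proof of Theorem~\ref{crt}, where $\gcd(N_j,n_j)=1$ is asserted, so the argument could alternatively be cited there.) With coprimality established, Corollary~\ref{thm25} yields $\dv{P_{m+1}}{A-a}$. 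At $m=k$ this gives $\dv{N}{A-a}$, i.e. $A\equiv a\pmod N$.
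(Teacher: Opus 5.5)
Your proof is correct and follows essentially the paper's route. The paper simply defers to the uniqueness part of Theorem~\ref{crt}, which applies Corollary~\ref{thm25} to the pairwise coprime moduli; your induction, with the check $\gcd(n_1\cdots n_m,n_{m+1})=1$, supplies exactly the details that argument leaves implicit, and your observation that the converse follows from $\dv{n_i}{N}$ alone (without coprimality) is also correct.
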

\begin{proof}
It immediately follows from the previous theorem.
\end{proof}

\begin{cor}
\noindent
Let $n_1 , \ldots , n_k$ be  pairwise prime numbers
i.e. $\gcd(n_i , n_j )=1$ for $i\neq j$, $i,j = 1, \ldots , k$.
Let $a_1 , \ldots , a_k \in \mb{Z}$,
and let us assume that there is a solution for the system
of modular equations below.
\begin{eqnarray}
\label{crt2}
   A &\equiv& a_1 \pmod {n_1} \nonumber \\
   A &\equiv& a_2 \pmod {n_2} \nonumber \\
     & \ldots & \\
   A &\equiv& a_k \pmod {n_k} \nonumber
\end{eqnarray}
Furthermore, let $N= n_1 n_2 \ldots n_k$.
Show that the solution is as follows.
\[
A = \sum_{i=1}^{k}  a_i N_i x_i \pmod{N}
\]
where $N_i = N / n_i$ and
$N_i x_i \equiv 1 \pmod{n_i}$,
$i=1, 2, \ldots , k$.
\end{cor}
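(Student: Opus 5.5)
The plan is to verify directly that $A=\sum_{i=1}^{k} a_i N_i x_i$ satisfies every congruence in (\ref{crt2}), and then to use the uniqueness part of Theorem~\ref{crt} to conclude that any solution agrees with it modulo $N$.

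First, the construction must be well defined, so we need the $x_i$ to exist. For each $i$ we have $N_i=\prod_{j\neq i} n_j$. Every $n_j$ with $j\neq i$ is coprime to $n_i$, so $\gcd(N_i,n_i)=1$. To see this, argue as in the proof of Theorem~\ref{crt}: a prime $p$ dividing both $N_i$ and $n_i$ would, by Lemma~\ref{pfactor}, divide some $n_j$ with $j\neq i$. It would then divide $\gcd(n_i,n_j)=1$, which is impossible. Theorem~\ref{modn} therefore yields $x_i$ with $N_i x_i\equiv 1 \pmod{n_i}$.

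Next, fix an index $i$ and reduce $A$ modulo $n_i$. For every $j\neq i$ the factor $n_i$ occurs in $N_j$, so $\dv{n_i}{a_jN_jx_j}$ and that term is $\equiv 0\pmod{n_i}$. The remaining term satisfies $a_iN_ix_i\equiv a_i\cdot 1\pmod{n_i}$, using the scaling property of Corollary~\ref{pmod}. By additivity (Corollary~\ref{pmod}) we get $A\equiv a_i\pmod{n_i}$ for each $i$. Hence $A$, and every integer congruent to $A$ modulo $N$, solves the system (each $n_i$ divides $N$).

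Finally, suppose $A'$ is any solution of (\ref{crt2}). Then $\dv{n_i}{A'-A}$ for all $i$. Repeated use of Corollary~\ref{thm25} gives $\dv{N}{A'-A}$; this works because $\gcd(n_1\cdots n_{m}, n_{m+1})=1$ by the same prime-factor argument as above. So $A'\equiv \sum_i a_iN_ix_i \pmod N$, which is the claimed formula.

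The only step requiring care is the coprimality $\gcd(N_i,n_i)=1$, together with its analogue for the partial products in the uniqueness step. Both reduce to Lemma~\ref{pfactor}, so I expect no real obstacle; the rest is routine bookkeeping with Corollary~\ref{pmod}.
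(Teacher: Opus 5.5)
Your proof is correct and is essentially the paper's argument. The paper simply points back to the proof of Theorem~\ref{crt}, which builds the same $A=\sum_i a_iN_ix_i$, checks $A\equiv a_i \pmod{n_i}$ term by term, and gets uniqueness modulo $N$ from Corollary~\ref{thm25}; you additionally justify $\gcd(N_i,n_i)=1$ and the coprimality needed to iterate Corollary~\ref{thm25}, both of which the paper leaves implicit.
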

\begin{proof}
It follows directly from the proof of existence of
Eq.(\ref{crt}).
\end{proof}

\newpage

\begin{exa}      
Find all integers $A$ such that
\begin{eqnarray*}
    A &\equiv& 1 \pmod 2 \\ 
    A &\equiv& 2 \pmod 3 \\ 
    A &\equiv& 3 \pmod 5 \\ 
\end{eqnarray*}
\end{exa}      

This is the similar to the original
problem proposed by Sunzi
that appeared in the book  Sunzi Suanjing.
A solution though was not proposed there.
In the original problem, 2,3,5 were replaced by 3,5 and 7
respectively.

\begin{proof}
Let $n_1 =2 , n_2 = 3 , n_3 = 5$.
Then $N=30$.
Furthermore,
$N_1 = 3\cdot 5 = 15$,  $N_2 = 2\cdot 5 = 10$, $N_3 = 2\cdot 3 =  6$.

\begin{eqnarray*}
 N_1 x_1 &\equiv& 1 \pmod n_1 \Rightarrow 15 x_1 \equiv 1 \pmod 2 
                              \Rightarrow x_1 = 1  \\
 N_2 x_3 &\equiv& 1 \pmod n_2 \Rightarrow 10 x_2 \equiv 1 \pmod 3 
                              \Rightarrow x_2 = 1 \\
 N_3 x_4 &\equiv& 1 \pmod n_3 \Rightarrow  6 x_3 \equiv 1 \pmod 5
                              \Rightarrow x_3 = 1 \\
\end{eqnarray*}
Then $n_1 n_2 n_3 = 2 \cdot 3 \cdot 5= 30$, and
\[
A = a_1 N_1 x_1 + a_2 N_2 x_2 + a_3 N_3 x_3= 
   1\cdot  15 \cdot 1 + 2\cdot  10 \cdot 1 + 3\cdot   6 \cdot 1 =
    15+ 20 + 18 \pmod 30  = 23
\]
Thus one solution is 23. Another $23+30$, another $23+60$, and so on.
\end{proof}

\begin{cor}
\label{cor7}
Let $n_1 , n_2 >1$ be integers and let $a_1 , a_2 \in Z$.
Let $d=\gcd(n_1 , n_2 )$. If $\dv{d}{a_1 - a_2}$ then the equations
\begin{eqnarray*}
 A &\equiv& a_1 \pmod{n_1} \\
 A &\equiv& a_2 \pmod{n_2} \\
\end{eqnarray*}
have a unique solution $A \pmod \lcm(n_1 , n_2 )$.
If $\ndd{d}{a_1 - a_2}$ then the equations
have no solution.
\end{cor}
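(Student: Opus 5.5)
The plan is to reduce the system to a single linear congruence and then apply the results on linear congruences and Diophantine equations proved earlier (Theorem~\ref{tlincon}, Theorem~\ref{diot}, Theorem~\ref{modeq1}). Every solution of the first equation has the form $A = a_1 + n_1 t$ with $t \in \mb{Z}$. Substituting into the second equation, $A$ solves the system if and only if
\[
 n_1 t \equiv a_2 - a_1 \pmod{n_2}.
\]
By Theorem~\ref{modeq1}, this congruence in $t$ has a solution if and only if $\dv{d}{a_2-a_1}$, where $d=\gcd(n_1,n_2)$. This settles both existence when $\dv{d}{a_1-a_2}$ and nonexistence when $\ndd{d}{a_1-a_2}$. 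The nonexistence direction can also be checked directly: if $A$ solved both equations, then $\dv{d}{n_1}$ and $\dv{n_1}{A-a_1}$, so $\dv{d}{A-a_1}$. Likewise $\dv{d}{A-a_2}$, and subtracting gives $\dv{d}{a_1-a_2}$.

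For uniqueness modulo $\lcm(n_1,n_2)$, suppose $A$ and $A'$ both solve the system. Then $\dv{n_1}{A-A'}$ and $\dv{n_2}{A-A'}$, so $A-A'$ is a common multiple of $n_1$ and $n_2$. By the proposition in the least common multiples section, $\dv{\lcm(n_1,n_2)}{A-A'}$, that is, $A\equiv A'\pmod{\lcm(n_1,n_2)}$. Conversely, adding any multiple of $\lcm(n_1,n_2)$ to a solution preserves both congruences, because $\lcm(n_1,n_2)$ is a multiple of both $n_1$ and $n_2$. So the solution set is exactly one residue class modulo $\lcm(n_1,n_2)$.

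As a consistency check, Theorem~\ref{diot} says the $t$-solutions are $t = t_0 + m n_2/d$. These give $A = a_1 + n_1 t_0 + m\, n_1 n_2/d$, and $n_1n_2/d=\lcm(n_1,n_2)$ by the theorem $\gcd(a,b)\lcm(a,b)=ab$. This is the same single class modulo the lcm. The only delicate step is making sure the substitution $A=a_1+n_1t$ is an exact equivalence, so that solvability in $t$ matches solvability of the system. Once that is granted, everything else is a direct invocation of the earlier results.
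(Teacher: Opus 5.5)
Your proof is correct. Its skeleton agrees with the paper's proof, which is given for the restated Corollary~\ref{crt2d}. The nonexistence argument via $d \mid A-a_1$ and $d \mid A-a_2$ is the same, and both arguments reduce the system to the single congruence $n_1 t \equiv a_2-a_1 \pmod{n_2}$.

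You differ in how existence and uniqueness are obtained. The paper applies the Euler-theorem formula of Theorem~\ref{modeq2}, namely $t \equiv \frac{a_2-a_1}{d}\left(\frac{n_1}{d}\right)^{\phi(n_2/d)-1} \pmod{n_2/d}$. It multiplies by $n_1$ and reads off $A \equiv a_1+(a_2-a_1)\left(\frac{n_1}{d}\right)^{\phi(n_2/d)} \pmod{n_1n_2/d}$, identifying $n_1n_2/d$ with $\lcm(n_1,n_2)$. This gives an explicit representative of the solution class. However, the derivation starts from an assumed solution $x$, so existence is left implicit: one would still have to check that this representative satisfies both congruences.

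You instead make the substitution $A=a_1+n_1t$ an exact biconditional, so existence follows at once from the solvability criterion of Theorem~\ref{modeq1}. You get uniqueness structurally, from the fact that every common multiple of $n_1$ and $n_2$ is divisible by $\lcm(n_1,n_2)$. This needs neither Euler's theorem nor the identity $\gcd(n_1,n_2)\lcm(n_1,n_2)=n_1n_2$; your Diophantine ``consistency check'' is essentially the paper's route.

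So your argument is more elementary and airtight on existence, while the paper's yields a closed-form solution.
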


\bigskip\noindent
\begin{exa}
When a bit sequence is transmitted $a=(a_1 a_2  \ldots a_n )$, a
parity bit is computed and transmitted as well
where $p \equiv a_1 + \ldots + a_n \pmod 2$.
The {\bf even parity bit } e(a) os obtained by adding the
bits of $a$ and returning the value of the sum modulo two.
This sum is the number of ones in $a$; if it is an even
number $e(a)$ is 0 else it is 1.
\end{exa}

\begin{exa}
A 10-digit ISBN (International Standard Book Number) code
$a=(a_1 \ldots a_{10}$ where $a_{10}$ is a check digit.
The check digits is $\pmod 11$; an $X$ represents a 10.
The check digit computation involved is
\[
10a_1 +9 a_2 +8 a_3 +7 a_4 +6 a_5 +5 a_6 +
4 a_7 +3a_8 +2 a_9 +a_{10} 
\pmod{11}.
\]
If the checkdigit $a_{10}$ is valid this sum 
is equal to $0 \pmod 10$.
The weights can be an increasing left-to-right 
sequence as well.
\[
a_1 +2 a_2 +3 a_3 +4 a_4 +5 a_5 +6 a_6 +
7 a_7 +8a_8 +9 a_9 +10 a_{10} \pmod{11}.
\]
For a 13-digit ISBN $(a_1 \ldots a_{12}a_{13})$ a 
check digits is computed for 
$a_1 \ldots a_{12}$ where the weights are alternating 1 and 3s.
\[
a_1 +3 a_2 + a_3 +3 a_4 + a_5 +3 a_6 + a_7 +
3a_8 + a_9 +3 a_{10} +a_{11}+3a_{12}
\]
The sum $\pmod 10$ determines the check-digit after subtracting 
it from 10.
\end{exa}

\subsection{ CRT with two equations}

\begin{cor}[System of two equations]
\label{crt21}
Let $a_1 , a_2 \in Z$.
Let $n_1 , n_2 \in \mb{N}$ with $n_1 , n_2 > 1$,
Let $d$ be the g.c.d of $n_1 , n_2$ such that
$d=\gcd(n_1 , n_2 )$.
If $d=1$ and thus $n_1 , n_2$ are relatively prime,
then the system of two equations
\begin{eqnarray}
\label{cor7a}
\label{so2ecrt}
 x &\equiv& a_1 \pmod{n_1}\nonumber \\
 x &\equiv& a_2 \pmod{n_2}
\end{eqnarray}
has a unique solution $x \bmod (n_1 \cdot  n_2 ) $.
Moreover in $\mb{Z}$ any two solutions $x,y$ are such that
$x\equiv y \pmod{n_1 n_2}$.
\end{cor}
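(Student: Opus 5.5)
The plan is to specialize the construction in the proof of Theorem~\ref{crt} to $k=2$, and then prove uniqueness modulo $n_1 n_2$ using Corollary~\ref{thm25}.

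\textbf{Existence.} Since $d=\gcd(n_1,n_2)=1$, Theorem~\ref{egcd} (B\'ezout) gives integers $u,v$ with $n_1 u + n_2 v = 1$. Set
\[
x_0 = a_1 n_2 v + a_2 n_1 u .
\]
Reducing modulo $n_1$ gives $n_1 u \equiv 0$. From $n_2 v = 1 - n_1 u \equiv 1 \pmod{n_1}$ we get $x_0 \equiv a_1 \pmod{n_1}$. Symmetrically, modulo $n_2$ we have $n_1 u \equiv 1$, so $x_0 \equiv a_2 \pmod{n_2}$.

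This is exactly the formula $A=\sum a_i N_i x_i$ of Theorem~\ref{crt}, with $N_1=n_2$, $x_1=v$, $N_2=n_1$, $x_2=u$. One could simply cite that theorem, but the explicit B\'ezout computation keeps the argument self-contained.

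\textbf{Uniqueness.} Suppose $x$ and $y$ are two integer solutions. By transitivity and symmetry of congruence (Corollary~\ref{pmod}), $x\equiv y \pmod{n_1}$ and $x\equiv y\pmod{n_2}$, so $\dv{n_1}{x-y}$ and $\dv{n_2}{x-y}$. Since $\gcd(n_1,n_2)=1$, Corollary~\ref{thm25} gives $\dv{n_1 n_2}{x-y}$, that is, $x\equiv y \pmod{n_1n_2}$. This is the final claim of the statement.

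For the "unique solution $x \bmod (n_1 n_2)$" reading, there is one more step. Any $x'\equiv x_0 \pmod{n_1 n_2}$ is also a solution, because $x'-x_0$ is a multiple of both $n_1$ and $n_2$. So the solution set is exactly one residue class modulo $n_1 n_2$. Proposition~\ref{moddiv} then picks out its unique representative in $[0,n_1n_2)$.

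\textbf{Main obstacle.} There is no real difficulty here. The only point needing care is the uniqueness step, which must use the coprimality hypothesis through Corollary~\ref{thm25}; without that hypothesis the conclusion would only hold modulo $\lcm(n_1,n_2)$, as in Corollary~\ref{cor7}.
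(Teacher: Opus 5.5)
Your proof is correct and follows essentially the same route as the paper: you build an explicit solution from the B\'ezout (extended-GCD) inverses of $n_2$ modulo $n_1$ and of $n_1$ modulo $n_2$, then get uniqueness because $n_1 \mid x-y$, $n_2 \mid x-y$ and $\gcd(n_1,n_2)=1$ force $n_1 n_2 \mid x-y$. Your pairing $a_1 n_2 v + a_2 n_1 u$ is in fact the correct one: the paper's $X = a_1 m_1 n_1 + a_2 m_2 n_2$ satisfies $X \equiv a_2 \pmod{n_1}$ and $X \equiv a_1 \pmod{n_2}$, so it solves the system with $a_1,a_2$ swapped. Your remark that the whole residue class of $x_0$ consists of solutions also handles the ``unique modulo $n_1 n_2$'' claim more cleanly than the paper's range argument.
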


\begin{proof}
$ $ \\ $ $
Let $1=\gcd(n_1 , n_2 )$.
Then there exist
$m_1$ such that
$m_1 n_1 \equiv 1 \pmod{n_2}$,
and
$m_2$ such that
$m_2 n_2 \equiv 1 \pmod{n_1}$.
This follows from the fact that $\gcd(n_1 , n_2 )=1$,
and thefore
\[
m_1 n_1 -1 = n_2 k_2 , \quad
m_2 n_2 -1 = n_1 k_1
\]
have solution for $n_1, n_2$ by the extended GCD algorithm.
Consider then integer $X$ as follows
\[
 X = \left( a_1 m_1 n_1 + a_2 m_2 n_2 \right) \bmod{n_1 n_2} .
\]
It is obvious that
\[
X \pmod{n_1}\equiv 0 + a_2 m_2 n_2  \equiv a_2 \pmod{n_1},
\]
since $m_2 n_2 \equiv 1 \pmod n_1$.
Similarly,
\[
X \equiv a_1 \pmod{n_2}.
\]
It then suffices to show that there is no $Y \bmod (n_1 \cdot  n_2 )$
For if there is a $Y$ such that
\[
Y \equiv a_2 \pmod{n_1}, \quad
Y \equiv a_1 \pmod{n_2},
\]
then $\dv{n_1}{Y-X}$
and $\dv{n_2}{Y-X}$.
Since $\gcd(n_1 , n_2 )=1$, then
$\dv{n_1 n_2}{Y-X}$ which implies $n_1 n_2 \leq Y-X$ or
$X=Y$.
But $Y-X$ or $X-Y$ is between $0$ and $n_1 n_2 -1$
contradicting the conclusion $n_1 n_2 \leq Y-X$.
The only alternative left $X=Y$ or $X \equiv Y \pmod{n_1 n_2}$.
\end{proof}

\noindent
In the Chinese Remainder Theorem and
Corollary~\ref{crt21} we assume that $d=1$.
In Corollary~\ref{crt2d} to follow we allow  $d>1$.

\begin{cor}[System of two equations]
\label{crt2d}
Let $a_1 , a_2 \in Z$.
Let $n_1 , n_2 \in \mb{N}$ with $n_1 , n_2 > 1$,
Let $d$ be the g.c.d of $n_1 , n_2$ such that
$d=\gcd(n_1 , n_2 )$.
If $\dv{d}{a_1 - a_2}$ or equivalently
$a_1 \equiv a_2 \pmod{d}$,
then the system of two equations
\begin{eqnarray}
\label{cor7b}
\label{so2e}
 x &\equiv& a_1 \pmod{n_1}\nonumber \\
 x &\equiv& a_2 \pmod{n_2}
\end{eqnarray}
has a unique solution $x \pmod{\lcm(n_1 , n_2 )}$.
If $a_1 \not\equiv a_2 \pmod d$, or equivalently
$\ndv{d}{a_1 - a_2}$, then the system of equations
has no solution in $x$.
\end{cor}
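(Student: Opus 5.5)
We prove three things in turn: necessity of $\dv{d}{a_1-a_2}$, existence of a solution when it holds, and uniqueness modulo $\lcm(n_1,n_2)$.

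\emph{Necessity.} Suppose $x$ solves the system. Then $x = a_1 + n_1 k_1 = a_2 + n_2 k_2$ for some integers $k_1,k_2$, so $a_1 - a_2 = n_2k_2 - n_1k_1$. Since $\dv{d}{n_1}$ and $\dv{d}{n_2}$, Theorem~\ref{podiv}(d3) gives $\dv{d}{a_1-a_2}$. Hence if $\ndd{d}{a_1-a_2}$ there is no solution, which proves the second claim.

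\emph{Existence.} Assume $a_1 - a_2 = d t$ for some integer $t$. Writing $x = a_1 + n_1 k$, the second congruence becomes the linear congruence
\[
n_1 k \equiv a_2 - a_1 \pmod{n_2}.
\]
By Theorem~\ref{tlincon} (equivalently Theorem~\ref{modeq1}), this is solvable in $k$ exactly when $\dv{\gcd(n_1,n_2)}{a_2-a_1}$, which is our hypothesis. For a concrete solution, take $u,v$ from Theorem~\ref{egcd} with $n_1u + n_2v = d$, and set $k = -tu$. Then
\[
x = a_1 - t n_1 u = a_1 - t(d - n_2 v) = a_2 + t n_2 v,
\]
so $x \equiv a_1 \pmod{n_1}$ and $x \equiv a_2 \pmod{n_2}$.

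\emph{Uniqueness modulo $\lcm(n_1,n_2)$.} This is the main step. Let $x$ and $y$ both be solutions. Then $\dv{n_1}{x-y}$ and $\dv{n_2}{x-y}$, so $x-y$ is a common multiple of $n_1$ and $n_2$. By the proposition that every common multiple is divisible by the least common multiple, $\dv{\lcm(n_1,n_2)}{x-y}$, i.e.\ $x \equiv y \pmod{\lcm(n_1,n_2)}$.

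One point needs care: that proposition is stated for positive multiples, while $x-y$ may be zero or negative. The case $x-y=0$ is trivial. Otherwise I would apply it to $|x-y|$, which is a positive common multiple, and divisibility passes back to $x-y$ by the sign rules for divisibility.

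Conversely, if $x$ is a solution and $x' \equiv x \pmod{\lcm(n_1,n_2)}$, then $x'$ is also a solution, since $n_1$ and $n_2$ both divide $\lcm(n_1,n_2)$. Therefore the solution set is exactly one residue class modulo $\lcm(n_1,n_2)$.

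Finally, the case $d=1$ reduces to Corollary~\ref{crt21}, because $\lcm(n_1,n_2)=n_1n_2$ when $\gcd(n_1,n_2)=1$ (Theorem~\ref{lcm1}).
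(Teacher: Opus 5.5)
Your proof is correct, but it takes a different route from the paper's.

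\textbf{The paper's route.} After the same necessity argument, the paper writes $x = a_1 + n_1 l_1$ and reduces to $n_1 l_1 \equiv a_2 - a_1 \pmod{n_2}$. It solves this with Theorem~\ref{modeq2}, i.e.\ Euler's theorem applied to $n_1/d$ modulo $n_2/d$, obtaining $l_1 \equiv \frac{a_2-a_1}{d}\,(n_1/d)^{\phi(n_2/d)-1} \pmod{n_2/d}$. Substituting back gives every solution in the form $x = A + m\,\frac{n_1 n_2}{d}$, with the closed form $A = a_1 + (a_2-a_1)(n_1/d)^{\phi(n_2/d)}$. It then identifies $n_1n_2/d$ with $\lcm(n_1,n_2)$, implicitly using $\gcd\cdot\lcm = n_1n_2$.

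\textbf{What the paper's route buys.} Existence and uniqueness fall out of one parametrization of the solution set, and you get an explicit representative. The cost is that it needs Euler's theorem and the gcd--lcm product identity. Also, its existence part is only implicit: it begins by assuming a solution $x$ and relies on the chain of equivalences being reversible.

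\textbf{What your route buys.} You build a solution from Bézout coefficients (Theorem~\ref{egcd}) and check it directly. You get uniqueness from the fact that $\lcm(n_1,n_2)$ divides every common multiple, which avoids both the $\phi$ computation and the product identity. Applying that proposition to $|x-y|$ is the right precaution, since the paper's $\lcm$ results are phrased for positive multiples. Your construction is also the algorithmically practical one, computable by extended Euclid. The paper's formula requires $\phi(n_2/d)$, which in general means factoring $n_2/d$.
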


\begin{proof}
$ $ \\ $ $
Let $d=\gcd(n_1 , n_2 )$.
Then $n_1 =d k_1 $ and $n_2 =d k_2$ for some
$k_1 , k_2 \in \mb{Z}$.
Let the system of Eq.(\ref{so2e}) has one solution $x$.
Then
$x \equiv a_1 \pmod n_1$
and
$x \equiv a_2 \pmod n_2$.
From the two we have for some
$l_1 , l_2 \in \mb{Z}$.
$ x- a_1 = n_1 l_1$ and
$ x- a_2 = n_2 l_2$.
From the former we obtain
\[
 x- a_1 = n_1 l_1 = (d k_1 ) l_1 = d (k_1 l_1 ),
\]
thus concluding $\dv{d}{x- a_1}$.
Similarly we obtain $\dv{d}{x- a_1}$.
Then $\dv{d}{a_1 - a_2}$ and therefore
\[
 a_1 \equiv a_2 \pmod d.
\]
Furthermore if
$a_1 \not\equiv a_2 \pmod d$, then
the system of Eq.(\ref{so2e}) has no solution in $x$,
since otherwise $ a_1 \equiv a_2 \pmod d$.
$ $ \\ $ $
Assume in the remainder that $ a_1 \equiv a_2 \pmod d$
and thus the system of Eq.(\ref{so2e}) has one
solution $x$.
Using the tools of the previous proof
\[
 x- a_1 = n_1 l_1  \wedge
%= (d k_1 ) l_1 = d (k_1 l_1 ) , \wedge
 x- a_2 = n_2 l_2
%= (d k_2 ) l_2 = d (k_2 l_2 )
\]
and by subtracting one from the other we obtain the following.
\[
a_1 - a_2 = n_2 l_2 - n_1 l_1 \Leftrightarrow
 n_1 l_1 = a_2 - a_1 + n_2 l_2  \Leftrightarrow
 n_1 l_1 \equiv a_2 - a_1  \pmod{n_2}
\]
The latter modular equation has a solution $l_1$
for $\gcd(n_1 , n_2) =d$ is such that $\dv{d}{a_2 -a_1}$
by way of $a_1 \equiv a_2 \pmod d$.
From the previous problem the solution $l_1$ is as follows.
\[
  l_1 \equiv  \frac{a_2 - a_1}{d}
              \left(\frac{n_1}{d}\right)^{\phi (\frac{n_2}{d}) -1 }
               \pmod{\frac{n_2}{d}}.
\]
We then have the following also using that $x - a_1 = n_1 l_1 $.
\begin{eqnarray*}
  l_1 &\equiv&  \frac{a_2 - a_1}{d}
              \left(\frac{n_1}{d}\right)^{\phi (\frac{n_2}{d}) -1 }
               \pmod{\frac{n_2}{d}} \Leftrightarrow \\
  l_1 &=&  \frac{a_2 - a_1}{d}
              \left(\frac{n_1}{d}\right)^{\phi (\frac{n_2}{d}) -1 }
               + m_2 \frac{n_2}{d} \Leftrightarrow \\
  x  &=& a_1 + n_1 \cdot  \frac{a_2 - a_1}{d}
              \left(\frac{n_1}{d}\right)^{\phi (\frac{n_2}{d}) -1 }
               + n_1 \cdot m_2 \frac{n_2}{d} \Leftrightarrow \\
  x  &=& a_1 + ( a_2 - a_1 )
              \left(\frac{n_1}{d}\right)^{\phi (\frac{n_2}{d}) }
               + m_2 \frac{n_1 \cdot n_2}{d} \Leftrightarrow \\
  x  &=& A +  m_2 \frac{n_1 \cdot n_2}{d} \Leftrightarrow \\
  x  &=& A +  m_2 \lcm(n_1 , n_2) \Leftrightarrow \\
  x  &\equiv& A  \pmod{\lcm(n_1 , n_2)} ,
\end{eqnarray*}
where $A$ is given by the following expression.
\[
A = a_1 + ( a_2 - a_1 )
              \left(\frac{n_1}{d}\right)^{\phi (\frac{n_2}{d}) }.
\]
\end{proof}

\begin{prp}
If for $m,m \in \mb{Z}$, $\gcd(m,n)=1$ then
\begin{equation}
\label{crtbyp}
a \equiv b \pmod{mn}
\Leftrightarrow
a \equiv b \pmod{m}
\quad
\wedge
\quad
a \equiv b \pmod{n}.
\end{equation}
\end{prp}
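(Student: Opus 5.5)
We prove the two implications separately, working directly from the definition of congruence as divisibility of the difference, $a \equiv b \pmod k \iff \dv{k}{a-b}$ (Lemma~\ref{eqrlem}). Throughout we read the hypothesis as $m,n \in \mb{Z}$ nonzero (the statement's ``$m,m$'' is a typo for ``$m,n$'') with $\gcd(m,n)=1$. Replacing $m,n$ by $|m|,|n|$ changes neither the divisibility statements nor the gcd (Fact~\ref{sgcdf}(ii)), so we may assume $m,n>0$.

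For the direction ($\Rightarrow$), suppose $a \equiv b \pmod{mn}$, so $a-b = k\,mn$ for some $k\in\mb{Z}$. Rewriting this as $a-b = (kn)m$ gives $\dv{m}{a-b}$. Likewise $a-b=(km)n$ gives $\dv{n}{a-b}$. Hence $a\equiv b \pmod m$ and $a \equiv b \pmod n$. Equivalently, this is transitivity of divisibility (Theorem~\ref{podiv}(b2)) applied with $\dv{m}{mn}$ and $\dv{n}{mn}$. This direction does not use $\gcd(m,n)=1$.

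For the direction ($\Leftarrow$), suppose $\dv{m}{a-b}$ and $\dv{n}{a-b}$. Since $\gcd(m,n)=1$, Corollary~\ref{thm25} (if $\gcd(a,b)=1$, $\dv{a}{c}$ and $\dv{b}{c}$, then $\dv{ab}{c}$), applied with $c=a-b$, gives $\dv{mn}{a-b}$, that is, $a\equiv b\pmod{mn}$.

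If a self-contained argument is preferred over citing Corollary~\ref{thm25}, we can repeat its B\'ezout step. Write $1=mx+ny$ (Corollary~\ref{egcd2}) and multiply by $c=a-b$ to get $c = cmx + cny$. Substituting $c=nr$ into the first term and $c=mq$ into the second gives $c = mn(rx+qy)$.

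The only point needing care is this coprimality step in ($\Leftarrow$). It is where the hypothesis $\gcd(m,n)=1$ is essential, and it genuinely fails without it: for example, $2\equiv 0$ modulo $2$ and modulo $2$, but $2\not\equiv 0 \pmod 4$. Since the needed result is already available as Corollary~\ref{thm25}, we do not expect a real obstacle.
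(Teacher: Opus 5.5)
Your proof is correct and follows essentially the paper's argument. The ($\Rightarrow$) direction is identical, and for ($\Leftarrow$) the paper writes $a-b=Mm=Nn$ and uses $\gcd(m,n)=1$ to get $\dv{n}{M}$ (the content of Corollary~\ref{egcd3}), which amounts to the same coprimality step as citing Corollary~\ref{thm25} directly, as you do.
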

\begin{proof}
$\Rightarrow$.
If
$ a \equiv b \pmod{mn} $,
the there exists an $k$ such that $a-b=k mn$.
Since $m$ divides $kmn$ then $\dv{m}{a-b}$.
Likewise, $\dv{n}{a-b}$.
From the former we
have
$ a \equiv b \pmod{m}
$
and the latter
$ a \equiv b \pmod{n}$.
$ $ \\ $ $
$\Leftarrow$.
Let
$ a \equiv b \pmod{m}
\wedge
a \equiv b \pmod{n} $.
Then there exist $M,N \in \mb{Z}$ such that
$a-b = M m$ and $a-b = N n$ respectively.
Therefore $Mm=Nn$. Since $n$ divides $Nn$ it
should divide $Mm$ but because $\gcd(n,m)=1$ this
implies $\dv{n}{M}$. Therefore $M=l n$ for some
integer $l$. Equation $a-b=Mm= lnm = l(nm)$.
Therefore $\dv{nm}{a-b}$ and we conclude
$a-b \equiv 0 \pmod{mn}$
or equivalently,
$a \equiv b \pmod{mn}$.
$ $ \\ $ $
The result is also a by product of the chinese remainder
theorem.
\end{proof}

\newpage

\section{Modular system of polynomial equations}

\begin{thm}
Let $f(x)$ be a polynomial of degree $n$ of integer
coefficients. A polynomial congruence equation
is $f(x) \equiv 0 \pmod m$ for some integer $m$.
Let $n_1 , \ldots , n_k$ be  pairwise prime numbers
i.e. $\gcd(n_i , n_j )=1$ for $i\neq j$, $i,j = 1, \ldots , k$.
Let $N= n_1 n_2 \ldots n_k$.
Then integer $a$ is a solution of
\[
   f(x) \equiv 0 \pmod N ,
\]
if and only if $a$ is a solution of the system
of the following polynomial equations.
\begin{eqnarray}
\label{crt3}
f(a) &\equiv&  0  \pmod {n_1} \nonumber \\
f(a) &\equiv&  0  \pmod {n_2} \nonumber \\
     & \ldots & \\
f(a) &\equiv&  0  \pmod {n_k} \nonumber
\end{eqnarray}
Moreover if there are two solutions $a,b$, then
\[
 a \equiv b \pmod{n_1 \ldots n_k}
\Rightarrow
 a \equiv b \pmod{N},
\]
\end{thm}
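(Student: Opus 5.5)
The plan is to reduce everything to the statement that, for pairwise coprime moduli, an integer $M$ is divisible by $N=n_1 n_2 \ldots n_k$ if and only if it is divisible by every $n_i$. I apply this to the single integer $M=f(a)$; since $f$ has integer coefficients and $a \in \mb{Z}$, the value $f(a)$ is an integer.

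For the forward direction, assume $f(a) \equiv 0 \pmod N$, that is, $\dv{N}{f(a)}$. Each $n_i$ divides $N$, so by transitivity (Theorem~\ref{podiv}(b2)) we get $\dv{n_i}{f(a)}$, which is $f(a)\equiv 0 \pmod{n_i}$ for every $i$.

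For the converse, assume $\dv{n_i}{f(a)}$ for all $i$. I argue by induction on $k$ that $\dv{n_1\cdots n_k}{f(a)}$.
\begin{itemize}
\item Case $k=1$ is immediate.
\item For the inductive step, set $N' = n_1\cdots n_{k-1}$, so that $\dv{N'}{f(a)}$ by hypothesis. Next check $\gcd(N',n_k)=1$: if a prime $p$ divided both, Lemma~\ref{pfactor} would force $\dv{p}{n_i}$ for some $i<k$, contradicting $\gcd(n_i,n_k)=1$.
\item Corollary~\ref{thm25} then gives $\dv{N' n_k}{f(a)}$, i.e.\ $f(a)\equiv 0 \pmod N$.
\end{itemize}
Equivalently, I could quote the uniqueness part of Theorem~\ref{crt} with all $a_i=0$: the system $A\equiv 0 \pmod{n_i}$ has the solution $A=0$, and any other solution such as $f(a)$ is congruent to $0$ modulo $N$.

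For the final clause, I read it as the CRT-type statement that two integers congruent modulo every $n_i$ are congruent modulo $N$. This follows from the same lemma applied to $M=a-b$. As literally written, $a\equiv b \pmod{n_1\cdots n_k} \Rightarrow a\equiv b\pmod N$ is a tautology, since $N=n_1\cdots n_k$. I would also remark that congruent $a\equiv b \pmod N$ give $f(a)\equiv f(b)\pmod N$ by Corollary~\ref{pmod} (additivity, multiplicativity and exponentiativity). So the solution set is a union of residue classes mod $N$.

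The only mildly delicate step is checking $\gcd(N',n_k)=1$ in the induction. The rest is direct citation of earlier results.
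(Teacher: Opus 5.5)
Your proof is correct and follows the paper's route: the forward direction uses $\dv{n_i}{N}$, and the converse applies the Chinese remainder theorem to $A=f(a)$ with all residues equal to $0$. You make that appeal precise through the uniqueness part of Theorem~\ref{crt}, or equivalently through your induction on Corollary~\ref{thm25}, which is exactly the step the paper's CRT uniqueness argument leaves implicit; your treatment of the final ``moreover'' clause, which the paper's proof does not address at all, is also sound.
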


\begin{proof}
$ $ \\ $ $
$\Rightarrow$.
If $a$ is a solution to the
congruence equation
\[
   f(x) \equiv 0 \pmod N ,
\]
then $f(a) \equiv 0 \pmod N$. Since $\dv{n_i}{N}$ for
all $i=1, \ldots , k$, the $f(a) \equiv 0 \pmod{n_i}$,
and ther result follows.
$ $ \\ $ $
$\Leftarrow$.
If $a$ is a solution of the sytem of equation~(\ref{crt3}),
then set $A=f(a)$
\begin{eqnarray}
\label{crt4}
A=f(a) &\equiv&  0  \pmod {n_1} \nonumber \\
A=f(a) &\equiv&  0  \pmod {n_2} \nonumber \\
     & \ldots & \\
A=f(a) &\equiv&  0  \pmod {n_k}, \nonumber
\end{eqnarray}
and by the way of Eq.(\ref{crt}) that is the
Chinese remainder theorem we obtain that
there is an $A=f(a)$, such that
\[
  A= f(a) \equiv 0 \pmod{N},
\]
as needed.
\end{proof}

\begin{prp}
The modular equation 
\[
x^2 \equiv a \pmod p
\]
where $p$ is a prime number has either 
0 solutions or two (congruence) solutions.
(Integer $a$ is such that $0 < a < p$.)
\end{prp}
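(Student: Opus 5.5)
We split into the case with no solution, where there is nothing to prove, and the case where some solution $x_0$ exists. In the latter case we show that the solutions are exactly the two distinct classes $x_0$ and $-x_0 \pmod p$. The only tools needed are the corollary that a prime dividing a product divides one of the factors, together with the basic properties of congruences (Corollary~\ref{pmod}).

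\textbf{Step 1 (a second solution).} Suppose $x_0^2 \equiv a \pmod p$. By multiplicativity, $(-x_0)^2 = x_0^2 \equiv a \pmod p$, so $p - x_0$ (that is, $-x_0 \bmod p$) is also a solution.

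\textbf{Step 2 (the two solutions are distinct).} Since $0<a<p$, we have $\ndv{p}{a}$, hence $x_0 \not\equiv 0 \pmod p$. If $x_0 \equiv -x_0 \pmod p$, then $\dv{p}{2x_0}$. By the prime-divides-product corollary, $\dv{p}{2}$ or $\dv{p}{x_0}$. The second is excluded, so $p=2$.

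\textbf{Step 3 (no other solutions).} Let $y$ be any solution. Then $y^2 \equiv x_0^2 \pmod p$, so $\dv{p}{(y-x_0)(y+x_0)}$. Again by the prime-divides-product corollary, $y \equiv x_0$ or $y \equiv -x_0 \pmod p$. Thus there are at most two solutions modulo $p$.

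\textbf{Main obstacle.} The delicate point is Step 2, and it shows the statement must be read for an odd prime $p$. For $p=2$ the only admissible value is $a=1$, and $x^2\equiv 1 \pmod 2$ has exactly one solution, $x\equiv 1$. So I would state explicitly that $p$ is an odd prime, or treat $p=2$ as a degenerate case with one solution. With that restriction, Steps 1–3 give exactly two solutions whenever at least one exists, and zero otherwise.
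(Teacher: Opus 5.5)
Your proof is correct and follows the same route as the paper: exhibit $p-x\equiv -x$ as a second solution, then use $p \mid (y-x)(y+x)$ to show that every solution is $\equiv \pm x \pmod p$. Your Step~2 improves on the paper, which asserts $p-x\neq x$ without justification. Your argument shows this distinctness needs $p\neq 2$, and your example $x^2\equiv 1 \pmod 2$, which has exactly one solution, is a genuine counterexample to the statement as written, so the hypothesis that $p$ is odd (as in Theorem~\ref{thm47}) is indeed required.
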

\begin{proof}
If the equation has zero solutions, we are done.
Otherwise let it have one and let it be $x$.
Then we have the following
\[
 (p-x)^2 \equiv p^2 -2px + x^2 \equiv x^2 \equiv a \pmod p,
\]
and thus a second solution $p-x \neq x$ has been found.
We show that there are no more than them.
Let $x,y$ be two solutions.
We have the following.
\[
x^2 \equiv a \pmod p
, \quad
y^2 \equiv a \pmod p
\]
which implies
\[
x^2 - y^2 \equiv 0 \pmod p ,
\]
or equivalently $\dv{p}{x^2 -y^2}$ or
$\dv{p}{(x-y)(x+y)}$. For prime $p$ this
means $\dv{p}{x-y}$ or $\dv{p}{x+y}$ in
other words $x \equiv \pm y \pmod p$.
Our solution $p-x$ is such that
\[
p -x  \equiv - x \pmod p ,
\]
and is the complementary solution of $x$.
\end{proof}

\begin{dfn}[Quadratic residue]
For an $a$ such that
\[
x^2 \equiv a \pmod n
\]
has a solution we call $a$ a quadratic
residue mod $n$. If there is no
solution for $x$, then $a$ is called a
quadratic non-residue.
\end{dfn}

\noindent
In the previous proposition $n$ was a prime number and in 
fact we showed that if there is a solution then the number
of solutions is two; if $x$ is one the other is $n-x=p-x$.

\noindent
We use qr or qnr or q.r. and q.n.r. respectively.

\begin{prp}
Let $p$ be a prime number greather than 1 with
$p \equiv 3 \pmod 4$.
Then, for all $a,b$
\[
a^2 + b^2 \equiv 0 \pmod p \Longrightarrow
 a \equiv b \equiv 0 \pmod p.
\]
\end{prp}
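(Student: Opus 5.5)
We argue by contradiction. Suppose $a^2+b^2\equiv 0 \pmod p$ but not both $a,b$ are $\equiv 0 \pmod p$.

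First we show neither can be $0$ mod $p$. If $a\equiv 0\pmod p$, the congruence gives $b^2\equiv 0\pmod p$, that is, $\dv{p}{b\cdot b}$. By the corollary that a prime dividing a product divides one of the factors, $\dv{p}{b}$, which contradicts our assumption. Symmetrically, $b\equiv 0$ forces $a\equiv 0$. So both $a$ and $b$ are nonzero mod $p$, hence units.

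Since $\ndv{p}{b}$, the theorem on $ax\equiv 1\pmod p$ gives an inverse $b^{-1}$ mod $p$. Multiply the congruence by $(b^{-1})^2$, using the scaling property of Corollary~\ref{pmod}. With $c\equiv ab^{-1}\pmod p$ this becomes
\[
c^2\equiv -1 \pmod p, \qquad \ndv{p}{c}.
\]
So the whole matter reduces to showing that $-1$ is not a quadratic residue mod $p$ when $p\equiv 3\pmod 4$. This is the main obstacle, since Euler's criterion has not yet been developed in the notes.

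To handle it, write $p=4k+3$, so $p-1=4k+2=2(2k+1)$. Raise $c^2\equiv -1$ to the odd power $2k+1$ using exponentiativity (property 9 of Corollary~\ref{pmod}):
\[
c^{p-1}=(c^2)^{2k+1}\equiv (-1)^{2k+1}=-1 \pmod p.
\]
On the other hand, Fermat's little theorem says $c^{p-1}\equiv 1\pmod p$. It is the case $n=p$, $\phi(p)=p-1$, of Euler's theorem already stated in the notes, and it applies because $\gcd(c,p)=1$. Combining the two congruences gives $1\equiv -1\pmod p$, so $\dv{p}{2}$. This is impossible, because $p\equiv 3\pmod 4$ forces $p\ge 3$ (odd). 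This contradiction proves $a\equiv b\equiv 0\pmod p$.
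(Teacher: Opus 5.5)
Your proof is correct and follows essentially the same route as the paper. Both arguments reduce to a unit whose square is $-1$ modulo $p$ (the paper takes $A = a b^{(p-3)/2}$, obtained by multiplying through by $b^{p-3}$ and using Fermat, where you take $c \equiv ab^{-1}$), raise it to the odd power $(p-1)/2$, and contradict Fermat's little theorem. Your explicit check that $p \nmid b$ whenever $p \nmid a$ is a small improvement, since the paper's case (b) applies Fermat to $b$ without stating this.
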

\begin{proof}
We distinguish two cases: (a) $\dv{p}{a}$ and
(b) $\ndv{p}{a}$. We then show that the latter
is impossible, i.e. it is always the case
$\dv{p}{a}$.
$ $ \\ $ $
(a) Let $\dv{p}{a}$. Then $\dv{p}{a^2}$
and therefore $a^2 \equiv 0 \pmod p$ which
leads to $a \equiv 0 \pmod p$.
If $\dv{p}{a}$ given that $a^2 + b^2 \equiv 0 \pmod p$,
and from the former as before we obtain,
$\dv{p}{a^2}$ and then $a^2 \equiv 0 \pmod p$,
that leads to $b^2 \equiv 0 \pmod p$ i.e.
$b \equiv 0 \pmod p$.
$ $ \\ $ $
(b) Let $\ndv{p}{a}$.
From
\begin{eqnarray*}
a^2 + b^2 &\equiv& 0 \pmod p  \Leftrightarrow \\
a^2 b^{p-3} + b^2 b^{p-3} &\equiv& 0 \pmod p  \Leftrightarrow \\
a^2 b^{p-3} + b^{p-1} &\equiv& 0 \pmod p  \Leftrightarrow \\
a^2 b^{p-3} &\equiv& -b^{p-1} \pmod p    \Leftrightarrow \\
a^2 b^{p-3} &\equiv& -1 \pmod p  ,
\end{eqnarray*}
where in order to obtain the last step we used
Fermat's theorem.
Since $p \equiv 3 \pmod 4$, we have $p-3=4k$ for some
integer $k$ and $(p-3)/2 = 2k $ is an even integer.
For $A = a b^{(p-3)/2}$ we have
\[
a^2 b^{p-3} \equiv -1 \pmod p  \Leftrightarrow
A^2 \equiv -1 \pmod p          \Leftrightarrow
A^{2 \cdot \frac{(p-1)}{2}} \equiv (-1)^{\frac{(p-1)}{2}} \pmod p
                               \Leftrightarrow
A^{p-1} \equiv (-1) \pmod p
\]
where we derived the last expression by way of the fact
that $(p-1)/2 = 2k+1$ is an odd number.
The latter contradicts, by Fermat's theorem, the
\[
A^{p-1} \equiv 1 \pmod p  ,
\]
and therefore we can never have $\ndv{p}{a}$ of case (b).
\end{proof}

\section{Diophantine equations part ii}

\begin{prp}[Diophantine equation for $n > 2$]
Let $n \geq 2$. Show that the diophantine equation
\begin{equation}
\label{dioen}
 a_1  x_1 + a_2 x_2 + \ldots + a_n x_n = b
\end{equation}
has an integer solution
$(x_1 , x_2 , \ldots , x_n ) \in
\mb{Z} \times \ldots \times \mb{Z}=\mb{Z}^n$
provided that $( a_1 , a_2 , \ldots , a_n )=1$
that is, the gcd of $a_1, a_2, \ldots , a_n$ is equal to 1.
Moreover all ineger solutions can be expressed in terms of
$n-2$ integer parameters.
\end{prp}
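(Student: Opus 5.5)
I will argue by induction on $n$, with Theorem~\ref{diot} as the engine at every stage. The base case $n=2$ is exactly Theorem~\ref{diot}. Since $\gcd(a_1,a_2)=1$ divides $b$, there is a particular solution $(x_1^{(0)},x_2^{(0)})$, and every solution has the form $x_1=x_1^{(0)}+m a_2$, $x_2=x_2^{(0)}-m a_1$ with $m\in\mb{Z}$. That is one integer parameter, which shows that the parameter count should read $n-1$; I will prove existence together with a description of all solutions by $n-1$ free integer parameters. I assume throughout that enough of the $a_i$ are non-zero for the gcd's below to be defined. Zero coefficients can be discarded at the start, and each discarded variable becomes one more free parameter.

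For the inductive step, let $d=\gcd(a_1,\ldots,a_{n-1})$. By the corollary on gcd's of three integers, extended by iterating the argument, $\gcd(d,a_n)=\gcd(a_1,\ldots,a_n)=1$. Split Eq.(\ref{dioen}) into the two-variable equation
\[
 d\,y + a_n x_n = b
\]
together with the $(n-1)$-variable equation
\[
 a_1x_1+\cdots+a_{n-1}x_{n-1}=d\,y .
\]
Since $\gcd(d,a_n)=1$, Theorem~\ref{diot} solves the first equation for all $b$. Its solutions are $y=y_0+t a_n$ and $x_n=x_n^{(0)}-t d$, with one parameter $t$.

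For the second equation, divide through by $d$ to get $(a_1/d)x_1+\cdots+(a_{n-1}/d)x_{n-1}=y$. By Theorem~\ref{gcddi}, generalized to $n-1$ numbers with the same proof, the gcd of the new coefficients is $1$. The induction hypothesis therefore gives a solution for every value of $y$, and its general solution depends on $n-2$ parameters. Substituting $y=y_0+ta_n$ yields a family of solutions of Eq.(\ref{dioen}) in $n-1$ parameters in total.

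To show that this family contains every solution, take an arbitrary solution $(x_1,\ldots,x_n)$. Since $d$ divides each of $a_1,\ldots,a_{n-1}$, it divides $a_1x_1+\cdots+a_{n-1}x_{n-1}$, so I can set $y=(a_1x_1+\cdots+a_{n-1}x_{n-1})/d$, which is an integer. Then $(y,x_n)$ solves the two-variable equation and is captured by some value of $t$, and $(x_1,\ldots,x_{n-1})$ solves the reduced equation for that $y$ and is captured by the induction hypothesis.

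I expect the main obstacle to be the bookkeeping in the inductive hypothesis rather than any single step. The hypothesis has to be stated uniformly in the right-hand side: for every integer $c$, the solution set of the reduced equation with right-hand side $c$ is a particular solution depending on $c$ plus an integer combination of $n-2$ fixed homogeneous vectors. Only then does substituting the parametrized $y$ stay within integer parameters. To arrange this, I would first solve the reduced equation with right-hand side $1$, obtaining a vector $u$, and use $c\,u$ as the particular solution for right-hand side $c$, so that the dependence on $t$ is linear with integer coefficients.
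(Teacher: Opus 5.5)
Your argument is correct and is essentially the paper's proof: both split off $d=\gcd(a_1,\ldots,a_{n-1})$, use $\gcd(d,a_n)=1$ to parametrize $x_n$ by one integer, then divide the remaining equation by $d$ and apply the induction hypothesis; your auxiliary $y$ is exactly the paper's $-k=(b-a_nx_n)/d$. Your correction of the parameter count to $n-1$ is right (the paper's own proof ends by calling $l$ ``the $n-1$-st integer parameter''), and your hypothesis that is uniform in the right-hand side, together with your check that every solution is captured, makes rigorous two points the paper leaves implicit.
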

\begin{proof}
The proof is by induction on $n$. The $n=1$ case is trivially true
as $(a_1 )=1$ implies $a_1 =1$ or $a_1 = -1$ and thus the integer
solution for $x_1$ is $x_1 = b $ or $x_1 = -b$.
$ $ \\ $ $
For $n \geq 2$ assume that it is true for $n-1$ and let $H(n-1)$
is the hypothesis and in the inductive step show that $H(n)$ is true.
Hypothesis (assumption) $H(n-1)$ is stated as follows.
\[
H(n-1):
 a_1  x_1 + a_2 x_2 + \ldots + a_{n-1} x_{n-1} = b
\]
has an integer solution
$(x_1 , x_2 , \ldots , x_{n-1} ) \in (\mb{Z})^{n-1}$
provided that $( a_1 , a_2 , \ldots , a_{n-1} )=1$,
and expressed in terms of $n-2$ integer parameters.
%such that $\dv{d}{b}$.
We are going to show $H(n)$ that is, Eq.(\ref{dioen})
for $n$ variables.
Let
\[
 a_1  x_1 + a_2 x_2 + \ldots + a_{n-1}x_{n-1} +  a_n x_n = b ,
\]
and by taking $\pmod{d}$ of both sides, we obtain the following.
\[
 a_1  x_1 + a_2 x_2 + \ldots + a_{n-1}x_{n-1} +  a_n x_n \equiv b \pmod d.
\]
By $H(n-1)$ we have that $d$ divides all $a_1 , \ldots a_{n-1}$ as their
common divisor. Therefore the following is obtained.
\[
  a_n x_n \equiv b \pmod d,
\]
We also have the following
\[
\gcd(d, a_n ) = (d, a_n) = (a_1 , \ldots , a_{n-1} , a_n )=1,
\]
by way of $( a_1 , a_2 , \ldots , a_n )=1$.
This implies by a prior problem (Diophantine equation $n=2$)
that $\gcd(d,a_n ) =1$ divides $b$ and thus there exists a solution
for $x_n$ of $ a_n x_n \equiv b \pmod d$.
Let this solution be
\[
a_n x_n \equiv b \pmod d  \Leftrightarrow
a_n A   \equiv b \pmod d  \Leftrightarrow
x_n     \equiv A \pmod d.
\]
The former one implies that there exist a $k \in \mb{Z}$ such
that $a_n x_n -b = k d$. This further implies that
$\dv{d}{a_n x_n -b}$, or $a_n x_n -b$ is a multiple of $d$
or equivalently $(a_n x_n -b)/d =k$ is an integer.
All solutions $x_n$ are such that $x_n     \equiv A \pmod d$
for $a_n A   \equiv b \pmod d$.
In equation Eq.(\ref{dioen}) we substitute as follows noting
$d \neq 0$.
\begin{eqnarray*}
 a_1  x_1 + a_2 x_2 + \ldots + a_{n-1}x_{n-1} +  a_n x_n &=& b \\
 a_1  x_1 + a_2 x_2 + \ldots + a_{n-1}x_{n-1}    &=& -d(a_n x_n -b)/d \\
 \frac{a_1}{d}  x_1 +
 \frac{a_2}{d}  x_2 + \ldots +
 \frac{a_{n-1}}{d} x_{n-1}    &=& -(a_n x_n -b)/d \\
    c_1         x_1 +
    c_2         x_2 + \ldots +
    c_{n-1}                  &=& -k ,
\end{eqnarray*}
where $c_i = a_i /d$.
We then have
\[
(c_1 , c_2 , \ldots c_{n-1} ) =
\frac{1}{d} \cdot (d c_1 , d c_2 , \ldots d c_{n-1} ) =
\frac{1}{d} \cdot (  a_1 ,   a_2 , \ldots   a_{n-1} ) =
\frac{1}{d} \cdot d = 1.
\]
By $H(n-1)$ the latter has integer solutions for $x_i$,
$i=1, \ldots , n-1$ expressed in terms of $n-2$ integer parameters.
Adding to it $x_n$ which is $x_n \equiv A \pmod d$ and thus there
is an $l \in \mb{Z}$ the $n-1$-st integer parameter
such that $x_n = A + d l$.
\end{proof}

\section{Wilson's theorem}

\begin{thm}[Wilson's theorem  onlyif]
\label{wilson1}
If $p$ is a prime number, then show that
\begin{equation}
(p-1)! \equiv -1 \pmod p.
\end{equation}
\end{thm}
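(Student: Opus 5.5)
The plan is the classical pairing argument: match each element of $\{1,2,\ldots,p-1\}$ with its inverse modulo $p$, so that almost all factors of $(p-1)!$ cancel in pairs. Small cases come first. For $p=2$ we have $(p-1)!=1\equiv -1 \pmod 2$, and for $p=3$ we have $2!=2\equiv -1\pmod 3$, so we may assume $p\geq 5$ (in fact the argument below also covers $p=3$).

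\textbf{Step 1: inverses.} Every $a$ with $1\leq a\leq p-1$ satisfies $\ndv{p}{a}$. By the theorem stated earlier for prime moduli, there is a unique $x$ with $1\leq x\leq p-1$ and $ax\equiv 1\pmod p$; denote it $a^{-1}$. Uniqueness modulo $p$ follows from the cancellation law, since $\gcd(a,p)=1$. The map $a\mapsto a^{-1}$ is an involution on $\{1,\ldots,p-1\}$, because $(a^{-1})^{-1}=a$.

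\textbf{Step 2: fixed points.} I expect this to be the only step with content. We must decide when $a=a^{-1}$, that is, when $a^2\equiv 1\pmod p$. This means $\dv{p}{(a-1)(a+1)}$. Since $p$ is prime, the earlier corollary (a prime dividing $ab$ divides $a$ or $b$) gives $\dv{p}{a-1}$ or $\dv{p}{a+1}$. Within the range $1\leq a\leq p-1$, this forces $a=1$ or $a=p-1$. Alternatively one can cite the earlier proposition that $x^2\equiv a\pmod p$ has at most two solutions, applied with $a=1$.

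\textbf{Step 3: pairing.} The remaining elements $2,3,\ldots,p-2$, of which there are $p-3$ (an even number), split into disjoint pairs $\{a,a^{-1}\}$ with $a\neq a^{-1}$. Each pair contributes a product $\equiv 1\pmod p$. Therefore $2\cdot 3\cdots(p-2)\equiv 1\pmod p$, using multiplicativity from Corollary~\ref{pmod}. Multiplying by the two fixed points then gives
\[
(p-1)! \equiv 1\cdot 1\cdot (p-1)\equiv -1 \pmod p .
\]
The main care needed is to state the pairing rigorously: the involution has no fixed points on $\{2,\ldots,p-2\}$, so that set is a disjoint union of two-element orbits. One can make this precise by induction, repeatedly removing the smallest remaining $a$ together with $a^{-1}$.
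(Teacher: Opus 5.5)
Your proposal is correct and follows the same pairing argument as the paper. Both show that every element of $\{1,\ldots,p-1\}$ is invertible modulo $p$, that $x\equiv x^{-1}$ forces $p\mid (x-1)(x+1)$ and hence $x\in\{1,p-1\}$, and that $2,\ldots,p-2$ cancel in inverse pairs, so that $(p-1)!\equiv 1\cdot(p-1)\equiv -1 \pmod p$ (your version is a bit more careful: separate check of $p=2$, uniqueness of inverses, and the involution/orbit formulation of the pairing).
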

\begin{proof}
$ $ \\ $ $
Consider $\{ 1, 2 , \ldots p-1 \}$.
No number $x$ among them is divisible by $p$ thus it is relatively
prime to $p$ and thus it has an inverse $\pmod p$.
$ $ \\ $ $
It is not possible that $x$ is its own inverse $x^{-1} \pmod p$
unless $x=1$ or $x \equiv p-1 \equiv -1 \pmod p $ from Theorem~\ref{thm47}.
Another way to say this is that if the inverse of $x$ for some
$x$ such that $1 \leq x < p$  is $x$ the
$x^2 \equiv 1 \pmod p$. Then $\dv{p}{(x-1)(x+1)}$. Given that $x<p$ this
is only possible for $x=1$ and $x=-1 \pmod p$ i.e. $x=p-1$.
$ $ \\ $ $
Thus for the remaining values $ x \in \{ 2, \ldots , p-2 \}$, we must have
$x \not\equiv x^{-1} \pmod p$. Every pair cancels each other
i.e. $x \cdot x^{-1} \equiv 1 \pmod p$.
Thus
\[
  2 \cdot 3 \cdot 4 \ldots (p-2) \equiv 1 \pmod p
\]
Restoring the missing $1$ and $p-1$  we have
\[
1\cdot  2 \cdot 3 \cdot 4 \ldots (p-2) \cdot (p-1) \equiv p-1 \equiv -1 \pmod p
\]
\end{proof}

\begin{thm}[Wilson's theorem]
\label{wilson2}
A natural number $p$ is a prime number,
if and only if
\begin{equation}
\label{thm59a}
(p-1)! \equiv -1 \pmod p.
\end{equation}
\end{thm}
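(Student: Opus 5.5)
The forward direction is exactly Theorem~\ref{wilson1}, so I will invoke it directly: if $p$ is prime then $(p-1)! \equiv -1 \pmod p$. All the remaining work is the converse. I also need to handle the degenerate natural number $p=1$, which is not prime. Here $(p-1)! = 0! = 1$, and every integer is congruent to every other modulo $1$, so the congruence holds trivially. The statement must therefore be read for $p \geq 2$ (or $p>1$), and I will say so explicitly.

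For the converse with $p \geq 2$, I argue by contraposition. Suppose $p$ is composite, so $p>1$ and not prime. By the lemma on composites, $p$ has a factor $a$ with $1<a<p$. Then $a \leq p-1$, so $a$ is one of the factors of $(p-1)!$, and hence $\dv{a}{(p-1)!}$.

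Now assume for contradiction that $(p-1)! \equiv -1 \pmod p$. Then $\dv{p}{(p-1)!+1}$, and since $\dv{a}{p}$, transitivity (Theorem~\ref{podiv}(b2)) gives $\dv{a}{(p-1)!+1}$. Combining this with $\dv{a}{(p-1)!}$ and applying Theorem~\ref{podiv}(d2), we get $\dv{a}{1}$. By (f2) this forces $a \leq 1$, contradicting $a>1$. So a composite $p$ never satisfies the congruence, and therefore the congruence forces $p$ to be prime.

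The argument is elementary, and the only delicate point is the boundary case $p=1$ together with making sure the factor $a$ really lies in $\{1,\ldots,p-1\}$. One could optionally strengthen the converse: for composite $p>4$ one has $(p-1)! \equiv 0 \pmod p$, using $p=ab$ with $a<b$, or $p=q^2$ with $q>2$ so that $q$ and $2q$ are distinct and both at most $p-1$. The contradiction argument above does not need this, so I will not rely on it.
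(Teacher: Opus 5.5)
Your proof is correct. The forward direction matches the paper, but your converse takes a different and shorter route.

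The paper proves the converse by showing that a composite $p$ actually divides $(p-1)!$:
\begin{itemize}
\item it writes $p=rs$ with $r,s<p$, both occurring as factors of $(p-1)!$;
\item it treats $p=q^2$ separately, using the two distinct factors $q$ and $2q$ (which requires $q>2$);
\item it checks the lone exception $p=4$ by hand, where $3!\equiv 2 \pmod 4$.
\end{itemize}
It then concludes $(p-1)!\equiv 0\not\equiv -1 \pmod p$.

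Your argument needs only a single proper divisor $a$ with $1<a<p$. That $a$ divides both $(p-1)!$ and $(p-1)!+1$, hence divides $1$. There is no case split, no special handling of squares, and no exceptional value to check. What the paper's route buys is more information: the actual residue of $(p-1)!$ modulo a composite $p$, namely $0$ for $p>4$ and $2$ for $p=4$. This is exactly the optional strengthening you sketch at the end.

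For the forward direction there is no real difference. The paper re-runs inline the pairing of each $x\in\{2,\ldots,p-2\}$ with its inverse, which is the content of Theorem~\ref{wilson1} that you cite.

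Your remark on $p=1$ is a genuine point the paper misses. Since $0!=1\equiv -1 \pmod 1$, the statement as written fails at $p=1$. It is also meaningless at $p=0$, which the paper counts as a natural number. So the restriction $p\geq 2$ is needed.
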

\begin{proof}
$ $ \\ $ $
$\Leftarrow$.
If $p$ is not prime then $p=rs$ and $r,s<p$.
The term  $(p-1)!$ includes all integers $<p$ and thus $r$ and $s$.
This implies that $(p-1)! \equiv 0 \pmod p$.
There is one exception and that is $p= q^2 $.
For to have $q$ appearing in the
product twice it would mean that $q$ and $2q$ are part of the product,
i.e. $2q\leq p-1$. For this to be the case we need $p\geq 4$.
Thus we verify by hand exhaustively that for $p=2,3$ the Theorem is true.
$ $ \\ $ $
We can also claim that for  $p=q^2 > 2q > q$ the $(p-1)! \equiv 0 \pmod p$
is still the case. The latter is equivalent to $q^2 > 2q$ i.e. $q>2$.
For $q=2$ and thus $p=q^2 =4$ we have $(p-1)!= 1 \cdot 2 \cdot 3
\equiv 2 \pmod 4$ and thus $ (p-1)! \not\equiv -1 \pmod p$.
$ $ \\ $ $
$\Rightarrow$.
If $p$ is a prime number, then each $1,2, \ldots , p-1$ has an inverse,
and among them, 1 and $p-1$ are their own inverses.
\[
x \equiv x^{-1} \pmod p
\Rightarrow x \cdot x \equiv 1 \pmod p
\Rightarrow x \cdot x -1 \equiv 0 \pmod p
\Rightarrow (x-1)(x+1) \equiv 0 \pmod p
\]
Thus $\dv{p}{(x-1)(x+1)}$ and since $1 \leq x <p$
we have $\dv{p}{x-1}$ or $\dv{p}{x+1}$ or equivalently
 $x-1 \equiv 0 \pmod p$ or $x+1 \equiv 0 \pmod p$ or equivalently
 $x \equiv 1 \pmod p$ or $x \equiv -1 \pmod p$.
$ $ \\ $ $
Thus all the other integers invert  in pairs.
Thus $(p-1)! \equiv (p-1)\cdot (p-2) \cdot \ldots \cdot 2 \cdot 1
\equiv (p-1) \cdot 1 \equiv -1 \pmod p$.
\end{proof}

A shorter proof follows.

\begin{thm}[Wilson's Theorem]
\label{wilson3}
A natural number  $p$ is a prime prime 
if and only if $(p-1)! \equiv -1 \pmod p$.
\end{thm}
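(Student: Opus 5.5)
For the forward direction I would simply invoke Theorem~\ref{wilson1}: if $p$ is prime, the elements $1,\ldots,p-1$ are all units modulo $p$. The only self-inverse ones are $1$ and $p-1$, because $x^2\equiv 1 \pmod p$ forces $\dv{p}{(x-1)(x+1)}$. The remaining elements $2,\ldots,p-2$ therefore pair off with their inverses, and $(p-1)!\equiv 1\cdot(p-1)\equiv -1 \pmod p$. So the content of the shorter proof lies entirely in the converse, and I would make it as brief as possible.

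For the converse, assume $(p-1)!\equiv -1 \pmod p$ with $p>1$, and suppose for contradiction that $p$ is composite. By the lemma on composites, $p$ has a factor $a$ with $1<a<p$. Since $a\le p-1$, the integer $a$ is one of the factors of $(p-1)!$, so $\dv{a}{(p-1)!}$. The hypothesis gives $\dv{p}{(p-1)!+1}$, and since $\dv{a}{p}$, transitivity (Theorem~\ref{podiv}(b2)) gives $\dv{a}{(p-1)!+1}$. Subtracting via Theorem~\ref{podiv}(d2) yields $\dv{a}{1}$, so $a\le 1$ by (f2), which contradicts $a>1$. This avoids the case split on $p=q^2$ used in Theorem~\ref{wilson2}, since we only need one proper divisor to divide $(p-1)!$, not $p$ itself.

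The main point needing care is the degenerate case: the statement says ``natural number $p$'', so I would exclude $p=1$ explicitly. In that case $0!=1\equiv 0\equiv -1 \pmod 1$ holds trivially, but $1$ is not prime, so the theorem must be read for $p\ge 2$, consistent with Definition~\ref{prime}. With that convention fixed, both directions are short, and no step is a real obstacle.
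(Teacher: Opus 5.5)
Your proof is correct. The forward direction is the same inverse-pairing argument the paper gives, but your converse takes a genuinely different route.

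The paper argues that a composite $p=rs$ actually divides $(p-1)!$, because $r$ and $s$ both appear among $1,\ldots,p-1$. That needs a side argument when $p=q^2$, since one must find both $q$ and $2q$ below $p$. It also breaks down at $p=4$, where $3!\equiv 2 \pmod 4$. Theorem~\ref{wilson2} computes that case explicitly, but the short proof does not really handle it; it checks $p=2,3$ instead.

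You only need one proper divisor $a$ to divide both $(p-1)!$ and $(p-1)!+1$, so there is no case split and $p=4$ is covered uniformly. In effect you show $\gcd((p-1)!,p)\ge a>1$, which is incompatible with $(p-1)!\equiv -1$ because $-1$ is a unit modulo $p$.

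The paper's route gives slightly more: the exact residue $(p-1)!\equiv 0 \pmod p$ for every composite $p\neq 4$. For the equivalence as stated, your argument is shorter and less error-prone.

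Your remark on $p=1$ is also correct. Since $0!\equiv -1 \pmod 1$ holds trivially while $1$ is not prime, the statement must be read for $p\ge 2$, which the paper leaves implicit.
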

\begin{proof}
If $p$ is not prime then $p=rs$ and $r,s<p$. 
The term  $(p-1)!$ includes all integers $<p$ and thus $r$ and $s$.
This implies that $(p-1)! \equiv 0 \pmod p$.
There is one exception that $p= q^2 $. For to have $q$ appearing in the
product twice it would mean that $q$ and $2q$ are part of the product,
ie $2q\leq p-1$. For this to be the case we need $p\geq 4$.
Thus we verify by hand exhaustively that for $p=2,3$ the Thoerem is true.

If $p$ is a prime number, then $1,2, \ldots , p-1$ has a inverse,
and 1 and $p-1$ are their own inverses. Thus all the other integers
inverse in pairs. Thus $(p-1)! \equiv (p-1)\cdot (p-2) \cdot \ldots \cdot 2 \cdot 1
\equiv (p-1) \cdot 1 \equiv -1 \pmod p$.
\end{proof}

\newpage

\section{Fermat's several theorems}

As of now we have established that for a prime number $p$,
all integers $a$ with $1 \leq a \leq p-1$ are units 
that is, the modular equations  $ax \equiv 1 \pmod p$ has a 
solution for $x$.

\subsection{Fermat's (little) theorem}

\bigskip %THEOREM 46 %%
\begin{thm}[Fermat's  little  theorem]
\label{flt}
Let $p \in \mb{N}$ be a prime number, 
and let $a\in \mb{Z}$  with
$\gcd(a,p)=1$,
then the following applies.
\[
 a^{p-1} \equiv 1 \pmod p.
\]
\end{thm}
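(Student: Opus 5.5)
The plan follows the permutation argument previewed in the remark after Theorem~\ref{tlincond}. I consider the $p-1$ integers $a, 2a, \ldots, (p-1)a$ and show that, reduced modulo $p$, they are a rearrangement of $1,2,\ldots,p-1$. Multiplying them all together and cancelling the common factor $(p-1)!$ then gives the result.

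\textbf{Nonzero residues.} For each $i$ with $1\le i\le p-1$, I check that $ia\not\equiv 0 \pmod p$. If $\dv{p}{ia}$, the corollary that a prime dividing a product divides one of the factors gives $\dv{p}{i}$ or $\dv{p}{a}$. The first is impossible because $0<i<p$. The second is excluded by $\gcd(a,p)=1$, via the corollary that $\ndv{p}{a}$ iff $\gcd(a,p)=1$.

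\textbf{Distinct residues.} For $1\le i<j\le p-1$, I check that $ia\not\equiv ja \pmod p$. If they were congruent, the Cancellation law (applicable since $\gcd(a,p)=1$) would give $i\equiv j\pmod p$. Then $\dv{p}{j-i}$ with $0<j-i<p$, which contradicts (f2) of Theorem~\ref{podiv}.

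\textbf{Counting.} The least residues of $a,2a,\ldots,(p-1)a$ are therefore $p-1$ distinct elements of $\{1,\ldots,p-1\}$. Hence they form exactly that set, i.e.\ a reduced system of congruences modulo $p$.

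\textbf{Product and cancellation.} By multiplicativity (Corollary~\ref{pmod}, item 8),
\[
\prod_{i=1}^{p-1} (ia) = a^{p-1}(p-1)! \equiv \prod_{i=1}^{p-1} i = (p-1)! \pmod p .
\]
Each factor $i$ of $(p-1)!$ satisfies $\gcd(i,p)=1$, so $\gcd((p-1)!,p)=1$. This holds either by repeated use of the prime-divides-product corollary, or directly because $p$ divides none of $1,\ldots,p-1$. Applying the Cancellation law once more yields $a^{p-1}\equiv 1\pmod p$.

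\textbf{Main obstacle.} The only delicate step is the bookkeeping that turns ``distinct and nonzero'' into ``a permutation of $1,\ldots,p-1$''; pigeonhole on a finite set of size $p-1$ handles it. As a fallback, the statement is also the special case $n=p$ of the Euler's theorem example already worked out, using $\phi(p)=p-1$.
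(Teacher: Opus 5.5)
Your proposal is correct and follows essentially the same route as the paper. Both show that $a,2a,\ldots,(p-1)a$ are nonzero and pairwise incongruent modulo $p$, hence a rearrangement of $1,\ldots,p-1$; both then multiply to obtain $a^{p-1}(p-1)!\equiv (p-1)!\pmod p$ and cancel $(p-1)!$ using $\gcd((p-1)!,p)=1$. Your step-by-step justifications (the prime-divides-product corollary, the cancellation law, and (f2) of Theorem~\ref{podiv}) are somewhat more explicit than the paper's, but the argument is the same.
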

\begin{proof}
Consider $a, 2a, 3a, \ldots (p-1)a \pmod p$.
Since $\ndd{p}{a}$, all these values are non-zero
and distinct mod $p$.
This is because if $ i a \equiv j a \pmod p$, because
$\ndd{p}{a}$ it should be $\dv{p}{i-j}$. 
This means $p \leq |i-j|$.
But both $i,j$ are such that $0 \leq |i-j| \leq p-1$ and thus 
$p \leq p-1$ which is impossible!
The $p-1$ values $a, 2a , \ldots (p-1)a \pmod p$ can only be
the only $p-1$  available $\pmod p$ i.e.
$ 1, 2, 3, \ldots , p-1$ (possibly) rearranged.
Then taking their product one way or the other,
\[
 a\cdot 2a \cdot \ldots \cdot (p-1)a  \equiv
 a^{p-1} (p-1)!                       \equiv
 1\cdot  2 \cdot \ldots \cdot (p-1)   \equiv
  (p-1)!  \pmod p
\]
\[
 a^{p-1} (p-1)!   \equiv (p-1)!  \pmod p
\]
Since $p$ is relatively prime to $1,2,3, \ldots , (p-1)$ it is also
to $(p-1)!$. Thus
$ (a^{p-1}-1) (p-1)! \equiv 0 \pmod p$.
Thus it must be $a^{p-1} -1 \equiv 0 \pmod p$  or equivalently
 $a^{p-1}  \equiv 1 \pmod p$  as needed.
\end{proof}

The proof below is identical.
Note that if $p$ is a prime number, then $\gcd(a,p)= 1$
or $\gcd(a,p) =    p$ for any $a \in \mb{Z}$.
The latter is the case if $a$ is a multiple of p.
If $a$ is not a multiple of $p$ then $\ndv{p}{a}$
and  the only possibility is $\gcd(a,p)=1$.

\bigskip %THEOREM 46 %%
\begin{thm}[Fermat's  little  theorem restated]
\label{flt1}
Let $p \in \mb{N}$ be a prime number, 
and let $a\in \mb{Z}$  with
$\ndd{p}{a}$a,
then the following applies.
\[
 a^{p-1} \equiv 1 \pmod p.
\]
\end{thm}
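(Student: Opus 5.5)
The plan is to reduce this restatement to Theorem~\ref{flt}. The only new work is translating the hypothesis $\ndd{p}{a}$ into $\gcd(a,p)=1$.

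First I establish that reduction. Let $g=\gcd(a,p)$; it exists because $p\neq 0$. Since $\dv{g}{p}$ and $g>0$ by Fact~\ref{sgcdf}(i), and the only positive divisors of the prime $p$ are $1$ and $p$ by Definition~\ref{prime}, we get $g\in\{1,p\}$. If $g=p$, then $\dv{p}{a}$, which contradicts $\ndd{p}{a}$. Hence $g=1$. The corollary stating that $\ndv{p}{a}$ if and only if $\gcd(a,p)=1$ for prime $p$ already gives this directly, and I would simply cite it.

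Next I apply Theorem~\ref{flt} to $a$ with $\gcd(a,p)=1$, which yields $a^{p-1}\equiv 1 \pmod p$. For a self-contained version I would instead rerun the permutation argument. The residues $a,2a,\ldots,(p-1)a$ are nonzero and pairwise incongruent mod $p$, using the cancellation law with $\gcd(a,p)=1$. So they permute $1,\ldots,(p-1)$ mod $p$, and multiplying them all gives $a^{p-1}(p-1)!\equiv (p-1)! \pmod p$. Since $\gcd((p-1)!,p)=1$, the cancellation law then gives $a^{p-1}\equiv 1\pmod p$.

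There is no real obstacle. The one point needing care is the case $a\le 0$ or $a\ge p$: the argument should work with residues of $a$ rather than assume $1\le a\le p-1$. The gcd reduction above is independent of the size of $a$, so this case is already covered.
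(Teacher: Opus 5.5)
Your proposal is correct and is essentially the paper's proof. The paper also first observes that $\gcd(a,p)\in\{1,p\}$, so $p\nmid a$ forces $\gcd(a,p)=1$; it then shows $a,2a,\ldots,(p-1)a$ permute the nonzero residues mod $p$ and cancels $(p-1)!$ using $\gcd(p,(p-1)!)=1$, which is exactly your self-contained version (and citing Theorem~\ref{flt} after the gcd reduction is equally valid).
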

\begin{proof}
$ $ \\ $ $
If $p$ is a prime number, then $\gcd(i,p)= 1 \vee p$.
The latter is the case if i is a multiple of p.
Say $a$ is not a multiple of $p$ and  $\gcd(a,p)=1$.
Consider
\[
 (1a)_p \cdot (2a)_p \cdot \ldots \cdot ((p-1)a)_p ,
\]
where
$(ia)_p != (0)_p$ and thus
$ia-0 != k p$.
This is so because otherwise $ia = k p$ would imply
that $p$ divides $i$ or $p$ divides $a$. The former
is not possible for prime $p$ since $0< i < p$,
and the latter by way of $ \gcd(a,p)=1$.
Furthermore $(ia)_p  \neq (ja)_p$ for $i \neq j$
since otherwise $(i-j)a \equiv 0 \pmod kp$ and
we can use the same consideration as before.
Therefore
\[
 (1a)_p ,     (2a)_p ,     \ldots ,     ((p-1)a)_p
\]
is a permutation of $1, 2, \ldots , p-1$ and
we obtain the following.
\[
 (1a)_p \cdot (2a)_p \cdot \ldots \cdot ((p-1)a)_p
 = ((p-1)! a^{p-1})_p = ((p-1)!)_p ,
\]
or equivalently,
\[
 ((p-1)! a^{p-1})_p = ((p-1)!)_p  \Leftrightarrow
 ((p-1)! a^{p-1}) \equiv (p-1)!   \Leftrightarrow
  a^{p-1} \equiv 1 \pmod p .
\]
The last one is obtained given $gcd(p,(p-1)!)=1$.
Completed.
\end{proof}

\bigskip %THEOREM 46 %%
\begin{thm}[Fermat's  little  theorem re-restated]
\label{flt2}
Let $p \in \mb{N}$ be a prime number. 
Let $f(x) = x^{p-1} -1$.
Then
\[
   f(x) \equiv 0 \pmod p ,
\]
has exactly $p-1$ distinct roots in $(\mb{Z}/p\mb{Z})^*$.
For a prime number $p$ the  latter set is 
$\{ 1, 2, \ldots , p-1 \}$.
\end{thm}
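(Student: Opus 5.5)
The plan is to show two things: every element of $(\mb{Z}/p\mb{Z})^* = \{1,2,\ldots,p-1\}$ is a root of $f$, and these $p-1$ roots are pairwise distinct modulo $p$. Together they give exactly $p-1$ distinct roots in that set.

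\textbf{Every element is a root.} Take any $a$ with $1 \leq a \leq p-1$. Since $0<a<p$ and $p$ is prime, $\ndv{p}{a}$, so $\gcd(a,p)=1$. Fermat's little theorem (Theorem~\ref{flt}, or its restatement Theorem~\ref{flt1}) then gives $a^{p-1} \equiv 1 \pmod p$. Hence $f(a) = a^{p-1}-1 \equiv 0 \pmod p$.

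\textbf{The roots are distinct.} For $1 \leq i < j \leq p-1$ we have $0 < j-i < p$, so $\ndv{p}{j-i}$ and therefore $i \not\equiv j \pmod p$. Thus the $p-1$ roots found above are pairwise incongruent.

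\textbf{No other roots, and the meaning of ``exactly''.} The residue $0$ is not a root, since $f(0) = -1 \not\equiv 0 \pmod p$. So the roots of $f$ in all of $\mb{Z}/p\mb{Z}$ are precisely the $p-1$ nonzero classes. The set $(\mb{Z}/p\mb{Z})^*$ has only $p-1$ elements, so it cannot contain more than $p-1$ distinct roots, and we have exhibited that many. If one also wants ``exactly'' in the stronger sense that a degree-$(p-1)$ polynomial has no more than $p-1$ roots modulo $p$, this would need Lagrange's theorem, which the paper has not yet stated. The counting argument above makes that unnecessary here, because the ambient set is exhausted.

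The only step needing care is interpreting ``exactly'' correctly. Once it is read as counting within the $(p-1)$-element set, there is no real obstacle: the substance is entirely Fermat's little theorem, which we may assume.
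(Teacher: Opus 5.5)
Your argument is correct and is essentially the paper's: the paper gives no separate proof and treats the statement as Fermat's little theorem applied to each of the $p-1$ units, and its later remark (after Lagrange's theorem, Theorem~\ref{poly2lag}) adds only that Lagrange's theorem guarantees at most $p-1$ roots modulo $p$. Your check that $f(0) \equiv -1 \not\equiv 0 \pmod p$ already gives that upper bound for this particular polynomial, since $0$ is the only residue class outside $\{1,\ldots,p-1\}$. So Lagrange's theorem is not needed here even for the stronger reading of ``exactly''.
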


A direct consequence is the following Corollary.
\medskip
\begin{cor}
For $a\in \mb{Z}$, we have $a^p \equiv a \pmod p$.
\end{cor}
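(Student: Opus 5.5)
The plan is to split into two cases according to whether $p$ divides $a$, using the observation recorded just before Theorem~\ref{flt1}: for prime $p$ and any $a \in \mb{Z}$, either $\gcd(a,p)=1$ or $\dv{p}{a}$.

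\emph{Case $\ndv{p}{a}$.} Here $\gcd(a,p)=1$, so Fermat's little theorem (Theorem~\ref{flt}, or its restatement Theorem~\ref{flt1}) gives $a^{p-1} \equiv 1 \pmod p$. Multiplying both sides by $a$ is allowed by the scaling property of Corollary~\ref{pmod}(5). It yields $a^{p} = a^{p-1}\cdot a \equiv 1\cdot a = a \pmod p$.

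\emph{Case $\dv{p}{a}$.} Then $a \equiv 0 \pmod p$. By exponentiativity (Corollary~\ref{pmod}(9)) with exponent $c=p>0$, we get $a^p \equiv 0^p = 0 \pmod p$. Transitivity and symmetry (Corollary~\ref{pmod}(2),(3)) then give $a^p \equiv 0 \equiv a \pmod p$. Equivalently, one can argue directly: $a^p - a = a(a^{p-1}-1)$ is a multiple of $a$, hence of $p$ by Theorem~\ref{podiv}(b2),(c1).

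Both cases cover all $a \in \mb{Z}$, which completes the argument. Negative $a$ need no separate treatment, since Theorem~\ref{flt} is stated for arbitrary $a \in \mb{Z}$ with $\gcd(a,p)=1$, and divisibility is insensitive to sign.

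There is no real obstacle. The only point needing care is that the hypothesis of Theorem~\ref{flt} fails precisely when $\dv{p}{a}$, so that case must be handled separately rather than by cancellation.
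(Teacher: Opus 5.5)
Your proof is correct and takes essentially the paper's route: the paper states this corollary without a separate proof, as a direct consequence of Fermat's little theorem. Your case split (multiply $a^{p-1}\equiv 1 \pmod p$ by $a$ when $\gcd(a,p)=1$, and note that $a^p-a$ is a multiple of $p$ when $p \mid a$) is exactly the argument that makes that consequence rigorous; the paper's later ``Fermat's theorem'' reproves the same congruence by a different route, induction on $a\ge 0$ via the binomial theorem.
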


Given from Fermat's  theorem that 
$a^{p-1} \equiv a^{p-2} a$ we conclude the following.
\medskip
\begin{cor}
For $a\in \mb{Z}$ such that $\ndd{p}{a}$, 
we have $a^{-1} \equiv a^{p-2} \pmod p$.
\end{cor}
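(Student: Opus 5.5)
The statement is that for $p$ prime and $\ndd{p}{a}$ we have $a^{-1} \equiv a^{p-2} \pmod p$. The plan is to show directly that $a^{p-2}$ satisfies the defining congruence of an inverse, and then appeal to uniqueness of inverses modulo $p$.

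First, since $p$ is prime and $\ndd{p}{a}$, we have $\gcd(a,p)=1$, as noted just before Theorem~\ref{flt1}. So Theorem~\ref{modn} guarantees that an inverse $a^{-1}$ exists modulo $p$.

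Second, Fermat's little theorem (Theorem~\ref{flt1}) gives $a^{p-1}\equiv 1 \pmod p$. We rewrite $a^{p-1} = a\cdot a^{p-2}$, which is a legitimate exponent split because $p\geq 2$ makes $p-2\geq 0$. This yields $a\cdot a^{p-2}\equiv 1 \pmod p$, so $a^{p-2}$ is an inverse of $a$ modulo $p$. In the edge case $p=2$, $a^{0}=1$ is indeed the inverse of any odd $a$.

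Third, we need uniqueness. Suppose $b$ is any inverse of $a$, i.e.\ $ab\equiv 1 \pmod p$. Then by multiplicativity (Corollary~\ref{pmod})
\[
b \equiv b\,(a\,a^{p-2}) \equiv (ba)\,a^{p-2} \equiv a^{p-2} \pmod p .
\]
Alternatively, one can cancel $a$ using the cancellation law, since $\gcd(a,p)=1$. Either way, $a^{-1}\equiv a^{p-2}\pmod p$ as a residue class.

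There is no genuine obstacle here. The only point needing care is interpreting $a^{-1}$ as a well-defined residue, which is why uniqueness must be stated and not merely existence.
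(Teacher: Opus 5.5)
Your proof is correct and follows the paper's argument. The paper simply notes that $a^{p-1}=a\cdot a^{p-2}\equiv 1 \pmod p$ by Fermat's little theorem, so $a^{p-2}$ is an inverse of $a$; your added uniqueness step only makes explicit what the paper takes for granted (uniqueness of inverses is noted elsewhere in the text).
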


\begin{exa}
Consider the integers $\pmod 8$.
We have $3^2 \equiv 1 \pmod 8$.
We also have $5^2 \equiv 1 \pmod 8$.
There are two square roots of  $1 \pmod 8$.
Can you find others (e.g. 1, 7)?
Moreover $4 \equiv -4 \pmod 8$. 
\end{exa}

We can generalize the last observation as follows.

\begin{exa}
$a \equiv -a \pmod n$ is equivalent to
$2a \equiv 0 \pmod n$. 
If $n=2a$ this is trivially true. 
If $n$ is odd, then $\gcd(2,n)=1$ 
and thus $\dv{n}{a}$.
In the latter case $a \equiv 0 \pmod n$.
\end{exa}

\bigskip %THEOREM 47 %%
\begin{thm}
\label{thm47}
If $p$ is an odd prime ($p\neq 2$) and
$\ndd{p}{a}$ then the equation
\[
x^2 \equiv a \pmod p
\]
has either exactly two distinct roots or no roots at all.
\end{thm}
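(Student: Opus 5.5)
The plan is a case split: either the equation has no root, and we are done, or it has a root $x_0$, and then we show there are exactly two. This mirrors the earlier proposition on $x^2 \equiv a \pmod p$ for $0<a<p$, but now we must make the role of $p$ odd and $\ndd{p}{a}$ explicit.

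Suppose there is a root $x_0$ with $x_0^2 \equiv a \pmod p$. The first step is to produce a second root. Take $p - x_0$, i.e. $-x_0 \pmod p$. Expanding gives $(p-x_0)^2 = p^2 - 2px_0 + x_0^2 \equiv x_0^2 \equiv a \pmod p$, so $p-x_0$ is also a root.

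The second step, which is the real content, is to show that $x_0 \not\equiv -x_0 \pmod p$. If they were congruent, then $2x_0 \equiv 0 \pmod p$, i.e. $\dv{p}{2x_0}$. Since $p$ is an odd prime, $\gcd(2,p)=1$, so by the earlier corollary ($\gcd(a,b)=1$ and $\dv{a}{bm}$ imply $\dv{a}{m}$) we get $\dv{p}{x_0}$. But then $\dv{p}{x_0^2}$, so $a \equiv 0 \pmod p$, contradicting $\ndd{p}{a}$. This is also exactly the observation in the example preceding the theorem ($a \equiv -a \pmod n$ with $n$ odd forces $\dv{n}{a}$). Hence the two roots are distinct.

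The third step is to show there are no further roots. Let $y$ be any root. Then $x_0^2 \equiv y^2 \equiv a \pmod p$, so $\dv{p}{(x_0-y)(x_0+y)}$. By the corollary that a prime dividing a product divides one of the factors (Lemma~\ref{pfactor}), either $y \equiv x_0$ or $y \equiv -x_0 \pmod p$. So the roots are exactly $x_0$ and $p-x_0$ modulo $p$.

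The main obstacle is the distinctness in the second step. It is the only place where both hypotheses, $p \neq 2$ and $\ndd{p}{a}$, are used. For $p=2$ or $a\equiv 0$ there is a single root, so this argument should be written carefully.
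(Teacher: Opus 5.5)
Your proof is correct and follows essentially the same route as the paper: take $-x_0$ as a second root, use that $p$ is odd and $p \nmid a$ to rule out $x_0 \equiv -x_0 \pmod p$, then factor $(x_0-y)(x_0+y)$ and use that a prime dividing a product divides a factor, which excludes a third root. Your distinctness step is actually spelled out more carefully than the paper's, which only refers back to the preceding example on $a \equiv -a \pmod n$.
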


\begin{proof}
If there are roots to the modular equation the proof is
complete. Otherwise let $z$ be a solution i.e.
$z^2 \equiv a \pmod p$. 
Since $-z$ is such that
$(-z)^2 \equiv z^2 \equiv a \pmod p$, then
$-z$ is also a solution.
Is it $z \equiv -z \pmod p$? 
This is so if $p$ is an even number as it was
shown prior to the statement of Theorem~\ref{thm47}.
It is also possible that $p$ is odd but then it must
divide $a$. However the preconditions of the theorem
disallow the former (even number cannot be the case
as $p$ is an odd prime) and the latter (even number
and $\dv{p}{a}$ is not possible, since $p$ is
odd and $\ndd{p}{a}$).

Therefore there two distinct solutions $z, -z \pmod p$ if one
of them (say $z$) exists.
Does there exist a third (or fourth etc) solution?
Let us call it $w$. Then $w^2 \equiv z^2 \equiv a \pmod p$.
This implies $w^2 - z^2 \equiv 0 \pmod p$.
Then $(w-z)(w+z)\equiv 0 \pmod p$.
Then $\dv{p}{w-z}$ or $\dv{p}{w+z}$.
In other words $ w\equiv z$ or $w \equiv -z$.
There are two and only two solutions then. No third or
more!
\end{proof}

\subsection{Fermat's  theorem}

\begin{thm}[\bf{Fermat's theorem}]
For any prime number $p$ and any $a \in \mb{Z}$ we have
\begin{equation}
\label{ft1a}
 a^p \equiv a \pmod p .
\end{equation}
If in addition $\gcd(a,p)=1$, then
\begin{equation}
\label{ft1b}
a^{p-1} \equiv 1 \pmod p.
\end{equation}
\end{thm}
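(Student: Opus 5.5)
The plan is to prove the second statement, Eq.(\ref{ft1b}), first, since it is essentially Fermat's little theorem (Theorem~\ref{flt}). I will then derive Eq.(\ref{ft1a}) from it by splitting into two cases according to whether $p$ divides $a$.

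For Eq.(\ref{ft1b}), assume $\gcd(a,p)=1$. I will invoke Theorem~\ref{flt} directly. If a self-contained argument is preferred, I will repeat its permutation argument. The residues $a,2a,\ldots,(p-1)a$ are nonzero modulo $p$ and pairwise incongruent: if $ia\equiv ja \pmod p$, then the cancellation law (valid since $\gcd(a,p)=1$) gives $i\equiv j\pmod p$, and $|i-j|<p$ forces $i=j$. Hence these residues form a permutation of $1,\ldots,p-1$. Multiplying them all together gives $a^{p-1}(p-1)!\equiv (p-1)! \pmod p$. Since $\gcd((p-1)!,p)=1$, as $p$ is prime and divides none of the factors (Lemma~\ref{pfactor}), cancellation yields $a^{p-1}\equiv 1\pmod p$.

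For Eq.(\ref{ft1a}) there are two cases. If $\ndv{p}{a}$, then $\gcd(a,p)=1$, because $p$ is prime and its only positive divisors are $1$ and $p$ (this is the earlier corollary characterizing $\ndv{p}{a}$ via $\gcd(a,p)=1$). Then Eq.(\ref{ft1b}) holds, and multiplying both sides by $a$ (the scaling property of Corollary~\ref{pmod}) gives $a^p\equiv a\pmod p$. If instead $\dv{p}{a}$, then $a\equiv 0\pmod p$, so by exponentiativity (Corollary~\ref{pmod}(9), with $p\ge 2>0$) we get $a^p\equiv 0^p=0\equiv a\pmod p$. Equivalently, $p$ divides $a^p-a=a(a^{p-1}-1)$ directly.

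There is no serious obstacle here. The only point needing care is the case split for Eq.(\ref{ft1a}): the multiplicative form fails when $p$ divides $a$, so that case has to be handled separately by the trivial congruence $a\equiv 0$.
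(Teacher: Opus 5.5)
Your proof is correct, but it runs in the opposite direction from the paper's. The paper proves Eq.~(\ref{ft1a}) first, by induction on $a \geq 0$. Assuming $(a-1)^p \equiv a-1 \pmod p$, it expands $a^p = ((a-1)+1)^p$ by the binomial theorem and reduces it to $(a-1)^p + 1 \equiv a \pmod p$; this step implicitly uses $\dv{p}{\binom{p}{i}}$ for $0<i<p$. It then obtains Eq.~(\ref{ft1b}) from $\dv{p}{a(a^{p-1}-1)}$ together with $\gcd(a,p)=1$.

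You instead prove Eq.~(\ref{ft1b}) first, via the permutation argument of Theorem~\ref{flt}, and recover Eq.~(\ref{ft1a}) by multiplying by $a$, with a separate trivial case when $\dv{p}{a}$.

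What each route buys:
\begin{itemize}
\item The paper's route treats every $a$ uniformly, with no case split and no cancellation in the main congruence. However, it depends on the divisibility of the binomial coefficients, which the notes never prove. As written it also covers only $a \geq 0$; negative $a$ would need an extra reduction $a \equiv r \pmod p$ with $0 \leq r < p$.
\item Your route needs only the cancellation law and the fact that $\gcd((p-1)!,p)=1$. It reuses an already-proved theorem and handles every $a \in \mb{Z}$, including negative ones, at no extra cost.
\end{itemize}

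One small point you assert without justification is that each $ia$ is nonzero modulo $p$. This follows at once from cancellation, since $ia \equiv 0 \cdot a$ would force $i \equiv 0 \pmod p$. It also follows from the corollary that a prime dividing a product divides one of the factors.
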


The second part condition $\gcd(a,p)=1$ of Eq.(\ref{ft1b})
is equivalent to $\ndv{p}{a}$ that eventually leads to
Fermat's Little theorem of Theorem~\ref{flt}.
\begin{proof}
$ $ \\ $ $
Let $a \geq 0$. We prove the first part by induction on $a$.
$ $ \\ $ $
Base case $a=0$. Then $a^p = 0^p = 0 = a$ and thus
$ a^p \equiv a \pmod p$ by default.
$ $ \\ $ $
We now assume that for $a \geq 1$ we have
\[
 (a-1)^p \equiv a-1 \pmod p
\]
We then have by utilizing the Binomial theorem, and the
induction hypothesis above the following.
\[
a^p = ((a-1)+1)^p \equiv (a-1)^p  + 1^p \equiv (a-1) +1
\equiv a \pmod p.
\]
$ $ \\ $ $
Now from $ a^p \equiv a \pmod p$
we have that there exists a $k \in \mb{Z}$ such that
\[
 a^p - a = k p \Leftrightarrow
a  ( a^{p-1} -1 )  = k p ,
\]
and given $\dv{p}{kp}$ we have
$\dv{p}{a  ( a^{p-1} -1 )}$. Since $\gcd(a,p)=1$,
then $\dv{p}{a^{p-1}-1}$ and therefore
\[
a^{p-1} \equiv 1 \pmod p.
\]
\end{proof}

\subsection{Some interesting results}

\begin{cor}[Primality testing]
Let $p \in \mb{Z}_+$. Show that $p$ is a prime
number if and only is $a^{p-1} \equiv 1 \pmod p$ for
every $a \neq 0$, and $a=1, \ldots , p-1$.
(We can  augment $a$ to be any integer $a \in \mb{Z}_+$
such that $\ndv{p}{a}$.)
\end{cor}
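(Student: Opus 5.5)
The plan is to prove the two directions separately. The forward direction will come straight from Fermat's little theorem (Theorem~\ref{flt}). The converse will be a contrapositive argument that exhibits a base $a$ for which the congruence fails.

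\emph{Only-if.} Suppose $p$ is prime and $1 \leq a \leq p-1$. Then $\ndv{p}{a}$, and since the only positive divisors of $p$ are $1$ and $p$, this gives $\gcd(a,p)=1$. Theorem~\ref{flt} then yields $a^{p-1} \equiv 1 \pmod p$ directly. The augmented version, with any $a\in\mb{Z}_+$ such that $\ndv{p}{a}$, follows in the same way from Theorem~\ref{flt1}.

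\emph{If.} Assume $a^{p-1} \equiv 1 \pmod p$ for every $a=1,\ldots,p-1$. We may take $p \geq 2$: for $p=1$ the range is empty, and $1$ is a unit, so we treat it as a degenerate case excluded by convention. The idea is that the hypothesis turns every such $a$ into a unit mod $p$. Indeed, $a \cdot a^{p-2} \equiv 1 \pmod p$, so $a^{p-2}$ is an inverse of $a$. By Theorem~\ref{modn} this gives $\gcd(a,p)=1$ for every $1\leq a \leq p-1$.

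Now suppose for contradiction that $p$ is composite. By the Lemma on composites there is a factor $a$ with $1<a<p$. Then $\gcd(a,p)=a>1$, which contradicts the previous step. Hence $p$ is prime.

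An equivalent direct route is also available. From $\dv{a}{p}$ and $\dv{p}{a^{p-1}-1}$ we get $\dv{a}{a^{p-1}-1}$, and since $\dv{a}{a^{p-1}}$ (as $p-1\geq 1$), this forces $\dv{a}{1}$, which is impossible for $a>1$. I would present this version because it avoids invoking inverses at all.

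The main obstacle is only bookkeeping in the converse. One must note that $p-1\geq 1$ so that $\dv{a}{a^{p-1}}$ holds, and handle $p=1$ (and, if desired, $p=2$ trivially) separately. No deeper difficulty arises, since the hypothesis quantifies over all $a$, including a nontrivial divisor.
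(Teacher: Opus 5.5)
Your proposal is correct, and the version you say you would present is the paper's own argument. The paper gets the forward direction from Fermat's little theorem. For the converse it takes a nontrivial divisor $q$ of $p$, reduces $q^{p-1}\equiv 1 \pmod p$ to $q \mid q^{p-1}-1$, and combines this with $q \mid q^{p-1}$ to reach $q \mid 1$. Your extra care fixes two small points the paper glosses over: you require $1<a<p$ so that the divisor lies in the tested range, and you flag that $p=1$ satisfies the hypothesis vacuously and must be excluded.
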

\begin{proof}
If $p$ is a prime number this follows from Fermat's
little theorem.
For the other direction, if the modular equation
is true for all $a$, and $p$ is not a prime number,
let $q$ be a divisor of $p$ that is $p=q k$ for
some integer $k$, and $q > 1$. Then $a^{p-1} \equiv 1 \pmod p$,
implies also $q^{p-1} \equiv 1 \pmod p$, and
since $\dv{q}{p}$ we also have
$q^{p-1} \equiv 1 \pmod q$. From the latter,
$\dv{q}{q^{p-1}-1}$ we have $\dv{q}{1}$ or
$q \leq 1$ which contradicts $q>1$. Thus
$q$ does not exist and $p$ is a prime number, as
needed.
\end{proof}

\begin{prp}
If $a,b \in \mb{Z}$
such that $a \equiv b \pmod{p^n}$ then
$a^p = b^p \pmod{p^{n+1}}$.
\end{prp}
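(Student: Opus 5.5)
Write $a = b + kp^n$ for some $k\in\mb{Z}$, which is possible by Lemma~\ref{eqrlem} since $a \equiv b \pmod{p^n}$. Then expand $a^p = (b+kp^n)^p$ with the binomial theorem:
\[
a^p = b^p + \binom{p}{1} b^{p-1} k p^n + \sum_{j=2}^{p} \binom{p}{j} b^{p-j} k^j p^{jn}.
\]
The goal is to show that every term after $b^p$ is divisible by $p^{n+1}$. It then follows that $\dv{p^{n+1}}{a^p-b^p}$, that is, $a^p \equiv b^p \pmod{p^{n+1}}$ (the statement's "$=$" being read as a congruence).

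The $j=1$ term is $p\cdot b^{p-1}k p^n = b^{p-1}k\,p^{n+1}$, which is visibly a multiple of $p^{n+1}$. For $2\le j\le p$, the term contains the factor $p^{jn}$ with $jn \ge 2n \ge n+1$ whenever $n\ge 1$. So each such term is a multiple of $p^{n+1}$, and the binomial coefficient is not even needed. Summing these, using Theorem~\ref{podiv} (c1),(d1), gives the claim.

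The only point needing care is the hypothesis $n\ge 1$. If $n=0$, the hypothesis $a\equiv b \pmod 1$ is vacuous, and the claim $a^p\equiv b^p \pmod p$ would still hold only for special $a,b$. Take for instance $a=1,b=0$: then $1\not\equiv 0 \pmod p$. So I would state that $n\ge 1$ is assumed, which is implicit in the statement. With that, the term $j\ge 2$ needs $jn\ge n+1$, which holds since $(j-1)n\ge 1$.

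Alternatively, I could factor $a^p-b^p=(a-b)\sum_{i=0}^{p-1}a^{p-1-i}b^i$. Here $\dv{p^n}{a-b}$, and each summand is $\equiv b^{p-1}\pmod p$ because $a\equiv b\pmod p$. So the sum is $\equiv p\,b^{p-1}\equiv 0 \pmod p$, giving the extra factor of $p$. I expect no real obstacle, and would present the binomial version, since the paper already used that theorem in Corollary~\ref{pmod}(9).
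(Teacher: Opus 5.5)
Your proof is correct, but your main argument is not the paper's. The paper uses the route you list as your alternative. It factors $a^p-b^p=(a-b)(a^{p-1}+a^{p-2}b+\cdots+b^{p-1})$. Since $a\equiv b\pmod p$, every summand of the second factor is congruent to $a^{p-1}$, so that factor is $\equiv p\,a^{p-1}\equiv 0\pmod p$; combined with $\dv{p^n}{a-b}$ this gives the extra power of $p$.

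Your binomial expansion instead checks divisibility term by term. The $j=1$ term carries exactly $p\cdot p^n$, and each term with $j\ge 2$ carries $p^{jn}$ with $jn\ge n+1$.

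Each route has its own advantage. Your version needs no reduction of a cofactor modulo $p$ and shows exactly where $n\ge 1$ enters. The paper's version avoids the binomial theorem and uses only the weaker consequence $a\equiv b\pmod p$ of the hypothesis. Neither argument uses that $p$ is prime, so both prove the statement for any integer $p\ge 2$.

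Your remark that $n\ge 1$ must be assumed is right, and your counterexample $a=1$, $b=0$, $n=0$ works. The paper's proof also needs $n\ge 1$, though it uses it silently when it deduces $\dv{p}{a-b}$ from $\dv{p^n}{a-b}$.
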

\begin{proof}
Since $ a \equiv b \pmod{p^n}$ we have
$a-b = k p^n$ and thus $p$ divides $a-b$ and in fact any
$p^i$, $i \leq n$ divides $a-b$. Thus $a \equiv b \pmod p$.
Moreover $ a^j (a-b) = a^{j+1} - a^j b$ and thus
$a^{j+1} \equiv a^j b \pmod{p}$ and  also
$a^{p-1} \equiv a^j b^{p-1-j} \pmod{p}$.
Consider
\[
a^p - b^p = (a-b) (a^{p-1} + a^{p-2} b + \ldots + b^{p-1} ) .
\]
Moreover $a^{p-i}b \equiv a^{p-1} \pmod p$ and thus
\[
(a^{p-1} + a^{p-2} b + \ldots + b^{p-1} ) \equiv p a^{p-1}
\equiv 0 \pmod p.
\]
Then from $a \equiv b \pmod{p^n}$  we have
that $\dv{p^n}{a-b}$ and
$\dv{p}{(a^{p-1} + a^{p-2} b + \ldots + b^{p-1} )}$,
i.e. $\dv{p^{n+1}}{a^p - b^p}$.
\end{proof}

\begin{prp}
Ffor any prime number $p \in \mb{N}$
and any $a \in \mb{Z}$ with $\gcd(a,p)=1$ the following
applies.
\begin{equation}
\label{ft2}
  a^{(p-1) p^{n-1}} \equiv 1  \pmod{p^n} .
\end{equation}
\end{prp}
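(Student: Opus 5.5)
The plan is induction on $n \geq 1$, using Fermat's little theorem (Theorem~\ref{flt}) for the base case. The lifting step comes from the immediately preceding proposition: if $a \equiv b \pmod{p^n}$ then $a^p \equiv b^p \pmod{p^{n+1}}$.

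\emph{Base case.} For $n=1$ the exponent is $(p-1)p^0 = p-1$. Since $\gcd(a,p)=1$, Theorem~\ref{flt} gives $a^{p-1} \equiv 1 \pmod p$ directly.

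\emph{Inductive step.} Assume the hypothesis $a^{(p-1)p^{n-1}} \equiv 1 \pmod{p^n}$ for some $n \geq 1$, and set $A = a^{(p-1)p^{n-1}}$ and $B=1$. Then $A \equiv B \pmod{p^n}$, so the preceding proposition gives $A^p \equiv B^p \pmod{p^{n+1}}$. Since $A^p = a^{(p-1)p^{n}}$ and $B^p=1$, this reads $a^{(p-1)p^{n}} \equiv 1 \pmod{p^{n+1}}$, which is exactly the statement for $n+1$.

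\emph{Main obstacle.} The whole argument depends on the lifting proposition, and its proof in the text is somewhat loose. I would check its core fact independently: when $a \equiv b \pmod p$, each term $a^{p-1-j}b^j$ is congruent to $a^{p-1}$ modulo $p$. Hence the sum $\sum_{j=0}^{p-1} a^{p-1-j}b^j$ is congruent to $p\,a^{p-1} \equiv 0 \pmod p$. Combined with $a^p-b^p=(a-b)\sum_j a^{p-1-j}b^j$ and $p^n \mid a-b$, this gives $p^{n+1} \mid a^p-b^p$. With $b=1$ the check is especially simple, because $1+A+\dots+A^{p-1} \equiv p \equiv 0 \pmod p$ follows at once from $A \equiv 1 \pmod p$.

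\emph{Cross-check.} As a consistency check (not needed for the induction), note that $(p-1)p^{n-1}=\phi(p^n)$ by Corollary~\ref{cor11}. The result is therefore also Euler's theorem applied to the modulus $p^n$, since $\gcd(a,p^n)=1$.
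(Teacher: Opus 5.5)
Your proof is correct and matches the paper's argument: induction on $n$, with Fermat's little theorem as the base case, and the preceding lifting proposition ($a\equiv b \pmod{p^n}\Rightarrow a^p\equiv b^p \pmod{p^{n+1}}$) applied with $b=1$ for the inductive step. Your exponent bookkeeping, $\bigl(a^{(p-1)p^{n-1}}\bigr)^p=a^{(p-1)p^{n}}$, is also cleaner than the paper's version, which writes $a^{((p-1)p^{n-2})^{p}}$.
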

\begin{proof}
Use induction of $n \geq 1$.
For $n=1$ this follows from the previous problem.
Let by the induction hypothesis
\[
  a^{(p-1) p^{n-2}} \equiv 1  \pmod{p^{n-1}} .
\]
Then by part (a) we have
\[
a^{((p-1) p^{n-2})^{p}} \equiv 1^{p} \pmod{p^n}
\]
and then
\[
a^{(p-1) p^{n-1}} \equiv 1 \pmod{p^n} .
\]
\end{proof}

\begin{exa}
Is 511 a prime number?
\end{exa}
\begin{solution}
511 is not a prime number. We show that if
511 was a prime number by Fermat's Little Theorem,
for every $a$ such that $\gcd(a,p)=1$ we would have
have $a^{p-1} \equiv 1 \pmod{p}$ or
equivalently
$a^{510} \equiv 1 \pmod{511}$.
$ $ \\ $ $
We observe
\[
  2^{9} = 512 \equiv 1 \pmod{511}.
\]
Furthermore $2^6 \equiv 64 \pmod{511}$.
Moreover    $510 = 56 \cdot 9 + 6$.
Therefore
\[
2^{510} = 2^{56 \cdot 9 + 6} = (2^{9})^{56} \cdot 2^6
\equiv  1^{56} \cdot 64 \equiv 64 \pmod{511}.
\]
Give that $\gcd(2,511)=1$ this mean $511$ is not a prime
number.
\end{solution}

%HERE  Stopped at 2:01pm 6/12

\section{Euler's Theorem }

The units $\bmod p$ are all those
integers $a$ that they have an inverse $a^{-1} \pmod p$.
An integer $1\leq a <p$ is a unit if $\gcd(a,p) =1$.

\begin{dfn}[Set of units of $\mb{Z}_n$]
The set of units $\mb{U}_n$ of $\mb{Z}_n$, for $n >1$, 
is the set of units $\pmod n$, that is
the integers between 1 and $n-1$ that are 
relatively prime to $n$. 
\end{dfn}

\begin{exa}[$\mb{U}_3 , _4 , \ldots$]
Therefore $\mb{U}_3 = \{ 1,2,3 \}$;
$\mb{U}_4 = \{ 1,3 \}$, and
$\mb{U}_5 = \{ 1,2,3,4 \}$
and finally  $\mb{U}_6 = \{ 1,5 \}$.
\end{exa}

\bigskip %THEOREM 49 %%
\begin{thm}
For $a,b \in \mb{U}_n$ we have that 
$ab \in \mb{U}_n$ and also $a^{-1} \in \mb{U}_n$.
\end{thm}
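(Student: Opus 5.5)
The plan is to work directly from the definition of $\mb{U}_n$ as the set of residues $1 \leq a \leq n-1$ with $\gcd(a,n)=1$. Equivalently, by Theorem~\ref{modn}, these are exactly the residues that have an inverse modulo $n$. Both closure claims will be proved using inverses, which keeps the argument short and avoids factorization.

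For closure under multiplication, let $a,b \in \mb{U}_n$ with inverses $a^{-1}, b^{-1}$, so that $aa^{-1} \equiv 1$ and $bb^{-1} \equiv 1 \pmod n$. I would exhibit $b^{-1}a^{-1}$ as an inverse of $ab$. By associativity and commutativity of multiplication in $\mb{Z}/n\mb{Z}$ (Proposition~\ref{abeltimes}) and the multiplicativity property of Corollary~\ref{pmod},
\[
(ab)(b^{-1}a^{-1}) \equiv a(bb^{-1})a^{-1} \equiv aa^{-1} \equiv 1 \pmod n .
\]
Hence $ab$ (reduced mod $n$) has an inverse, and by Theorem~\ref{modn} we get $\gcd(ab,n)=1$. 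It remains to check that the least residue of $ab$ is not $0$. If $ab \equiv 0 \pmod n$, then multiplying by $b^{-1}a^{-1}$ gives $0 \equiv 1 \pmod n$, which is impossible for $n>1$. So $ab \bmod n$ lies in $\{1,\ldots,n-1\}$ and is coprime to $n$, that is, $ab \in \mb{U}_n$.

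As a cross-check, I could instead argue via gcds. If a prime $p$ divided both $ab$ and $n$, then by the corollary that $\dv{p}{ab}$ implies $\dv{p}{a}$ or $\dv{p}{b}$, $p$ would divide $\gcd(a,n)$ or $\gcd(b,n)$, a contradiction.

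For the inverse, let $a \in \mb{U}_n$ and $x = a^{-1}$, taken as a least residue. Then $ax \equiv 1 \pmod n$ says that $x$ has the inverse $a$, so Theorem~\ref{modn} gives $\gcd(x,n)=1$. Also $x \not\equiv 0$, since otherwise $ax \equiv 0 \not\equiv 1$. Hence $x \in \mb{U}_n$. I do not expect any real obstacle here. The only care needed is the bookkeeping that products and inverses are reduced to least residues in $1,\ldots,n-1$, together with the observation that $0$ is excluded because $1 \not\equiv 0 \pmod n$ for $n>1$.
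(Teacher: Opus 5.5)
Your proof is correct and follows the paper's argument: you exhibit $b^{-1}a^{-1}$ as an inverse of $ab$, and you note that $aa^{-1}\equiv 1 \pmod n$ makes $a$ an inverse of $a^{-1}$. The extra bookkeeping you add is left implicit in the paper, which simply identifies $\mb{U}_n$ with the invertible residues; that bookkeeping consists of invoking Theorem~\ref{modn} to recover coprimality with $n$ and ruling out the zero residue.
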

\begin{proof}
Starting with the last result if $a \in \mb{U}_n$ it means that
$a a^{-1} \equiv 1 \pmod n$.
Moreover
$a^{-1} {a^{-1}}^{-1} \equiv 1 \pmod n$.
Thus $a^{-1} \in \mb{U}_n$.
For $a,b$ let their inverse be $a^{-1}  , b^{-1}$ respectively.
Consider $(ab)$. Since 
\[
(ab)(b^{-1}a^{-1} ) \equiv a \cdot 1 \cdot a^{-1}
\equiv 1 \pmod n
\]
it shows that $ ab \in \mb{U}_n$.
\end{proof}

Euler's totient function $\phi (n)$ denotes the
cardinality of $\mb{U}_n$ that is 
$\phi (n) = |\mb{U}_n| $, 
that is the number of units $\bmod n$.
Euler's theorem is an extension of Fermat's 
Little Theorem  where
the restriction of $n$ being a prime number 
has been relaxed.

\bigskip %THEOREM 50 %%
\begin{thm}[Euler's theorem]
\label{euler}
\label{thm50}
For any $n \in \mb{N}$, with $n>1$, and
for any $a \in Z$, $a>1$,
if $\gcd(a,n)=1$ then $a^{\phi(n)} \equiv 1 \pmod n$.
\end{thm}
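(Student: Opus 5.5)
We will prove Theorem~\ref{euler} by the same permutation argument used for Fermat's little theorem (Theorem~\ref{flt}), with the full residue set $\{1,\ldots,p-1\}$ replaced by the set of units $\mb{U}_n$. Let $\mb{U}_n=\{u_1,u_2,\ldots,u_{\phi(n)}\}$ be the units modulo $n$, i.e.\ the integers in $[1,n-1]$ that are relatively prime to $n$. Its cardinality is $\phi(n)$, by the definition of the cardinality of $\mb{U}_n$ and the corollary to Theorem~\ref{phit1}.

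\textbf{Step 1: the products $au_i$ stay in $\mb{U}_n$.} Since $\gcd(a,n)=1$, the residue $a \bmod n$ is a unit. By the theorem stated just before Theorem~\ref{euler}, a product of units is a unit, so $au_i \bmod n \in \mb{U}_n$ for every $i$.

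\textbf{Step 2: the products are pairwise distinct.} Suppose $au_i\equiv au_j \pmod n$. The cancellation law applies because $\gcd(n,a)=1$, and it gives $u_i\equiv u_j\pmod n$. Both lie in $[1,n-1]$, so $u_i=u_j$. Alternatively, multiply through by the inverse $a^{-1}$, which exists since $a$ is a unit.

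\textbf{Step 3: conclude.} By Steps 1 and 2, the $\phi(n)$ residues $au_1,\ldots,au_{\phi(n)} \bmod n$ are distinct elements of a set of size $\phi(n)$, so they are a rearrangement of $u_1,\ldots,u_{\phi(n)}$. Multiplying them all together, using multiplicativity of congruences (Corollary~\ref{pmod}), gives
\[
a^{\phi(n)}\prod_{i=1}^{\phi(n)} u_i \equiv \prod_{i=1}^{\phi(n)} u_i \pmod n .
\]
Let $P=\prod_i u_i$. Then $P$ is a unit, being a product of units, and so $\gcd(P,n)=1$. Cancelling $P$ by the cancellation law yields $a^{\phi(n)}\equiv 1\pmod n$.

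The main obstacle is Step 2 together with the justification that $P$ is invertible. These are the only places where $\gcd(a,n)=1$ and the coprimality of the $u_i$ are used, so each cancellation must be backed by an explicit unit or gcd argument rather than appealing to primality as in Theorem~\ref{flt}. The hypothesis $a>1$ plays no role in the argument; the case $a=1$ is trivial anyway.
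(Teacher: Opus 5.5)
Your proof is correct and is essentially the paper's first proof: the paper also multiplies the list of units $a_1,\ldots,a_{\phi(n)}$ by $a$, shows the products $aa_i$ are distinct units (cancelling via an inverse of $a$), and then cancels the common product by multiplying by $a_1^{-1}a_2^{-1}\cdots$. The paper additionally gives a second proof, which lifts $a^{(p-1)p^{k-1}}\equiv 1 \pmod{p^k}$ to each prime-power factor of $n$ and combines the resulting congruences, but your argument follows its primary route.
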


\begin{proof}
$ $ \\ $ $
{\bf First proof.}
Let 
\[
a_1 , \ldots a_{\phi(n)} \pmod n 
\]
be the list of units.  Multiply each one of the elements
with $a$, where $\gcd(a,n)=1$. 
Since it is so $ax\equiv 1 \pmod n$.
More over $a_i$ are units thus $a_i x_i \equiv 1 \pmod n$ as well.
The $a a_i$ are such that 
$(aa_i)(xx_i) \equiv (ax)(a_i x_i) \equiv 1 \pmod n$.
That is all $aa_i$ are units.
\[
a a_1 , a a_2 , \ldots , a a_{\phi(n)}  \pmod n.
\]
Moreover $aa_i \not\equiv a a_j \pmod n$. 
This is because if $aa_i \equiv a a_j \pmod n$  would
implicate $xaa_i \equiv xaa_j \pmod n$ i.e. $a_i \equiv a_j \pmod n$.
If all of them are in the integer interval $[1,n-1]$, then this
implie $a_i = a_j$, i.e. $i=j$.
That is the two lists above are
the same up to a reordering of the same elements.
Thus
\[
a a_1  a a_2 , \ldots , a a_{\phi(n)}   \equiv a_1 \ldots a_{\phi(n)} \pmod n 
\]
This means
\[
a^{\phi(n)} a_1   a_2 \ldots  a_{\phi(n)} \equiv a_1 a_2 \ldots a_{\phi(n)}\pmod n
\]
Using cancellation (or multiplication with 
 $a_1^{-1} a_2^{-1} \ldots$) we once more  conclude that
$a^{\phi(n)} \equiv 1 \pmod n$.
$ $ \\ $ $
{\bf Second proof.}
A second slightly easier proof can be obtained by way of
Eq.(\ref{ft2}).
$ $ \\ $ $
Let $n=
  p_1^{n_1}
  p_2^{n_2}
  \ldots
  p_r^{n_r}
$
where $n_i \geq 0$ and $r>0$.
By way of Eq.(\ref{ft2}) we have the following for all $i=1, \ldots , r$,
for an $a \in \mb{N}$ and $\gcd(a,n)=1$.
\begin{equation}
\label{euler1a}
  a^{p_i^{n-1}(p_i -1)} \equiv 1 \pmod{p_i^{n_i}}.
\end{equation}
Raising equation Eq.(\ref{euler1a}) to the power
$\phi (n / p_i^{n_i})$ we obtain the following.
\begin{equation}
\label{euler1b}
  \left( a^{p_i^{n-1}(p_i -1)}\right)^{\phi (n / p_i^{n_i})}
\equiv 1 \pmod{p_i^{n_i}} \Leftrightarrow
   a^{\phi (n)} \equiv 1 \pmod{p_i^{n_i}}.
\end{equation}
Given that $p_i$ are relative prime to each other
the last one implies the following,
for $n =p_1^{n_1} p_2^{n_2} \ldots p_r^{n_r}$.
\begin{equation}
   a^{\phi (n)} \equiv 1 \pmod{n}.
\end{equation}
\end{proof}

\begin{cor}
If $p$ is prime $\phi(p)=p-1$ and $\mb{Z}_p = \{ 1, \ldots , p-1 \}$ and
Euler's theorem becomes Fermat's theorem.
\end{cor}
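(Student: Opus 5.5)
We reduce this corollary directly to the two earlier results it combines: the value of Euler's totient at a prime, and Theorem~\ref{euler}. Three things must be checked: that $\phi(p)=p-1$, that the units modulo $p$ are exactly $1,\ldots,p-1$ (which is what the statement's ``$\mb{Z}_p = \{1,\ldots,p-1\}$'' should be read as, i.e. $\mb{U}_p$), and that substituting these into Euler's theorem gives Fermat's little theorem, Theorem~\ref{flt}.

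First, we compute $\phi(p)$ in two ways. From the definition of Euler's $\phi$ function with $n=p^1$, the product formula gives $\phi(p)=p^{0}(p-1)=p-1$. Independently, the paper already observes that every $a$ with $1\le a\le p-1$ satisfies $\gcd(a,p)=1$. The reason is that the only positive divisors of $p$ are $1$ and $p$ (Definition~\ref{prime}), and $p\nmid a$ because $0<a<p$ and (f2) of Theorem~\ref{podiv} applies. The residue $0$ is excluded since $\gcd(0,p)=p$. Hence $\mb{U}_p=\{1,\ldots,p-1\}$ and $|\mb{U}_p|=p-1$, which agrees with the formula.

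Next, we take any $a$ with $\gcd(a,p)=1$ and apply Theorem~\ref{euler} with $n=p>1$. This gives $a^{\phi(p)}\equiv 1 \pmod p$, that is, $a^{p-1}\equiv 1\pmod p$, which is exactly the conclusion of Theorem~\ref{flt}.

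Theorem~\ref{euler} as stated requires $a>1$, so the remaining values of $a$ need a separate argument. For $a=1$ the congruence is trivial. For a negative or otherwise arbitrary $a$ with $p\nmid a$, we replace $a$ by its least residue $r=a\bmod p$, which lies in $\{1,\ldots,p-1\}$. Property 9 (exponentiativity) of Corollary~\ref{pmod} then gives $a^{p-1}\equiv r^{p-1}\pmod p$, and Euler's theorem applied to $r$ finishes the case. This bookkeeping step is the only mild obstacle; everything else is direct substitution.
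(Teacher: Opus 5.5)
Your proposal is correct and is essentially the paper's own (implicit) argument: the paper gives no separate proof and treats the corollary as the direct specialization $n=p$ of Theorem~\ref{euler}, using $\phi(p)=p-1$ and $\mb{U}_p=\{1,\ldots,p-1\}$. Your reading of ``$\mb{Z}_p$'' as $\mb{U}_p$ is right, and your extra reduction step for $a\le 1$ is a sensible refinement rather than a different route; it is needed only because Theorem~\ref{euler} is stated with the hypothesis $a>1$.
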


\chapter{Primitive roots mod $n$}

\section{Order of $a$ mod $n$}

$\mb{Z}$ is the set of integers (positive, negative or zero).
For a positive integer $n>0$ the set
$\mb{Z}_n$ sometimes denoted by $\mb{Z}/n\mb{Z}$ or $\mb{Z}/n$
is the set of integers modulo $n$, thus representing the $n$
equivalence classes that the integers of $\mb{Z}$ can be
split into depending on the remainder of their division by $n$.
$\mb{Z}_n$ is a cyclic group under addition, and a commutative
ring under multiplication and addition.
The ring is a field for a prime $n$.

From Euler's formula for $a \in \mb{Z}$ and
$\gcd(a,n)=1$, we have that
$a^{\phi(n)}\equiv 1 \pmod n$.
Is it possible that $a^k \equiv 1 \pmod n$ for smaller
$k < \phi (n)$?
We have already mentioned that
$(-1)^2 \equiv (n-1)^2 \equiv 1 \pmod n$.

\noindent
\begin{dfn}[Order $\bmod n$]
For $n \in \mb{N}$ and $n>1$,
and $a \in \mb{Z}_n$ with $\gcd(a,n)=1$,
 we define the order $\ord_n (a)$ to be the smallest
positive integer $k$ such that $a^k \equiv 1 \pmod n$.
\end{dfn}

By Euler's theorem $a^{\phi(n)} \equiv 1 \pmod n$ implies
that $\dv{k}{\phi(n)}$.
(If this was not the case $\phi(n) = k q +r $, $0 \leq r < k$
would imply $a^r \equiv 1 \pmod n$ for an $r < k$, an
impossibility given that $\ord_n (a)=k$.)
An element $a$ as defined above that has order
$\phi(n)$ is called a
primitive root mod $n$.

\bigskip %THEOREM 52 %%
\begin{thm}
\label{thm52}
Let $k=\ord_n (a)$ be as defined earlier. For all $m$ we have
$a^m \equiv 1 \pmod n$ if and only if $\dv{k}{m}$.
\end{thm}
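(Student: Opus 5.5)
I will prove the two directions separately. Throughout, $k=\ord_n(a)\geq 1$ is the smallest positive integer with $a^k\equiv 1 \pmod n$, and $\gcd(a,n)=1$. I read $m$ as ranging over the non-negative integers. For negative $m$ the same argument works once $a^m$ is interpreted as a power of the inverse $a^{-1}\pmod n$, which exists because $a$ is a unit.

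\emph{The direction $\dv{k}{m} \Rightarrow a^m\equiv 1 \pmod n$.} If $\dv{k}{m}$, write $m=kq$. Then
\[
a^m=(a^k)^q\equiv 1^q\equiv 1 \pmod n,
\]
using the multiplicativity property (Exponentiativity) of Corollary~\ref{pmod}. The case $q=0$ is trivial.

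\emph{The direction $a^m\equiv 1 \pmod n \Rightarrow \dv{k}{m}$.} Apply the division theorem (Theorem~\ref{divis}) to $m$ and $k$, giving $m=kq+r$ with $0\leq r<k$. Then
\[
1\equiv a^m = (a^k)^q\, a^r \equiv a^r \pmod n.
\]
If $r>0$, then $r$ would be a positive exponent smaller than $k$ with $a^r\equiv 1\pmod n$. This contradicts the minimality of $k$ in the definition of $\ord_n(a)$. Hence $r=0$, so $m=kq$ and $\dv{k}{m}$. This is the same argument the text already sketched for $\dv{k}{\phi(n)}$.

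I do not expect a real obstacle. The only delicate point is $m<0$, where the quotient $q$ from the division theorem is negative and $(a^k)^q$ must be read as a power of the inverse $(a^{-1})^{|q|}$. That inverse is congruent to $1$ because $a^k\equiv 1$. So the same reduction to $a^r\equiv 1$ with $0\leq r<k$ still goes through.
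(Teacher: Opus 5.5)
Your proof is correct and follows essentially the same route as the paper. The easy direction writes $m=kq$ and uses $(a^k)^q\equiv 1$; the converse applies the division theorem $m=kq+r$, reduces to $a^r\equiv 1 \pmod n$, and uses the minimality of $k=\ord_n(a)$ to force $r=0$. Your one addition, handling negative $m$ through powers of the inverse of $a$ modulo $n$, is a small refinement the paper leaves implicit.
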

\begin{proof}
$ $ \\ $ $
$\Rightarrow$.
$ $ \\ $ $
If $\dv{k}{m}$ we have $m=ks$ for some integer $s$.
Then
\[
a^{m} \equiv (a^{k})^s \equiv 1 \pmod n .
\]
$ $ \\ $ $
$\Leftarrow$.
$ $ \\ $ $
For the other way if $a^m \equiv 1 \pmod n$, Let $m=kq+r$,
where $0 \leq r < k$.  If $r=0$ we are done since then
$\dv{k}{m}$. Consider that $r \neq 0$.
Since $a^k \equiv 1 \pmod n$ and thus
$(a^k)^q \equiv 1 \pmod n$ and also
$a^m \equiv 1 \pmod n$, we have
\[
1 \equiv a^m \equiv a^{kq+r} 
  \equiv (a^k)^q \cdot a^r \equiv a^r \pmod n.
\]
For $a^r$ to be $\equiv 1 \pmod n$ given 
that $0< r <k$ is impossible
since $k$ is the smallest index for which this is true.
The only possibility is that $r=0$ that was put aside
and the result follows by  accepting the dismissed case $r=0$
as the only possibility.
\end{proof}

\begin{cor}
\label{cor52}
Let $k =\ord_n (a)$ be as defined earlier.
Then $a^{\phi(n)} \equiv 1 \pmod n$ implies that $\dv{k}{\phi(n)}$.
\end{cor}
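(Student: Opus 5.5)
This is a direct specialization of Theorem~\ref{thm52}, taking $m=\phi(n)$. The only input needed is Euler's theorem (Theorem~\ref{euler}), which supplies the hypothesis $a^{\phi(n)}\equiv 1 \pmod n$. Here $\gcd(a,n)=1$ holds because $\ord_n(a)$ was only defined for such $a$.

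The steps, in order, are as follows. First, I note that $k=\ord_n(a)$ exists. The set $\{t\geq 1 : a^t\equiv 1 \pmod n\}$ contains $\phi(n)\geq 1$ by Euler's theorem, so by the well-ordered set principle it has a least element, which is $k$. Second, I apply the ``$a^m\equiv 1 \Rightarrow \dv{k}{m}$'' direction of Theorem~\ref{thm52} with $m=\phi(n)$. This concludes $\dv{k}{\phi(n)}$.

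To make the argument self-contained, I would also reproduce the division argument inline. Write $\phi(n)=kq+r$ with $0\leq r<k$ using Theorem~\ref{divis}. Then $1\equiv a^{\phi(n)}\equiv (a^k)^q a^r\equiv a^r \pmod n$. Since $k$ is the least positive exponent with $a^k\equiv 1 \pmod n$, a remainder $0<r<k$ with $a^r\equiv 1$ would contradict minimality. Hence $r=0$.

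There is no real obstacle. The only point needing care is that Theorem~\ref{thm52} labels its two directions in the opposite way from what is needed here: its ``$\Leftarrow$'' part is the one that proves $a^m\equiv 1 \Rightarrow \dv{k}{m}$. I would cite the content of the argument rather than the arrow label, or simply give the three-line division argument above directly.
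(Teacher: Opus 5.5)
Your proposal is correct and matches the paper: the corollary is the case $m=\phi(n)$ of Theorem~\ref{thm52}, and the paper's own justification (right after the definition of order, and again in the later example) is exactly your division argument, namely that $\phi(n)=kq+r$ forces $a^r\equiv 1 \pmod n$ and hence $r=0$ by minimality of $k$. Your remark that the arrow labels in the proof of Theorem~\ref{thm52} are swapped is accurate and does not affect the argument.
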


\begin{cor}
Let $k =\ord_p (a)$, $a \in \mb{Z}$, where $p$ is a prime number.
Then $a^{\phi(p)} \equiv 1 \pmod p$ implies that $\dv{k}{p-1}$.
\end{cor}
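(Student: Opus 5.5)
The statement is a direct specialization of Theorem~\ref{thm52} (equivalently Corollary~\ref{cor52}) to a prime modulus, so the plan is to reduce to that theorem and then evaluate $\phi(p)$.

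First I will verify the hypotheses. Since $k=\ord_p(a)$ is defined, the definition of order requires $\gcd(a,p)=1$, i.e.\ $\ndv{p}{a}$. Fermat's little theorem (Theorem~\ref{flt}), or Euler's theorem (Theorem~\ref{euler}) with $n=p$, then gives $a^{\phi(p)} \equiv 1 \pmod p$. Next I will record $\phi(p)=p-1$. This follows either from the definition of Euler's $\phi$ function with $n=p^1$, which gives $p^{0}(p-1)=p-1$, or from the remark that every element of $1,\ldots,p-1$ is relatively prime to $p$, so that $|\mb{U}_p|=p-1$.

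With these in hand I will apply the ``if'' direction of Theorem~\ref{thm52} with $n=p$ and $m=\phi(p)=p-1$. Since $a^{m}\equiv 1 \pmod p$, that theorem yields $\dv{k}{m}$, that is, $\dv{k}{p-1}$.

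If I want the argument to be self-contained rather than cite Theorem~\ref{thm52}, I will repeat its division argument. Divide $p-1=kq+r$ with $0\le r<k$ using Theorem~\ref{divis}. Then $1\equiv a^{p-1}\equiv (a^k)^q a^r\equiv a^r \pmod p$. Minimality of $k$ forces $r=0$, which gives $\dv{k}{p-1}$.

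I expect no real obstacle here. The only point that needs care is keeping the directions in Theorem~\ref{thm52} straight, since its proof labels them the reverse of what the statement suggests. The step I rely on is the implication ``$a^m\equiv 1$ implies $\dv{k}{m}$'', which is the division-with-remainder step, and I will take care to cite that one.
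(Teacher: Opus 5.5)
Your proposal is correct and matches the paper. The paper states this corollary without proof, as the immediate specialization of Theorem~\ref{thm52} (via Corollary~\ref{cor52}) to $n=p$ with $\phi(p)=p-1$, and your optional division-with-remainder argument simply unpacks that theorem.

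One small wording point: the implication you use, $a^m\equiv 1 \pmod p \Rightarrow \dv{k}{m}$, is the ``only if'' half of the theorem as stated. You do state the implication itself correctly.
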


\begin{prp}
\label{prp2}
If $k= \ord_n (a)$ be as defined earlier, then 
$a^m$ has order $k$ if and only if $\gcd(m,k)=1$.
\end{prp}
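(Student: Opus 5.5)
The plan is to compute the order of $a^m$ exactly, namely $\ord_n(a^m) = k/\gcd(m,k)$. Once this is in hand, the claim follows at once: $\ord_n(a^m)=k$ holds if and only if $\gcd(m,k)=1$. The tool throughout is Theorem~\ref{thm52}, which says $a^t \equiv 1 \pmod n$ if and only if $\dv{k}{t}$. Note that $a^m$ is a unit modulo $n$, since $\gcd(a,n)=1$, so its order is defined. Treat $m \geq 1$; for $m=0$ one has $\gcd(0,k)=k$, and the statement should be read with that convention.

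Set $d=\gcd(m,k)$ and write $k=dk_1$, $m=dm_1$, so that $\gcd(m_1,k_1)=1$ by Theorem~\ref{gcddi}. First, $(a^m)^{k_1} = a^{dm_1k_1} = (a^{k})^{m_1} \equiv 1 \pmod n$. Hence, by Theorem~\ref{thm52} applied to $a^m$, $\ord_n(a^m)$ divides $k_1$.

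For the reverse divisibility, let $t=\ord_n(a^m)$. Then $a^{mt}\equiv 1\pmod n$, and Theorem~\ref{thm52} gives $\dv{k}{mt}$, that is, $\dv{dk_1}{dm_1t}$. By Theorem~\ref{podiv}(g) this reduces to $\dv{k_1}{m_1 t}$. Since $\gcd(k_1,m_1)=1$, Corollary~\ref{egcd3} yields $\dv{k_1}{t}$. Combining with the previous paragraph through Theorem~\ref{podiv}(f3), and using that both quantities are positive, gives $t=k_1=k/d$.

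To conclude: if $\gcd(m,k)=1$ then $t=k$. Conversely, if $t=k$ then $k/d=k$, so $d=1$. The main point needing care is the step $\dv{k_1}{m_1t}\Rightarrow \dv{k_1}{t}$. This requires first dividing out $d$, and not applying the coprimality argument to $k$ and $m$ directly. Alternatively, the direction ``$d>1$ implies the order is less than $k$'' can be handled on its own, since $(a^m)^{k/d}\equiv 1$ with $k/d<k$.
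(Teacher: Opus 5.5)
Your proof is correct, but you organize it differently from the paper.

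The paper treats the two directions separately. For the forward direction it argues by contradiction: if $d=\gcd(m,k)>1$, then $(a^m)^{k/d}=(a^k)^{m/d}\equiv 1 \pmod n$ with $k/d<k$. For the converse it sets $q=\ord_n(a^m)$ and uses $\dv{k}{mq}$ together with $\gcd(k,m)=1$ to get $\dv{k}{q}$, while $(a^m)^k\equiv 1$ gives $\dv{q}{k}$.

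You instead prove the exact formula $\ord_n(a^m)=k/\gcd(m,k)$ by two-sided divisibility. This forces you to divide out $d$ before invoking Corollary~\ref{egcd3}; in the paper's converse direction $d=1$, so that step is never needed.

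The paper's route is marginally shorter and uses coprimality only where it is the hypothesis. Yours yields a strictly stronger statement, from which the equivalence is read off at once.

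That stronger statement is useful in this paper. The paper later proves the same formula only for a primitive root modulo a prime (Lemma~\ref{ordgm}, by essentially your argument). Yet Proposition~\ref{poly7} applies it to an arbitrary element of order $k$, which is exactly the case your proof covers.

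Your argument also tightens the paper's forward direction. There the order of $a^m$ is asserted to equal $k/d$, although only $\ord_n(a^m)\le k/d$ is actually established; your proof supplies the equality.
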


\begin{proof}
Let $d=\gcd(m,k)$ and let $q= \ord_n (a^m )$. 
$ $ \\ $ $ 
We need to show that $q=k$
if and only if $d=1$.
$ $ \\ $ $
$\Rightarrow$.
$ $ \\ $ $
Let $q=k$. We will show that $d=1$. Let 
$d>1$. Then $k=d x$ and $m=d y $,
for some integer $x,y$. Note that 
$x < k$ and $y<m$. Then we have
\[
  ( a^m)^x \equiv a^{mx} \equiv a^{dyx} 
           \equiv (a^{k})^y \equiv 1 \pmod n
\]
Therefore the order $q$ of $a^m$ is $x$, where 
$x< k$, is smaller than the order $q=k$ of $a^m$. 
This is impossible by the minimality of
$q=k$, $q= \ord_n (a^m )$.
Thus it should be $x=k=q$ which implies $d=1$. Result shown.
$ $ \\ $ $
$\Leftarrow$.
$ $ \\  $ $
If $d=1=\gcd(m,k)$ we show that for 
$q= \ord_n (a^m )$ we have $q=k$. 
Since $q=\ord_n (a^m )$ we have
\[
 a^{mq} \equiv (a^m)^q \equiv 1 \pmod n
\]
This means that $\dv{k}{mq}$ from Theorem~\ref{thm51}.
Since $d=\gcd(k,m)=1$, we have $\dv{k}{q}$. 
Thus $k\leq q$.  We also have 
\[
(a^m)^k \equiv (a^k)^m \equiv 1^m \equiv  \pmod n
\]
Thus $\dv{q}{k}$ i.e. $q\leq k$. 
From $k\leq q$ and $q\leq k$ the result $k=q$ follows.
\end{proof}

\begin{cor}
\label{cor17}
Let $n \in \mb{N}$ with $n>1$ and let $a,b \in \mb{Z}$
with $\gcd(a,n)=1$ and $\gcd(b,n)=1$,
For $\ord_n (a) =k$ and $\ord_n (b) = l$ 
if $\gcd(k,l)=1$ then $\ord_n(ab)= kl$.
\end{cor}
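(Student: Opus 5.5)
Let $q=\ord_n(ab)$. The plan is to show $\dv{q}{kl}$ and $\dv{kl}{q}$, and then conclude $q=kl$ by Theorem~\ref{podiv}(f3), since both are positive. Throughout, the tool is Theorem~\ref{thm52}: for any integer $m$ and unit $c$, $c^m\equiv 1 \pmod n$ if and only if $\ord_n(c)$ divides $m$. Note first that $ab$ is a unit mod $n$, because $a,b$ are units and products of units are units. Hence $\ord_n(ab)$ is defined.

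First, $\dv{q}{kl}$. By multiplicativity of congruences,
\[
(ab)^{kl}\equiv (a^k)^l (b^l)^k \equiv 1\cdot 1\equiv 1 \pmod n,
\]
so Theorem~\ref{thm52} gives $\dv{q}{kl}$.

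Second, $\dv{k}{q}$ and $\dv{l}{q}$. From $(ab)^q\equiv 1 \pmod n$, raise both sides to the power $l$ to get $a^{ql}b^{ql}\equiv 1$. Since $b^{ql}=(b^l)^q\equiv 1$, this reduces to $a^{ql}\equiv 1 \pmod n$. Theorem~\ref{thm52} then gives $\dv{k}{ql}$, and because $\gcd(k,l)=1$, Corollary~\ref{egcd3} yields $\dv{k}{q}$. The symmetric argument, raising to the power $k$, gives $b^{qk}\equiv 1$, hence $\dv{l}{qk}$ and so $\dv{l}{q}$. Since $\gcd(k,l)=1$, Corollary~\ref{thm25} gives $\dv{kl}{q}$.

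The only real obstacle is the step of isolating a single power of $a$ from $(ab)^q\equiv 1$. Raising to the power $l$ to kill $b$, instead of trying to cancel $b$ directly, handles it cleanly. The remaining steps are direct applications of the earlier divisibility corollaries.
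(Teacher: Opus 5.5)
Your proof is correct and follows the paper's argument step for step: show $\ord_n(ab)$ divides $kl$, raise $(ab)^q\equiv 1$ to the powers $l$ and $k$ to isolate $a$ and $b$, and use $\gcd(k,l)=1$ to get $k\mid q$ and $l\mid q$, hence $kl\mid q$. The only cosmetic difference is at the end: you conclude by mutual divisibility (Theorem~\ref{podiv}(f3)), whereas the paper compares $kl\le m$ with $kl\ge m$.
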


\begin{proof}
Let $\ord_n(ab)=m$. Then
\[
(ab)^{kl} \equiv (a^{k})^l (b^l)^k \equiv 1 \pmod n
\]
Therefore $\dv{m}{kl}$. Moreover
\[
1 \equiv ((ab)^{m})^k = (a^k)^m  (b^{km}) \equiv b^{km} \pmod n
\]
This means $\dv{l}{km}$. Since $\gcd(k,l)=1$ we have $\dv{l}{m}$.
Likewise, if we consider $((ab)^{m})^l$ instead, we conclude
$\dv{k}{m}$ instead. Since $\gcd(k,l)=1$ we have 
$\dv{kl}{m}$. This implies $kl \leq m$. But since $m$ is the
order of $ab$ we must also have, by the first derivation
above, $kl\geq m$. Thus $kl=m$.
\end{proof}

\begin{lem}
\label{lem52a}
For $n \in \mb{N}$, and $a \in \mb{Z}$ such
that $\gcd(a,n)=1$. Let $ord_n (a) = k$,
Then for all $b,c \in \mb{N}$ and $a$
the following applies.
\begin{equation}
\label{thm52a}
a^b \equiv a^c \pmod n  \Longleftrightarrow
b   \equiv c   \pmod k .
\end{equation}
\end{lem}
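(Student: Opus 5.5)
The plan is to reduce both directions of Eq.(\ref{thm52a}) to Theorem~\ref{thm52}, which says that $a^m \equiv 1 \pmod n$ if and only if $\dv{k}{m}$, where $k=\ord_n(a)$. The one tool needed besides that theorem is that $a$ is a unit modulo $n$. Since $\gcd(a,n)=1$, the paper's results on inverses give an $a^{-1}$ with $a a^{-1} \equiv 1 \pmod n$, so any power $a^j$ can be cancelled from both sides of a congruence mod $n$. Alternatively, the Cancellation law applies, since $\gcd(a^j,n)=1$.

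By symmetry of both congruences, assume without loss of generality that $b \geq c$ and set $m=b-c\geq 0$.

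For $\Leftarrow$: if $b\equiv c \pmod k$ then $\dv{k}{m}$, so $m=ks$ with $s\geq 0$. Then
\[
a^b = a^{c}\,(a^{k})^{s} \equiv a^c \cdot 1 \equiv a^c \pmod n .
\]
This direction needs no cancellation at all; it is also exactly the easy direction of Theorem~\ref{thm52}.

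For $\Rightarrow$: suppose $a^b\equiv a^c \pmod n$, that is, $a^c a^m \equiv a^c \cdot 1 \pmod n$. Since $\gcd(a^c,n)=1$, which follows from $\gcd(a,n)=1$ by repeated use of the property that $\gcd(a,n)=1=\gcd(x,n)$ implies $\gcd(ax,n)=1$, we can cancel $a^c$. This gives $a^m\equiv 1 \pmod n$. Theorem~\ref{thm52} then yields $\dv{k}{m}$, that is, $b\equiv c \pmod k$.

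The only real obstacle is justifying the cancellation of $a^c$. I would handle it by multiplying both sides by $(a^{-1})^c$, which exists because $a$ is a unit, rather than by invoking a gcd statement about powers. This avoids any subtlety in showing $\gcd(a^c,n)=1$ and keeps the argument inside results already stated. The WLOG step for $b<c$ is harmless because both congruence relations are symmetric.
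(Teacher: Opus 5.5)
Your proposal is correct and follows essentially the same route as the paper: you assume $b\ge c$, get the easy direction from $a^b=a^c(a^k)^s$, and for the converse cancel $a^c$ to reach $a^{b-c}\equiv 1 \pmod n$ and conclude $\dv{k}{b-c}$. The differences are cosmetic: the paper redoes the last step by dividing $b-c$ by $k$ and using the minimality of $k$, rather than citing Theorem~\ref{thm52}, and it justifies the cancellation only by noting that $\gcd(a,n)=1$ gives $n\nmid a^c$, which does not by itself license cancelling $a^c$, so your multiplication by $(a^{-1})^c$ is the cleaner justification.
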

\begin{proof}
$\Leftarrow$.
If $b<c$ we swap, and thus
in the remainder $b \geq c$.
If $b \equiv c \pmod n$ then there exists an $l$
such that $b-c=k l$. Since $b\geq c$, then $l \geq 0$.
We then obtain the following.
\[
a^b = a^{kl+c} = a^c (a^k )^l \equiv a^c \cdot 1 \pmod n.
\]
This implies the following.
\[
a^b  \equiv a^c  \pmod n ,
\]
and the result is proven.
$ $ \\ $ $
$\Rightarrow$.
Let $
a^b  \equiv a^c  \pmod n $.
Then the following are applicable.
\begin{eqnarray*}
a^b              \equiv a^c  \pmod n  &\Leftrightarrow&
a^b - a^c        \equiv   0  \pmod n  \Leftrightarrow \\
(a^{b-c} -1) a^c \equiv   0  \pmod n  &\Leftrightarrow&
(a^{b-c} -1)     \equiv   0  \pmod n  \Leftrightarrow \\
 a^{b-c}         \equiv   1  \pmod n  ,
\end{eqnarray*}
where the last two equivalences is due to the fact
$\gcd(a,n)=1$ and thus $\ndv{n}{a^c}$.
Consider $b-c$. Then $b-c = k l + m$, where $0 \leq m < k$.
Then the following are applicable.
\begin{eqnarray*}
 a^{b-c}         \equiv   1  \pmod n   &\Leftrightarrow&
 a^{kl+m}        \equiv   1  \pmod n   \Leftrightarrow \\
 a^{kl} a^m      \equiv   1  \pmod n   &\Leftrightarrow&
 (a^{k})^l a^m   \equiv   1  \pmod n   \Leftrightarrow \\
           a^m   \equiv   1  \pmod n  .
\end{eqnarray*}
The last inequality derives from the fact $k$ is the
smallest positive integer $a^k \equiv 1 \pmod n$.
Note that $m$ can be $0$ or $1, \ldots , k-1$.
It cannot be $1, \ldots , k-1$ because the smallest
$m$ with $ a^m   \equiv   1  \pmod n$ that is a positive
integer is $k$ and nothing smaller such as $m$.
Thus the only other possibility is for $m=0$.
The $b-c =k l + 0 = kl$ therefore $\dv{k}{b-c}$ i.e
\[
  b -c \equiv 0 \pmod k  \Leftrightarrow
  b    \equiv c \pmod k,
\]
as needed. This completes the proof.
\end{proof}

\begin{exa}
For $n \in \mb{N}$, and $a \in \mb{Z}$ such
that $\gcd(a,n)=1$ and $ord_n (a) = k$,
the following applies.
This is Theorem~\ref{thm52}.
\begin{equation}
\label{thm52b}
 a^m \equiv 1 \pmod n  \Longleftrightarrow
 \dv{k}{m}.
\end{equation}
\end{exa}
\begin{solution}
 The choice of $k$ is such that $a^k \equiv 1 \pmod n$
and this is the smallest positive integer.
Consider an $m$ such that
$a^m \equiv 1 \pmod n$. Then
$1\equiv a^m \equiv a^k \pmod n$ and by the previous 
Lemma
we have
$m \equiv k \pmod k$. This implies $m-k \equiv 0 \pmod k$
and therefore $\dv{k}{m}$.
%$ $ \\ $ $
%(d) Immediate from part (c).
%$ $ \\ $ $
%{\bf Alternative proof for (c).}
%Let $b=k k_1 +r $, where $0 \leq r < k$.
%Then
%\[
%a^b \equiv a^{kk_1 +r} \equiv (a^k )^{k_1} a^r
%\equiv a^r \pmod n .
%\]
%Note that in $ a^b \equiv a^r \pmod n$,
%we have $r< k$.
%If $a^b \equiv 1 \pmod n$ then
%   $a^r \equiv 1 \pmod n$,
%and we have found an $0< r< k$, an impossibility to the
%positive minimality of $k$.
%Otherwise $r=0$ but then $b=k k_1$ which implies
%$\dv{k}{b}$ as needed and we are done.
\end{solution}

\begin{exa}
If $n \in \mb{N}$, and $a \in \mb{Z}$ such
that $\gcd(a,n)=1$ and $ord_n (a) = k$, 
then
 $\dv{k}{\phi (n)}$ or in other
words $\dv{ord_n(a)}{\phi(n)}$.
\end{exa}
\begin{solution}
%It follows directly from Theorem~\ref{thm52}
%and Lemma~\ref{lem52a}.
%Since by Euler's theorem
%\[
% a^{\phi (n)} \equiv 1 \pmod n,
%\]
%part (c), with $b=\phi (n)$
%concludes that $\dv{k}{\phi (n)}$.
This is Corollary~\ref{cor52}.
$ $ \\ $ $
A more direct alterantive approach follows.
Consider minimum $k=ord_n (a)$ such that
\[
 a^k \equiv 1 \pmod n ,
\]
\[
 a^{\phi(n)} \equiv 1 \pmod n.
\]
We have $k \leq \phi(n)$.
Then let $\phi (n) = A k + r $,
where $0 \leq r < k = ord_n (a)$.
We have the following
\[
a^{\phi(n)} \equiv 1 \pmod n  \Leftrightarrow
a^{Ak+r}    \equiv 1 \pmod n  \Leftrightarrow
(a^k)^A a^r \equiv 1 \pmod n  \Leftrightarrow
        a^r \equiv 1 \pmod n ,
\]
and the last one if $r>0$ then  
$r< k = ord_n (a)$, a contradiction
to the minimality of $k$. Thus $r=0$ is the only possibility
leading to $\phi (n) = A k $ and thus $\dv{ord_n(a)}{\phi(n)}$.
$ $ \\ $ $
\end{solution}

Below $p$ is a prime number $p \in \mb{N}$, 
and $a \in \mb{U}_p$ implies
$\gcd(a,p)=1$.

\begin{prp}
\label{prp3}
Let $a \in \mb{U}_p $,
where $p$ is an odd prime number.
Then we have the following
\begin{equation}
\label{ordpsquared}
ord_{p^2} (a) = ord_p (a)
\quad \vee \quad
ord_{p^2} (a) = p \cdot ord_p (a) .
\end{equation}
\end{prp}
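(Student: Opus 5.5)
Let $k=\ord_p(a)$ and $K=\ord_{p^2}(a)$. Since $\gcd(a,p)=1$ we also have $\gcd(a,p^2)=1$, so both orders are defined. The plan is to trap $K$ between $k$ and $pk$ using Theorem~\ref{thm52} twice, and then use that $p$ is prime.

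First we show $\dv{k}{K}$. From $a^K\equiv 1 \pmod{p^2}$ we get $\dv{p^2}{a^K-1}$, and since $\dv{p}{p^2}$, also $a^K\equiv 1\pmod p$. Theorem~\ref{thm52} applied modulo $p$ then gives $\dv{k}{K}$.

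Next we show $\dv{K}{pk}$. From $a^k\equiv 1\pmod p$ we have $a^k\equiv 1^k \pmod{p^1}$. The earlier proposition ``if $a\equiv b\pmod{p^n}$ then $a^p\equiv b^p\pmod{p^{n+1}}$'', applied with $n=1$ to the pair $a^k$ and $1$, yields $(a^k)^p\equiv 1\pmod{p^2}$, that is $a^{pk}\equiv 1 \pmod{p^2}$. Theorem~\ref{thm52} applied modulo $p^2$ then gives $\dv{K}{pk}$.

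Now write $K=kt$ for a positive integer $t$. From $\dv{kt}{pk}$ and $k>0$, part (g) of Theorem~\ref{podiv} gives $\dv{t}{p}$. Since $p$ is prime, its only positive divisors are $1$ and $p$, so $t=1$ or $t=p$. This gives $K=k$ or $K=pk$, which is Eq.~(\ref{ordpsquared}).

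The only step that needs care is the lifting step $a^{pk}\equiv 1\pmod{p^2}$. The cited proposition's proof is somewhat loose, so if it feels unreliable I would verify it directly. Write $a^k=1+mp$ and expand with the binomial theorem:
\[
(1+mp)^p = 1 + p\cdot mp + \binom{p}{2}m^2p^2+\cdots \equiv 1 \pmod{p^2}.
\]
Every term after the first contains $p^2$. This expansion works for every prime $p$, so the oddness hypothesis is not actually needed here.
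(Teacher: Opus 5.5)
Your proof is correct and follows the paper's argument essentially step for step. The paper also shows $k \mid \ord_{p^2}(a)$ by reducing modulo $p$, obtains $\ord_{p^2}(a) \mid kp$ from $a^{kp} = (1+Ap)^p \equiv 1 \pmod{p^2}$, and then concludes from $Kk \mid kp$ that $K \in \{1,p\}$; your remark that the oddness of $p$ is never actually used is also right.
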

\begin{proof}
Let
\begin{eqnarray}
\label{pp2a}
ord_p (a) = k &\Rightarrow&  a^k \equiv 1 \pmod{p}
\quad , \quad \\
\label{pp2b}
ord_{p^2} (a) =l   &\Rightarrow&  a^l \equiv 1 \mod{p^2} .
\end{eqnarray}
By way of Eq.(\ref{pp2a}) we have that there exists $A$
such that
\begin{eqnarray}
a^k - 1 = Ap &\Rightarrow& a^{kp} = (1+Ap)^p \equiv 1 \pmod{p^2}
\nonumber \\
             &\Rightarrow& \dv{ord_{p^2} (a)}{kp}
\nonumber \\
\label{pp2c}
             &\Rightarrow&  \dv{l}{kp} .
\end{eqnarray}
Furthermore,
\begin{eqnarray}
a^l - 1 = B p^2 &\Rightarrow& a^l \equiv 1 \pmod p
\nonumber \\
             &\Rightarrow& \dv{ord_{p} (a)}{l}
\nonumber \\
\label{pp2d}
             &\Rightarrow&  \dv{k}{l} .
\end{eqnarray}
By way of Eq.(\ref{pp2d}) we have
$l=K k $ for some integer $K$.
By way of Eq.(\ref{pp2c}) we have
\[
\dv{l}{kp}
\Rightarrow
\dv{Kk}{kp}
\Rightarrow
\dv{K}{p}
\Rightarrow
K=1 \quad \vee K=p,
\]
given that $p$ is a prime number.
Since $l=K k$ this leads to either $l=k$ or $l=pk$
and the result is proven.
\end{proof}

\begin{prp}
\label{prp4}
Let $p$ be a prime number.
Let $q$ be another prime number such that
$\dv{q^r}{p-1}$ for some positive integer $r$.
Then there exists
a $g \in \mb{U}_p$ such that
\begin{equation}
\label{ordgpqr}
 ord_p (g) = q^r .
\end{equation}
\end{prp}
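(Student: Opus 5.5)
The plan is to find an element $h\in\mb{U}_p$ whose $(p-1)/q^r$-th power is not a $q^{r-1}$-th ``degenerate'' power, and then take $g = h^{(p-1)/q^r}$. The tool for counting is polynomial roots modulo the prime $p$. We use the fact that a nonzero polynomial of degree $m$ has at most $m$ roots modulo $p$. This holds because $\mb{Z}/p\mb{Z}$ is a field: if $f(c)\equiv 0$, then $f(x)=(x-c)f_1(x)$, and since there are no zero divisors mod $p$ (the prior theorem for prime $p$), induction on $m$ gives the bound. This fact is implicit in Theorem~\ref{thm47} and Theorem~\ref{flt2}, but we will state and prove it briefly.

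Write $p-1 = q^r s$. Consider the polynomial $f(x) = x^{q^{r-1}s} - 1$. Its degree is $q^{r-1}s = (p-1)/q < p-1$, so it has at most $(p-1)/q$ roots mod $p$. Since $\mb{U}_p=\{1,\ldots,p-1\}$ has $p-1$ elements, some $h\in\mb{U}_p$ satisfies $h^{(p-1)/q}\not\equiv 1 \pmod p$. Set $g = h^{s}$.

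Next we compute the order of $g$. By Fermat's little theorem (Theorem~\ref{flt}),
\[
g^{q^r} = h^{sq^r} = h^{p-1}\equiv 1 \pmod p,
\]
so by Theorem~\ref{thm52} we get $\ord_p(g)\mid q^r$. Since $q$ is prime, $\ord_p(g)=q^j$ for some $0\le j\le r$. If $j<r$, then $q^j \mid q^{r-1}$, so $g^{q^{r-1}}\equiv 1$, that is, $h^{sq^{r-1}} = h^{(p-1)/q}\equiv 1 \pmod p$. This contradicts the choice of $h$. Hence $\ord_p(g)=q^r$, and $g\in\mb{U}_p$ because $\gcd(g,p)=1$, as $p\nmid h$.

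The main obstacle is the root-counting lemma. The paper has not yet established it in general; only the degree-2 case appears, in Theorem~\ref{thm47}. The first thing I would do is give the short factor-theorem induction, namely polynomial division by $x-c$ with integer coefficients followed by the no-zero-divisors property mod $p$. The rest is a routine application of Theorem~\ref{thm52} and Fermat.
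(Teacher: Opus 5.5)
Your proof is correct, but it takes a different route from the paper. The paper counts roots exactly. It invokes the fact that for $\dv{d}{p-1}$ the congruence $x^d\equiv 1 \pmod p$ has exactly $d$ solutions (Proposition~\ref{poly3}, which appears only later in the text). It applies this to $d=q^r$ and $d=q^{r-1}$, and concludes that $q^r-q^{r-1}$ units satisfy $g^{q^r}\equiv 1$ but $g^{q^{r-1}}\not\equiv 1$; each of these has order $q^r$.

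You instead use only the upper-bound half of Lagrange's theorem, once, on $x^{(p-1)/q}-1$. This gives a unit $h$ with $h^{(p-1)/q}\not\equiv 1$, which you push down to $g=h^{s}$. Fermat's little theorem and Theorem~\ref{thm52} then force $\ord_p(g)=q^r$.

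Your route is more economical and self-contained at this point in the text. It never needs the exact-count lemma, whose proof requires factoring $x^{p-1}-1=(x^d-1)(\cdots)$ and comparing the root counts of the two factors. It also avoids the paper's forward reference. The paper's route yields more: the exact number $q^r-q^{r-1}=\phi(q^r)$ of elements of order $q^r$, which is the counting pattern reused in Proposition~\ref{poly5} and Corollary~\ref{poly6}. The paper's closing remark $q^r-q^{r-1}>1$ is false when $q^r=2$, though only $\geq 1$ is needed; your argument does not face this issue.

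One correction to your commentary: the general root bound is in the paper, as Proposition~\ref{poly1} and Theorem~\ref{poly2lag}. It simply comes after this proposition, so proving it inline, as you plan, is reasonable.
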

\begin{proof}
Let us assume that there exists a $g$ such that
\begin{eqnarray}
\label{g1aa}
g^{q^r} \equiv 1 \pmod p .
\end{eqnarray}
Then
\[
 \dv{ord_p (g)}{q^r} .
\]
Then,
let $ord_p(g) = q^k$ for some $0 < k \leq r$.
We obtain the following.
\[
g^{q^k} \equiv 1 \pmod p
\Leftrightarrow
g^{q^j} \equiv 1 \pmod p , \quad \text{for} \quad j \geq k.
\]
The latter is because
\[
g^{q^j} \equiv g^{q^{j-k+k}}
        \equiv (g^{qk})^{q^{j-k}}
        \equiv (1)^{q^{j-k}} \equiv 1 \pmod p.
\]
If $ord_p(g) = q^k$ for some $0 < k  <   r$
(note that it is  $k < r$ not $k \leq r$),
then
\[
ord_p(g) = q^k \Rightarrow
 g^{q^{r-1}} \equiv 1 \pmod p  \quad \text{for} k < r.
\]
We address now the question.
\begin{que}
\label{g1q}
Does there exist a $g$ such that
\begin{eqnarray}
\label{g1b1}
g^{q^r} &\equiv& 1 \pmod p  \\
\label{g1b2}
g^{q^{r-1}} &\not\equiv& 1 \pmod p
\end{eqnarray}
\end{que}
Equation (\ref{g1b1}) is equivalent
to
$ g^{q^r} -1 \equiv 0 \pmod p  $.
Thus $g$ is a root of the
$ x^{q^r} -1 \equiv 0 \pmod p  $ and by
the previous Corollary to Lagrange's theorem
we know that there are $q^r$ roots provided that
$\dv{q^r}{p-1}$ which is indeed the case.
$ $ \\ $ $
Equation (\ref{g1b2}) generates polynomial
$ x^{q^{r-1}} - 1 \equiv 0 \pmod p  $.
Another application of the previous
Corollary to Lagrange's theorem indicates
that there are $q^{r-1}$ solutions to this
modular equation since $\dv{q^{r-1}}{q^r}$
and $\dv{q^r}{p-1}$ imply
$\dv{q^{r-1}}{p-1}$.
Thus there exist $q^r - q^{r-1}$ of the former
roots that satisfy the inequatlity
of Eq. (\ref{g1b2}).
Thus the way to find a $g$ as needed is to go
through the elements of $\mb{U}_p$ and answer
positively the question posed. Note
that $q^r - q^{r-1} >1$ and thus a $g$ can be
found, eventually.
\end{proof}

\section{Primitive roots}

A unit $g$ mod $n$ is a $g \in \mb{Z}_n$ such that $\gcd(g,n)=1$.
By Euler's theorem $g^{\phi (n)} \equiv 1 \pmod{n}$.

A unit $g \bmod n$ is a primitive root if its order is $\phi(n)$.

\begin{dfn}[Primitive root mod $n$]
Let $n \in \mb{N}$ with $n>1$. An integer $g \in \mb{Z}$ is
a primitive root modulo $n$ if $g \bmod n$ is a unit and thus
$\gcd(g,n)=1$ and by extension $g \bmod n \in  \mb{U}_n $ 
and has order $\phi(n)$.
\end{dfn}

A  unit $g \pmod n$ is a primitive root if it 
generates all $\mb{U}_n$.
For this reason $g$ is called a generator of   $\mb{U}_n$.
That is $\mb{U}_n = \{ 1, g , g^2 , \ldots , g^{\phi(n)-1} \}$.
This implies that $\ord_n (g) = \phi(n)$.
Moreover $\mb{U}_n$ is cyclic and $g$ is its generator.

\begin{exa}      
(Note that $ 3^2$ implies a $3^2 \pmod 7$.)
For $n=7$ we have 
\[
\mb{Z}_7 - \{ 0 \} =
\mb{U}_7 = \{ 1, 3^1 , 3^2 , 3^3 , 3^4 , 3^5  \}
    = \{ 1,  3  , 2   , 6   ,  4  , 5    \}.
\]
Thus $g=3$ is a primitive root.
\end{exa}

\begin{cor}
\label{nprimroots}
If $\mb{U}_n$ has a primitive root $g$, 
then all the primitive roots of $\mb{U}_n$ are 
those $g^q$ such that $\gcd(q,\phi(n))=1$.
In particular, there are $\phi(\phi(n))$ primitive roots 
$\bmod n$.
\end{cor}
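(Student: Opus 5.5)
Since $g$ is a primitive root, $\ord_n(g)=\phi(n)$. The plan is to show that every element of $\mb{U}_n$ is a power $g^q$, and then to read off which powers have full order by applying Proposition~\ref{prp2} with $k=\phi(n)$.

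First I establish that the powers $g^0, g^1, \ldots, g^{\phi(n)-1}$ are pairwise incongruent mod $n$. Lemma~\ref{lem52a} with $k=\phi(n)$ gives $g^b\equiv g^c \pmod n$ if and only if $b\equiv c \pmod{\phi(n)}$. Exponents in $\{0,\ldots,\phi(n)-1\}$ are distinct mod $\phi(n)$, so these are $\phi(n)$ distinct residues. Each $g^q$ is a unit, since products of units are units by the theorem that $ab\in\mb{U}_n$ for $a,b\in\mb{U}_n$. Because $|\mb{U}_n|=\phi(n)$, this list is exactly $\mb{U}_n$. Hence every unit, and in particular every primitive root, can be written as $g^q$ with $0\le q<\phi(n)$.

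Next, Proposition~\ref{prp2} with $a=g$, $k=\phi(n)$, $m=q$ says $g^q$ has order $\phi(n)$ if and only if $\gcd(q,\phi(n))=1$. So $g^q$ is a primitive root exactly when $\gcd(q,\phi(n))=1$. Together with the first step, the primitive roots are precisely the $g^q$ with $\gcd(q,\phi(n))=1$.

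For the count, the first step shows $q\mapsto g^q$ is a bijection from $\{0,\ldots,\phi(n)-1\}$ onto $\mb{U}_n$. The number of primitive roots therefore equals the number of $q$ in $\{0,\ldots,\phi(n)-1\}$ coprime to $\phi(n)$. Replacing $q=0$ by $q=\phi(n)$ changes nothing for $\phi(n)>1$. This number is $\phi(\phi(n))$ by the corollary to Theorem~\ref{phit1}, i.e.\ the interpretation of $\phi$ as the count of residues coprime to its argument. When $\phi(n)=1$, the only unit is $1=g$ and $\phi(1)=1$, so the count still holds.

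The main obstacle is the first step, that every unit is a power of $g$. Its proof rests on Lemma~\ref{lem52a} giving injectivity and on the cardinality $|\mb{U}_n|=\phi(n)$. A minor point is that the proof of Proposition~\ref{prp2} in the paper cites the wrong theorem; what it actually needs is Theorem~\ref{thm52}, which I will invoke directly if needed.
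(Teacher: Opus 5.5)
Your proof is correct and follows the same route as the paper: Proposition~\ref{prp2} with $k=\phi(n)$ identifies which powers $g^q$ are primitive roots, and the count $\phi(\phi(n))$ then comes from every unit being a power of $g$. The only difference is that you actually justify that last fact (injectivity of $q\mapsto g^q$ on $\{0,\ldots,\phi(n)-1\}$ via Lemma~\ref{lem52a}, plus $|\mb{U}_n|=\phi(n)$) and you pin down the range of exponents being counted, both of which the paper merely asserts; your remark that the proof of Proposition~\ref{prp2} should cite Theorem~\ref{thm52} rather than Theorem~\ref{thm51} is also right.
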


\begin{proof}
If $g$ is a primitive root the $\ord_n (g) = \phi(n)$.
By Proposition~\ref{prp2} $ord_n (g^k ) =\phi(n)$ if and
only if $\gcd(k,\phi(n))=1$. The number of values $k$ such that
this is true is $\phi (\phi(n))$. All elements 
of $\mb{U}_n$ are of the form $g^i$ and will 
thus be found this way.
\end{proof}

\begin{exa}
If $g$ is a primitive root of $\mb{U}_n$ then
$g^m$ has order $\phi(n)$ if and only if 
$\gcd(m,\phi(n))=1$.
\end{exa}
\begin{solution}
This is a consequence of Proposition~\ref{prp2}
for $ord_n (g) = \phi = k$ there.
\end{solution}

\subsection{Some auxiliary results mod prime $p$}

\begin{lem}
If $p$ is a prime number and let $g$
be a primitive root mod $p$,
then $g^i \not\equiv g^j$ mod $p$ for
all $i,j$ such that $0 \leq i < j < p-1$.
\end{lem}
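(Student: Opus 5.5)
The plan is a proof by contradiction that uses the minimality in the definition of order together with Lemma~\ref{lem52a}. Since $p$ is prime and $g$ is a primitive root mod $p$, we have $\gcd(g,p)=1$ and $\ord_p(g)=\phi(p)=p-1$. Suppose, for contradiction, that $g^i \equiv g^j \pmod p$ for some $0 \leq i < j < p-1$.

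The first route is a direct appeal to Lemma~\ref{lem52a} with $n=p$, $a=g$, $k=\ord_p(g)=p-1$, and exponents $b=j$, $c=i$, both in $\mb{N}$. The lemma turns $g^j \equiv g^i \pmod p$ into $j \equiv i \pmod{p-1}$, that is, $\dv{p-1}{j-i}$. But $0 < j-i \leq p-2 < p-1$. By Theorem~\ref{podiv}(f2), a positive multiple of $p-1$ is at least $p-1$, so this is a contradiction.

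For a self-contained version that avoids the lemma, I will cancel $g^i$ directly. From $g^j \equiv g^i \pmod p$ we get $g^i(g^{j-i}-1)\equiv 0 \pmod p$. Since $\gcd(g,p)=1$, we also have $\gcd(g^i,p)=1$: any prime dividing both would divide $g$ by Lemma~\ref{pfactor}, and the only prime dividing $p$ is $p$ itself. The cancellation law then gives $g^{j-i}\equiv 1 \pmod p$ with $0<j-i<p-1$. This contradicts the fact that $p-1=\ord_p(g)$ is the smallest positive exponent giving $1$.

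The only step needing care is justifying the cancellation, namely that $g^i$ is a unit mod $p$. This follows immediately, since products of units are units by the theorem on $\mb{U}_n$. The rest is bookkeeping with the range of $j-i$, in particular excluding $j-i=0$, which is ruled out because $i<j$.
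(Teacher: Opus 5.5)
Your proof is correct, and your self-contained version is essentially the paper's argument. The paper multiplies $g^i \equiv g^j \pmod p$ by $(g^{-1})^i$ to get $g^{j-i}\equiv 1 \pmod p$ with $0<j-i<p-1$, contradicting $\ord_p(g)=p-1$, which is exactly what your cancellation of the unit $g^i$ achieves. Your first route through Lemma~\ref{lem52a} is also valid; it simply packages the same cancellation-plus-minimality step inside that lemma.
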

\begin{proof}
Suppose that $i,j$ are such that
\[
g^i \equiv g^j \pmod p .
\]
Then let $g^{-1}$ be the inverse of $g$ mod $p$,
that exists since $\gcd(g,p)=1$ and we then have
the following utilizing the previous assumption.
\[
(g) (g^{-1}) \equiv 1 \pmod p \Leftrightarrow
1 \equiv (g^i) (g^{-1})^i     \Leftrightarrow
1 \equiv (g^j) (g^{-1})^i     \Leftrightarrow
1 \equiv g^{j-i}.
\]
Then, since $ 0 < j-i \leq j < p-1$, $g$ can't be
a primitive root mod $p$.
\end{proof}

\begin{lem}
\label{ordgm}
If $p$ is a prime number and let $g$
be a primitive root mod $p$,
then $g^m$ has an  order as follows.
\begin{equation}
 ord_p (g^m ) = \frac{p-1}{\gcd(m,p-1)}.
\end{equation}
\end{lem}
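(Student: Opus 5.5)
Set $d=\gcd(m,p-1)$ and $k=\ord_p(g^m)$. Since $g$ is a primitive root mod $p$, $\ord_p(g)=\phi(p)=p-1$. The plan is to show that $k$ and $(p-1)/d$ divide each other. Both are positive, so (f3) of Theorem~\ref{podiv} then gives $k=(p-1)/d$.

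\emph{Step 1: $k$ divides $(p-1)/d$.} Write $m=d\,m_1$ and $p-1=d\,n_1$, where $n_1=(p-1)/d$ is an integer. Then
\[
(g^m)^{n_1} = g^{d m_1 n_1} = (g^{p-1})^{m_1} \equiv 1 \pmod p .
\]
By Theorem~\ref{thm52} applied to the element $g^m$, whose order is $k$, this gives $\dv{k}{n_1}$.

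\emph{Step 2: $n_1$ divides $k$.} By definition of $k$ we have $g^{mk}=(g^m)^k\equiv 1 \pmod p$. Applying Theorem~\ref{thm52} to $g$, whose order is $p-1$, yields $\dv{p-1}{mk}$, that is, $\dv{d n_1}{d m_1 k}$. Cancelling $d\neq 0$ by Theorem~\ref{podiv}(g) gives $\dv{n_1}{m_1 k}$. By Theorem~\ref{gcddi}, $\gcd(m_1,n_1)=\gcd(m/d,(p-1)/d)=1$. Corollary~\ref{egcd3} then gives $\dv{n_1}{k}$.

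The only step needing care is Step 2, where we must pass from $\dv{p-1}{mk}$ to $\dv{n_1}{k}$. This is handled by dividing out $d$ and using the coprimality of $m/d$ and $(p-1)/d$ from Theorem~\ref{gcddi}. Two edge cases should be checked. If $m=0$, then $d=p-1$, $g^0=1$ has order $1$, and $(p-1)/(p-1)=1$, so the formula holds. If $m<0$, the power $g^m$ is interpreted via $g^{-1}$, and the same argument goes through since $\gcd(m,p-1)=\gcd(|m|,p-1)$ by Fact~\ref{sgcdf}(ii). Otherwise we take $m\geq 1$.
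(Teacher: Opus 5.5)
Your proof is correct and follows essentially the same route as the paper's: one direction comes from $(g^m)^{(p-1)/d}\equiv 1 \pmod p$, and the other from $p-1 \mid mk$ after dividing out $d$ and using $\gcd(m/d,(p-1)/d)=1$. The differences are cosmetic: the paper closes with the inequalities $(p-1)/d \le k \le (p-1)/d$ rather than mutual divisibility and (f3), and it does not treat the cases $m\le 0$ that you check separately.
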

\begin{proof}
Let $d=\gcd(m,p-1)$. Then there exists integers
$a,b$ such that
\[
m = d a , \quad p-1= db , \quad
\gcd(a,b)=\gcd(\frac{m}{d},\frac{p-1}{d}) =1.
\]
Let $ord_p (g^m ) = l$.
Consider $(g^m)^l$.
\[
g^{ml} \equiv (g^m)^l  \equiv 1 \pmod p ,
\]
by defition of $ord_p (g^m ) = l$. Then
$\dv{ord_p (g) }{ml}$ that is,
$\dv{p-1}{ml}$.
But then
$\dv{((p-1)/d)}{((m/d)l)}$,
and since $\gcd(\frac{m}{d},\frac{p-1}{d}) =1$
we conclude that $\dv{((p-1)/d}{l}$ or $(p-1)/d \leq l$.
$ $ \\ $ $
Furthemore consider $ (g^m)^{\frac{p-1}{d}}$.
\[
(g^m)^{\frac{p-1}{d}} \equiv
(g)^{m\frac{p-1}{d}} \equiv
(g)^{da\frac{p-1}{d}} \equiv
(g)^{(p-1)a} \equiv
(g^{p-1})^{a} \equiv
1 \pmod p,
\]
since $ord_p (g) = p-1$.
The latter implies $\dv{ord_p (g^m) }{\frac{p-1}{d}}$
and thus $l= ord_p (g^m) \leq (p-1)/d$.
From the prior $(p-1)/d \leq l$ and the current
$l= ord_p (g^m) \leq (p-1)/d$ we conclude
$l=ord_p (g^m) =(p-1)/d$, as needed.
\end{proof}

\subsection{Some auxiliary results mod $n$}

\begin{lem}
If $g$ is a primitive root mod $n$, for some
positive odd integer $n$, then
\[
 g^{\phi(n) /2} \equiv -1 \pmod n.
\]
\end{lem}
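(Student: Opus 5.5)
We would not argue directly from $x^2\equiv 1 \pmod n$, since for composite $n$ that congruence may have roots other than $\pm 1$. Instead we would use that $g$ generates all of $\mb{U}_n$: we write $-1$ as a power of $g$ and show that the exponent is forced to be $\phi(n)/2$. Throughout we assume $n>1$, so that $n\geq 3$ because $n$ is odd. (For $n=1$ the statement is degenerate.)

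First we check that $\phi(n)$ is even. Since $n\geq 3$ we have $-1\not\equiv 1 \pmod n$, and $\gcd(-1,n)=1$. Also $(-1)^2\equiv 1 \pmod n$, so $\ord_n(-1)=2$. Corollary~\ref{cor52} then gives $\dv{2}{\phi(n)}$, so $\phi(n)/2$ is an integer.

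Next, $-1 \bmod n$ is a unit and $g$ is a primitive root, so $\mb{U}_n=\{1,g,\ldots,g^{\phi(n)-1}\}$. Hence $-1\equiv g^m \pmod n$ for some $m$ with $0\leq m<\phi(n)$. From $(g^m)^2\equiv 1\equiv g^0 \pmod n$, Lemma~\ref{lem52a} (with $k=\ord_n(g)=\phi(n)$) gives $2m\equiv 0 \pmod{\phi(n)}$, that is, $\dv{\phi(n)}{2m}$. Since $0\leq 2m<2\phi(n)$, this leaves only $2m=0$ or $2m=\phi(n)$. The case $m=0$ would give $-1\equiv 1 \pmod n$, which is impossible because $n$ is odd and larger than $2$. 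Therefore $m=\phi(n)/2$, and so $g^{\phi(n)/2}\equiv -1 \pmod n$.

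The main point is the parity step: $\phi(n)/2$ must be an integer, and $-1\not\equiv 1$. Both follow from $n$ being odd with $n\geq 3$, and this is the only place where oddness of $n$ is used. The rest is a direct application of Lemma~\ref{lem52a} together with the fact that $g$ generates $\mb{U}_n$.
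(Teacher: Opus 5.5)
Your proof is correct, and it takes a different route from the paper at the key step. The paper sets $x=g^{\phi(n)/2}$ and argues that $x^2\equiv 1 \pmod n$ forces $x\equiv \pm 1 \pmod n$: it factors $(x-1)(x+1)\equiv 0 \pmod n$ and concludes that one factor is $0 \bmod n$. It also just asserts that $\phi(n)$ is even. That factoring step is the standard argument for prime $n$, but it fails for general odd $n$; for instance, $4^2\equiv 1 \pmod{15}$. The conclusion is only true under the lemma's hypothesis because an odd $n$ with a primitive root must be a prime power, and the paper does not invoke that fact at this point.

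You avoid the issue by working inside the cyclic group $\mb{U}_n$ generated by $g$. Writing $-1\equiv g^m$ and applying Lemma~\ref{lem52a} pins $m$ down to $\phi(n)/2$, using only $\ord_n(g)=\phi(n)$ and $n\geq 3$. The same computation shows that every $h$ with $h^2\equiv 1 \pmod n$ is $g^0$ or $g^{\phi(n)/2}$. So your method supplies exactly the fact the paper's argument needed but did not justify.

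One minor remark: your separate parity step is not needed. Once $m\neq 0$, the conditions $\dv{\phi(n)}{2m}$ and $0<2m<2\phi(n)$ give $2m=\phi(n)$, which already shows that $\phi(n)$ is even.
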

\begin{proof}
Let $n > 2$.
Then by Euler's theorem we have the following.
\[
  g^{\phi (n)} \equiv 1 \pmod n.
\]
Consider
\[
 x^2 \equiv 1 \pmod n.
\]
This implies the following.
\begin{eqnarray*}
 x^2 \equiv 1 \pmod n
&\Leftrightarrow&
 x^2 -1 \equiv 0 \pmod n \\
&\Leftrightarrow&
 (x-1)(x+1) \equiv 0 \pmod n \\
&\Leftrightarrow&
(x-1) \equiv 0 \pmod n
\quad \vee \quad
(x+1) \equiv 0 \pmod n \\
&\Leftrightarrow&
x \equiv 1 \pmod n
\quad \vee \quad
x \equiv -1 \pmod n .
\end{eqnarray*}
Set $x^2 = g^{\phi(n)}$ and
thus the following apply, given that
$\phi(n)$ is even and thus $\phi (n)/2$ is an
integer.
\[
g^{\frac{\phi(n)}{2}} \equiv 1 \pmod n
\quad \vee \quad
g^{\frac{\phi(n)}{2}} \equiv -1 \pmod n
\]
The former implies that the order of $g$
is no more than $ \frac{\phi(n)}{2}$
contradicting the fact that $g$ is a
primitive root.
Thus the latter applies.
\end{proof}

\begin{lem}
If $g$ is a primitive root mod $n$, for some positive
odd integer $n$,
then for $a \in \mb{Z}_n$ we have that
the modular congruence
\[
 a^2 \equiv 1 \pmod n,
\]
can have   two solutions mod $n$:
\[
a \equiv 1 \pmod n
\quad \text{and} \quad
a \equiv -1 \pmod n
\]
\end{lem}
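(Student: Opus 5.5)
The plan is to avoid factoring $(a-1)(a+1)\equiv 0 \pmod n$ directly. That step needs $n$ prime, which is exactly what the previous lemma quietly assumed. Instead I will work in the cyclic group $\mb{U}_n$ generated by $g$ and reduce everything to a divisibility statement about exponents. The case $n=1$ is trivial, since everything is congruent to everything, so assume $n>2$ odd. Then $1\not\equiv -1 \pmod n$, and both $\pm 1$ are clearly solutions, so the content is that there are no others.

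\emph{Step 1: a solution is a unit, hence a power of $g$.} If $a^2\equiv 1 \pmod n$, then $a\cdot a\equiv 1$, so $a$ has an inverse mod $n$ and $\gcd(a,n)=1$. Since $g$ is a primitive root, $\mb{U}_n=\{1,g,\ldots,g^{\phi(n)-1}\}$ as discussed after the definition of primitive root. Hence $a\equiv g^k \pmod n$ for some $0\le k<\phi(n)$.

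\emph{Step 2: translate into exponents.} We get $g^{2k}\equiv 1 \pmod n$. Since $\ord_n(g)=\phi(n)$, Theorem~\ref{thm52} gives $\dv{\phi(n)}{2k}$. Now $\phi(n)$ is even: $(-1)^2\equiv 1$ and $-1\not\equiv 1$, so $\ord_n(-1)=2$, and by Corollary~\ref{cor52} we have $\dv{2}{\phi(n)}$. Writing $\phi(n)=2h$, the condition $\dv{2h}{2k}$ gives $\dv{h}{k}$. With $0\le k<2h$ this leaves only $k=0$ or $k=h=\phi(n)/2$.

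\emph{Step 3: identify the two cases.} The case $k=0$ gives $a\equiv 1$. The case $k=\phi(n)/2$ gives $a\equiv g^{\phi(n)/2}$. To show this equals $-1$ without relying on the previous lemma, I argue as follows. The element $-1$ is itself a unit, so $-1\equiv g^{j}$ for some $0\le j<\phi(n)$. From $(-1)^2\equiv 1$, the same reasoning as in Step 2 forces $j\in\{0,\phi(n)/2\}$, and $j\neq 0$ because $-1\not\equiv 1$. So $-1\equiv g^{\phi(n)/2}$, and the two solutions are exactly $a\equiv\pm1 \pmod n$.

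The main obstacle is Step 2, specifically making sure the exponent argument is airtight. It needs $\phi(n)$ to be even, which is why I derive it from the order of $-1$ rather than assume it. It also needs the restriction $0\le k<\phi(n)$, which is justified by Lemma~\ref{lem52a} (exponents only matter modulo $\ord_n(g)$).
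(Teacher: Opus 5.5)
Your proof is correct. It shares the paper's opening: a solution is its own inverse, hence a unit, hence $a\equiv g^k$ with $g^{2k}\equiv 1 \pmod n$. The key step is handled differently.

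\textbf{The paper's route.} The paper passes from $g^{2k}\equiv 1$ to $g^k\equiv \pm 1 \pmod n$ by citing the preceding lemma. That lemma's proof splits $(x-1)(x+1)\equiv 0 \pmod n$ into $x\equiv 1$ or $x\equiv -1$. For composite $n$ this split is essentially the statement being proved, so the paper's argument is only airtight as written for prime $n$. For an odd prime power $p^e$ it can be repaired: $p$ cannot divide both $x-1$ and $x+1$, and the later Theorem~\ref{prrconpa2pa} shows an odd modulus with a primitive root is a prime power. That is extra machinery, and it is not invoked at this point.

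\textbf{Your route.} You stay inside the cyclic group $\mb{U}_n$. Theorem~\ref{thm52} turns $g^{2k}\equiv 1$ into $\phi(n)\mid 2k$. The order of $-1$ shows $\phi(n)$ is even, so $k\in\{0,\phi(n)/2\}$. Applying the same exponent argument to $-1$ itself identifies $g^{\phi(n)/2}\equiv -1$.

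\textbf{What each buys.} Your proof is self-contained: it uses only $\ord_n(g)=\phi(n)$ and needs no structure theorem for $n$. It uses oddness only to get $n>2$, so it also covers $n=4$. As a byproduct it gives a valid proof of the preceding lemma $g^{\phi(n)/2}\equiv -1 \pmod n$. The paper's version is shorter but inherits that lemma's gap.

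One small aside: your remark that the factoring step ``needs $n$ prime'' is slightly too strong, since it goes through for odd prime powers with the observation above. This does not affect your argument.
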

\begin{proof}
We note that $ a^{-1} =a $ since
\[
 a a^{-1} \equiv a \cdot a \equiv 1 \pmod n .
\]
This means if the modular equation has a solution
$a$, then  $a$ is a unit and thus $\gcd(a,n)=1$.
This is because if $\gcd(a,n)=d >1$,
then
$ a^2 \equiv 1 \pmod n$ implies
$a^2 -1 = k n$ for some integer $k$.
We have $\dv{d}{a}$, thus
$\dv{d}{a^2}$ and we also have
$\dv{d}{n}$ and thus $\dv{d}{kn}$ and thus
$\dv{d}{a^2 - kn}$ which implies
$\dv{d}{1}$ i.e. $d=1$ contradicting $d>1$.
If $a \in \mb{U}_n$, and since $g$ be a primitive root
mod $n$, there exists a $k$ such that $0< k < \phi(n)$
such that
\[
  a \equiv g^k \pmod n
\Leftrightarrow
  a^2 \equiv g^{2k} \pmod n
\Leftrightarrow
  1   \equiv g^{2k} \pmod n
\Leftrightarrow
 g^{2k} \equiv 1 \pmod n
\]
This leads (see also the previous problem) to the following.
\[
 g^{k} \equiv 1 \pmod n  , k \neq 0
\quad \vee \quad
 g^{k} \equiv -1 \pmod n , k \neq 0,
\quad \vee \quad
 k=0.
\]
The  first is dismissed by $g$ being a primitive root
and $k < \phi (n)$. The latter two are the only
remaining possibilities. But then
\[
 a \equiv g^{k} \equiv -1 \pmod n ,
\]
or for $k=0$,
\[
  a \equiv g^k \equiv  1 \pmod n
\]
The result follows.
%Let $a$ be a unit mod $n$ (i.e. $a \in \mb{Z}_n$),
%which also implies $\gcd(a,n)=1$.
%Then
%\[
% a a^{-1} \equiv 1 \pmod n,
%\]
%Thus the assumption
%\[
% a^2 \equiv a a^{-1} \pmod n,
%\]
%leads to
%$aa^{-1} - a^2 \equiv 0 \pmod n$,
%and since $\gcd(a,n) =1 $,
%we conclude
%$a^{-1} - a \equiv 0 \pmod n$ or
%$a^{-1}  \equiv a \pmod n$.
\end{proof}

\begin{prp}
If $p,q$ are prime numbers $p \neq q$,
then $\mb{U}_{pq}$ has no primitive roots.
\end{prp}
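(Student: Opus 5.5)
The plan is to show that every unit of $\mb{U}_{pq}$ has order strictly smaller than $\phi(pq)=(p-1)(q-1)$, so that no element can have order $\phi(pq)$. I will assume $p$ and $q$ are both odd. This is needed: for $p=2$ the group $\mb{U}_{2q}$ is cyclic, for example $\mb{U}_6=\{1,5\}$ is generated by $5$. With this assumption both $p-1$ and $q-1$ are even, and that is the fact the argument turns on.

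Let $a\in\mb{U}_{pq}$. By Proposition~\ref{uniteq}, $a$ is a unit modulo $p$ and modulo $q$, so $\gcd(a,p)=\gcd(a,q)=1$. Fermat's little theorem (Theorem~\ref{flt}) then gives $a^{p-1}\equiv 1 \pmod p$ and $a^{q-1}\equiv 1 \pmod q$.

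Set $L=\lcm(p-1,q-1)$. Since $(p-1)\mid L$ and $(q-1)\mid L$, raising to the appropriate powers gives $a^{L}\equiv 1\pmod p$ and $a^{L}\equiv 1\pmod q$. Because $\gcd(p,q)=1$, the Proposition of Eq.(\ref{crtbyp}) (the two-modulus form of the Chinese remainder theorem) combines these into $a^{L}\equiv 1\pmod{pq}$. By Theorem~\ref{thm52}, $\ord_{pq}(a)$ divides $L$, and in particular $\ord_{pq}(a)\le L$.

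It remains to check that $L<\phi(pq)$, and this is the only step that uses oddness. Since $2$ divides both $p-1$ and $q-1$, we have $\gcd(p-1,q-1)\ge 2$. Using $\gcd\cdot\lcm=$ product for positive integers, this gives
\[
L=\frac{(p-1)(q-1)}{\gcd(p-1,q-1)}\le \frac{(p-1)(q-1)}{2}<(p-1)(q-1)=\phi(pq),
\]
where $\phi(pq)=(p-1)(q-1)$ comes from Lemma~\ref{phil1}. Hence no $a\in\mb{U}_{pq}$ has order $\phi(pq)$, so $\mb{U}_{pq}$ has no primitive root.
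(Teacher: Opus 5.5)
Your proof is correct and follows essentially the paper's route: the paper uses the exponent $k=\phi(p)\phi(q)/2$ (a multiple of your $L=\lcm(p-1,q-1)$), gets $g^k\equiv 1$ modulo $p$ and modulo $q$ from Fermat, combines the two modulo $pq$, and contradicts $\ord_{pq}(g)=\phi(pq)$ because $k<\phi(pq)$. Your explicit assumption that $p,q$ are odd is genuinely needed: the paper uses it tacitly when it asserts that $\phi(p)$ and $\phi(q)$ are even, and the statement as written fails for $pq=6$ and for $pq=10$.
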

\begin{proof}
It is the case that $\gcd(p,q)=1$ by $p \neq q$,
for prime $p,q$.
Pick an element $x$ of $\mb{Z}_{pq}^{x} = \mb{U}_{pq}$.
We have by Euler's theorem
\[
 x^{\phi (pq)} \equiv 1 \pmod{pq}
\Leftrightarrow
 x^{\phi (p) \phi (q)} \equiv 1 \pmod{pq}
\]
Note that $\phi (pq) = \phi (p) \phi (q)$
for prime $p,q$. It is easy to show that
$\phi (p)$ and $\phi (q)$ are even and thus
$\phi(p)/2$ and $\phi(q)/2$ are integer.
Consider
\[
k= \frac{\phi(p)\phi(q)}{2}.
\]
We have $\dv{\phi (p)}{k}$ and
        $\dv{\phi (q)}{k}$.
Let us assume that there exists a primitive root $g$ mod $pq$.
Then $ord_{pq}{g} = \phi (pq) = \phi (p) \phi (q)$.
Moreover,
\[
g^{k} \equiv 1 \mod p \quad , \quad
g^{k} \equiv 1 \mod q ,
\]
since for example
\[
g^{k} \equiv g^{\frac{\phi(p)\phi(q)}{2}}
      \equiv (g^{\phi(p)})^{\frac{\phi(q)}{2}}
      \equiv 1 \pmod p,
\]
with a similar and symmetric proof for a mod $q$ result.
Then
\[
 g^k \equiv 1 \pmod{pq},
\]
and given that $k < \phi(pq)$ since $k= \frac{\phi(p)\phi(q)}{2}$,
we have a contradiction to the  assumption that $g$ is a
primitive root mod $pq$.
\end{proof}

\section{Polynomials}

\begin{prp}
\label{poly1}
Let $(S, + , \cdot)$ be a field and
let $f \in S[x]$, where $f \neq 0$ be a polynomial
over $S$. Then $f$ has at most deg(f) roots.
\end{prp}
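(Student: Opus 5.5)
I will prove Proposition~\ref{poly1} by induction on $n=\deg(f)$. The main tool is a factor theorem: if $f(a)=0$ for some $a\in S$, then $f(x)=(x-a)g(x)$ for some $g\in S[x]$ with $\deg g=\deg f-1$. After that, the fact that a field has no zero divisors does the rest.

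\emph{Base case.} If $\deg f=0$, then $f=c$ is a nonzero constant. So $f(a)=c\neq 0$ for every $a\in S$, and $f$ has $0=\deg f$ roots.

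\emph{Factor theorem.} Write $f(x)=\sum_{i=0}^{n} c_i x^i$. For a root $a$,
\[
f(x)=f(x)-f(a)=\sum_{i=1}^{n} c_i\,(x^i-a^i).
\]
Each term factors using $x^i-a^i=(x-a)(x^{i-1}+x^{i-2}a+\cdots+a^{i-1})$, the same identity the paper used for $a^p-b^p$. Summing gives $f(x)=(x-a)g(x)$. The leading coefficient of $g$ is $c_n\neq 0$, so $\deg g=n-1$ and $g\neq 0$. This is the step I expect to need the most care: it should be written as a direct identity rather than by appeal to polynomial long division, since the paper has not developed division in $S[x]$.

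\emph{Inductive step.} Assume every nonzero polynomial of degree $n-1$ has at most $n-1$ roots, and let $\deg f=n\geq 1$. If $f$ has no roots we are done. Otherwise pick a root $a$ and write $f=(x-a)g$ as above. Let $b$ be any root of $f$, so $0=f(b)=(b-a)g(b)$. A field is an integral domain, since multiplying $uv=0$ with $u\neq 0$ by $u^{-1}$ gives $v=0$. Hence either $b=a$ or $g(b)=0$. Every root of $f$ is therefore $a$ or a root of $g$, and by the induction hypothesis $g$ has at most $n-1$ roots. So $f$ has at most $1+(n-1)=n=\deg(f)$ roots, completing the induction.
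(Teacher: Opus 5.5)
Your proof is correct and follows essentially the same route as the paper: induction on the degree, the factorization $f(x)=f(x)-f(a)=\sum_i c_i(x^i-a^i)$ via $x^i-a^i=(x-a)\sum_{j} x^j a^{i-1-j}$, and cancelling the invertible factor $b-a$ so that every root of $f$ other than $a$ is a root of $g$. Your explicit observation that $g$ has leading coefficient $c_n\neq 0$ is a small improvement over the paper: it gives $g\neq 0$ and $\deg g=n-1$, which the induction hypothesis needs, whereas the paper only records $\deg(g)\leq n-1$ and leaves implicit that $g$ is nonzero.
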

\begin{proof}
A proof follows by induction.
$ $ \\ $ $
{\bf Base case.} Let $n=deg(f)=0$ be the degree of $f$,
that is, $f(x) = a_0$. Since $f(x) \neq 0$ we have
$a_0 \neq 0$ and thus the polynomial has at most 0 roots
that is 0 roots obviously. The base case has been proven.
$ $ \\ $ $
{\bf Inductive step.}  Assume that the claim holds for
polynomials of degree up to $n-1$.
Consider $n=deg(f) \geq 1$.
If polynomial $f$ has at most $n$ roots, then we are done.
Consider $f(x)$ as follows.
\[
f(x) = a_n x^n + \ldots + a_1 x + a_0  \in S[x],
\]
with $f(x)$ being a polynomial of degree $n$.
If $f(x)$ has no roots in $S$, then we are also done.
Otherwise assume that $f(x)$ has at least one root,
and let it be $r$ that is $f(r)=0 \pmod n$.
Then we have the following.
\[
f(x) = f(x) - f(a) = a_n (x^n -r^n ) + \ldots + a_1 (x- r ) =
     \sum_{i=1}^{n} a_i (x^i - r^i ).
\]
We have that $x^i - r^i = (x-r) \sum_{j=0}^{i-1} x^j r^{i-1-j}$.
Then
\[
 f(x) = (x-r) g(x),
\]
where $deg(g) \leq n-1$.
Consider another root of $f$ and let it be $s$.
For $s \neq r$, we have $0=f(s) = (s-r) g(s)$.
Since $s \neq r$ this implies $g(s)=0$. This is because
element $s-r \neq 0$ has an inverse and multiplying
both sides of $0=(s-r) g(s)$ with that inverse we
conclude that $g(s)=0$ that is $s$ is a root of $g$.
By the induction hypothesis applied to
$g$ there can be no more than $n-1$ roots of $g$ such as
$s$. Adding to this root $r$ we can have at most $n$
roots for $f$.
$ $ \\ $ $
\bigskip
Consider $Z_p = \mb{Z}/p\mb{Z}$, where $p$ is a prime number.
Then $( Z_p , + , \cdot )$ is  a field.
Thus the following result due to Lagrange is applicable.
See the next problem, already proven for the more general
case of a field in this problem.
\end{proof}

\subsection{Lagrange's theorem}

\begin{thm}[Lagrange's theorem]
\label{poly2lag}
Consider a polynomial $f(x)$ of degree $n$ with
integer coefficients.
Let $p$ be a prime number.
\[
 f(x) = a_n x^n + a_{n-1} x^{n-1} + \ldots + a_1 x + a_0 .
\]
Consider $a_n \bmod p \neq 0$.
That is we exclude from
consideration the case that all coefficients
are multiples of $p$ or equivalently  exclude all
cases
$\dv{p}{a_i}$ for all $i=0, 1, \ldots , n$.
Then, $f(x)$ can have at most $n$ distinct zeroes mod $p$.
\end{thm}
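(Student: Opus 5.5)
The plan is to reduce Lagrange's theorem to Proposition~\ref{poly1}, which already proves that a nonzero polynomial over a field has at most $\deg(f)$ roots. The field here is $(\mb{Z}_p,+,\cdot)$: an earlier result shows that $\mb{Z}/n\mb{Z}$ is a field exactly when $n$ is prime. Every nonzero class $a \bmod p$ has an inverse because $\gcd(a,p)=1$, by Theorem~\ref{modn}.

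The first step is to reduce the coefficients modulo $p$. Define $\bar f(x) = \bar a_n x^n + \ldots + \bar a_1 x + \bar a_0 \in \mb{Z}_p[x]$ with $\bar a_i = (a_i)_p$. By the additivity and multiplicativity of congruences (Corollary~\ref{pmod}, items 6, 8 and 9), for every integer $c$ we have $f(c) \equiv 0 \pmod p$ if and only if $\bar f((c)_p) = (0)_p$ in $\mb{Z}_p$. Hence the distinct zeroes of $f$ mod $p$, counted as residue classes, are exactly the roots of $\bar f$ in $\mb{Z}_p$.

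The second step is to check the hypothesis of Proposition~\ref{poly1}. Since $a_n \bmod p \neq 0$, the leading coefficient $\bar a_n$ is nonzero. So $\bar f$ is a nonzero polynomial of degree exactly $n$ over the field $\mb{Z}_p$. Applying Proposition~\ref{poly1} gives at most $n$ roots, and therefore at most $n$ distinct zeroes of $f$ mod $p$.

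There is no real obstacle, but one point needs care. It is the step in the proof of Proposition~\ref{poly1} where $(s-r)g(s)=0$ with $s\neq r$ forces $g(s)=0$. This uses the absence of zero divisors mod $p$: if $\dv{p}{(s-r)g(s)}$ and $\ndv{p}{s-r}$, then $\dv{p}{g(s)}$ by the corollary that a prime dividing a product divides one of the factors. If I want a self-contained argument instead, I will rerun that induction directly over the integers. I will write $f(x)-f(r) = (x-r)g(x)$ with $g$ having integer coefficients and leading coefficient $a_n$, so $\ndv{p}{a_n}$ still holds, and then use $p \mid (s-r)g(s)$ to conclude $p \mid g(s)$. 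The base case $n=0$ holds because $f=a_0$ with $\ndv{p}{a_0}$ has no zeroes mod $p$.
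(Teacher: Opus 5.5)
Your proof is correct and is essentially the paper's argument. The paper proves Lagrange's theorem by the same induction carried out directly modulo $p$ (writing $f(x)=g(x)(x-r)+c$ and showing $c\equiv 0 \pmod p$), and it remarks at the end of the proof of Proposition~\ref{poly1} that the field case already covers $\mb{Z}_p$, which is your primary route; your self-contained fallback is exactly the paper's induction. Your version is slightly more careful, since it states two points the paper leaves implicit: a root $s\not\equiv r \pmod p$ of $f$ must be a root of $g$ because $p$ is prime (this is where primality enters), and $g$ has leading coefficient $a_n$ with $\ndv{p}{a_n}$, so the inductive hypothesis applies to $g$.
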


In other words,
$f(x)\equiv 0 \pmod p$, has at most $deg(f)=n$
roots mod $p$, unless all
(presumed integer) coefficients of $f$ are divisible by $p$.
No more than $n$ elements of $\mb{U}_p$ can thus
be a root of $f(x)\equiv 0 \pmod p$.
The ``expression $n$ zeroes mod $p$'' is to
be read ``$n$ congruence classes of solutions (zeroes) mod $p$''.
Thus if $k$, an elements of $\mb{U}_p$ is
a root of $f(x)\equiv 0 \pmod p$ and $f(x) \equiv 0 \pmod p$
so do we expect for $k+p, k+2p, \ldots $.

\begin{proof}
A proof follows by induction.
$ $ \\ $ $
{\bf Base case.} For $n=deg(f)=0$ we have $f(x)=a_0$ that
has 0 roots mod $p$ for $f(x)\equiv a_0 \equiv 0 \pmod p$,
for $\ndv{p}{a_0}$. (Or for $n=0$,  $a_n \bmod p \neq 0$.)
$ $ \\ $ $
{\bf Inductive step.} Consider $n=deg(f) \geq 1$.
If $f$ has at most $n$ roots we are done.
By the inductive step $f$ has at least one root and let it be $r$.
Consider the polynomials
\[
  f(x) = g(x) (x-r) + R(x),
\]
with $deg(R) < deg(x-r) = 1$ i.e. $deg(R)=0$. We can write
$R(x)= c$ for some integer $c$.
Set $x=r$ and we have
\[
 ( f(r) \equiv 0 \pmod p)  \equiv g(r) \cdot 0 + c \pmod p
\]
Then
\[
 c \equiv 0 \pmod p.
\]
This means $R(x)=R=c=0 \pmod p$ and therefore
\[
 f(x) \equiv g(x) (x-r) \pmod p.
\]
Then, $deg(x-r)=1$ and $deg(g) \leq n-1$. By the inductive
hypothesis  the former has one root (and it is $r$) and the
latter has at most $n-1$ roots mod $p$. Combined $f(x)$ has
at most $n$ roots mod $p$, as needed.
\end{proof}

\begin{prp}
\label{poly3}
If $p$ is a prime number and $d$ is a divisor of $p-1$,
then the polynomial $f(x)=x^d -1 \in Z_p [x] $
has exactly $d$ roots.
\end{prp}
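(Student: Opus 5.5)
The plan is the classical factor-and-count argument. It combines Fermat's little theorem (Theorem~\ref{flt}) with the root bound of Proposition~\ref{poly1}, or equivalently Lagrange's theorem (Theorem~\ref{poly2lag}), applied in the field $\mb{Z}_p$.

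First, write $p-1 = de$ with $e \in \mb{Z}_+$, which is possible since $\dv{d}{p-1}$. Use the telescoping identity
\[
x^{p-1} - 1 = (x^d)^e - 1 = (x^d - 1)\, h(x), \qquad h(x) = \sum_{j=0}^{e-1} x^{dj},
\]
which is the same factorization $y^e - 1 = (y-1)(y^{e-1}+\cdots+1)$ used in the proof of Proposition~\ref{poly1}, with $y = x^d$. Note that $h$ has integer coefficients, leading coefficient $1$, and degree $d(e-1) = p-1-d$.

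Second, count roots. By Fermat's little theorem (or Theorem~\ref{flt2}), every one of the $p-1$ nonzero residues $1,2,\ldots,p-1$ is a root of $x^{p-1}-1$ modulo $p$. Since $\mb{Z}_p$ is a field, it has no zero divisors (this is the earlier theorem on $ab\equiv 0 \pmod p$). So for each such root $a$, $(a^d-1)h(a)\equiv 0$ forces $a^d - 1 \equiv 0$ or $h(a)\equiv 0 \pmod p$. Hence every one of the $p-1$ nonzero residues is a root of $f(x)=x^d-1$ or a root of $h$.

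Third, apply the root bound to $h$. Since $h$ is a nonzero polynomial (leading coefficient $1 \not\equiv 0$) of degree $p-1-d$, Proposition~\ref{poly1} (or Theorem~\ref{poly2lag}) says $h$ has at most $p-1-d$ roots. Therefore $f$ has at least $(p-1)-(p-1-d) = d$ roots among the nonzero residues. In the other direction, $f$ has degree $d$ with leading coefficient $1$, so by the same result it has at most $d$ roots in $\mb{Z}_p$. (Note $0$ is never a root of $f$, since $0^d - 1 \equiv -1$.) Combining the two bounds, $f$ has exactly $d$ roots.

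The only delicate point I expect is the counting step. We need a residue that is a root of both $f$ and $h$ to be harmless, and it is, because we only use $|A\cup B| \le |A| + |B|$ with $A$ the roots of $f$ and $B$ the roots of $h$. We also need the nonvanishing hypotheses of Lagrange's theorem, which hold because both leading coefficients equal $1$. The edge cases $d=1$ (where $h$ has degree $p-2$) and $d=p-1$ (where $h=1$ has no roots) fit the same argument without change.
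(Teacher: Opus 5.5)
Your proof is correct and follows the paper's own argument. Both factor $x^{p-1}-1=(x^d-1)h(x)$, use Fermat's little theorem to get the $p-1$ roots, and bound the roots of each factor by Lagrange's theorem. Your version is somewhat more careful: it spells out the no-zero-divisor step and the union-bound count, whereas the paper only asserts the conclusion, plus the unneeded claim that $h$ has exactly $p-1-d$ roots.
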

\begin{proof}
Since $\dv{d}{p-1}$ then $p-1=dk$.
Consider $x^{p-1}-1$. By Euler's (or FLT) all
non zero elements $1, \ldots , p-1$ are roots of it.
Therefore it has $p-1$ different roots.
Moreover
\[
(x^d)^k -1 = (x^d -1) ( (x^d)^{k-1} + \ldots + 
(x^d )^0  ) =f(x)g(x),
\]
where $g(x)$ is a polynomial of degree $dk -d = p-1-d$.
By the previous result $f$ has at most $d$ roots, and
$g$ at most $dk-d=p-1-d$. But the product has 
at least $p-1$ roots
all of them distinct. Thus $f(x)$ has exactly 
$d$ and $g(x)$ $p-1-d$
roots. The result follows.
\end{proof}

Consider the polynomial $f(x) = x^{p-1} -1$, where
$p$ is a prime number. By Fermat's Little theorem
we know. that
\[
 f(x) \equiv 0    \pmod p   \quad \Leftrightarrow \quad
 x^{p-1} \equiv 1 \pmod p   ,
\]
for any $x$ such that $\gcd(x,p)=1$. There are $p-1$
such $x$ and these are the elements of
$\mb{U}_p = \{ 1, 2, \ldots , p-1 \} $ since
$|\mb{U}_p|= \phi (p)=p-1$.
Lagrange's theorem confirms that there are at most
$p-1$ distinct solutions (roots) for $f(x) \equiv 0 \pmod p$.

%\begin{cor}[Corollary to Lagrange's theorem]
%\label{poly4lagc}
%Let $f(x) = x^{p-1} -1$, and let $p$ be a
%prime number. Consider $d$ a positive integer
%such that $\dv{d}{p-1}$.
%Then the modular equation below has at most $d$ distinct
%solutions mod $p$
%\begin{equation}
%\label{corLagrange}
%x^{d} \equiv 1 \pmod p \quad \quad
%\text{has at most d solutions mod p for}, \quad
%\dv{d}{p-1}.
%\end{equation}
%\end{cor}
%\begin{proof}
%Since $\dv{d}{p-1}$ then let $k = (p-1)/d$.
%We observe the following
%\[
%x^{p-1} -1 = x^{dk} -1 = (x^d -1) ( x^{(k-1)d} + \ldots + 1).
%\]
%Considering $\pmod p$ of both sides we conclude
%\[
%x^{p-1}-1 \equiv  (x^d -1) ( x^{(k-1)d} + \ldots + 1) 
%          \equiv 0 \pmod p .
%\]
%By the prior discussion $x^{p-1}-1 \equiv 0 \pmod p$ has
%$p-1$ distinct roots mod $p$.
%Say that $x^d -1 \equiv 0 \pmod p$ has LESS than 
%$d$ solutions (roots).
%By Lagrange's theorem
%$ x^{(k-1)d} + \ldots + 1) \equiv 0 \pmod p$
%has at most $(k-1)d$ distinct solutions mod $p$.
%Thus the product
%$(x^d -1) ( x^{(k-1)d} + \ldots + 1) \equiv 0 \pmod p$
%is going to have a number of solutions equal to the
%sum of (less than $d$) for $x^d -1 \equiv 0 \pmod p$
%and at most $(k-1)d$ for the second term a total
%of less than $d + (k-1)d = kd = p-1$.
%This ``less than'' contradicts the fact that
%$x^{p-1}-1 \equiv 0 \pmod p$ has exactly $p-1$ solutions
%mod $p$.
%\end{proof}

\begin{prp}
\label{poly5}
Let $p$ be a prime number.
Let
\[
 x^n \equiv 1 \pmod p,
\]
has $n$ distinct roots mod $p$.
Then $\mb{U}_p = \mb{Z}_p^{x}$ has exactly $\phi(n)$
elements of order $n$.
\end{prp}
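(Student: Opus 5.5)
The hypothesis is that $x^n\equiv 1 \pmod p$ has $n$ distinct roots mod $p$. I will prove that exactly $\phi(n)$ elements of $\mb{U}_p$ have order exactly $n$. The plan is a counting argument over the divisors of $n$, using the identity $n=\sum_{\dv{d}{n}}\phi(d)$ proved in the subsection on the totient function.

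\textbf{Step 1: the roots form a set closed under orders dividing $n$.} Let $R_n\subseteq \mb{U}_p$ be the set of roots of $x^n-1$ mod $p$; by hypothesis $|R_n|=n$. Every root is a unit, since $x\cdot x^{n-1}\equiv 1$. By Theorem~\ref{thm52}, an element $a\in\mb{U}_p$ lies in $R_n$ if and only if $\dv{\ord_p(a)}{n}$. Writing $\psi(d)$ for the number of elements of $\mb{U}_p$ of order exactly $d$, this partitions $R_n$ by order and gives
\[
n=\sum_{\dv{d}{n}}\psi(d).
\]

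\textbf{Step 2: the key bound $\psi(d)\le\phi(d)$ for each $d\mid n$.} If $\psi(d)=0$ there is nothing to show. Otherwise pick $a$ of order $d$. The powers $1,a,\ldots,a^{d-1}$ are pairwise distinct mod $p$ by Lemma~\ref{lem52a}, and each satisfies $(a^i)^d\equiv 1$. So they are $d$ distinct roots of $x^d-1$. By Lagrange's theorem (Theorem~\ref{poly2lag}), $x^d-1$ has at most $d$ roots mod $p$, so these powers are all of its roots. Any element of order $d$ is a root of $x^d-1$, hence equals some $a^i$. By Proposition~\ref{prp2}, $a^i$ has order $d$ exactly when $\gcd(i,d)=1$, so there are exactly $\phi(d)$ of them. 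Thus $\psi(d)\in\{0,\phi(d)\}$, and in particular $\psi(d)\le\phi(d)$.

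\textbf{Step 3: comparison.} From Step 1 and the totient identity,
\[
\sum_{\dv{d}{n}}\psi(d)=n=\sum_{\dv{d}{n}}\phi(d).
\]
Each term satisfies $\psi(d)\le\phi(d)$, so equality must hold term by term. In particular $\psi(n)=\phi(n)$, which is the claim.

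The main obstacle is Step 2. It relies on the fact that an element of order $d$ already supplies all $d$ roots of $x^d-1$, so every element of order $d$ is one of its powers. This requires Lagrange's bound applied to $x^d-1$, whose leading coefficient is nonzero mod $p$. It also requires Proposition~\ref{prp2} to be read correctly as counting exponents $i$ coprime to $d$. The rest of the argument is bookkeeping.
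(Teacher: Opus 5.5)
Your proof is correct, but it is organized differently from the paper's.

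\textbf{The paper's route.} It argues by strong induction on $n$. For each proper divisor $d$ of $n$, with $n=dN$, it factors $x^n-1=(x^d-1)(x^{d(N-1)}+\cdots+x^d+1)$. Lagrange's theorem bounds the cofactor by $n-d$ roots, and since there are no zero divisors mod $p$, $x^d-1$ must have exactly $d$ roots. The induction hypothesis then gives exactly $\phi(d)$ elements of order $d$. Finally, the number of elements of order $n$ comes out by subtraction: $n-\sum_{d\mid n,\,d<n}\phi(d)=\phi(n)$.

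\textbf{Your route.} You first prove the bound $\psi(d)\le\phi(d)$, in fact $\psi(d)\in\{0,\phi(d)\}$, from the powers of a single element of order $d$. This is essentially Proposition~\ref{poly7}, with the Lagrange step that the paper leaves implicit made explicit. You use the hypothesis only once, to obtain $\sum_{d\mid n}\psi(d)=n$. Termwise comparison with $n=\sum_{d\mid n}\phi(d)$ then finishes.

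\textbf{What each buys.} Yours is Gauss's classical argument. It needs neither induction nor the factorization of $x^n-1$. The key inequality $\psi(d)\le\phi(d)$ does not depend on the hypothesis at all. It also yields $\psi(d)=\phi(d)$ for every $d\mid n$ simultaneously. The paper's route, in exchange, never has to determine which powers of an element have a prescribed order, so it avoids Proposition~\ref{prp2} and Lemma~\ref{lem52a}. It relies only on polynomial factorization, Lagrange's bound, and the field structure of $\mb{Z}_p$.
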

\begin{proof}
Proof is by induction.
$ $ \\ $ $
{\bf Base case $n=1$.} If $x \equiv 1 \pmod p$ has one
solution mod $p$, and indeed the only solution is $x=1$,
then $\mb{U}_p$ has exactly $\phi(1)=1$ element of order $n=1$
and this is $1$ mod $p$. Case completed.
$ $ \\ $ $
{\bf Inductive step.} (Strong) Induction that is, assume that
result is true for all exponents $<n$ and we show the result
for $n$. That is if $x^k -1$ has exactly $k$ distinct roots mod
$p$, for all $k<n$ then $\mb{U}_k$ has exactly $\phi (k)$ elements
of order $k$. Consider  the case for $n$.
$ $ \\ $ $
Let $d$ be a divisor of $n$ i.e. there exists integer $N$ such
that $n=dN$. Just like in the previous problem we obtain the
following.
\[
x^n -1 = (x^d -1 ) ( x^{d(N-1)} + x^{d(N-2)} + \ldots  +x^d + 1).
\]
By Lagrange's theorem the last polynomial has at most $d(N-1)$
distinct roots mod $p$. The left-most polynomial 
has at most $n$ by Lagrange's theorem.
But we are told that $x^n -1$ has exactly $n$ distinct roots
mod $p$.
Then $x^d -1$ cannot have fewer than $d$ roots since then
\[
(=  n) \neq (<n ) =  (< d) + (\leq d(N-1)) ,
\]
$x^n -1$ would have fewer than $n$ distinct roots.
We note also that $d<n$. Thus $x^d -1$ must have exactly $d$
distinct solutions (roots). By the induction hypothesis since 
$d<n$ it has among the exactly  
$d$ distinct  roots $\phi (d)$ of order $d$.
Every root of $x^n -1$ is a root of $x^d -1$ or  the rest of the
polynomial (the other factor). Among the $n$ roots of $x^n-1$ ,
$d$ of them are roots of $x^d -1$ and distinct; moreover among
the $d$ latter roots $ \phi (d)$ are of order $d$ mod $p$.
Integer $d$ is an arbitrary divisor of $n$. One such divisor
is $n$ itself, and all the other divisors are less than $n$.
We know then the following
\[
n = \sum_{\dv{d}{n}} \phi (d ) =
    \sum_{\dv{d}{n}, d \neq n} \phi (d ) +
    \sum_{\dv{d}{n}, d=n}      \phi (d ) =
    \sum_{\dv{d}{n}, d \neq n} \phi (d ) +
                              \phi (n ).
\]
Moreover among the $n$ roots of $x^n -1$,  some of them
are of order $n$, and let that number be $a(n)$ and the
rest are of order $<$ than $n$. We concluded earlier that for
every divisor $d$ of $n$ there are $\phi (d)$ roots of $n$ of
order $d$. Adding up all of them we come up with the following.
\[
\text{(NumRoots of order } <  n)=
\sum_{d \neq n, \dv{d}{n}} = \phi (d),
\]
and we also have the following.
\[
\text{(NumRoots of order } < n) +
\text{(NumRoots of order } = n) =
\text{(NumRoots of order } < n) +
  a(n)                       =
n
\]
It is obvious that
\[
\text{(NumRoots of order }=n) = a(n)
=  n - \sum_{d \neq n, \dv{d}{n}} \phi(d)
=\phi (n).
\]
\end{proof}

\begin{cor}
\label{poly6}
\[
 x^{p-1} \equiv 1 \pmod p,
\]
has $\phi(p-1)=\phi(\phi(p))$ elements of order $p-1$ i.e.
that number of primitive roots mod $p$.
\end{cor}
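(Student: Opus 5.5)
The plan is to obtain Corollary~\ref{poly6} as the special case $n=p-1$ of Proposition~\ref{poly5}. To do that I will verify the proposition's hypothesis: that $x^{p-1}\equiv 1 \pmod p$ has exactly $p-1$ distinct roots modulo $p$. Proposition~\ref{poly5} then says that $\mb{U}_p$ has exactly $\phi(p-1)$ elements of order $p-1$.

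\textbf{Counting the roots.} By Fermat's little theorem (Theorem~\ref{flt}), each of $1,2,\ldots,p-1$ satisfies $x^{p-1}\equiv 1\pmod p$. These are $p-1$ pairwise incongruent residues, so there are at least $p-1$ roots. Conversely, $x=0$ is not a root, so every root lies in $\mb{U}_p$, which has $\phi(p)=p-1$ elements. Lagrange's theorem (Theorem~\ref{poly2lag}) gives the same upper bound, since the leading coefficient is $1\not\equiv 0\pmod p$. Hence there are exactly $p-1$ roots. Alternatively, one could cite Proposition~\ref{poly3} with $d=p-1$, which divides $p-1$ trivially.

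\textbf{Translating to primitive roots.} Since $\phi(p)=p-1$, we have $\phi(p-1)=\phi(\phi(p))$. An element of $\mb{U}_p$ has order $p-1=\phi(p)$ exactly when it is a primitive root mod $p$, by the definition of primitive root. So the $\phi(p-1)$ elements of order $p-1$ are exactly the primitive roots mod $p$, and there are $\phi(\phi(p))$ of them. In particular at least one exists, because $\phi(p-1)\ge 1$. This is consistent with Corollary~\ref{nprimroots}, which gives the same count once a single primitive root is known.

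\textbf{Main obstacle.} Nearly all the substance lies in Proposition~\ref{poly5}. That proof relies on the identity $n=\sum_{\dv{d}{n}}\phi(d)$ and on the fact that $x^d-1$ has exactly $d$ roots for each $\dv{d}{n}$. When I invoke it, I will make sure the exactness of the $d$-root count is available for every divisor $d$ of $p-1$; Proposition~\ref{poly3} supplies this directly. I will also check the base case $p=2$ by hand: $1$ is the unique primitive root, and $\phi(\phi(2))=\phi(1)=1$.
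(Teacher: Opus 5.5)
Your proposal is correct and follows the paper's route. There the corollary is stated without a separate proof, as the case $n=p-1$ of Proposition~\ref{poly5}. The hypothesis that $x^{p-1}\equiv 1 \pmod p$ has exactly $p-1$ distinct roots comes from Fermat's little theorem together with Lagrange's bound, just as you verify it, and the elements of order $p-1=\phi(p)$ are the primitive roots by definition.
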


\begin{prp}
\label{poly7}
A result similar to the previous one.
Let $p$ be a prime number.
Let there exists an $a$ such that $ord_p (a)=k$.
Then the number of elements mod $p$ of order $k$ is
$\phi (k)$.
\end{prp}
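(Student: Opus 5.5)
The plan is to reduce the statement to Proposition~\ref{poly5} and Proposition~\ref{poly3}. Since $a$ is a unit mod $p$ with $\ord_p(a)=k$, Corollary~\ref{cor52} (or its prime version) gives $\dv{k}{p-1}$. By Proposition~\ref{poly3}, the polynomial $x^k-1$ then has exactly $k$ distinct roots mod $p$. This is precisely the hypothesis of Proposition~\ref{poly5} with $n=k$, which yields exactly $\phi(k)$ elements of $\mb{U}_p$ of order $k$. If one only wants the existence of $a$ to do the work, the argument is more self-contained, and I would present that route as the main proof, as follows.

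First, I use $a$ to exhibit the roots of $x^k-1$. The powers $1,a,a^2,\ldots,a^{k-1}$ are pairwise incongruent mod $p$ by Lemma~\ref{lem52a}, because $a^i\equiv a^j$ forces $i\equiv j \pmod k$. Each is a root of $x^k-1$, since $(a^i)^k=(a^k)^i\equiv 1$. By Lagrange's theorem (Theorem~\ref{poly2lag}) this polynomial has at most $k$ roots mod $p$. Hence its roots are exactly the $k$ powers $a^i$, $0\le i<k$.

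Next, every element $b$ of order $k$ satisfies $b^k\equiv 1$, so $b$ is one of these roots, that is, $b\equiv a^i$ for some $0\le i<k$. It remains to count the $i$ for which $a^i$ has order exactly $k$. By Proposition~\ref{prp2}, $a^i$ has order $k$ if and only if $\gcd(i,k)=1$. The number of such $i$ in $\{0,\ldots,k-1\}$ is $\phi(k)$. For $k=1$ this count correctly includes $i=0$, and for $k>1$ the value $i=0$ is excluded automatically.

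The main obstacle is the step that every element of order $k$ lies in the cyclic set $\{a^i\}$. This needs the root bound from Lagrange's theorem, which holds because $p$ is prime so that $\mb{Z}_p$ is a field. The rest is bookkeeping via Proposition~\ref{prp2}. I should also check that the proof of Proposition~\ref{prp2} really gives both directions; if not, I would reprove the order formula directly, as in Lemma~\ref{ordgm}: $\ord_p(a^i)=k/\gcd(i,k)$.
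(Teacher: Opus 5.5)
Your proof is correct and takes the same route as the paper: exhibit $1,a,\ldots,a^{k-1}$ as roots of $x^k-1$, then count the powers $a^i$ of order exactly $k$ via the condition $\gcd(i,k)=1$. Your version is more complete, because you use Lemma~\ref{lem52a} and Lagrange's theorem to show these powers are distinct and exhaust all roots, so every element of order $k$ is among them (a step the paper only asserts), and Proposition~\ref{prp2} is a more apt citation than Lemma~\ref{ordgm}, which the paper invokes even though it is stated only for primitive roots.
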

\begin{proof}
Since $ord_p (a)=k$, we have
\[
 a^k \equiv 1 \pmod p
\]
Consider the sequence
\[
a^0 , a^1 , \ldots , a^{k-1}
\]
are the $k$ roots of $x^k - 1\equiv 0 pmod p$,
or equivalently $ x^k \equiv 1 \pmod p$.
This is because for $i=0, 1, \ldots , k-1$, we have
the following.
\[
( a^i )^k -1 \equiv ( a^k)^i -1 \equiv 1-1 \equiv 0 \pmod p.
\]
We now ask the question: Out of the sequence
$a^0 , a^1 , \ldots , a^{k-1}$ how many of them  are
of order $k$?
The answer comes by way of Eq.(\ref{ordgm}) that states
\[
 ord_p(a^i )
  = \frac{ord_p(a)}{\gcd(i,ord_p(a))}
  = \frac{k}{\gcd(i,k)}.
\]
Thus
\[
 ord_p(a^i )
  = \frac{k}{\gcd(i,k)} = k,
\]
for all $i$ such that $\gcd(i,k)=1$ and thus for
$\phi (k)$ values $i$.
\end{proof}

\section{Primitive root existence}

\subsection{Primitive roots mod a prime}

\begin{prp}[Primitive root mod prime $p$]
\label{prrmodp}
For a prime number $p$,
$\mb{Z}_p$ has a primitive root  mod $p$,
or equivalently  $\mb{U}_p$ has a generator.
\begin{equation}
\label{prmodp}
\exists g \in \mb{U}_p : ord_p (g) = \phi(p).
\end{equation}
Furthermore, $\mb{U}_p$ is cyclic.
\end{prp}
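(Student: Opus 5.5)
The plan is to use the machinery already built for polynomials modulo a prime, rather than constructing $g$ directly. First, I will apply Proposition~\ref{poly3} with $d=p-1$, which trivially divides $p-1$. It gives that $x^{p-1}-1$ has exactly $p-1$ distinct roots in $\mb{Z}_p$. This is also clear directly from Fermat's little theorem (Theorem~\ref{flt}), since every element of $\mb{U}_p=\{1,\ldots,p-1\}$ is a root.

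Second, I will invoke Proposition~\ref{poly5} with $n=p-1$. Its hypothesis, that $x^{n}\equiv 1 \pmod p$ has $n$ distinct roots, is exactly what was just verified. It yields exactly $\phi(p-1)$ elements of $\mb{U}_p$ of order $p-1$; this is recorded as Corollary~\ref{poly6}. Since $\phi(m)\geq 1$ for every positive integer $m$ (the class of $1$ is always coprime to $m$), there is at least one $g\in\mb{U}_p$ with $\ord_p(g)=p-1=\phi(p)$, which is (\ref{prmodp}).

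The main obstacle is really inside Proposition~\ref{poly5}. Its inductive argument relies on the identity $n=\sum_{\dv{d}{n}}\phi(d)$ and on the fact that each divisor $d$ of $n$ forces $x^d-1$ to have exactly $d$ roots (Proposition~\ref{poly3} and Lagrange's theorem, Theorem~\ref{poly2lag}). If I want the present proof to be self-contained I would instead run the standard counting argument directly. Let $\psi(d)$ be the number of elements of $\mb{U}_p$ of order $d$. Each element has order dividing $p-1$ (Corollary~\ref{cor52}), so $\sum_{\dv{d}{p-1}}\psi(d)=p-1$. If $\psi(d)>0$, take $a$ of order $d$. Then $1,a,\ldots,a^{d-1}$ are $d$ distinct roots of $x^d-1$, hence all of them by Lagrange's theorem, and by Proposition~\ref{poly7} exactly $\phi(d)$ of them have order $d$. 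So $\psi(d)\leq\phi(d)$ for all $d$. Comparing with $\sum_{\dv{d}{p-1}}\phi(d)=p-1$ forces $\psi(d)=\phi(d)$, in particular $\psi(p-1)=\phi(p-1)\geq 1$.

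Finally, cyclicity follows from the order. If $\ord_p(g)=p-1$, then by Lemma~\ref{lem52a} the powers $g^0,g^1,\ldots,g^{p-2}$ are pairwise incongruent mod $p$, and they all lie in $\mb{U}_p$. That gives $p-1=|\mb{U}_p|$ distinct elements, so they exhaust $\mb{U}_p$. Hence $\mb{U}_p=\{1,g,\ldots,g^{p-2}\}$ is cyclic with generator $g$.
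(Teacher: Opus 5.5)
Your proof is correct and is essentially the paper's own first argument, which derives the result as a consequence of Corollary~\ref{poly6} (Proposition~\ref{poly5} with $n=p-1$); your cyclicity step via Lemma~\ref{lem52a} is more explicit than the paper's one-line remark. Where the two differ is the fallback argument: you give the standard counting argument, whereas the paper's second, ``more direct'' proof factors $p-1=\prod p_i^{a_i}$, obtains elements of order $p_i^{a_i}$ from Proposition~\ref{prp4}, and multiplies them using Corollary~\ref{cor17}.
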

\begin{proof}
$ $ \\ $ $
It is a consequence of Corollary~\ref{poly6}.
$ $ \\ $ $
A more direct proof follows.
$ $ \\ $ $
{\bf Case 1: $p=2$.}
If $p=2$, then $1$ is a primitive root of
$\mb{Z}_p = \{ 1 \}$.
$ $ \\ $ $
{\bf Case 2: $p>2$ is an odd prime.}
Let $p-1$ have a prime factorization as given below.
\[
 p-1= p_1^{a_1} \ldots  p_k^{a_k}
\]
where $p_1 < \ldots < p_k$.
Let us form
\[
 x^{p-1} -1 = (x^{{p_1}^{a_1}} - 1) f(x)
\]
The first factor has 
exactly $p_1^{a_1}$ distinct roots following
Proposition~\ref{poly3}.
From Proposition~\ref{prp4} there exists a $g_1$
such that $ord_p (g_1 ) = p_1^{a_1}$.
Repeating this argument for every $i=2, 3 \ldots, k$ and
$p_i$  we conclude that there exists  $g_i$ of order
$p_i^{a_i}$, and using  Corollary~\ref{cor17} we
conclude that $g_1 \cdot g_2 \cdot \ldots \cdot g_k$
has order $ p_1^{a_1} \cdot \ldots p_k^{a_k} = p-1$,
and thus $g_1 \ldots g_k$ is a primitive root mod $p$.
$ $ \\ $ $
A primitive root for $\mb{U}_p$ implies an element
of order $\phi (p)$. This mean $\mb{U}_p$ is cyclic.
%
%
%If $q$ is a root of $ (x^{{p_1}^{a_1}} - 1)$ then
%$ q^{{p_1}^{a_1}} \equiv 1 \pmod p $.
%Thus $\dv{\ord_p (q)}{{p_1}^{a_1}}$.
%Thus $\ord_p (q) = p_1^{b_1}$ , where $b_1 \leq a_1$.
%Every  root of $ (x^{{p_1}^{a_1}} - 1)$  cannot have
%order less that $p_1^{a_1}$ because then all those
%roots would also be roots of $(x^{{p_1}^{a_1-1}} - 1)$.
%The latter polynomial is of degree $1/p_1 \cdot {p_1}^{a_1}$ 
%of the original polynomial
%and it is going to have the same number of roots with the
%original one, an impossibility.
%Thus at least one of the roots is of
%order $p_1^{a_1}$. Let it be $g_1$.
%Repeating this argument for every $i=2, 3 \ldots, k$ and
%$p_i$  we conclude that there exists  $g_i$ of order
%$p_i^{a_i}$, and using  Corollary~\ref{cor17} we
%we conclude  that $g_1 \cdot g_2 \cdot \ldots \cdot g_k$
%has order $ p_1^{a_1} \cdot \ldots p_k^{a_k} = p-1$,
%and thus $g_1 \ldots d_k$ is a primitive root mod $p$.
%A primitive root for $\mb{U}_p$ implies an element
%of order $\phi (p)$. This mean $\mb{U}_p$ is cyclic.
\end{proof}

\begin{lem}
\label{prrmodpp}
Let $p$ be a positive prime number.
Suppose that $\phi(p)=p-1=p_1^{a_1} \ldots p_k^{a_k}$
are the prime factors of $p-1$, where
$p_1 < \ldots < p_k$ with $a_i > 0$.
Then $g$ is a primititve root
$\pmod p$ if and only if
\[
g^{\frac{p-1}{p_i}} \not\equiv 1 \pmod p
\]
for every $p_i$.
%\[
%g \; \text{is primitive root mod } p \quad \Leftrightarrow \quad
%g^{\frac{p-1}{p_i}} \not\equiv 1 \pmod p , \forall p_i .
%\]
\end{lem}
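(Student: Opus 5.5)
We prove the two directions separately. Throughout, $g$ is a unit mod $p$ (implicit in $\ord_p(g)$ being defined), so write $k=\ord_p(g)$. By Corollary~\ref{cor52}, or its prime-modulus version, we have $\dv{k}{p-1}$, and Theorem~\ref{thm52} gives $g^m\equiv 1 \pmod p$ if and only if $\dv{k}{m}$. These two facts drive the whole argument.

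\emph{($\Rightarrow$)} Suppose $g$ is a primitive root, so $k=p-1$. For any prime factor $p_i$ of $p-1$, the exponent $m=(p-1)/p_i$ satisfies $0<m<p-1$. Hence $\ndv{(p-1)}{m}$, and by Theorem~\ref{thm52} we get $g^{(p-1)/p_i}\not\equiv 1 \pmod p$. Alternatively, minimality of the order already rules out $g^m\equiv 1$ for a positive $m<k$.

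\emph{($\Leftarrow$)} This is the main step, and we argue by contrapositive. Suppose $g$ is not a primitive root, so $k<p-1$ and $\dv{k}{p-1}$. Then $(p-1)/k$ is an integer greater than $1$, so it has a prime factor by Theorem~\ref{prfactor}. Every prime dividing $(p-1)/k$ divides $p-1$, so by the uniqueness of factorization (Theorem~\ref{fund}) it equals some $p_i$. Write $(p-1)/k = p_i t$ with $t$ an integer. Then $(p-1)/p_i = kt$, so $\dv{k}{(p-1)/p_i}$, and Theorem~\ref{thm52} gives $g^{(p-1)/p_i}\equiv 1\pmod p$. This contradicts the hypothesis, so $k=p-1$ and $g$ is a primitive root.

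The only delicate point is the bookkeeping in the second direction: we must check that the prime factor of $(p-1)/k$ is indeed one of the listed $p_i$. Beyond that, the argument is routine given Theorem~\ref{thm52}.
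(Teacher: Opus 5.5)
Your proof is correct, but it argues differently from the paper. The paper's proof is only a pointer to Proposition~\ref{prp4} and the prime-power order criterion behind Question~\ref{g1q}: a unit $h$ has order exactly $q^r$ if and only if $h^{q^r}\equiv 1$ and $h^{q^{r-1}}\not\equiv 1 \pmod p$.

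The route suggested there works one prime power at a time. For each $i$, set $h_i=g^{(p-1)/p_i^{a_i}}$. Then $h_i^{p_i^{a_i}}=g^{p-1}\equiv 1$ and $h_i^{p_i^{a_i-1}}=g^{(p-1)/p_i}\not\equiv 1 \pmod p$, so $\ord_p(h_i)=p_i^{a_i}$. Since $\ord_p(h_i)$ divides $\ord_p(g)$, this forces $p_i^{a_i}\mid \ord_p(g)$ for every $i$, and hence $(p-1)\mid \ord_p(g)$.

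You instead work with $\ord_p(g)$ directly. If it is a proper divisor of $p-1$, any prime factor of the cofactor $(p-1)/\ord_p(g)$ is some $p_i$, and then $\ord_p(g)\mid (p-1)/p_i$.

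Your version is shorter and self-contained: it needs only Theorem~\ref{thm52} and unique factorization. The paper's citation leans on Proposition~\ref{prp4}, whose Lagrange-based existence argument is not actually needed for this characterization. The prime-power route does have the merit of matching the decomposition used in the existence proof of Proposition~\ref{prrmodp}.

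Two small remarks. First, your standing assumption $\gcd(g,p)=1$ is genuinely necessary, not just a convenience. If $p\mid g$, then $g^{(p-1)/p_i}\equiv 0\not\equiv 1 \pmod p$ for every $i$, yet $g$ is not a primitive root, so the ``if'' direction of the statement as written needs this hypothesis. Second, the statement already uses $k$ for the number of distinct prime factors of $p-1$, so you should give the order a different name.
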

\begin{proof}
This is derived from Proposition~\ref{prp4}
and its Eq.(\ref{ordgpqr})
and the answer to its Question (\ref{g1q}).
\end{proof}

\noindent
We extend the existence of primitive roots beyond
prime numbers with the following question.

\begin{prp}[Conditions for primitive roots mod $n$ and $2n$]
\label{prrmod2n}
Let $n$ be equal to two ($n=2$) or to an odd prime
integer power $n=p^a$, $a \geq 1$.
Then there exists a primitive root  mod $n$ if and
only if there exists a primitive root  mod $2n$.
\end{prp}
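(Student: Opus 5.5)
The whole argument rests on comparing the unit groups $\mb{U}_n$ and $\mb{U}_{2n}$. The key fact is $\phi(2n)=\phi(n)$. For $n=2$ this is $\phi(4)=2\neq\phi(2)=1$, so that case is handled separately by hand: $1$ is a primitive root mod $2$ (since $\mb{U}_2=\{1\}$), and $3$ is a primitive root mod $4$ (since $\mb{U}_4=\{1,3\}$ and $3^2\equiv 1$, $3\not\equiv 1$). Both sides of the equivalence therefore hold and the case is done.

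For $n=p^a$ with $p$ odd, $\gcd(2,n)=1$, so Theorem~\ref{thm51} gives $\phi(2n)=\phi(2)\phi(n)=\phi(n)$. By Proposition~\ref{uniteq}, an integer $g$ is a unit mod $2n$ iff it is a unit mod $2$ and mod $n$, i.e. iff $g$ is odd and $\gcd(g,n)=1$.

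\textbf{($\Leftarrow$)} Let $g$ be a primitive root mod $2n$.
\begin{itemize}
\item Then $\gcd(g,n)=1$. Let $k=\ord_n(g)$, so $k\mid\phi(n)$ by Corollary~\ref{cor52}.
\item $g$ is odd, so $g^k\equiv 1\pmod 2$. Also $g^k\equiv 1\pmod n$ by definition of $k$.
\item Since $\gcd(2,n)=1$, the proposition right after Corollary~\ref{crt2d} (congruence mod $mn$ iff mod $m$ and mod $n$) gives $g^k\equiv 1\pmod{2n}$.
\item Hence $\phi(2n)=\ord_{2n}(g)$ divides $k$ by Theorem~\ref{thm52}, so $\phi(n)\le k$. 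Combined with $k\mid\phi(n)$, this gives $k=\phi(n)$, and $g$ is a primitive root mod $n$.
\end{itemize}

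\textbf{($\Rightarrow$)} Let $g$ be a primitive root mod $n$. We first arrange that it is odd: if $g$ is even, replace it by $g+n$. This is odd because $n$ is odd, and it is congruent to $g$ mod $n$, so it is still a primitive root mod $n$. Now $g$ is odd and $\gcd(g,n)=1$, so $g\in\mb{U}_{2n}$. Let $m=\ord_{2n}(g)$. Then $g^m\equiv 1\pmod{2n}$ implies $g^m\equiv 1\pmod n$, so $\phi(n)\mid m$ by Theorem~\ref{thm52}. Also $m\mid\phi(2n)=\phi(n)$. Hence $m=\phi(2n)$, and $g$ is a primitive root mod $2n$.

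The only real obstacle is the parity adjustment in ($\Rightarrow$), i.e. making the primitive root a unit mod $2n$. It works precisely because $n$ is odd, which is why the case $n=2$ has to be checked by hand. Everything else is divisibility bookkeeping with orders.
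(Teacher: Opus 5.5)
Your proof is correct and follows the same route as the paper: for an odd $g$ coprime to $n$, split $g^k\equiv 1 \pmod{2n}$ into congruences mod $2$ and mod $n$ via the CRT proposition, then compare orders using $\phi(2n)=\phi(n)$. It is in fact more careful than the paper's sketch, which neither treats $n=2$ separately (there $\gcd(2,n)\neq 1$ and $\phi(4)\neq\phi(2)$) nor replaces an even primitive root $g$ by the odd $g+n$.
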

\begin{proof}
Let $g$ be any odd integer. Then
\[
  g \equiv 1 \pmod 2.
\]
Then for any $k \geq 1$ we have the following.
\[
  g^k \equiv 1 \pmod 2.
\]
If $ord_n (g)= k$ then we have the following.
\[
  g^k \equiv 1 \pmod n.
\]
Combining the two using Equation (\ref{crtbyp}) we
have the following.
\[
  g^k \equiv 1 \pmod{2 n}.
\]
Thus $g$ is a primitive root mod $n$ if and only if
it is a primitive root mod $2n$.
\end{proof}

\subsection{Primitive roots mod $2$ and $4$} 

\begin{thm}
\label{prrmod2or4}
There are primitive roots mod $n$, where $n=2$ or $n=4$.
\begin{equation}
\label{prmod2a4}
\exists g_2 , \quad
g_2 \ \text{is a primitive root mod} \quad n=2 \  \wedge \
\exists g_4 , \quad
g_4 \ \text{is a primitive root mod} \quad n=4 \  .
\end{equation}
\end{thm}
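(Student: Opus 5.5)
This is a finite verification, so the plan is simply to exhibit explicit generators and check their orders directly from the definition of $\ord_n$. For each modulus we first compute $\phi(n)$ and list the unit group $\mb{U}_n$. Then we produce a $g$ with $\gcd(g,n)=1$ and show $\ord_n(g)=\phi(n)$.

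\textbf{Case $n=2$.} Here $\phi(2)=1$ and $\mb{U}_2=\{1\}$. Take $g_2=1$; it is a unit and $1^1\equiv 1 \pmod 2$. No positive exponent is smaller than $1$, so $\ord_2(1)=1=\phi(2)$. Hence $g_2=1$ is a primitive root mod $2$, consistent with Case 1 of Proposition~\ref{prrmodp}.

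\textbf{Case $n=4$.} Here $\phi(4)=2^{2-1}(2-1)=2$, by Corollary~\ref{cor11}, and $\mb{U}_4=\{1,3\}$. Take $g_4=3\equiv -1 \pmod 4$, which satisfies $\gcd(3,4)=1$. We check $3^1\equiv 3\not\equiv 1 \pmod 4$ and $3^2=9\equiv 1 \pmod 4$. So $\ord_4(3)=2=\phi(4)$, and the powers $\{3^0,3^1\}=\{1,3\}$ exhaust $\mb{U}_4$, which is therefore cyclic.

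As a cross-check on $n=4$, Corollary~\ref{cor52} says any order divides $\phi(4)=2$. It therefore suffices to rule out order $1$, i.e.\ to observe $3\not\equiv 1\pmod 4$. By Corollary~\ref{nprimroots} the number of primitive roots mod $4$ is $\phi(\phi(4))=\phi(2)=1$, so $3$ is the unique one. The same corollary gives $\phi(\phi(2))=1$, so $1$ is the unique primitive root mod $2$.

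There is no real obstacle in this argument. The only point needing care is the degenerate case $n=2$: one must accept $g=1$ as a generator of the trivial group $\mb{U}_2$, even though $1$ is never a primitive root for larger $n$.
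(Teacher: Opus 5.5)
Your proposal is correct and takes the same approach as the paper: direct inspection, with $1$ generating $\mb{U}_2=\{1\}$ and $3$ generating $\mb{U}_4=\{1,3\}$. Your extra cross-checks via Corollary~\ref{cor52} and Corollary~\ref{nprimroots} are valid but not needed.
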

\begin{proof}
Direct inspection.
For the former case $|U_2|=1$ and contains 1. The primitive
root is $1$.
For the latter case $|U_4|=2$ and contains 1,3.
The primitive root is $3$.
\end{proof}

\begin{cor}[No primitive roots mod $8$]
\label{prrmod8}
 There are no primitive roots mod $n$, where $n=8$.
\end{cor}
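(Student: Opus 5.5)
The plan is a direct computation of $\mb{U}_8$ and of the order of each of its elements. First I identify $\mb{U}_8$. By the definition of units, these are the residues in $\{1,\ldots,7\}$ relatively prime to $8$, namely the odd residues, so $\mb{U}_8=\{1,3,5,7\}$. This agrees with the formula for $\phi$ of a prime power (Corollary~\ref{cor11}), which gives $\phi(8)=2^3-2^2=4$. By the definition of a primitive root, a primitive root mod $8$ would be some $g\in\mb{U}_8$ with $\ord_8(g)=4$.

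Next I show that every element of $\mb{U}_8$ has order at most $2$. There are two routes. The first is to check the four squares directly: $1^2=1$, $3^2=9\equiv 1$, $5^2=25\equiv 1$, $7^2=49\equiv 1 \pmod 8$. The second is uniform: any odd integer is $g=2m+1$, so $g^2-1=4m(m+1)$, and $m(m+1)$ is even, hence $\dv{8}{g^2-1}$. Either way $g^2\equiv 1\pmod 8$ for all $g\in\mb{U}_8$. Theorem~\ref{thm52} then gives $\dv{\ord_8(g)}{2}$, so $\ord_8(g)\in\{1,2\}$, and in particular $\ord_8(g)\le 2<4=\phi(8)$.

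This contradicts the existence of any element of order $\phi(8)$, so no primitive root mod $8$ exists. I do not expect a real obstacle here. The only care needed is to invoke the definition of primitive root correctly, as order equal to $\phi(n)$, and not to confuse the case $n=8$ with $n=4$, where $3$ does have order $2=\phi(4)$. As a remark, the same identity $g^2\equiv1\pmod 8$, raised to powers via Theorem~\ref{thm52}, shows that $g^{2^{k-2}}\equiv 1\pmod{2^k}$ for $k\ge3$, which extends the result to all higher powers of two. That extension is not needed for this corollary.
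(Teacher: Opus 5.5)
Your proof is correct and matches the paper's: list $\mb{U}_8=\{1,3,5,7\}$, note every element squares to $1 \pmod 8$, so no element can have order $\phi(8)=4$. One caution on your closing aside, which does not affect this corollary: raising $g^2\equiv 1\pmod 8$ to powers only yields congruences modulo $8$, not modulo $2^k$. The extension to higher powers of two instead needs the inductive squaring step $(1+A2^k)^2\equiv 1 \pmod{2^{k+1}}$, as in Proposition~\ref{prrmod2p}.
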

\begin{proof}
 For $n=8$, we have $|U_8|=4$ and contains 1,3,5,7.
For all these value $x=1,3,5,7$ we have
$x^2 \equiv 1 \pmod 8$.
For an $x$ to be a primitive root we require
$x^4 \equiv 1 \pmod 8$ i.e. the minimal power/order
of $x$ to be 4 and not 2, since $\phi(8)=4$.
\end{proof}

\subsection{No primitive roots mod $2^a$, $a \geq 3$}

\begin{prp}[No primitive roots mod $2^a$, $a \geq 3$]
\label{prrmod2p}
Let $n=2^a$,  where $a\geq 3$.
Then, there are NO primitive roots mod $n$
for $n=2^a$ as defined.
\begin{equation}
\label{nprmod2twoa}
\nexists g , \quad
g \ \text{is a primitive root mod} \quad n=2^a , \ a \geq 3.
\end{equation}
\end{prp}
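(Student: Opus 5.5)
The plan is to show that every odd $g$ satisfies $g^{2^{a-2}} \equiv 1 \pmod{2^a}$ for all $a \geq 3$. Since $\phi(2^a) = 2^{a-1}$ by Corollary~\ref{cor11}, this shows that $\ord_{2^a}(g)$ divides $2^{a-2}$ (Theorem~\ref{thm52}). Hence $\ord_{2^a}(g) \leq 2^{a-2} < 2^{a-1} = \phi(2^a)$, so no unit modulo $2^a$ can have order $\phi(2^a)$, and there is no primitive root. Only odd $g$ need be considered, because the units of $\mb{Z}_{2^a}$ are exactly the odd residues.

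The core claim will be proved by induction on $a \geq 3$.

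\textbf{Base case $a=3$.} This is Corollary~\ref{prrmod8}: for each $x \in \{1,3,5,7\}$ we have $x^2 \equiv 1 \pmod 8$, which is the claim since $2^{a-2}=2$. Alternatively, it follows directly from $g^2 - 1 = (g-1)(g+1)$. This is a product of two consecutive even numbers, one of which is divisible by $4$, so the product is divisible by $8$.

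\textbf{Inductive step.} Assume $g^{2^{a-2}} = 1 + k\,2^a$ for some integer $k$. Squaring gives
\[
g^{2^{a-1}} = 1 + k\,2^{a+1} + k^2 2^{2a} \equiv 1 \pmod{2^{a+1}},
\]
since $2a \geq a+1$. This is exactly the statement for $a+1$. (This step is the $p=2$ analogue of the earlier proposition that $a\equiv b \pmod{p^n}$ implies $a^p\equiv b^p \pmod{p^{n+1}}$. I would do the squaring explicitly, because that proposition's proof was phrased for general $p$ and its sum argument is cleaner to redo directly here.)

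\textbf{Main obstacle.} The only delicate point is the base case. The induction must start at $a=3$ rather than $a=1$ or $a=2$, because the bound $2^{a-2}$ fails to be a proper divisor of $\phi(2^a)$ for $a \leq 2$. This is consistent with Theorem~\ref{prrmod2or4}, which shows primitive roots do exist modulo $2$ and $4$. Everything else is routine.
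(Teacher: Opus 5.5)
Your proposal is correct and is essentially the paper's proof: induction on $a\geq 3$ shows $g^{2^{a-2}}\equiv 1 \pmod{2^a}$ for every odd $g$ by squaring $g^{2^{a-2}}=1+A\,2^a$, so no unit can have order $\phi(2^a)=2^{a-1}$. One small correction to your closing remark: at $a=2$ the exponent $2^{a-2}=1$ \emph{is} a proper divisor of $\phi(4)=2$, and the induction cannot start there simply because the claim $g\equiv 1 \pmod 4$ is false for $g=3$.
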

\begin{proof}
We prove the result by induction on $a$.
Base case $a=3$. There are no primitive roots
mod $n=2^3 = 8$. This is part (b) of the
previous problem and can use a proof by inspection.
$ $ \\ $ $
For the inductive step we
we then show by induction that
\begin{equation}
\label{n2a1}
   g^{2^{a-2}} \equiv 1 \pmod{2^a}
\end{equation}
for all $g$ such that $\gcd(g,n)=\gcd(g, 2^a )=1$. The
latter implies an odd $g$.
$ $ \\ $ $
The base case is $a=3$ and Equation (\ref{n2a1})
becomes $g^2 \equiv 1 \pmod{2^3}$, which is true for
every odd number $g$ and thus every element of $U_8$.
$ $ \\ $ $
For the inductive step,
assuming
   $ g^{2^{a-2}} \equiv 1 \pmod{2^a} $
we shall show the following.
\[
    g^{2^{a-1}} \equiv 1 \pmod{2^{a+1}} .
\]
Equation (\ref{n2a1}) implies that $g$ can never have
order $\phi (2^a ) = 2^{a-1}$, as $2^{a-2} < 2^{a-1}$.
From Equation (\ref{n2a1}) we obtain that there exists
an $A$ such that
\[
  g^{2^{a-2}} -1 = A 2^a \Leftrightarrow
  g^{2^{a-2}}  = 1+ A 2^a \Leftrightarrow
  g^{2^{a-1}}  = (1+ A 2^a)^2  \Leftrightarrow
  g^{2^{a-1}}  = 1+ A 2^{a+1} + A^2 A^{2a} \equiv 1 \pmod{2^{a+1}}.
\]
This proves the inductive step, induction is incomplete and
$g^{2^{a-2}} \equiv 1 \pmod{2^a}$ implies that any $g$ of
$U_n$, $n=2^a$, $a\geq 3$ cannot be a primitive root mod $n$.
\end{proof}

\begin{thm}[Conditions for primitive roots mod $p^a$ or $2p^a$]
\label{prrconpa2pa}
Let $p$ be odd prime (positive) number.
Let $n$ be equal to two ($n=2$) or $n$ is equal
to an odd prime integer power $n=p^a$, $a \geq 1$.
Then if there exists a primitive root  mod $n$,
then either $n=p^a$ or $n=2p^a$,
with $n$ as defined.
\end{thm}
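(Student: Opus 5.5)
We will prove the statement in the form in which it is used in the sequel: \emph{if $\mb{U}_n$ has a primitive root, then $n\in\{2,4,p^a,2p^a\}$ with $p$ an odd prime and $a\geq 1$.} The statement as printed is garbled, so this reading is an interpretation. The converse (existence for these $n$) is Proposition~\ref{prrmodp}, Theorem~\ref{prrmod2or4} and Proposition~\ref{prrmod2n}, together with a lifting argument from $p$ to $p^a$ built on Proposition~\ref{prp3}; that direction is not what we plan to prove here. The approach is by contrapositive: for every $n$ not of the listed form we exhibit an exponent $e<\phi(n)$ with $g^{e}\equiv 1 \pmod n$ for \emph{every} unit $g$. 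Then no unit has order $\phi(n)$.

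The key step is a splitting lemma that generalizes the proposition showing $\mb{U}_{pq}$ has no primitive roots. Suppose $n=m_1m_2$ with $\gcd(m_1,m_2)=1$ and $m_1,m_2>2$. Then $\phi(m_1)$ and $\phi(m_2)$ are both even, as read off from the product formula for $\phi$. Also $\phi(n)=\phi(m_1)\phi(m_2)$ by Theorem~\ref{thm51}. Put $e=\phi(m_1)\phi(m_2)/2<\phi(n)$.
\begin{itemize}
\item Since $\phi(m_1)\mid e$, Euler's theorem (Theorem~\ref{euler}) applied mod $m_1$ gives $g^{e}\equiv 1 \pmod{m_1}$ for every unit $g$ mod $n$.
\item By Proposition~\ref{uniteq}, such a $g$ is also a unit mod $m_2$, and the same argument gives $g^{e}\equiv 1 \pmod{m_2}$.
\item By Eq.~(\ref{crtbyp}), these two congruences combine to $g^{e}\equiv 1 \pmod{n}$.
\end{itemize}
Hence no unit has order $\phi(n)$.

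Next we do the case analysis on the factorization of $n$.
\begin{itemize}
\item If $n$ has two distinct odd prime factors $p,q$, write $n=p^{a}m$ with $\gcd(p,m)=1$ and $q\mid m$, so $m>2$. The lemma applies.
\item If $4\mid n$ and $n$ has an odd prime factor, write $n=2^{b}m$ with $b\geq 2$ and $m>1$ odd. Then $2^b\geq 4>2$ and $m\geq 3$, so the lemma applies.
\item If $n=2^{a}$ with $a\geq 3$, this is exactly Proposition~\ref{prrmod2p}.
\end{itemize}
The remaining $n$ are $1,2,4,p^a$ and $2p^a$, which are the listed forms.

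The main obstacle is bookkeeping rather than depth. We must verify that $\phi(m)$ is even for every $m>2$. For an odd prime factor $p$ of $m$ this comes from the factor $p-1$; when $m=2^{b}$ with $b\geq 2$ it comes from $2^{b-1}$. We must also make sure the three cases above genuinely exhaust all $n$ outside the listed forms.
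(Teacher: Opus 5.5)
Your proof is correct and is essentially the paper's argument. The paper writes $n=Np^a$ with $\gcd(N,p)=1$ and, for $N\geq 3$, shows that every unit satisfies $b^{\phi(N)\phi(p^a)/2}\equiv 1$ modulo $N$ and modulo $p^a$ by Euler's theorem, then combines the two congruences via Eq.~(\ref{crtbyp}); this is exactly your splitting lemma with $m_1=N$ and $m_2=p^a$. Your extra case bookkeeping (two distinct odd primes, $4\mid n$, and $n=2^a$ handled by Proposition~\ref{prrmod2p}) only spells out the full classification, whereas the paper's proof assumes $n$ has an odd prime factor and leaves powers of two to Proposition~\ref{prrmod2p}.
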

\begin{proof}
Let $p$ be an odd prime number.
Let $n=N p^a $ for some integer $a\geq 1$.
$N$ is such that $\gcd(N, p^a ) = \gcd(N, p)=1$
as otherwise we increase $a$ and reduce $N$ to $N/p$.
Let $N \geq 3$. We will show that that there are no
primitive roots mod $n$ for $N\geq 3$.
There will be then for $N=1$ and $N=2$.
Consider the following.
\[
\phi (n) = \phi (N p^a ) = \phi(N) \phi(p^a ).
\]
Both $\phi$ are even numbers. For example
$\phi (p^a) = p^{a-1} (p-1)$ and $p$ is odd.
Similar considerations apply to $N$.
Let $b \in \mb{Z}$ and $\gcd(b,n)=1$ and let us
assume it is a primitive root mod $n$. Moreover
$\gcd(b,N)=1$ and then $b^{\phi(N)}\equiv 1 \pmod N$.
Additionally $\gcd(b,p^a)=1$ and then
$b^{\phi(p^a)}\equiv 1 \pmod p^a $.
We first obtain the following.
\[
b^{\frac{\phi(n)}{2}}
\equiv
b^{\frac{\phi(N) \phi(p^a )}{2}}
\equiv
( b^{\phi(N)})^{\frac{\phi(p^a )}{2}}
\equiv
 1 \pmod{N}.
\]
We then  obtain the following.
\[
b^{\frac{\phi(n)}{2}}
\equiv
b^{\frac{\phi(N) \phi(p^a )}{2}}
\equiv
( b^{\phi(p^a)})^{\frac{\phi(N   )}{2}}
\equiv
 1 \pmod{p^a}.
\]
Combining the two using Equation (\ref{crtbyp}) we
have the following.
\[
b^{\frac{\phi(n)}{2}}
\equiv 1 \pmod{Np^a}
\Rightarrow
b^{\frac{\phi(n)}{2}}
\equiv 1 \pmod{n}.
\]
But $b$ is a primitive root mod $n$ and thus the following
applies.
\[
 a^{\phi(n)} \equiv 1 \pmod{n}
\]
For a primitive root with order $\phi(n)$,
the equality $a^{\frac{\phi(n)}{2}}
\equiv 1 \pmod{n}$ contradicts it as it implies
an order $\frac{\phi(n)}{2} < \phi(n)$.
Thus there can be no primitive root for
$Np^a$ if $N \geq 3$.
The only possible case(s) is/are $p^a$ and $2p^a$.
\end{proof}

\noindent
So far we have proved the existence
of primitive roots mod $n$, where $n$ is an (odd)
prime number, or $n=2$ or $n=4$.  
We also proved that there are
no primitive roots for $n=2^a$, where $a \geq 3$.

\subsection{Primitive roots mod $p^2$}

\begin{thm}[Primitive roots mod $p^2$]
\label{prrmodp2}
If $p$ is an odd prime number and thus
$p>2$, then  $\mb{Z}_{p^2}$ has a primitive root.
\begin{equation}
\label{prmodptwo2}
\exists g , \quad
g \ \text{is a primitive root mod} \quad p^2 , \
p>2 \ \text{is prime} .
\end{equation}
\end{thm}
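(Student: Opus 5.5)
We start from a primitive root $g$ mod $p$, which exists by Proposition~\ref{prrmodp}. The goal is to show that $g$ or $g+p$ is a primitive root mod $p^2$. The tool is Proposition~\ref{prp3}: for any unit $a$ mod $p$, $\ord_{p^2}(a)$ equals either $\ord_p(a)$ or $p\cdot\ord_p(a)$. Since $g+p\equiv g \pmod p$, both $g$ and $g+p$ have order $p-1$ mod $p$. Hence each has order either $p-1$ or $p(p-1)=\phi(p^2)$ mod $p^2$. It therefore suffices to show that $g$ and $g+p$ cannot both have order $p-1$ mod $p^2$.

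\textbf{Step 1.} If $g^{p-1}\not\equiv 1 \pmod{p^2}$, then $\ord_{p^2}(g)\neq p-1$. By Proposition~\ref{prp3}, $\ord_{p^2}(g)=p(p-1)$, and $g$ is a primitive root mod $p^2$.

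\textbf{Step 2.} Suppose instead $g^{p-1}\equiv 1\pmod{p^2}$. We expand $(g+p)^{p-1}$ by the binomial theorem, keeping terms only modulo $p^2$:
\[
(g+p)^{p-1}\equiv g^{p-1}+(p-1)g^{p-2}p \equiv 1+(p-1)p\,g^{p-2}\equiv 1-p\,g^{p-2} \pmod{p^2}.
\]
All terms with $p^2$ or higher powers of $p$ vanish here. Since $\gcd(g,p)=1$, we have $\ndv{p}{g^{p-2}}$, so $\ndv{p^2}{p\,g^{p-2}}$. Hence $(g+p)^{p-1}\not\equiv 1\pmod{p^2}$.

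\textbf{Step 3.} Since $\gcd(g+p,p)=1$ and $\ord_p(g+p)=p-1$, Proposition~\ref{prp3} forces $\ord_{p^2}(g+p)=p(p-1)=\phi(p^2)$. So $g+p$ is a primitive root mod $p^2$.

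The main obstacle is the binomial computation in Step 2, and it is mild. One must check that the middle term $(p-1)p\,g^{p-2}$ survives modulo $p^2$, which uses only that $p\nmid g$ and $p\nmid p-1$. The odd-prime hypothesis enters only through Proposition~\ref{prp3} as stated; the argument is otherwise routine.
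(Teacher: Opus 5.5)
Your proposal is correct and follows essentially the same route as the paper: start from a primitive root $g$ mod $p$, note that its order mod $p^2$ is $p-1$ or $p(p-1)$, and, if $g^{p-1}\equiv 1 \pmod{p^2}$, use $(g+p)^{p-1}\equiv 1-p\,g^{p-2}\pmod{p^2}$ to show that $g+p$ is a primitive root. The only difference is that you cite Proposition~\ref{prp3} for the order dichotomy, whereas the paper rederives it inline from $p-1 \mid \ord_{p^2}(g) \mid p(p-1)$.
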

\begin{proof}
Let $g$ be a primitive root mod $p$.
Since $g$ is a primitive root we have
$g^{p-1} \equiv 1 \pmod {p}$.
Let $k=ord_{p^2}(g)$.
Since $g^k \equiv 1 \pmod {p^2}$ we also have
      $g^k \equiv 1 \pmod {p}$.
Since $g$ is a primitive root mod $p$,
and $\phi(p)=p-1$ we also have that $\dv{p-1}{k}$.
Thus $k=(p-1)r$.
Moreover $\dv{k}{\phi( p^2)}$ since $k=ord_{p^2}(g)$ that
is, $\dv{k}{p(p-1)}$. That is $p(p-1) = (p-1)r s$.
This means $\dv{r}{p}$. Since $p$ is prime this means
$r=1$ or $r=p$. (This by itself is also a previous problem.)
$ $ \\ $ $
If $r=p$, $k=(p-1)r =p(p-1)$, then
$\ord_{p^2} (g) = k= p(p-1)=\phi(p^2)$.
This means {\bf $g$ is a primitive root mod $p^2$}.
$ $ \\ $ $
If $r=1$, $k=(p-1)r=(p-1)$, then $ord_{p^2} (g) = k = p-1$
and this means $g^{p-1} \equiv 1 \pmod {p^2}$.
$ $ \\ $ $
Consider $g_1 =g +p$. $g_1$ is also a primitive root $\pmod p$.
This is because
\[
 g_1^{p-1} \equiv (g+p)^{p-1}
           \equiv g^{p-1} + \lambda p
           \equiv 1 + 0
           \equiv 1 \pmod p
\]
implies $g_1^{p-1} \equiv 1 \pmod p$.
Similarly as before $\ord_{p^2}{g_1} = r_1 (p-1)$ where
$r_1$ is either 1 or $p$.
$ $ \\ $ $
Consider that $g^{p-1} \equiv 1 \pmod {p^2}$ implies that
\[
g_1^{p-1} \equiv (g+p)^{p-1}
          \equiv g^{p-1} +p(p-1)g^{p-2} + t p^2 g^{p-3}
          \equiv g^{p-1} -p g^{p-2} + (t+g^{p-2}) p^2
          \equiv 1 - p g^{p-2} \pmod {p^2}
\]
If $r_1 =1$ then $\ord_{p^2}(g_1)=(p-1)r_1 =1$, and thus
\[
1 \equiv g_1^{p-1} \equiv 1 -pg^{p-2} \pmod p^2
\]
which implies
$pg^{p-2} \equiv 0 \pmod p^2$ i.e. $\dv{p}{g}$ i.e. $p \leq g$.
This contradicts the fact that $g$ chosen as primitive root
mod $p$ implies $g < p$. That is {\bf it can't be that $r_1 =1$.}
$ $ \\ $ $
Thus $r_1 =p$  and 
$\ord_{p^2}(g_1 )=(p-1)r_1 =p(p-1)=\phi (p^2)$, and thus
{\bf $g_1$ is primitive root $\pmod {p^2}$}.
$ $ \\ $ $
To conclude if $r=p$ then $g$ is a primitive root $\pmod {p^2}$.
If $r=1$ then $g_1 = g+p$ implies that 
$g_1$ is a primitive root $\pmod {p^2}$.
One way or the other there is a primitive root in $\mb{Z}_p^2$,
and this is either a primitive root $g$ of $\mb{Z}_p$ or  the sum
of the primitive root plus p i.e. $g+p$!
\end{proof}

In the next result, we use this step as the base case or
stepping stone of an inductive proof.

\subsection{Primitive roots mod $p^a$}

In the next few problems
that there are primitive roots mod a prime power
$n=p^a$, $a \geq 1$, or for
$n=2p^a$.

\begin{lem}
Let $g$ be  a primitive root mod $p$, where
$p$ is an odd prime.
Let $g$ be such that
$ g^{p-1} \not\equiv 1 \pmod{p^2}$.
Show then the following.
\begin{equation}
\label{ptwo2prop}
g^{p-1} \not\equiv 1 \pmod{p^2}
\Rightarrow
g^{\phi (p^a )} \not\equiv 1 \pmod{p^{a+1}} \quad \forall a \geq 1.
\end{equation}
\end{lem}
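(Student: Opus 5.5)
We argue by induction on $a \geq 1$. To make the induction go through, we prove a slightly stronger and more explicit statement. For every $a\geq 1$,
\[
g^{\phi(p^a)} = 1 + k_a p^a \quad \text{with} \quad \ndv{p}{k_a}.
\]
This says $g^{\phi(p^a)} \equiv 1 \pmod{p^a}$ but $\not\equiv 1 \pmod{p^{a+1}}$. The first congruence is Euler's theorem (Theorem~\ref{euler}), since $\gcd(g,p)=1$.

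The base case $a=1$ is exactly the hypothesis. Since $\phi(p)=p-1$, Euler/Fermat gives $g^{p-1}=1+k_1p$, and $g^{p-1}\not\equiv 1 \pmod{p^2}$ means $\ndv{p}{k_1}$.

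For the inductive step, assume $g^{\phi(p^a)} = 1+k_a p^a$ with $\ndv{p}{k_a}$. We use $\phi(p^{a+1}) = p\,\phi(p^a)$, which follows from Corollary~\ref{cor11}. Raising to the $p$-th power and expanding with the binomial theorem gives
\[
g^{\phi(p^{a+1})} = (1+k_a p^a)^p = 1 + p\cdot k_a p^a + \binom{p}{2} k_a^2 p^{2a} + \sum_{j\geq 3}\binom{p}{j} k_a^j p^{ja}.
\]
The linear term is $k_a p^{a+1}$. We then show that every remaining term is divisible by $p^{a+2}$:
\begin{itemize}
\item[(1)] For $2\leq j \leq p-1$, $p$ divides $\binom{p}{j}$, so the $j$-th term is divisible by $p^{1+ja}$. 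Since $1+ja \geq 1+2a \geq a+2$ for $a\geq 1$, it is divisible by $p^{a+2}$.
\item[(2)] The last term $k_a^p p^{pa}$ needs $pa \geq a+2$. This holds because $p\geq 3$ gives $pa \geq 3a \geq a+2$.
\end{itemize}
Hence $g^{\phi(p^{a+1})} = 1 + p^{a+1}(k_a + p\,t)$ for some integer $t$. Setting $k_{a+1}=k_a+pt$, we have $\ndv{p}{k_{a+1}}$ because $\ndv{p}{k_a}$. This completes the induction, and the claimed non-congruence modulo $p^{a+2}$ (that is, for $a+1$) follows.

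The main obstacle is the bookkeeping in (1) and (2), namely checking that all higher binomial terms land in $p^{a+2}$. This is exactly where the hypothesis that $p$ is odd is used. For $p=2$ and $a=1$, the term $\binom{2}{2}k_1^2p^2$ only contributes $p^{2}$, not $p^{3}$, and the statement fails (consistent with Proposition~\ref{prrmod2p}). I would state explicitly that the term $j=p$ requires $p\geq 3$. The binomial coefficients with $1\le j\le p-1$ are divisible by $p$ by the same fact used in the proof of Fermat's theorem.
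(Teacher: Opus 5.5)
Your proof is correct and follows the paper's route: induction on $a$, writing $g^{\phi(p^a)}=1+Ap^a$ with $p\nmid A$ via Euler's theorem, using $\phi(p^{a+1})=p\,\phi(p^a)$, and expanding $(1+Ap^a)^p$ with the binomial theorem. Your term-by-term bookkeeping is more careful than the paper's, which absorbs the higher terms as $p^{2a}S=p^{a+2}\cdot Sp^{a-2}$. That rewriting is only justified for $a\ge 2$, and the paper never says where oddness of $p$ enters. You instead cover the step from $a=1$ explicitly and isolate the two facts that make it work: $p\mid\binom{p}{j}$ for $2\le j\le p-1$, and $pa\ge a+2$ when $p\ge 3$.
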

\begin{proof}
Proof by induction on $a$.
Base case is $a=1$ that is
\[
g^{\phi (p   )} \not\equiv 1 \pmod{p^{2}},
\]
which is another writing of
\[
g^{p-1} \not\equiv 1 \pmod{p^2},
\]
since $\phi(p)=p-1$.
$ $ \\ $ $
{\bf Inductive step.} We show the result for $a\geq 2$.
If
\[
g^{\phi (p^a )} \not\equiv 1 \pmod{p^{a+1}} ,
\]
we show then
\[
g^{\phi (p^{a+1} )} \not\equiv 1 \pmod{p^{a+2}} ,
\]
By Euler's theorem we have the following.
\[
g^{\phi (p^a )} \equiv 1 \pmod{p^{a}}
\Rightarrow
g^{\phi (p^a )} -  1 = A {p^{a}}
\Rightarrow
g^{\phi (p^a )} =  1 + A {p^{a}} .
\]
The induction hypothesis
\[
g^{\phi (p^a )} - 1 \neq B \pmod{p^{a+1}} ,
\]
implies that $\ndv{p}{A}$, since otherwise
\[
g^{\phi (p^a )} - 1 = B \pmod{p^{a+1}} .
\]
Consider
\[
\phi(p^{a+1} ) = p^a (p-1) = p \times \phi(p^a ).
\]
We then obtain
\begin{eqnarray*}
g^{\phi (p^{a+1} )} &=& g^{p \phi (p^{a} )} \\
                    &=& (1+ A{p^{a}} )^p \\
                    &=& 1+A p^{a+1} \\
                    &\not\equiv& 1 \mod{p^{a+2}},
\end{eqnarray*}
as needed to conclude the inductive step,
where in the last derivation above one
used the binomial theorem, for $x= A p^a$, as shown below.
\begin{eqnarray*}
(1+Ap^{a})^p = (1+x)^p &=& \sum_{i=0}^{p} 
                             {p \choose i} x^i 1^{p-1} \\
                       &=& 1+ p x + x^2 \cdot S \\
                       &=& 1+ Ap p^a  + p^{2a} \cdot S \\
                       &=& 1+ A p^{a+1}  + 
                                p^{a+2} \cdot S p^{a-2}.
\end{eqnarray*}
\end{proof}

\begin{prp}[Primitive roots mod $p^a$, $a\geq 2$]
\label{prrmodp2a}
Let $g$ be a primitive root mod an odd positive prime
number $p$. Then either $g$ or $g+p$ is
a primitive root mod $p^a$ for all $a \geq 2$.
\end{prp}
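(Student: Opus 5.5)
The plan is to reduce everything to the preceding Lemma (Eq.~(\ref{ptwo2prop})) by first selecting, between $g$ and $g+p$, a primitive root $h$ mod $p$ with $h^{p-1}\not\equiv 1 \pmod{p^2}$. The proof of Theorem~\ref{prrmodp2} already supplies this. If $g^{p-1}\not\equiv 1 \pmod{p^2}$, set $h=g$. Otherwise $g^{p-1}\equiv 1\pmod{p^2}$, and the binomial computation there gives $(g+p)^{p-1}\equiv 1-pg^{p-2}\not\equiv 1\pmod{p^2}$, because $\ndv{p}{g}$. In that case set $h=g+p$. Note that $g+p\equiv g \pmod p$, so $h$ is still a primitive root mod $p$. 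With this choice, the preceding Lemma yields $h^{\phi(p^{a})}\not\equiv 1 \pmod{p^{a+1}}$ for every $a\geq 1$.

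Next, I fix $a\geq 2$ and let $k=\ord_{p^a}(h)$. Two divisibilities constrain $k$.
\begin{itemize}
\item[(i)] By Corollary~\ref{cor52}, $\dv{k}{\phi(p^a)}=p^{a-1}(p-1)$.
\item[(ii)] From $h^k\equiv 1\pmod{p^a}$ we get $h^k\equiv 1 \pmod p$, so Theorem~\ref{thm52}, applied with $\ord_p(h)=p-1$, gives $\dv{p-1}{k}$.
\end{itemize}
Writing $k=(p-1)m$, condition (i) says $\dv{m}{p^{a-1}}$, so $m=p^j$ for some $0\le j\le a-1$ by unique factorization (Theorem~\ref{fund}).

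It remains to exclude $j<a-1$. If $j\le a-2$, then $k$ divides $(p-1)p^{a-2}=\phi(p^{a-1})$, and Theorem~\ref{thm52} gives $h^{\phi(p^{a-1})}\equiv 1\pmod{p^a}$. This contradicts the Lemma applied with $a-1\geq 1$ in place of $a$. Hence $j=a-1$, so $k=\phi(p^a)$ and $h$ is a primitive root mod $p^a$.

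The main obstacle is the Lemma itself. Its inductive step needs $(1+Ap^a)^p\equiv 1+Ap^{a+1}\pmod{p^{a+2}}$ with $\ndv{p}{A}$. This requires $p$ to be odd and $a\ge1$, so that the middle binomial term $\binom{p}{2}A^2p^{2a}$ is divisible by $p^{2a+1}$ and hence by $p^{a+2}$. Since the Lemma is stated earlier, I would only check that it really holds for all $a\geq1$ and that this is where oddness of $p$ enters. The rest is order bookkeeping.
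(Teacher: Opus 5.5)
Your proposal is correct and follows essentially the paper's route. Like the paper, you pick $g$ or $g+p$ according to whether $g^{p-1}\equiv 1 \pmod{p^2}$, and then combine the preceding Lemma ($h^{\phi(p^a)}\not\equiv 1 \pmod{p^{a+1}}$) with order-divisibility bookkeeping. The only difference is that you exclude $\ord_{p^a}(h)<\phi(p^a)$ directly, using $\dv{p-1}{\ord_{p^a}(h)}$ and the Lemma at level $a-1$, whereas the paper inducts on $a$ to obtain $\dv{\phi(p^{a})}{\ord_{p^{a+1}}(g)}$ and is then left with only two candidates.
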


We have already shown the result for $a=2$.
We need to show existence of primitive roots for $a>2$.

\begin{proof}
Let $g$ be a primitive root mod $p$. Then we have
the following.
\[
  g^{p-1} \equiv  1 \pmod p,
\]
with $\gcd(g,p)=1$.
We distinguish two cases.
\begin{equation}
\begin{split}
\text{Case 1:}& \quad g^{p-1} \not\equiv 1 \pmod{p^2}   \\
\text{Case 2:}& \quad g^{p-1}     \equiv 1 \pmod{p^2}  \Rightarrow
                \quad (p+g)^{p-1} \not\equiv 1 \pmod{p^2}.
\end{split}
\end{equation}
$ $ \\ $ $
We will then show by induction that  for all
$a \geq 1$, we have the following.
\begin{equation}
\label{p2a1}
ord_{p^a}(g)= \phi(p^a ) = p^a - p^{a-1},
\end{equation}
that is, $g$ is a primitive root mod $p^a$.
$ $ \\ $ $
{\bf Case 1, base case $a=1$.}
Equation (\ref{p2a1}) is true for $a=1$ as a by product
of Fermat's little theorem which also establishes
that $g$ is a primitive root mod $p$.
This is the basis of the inductive proof.
$ $ \\ $ $
{\bf Case 1, inductive step from $a$ to $a+1$.}
We assume that Equation
(\ref{p2a1}) is true for $a$, we shall show it is
true for $a+1$.
That is, the following will be shown.
\[
ord_p(g) = \phi (p) \quad \wedge \quad
ord_{p^a}(g) = \phi(p^a )  \Rightarrow
ord_{p^{a+1}}(g) = \phi(p^{a+1} ) .
\]
We will show that
\[
  k = ord_{p^{a+1}} (g)  \quad \wedge \quad
   g^k \equiv 1 \pmod{p^{a+1}}
\Rightarrow k = \phi (p^{a+1}).
\]
Since $g^k \equiv 1 \pmod{p^{a+1}}$
this implies the following
\[
g^k - 1 = K p^{a+1}
\Rightarrow
\dv{p^{a}}{g^k -1}
\Rightarrow
   g^k \equiv 1 \pmod{p^{a}}
\Rightarrow
\Rightarrow
   \dv{ord_{p^a}(g)}{k}
\Rightarrow
   \dv{\phi({p^a})}{k}
\]
Furthermore,
\[
 g^k \equiv 1 \pmod{p^{a+1}}
\Leftrightarrow
\dv{ord_{p^{a+1}}(g)}{\phi (p^{a+1})}
\Leftrightarrow
\dv{k}{\phi (p^{a+1})}
\]
From the two derivations
$\dv{\phi({p^a})}{k}$
and
$\dv{k}{\phi (p^{a+1})}$
we conclude that
(a)
either $k=\phi (p^{a+1}) =p^a (p-1)$
(b)
or     $k=\phi({p^a}))   =p^{a-1} (p-1)$.
$ $ \\  $ $
{\bf Case (b) leads to contradiction.}
If it is not the former case (a), then it is
the latter case (b) and  $k=\phi({p^a}))   =p^{a-1} (p-1)$.
The latter implies
\[
  k = ord_{p^{a+1}} (g) =\phi({p^a}))  \quad \wedge \quad
   g^{\phi({p^a})} \equiv 1 \pmod{p^{a+1}},
\]
which contradicts  Eq.(\ref{ptwo2prop})
that from
\[
g^{p-1} \not\equiv 1 \pmod{p^2},
\]
derives a
\[
   g^{\phi({p^a})} \not\equiv 1 \pmod{p^{a+1}}
\]
instead.
Thus case (a) is applicable
and we conclude that $g$
has order $k=\phi (p^{a+1})$ thus completing the inductive
step. As a conclusion
$g$ is a primitive root mod $p^a$ for all $a \geq 1$.
$ $ \\ $ $
{\bf Case 2, base case $a=1$.}
Equation (\ref{p2a1}) is true for $a=1$ as a by product
of Fermat's little theorem which also establishes
that $g$ is a primitive root mod $p$.
This is the basis of the inductive proof.
Furthermore, by way of Case 2 we have the following.
\[
g^{p-1} \equiv 1 \pmod{p^2}.
\]
$ $ \\ $ $
{\bf Case 2, inductive step from $a$ to $a+1$.}
We then consider $g+p$ as a candidate for a primitive
root. We note that since $g$ is a primitive root mod
$p$ so is $p+g$. We use the binomial theorem for
\begin{eqnarray*}
(p+g)^{p-1} &=& \sum_{i=0}^{p-1} {p-1 \choose i} p^i g^{p-1-i} \\
            &=& p^0 g^{p-1} + (p-1)p g^{p-2} + p^2 P \\
            &\equiv& g^{p-1} +p^2 g^{p-2} -p g^{p-2} \pmod{p^2} \\
            &\equiv& g^{p-1}  -p g^{p-2} \pmod{p^2} \\
            &\equiv&  1       -p g^{p-2} \pmod{p^2} .
\end{eqnarray*}
where we used the $ g^{p-1} \equiv 1 \pmod{p^2}$ to obtain the
last derivation above. Moreover $\gcd(p,g)=1$ and
further more we obtain
\[
(p+g)^{p-1} \equiv  1       -p g^{p-2} \pmod{p^2}
            \not\equiv 1 \pmod{p^2} .
\]
This is a condition symmetric to the one of Case 1.
Using similar arguments we conclude that
$p+g$ is primitive root for this case mod $p^a$,
for every $a\geq 1$.
\end{proof}

\begin{thm}[Primitive roots mod $p^a$, $a \geq 1$]
\label{prrmodpa}
There are primitive roots mod $p^a$ for all
$a \geq 1$, where $p$ is an odd prime number.
\end{thm}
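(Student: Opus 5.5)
The statement is essentially a packaging of results already established, so I will assemble it from them rather than reprove anything from scratch. The plan has three steps: first get a primitive root modulo $p$, then adjust it so that a specific non-congruence modulo $p^2$ holds, and finally lift that adjusted element to every power $p^a$ by induction on $a$.

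\textbf{Step 1: a primitive root mod $p$ and the case split.} By Proposition~\ref{prrmodp} there is a primitive root $g$ mod $p$, and we may take $1\le g<p$. For $a=1$ this $g$ already works. For $a\ge 2$ I split on whether $g^{p-1}\equiv 1 \pmod{p^2}$. If not, set $h=g$. If so, set $h=g+p$, which is still a primitive root mod $p$ since it is congruent to $g$. The binomial expansion from Theorem~\ref{prrmodp2} then gives
\[
(g+p)^{p-1}\equiv 1-p\,g^{p-2}\pmod{p^2}.
\]
This is $\not\equiv 1$ because $p\nmid g$. Either way I obtain a primitive root $h$ mod $p$ with $h^{p-1}\not\equiv 1\pmod{p^2}$.

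\textbf{Step 2: the lifting lemma.} Applying the lemma giving Eq.~(\ref{ptwo2prop}) to $h$ yields
\[
h^{\phi(p^a)}\not\equiv 1 \pmod{p^{a+1}} \quad \text{for all } a\ge 1.
\]

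\textbf{Step 3: induction on $a$.} I prove by induction that $\ord_{p^a}(h)=\phi(p^a)$, which is exactly the argument of Proposition~\ref{prrmodp2a}. Suppose $\ord_{p^a}(h)=\phi(p^a)$ and let $k=\ord_{p^{a+1}}(h)$. From $h^k\equiv1 \pmod{p^a}$ and Theorem~\ref{thm52} we get $\phi(p^a)\mid k$. By Corollary~\ref{cor52}, $k\mid \phi(p^{a+1})=p\,\phi(p^a)$. Hence $k/\phi(p^a)$ divides the prime $p$, so $k$ is either $\phi(p^a)$ or $p\,\phi(p^a)$.

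The first option, $k=\phi(p^a)$, would force $h^{\phi(p^a)}\equiv1\pmod{p^{a+1}}$, contradicting Step 2. Therefore $k=\phi(p^{a+1})$, so $h$ is a primitive root mod $p^{a+1}$, which completes the induction.

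\textbf{Main obstacle.} The only delicate point is the lifting estimate in Step 2. Its key step is the computation
\[
(1+Ap^a)^p\equiv 1+Ap^{a+1}\pmod{p^{a+2}} \quad \text{with } p\nmid A.
\]
This needs all middle binomial terms to be divisible by $p^{a+2}$. The terms with $2\le i\le p-1$ carry the factor $p\binom{p}{i}$ times $p^{2a}$, and the last term $p^{pa}$ is fine because $p\ge3$. This is also where oddness of $p$ is used. Since that lemma is already stated, I would cite it and only remark on this check.
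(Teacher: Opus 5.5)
Your proof is correct and follows the paper's route. The paper's proof cites Proposition~\ref{prrmodp} for a primitive root $g$ mod $p$ and Proposition~\ref{prrmodp2a} for the fact that $g$ or $g+p$ is a primitive root mod every $p^a$, and your Steps 1--3 simply inline the proof of Proposition~\ref{prrmodp2a}: the $g$ versus $g+p$ split, the lemma of Eq.~(\ref{ptwo2prop}), and the dichotomy $k\in\{\phi(p^a),\,p\,\phi(p^a)\}$. Your check that $p\mid\binom{p}{i}$ for $2\le i\le p-1$, and that $p^{pa}$ is absorbed because $p\ge 3$, is worth keeping, since it covers the $a=1$ case of that lemma, which the paper's proof glosses over with the factor $p^{a-2}$.
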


\begin{proof}
From Proposition~\ref{prrmodp2a}, 
if  $g$ is a primitive root
mod $p$, then either $g$ or $g+p$ is a primitive root
mod $p^a$ for all $a \geq 1$.
By Proposition~\ref{prrmodp}, there exists
a primitive root $g$ mod $p$. Thus there exist primitive
roots mod $p^a$.
Furthermore if there exists one primitive root mod $p$,
from Eq. (\ref{nprimroots}) there are $\phi (\phi (p))= \phi (p-1)$
such primitive roots mod $p$ and by extension mod $p^a$.
Every primitive root mod $p$ is a generator
of $\mb{U}_p$ and thus $\mb{U}_p$ is cyclic.
%This can be extended to $\mb{U}_n$ for any $n$.
\end{proof}

\begin{thm}[Primitive roots mod $2p^a$, $a \geq 1$]
\label{prrmod2pa}
There are primitive roots mod $2p^a$ for all
$a \geq 1$, where $p$ is an odd prime number.
\end{thm}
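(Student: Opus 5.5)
We combine Theorem~\ref{prrmodpa} with the observation already made in Proposition~\ref{prrmod2n}, but we redo that observation carefully, because the argument there is somewhat loose. By Theorem~\ref{prrmodpa} there is a primitive root $g$ mod $p^a$. If $g$ is even, we replace it by $g' = g + p^a$. Then $g' \equiv g \pmod{p^a}$, so $g'$ is still a primitive root mod $p^a$, and $g'$ is odd because $p^a$ is odd. So we may assume $g$ is odd. Then $\gcd(g,2)=1$ and $\gcd(g,p^a)=1$, hence $\gcd(g,2p^a)=1$, so $g$ is a unit mod $2p^a$.

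Next we compute $\phi(2p^a)$. Since $\gcd(2,p^a)=1$, Theorem~\ref{thm51} gives $\phi(2p^a)=\phi(2)\phi(p^a)=\phi(p^a)$. Let $k=\ord_{2p^a}(g)$. By Corollary~\ref{cor52}, $\dv{k}{\phi(2p^a)}=\phi(p^a)$. On the other hand, $g^k\equiv 1 \pmod{2p^a}$ implies $g^k\equiv 1\pmod{p^a}$. By Theorem~\ref{thm52} applied mod $p^a$, this gives $\dv{\phi(p^a)}{k}$, since $\ord_{p^a}(g)=\phi(p^a)$. Hence $k=\phi(p^a)=\phi(2p^a)$, and $g$ is a primitive root mod $2p^a$.

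As an alternative to the divisibility argument in the second paragraph, one can argue directly: for any $m$, $g^m\equiv 1 \pmod 2$ holds automatically since $g$ is odd. By Equation~(\ref{crtbyp}), $g^m\equiv 1\pmod{2p^a}$ is then equivalent to $g^m\equiv 1\pmod{p^a}$. So the orders of $g$ mod $2p^a$ and mod $p^a$ coincide.

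The only real obstacle is the parity adjustment: a primitive root mod $p^a$ need not be a unit mod $2p^a$. The shift by $p^a$ handles this without changing the residue mod $p^a$. The case $a\ge1$ is uniform, so no induction is needed.
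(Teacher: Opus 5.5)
Your proof is correct and follows essentially the same route as the paper: take a primitive root $g$ mod $p^a$, replace it by $g+p^a$ if it is even, and use $\phi(2p^a)=\phi(p^a)$. The only difference is the last step: the paper shows that $g,g^2,\ldots,g^{\phi(p^a)}$ are distinct mod $2p^a$ and so exhaust $\mb{U}_{2p^a}$, whereas you determine $\ord_{2p^a}(g)$ from two divisibilities (or via Eq.~(\ref{crtbyp})); both versions are fine.
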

\begin{proof}
Existentially,
this follows by way of Corollary~\ref{prrmodpa}
and
Proposition~\ref{prrmod2n} and
Proposition~\ref{prrconpa2pa}.
This is going to be proved indirectly.
By the previous Proposition~\ref{prrmodpa} 
we know that there is a
primitive root, call it $g$, mod $p^a$, $a \geq 1$.
$ $\\ $ $
We are going to show that if $g$ is odd then $g$
is also a primitive root mod $2p^a$. Otherwise,
$g+p^a$ is odd, and in this case we can show that
$g+p^a$ is a primitive root mod $2p^a$.
$ $ \\ $ $
{\bf Case 1: $g$ is odd.}
Then, the sequence,
\[
 g, g^2 , g^3 , \ldots g^{\phi(p^a)} ,
\]
is a sequence of odd integer and are distinct mod $p^a$.
Then they are also distinct mod $2p^a$ as well.
% say x notequiv y mod p^a
% but say x  equiv y mod 2p^a
% then x-y =2p^a K => x-y = p^a L => x equv y mod p^2 contradicting
% x notequiv y mod p^a.
Note that
\[
 \phi (2 p^a) = \phi (2) \phi (p^a) = \phi (p^2).
\]
Thus the sequence above enumerates all unit not only of
$\mb{U}_{p^a}$ but also $\mb{U}_{2p^a}$ since
\[
|\mb{U}_{p^a}| = |\mb{U}_{2p^a}| = \phi (p^a ).
\]
$ $ \\ $ $
{\bf Case 2 $g$ is even.}
The $g+p^a$ is odd and proceed likewise.
$ $ \\ $ $
This concludes the theorem.
\end{proof}

\bigskip

The conclusion leads to the following summarizing theorem.

\begin{thm}
There are primitive roots mod $n$ where $n=2,4, p^a , 2p^a$
for $a \geq 1$. The number of primitive roots mod $n$
is $\phi (\phi (n))$.
\end{thm}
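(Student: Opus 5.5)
The statement has two parts, and I will prove them separately: existence of a primitive root for each listed modulus, and the count $\phi(\phi(n))$.

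\emph{Existence.} I will assemble this from earlier results by splitting into four cases.
For $n=2$ and $n=4$, Theorem~\ref{prrmod2or4} exhibits the generators $1$ and $3$ directly.
For $n=p^a$ with $p$ an odd prime and $a\geq 1$, Theorem~\ref{prrmodpa} applies. It rests on Proposition~\ref{prrmodp}, which gives a primitive root $g$ mod $p$, and on Proposition~\ref{prrmodp2a}, which says that $g$ or $g+p$ is a primitive root mod every $p^a$.
For $n=2p^a$, Theorem~\ref{prrmod2pa} applies. One takes the odd member of $\{g,\, g+p^a\}$, where $g$ is a primitive root mod $p^a$.

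In that last case I will check one point explicitly, since the written argument is slightly loose there. For odd $h$, the congruence $h^k\equiv 1 \pmod{2p^a}$ holds if and only if $h^k\equiv 1\pmod{p^a}$; this follows from Equation~(\ref{crtbyp}) because $h^k$ is automatically odd. Hence $\ord_{2p^a}(h)=\ord_{p^a}(h)=\phi(p^a)=\phi(2)\phi(p^a)=\phi(2p^a)$, using Theorem~\ref{thm51}.

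\emph{Count.} Once some primitive root $g$ mod $n$ exists, $\mathbb{U}_n=\{g^0,g^1,\ldots,g^{\phi(n)-1}\}$. These powers are distinct mod $n$ by Lemma~\ref{lem52a} applied with $k=\phi(n)$. By Proposition~\ref{prp2}, $g^m$ has order $\phi(n)$ exactly when $\gcd(m,\phi(n))=1$. So the primitive roots are precisely the $g^m$ with $0\le m<\phi(n)$ and $\gcd(m,\phi(n))=1$, and there are $\phi(\phi(n))$ of them. This is Corollary~\ref{nprimroots}.

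I do not expect a genuine obstacle, because every piece has already been established. The only care needed is in the $2p^a$ case, where the order transfer between $p^a$ and $2p^a$ must be stated cleanly, and I will do that via the CRT equivalence (\ref{crtbyp}) as above. If one also wants the converse (that primitive roots exist \emph{only} for these $n$), I would cite Proposition~\ref{prrmod2p} and Theorem~\ref{prrconpa2pa}, but the statement as posed asserts only existence and the count.
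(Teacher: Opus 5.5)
Your proposal is correct and follows the paper's proof: the same case split into $n=2,4$, $n=p^a$ and $n=2p^a$, citing Theorems~\ref{prrmod2or4}, \ref{prrmodpa} and \ref{prrmod2pa}, and your order-transfer check for $2p^a$ is exactly the argument of Proposition~\ref{prrmod2n}. Your explicit count via Proposition~\ref{prp2} (i.e.\ Corollary~\ref{nprimroots}) supplies the $\phi(\phi(n))$ part, which the paper's proof of this theorem leaves implicit.
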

\begin{proof}
{\bf Case 1: $n=2,4$.} By way of Theorem~(\ref{prrmod2or4} 
there exist primitive roots mod $n=2$ and mod $n=4$. 
They are by direct inspection 1 and 3 respectively.
$ $ \\ $ $
{\bf Case 2: $n=p^a$, $a \geq 1$.} This is a by product of
Theorem~\ref{prrmodpa}.
$ $ \\ $ $
{\bf Case 3: $n=2p^a$, $a \geq 1$.} This is a by product of
Theorem~\ref{prrmod2pa}.
\end{proof}

\section{Auxiliaries for Legendre and Jacobi symbols}

\begin{lem}
\label{jlaux1}
(a)
If $p,q$  are odd integers, then the following
applies.
\begin{equation}
\label{jacu1}
\frac{p-1}{2}
+
\frac{q-1}{2}
\equiv
\frac{pq-1}{2} \mod 2.
\end{equation}
This can be generalized for $p_1 , p_2 , \ldots p_k$,
with $p= p_1 \cdot p_2 \cdot \ldots \cdot p_k$.

\noindent
(b)  If $p_i$ are odd integer then the following
applies.
\begin{equation}
\label{jacu2}
\frac{p_1 -1}{2}
+
\frac{p_2 -1}{2}
+
\ldots
+
\frac{p_k -1}{2}
\equiv
\frac{p-1}{2} \mod 2.
\end{equation}
\end{lem}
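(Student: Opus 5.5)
For part (a) I would work directly with the product rather than with congruence classes of $p$ and $q$ separately. Since $p,q$ are odd, both $p-1$ and $q-1$ are even, so $\frac{p-1}{2}$ and $\frac{q-1}{2}$ are integers; likewise $pq$ is odd, so $\frac{pq-1}{2}$ is an integer. The key identity is
\[
pq-1 = (p-1)(q-1) + (p-1) + (q-1),
\]
which is checked by expanding the right-hand side. Dividing by $2$ gives
\[
\frac{pq-1}{2} = \frac{(p-1)(q-1)}{2} + \frac{p-1}{2} + \frac{q-1}{2}.
\]
Now $(p-1)(q-1)$ is a product of two even numbers, hence divisible by $4$, so $\frac{(p-1)(q-1)}{2} = 2\cdot\frac{p-1}{2}\cdot\frac{q-1}{2}$ is even. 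Reducing modulo $2$ (using the additivity property of Corollary~\ref{pmod}) then yields Eq.(\ref{jacu1}).

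For part (b) I would use induction on $k$. The base case $k=1$ is trivial, since both sides equal $\frac{p_1-1}{2}$; the case $k=2$ is exactly part (a). For the inductive step, set $P=p_1\cdots p_{k-1}$. This is odd as a product of odd integers, and by the induction hypothesis
\[
\sum_{i=1}^{k-1}\frac{p_i-1}{2}\equiv \frac{P-1}{2}\pmod 2 .
\]
Adding $\frac{p_k-1}{2}$ to both sides and applying part (a) to the pair of odd integers $P$ and $p_k$ gives $\frac{P-1}{2}+\frac{p_k-1}{2}\equiv \frac{Pp_k-1}{2}=\frac{p-1}{2}\pmod 2$, which closes the induction.

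There is no real obstacle. The one point that needs care is the justification that $\frac{(p-1)(q-1)}{2}$ is even, i.e.\ noting that $4\mid (p-1)(q-1)$ because each factor is even, together with confirming that all the halves involved are genuine integers, so that the congruences modulo $2$ make sense.
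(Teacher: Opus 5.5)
Your proposal is correct and is essentially the paper's argument. Writing $P=\frac{p-1}{2}$ and $Q=\frac{q-1}{2}$, your identity becomes exactly the paper's computation $\frac{pq-1}{2}=2PQ+P+Q\equiv P+Q \pmod 2$, and your induction for part (b) just spells out the step the paper states in one line.
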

\begin{proof}
(a)
Let $p=2P+1$, $q=2Q+1$ for odd $p,q$.
Then
\[
\frac{pq-1}{2}
=
\frac{(2P+1)(2Q+1)-1}{2}
=
2PQ + P + Q
\equiv
 0 + P + Q \pmod 2
\equiv
 0 + \frac{p-1}{2} +  \frac{q-1}{2}\pmod 2 .
\]
$ $ \\ $ $
(b) Follows by induction from part (a).
\end{proof}

\begin{lem}
\label{jlaux2}
(a)
If $p,q$  are odd integers, then the following
applies.
\begin{equation}
\label{jacu3}
\frac{p^2 -1}{8}
+
\frac{q^2 -1}{8}
\equiv
\frac{p^2 q^2 -1}{8} \mod 2.
\end{equation}

\noindent
(b)  If $p_i$ are odd integer then the following
applies.
\begin{equation}
\label{jacu4}
\frac{p_1^2 -1}{8}
+
\frac{p_2^2 -1}{8}
+
\ldots
+
\frac{p_k^2 -1}{8}
\equiv
\frac{p^2 -1}{8} \mod 2.
\end{equation}
\end{lem}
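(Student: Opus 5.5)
We mirror the proof of Lemma~\ref{jlaux1}: write the odd integers in a convenient parametric form, expand, and reduce modulo $2$. For part (a), set $P=(p^2-1)/8$ and $Q=(q^2-1)/8$. First we check that these are integers. Writing $p=2m+1$ gives $p^2-1=4m(m+1)$, and since one of $m,m+1$ is even, $8 \mid p^2-1$. The same argument shows $8 \mid q^2-1$, and, because $pq$ is odd, also $8 \mid p^2q^2-1$. All three expressions in Eq.(\ref{jacu3}) are therefore integers, so the congruence modulo $2$ makes sense.

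The key step is the identity
\[
p^2q^2-1=(8P+1)(8Q+1)-1 = 64PQ+8P+8Q .
\]
Dividing by $8$ gives
\[
\frac{p^2q^2-1}{8}=8PQ+P+Q\equiv P+Q \pmod 2 ,
\]
which is exactly Eq.(\ref{jacu3}). The computation is in fact easier than the one in Lemma~\ref{jlaux1}, because the cross term $8PQ$ is even, not merely integral.

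For part (b) we induct on $k$, taking $p=p_1p_2\cdots p_k$ as in Lemma~\ref{jlaux1}. The base case $k=1$ is trivial. For the inductive step, set $p'=p_1\cdots p_{k-1}$, which is odd. The inductive hypothesis gives
\[
\sum_{i=1}^{k-1}\frac{p_i^2-1}{8}\equiv \frac{p'^2-1}{8}\pmod 2 .
\]
We then apply part (a) to the odd pair $p'$ and $p_k$. This yields
\[
\frac{p'^2-1}{8}+\frac{p_k^2-1}{8}\equiv\frac{p'^2p_k^2-1}{8}=\frac{p^2-1}{8}\pmod 2 .
\]
Adding $(p_k^2-1)/8$ to both sides of the inductive hypothesis and substituting this congruence gives Eq.(\ref{jacu4}).

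The only point needing care is the integrality of $(x^2-1)/8$ for odd $x$, which is what allows us to write $p^2=8P+1$. Once that is established, both parts are routine.
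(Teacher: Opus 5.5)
Your proof is correct and follows the paper's route: a direct algebraic computation reduced modulo $2$ for (a), and induction on $k$ using (a) for (b). The only difference is that you parametrize by $p^2=8P+1$, $q^2=8Q+1$ instead of the paper's $p=2P+1$, $q=2Q+1$, so the paper's full expansion of $p^2q^2$ becomes the one-line identity $(8P+1)(8Q+1)-1=64PQ+8P+8Q$.
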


%% USe also ((pq)^2 -1 )/8 - (p^2 -1)/8 - (q^2 -1 )/8 = (p^2 -1) (q^2 -1)/8
%% with the latter being even (case analysis) p=2P+1, q=2Q+1.

\begin{proof}
$ $ \\ $ $
(a)
Let $p=2P+1$, $q=2Q+1$ for odd $p,q$.
Then
\[
p^2 - 1 =  4P(P+1), \quad
q^2 - 1 =  4Q(Q+1) \Rightarrow
\frac{p^2 - 1}{8} =  \frac{P(P+1)}{2}, \quad
\frac{q^2 - 1}{8} =  \frac{Q(Q+1)}{2},
\]
and the products $P(P+1)$, $Q(Q+1)$ are even integers.
Moreover
\[
\frac{p^2 - 1}{8} +
\frac{q^2 - 1}{8} =
 \frac{P(P+1)}{2} +
 \frac{Q(Q+1)}{2}.
\]
Similarly
\[
p^2 q^2 = 16P^2 Q^2 + 16P^2 Q+ 4P^2 + 16PQ^2 + 16PQ + 4P +4Q^2 + 4Q+1,
\]
and therefore
\[
\frac{p^2 q^2 -1}{8} =
   (2P^2 Q^2 + 2 P^2 Q + 2PQ^2 + 2PQ )
+\frac{P(P+1)}{2}
+\frac{Q(Q+1)}{2}.
\]
which, given that the parenthesized term is an even number,
leads to the following.
\[
\frac{p^2 q^2 -1}{8} \equiv
 0
+\frac{P(P+1)}{2}
+\frac{Q(Q+1)}{2} \pmod 2 ,
\]
\[
\frac{p^2 q^2 -1}{8} \equiv
 0
+\frac{P(P+1)}{2}
+\frac{Q(Q+1)}{2}
=
\frac{p^2 - 1}{8} +
\frac{q^2 - 1}{8}
\pmod 2 .
\]
$ $ \\ $ $
(b) Follows by induction from part (a).
\end{proof}

\section{Legendre symbol}

We write  q.r. to indicate that an $a$ is a quadratic
residue and q.nr.  to indicate that it is not a quadratic
residue i.e. it is a quadratic non-residue.
The set of units $(\mb{Z}/n\mb{Z})^x$ for prime $n$
will be represented by $\mb{U}_n$. Then $U_n$ becomes
a field. For $g \in \mb{U}_n$ it is $\gcd(g,n)=1$
and $g$  is invertible and thus $gx \equiv 1 \pmod n$
exists, and it is known as a unit.
Moreover $\mb{U}_n$ for prime $n$ it is a cyclic group.
The Legendre symbol $\leg{a}{p}$ is defined to be 1 if
$a$ is a quadratic residue $\pmod p$.
It is -1 if $a$ is a quadratic non-residue
(i.e. it is not a quadratic residue mod $p$).
Therefore for those two cases $\ndv{p}{a}$.
Furthermore, it is 0 if $\dv{p}{a}$.

\begin{dfn}[\bf{Legendre symbol}]
\label{legsym}
Let $p \in \mb{N}$ be an odd prime number. For any
$a \in \mb{Z}$ the Legendre symbol is defined as
follows.
\[
\leg{a}{p}  =
\begin{cases}
 0 & \text{if} \  \dv{p}{a} \\
 1 & \text{if a is quadratic residue mod p}\\
-1 & \text{otherwise}.
\end{cases}
\]
\end{dfn}

Viewing the Legendre symbol as a function we conclude the
following.
\[
\leg{a+kp}{p}
=
\leg{a}{p}.
\]

\begin{fct}[Legendre symbol property summary]
Let $p \in \mb{N}$ be an odd prime number ($p>2$).
For any $a,b \in \mb{Z}$ the following apply.
\begin{align}
a \equiv b \pmod p  &\Rightarrow  \leg{a}{p}=\leg{b}{p} . \\
\gcd(a,p)=1         &\Rightarrow   \leg{a^2}{p} =1. \\
\leg{ab}{p}         &= \leg{a}{p} \leg{b}{p} .\\
\gcd(a,p)=1, a \text{\ is q.r.}  &\Leftrightarrow
  a^{\frac{p-1}{2}} \equiv 1 \pmod p  \\
\leg{a}{p}          &\equiv a^{\frac{p-1}{2}} \pmod p. \\
\leg{-1}{p} =1   &\Leftrightarrow p \equiv 1 \pmod 4 . \\
\leg{-1}{p} =-1  &\Leftrightarrow p \equiv 3 \pmod 4 . \\
\sum_{a=0}^{p-1} \leg{a}{p} &= 0. \\
\leg{a}{p}=\leg{b}{p}=1  &\Rightarrow  \leg{ab}{p}=1. \\
\leg{a}{p}=\leg{b}{p}=-1 &\Rightarrow  \leg{ab}{p}=1. \\
\end{align}
\end{fct}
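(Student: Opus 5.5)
The plan is to prove the ten properties in an order in which each one feeds the next. The pivot is Euler's criterion (the fifth line), which I will derive from the existence of a primitive root mod $p$ (Proposition~\ref{prrmodp}). Throughout, $p$ is an odd prime.

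First, the congruence property is immediate. Both conditions ``$p\mid a$'' and ``$x^2\equiv a$ is solvable'' depend only on the class of $a$ mod $p$, which is the remark $\leg{a+kp}{p}=\leg{a}{p}$. The second line holds because $x=a$ solves $x^2\equiv a^2$ and $p\nmid a^2$.

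Next I establish Euler's criterion in the form of the fourth and fifth lines. Fix a primitive root $g$ mod $p$; every unit is $a\equiv g^k$ with $0\le k<p-1$. Then $a$ is a q.r. if and only if $k$ is even. If $k=2j$, then $x=g^j$ works. Conversely, if $x\equiv g^j$ and $x^2\equiv a$, then $g^{2j}\equiv g^k$, so Lemma~\ref{lem52a} gives $2j\equiv k\pmod{p-1}$, and since $p-1$ is even, $k$ is even.

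Now $a^{(p-1)/2}\equiv g^{k(p-1)/2}$. When $k$ is even this equals $(g^{p-1})^{k/2}\equiv 1$. When $k$ is odd it is not $1$: by Theorem~\ref{thm52} that would require $p-1\mid k(p-1)/2$, i.e.\ $k$ even. In that case $(a^{(p-1)/2})^2=a^{p-1}\equiv 1$ by Fermat, and Theorem~\ref{thm47} (or the factoring $(y-1)(y+1)\equiv 0$) forces $a^{(p-1)/2}\equiv -1$. The case $p\mid a$ gives $0\equiv 0$. Since $1,0,-1$ are distinct mod an odd prime, this yields $\leg{a}{p}\equiv a^{(p-1)/2}$ as a congruence that determines the symbol.

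Multiplicativity (the third line) then follows from $(ab)^{(p-1)/2}=a^{(p-1)/2}b^{(p-1)/2}$ together with that distinctness. The last two lines are special cases of it.

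For $\leg{-1}{p}$, the criterion gives $(-1)^{(p-1)/2}$, which is $1$ exactly when $(p-1)/2$ is even, i.e.\ $p\equiv1\pmod 4$. Otherwise $p\equiv 3\pmod 4$ and the value is $-1$.

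For the sum, I count via the primitive root: the units $g^k$ with $k$ even and with $k$ odd number $(p-1)/2$ each, and $a=0$ contributes $0$, so the total is $0$.

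The main obstacle is the Euler criterion step, specifically showing that a non-residue gives exactly $-1$ rather than merely ``not $1$.'' I will handle it, as described, by combining Fermat with the two-root statement of Theorem~\ref{thm47}, and by using the parity of the discrete-log exponent.
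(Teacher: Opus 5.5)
Your proof is correct. For the congruence, square, multiplicativity and $\leg{-1}{p}$ lines it matches the paper: Lemma~\ref{thm61} likewise reads multiplicativity off Euler's criterion, using the fact that two values in $\{-1,0,1\}$ congruent mod $p$ coincide.

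The real difference is the Euler-criterion step. The paper's main proof squares $y=a^{(p-1)/2}$ and gets $y\equiv\pm1$ from Lagrange's theorem. It then argues that the $(p-1)/2$ quadratic residues already exhaust the at most $(p-1)/2$ roots of $x^{(p-1)/2}\equiv 1$, so a non-residue must give $-1$; a Wilson-pairing argument is offered as an alternative. In the paper, primitive roots enter only in the converse of the ``simpler form'' and in a separate parity argument showing that a product of two non-residues is a residue. The sum is taken from the half/half count of residues.

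You instead make the index of $a$ with respect to a primitive root the single tool. It characterizes residues, and it gives $a^{(p-1)/2}\not\equiv 1$ for non-residues via Theorem~\ref{thm52} in place of root counting. It also gives the count behind the sum, and it turns the last two lines into corollaries of multiplicativity. This buys uniformity. It removes the forward reference in Lemma~\ref{thm61}, which uses Euler's criterion before it is proved. It also covers $p\mid a$ in multiplicativity, a case that lemma's proof sets aside.

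The price is that everything rests on the existence of a primitive root mod $p$ (Proposition~\ref{prrmodp}). That is a deeper result than the Fermat-plus-Lagrange counting the paper's main Euler argument needs.
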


\begin{fct}[Legendre symbol property summary continued]
Let $p ,q \in \mb{N}$ be  odd prime numbers ($p,q>2$).
The following apply.
\begin{align}
 \leg{p}{q} \leg{q}{p}  &= (-1)^{\frac{p-1}{2} \frac{q-1}{2}}. \\
\leg{p}{q}              &= \begin{cases}
- \leg{q}{p}  & p \equiv q \equiv 3 \pmod 4  \\
\leg{q}{p}    & \text{otherwise}
\end{cases} \\
\leg{2}{p}  &=  (-1)^{\frac{p^2 -1}{8}} . \\
\leg{2}{p}=1 &\Leftarrow  p \equiv \pm 1 \pmod 8 . \\
\leg{2}{p} =-1  &\Leftarrow p \equiv \pm 3 \pmod 8 .
\end{align}
\end{fct}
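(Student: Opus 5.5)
The plan is to derive all five lines of the Fact from two ingredients: Euler's criterion $\leg{a}{p}\equiv a^{\frac{p-1}{2}} \pmod p$ from the previous Fact, and Gauss's lemma. Gauss's lemma is proved first. Fix an odd prime $p$ and $a$ with $\ndv{p}{a}$. Consider the half system $a,2a,\ldots,\frac{p-1}{2}a$, reduced to least residues in $(-p/2,p/2)$, and let $\mu$ be the number of those residues that are negative. The absolute values of these residues are distinct. Indeed, $ia\equiv \pm ja \pmod p$ with $1\le i,j\le \frac{p-1}{2}$ forces $\dv{p}{i\mp j}$ by the cancellation law, hence $i=j$. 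So the absolute values are a permutation of $1,\ldots,\frac{p-1}{2}$. Multiplying all of them gives
\[
a^{\frac{p-1}{2}}\left(\tfrac{p-1}{2}\right)!\equiv(-1)^{\mu}\left(\tfrac{p-1}{2}\right)! \pmod p,
\]
and cancelling the factorial (it is prime to $p$) together with Euler's criterion yields $\leg{a}{p}=(-1)^{\mu}$.

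For the supplementary law I apply Gauss's lemma with $a=2$. The multiples $2,4,\ldots,p-1$ are already in $[1,p-1]$, and $\mu$ counts those exceeding $p/2$. That gives $\mu=\frac{p-1}{2}-\floor{p/4}$. Checking the four cases $p\equiv 1,3,5,7 \pmod 8$ (write $p=8k+r$) shows that $\mu$ is even exactly when $p\equiv\pm1\pmod 8$, and that this parity agrees with the parity of $\frac{p^2-1}{8}$. This proves $\leg{2}{p}=(-1)^{\frac{p^2-1}{8}}$ together with the two $\pmod 8$ lines.

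For reciprocity I use Eisenstein's counting argument. The first step is to refine Gauss's lemma for odd $a$. Write $ia=p\floor{ia/p}+r_i$ and sum over $i=1,\ldots,\frac{p-1}{2}$. The residues $r_i$ exceeding $p/2$ are replaced by $p-r_i$, which produces $\mu$ extra copies of $p$ and a correction that is even. Reducing mod $2$ and using that $a$ and $p$ are odd gives
\[
\mu\equiv \sum_{i=1}^{\frac{p-1}{2}}\floor{\tfrac{iq}{p}} \pmod 2
\]
for $a=q$. The second step is to interpret this sum together with the analogous sum with $p$ and $q$ swapped. Their total counts the lattice points $(x,y)$ with $1\le x\le\frac{p-1}{2}$ and $1\le y\le\frac{q-1}{2}$. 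Each such point lies strictly on one side of the line $y=\frac{q}{p}x$, and none lies on the line because $\gcd(p,q)=1$. Hence the two sums add to $\frac{p-1}{2}\cdot\frac{q-1}{2}$, which gives $\leg{p}{q}\leg{q}{p}=(-1)^{\frac{p-1}{2}\frac{q-1}{2}}$. The piecewise form follows at once: the exponent is odd iff both $\frac{p-1}{2}$ and $\frac{q-1}{2}$ are odd, i.e.\ iff $p\equiv q\equiv 3\pmod 4$. Multiplying by $\leg{q}{p}=\pm1$ then gives the sign.

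The main obstacle is the parity bookkeeping in the refinement of Gauss's lemma, namely showing $\sum r_i \equiv \mu p + \sum_{j=1}^{\frac{p-1}{2}} j \pmod 2$ and handling the sign flips correctly. I would write this out carefully with explicit sets of "small" and "large" residues. The lattice-point count and the $\pmod 8$ case check are routine.
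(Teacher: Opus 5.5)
Your proposal is correct and follows essentially the paper's route: Gauss's lemma by multiplying the half system and applying Euler's criterion, the count for $a=2$ as in Theorem~\ref{thm63}, Eisenstein's parity refinement from summing $ia=p\floor{ia/p}+r_i$ modulo $2$, the lattice-point count under $y=\frac{q}{p}x$, and a case analysis for the piecewise form. The only divergence is that the paper proves $\leg{2}{p}=(-1)^{\frac{p^2-1}{8}}$ separately with eighth roots of unity ($w=z+z^{-1}$, $w^2=2$), whereas your check of the parity of $\frac{p^2-1}{8}$ in each residue class mod $8$ is more elementary.

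One hypothesis is missing, and you should state it: the first line needs $p\neq q$, which the Fact omits. For $p=q$ its left side is $0$, and your appeal to $\gcd(p,q)=1$ silently assumes $p\neq q$.
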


\begin{lem}[More Legendre symbol property summary]
Let $p \in \mb{N}$ be an odd prime number ($p>2$).
Let $a\in \mb{U}_p$ such that $a>0$ and  
$\gcd(a,p)=1$.
Consider
\[
I (a) = \{ 1 \leq i \leq (p-1)/2 : ia  \pmod p \}
      = \{ 1 \leq i \leq (p-1)/2 : R(i) \},
\]
and
\[
T = \{ 1 \leq i \leq (p-1)/2 : R(i) < 0 \},
\]
where $R(i)$ is  the absolute least residue of $ai \pmod p$,
and thus $ -(p-1)/2 \leq r(i) \leq (p-1)/2$,
with
\[
 R(i) \equiv i \cdot a \equiv a(i) r(i) \pmod p,
\]
and $a(i) \equiv \pm 1 \pmod p$.
\begin{align}
\leg{a}{p}     &= (-1)^{|T|} & \text{(Gauss)}. \\
a \neq p, a \bmod 2 =1 :
\leg{a}{p} &= (-1)^{\sum_{i=1}^{\frac{p-1}{2}} \floor{\frac{ia}{p}}}
           & \text{(Eisenstein)} .
\end{align}
\end{lem}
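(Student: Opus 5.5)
We prove the two formulas in turn: first Gauss's lemma, $\leg{a}{p}=(-1)^{|T|}$, and then Eisenstein's formula by reducing it to Gauss's lemma. We use Euler's criterion, $\leg{a}{p}\equiv a^{\frac{p-1}{2}} \pmod p$, from the preceding fact summary.

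\textbf{Gauss's lemma.} Put $h=(p-1)/2$ and consider the residues $R(i)$ of $ia$ for $i=1,\ldots,h$, each taken as its absolute least residue in $[-h,h]$. None is $0$, since $\gcd(a,p)=1$ and $0<i<p$. The key claim is that $|R(1)|,\ldots,|R(h)|$ is a permutation of $1,\ldots,h$, and we argue it as follows.
\begin{itemize}
\item[(i)] If $|R(i)|=|R(j)|$, then $ia\equiv \pm ja \pmod p$.
\item[(ii)] Cancelling $a$ (the cancellation law, valid because $\gcd(a,p)=1$) gives $p \mid i\mp j$.
\item[(iii)] Since $0<i+j<p$ and $|i-j|<p$, this forces $i=j$.
\end{itemize}
Taking the product over all $i$ then gives
\[
a^h\, h! \equiv \prod_{i=1}^h R(i) \equiv (-1)^{|T|}\, h! \pmod p .
\]
Since $\gcd(h!,p)=1$, we can cancel $h!$ to get $a^h\equiv(-1)^{|T|}\pmod p$. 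Euler's criterion gives $\leg{a}{p}\equiv a^h\equiv(-1)^{|T|}$. Both sides are $\pm1$ and $p>2$, so the congruence is an equality.

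\textbf{Eisenstein.} Now let $a$ be odd with $p\nmid a$. For each $i$, write $ia=p\floor{ia/p}+r_i$ with $0<r_i<p$. Split the remainders into the \emph{small} ones, $r_i\le h$ (sum $S$), and the \emph{large} ones, $r_i>h$. Exactly the large ones correspond to $i\in T$, and there are $|T|$ of them, with sum $L$. By the permutation claim above, the multiset consisting of the small $r_i$ together with the values $p-r_i$ for the large $r_i$ is exactly $\{1,\ldots,h\}$. This gives two identities:
\[
\sum_{k=1}^h k = S + p|T| - L,
\qquad
a\sum_{i=1}^h i = p\sum_{i=1}^h \floor{ia/p} + S + L .
\]
Subtracting the first from the second yields
\[
(a-1)\sum_{i=1}^h i = p\Bigl(\sum_{i=1}^h \floor{ia/p} - |T|\Bigr) + 2L .
\]
Reduce this modulo $2$. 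Since $a$ is odd, $a-1$ is even, so the left side is $\equiv 0$; likewise $2L\equiv 0$, and $p\equiv 1$. Hence $\sum_i \floor{ia/p}\equiv |T| \pmod 2$. Combining this with Gauss's lemma gives Eisenstein's formula.

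The main obstacle is the permutation claim in Gauss's lemma, which needs the careful case analysis (i)–(iii) above. It also requires setting up the sets $I(a)$ and $T$ cleanly in terms of absolute least residues, which the statement defines somewhat loosely.
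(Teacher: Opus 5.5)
Your proof is correct and follows essentially the same route as the paper. The summary lemma's own proof environment is empty, but the later theorems ``Gauss lemma'' and ``Eisenstein'' prove exactly these formulas the way you do: the absolute least residues of $ia$ permute $1,\ldots,(p-1)/2$, comparing products and applying Euler's criterion gives $(-1)^{|T|}$, and summing the identities $ia=p\lfloor ia/p\rfloor+r(i)$ and reducing modulo $2$ gives $|T|\equiv\sum_i\lfloor ia/p\rfloor \pmod 2$. The only difference is cosmetic: you split the remainders into the sums $S$ and $L$ and subtract $\sum_k k=S+p|T|-L$, whereas the paper uses the signed residues $s(i)\in\{r(i),\,r(i)-p\}$ and the fact $-i\equiv i \pmod 2$ to cancel $\sum_i s(i)$ against $\sum_i i$.
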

\begin{proof}
\end{proof}

\begin{lem}
Show that if $a \equiv b \pmod p$ for an odd prime number
$p$, then
\[
 \leg{a}{p}=\leg{b}{p}
\]
\end{lem}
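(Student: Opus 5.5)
The plan is to argue directly from Definition~\ref{legsym}: each of the three values of the Legendre symbol is determined by a property of $a$ that depends only on the residue class of $a$ modulo $p$. So I will show that each of the three defining conditions holds for $a$ exactly when it holds for $b$. Throughout I will write the hypothesis $a \equiv b \pmod p$ as $a = b + kp$ for some $k \in \mb{Z}$, using Lemma~\ref{eqrlem}.

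First, the zero case. If $\dv{p}{a}$, then since $\dv{p}{kp}$, Theorem~\ref{podiv}(d2) gives $\dv{p}{a-kp}$, that is, $\dv{p}{b}$. Exchanging the roles of $a$ and $b$ (using symmetry, Corollary~\ref{pmod}) gives the converse. Hence $\leg{a}{p}=0$ if and only if $\leg{b}{p}=0$.

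Next, the residue case, assuming $\ndv{p}{a}$ and therefore $\ndv{p}{b}$. Suppose $a$ is a quadratic residue, so some $x$ satisfies $x^2 \equiv a \pmod p$. By transitivity (Corollary~\ref{pmod}, item 3), $x^2 \equiv b \pmod p$, so the same $x$ shows that $b$ is a quadratic residue. By symmetry the converse also holds. Therefore $a$ is a q.r. exactly when $b$ is, and the remaining value $-1$ ("otherwise") must also agree.

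Combining the three cases gives $\leg{a}{p}=\leg{b}{p}$. I do not expect any real obstacle. The one point needing care is that the three cases in Definition~\ref{legsym} are mutually exclusive, so that matching the first two conditions forces the third to match. This is immediate, because the q.r. and non-residue cases are only considered when $\ndv{p}{a}$.
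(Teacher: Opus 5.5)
Your proof is correct and follows essentially the same route as the paper. Both use the three-case split (divisible by $p$, quadratic residue, non-residue), and both transfer the same witness $x$ from $a$ to $b$ in the residue case. The only cosmetic difference is that you get the non-residue case from mutual exclusivity plus the symmetric residue argument, while the paper writes it out as a short separate contradiction, which amounts to the same thing.
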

\begin{proof}
$ $ \\ $ $
{\bf Case 1: $\dv{p}{a}$.}
Note that if $\dv{p}{a}$ then since $a \equiv b \pmod p$ we
have $\dv{p}{b}$ and thus in that case
\[
 \leg{a}{p}=\leg{b}{p} =0
\]
$ $ \\ $  $
{\bf Case 2: a is a q.r.}
$ $ \\ $  $
Let $a$ otherwise be a q.r. mod $p$ i.e.
\[
\leg{a}{p}=1.
\]
Then there exists an $x$ such that
$x^2 \equiv a \pmod p$ and thus $\dv{p}{x^2-a}$. Moreover
$a \equiv b \pmod p$ implies $\dv{p}{a-b}$ and by addition
$ \dv{p}{x^2-b}$ i.e. $x^2 \equiv b\pmod p$ i.e. $b$ is a q.r.
mod $p$.
Same if $b$ is a q.r. mod $p$.
$ $ \\ $ $
$ $ \\ $  $
{\bf Case 3: a is a q.nr.}
$ $ \\ $  $
Let $a$ be a q.nr. mod $p$. Then there is NO $x$ such
that $x^2 \equiv a \pmod p$. This implies that $b$ is also a
q. nr. mod $p$. Why ? Otherwise if $b$ is a q.r. mod $p$
there exists a $y$ such that $y^2 \equiv b\pmod p$
and by using $\dv{p}{a-b}$ we would then conclude $a$ is a q.r.
mod $p$, a contradiction.
Case completed, cases completed, problem completed.
\end{proof}

\begin{lem}
Let $p$ be an odd prime number $>2$. Then for every $a \in \mb{Z}$ with
$\gcd (a,p)=1$ then show the following.
\[
 \leg{a^2}{p} =1.
\]
\end{lem}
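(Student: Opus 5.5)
The plan is to check the definition of the Legendre symbol (Definition~\ref{legsym}) directly: I will show that $a^2$ is not divisible by $p$ and that it is a quadratic residue mod $p$. Together these force $\leg{a^2}{p}=1$.

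First I rule out the value $0$. Since $\gcd(a,p)=1$ and $p$ is prime, $\ndv{p}{a}$. If $\dv{p}{a^2}=a\cdot a$, then the corollary on primes dividing a product gives $\dv{p}{a}$, which is a contradiction. Hence $\ndv{p}{a^2}$, and the first case of the definition does not apply.

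Next I exhibit a square root. Taking $x=a$ gives $x^2 = a^2 \equiv a^2 \pmod p$ trivially. So the congruence $x^2\equiv a^2 \pmod p$ has a solution, and by the definition of quadratic residue, $a^2$ is a quadratic residue mod $p$. The definition then gives $\leg{a^2}{p}=1$.

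There is no real obstacle here; the only point needing care is the non-divisibility step, which rests on Euclid's lemma for primes. Alternatively, one could reduce $a$ to its least residue $r$ with $1\le r\le p-1$ and use the preceding lemma ($a\equiv b \Rightarrow \leg{a}{p}=\leg{b}{p}$) applied to $a^2\equiv r^2$. This is unnecessary, since $x=a$ already witnesses the residue property directly.
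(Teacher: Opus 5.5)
Your proof is correct and follows the paper's, which likewise exhibits $x=a$ as a square root of $a^2$. The paper reduces to $x = a \bmod p$ when $a\ge p$, which is what the later definition of quadratic residue (requiring $0<x<p$) actually needs. You also verify explicitly that $p\nmid a^2$, so that the symbol is not $0$; the paper leaves this step implicit.
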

\begin{proof}
Integer $a^2$ is a quadratic residue mod $p$ since there
exists an $x$ such that $x^2 \equiv a^2 \mod p$.
This $x$ is $x=a$ or $a \mod p$ in general if $a \geq p$.
\end{proof}

\begin{lem}
\label{thm61}
Let $p$ be an odd prime number $>2$. For all $a,b \in \mb{Z}_p$ we have
\begin{itemize}
\item[(a)] If $a\equiv b \pmod p$ then $\leg{a}{p} = \leg{b}{p}$.
\item[(b)] $\leg{a^2}{p}=1$.
\item[(c)] $\leg{ab}{p} = \leg{a}{p} \leg{b}{p}$.
\end{itemize}
\end{lem}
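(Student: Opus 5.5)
Parts (a) and (b) are the two lemmas proved immediately above, so I would simply cite them: (a) is the lemma showing $a\equiv b \pmod p$ implies $\leg{a}{p}=\leg{b}{p}$ (by the three-case analysis $\dv{p}{a}$, q.r., q.nr.), and (b) is the lemma $\leg{a^2}{p}=1$ when $\gcd(a,p)=1$. Since $a\in\mb{Z}_p$ in (b), I would note that the case $a\equiv 0$ must be excluded (there the symbol is $0$), so (b) is read under $\ndv{p}{a}$. The real work is the multiplicativity (c).

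For (c), first dispose of the degenerate case: if $\dv{p}{a}$ or $\dv{p}{b}$ then $\dv{p}{ab}$, and both sides are $0$. Otherwise $a,b\in\mb{U}_p$, and I would use a primitive root $g$ mod $p$, which exists by Proposition~\ref{prrmodp}. Write $a\equiv g^i$, $b\equiv g^j \pmod p$. The key intermediate claim is
\[
x\in\mb{U}_p \text{ is a q.r.} \iff x\equiv g^k \pmod p \text{ with } k \text{ even}.
\]
If $k=2m$ then $x\equiv (g^m)^2$. Conversely, if $x\equiv y^2$ with $y\equiv g^m$, then $g^k\equiv g^{2m}$, so by Lemma~\ref{lem52a} $k\equiv 2m \pmod{p-1}$; since $p-1$ is even, $k$ is even. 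Hence $\leg{g^k}{p}=(-1)^k$, which is well defined modulo $p-1$ because $p-1$ is even.

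Then $ab\equiv g^{i+j}$, and $\leg{ab}{p}=(-1)^{i+j}=(-1)^i(-1)^j=\leg{a}{p}\leg{b}{p}$, using (a) to pass between congruent representatives.

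The main obstacle is the converse direction of the parity claim, which is where Lemma~\ref{lem52a} and the evenness of $p-1$ (so $p$ odd) are essential. As a fallback, I would instead prove Euler's criterion $\leg{a}{p}\equiv a^{(p-1)/2}\pmod p$, which follows from the same parity description via $g^{(p-1)/2}\equiv -1$. Multiplicativity of $a^{(p-1)/2}$ then gives (c), since distinct values $\pm1$ are distinct mod $p>2$.
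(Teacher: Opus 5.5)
Your proof is correct, but for (c) it takes a different route from the paper. The paper does not use primitive roots there. It invokes Euler's criterion $\leg{x}{p}\equiv x^{(p-1)/2}\pmod p$, which is stated for all $x\in\mb{Z}$ and proved in the next section from Fermat's little theorem, Lagrange's theorem and the count of $(p-1)/2$ residues, so there is no circularity. It then writes $\leg{ab}{p}\equiv (ab)^{(p-1)/2}\equiv a^{(p-1)/2}b^{(p-1)/2}\equiv\leg{a}{p}\leg{b}{p}\pmod p$, and concludes because two values in $\{-1,0,1\}$ differ by at most $2<p$. This is exactly your fallback.

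That argument is shorter and needs nothing deeper than Lagrange's theorem. It also covers $\dv{p}{a}$ or $\dv{p}{b}$ without a case split, since both sides are then $0$.

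Your main argument instead rests on the existence of a primitive root (Proposition~\ref{prrmodp}), a substantially heavier result, together with Lemma~\ref{lem52a}. In return it avoids the forward reference to Euler's criterion and makes the structure explicit. The residues are the even-index elements, an index-two subgroup of the cyclic group $\mb{U}_p$, and $\leg{\cdot}{p}$ is the homomorphism $g^k\mapsto(-1)^k$; you correctly tie its well-definedness modulo $p-1$ to $p-1$ being even. The paper itself uses this parity description later, in the lemma that $g^k$ is a q.r.\ iff $k$ is even and in the proof that a product of two non-residues is a residue, so your approach fits its toolkit.

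Your caveat on (b) is also right. The claim fails for $a\equiv 0\pmod p$, and the paper's proof of (b) tacitly assumes $\gcd(a,p)=1$ as well.
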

\begin{proof}
(a) It is immediate (and proven earlier).
$ $ \\ $ $
(b)
$ (a^2)^{(p-1)/2} \equiv 1 \pmod p$ by Fermat's Little Theorem and
Euler's criterion.
Moreover, $a^2$ is such that $\leg{a^2}{p}=1$ obviously by way of part (c).
$ $ \\ $ $
(c) $\leg{ab}{p}   \equiv  (ab)^{(p-1)/2}
                   \equiv  \leg{a}{p} \leg{b}{p} \pmod p$.
Integer $p$ is a prime number; furthermore the Legendre symbols
on the left and right side of the equivalence  are of a diference
between $-2$ and $2$. Give that $p$ divides that difference the
only possibility is for them to be equal to each other and thus
\[
\leg{ab}{p} = \leg{a}{p} \leg{b}{p}.
\]
The result is true for all $a,b>0$ as long as $\gcd(a,p)=\gcd(b,p)=1$.
\end{proof}

\begin{exa}
Let $p$ be an odd prime.
Show the following: If $a$ is a quadratic residue mod $p$
and $b$ is a quadratic residue mod $p$ then $ab$ is a
quadratic residue mod $p$ as well.
\end{exa}
\begin{proof}
If $a$ is a q.r. mod $p$ then there exists $x$ such
that $x^2 \equiv a \pmod p$.
If $b$ is a q.r. mod $p$ then there exists $y$ such
that $y^2 \equiv b \pmod p$.
Then obviously
$x^2 y^2 = (xy)^2 \equiv ab \pmod p$, implying that
$ab$ is a
quadratic residue mod $p$ as well.

\end{proof}

\begin{lem}
Let $p$ be an odd prime number.
The congruence relation
\begin{equation}
\label{x2modp}
 x^2 \equiv a \pmod p
\end{equation}
has the following properties.

\noindent
(1) If $a=0$ then it has only one solution $x=0$ mod $p$.

\noindent
(2) If $\ndv{p}{a}$, then it has zero (0) or two (2) solutions
mod $p$.
\end{lem}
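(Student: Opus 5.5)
For part (1), I read ``$a=0$'' as $a\equiv 0 \pmod p$. Then $x^2\equiv 0 \pmod p$ says $\dv{p}{x\cdot x}$. By the corollary that a prime dividing a product divides one of the factors (Lemma~\ref{pfactor}), this gives $\dv{p}{x}$, so $x\equiv 0\pmod p$. Conversely $x=0$ is clearly a solution, so there is exactly one solution mod $p$.

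For part (2), I assume $\ndv{p}{a}$ and split into two cases according to whether a solution exists. If none exists, the count is zero and we are done.

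Otherwise I fix a solution $z$ with $z^2\equiv a\pmod p$. Then $(-z)^2=z^2\equiv a$, so $-z\equiv p-z$ is also a solution. I must check these two are distinct. If $z\equiv -z\pmod p$, then $\dv{p}{2z}$. Since $p$ is odd, $\gcd(2,p)=1$, so by Corollary~\ref{egcd3} we get $\dv{p}{z}$. That forces $\dv{p}{z^2}$ and hence $\dv{p}{a}$, contradicting $\ndv{p}{a}$. So there are at least two solutions.

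Next I show there are no more than two. If $w$ is any solution, then $w^2-z^2\equiv 0\pmod p$, i.e.\ $\dv{p}{(w-z)(w+z)}$. Primality of $p$ gives $w\equiv z$ or $w\equiv -z\pmod p$, so exactly two solutions exist. This is essentially the argument of Theorem~\ref{thm47}, and I could simply cite it. Alternatively, Lagrange's theorem (Theorem~\ref{poly2lag}) applied to $x^2-a$ bounds the number of roots by two.

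The only delicate point is the distinctness of $z$ and $-z$. That is exactly where both hypotheses are used: $p$ odd, so $2$ is invertible, and $\ndv{p}{a}$, so $z\not\equiv 0$. Everything else is a direct application of the prime-divides-product property.
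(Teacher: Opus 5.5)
Your proof is correct and follows essentially the same route as the paper. Part (1) goes through $\dv{p}{x^2}\Rightarrow\dv{p}{x}$. In part (2) you pair a solution $z$ with $p-z$, and use that $p$ is odd together with $\ndv{p}{a}$ to rule out $z\equiv -z \pmod p$. You then bound the count by two either via $\dv{p}{(w-z)(w+z)}$ or via Lagrange's theorem, which are exactly the two alternatives the paper offers.
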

\begin{proof}
{\bf Case (1).} If $a=0$, which implies $\dv{p}{a}$, then
the congruence relation becomes as follows.
\[
 x^2 \equiv 0 \pmod p ,
\]
which implies $\dv{p}{x^2}$ or equivalently
$\dv{p}{x}$ and thus $x \equiv 0 \pmod p$, and the result
follows.
$ $ \\ $ $
{\bf Case (2).} Suppose $\ndv{p}{a}$. If the modular equation
has 0 solutions we are done.
Otherwise it has at least one solution.
One can use Lagrange's theorem to conclude that
modular Eq.(\ref{x2modp}) for prime $p$ has at most two
distinct solutions.
Consider $x$ the ``at least one solution'' such that
\[
 x^2 \equiv a \pmod p
\]
We observe then that $p-x$ is another solutions as follows.
\[
(p-x)^2 \equiv p^2 -2xp + x^2 \equiv 0+ x^2 \equiv x^2 \pmod p.
\]
Thus if $x^2 \equiv a \pmod p$ then $(p-x)^2 \equiv a \pmod p$.
Therefore we found a second solution mod $p$.
It then suffices to show that those two solutions are distinct
mod $p$ to tie with the Lagrange upper-bound of two solutions.
In order to prove $x \not\equiv p-x \pmod p$ let us assume
that it is $x \equiv p-x \pmod p$. Then equivalently
\[
x \equiv p-x \pmod p
\Leftrightarrow
2x \equiv 0 \pmod p
\Leftrightarrow
\dv{p}{x}
\Leftrightarrow
\dv{p}{x^2}
\Leftrightarrow
\dv{p}{a},
\]
since $p$ is an odd prime and thus it can't divide two or be divisible
by two. The conclusion $\dv{p}{a}$ contradicts $\ndv{p}{a}$.
Thus $x$ and $p-x$ are distinct mod $p$, if one of them exists.
There are no more by Lagrange's theorem.
$ $ \\ $ $
(We can sideswipe references to Lagrange's theorem by using
an alternative arguement. Consider
another solutions $z$ to $z^2 \equiv a \pmod p$.
Then $z^2 \equiv x^2 \equiv a \pmod p$ would lead
to $(z-x)(z+x) \equiv 0 \pmod p$ and thus any other solution
is either $z\equiv x \pmod p$ or $z\equiv -x \pmod p$ i.e.
already included in the (set of) two solutions already obtained:
$x$ and $p-x$ mod $p$.
We also show that if $x+y \neq p$ or $x+y \not\equiv 0 \pmod p$
then $x^2 \not\equiv y^2 \pmod p$ and thus the two solutions
$x$ and $p-x$ is all that we can have.
If $x^2 \equiv y^2 \mod p$ we have as before $x-y \equiv 0 \pmod p$
or $x+y \equiv 0 \pmod p$.
The latter contradicts $x+y \not\equiv 0 \pmod p$, and is dismissed.
The former leads to $\dv{p}{x-y}$. But $-p < x-y < p$ and $p$
primes leads to $\ndv{p}{x-y}$ which contradicts the former
$x-y \equiv 0 \pmod p$, and is also dismissed.
Thus we have covered all possible solutions.
)
$ $ \\ $ $
Furthermore, consider the set
\[
 \{ 1, 2, \ldots , i , \ldots , \frac{p-1}{2}, \frac{p+1}{2} , \ldots , p-1 \}
\]
and then, the set of
\[
 1^2 , 2^2 , \ldots , i^2 \pmod p , \ldots , (\frac{(p-1)}{2})^2  \pmod p
\]
describes   all $\frac{p-1}{2}$ quadratic residues mod $p$,
with the remainder  $\frac{p-1}{2}$  elements of the set forming
the quadratic non-residues mod $p$.
\end{proof}

\section{Euler's criterion}

\begin{thm}[Simple form of Euler's criterion]
Let $p$ be an odd prime number, for every $a \in \mb{Z}$
with $\gcd(a,p)=1$,
%$a$ is a quadratic residue mod $p$
%if and only if
%\[
%\text{a is q.r. mod $p$} \Leftrightarrow
%a^{\frac{p-1}{2}} \equiv 1 \pmod p .
%\]
\[
a^{\frac{p-1}{2}} =
\begin{cases}
 1 & \text{if a is quadratic residue mod p}\\
-1 & \text{if a is not a quadratic residue mod p}.
\end{cases}
\]
\end{thm}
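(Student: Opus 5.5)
We must show $a^{(p-1)/2}\equiv 1$ or $-1 \pmod p$ according to whether $a$ is a quadratic residue, with equality read modulo $p$. Set $b=a^{\frac{p-1}{2}}$. The plan is to first pin $b$ down to $\pm 1$, then separate the two cases using the count of residues and Lagrange's theorem.

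First, by Fermat's little theorem (Theorem~\ref{flt}), $b^2=a^{p-1}\equiv 1 \pmod p$. Hence $\dv{p}{(b-1)(b+1)}$, and by Lemma~\ref{pfactor} this gives $b\equiv \pm 1 \pmod p$. These two residues are distinct because $p$ is odd.

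Next, suppose $a$ is a quadratic residue, say $x^2\equiv a \pmod p$. Since $\ndv{p}{a}$ we also have $\ndv{p}{x}$. Then
\[
b\equiv (x^2)^{\frac{p-1}{2}}=x^{p-1}\equiv 1 \pmod p,
\]
again by Fermat. So every quadratic residue is a root of $f(x)=x^{\frac{p-1}{2}}-1$ modulo $p$.

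For the converse, I would count roots. By the last part of the preceding lemma on $x^2\equiv a \pmod p$, the squares $1^2,\ldots,\left(\frac{p-1}{2}\right)^2$ give exactly $\frac{p-1}{2}$ distinct quadratic residues among $1,\ldots,p-1$. By Lagrange's theorem (Theorem~\ref{poly2lag}), $f$ has at most $\frac{p-1}{2}$ roots modulo $p$. Equivalently, Proposition~\ref{poly3} with $d=\frac{p-1}{2}$, which divides $p-1$, says $f$ has exactly $\frac{p-1}{2}$ roots. Therefore the roots of $f$ are precisely the quadratic residues. If $a$ is a non-residue with $\gcd(a,p)=1$, then $b\not\equiv 1$, and by the first step $b\equiv -1 \pmod p$.

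The one delicate point is this counting step, namely that there are exactly $\frac{p-1}{2}$ distinct nonzero residues. I rely on the preceding lemma's observation that $i^2\equiv j^2$ forces $i\equiv \pm j$, so the squares of $1,\ldots,\frac{p-1}{2}$ are pairwise distinct. An alternative route uses a primitive root $g$ from Proposition~\ref{prrmodp}. Writing $a=g^k$, $a$ is a residue if and only if $k$ is even. Then
\[
a^{\frac{p-1}{2}}=\left(g^{\frac{p-1}{2}}\right)^k\equiv(-1)^k \pmod p,
\]
using $g^{\frac{p-1}{2}}\equiv -1$, which holds since $g$ has order $p-1$ and its square is $1$. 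I would keep this as a backup.
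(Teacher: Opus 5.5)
Your proof is correct and follows essentially the same route as the paper. Fermat gives $b^2\equiv 1$, so $b\equiv\pm1$; the residue case follows from $x^{p-1}\equiv 1$; and the non-residue case follows because Lagrange allows $x^{\frac{p-1}{2}}-1$ at most $\frac{p-1}{2}$ roots, all of which are already taken by the quadratic residues. Your explicit count of the $\frac{p-1}{2}$ distinct squares spells out the step the paper only gestures at (it writes, somewhat backwards, ``each such solution is a q.r.\ by way of case 1''), and using Lemma~\ref{pfactor} instead of Lagrange to get $b\equiv\pm1$ is an inessential variation.
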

\begin{proof}
Let $b= a^{\frac{p-1}{2}}$.
Then
\[
 b^2 = a^{p-1} \equiv 1 \pmod p,
\]
by Fermat's Little theorem. The polynomial on (with indeterminate)
$b$, by Lagrange's theorem has at most two roots. We
recognize them as $1, p-1$ mod $p$ i.e. $1, -1$ mod $p$.
$ $ \\ $ $
{\bf Case 1.}
Let $a$ be a q.r. mod $p$. Then
\[
\exists x :  x^2 \equiv a \pmod p.
\]
Moreover for prime $p$, $\gcd(x,p)=1$ since
$1 \leq x < p$, and by Fermat's Little theorem
we have $x^{p-1} \equiv 1 \pmod p$.
Then
\[
a^{\frac{p-1}{2}}
=
{x^2}^{\frac{p-1}{2}}
\equiv
{x}^{p-1}
\equiv 1 \pmod p,
\]
We proved the top part that
$a^{\frac{p-1}{2}} \equiv 1 \pmod p$ for a quadratic residue $a$
mod $p$.
$ $ \\ $ $
{\bf Case 2.}
Let $a$ be a q.nr. mod $p$. Then
set $z = a^{\frac{p-1}{2}}$.
We know by the discussion prior to case 1, (rename $x$ there
to $z$ or the other way around) that $z$ is either
$+1$ or $-1$ mod $p$.
The equation $y^{\frac{p-1}{2}} \equiv 1 \pmod p$
has at most $(p-1)/2$ solutions, after noting $(p-1)/2$ is
an integer for odd $p$. Each such solution is a q.r. mod $p$
by way of case 1. Thus the only possibility for
$a$ is the $-1$ case and thus
$z = a^{\frac{p-1}{2}} \equiv -1 \pmod p$.
We thus proved the bottom part that
$a^{\frac{p-1}{2}} \equiv -1 \pmod p$ for a non
quadratic residue $a$ mod $p$.
\end{proof}
% Case 2 alternative proof
%
%Let $a$ be a quadratic non-residue. For every $b=1,2, \ldots , p-1$ the
%congruence $bx \equiv a \pmod p$ is such that $\gcd(b,p)=1$ and
%thus the congruence has a unique solution in $1, 2, \ldots , p-1$.
%Since $a$ is quadratic non-residue we can't have $x=b$ since then
%$b\cdot b \equiv a \pmod p$. Thus the integers $1,2, \ldots , p-1$ can
%be broken into pairs whose products are all equal to $a$. There are
%$(p-1)/2$ such pairs. Then $(p-1)! \equiv a^{(p-1)/2} \pmod p$. The latter
%is $-1$ by Wilson't theorem.

\begin{thm}[Simpler form of Euler's criterion]
Let $p$ be an odd prime number, for every $a \in \mb{Z}$
with $\gcd(a,p)=1$,
$a$ is a quadratic residue mod $p$
if and only if
\[
\text{a is q.r. mod $p$} \Leftrightarrow
a^{\frac{p-1}{2}} \equiv 1 \pmod p .
\]
\end{thm}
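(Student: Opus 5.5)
The plan is to prove the two directions separately, using Fermat's little theorem (Theorem~\ref{flt}), Lagrange's theorem (Theorem~\ref{poly2lag}), and the existence of a primitive root mod $p$ (Proposition~\ref{prrmodp}). Throughout, $\gcd(a,p)=1$, so $a$ is a unit mod $p$.

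\emph{Forward direction.} Suppose $a$ is a q.r. mod $p$, so there is an $x$ with $x^2\equiv a \pmod p$. Since $\ndv{p}{a}$, we also have $\ndv{p}{x}$. Because $p$ is odd, $(p-1)/2$ is an integer, and
\[
a^{\frac{p-1}{2}}\equiv (x^2)^{\frac{p-1}{2}} = x^{p-1}\equiv 1 \pmod p
\]
by Fermat's little theorem. This is exactly Case~1 of the preceding simple form of Euler's criterion.

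\emph{Reverse direction.} This is the substantive part; I would argue it with a primitive root. Let $g$ be a primitive root mod $p$, which exists by Proposition~\ref{prrmodp}. Since $\gcd(a,p)=1$, we can write $a\equiv g^k\pmod p$ for some $0\le k<p-1$. Assume $a^{\frac{p-1}{2}}\equiv 1\pmod p$. Then $g^{k\frac{p-1}{2}}\equiv 1\pmod p$, so Theorem~\ref{thm52} gives $\dv{p-1}{k\frac{p-1}{2}}$. This forces $\dv{2}{k}$. Writing $k=2m$, we get $a\equiv (g^m)^2\pmod p$, so $a$ is a q.r.

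\emph{Alternative counting argument.} This route avoids primitive roots. The polynomial $x^{\frac{p-1}{2}}-1$ has at most $(p-1)/2$ roots mod $p$ by Lagrange's theorem. By the forward direction, every q.r. is a root. By the earlier lemma on $x^2\equiv a\pmod p$, the q.r.'s are exactly $1^2,\dots,\left(\frac{p-1}{2}\right)^2$, and these are pairwise distinct mod $p$. So there are exactly $(p-1)/2$ q.r.'s, and they already account for all the roots. Hence any unit $a$ with $a^{\frac{p-1}{2}}\equiv1\pmod p$ must be one of them.

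The main obstacle is this converse direction. In the counting route, the delicate point is the distinctness of the $(p-1)/2$ squares: if $i^2\equiv j^2\pmod p$ with $1\le i,j\le (p-1)/2$, then $i\equiv\pm j\pmod p$, and $i+j<p$ rules out $i\equiv -j$. In the primitive-root route, the delicate point is the divisibility step $\dv{p-1}{k(p-1)/2}\Rightarrow \dv{2}{k}$. I would present the primitive-root route as the main argument and mention the counting route as a check.
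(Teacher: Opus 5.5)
Your proposal is correct and follows the paper's proof essentially verbatim. The forward direction is $(x^2)^{(p-1)/2}\equiv x^{p-1}\equiv 1 \pmod p$ by Fermat's little theorem, and the converse writes $a\equiv g^k$ for a primitive root $g$, deduces $p-1 \mid k(p-1)/2$ so that $k$ is even, and concludes $a\equiv (g^{k/2})^2$; your counting argument via Lagrange's theorem is a valid alternative but is not needed.
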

\begin{proof}
$\Rightarrow$.
Let $a$ be a q.r. mod $p$. Then
\[
\exists x :  x^2 \equiv a \pmod p.
\]
Moreover for prime $p$, $\gcd(x,p)=1$ since
$1 \leq x < p$, and by Fermat's Little theorem
we have $x^{p-1} \equiv 1 \pmod p$.
Then
\[
a^{\frac{p-1}{2}}
=
{x^2}^{\frac{p-1}{2}}
\equiv
{x}^{p-1}
\equiv 1 \pmod p,
\]
and the only-if has been proved.
$ $ \\ $ $
$\Leftarrow$.
Let $a^{\frac{p-1}{2}} \equiv 1 \pmod p$.
We shall show that $a$ is q.r. mod $p$.
$ $  \\ $ $
If $g$ is a primitive root mod $p$,
the $ord_p (g) = p-1$ and therefore
\[
\exists k : g^k \equiv a \pmod p
\Rightarrow
            g^{k(p-1)/2} \equiv a^{(p-1)/2} \pmod p
\Rightarrow
            g^{k(p-1)/2} \equiv 1 \pmod p.
\]
The latter implies $k(p-1)/2 \equiv 0 \pmod{p-1}$
or in other words $k=2l$ i.e. $k$ is even and thus
$k/2$ an integer.
Then
\[
 ( g^{k/2} )^2 \equiv a \pmod p ,
\]
implying that $a$ is a quadratic residue and the
$x$ of $x^2 \equiv a \pmod p$ is $x= g^{k/2} \pmod p$.
\end{proof}

\begin{thm}[Euler's criterion for q.r. mod $p$]
Let $p$ be an odd prime number, for every $a \in \mb{Z}$.
The following holds.
\begin{equation}
\label{legeq0}
\leg{a}{p} \equiv  a^{\frac{p-1}{2}} \pmod p.
\end{equation}
\end{thm}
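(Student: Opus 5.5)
I will prove the congruence $\leg{a}{p} \equiv a^{\frac{p-1}{2}} \pmod p$ by splitting into the three cases of Definition~\ref{legsym}, according to whether $\dv{p}{a}$, $a$ is a q.r., or $a$ is a q.nr.\ mod $p$. In each case I compare the value of the symbol with the value of $a^{\frac{p-1}{2}}$ modulo $p$.

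\textbf{Case $\dv{p}{a}$.} Here $\leg{a}{p}=0$ by definition. Since $p$ is odd, $\frac{p-1}{2}\geq 1$, so $\dv{p}{a^{\frac{p-1}{2}}}$. Hence $a^{\frac{p-1}{2}}\equiv 0 \pmod p$ and the congruence holds trivially.

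\textbf{Case $\gcd(a,p)=1$.} Since $p$ is prime, this is the complementary case $\ndv{p}{a}$. Now I invoke the simple form of Euler's criterion proved just above.
\begin{itemize}
\item If $a$ is a q.r., then $a^{\frac{p-1}{2}}\equiv 1 \pmod p$, which equals $\leg{a}{p}=1$.
\item If $a$ is a q.nr., then $a^{\frac{p-1}{2}}\equiv -1 \pmod p$, which equals $\leg{a}{p}=-1$.
\end{itemize}
Together the three cases give Eq.~(\ref{legeq0}).

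The main obstacle is the q.nr.\ half of the simple criterion, that is, ruling out $a^{\frac{p-1}{2}}\equiv 1$ for a nonresidue. I rely on the earlier argument for this. Write $b=a^{\frac{p-1}{2}}$; then $b^2\equiv 1$ by Fermat's little theorem, so Lagrange's theorem forces $b\equiv\pm 1$. The polynomial $y^{\frac{p-1}{2}}-1$ has at most $\frac{p-1}{2}$ roots mod $p$, again by Lagrange. The $\frac{p-1}{2}$ quadratic residues $1^2,\ldots,(\frac{p-1}{2})^2$ are distinct and are all roots. So no nonresidue can be a root, and its value must be $-1$.

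If one prefers to avoid counting, the simpler form of the criterion (via a primitive root $g$ with $a\equiv g^k$) gives the converse directly: $a^{\frac{p-1}{2}}\equiv 1$ forces $k$ even, hence $a$ is a q.r. Its contrapositive, combined with $b\equiv\pm1$, yields $-1$ for nonresidues.

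Finally, I would remark that both sides of Eq.~(\ref{legeq0}) lie in $\{-1,0,1\}$ and $p\geq 3$. Hence the congruence upgrades to the equality $\leg{a}{p}=a^{\frac{p-1}{2}} \bmod p$, taking absolute least residues.
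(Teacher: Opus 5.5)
Your proposal is correct and follows essentially the same route as the paper: dispose of $p \mid a$ trivially, get $a^{\frac{p-1}{2}} \equiv \pm 1$ from Fermat plus Lagrange, handle residues via $a \equiv x^2 \Rightarrow a^{\frac{p-1}{2}} \equiv x^{p-1} \equiv 1$, and exclude $+1$ for nonresidues because the $\frac{p-1}{2}$ distinct squares already account for all roots of $y^{\frac{p-1}{2}}-1$. Your optional primitive-root variant and the closing remark about absolute least residues are fine extras. Strictly speaking, though, it is the least residue of $a^{\frac{p-1}{2}}$, not the integer $a^{\frac{p-1}{2}}$ itself, that lies in $\{-1,0,1\}$.
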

\begin{proof}
We skip the possibility $\dv{p}{a}$ for which
$\leg{a}{p}=0$ by definition.
For odd prime $p$ then $(p-1)/2$ is
an integer, and thus $\dv{p}{a}$ implies
$\dv{p}{a^{(p-1)/2}}$ from which
$a^{(p-1)/2} \equiv 0 \pmod p$ and thus
$a^{(p-1)/2} \equiv \leg{a}{p} \pmod p$.
$ $ \\ $ $
Consider $x= a^{(p-1)/2}$, for some $a$ such that
$1\leq a < p$. By Fermat's Little theorem
$x^2 = a^{p-1} \equiv 1 \pmod p$. By Lagrange's
theorem $x^2 \equiv 1 \pmod p$, for prime $p$ has
at most two solutions mod $p$. We can verify $+1$ and
$-1 \equiv p-1 \pmod p$ as those two mod $p$.
$ $ \\ $ $
We show that if $a$ is a quadratic residue mod $p$,
then
\[
a^{(p-1)/2} \equiv 1 \pmod p .
\]
If $a$ is a quadratic residue, there exists $b$
such that $1 \leq b < p$ such that
$b^2 \equiv a \pmod p$.
Then
\[
a^{(p-1)/2} = (b^2)^{(p-1)/2} =b^{p-1} \equiv 1 \pmod p,
\]
by Fermat's Little theorem.
$ $ \\ $ $
Let now $a$ is NOT a quadratic residue mod $p$.
The
\[
 x^{(p-1)/2} \equiv 1  \pmod p ,
\]
has at most $(p-1)/2$ solutions. All these are the
quadratic residues mod $p$. Note that for odd prime
$p$ then $(p-1)/2$ is an integer. Since $a$ now is not
a quadratic residue, it is not one of them,
and thus
\[
a^{(p-1)/2}\not\equiv  1  \pmod p .
\]
Consider
\[
  y = a^{(p-1)/2},
\]
As derived previously $y^2 \equiv 1 \pmod p$,
by Fermat's Little theorem, and the only
possibilities of $y^2 \equiv 1 \pmod p$
by Lagrange's theorem are $y \equiv 1 $ or
$y \equiv -1$ mod $p$. The latter is  not
possible for a quadratic non-residue,
thus  the only possibility left is
$y \equiv -1$  mod $p$ i.e.
\[
y= a^{(p-1)/2}\equiv  -1  \pmod p .
\]

\end{proof}

\begin{lem}
Let $p$ be an odd prime number, for every $a \in \mb{Z}$.
The following holds.
\end{lem}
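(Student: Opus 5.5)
Since the displayed formula is missing from the statement as worded, I read it as the congruence it restates, namely Eq.(\ref{legeq0}), $\leg{a}{p}\equiv a^{\frac{p-1}{2}} \pmod p$ for every $a\in\mb{Z}$. I plan to prove this directly through a primitive root rather than through counting roots, so the argument is independent of the previous proof.

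\textbf{Step 1: the case $\dv{p}{a}$.} Then $\leg{a}{p}=0$ by Definition~\ref{legsym}. Since $(p-1)/2\geq 1$ is an integer, we also have $a^{(p-1)/2}\equiv 0 \pmod p$, and the two sides agree.

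\textbf{Step 2: setting up exponents.} Assume $\ndv{p}{a}$. By Proposition~\ref{prrmodp} there is a primitive root $g$ mod $p$, so $a\equiv g^k \pmod p$ for some $0\le k<p-1$. The element $h=g^{(p-1)/2}$ satisfies $h^2\equiv 1\pmod p$. By Lemma~\ref{lem52a}, $h\not\equiv 1$, because $(p-1)/2\not\equiv 0 \pmod{p-1}$. By the earlier analysis of $x^2\equiv 1 \pmod p$ (Lagrange's theorem, or $\dv{p}{(h-1)(h+1)}$ with $p$ prime), it follows that $h\equiv -1$. Hence
\[
a^{\frac{p-1}{2}}\equiv (g^{\frac{p-1}{2}})^k \equiv (-1)^k \pmod p .
\]

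\textbf{Step 3: matching the parity of $k$ with the Legendre symbol.} I will show that $a$ is a q.r. mod $p$ if and only if $k$ is even.

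If $k=2l$, then $(g^l)^2\equiv a$, so $a$ is a q.r.

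Conversely, suppose $x^2\equiv a$. Then $\ndv{p}{x}$, so $x\equiv g^j$ for some $j$. This gives $g^{2j}\equiv g^k$, and Lemma~\ref{lem52a} yields $2j\equiv k \pmod{p-1}$. Since $p-1$ is even, $k$ is even.

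Therefore $(-1)^k=1$ exactly when $\leg{a}{p}=1$, and $(-1)^k=-1$ exactly when $\leg{a}{p}=-1$. This proves Eq.(\ref{legeq0}).

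\textbf{Main obstacle.} The delicate point is the converse direction in Step 3: deducing the evenness of $k$ from $2j\equiv k\pmod{p-1}$. This uses both the exponent-congruence Lemma~\ref{lem52a} and the fact that $p-1$ is even, which is where the hypothesis that $p$ is odd enters. The other ingredient to state carefully is $g^{(p-1)/2}\equiv -1$. Everything else is bookkeeping.
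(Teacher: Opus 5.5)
Your proof is correct, and it takes a different route from the paper's. After the case $\dv{p}{a}$, the paper does not use a primitive root. It uses Fermat's little theorem to get $\bigl(a^{\frac{p-1}{2}}-1\bigr)\bigl(a^{\frac{p-1}{2}}+1\bigr)\equiv 0 \pmod p$, and hence $a^{\frac{p-1}{2}}\equiv \pm 1$. A residue $a\equiv x^2$ gives $+1$ directly. Lagrange's theorem then says $z^{\frac{p-1}{2}}-1$ has at most $\frac{p-1}{2}$ roots, and the $\frac{p-1}{2}$ quadratic residues already account for all of them, so every non-residue must give $-1$. As an alternative for the non-residue case, the paper pairs each $b$ with the unique $x$ satisfying $bx\equiv a$ and applies Wilson's theorem to get $a^{\frac{p-1}{2}}\equiv (p-1)!\equiv -1$. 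The paper's argument therefore needs only Fermat, Lagrange's root bound and the count of residues (or Wilson), and it avoids the existence of primitive roots, which is a noticeably deeper result. Your argument pays that price up front, but after that everything is explicit: writing $a\equiv g^k$, you get the single formula $a^{\frac{p-1}{2}}\equiv(-1)^k$ and identify residuosity with the parity of $k$. That step is exactly the paper's separate lemma that $g^k$ is a quadratic residue iff $k$ is even, and it matches the primitive-root argument the paper uses in the ``simpler form'' of Euler's criterion. As a result, the facts that exactly half the units are residues and that $\leg{ab}{p}=\leg{a}{p}\leg{b}{p}$ come out as byproducts instead of being needed as inputs. There is no circularity, because the paper proves the existence of primitive roots mod $p$ from Lagrange's theorem and order arguments, not from Euler's criterion.
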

\begin{proof}
{\bf Case 1: $\dv{p}{a}$.}
Since $p$ is an odd prime number, $(p-1)/2$ is an integer.
Then $\dv{p}{a}$ implies $\dv{p}{a^{\frac{p-1}{2}}}$.
The latter implies
\[
a^{\frac{p-1}{2}} \equiv 0 \pmod p,
\]
and thus for $\dv{p}{a}$
it is indeed
\[
\leg{a}{p} \equiv  a^{\frac{p-1}{2}} \pmod p
\Rightarrow
\leg{a}{p}  = 0.
\]
$ $ \\ $ $
{\bf Case 2: $\ndv{p}{a}$.}
In the remaining cases $\ndv{p}{a}$.
Then $\gcd(a,p)=1$ and by Fermat's little theorem
we have
\[
a^{p-1} \equiv 1 \pmod p.
\]
By factoring we obtain the following
\[
 ( a^{\frac{p-1}{2}} - 1) ( a^{\frac{p-1}{2}}  + 1) \equiv 0 \pmod p
\]
The (congruence) equation
\[
x^2 \equiv 1 \pmod p
\]
has at most two distinct solutions by Lagrange's theorem mod $p$:
$\pm 1 \pmod p$ and there are no more mod $p$.
Furthermore
\[
 ( a^{\frac{p-1}{2}} - 1) ( a^{\frac{p-1}{2}}  + 1) \equiv 0 \pmod{p}
\Rightarrow \quad
a^{\frac{p-1}{2}} \equiv 1 \pmod{p}
\vee
 a^{\frac{p-1}{2}}  \equiv -1 \pmod{p}
 \Rightarrow \quad
\leg{a}{p} \equiv a^{\frac{p-1}{2}}  \pmod{p}.
\]
We need to distinguish which case maps to
residuosity and which one to non-residuosity.
$ $ \\ $ $
{\bf Case 2a: a is a quadratic residue mod $p$, mapping to
$ a^{\frac{p-1}{2}} \equiv 1 \pmod p$.}
Then there exists an $0< x < p$ such that
\[
x^2 \equiv a \pmod p .
\]
We then have
\[
a^{\frac{p-1}{2}} \equiv
(x^2)^{\frac{p-1}{2}} \equiv
x^{p-1} \equiv
1 \pmod p.
\]
This follows from Fermat's little Theorem as
$0 < x < p$, $p$ prime and thus $\gcd(x,p)=1$.
(Note also that if $\dv{p}{x}$ then $\dv{p}{x^2}$ and
since $\dv{p}{x^2 -a}$ then
$\dv{p}{a}$ contradicting $\ndv{p}{a}$.)
$ $ \\ $ $
Moreover $z^{\frac{p-1}{2}} \equiv 1 \pmod p$
has at most $\frac{p-1}{2}$ distinct solutions mod $p$,
$a$ is one of them and there are no more than $(p-1)/2$,
and by the previous problem we know there are
exactly $(p-1)/2$ distinct solutions, the quadratic
residues mod $p$,
which are $1^2, 2^2,  \ldots , ((p-1)/2)^2 \bmod p $.
$ $ \\ $ $
{\bf Case 2b: a is a quadratic non residue mod $p$, mapping to
$ a^{\frac{p-1}{2}} \equiv -1 \pmod p$.}
Let $a$ be a quadratic non-residue.
Since the $(p-1)/2$ quadratic residues satisfy
$z^{\frac{p-1}{2}} \equiv 1 \pmod p$
then since it is still
$a^{p-1} \equiv 1 \pmod p$ this can only be satisfied by
way of $ a^{\frac{p-1}{2}}  \equiv -1 \pmod{p}$
and thus the remaining $(p-1)/2$ quadratic non-residues will
satisfy
$z^{\frac{p-1}{2}} \equiv -1 \pmod p$ and this
concludes the problem.
$ $ \\ $ $
One can also argue as follows.
For every $b=1,2, \ldots , p-1$ the
congruence $bx \equiv a \pmod p$ is such that $\gcd(b,p)=1$ and
thus the congruence has a unique solution in $1, 2, \ldots , p-1$.
Since $a$ is a quadratic non residue we can't have $x=b$
since then $b\cdot b \equiv a \pmod p$ that would make $a$ a
quadratic residue.
Thus the integers $1,2, \ldots , p-1$ can
be broken into pairs whose products are all equal to $a$. There are
$(p-1)/2$ such pairs. Then by multiplying all of them we
get  $(p-1)! \equiv a^{(p-1)/2} \pmod p$. The latter $(p-1)!$
is $-1$ by Wilson't theorem.
Therefore for a quadratic non residue $a$ we have the following
\[
a^{(p-1)/2} \equiv -1 \pmod p .
\]
\end{proof}

\subsection{Applications of Euler's criterion: Legendre symbol}

\begin{lem}
Let $p$ be an odd prime number number greater than two.
Then $-1$ is a quadratic residue $\pmod p$ if and only
if $p \equiv 1 \pmod 4$.
\[
\leg{-1}{p} =1  \Leftrightarrow p \equiv 1 \pmod 4
\]
Moreover
\[
\leg{-1}{p} =-1  \Leftrightarrow p \equiv 3 \pmod 4
\]

\end{lem}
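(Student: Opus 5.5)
The plan is to apply Euler's criterion, in the form of Eq.~(\ref{legeq0}), with $a=-1$. Since $\gcd(-1,p)=1$, that result gives
\[
\leg{-1}{p} \equiv (-1)^{\frac{p-1}{2}} \pmod p .
\]
Both sides lie in $\{1,-1\}$. Their difference therefore lies in $\{-2,0,2\}$, and it is divisible by the odd prime $p>2$. The only multiple of $p$ in that set is $0$, so the congruence is an actual equality of integers:
\[
\leg{-1}{p} = (-1)^{\frac{p-1}{2}} .
\]
This is the same device already used in Lemma~\ref{thm61}(c).

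Next we split on $p \bmod 4$. An odd $p$ satisfies exactly one of $p\equiv 1 \pmod 4$ or $p\equiv 3\pmod 4$.
\begin{itemize}
\item If $p=4k+1$, then $(p-1)/2=2k$ is even, so $\leg{-1}{p}=1$.
\item If $p=4k+3$, then $(p-1)/2=2k+1$ is odd, so $\leg{-1}{p}=-1$.
\end{itemize}
Because the two cases are exhaustive and mutually exclusive, and the symbol takes different values in them, both biconditionals follow.

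For the first statement, $p\equiv 1\pmod 4$ gives $\leg{-1}{p}=1$ directly. Conversely, if $\leg{-1}{p}=1$, then $p\not\equiv 3 \pmod 4$, since that case gives $-1$; hence $p\equiv 1\pmod 4$. The second statement is proved symmetrically. Finally, $\leg{-1}{p}=1$ means exactly that $-1$ is a quadratic residue mod $p$, by Definition~\ref{legsym}, because $\ndv{p}{-1}$.

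The only delicate point is upgrading the congruence from Euler's criterion to an equality. That step needs $p>2$, and this is precisely why the statement assumes $p$ is an odd prime. Everything else is parity bookkeeping.
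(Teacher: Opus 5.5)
Your proof is correct and takes the same route as the paper: apply Euler's criterion with $a=-1$, then split on $p \bmod 4$ according to whether $(p-1)/2$ is even or odd. You are slightly more careful than the paper. The paper leaves implicit the upgrade of the congruence $\left(\frac{-1}{p}\right) \equiv (-1)^{(p-1)/2} \pmod p$ to an equality (valid since $p>2$), and it does not spell out the converse directions of the biconditionals, which follow from your exhaustive, mutually exclusive case split.
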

\begin{proof}
{\bf Case 1. $p$ is prime and $p=2$.}
Then $-1 \bmod 2 = 1 \mod 2$.
We have that $1^2 \equiv 1 \pmod p$. Thus
$-1$ as in $ -1 \bmod 2$ is q.r. mod $2$.
$ $ \\ $ $
{\bf Case 2. $p$ is an odd prime.}
If $p=4k+1$ then by Euler's criterion, Eq.(\ref{legeq0})
we have
\[
a^{\frac{p-1}{2}} \equiv (-1)^{(p-1)/2}
                   \equiv (-1)^{2k} \equiv 1 \pmod p.
\]
Alternatively, $(p-1)/2$ must be even.
Thus $-1$ is a q.r.
$ $ \\ $ $
For a $p=4k+3$ we conclude $-1$ is a q.nr by Euler's criterion
as.
\[
a^{\frac{p-1}{2}} \equiv (-1)^{(p-1)/2}
                   \equiv (-1)^{2k+1} \equiv -1 \pmod p.
\]
\end{proof}

\begin{lem}
Let $p$ be an odd prime number number greater than two
Then
\[
\sum_{a=0}^{p-1} \leg{a}{p} = 0.
\]

\end{lem}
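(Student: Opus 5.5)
The plan is to split the sum according to the definition of the Legendre symbol. The term $a=0$ contributes $\leg{0}{p}=0$, since $\dv{p}{0}$. The remaining terms run over $a=1,\ldots,p-1$, which are exactly the elements of $\mb{U}_p$. Each of these has symbol $+1$ if $a$ is a quadratic residue and $-1$ otherwise. So the sum equals (number of q.r.) minus (number of q.nr.) among $1,\ldots,p-1$, and it suffices to show that each count is $(p-1)/2$.

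For the count of quadratic residues I will use the lemma on $x^2\equiv a \pmod p$. For $\ndv{p}{a}$ that congruence has either $0$ or exactly $2$ solutions mod $p$, namely $x$ and $p-x$. The squaring map $x\mapsto x^2 \bmod p$ sends the $p-1$ elements of $\mb{U}_p$ into $\mb{U}_p$, and its image is precisely the set of quadratic residues. Every residue has exactly two preimages, so the number of residues is $(p-1)/2$. This is also the content of the closing remark of that lemma, which lists the residues as $1^2,2^2,\ldots,((p-1)/2)^2 \bmod p$. The quadratic non-residues are therefore the remaining $(p-1)/2$ elements.

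Putting this together gives $\sum_{a=0}^{p-1}\leg{a}{p} = 0 + \frac{p-1}{2}\cdot 1 + \frac{p-1}{2}\cdot(-1) = 0$.

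An alternative route, which avoids counting preimages, uses a primitive root $g$ mod $p$, which exists by Proposition~\ref{prrmodp}. Write $\mb{U}_p=\{g^0,\ldots,g^{p-2}\}$. By Euler's criterion (Eq.~(\ref{legeq0})) and Lemma~\ref{thm61}(c), $\leg{g^k}{p}=\leg{g}{p}^k$, and $\leg{g}{p}=-1$ because $g^{(p-1)/2}\not\equiv 1 \pmod p$ given that $\ord_p(g)=p-1$. The sum then becomes $\sum_{k=0}^{p-2}(-1)^k$, which is $0$ since $p-1$ is even.

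The only step needing care is the claim that each nonzero residue has exactly two square roots and that these are distinct. Distinctness uses that $p$ is odd, so $x\equiv -x$ would force $\dv{p}{x}$. Both ingredients were established earlier, so I expect no real obstacle.
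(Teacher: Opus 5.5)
Your proof is correct and follows the paper's argument: the $a=0$ term vanishes, and the nonzero residues split evenly into $(p-1)/2$ quadratic residues and $(p-1)/2$ non-residues. The paper simply asserts this count (cf.\ Theorem~\ref{thm57}), whereas you justify it through the two-to-one squaring map; your alternative via a primitive root $g$, using $\leg{g^k}{p}=(-1)^k$, is also valid.
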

\begin{proof}
Half of the integers $1, \ldots , p-1$ are
q.r. mod $p$ and thus
\[
\leg{a}{p}=1
\]
and
half are q.nr. mod $p$ and thus
\[
\leg{a}{p}=-1
\]
For $a=0$ we have $\dv{p}{a}$ and thus
\[
\leg{a}{p}=0.
\]
The number of $1$s is equal to the number of $-1$.
\end{proof}

\begin{lem}
The following apply for a $p$ that is an odd prime number.
(a)
\[
\leg{a}{p}=\leg{b}{p}=1 \quad \Rightarrow \quad \leg{ab}{p}=1.
\]
\noindent
(b)
\[
\leg{a}{p}=\leg{b}{p}=-1 \quad \Rightarrow \quad \leg{ab}{p}=1.
\]

\end{lem}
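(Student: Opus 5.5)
The plan is to derive both parts from the multiplicativity of the Legendre symbol, Lemma~\ref{thm61}(c), which gives $\leg{ab}{p}=\leg{a}{p}\leg{b}{p}$. The first step is to note that both hypotheses imply $\leg{a}{p}\neq 0$ and $\leg{b}{p}\neq 0$. By Definition~\ref{legsym}, this means $\ndv{p}{a}$ and $\ndv{p}{b}$, so $\gcd(a,p)=\gcd(b,p)=1$.

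For (a), multiplicativity immediately gives $\leg{ab}{p}=1\cdot 1=1$. Part (a) does not even need the symbol machinery. If $x^2\equiv a$ and $y^2\equiv b \pmod p$, then $(xy)^2\equiv ab\pmod p$, exactly as in the example preceding this lemma. The only remaining check is that $\ndv{p}{ab}$, which follows from the earlier corollary that a prime dividing a product divides one of the factors. So $ab$ is a quadratic residue rather than the $0$ case.

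For (b), I will use Euler's criterion, Eq.~(\ref{legeq0}), directly:
\[
\leg{ab}{p}\equiv (ab)^{\frac{p-1}{2}} = a^{\frac{p-1}{2}} b^{\frac{p-1}{2}} \equiv \leg{a}{p}\leg{b}{p} = (-1)(-1) = 1 \pmod p .
\]
Since $\ndv{p}{ab}$, the symbol $\leg{ab}{p}$ is $\pm1$. Its difference from $1$ is therefore either $0$ or $-2$. Because $p$ is odd, $p$ cannot divide $-2$, so $\leg{ab}{p}=1$.

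The one delicate point is this last step: passing from a congruence mod $p$ between values in $\{-1,0,1\}$ to an actual equality. It is the same argument the paper uses in the proof of Lemma~\ref{thm61}(c), and it relies on $p>2$. An alternative route for (b) is via a primitive root $g$, which exists by Proposition~\ref{prrmodp}. Write $a\equiv g^{k}$ and $b\equiv g^{l}$. By the simpler form of Euler's criterion, quadratic nonresidues are exactly the odd powers of $g$, so $k$ and $l$ are both odd. Then $k+l$ is even, and $ab\equiv (g^{(k+l)/2})^2 \pmod p$ is a quadratic residue.
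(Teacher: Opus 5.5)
Your proof is correct, and part (a) matches the paper, which simply cites the earlier fact that a product of quadratic residues is a quadratic residue.

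For (b), the paper's proof is exactly your alternative route. It writes $a\equiv g^{k}$ and $b\equiv g^{l}$ for a primitive root $g$, notes that $k$ and $l$ must be odd, and exhibits $g^{(k+l)/2}$ as a square root of $ab$. The oddness needs only the trivial direction: if $k$ or $l$ were even, $g^{k/2}$ or $g^{l/2}$ would already be a square root of $a$ or $b$. So you need not appeal to the simpler form of Euler's criterion there.

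Your primary route via Eq.~(\ref{legeq0}) is genuinely different. It is really the proof of Lemma~\ref{thm61}(c) redone; since you cite that lemma at the outset, (b) is already the one-liner $(-1)(-1)=1$.
\begin{itemize}
\item What your route buys: it gives full multiplicativity at once, covering (a), (b) and the mixed case uniformly. It needs only Fermat's little theorem and Lagrange's theorem (through Euler's criterion), not the existence of primitive roots.
\item What it costs: the lifting step from a congruence mod $p$ to an equality in $\{-1,0,1\}$, which uses $p>2$.
\item What the paper's route buys: it needs the heavier Proposition~\ref{prrmodp}, but it avoids that lifting step and produces an explicit square root of $ab$. It also explains the result structurally: the quadratic residues are the even powers of $g$, an index-two subgroup of the cyclic group $\mb{U}_p$, so $\leg{g^{k}}{p}=(-1)^{k}$.
\end{itemize}
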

\begin{proof}
$ $ \\ $ $
(a) It has been proven earlier (e.g. previous problem part (c)).
$ $ \\ $ $
(b) $\mb{U}_p$ has a primitive root $g$ and therefore
\[
 a =g^{k} \pmod p , \quad
 b =g^{l} \pmod p , \quad
\leg{a}{p}=-1=\leg{b}{p}.
\]
We conclude that $k$ and $l$ are odd numbers since otherwise
$k/2, l/2$ is an integer and thus
$g^{k/2}$ is such that
\[
 (g^{k/2})^2 \equiv g^k \equiv a \pmod p,
\]
and likewise
$g^{l/2}$ is such that
\[
 (g^{l/2})^2 \equiv g^l \equiv b \pmod p,
\]
which would implye that $a,b$ are q.r. mod $p$ contradicting
the corresponding assumptions i.e.
$\leg{a}{p}=-1=\leg{b}{p}$.
Thus for odd $k,l$ we have that $(k+l)/2$ is an integer
Therefore
\[
( g^{\frac{k+l}{2}} )^2 \equiv g^k g^l \equiv ab \pmod p,
\]
implies that $ ab$ is a q.r. mod $p$.
\end{proof}

%HERE
\section{Jacobi symbol}

In all cases below, $n$ is odd and positive.
Note that for the Jacobi symbol, we have $\gcd(a,n)=1$.
If $a$ is such that $\gcd(a,n) \neq 1$, because
of the prime decomposition of $n$ we would have
$\dv{p_i}{a}$ for some $i$. Then
$\leg{a}{p_i}=0$ and consequently
$\leg{a}{n}=0$.

\begin{dfn}[\bf{Jacobi symbol}]
\label{jacsym}
Let $n \in \mb{N}$ and $a \in \mb{Z}$ such
that $\gcd(a,n)=1$. Let $n$ be a product of
odd prime numbers, not necessarily distinct,
\[
 n = p_1 p_1 \ldots p_k ,
\]
then the Jacobi symbol is defined as follows.
\[
\leg{a}{n}  =
\leg{a}{p_1} \cdot
\leg{a}{p_2} \cdot
\ldots       \cdot
\leg{a}{p_k} \cdot .
\]
\end{dfn}

The Jacobi symbol for a prime number $n$ is the
Legendre symbol.

\begin{cor}
Let $n \in \mb{N}$ and $a \in \mb{Z}$ such
that $\gcd(a,n)=1$. Let $n$ be a product of
odd prime numbers, not necessarily distinct,
\[
 n = p_1 p_1 \ldots p_k ,
\]
and $a$ is a quadratic residue $\mod n$.
Then the following applies.
\[
\leg{a}{n} =1.
\]
\end{cor}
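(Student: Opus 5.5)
The plan is to reduce the Jacobi symbol to its defining product of Legendre symbols and show each factor equals $1$. Write $n = p_1 p_2 \cdots p_k$ with the $p_i$ odd primes, not necessarily distinct, as in Definition~\ref{jacsym}. Then $\leg{a}{n} = \prod_{i=1}^{k} \leg{a}{p_i}$, so it suffices to prove $\leg{a}{p_i} = 1$ for every $i$.

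The first step transfers the quadratic residuosity of $a$ modulo $n$ down to each prime factor. Since $a$ is a quadratic residue mod $n$, there is an $x$ with $x^2 \equiv a \pmod n$, that is, $\dv{n}{x^2 - a}$. Because $\dv{p_i}{n}$, transitivity (Theorem~\ref{podiv}, (b2)) gives $\dv{p_i}{x^2 - a}$. Hence $x^2 \equiv a \pmod{p_i}$, and $a$ is a quadratic residue mod $p_i$.

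The second step checks that the Legendre symbol is $1$ rather than $0$. Since $\gcd(a,n)=1$ and $\dv{p_i}{n}$, any common divisor of $a$ and $p_i$ also divides $\gcd(a,n)$, so $\ndv{p_i}{a}$. By Definition~\ref{legsym}, $\leg{a}{p_i} = 1$.

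Multiplying the $k$ factors then gives $\leg{a}{n} = 1$. Repeated primes cause no difficulty, since each occurrence contributes a factor of $1$. I do not expect any real obstacle here. The only point needing care is the coprimality check, which rules out a zero factor.
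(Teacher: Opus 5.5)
Your proposal is correct and follows the paper's proof essentially step for step. In both, $x^2 \equiv a \pmod n$ passes to $x^2 \equiv a \pmod{p_i}$ via $p_i \mid n$, the hypothesis $\gcd(a,n)=1$ gives $\gcd(a,p_i)=1$ so each Legendre factor is $1$ rather than $0$, and multiplying the factors of the Jacobi symbol gives $\leg{a}{n}=1$.
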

\begin{proof}
If
$
 n = p_1 p_1 \ldots p_k
$ and given $\gcd(a,n)=1$,
for $a$ a quadratic residue we have
$x^2 \equiv a \pmod n$ for some $x$.
Since $\dv{p_i}{n}$ and  $\gcd(a,n)=1$ it
follows then that $\gcd(a,p_i )=1$.
Moreover $x^2 \equiv a \pmod{p_i}$.
Therefore
\[
\leg{a}{p_i} = 1 , \forall i=1, \ldots , k.
\]
Therefore
\[
\leg{a}{n}  =
\leg{a}{p_1} \cdot
\leg{a}{p_2} \cdot
\ldots
\leg{a}{p_k}
= 1 \cdot 1 \cdot \ldots \cdot 1 = 1.
\]
\end{proof}

\noindent
Note that that if $ \leg{b}{n}  =1$ it is not necessarily
true that $b$ is a quadratic residue $\mod n$.

\begin{fct}[Jacobi symbol properties]
Let $n,m \in \mb{N}$, $n > 0, m > 0$ be
odd integers. Let $a \in \mb{Z}$ such
that $\gcd(a,n)=1$.
Furthermore,  let $a,b \in \mb{Z}$ such that
$\gcd(nm,ab)=1$.
The following then apply.
\begin{align}
a \text{\ is a q.r. mod n} &\Rightarrow \leg{a}{n}=1 . \\
a \equiv b \pmod n &\Rightarrow \leg{a}{n}  = \leg{b}{n} . \\
\leg{a}{n} \leg{a}{m} &= \leg{a}{nm}. \\
\leg{a}{n} \leg{b}{n}  &= \leg{ab}{n}. \\
 \leg{a^2}{n}  = \leg{a}{n^2} &= 1. \\
\leg{a^2 b}{n^2 m} &= \leg{b}{m  }. \\
\leg{-1}{n}  &= (-1)^{\frac{n-1}{2}} . \\
\leg{ 2}{n} &= (-1)^{\frac{n^2 -1}{8}} . \\
\gcd(n,m)=1, \leg{n}{m} \leg{m}{n} &= (-1)^{\frac{n-1}{2} \frac{m-1}{2} }.
\end{align}
Let $m,m$ be odd integer numbers such that $\gcd(n,m)=1$,
where
$n=n_1 n_2 \ldots n_k$,
$m=m_1 m_2 \ldots m_l$, for $k \geq i \geq 1 $,
$l \geq j \geq 1$
and $n_i , m_j$ prime numbers.
\begin{align}
\leg{n}{m} &=  \prod_i \prod_j \leg{n_i}{m_j} .
%
%\leg{p}{q}  \leg{q}{p} &= (-1)^{\frac{p-1}{2} \frac{q-1}{2}} .
\end{align}

\end{fct}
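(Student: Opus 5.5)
The approach is to reduce every item of this Fact to the definition of the Jacobi symbol as a product of Legendre symbols over the prime factors of the modulus, listed with multiplicity. Each Legendre-symbol property (Lemma~\ref{thm61}, the $\leg{-1}{p}$ lemma, the Fact on $\leg{2}{p}$ and quadratic reciprocity for odd primes) then transfers prime by prime. The parity Lemmas~\ref{jlaux1} and~\ref{jlaux2} absorb the exponents when primes are multiplied together.

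\textbf{The first five identities.} Write $n=p_1\cdots p_k$ and $m=q_1\cdots q_l$ with primes repeated.

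The first identity is exactly the Corollary following Definition~\ref{jacsym}.

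For $a\equiv b \pmod n$, we have $a\equiv b\pmod{p_i}$ for each $i$, so Lemma~\ref{thm61}(a) gives equality factor by factor.

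The identity $\leg{a}{n}\leg{a}{m}=\leg{a}{nm}$ is immediate, since the prime list of $nm$ is the concatenation of the lists for $n$ and $m$.

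For $\leg{ab}{n}=\leg{a}{n}\leg{b}{n}$, apply Lemma~\ref{thm61}(c) to each $p_i$ and regroup the product.

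For the squares: $\leg{a^2}{n}=\prod_i\leg{a}{p_i}^2=1$, because $\gcd(a,n)=1$ makes every factor $\pm1$. Likewise $\leg{a}{n^2}=\leg{a}{n}^2=1$ by the concatenation identity.

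The identity $\leg{a^2b}{n^2m}=\leg{b}{m}$ follows by combining the previous three: the symbol equals $\leg{a^2}{n^2m}\leg{b}{n^2}\leg{b}{m}$, and the first two factors are $1$.

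\textbf{The supplements.} For $\leg{-1}{n}$, the Legendre case gives $\leg{-1}{p_i}=(-1)^{(p_i-1)/2}$. Multiplying over $i$ gives $(-1)^{\sum_i (p_i-1)/2}$, and Lemma~\ref{jlaux1}(b) replaces the exponent sum by $(n-1)/2$ modulo $2$.

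The identity for $\leg{2}{n}$ is proved the same way. Use $\leg{2}{p}=(-1)^{(p^2-1)/8}$ for each prime, then apply Lemma~\ref{jlaux2}(b).

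\textbf{Reciprocity and the double product.} This is the step I expect to need the most care. First I will expand both symbols into the double product $\prod_i\prod_j$. By the concatenation identity in the bottom argument and multiplicativity in the top argument, $\leg{n}{m}=\prod_j\leg{n}{q_j}=\prod_j\prod_i\leg{p_i}{q_j}$. This is also the last displayed identity of the Fact.

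Since $\gcd(n,m)=1$, no $p_i$ equals any $q_j$, so Legendre reciprocity applies to every pair and gives
\[
\leg{n}{m}\leg{m}{n}=\prod_{i,j}(-1)^{\frac{p_i-1}{2}\frac{q_j-1}{2}}=(-1)^{\left(\sum_i\frac{p_i-1}{2}\right)\left(\sum_j\frac{q_j-1}{2}\right)}.
\]
Applying Lemma~\ref{jlaux1}(b) to each sum separately turns the exponent into $\frac{n-1}{2}\frac{m-1}{2}$ modulo $2$. Reducing both factors modulo $2$ is legitimate because only the parity of the product matters.

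The one subtlety is that the Fact's hypotheses must guarantee that all the symbols involved are $\pm1$ rather than $0$. I will state explicitly that $\gcd(n,m)=1$ (respectively $\gcd(nm,ab)=1$) is what ensures this.
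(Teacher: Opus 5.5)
Your proposal is correct and is essentially the paper's own argument. The paper establishes this Fact through the Corollary after Definition~\ref{jacsym} and the subsequent Jacobi-symbol lemmas. Each of these expands the symbol into Legendre symbols over the prime factors of the modulus (with multiplicity), transfers the Legendre property prime by prime, and uses Lemmas~\ref{jlaux1} and~\ref{jlaux2} to collapse the exponents.

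Your handling of reciprocity matches the paper's final lemma: the double product $\prod_i\prod_j\leg{p_i}{q_j}$, the factorization $\sum_{i,j}\frac{p_i-1}{2}\frac{q_j-1}{2}=\bigl(\sum_i\frac{p_i-1}{2}\bigr)\bigl(\sum_j\frac{q_j-1}{2}\bigr)$, and then reduction modulo $2$. You state that mod-$2$ step more carefully than the paper does.
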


\begin{lem}
Let $n,m \in \mb{Z}$, $n > 0 , m>0$ be odd (positive)
integers. Furthermore let $N,M \in \mb{Z}$ such that
$\gcd(nm,NM)=1$.
Then the following apply.
\[
(a) \leg{N}{n} \leg{N}{m} = \leg{N}{nm},
\]
\[
(b) \leg{N}{n} \leg{M}{n} = \leg{NM}{n},
\]
\[
(c) \leg{N^2}{n}  = \leg{N}{n^2} = 1,
\]
\[
(d) \leg{N^2 M}{n^2 m}  = \leg{M}{m  },
\]
and if $N \equiv M \pmod n$, then
\[
(e) \leg{N}{n}  = \leg{M}{n}.
\]
\end{lem}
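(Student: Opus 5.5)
All five parts will be reduced to the Legendre-symbol facts already established, through the definition of the Jacobi symbol as a product over the prime factors of the lower argument. Throughout, write $n=p_1p_2\cdots p_k$ and $m=q_1q_2\cdots q_l$ as products of odd primes, not necessarily distinct. The hypothesis $\gcd(nm,NM)=1$ guarantees that no $p_i$ or $q_j$ divides $N$ or $M$. Hence every Legendre symbol that appears is $\pm1$ and never $0$, and every Jacobi symbol in the statement is defined.

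For (a), $nm=p_1\cdots p_k q_1\cdots q_l$ is itself a factorization of $nm$ into odd primes. By Definition~\ref{jacsym},
\[
\leg{N}{nm}=\prod_i\leg{N}{p_i}\prod_j\leg{N}{q_j}=\leg{N}{n}\leg{N}{m}.
\]
The only subtlety is that the Jacobi symbol must not depend on the order of the factors. This holds because, by the fundamental theorem (Theorem~\ref{fund}), the multiset of primes is unique and multiplication is commutative.

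For (b), write $\leg{NM}{n}=\prod_i\leg{NM}{p_i}$. Apply Lemma~\ref{thm61}(c), $\leg{NM}{p_i}=\leg{N}{p_i}\leg{M}{p_i}$, to each factor and regroup the two products. For (e), $N\equiv M\pmod n$ together with $\dv{p_i}{n}$ gives $N\equiv M\pmod{p_i}$ for every $i$. Lemma~\ref{thm61}(a) then gives $\leg{N}{p_i}=\leg{M}{p_i}$, and multiplying over $i$ proves the claim.

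For (c), use (b) with $M=N$ to get $\leg{N^2}{n}=\leg{N}{n}^2$, and use (a) with $m=n$ to get $\leg{N}{n^2}=\leg{N}{n}^2$. Since $\leg{N}{n}$ is a product of values $\pm1$, its square is $1$. Alternatively, apply Lemma~\ref{thm61}(b) prime by prime.

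For (d), first apply (b), then (a) twice (splitting $n^2m=n\cdot n\cdot m$):
\[
\leg{N^2M}{n^2m}=\leg{N^2}{n^2m}\leg{M}{n^2m}=\leg{N^2}{n^2m}\leg{M}{n}^2\leg{M}{m}.
\]
By (c), $\leg{M}{n}^2=1$. By (a) and (c), $\leg{N^2}{n^2m}=\leg{N}{n^2m}^2=1$. What remains is $\leg{M}{m}$.

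None of these steps is deep. The one place needing care is keeping the hypotheses straight: checking that every intermediate symbol, such as $\leg{N^2}{n^2m}$, still has coprime arguments, so that (a)–(c) really apply and no zero Legendre symbol slips in. I expect this bookkeeping to be the main obstacle, and I will verify it once at the start from $\gcd(nm,NM)=1$.
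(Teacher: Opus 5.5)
Your proposal is correct and follows essentially the same route as the paper: factor $n$ and $m$ into odd primes, reduce (a), (b) and (e) prime by prime to the corresponding Legendre-symbol facts, and obtain (c) and (d) by combining (a) and (b) with the fact that the square of $\pm 1$ is $1$. Two small remarks: your appeal to Lemma~\ref{thm61}(c) for (b) is a sounder justification than the paper's square-root remark, which only covers the case where both symbols equal $1$; and in (d) the middle equality $\leg{N^2}{n^2m}=\leg{N}{n^2m}^2$ comes from (b) with $M=N$, not from (a).
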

\begin{proof}
(a) Write $n,m$ as a product of  odd primes using the unique
factorization theorem.
\[
 n= p_1 p_2 \ldots p_k  , \quad
 m= q_1 q_2 \ldots q_l  .
\]
Then we have the following
\[
\leg{N}{n} \leg{N}{m} =
\leg{N}{p_1}
\leg{N}{p_2}
\ldots
\leg{N}{p_k}
\leg{N}{q_1}
\leg{N}{q_2}
\ldots
\leg{N}{q_k}
=
\leg{N}{p_1 p_2 \ldots p_k q_1 q_2 \ldots q_l}
=
\leg{N}{nm}.
\]
$ $ \\ $ $
(b)
Write $n$ as a product of  odd primes using the unique
factorization theorem.
\[
 n= p_1 p_2 \ldots p_k  .
\]
Then
\[
\leg{N}{n} \leg{M}{n} =
\leg{N}{p_1}
\leg{N}{p_2}
\ldots
\leg{N}{p_k}
\leg{M}{p_1}
\leg{M}{p_2}
\ldots
\leg{M}{p_k}
=
\leg{NM}{p_1}
\leg{NM}{p_2}
\ldots
\leg{NM}{p_k}
=
\leg{NM}{p}.
\]
by way of $x^2 \equiv N \pmod n$ and
by way of $y^2 \equiv M \pmod n$ we have
          $(xy)^2 \equiv NM \pmod n$ and thus
for example
\[
\leg{N}{p_1} \leg{M}{p_1} = \leg{NM}{p_1}.
\]
$ $ \\ $ $
(c)
Write $n$ as a product of  odd primes using the unique
factorization theorem.
\[
 n= p_1 p_2 \ldots p_k  .
\]
Given that each $p_i$ is an odd prime using the Legendre
symbol
\[
\leg{N^2}{p_i} =1,
\]
for all $i=1, \ldots , k$.
Therefore the following applies.
\[
\leg{N^2}{n} =
\leg{N^2}{p_1}
\leg{N^2}{p_2}
\ldots
\leg{N^2}{p_k} = 1.
\]
But
\[
\leg{N}{p_1}
\leg{N}{p_1}
= 1
=
\leg{N}{p_1^2},
\]
for all $i=1, \ldots , k$.
Then we have the following.
$ $ \\ $ $
\[
\leg{N}{n^2} =
\leg{N}{p_1}
\leg{N}{p_1}
\leg{N}{p_2}
\leg{N}{p_2}
\ldots
\leg{N}{p_k}
\leg{N}{p_k}
= 1 \cdot 1 \cdot \ldots \cdot 1 = 1.
\]
$ $ \\ $ $
(d)
\begin{eqnarray*}
\leg{N^2 M}{n^2 m}  &=&
\leg{N^2 }{n^2 m}  \leg{M   }{n^2 m}  \\
                    &=&  1\cdot \leg{M   }{n^2 m}  \\
                    &=&  \leg{M   }{n^2}  \leg{M}{m} \\
                    &=&   1 \cdot         \leg{M}{m} \\
                    &=&  \leg{M}{m  }.
\end{eqnarray*}
$ $ \\ $ $
(e)
If $N \equiv M \pmod n$, since $n=p_1 \ldots p_k $ we have
$N \equiv M \pmod{p_i}$ for every $i=1 , \ldots , k$.
Then using Legendre symbol properties as $p_i$ are prime and odd,
$\leg{N}{p_i} = \leg{M}{p_i}$ for all $i$.
We then conclude
\[
\leg{N}{n} =
\leg{N}{p_1}
\leg{N}{p_2}
\ldots
\leg{N}{p_k}
=
\leg{M}{p_1}
\leg{M}{p_2}
\ldots
\leg{M}{p_k}
=
\leg{M}{n},
\]
as needed.
\end{proof}

\begin{lem}
Let $n,m \in \mb{Z}$, $n > 0 , m>0$ be odd (positive)
integers.
%Furthermore let $N,M \in \mb{Z}$ such that
%$\gcd(nm,NM)=1$.
Then the following apply.
\begin{itemize}
\item[(a)] if $a\equiv b \pmod n$ then $\leg{a}{n} = \leg{b}{n}$.
\item[(b)] $\leg{-1}{n} = (-1)^{\frac{n-1}{2}}$.
\item[(c)] $\leg{ 2}{n} = (-1)^{\frac{n^2 -1}{8}}$.
\item[(d)] For $\gcd(n,m)=1$
\[
\leg{n}{m} \leg{m}{n} = (-1)^{\frac{n-1}{2} \frac{m-1}{2} }.
\]
\end{itemize}
\end{lem}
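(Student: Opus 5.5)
The plan is to reduce every part to the corresponding statement for the Legendre symbol and then reassemble the pieces using the multiplicativity established in the previous lemma. Throughout, I write $n=p_1p_2\ldots p_k$ and $m=q_1q_2\ldots q_l$ as products of odd primes, not necessarily distinct, using the fundamental theorem of arithmetic.

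Part (a) is exactly part (e) of the previous lemma. If $a\equiv b \pmod n$, then $a\equiv b \pmod{p_i}$ for every $i$, so $\leg{a}{p_i}=\leg{b}{p_i}$ for every $i$. The definition of the Jacobi symbol as a product then gives the claim.

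For (b), I use the Legendre fact $\leg{-1}{p_i}=(-1)^{(p_i-1)/2}$, which follows from Euler's criterion and the lemma characterising $p\equiv 1,3 \pmod 4$. This gives
\[
\leg{-1}{n}=\prod_i (-1)^{(p_i-1)/2}=(-1)^{\sum_i (p_i-1)/2}.
\]
By Lemma~\ref{jlaux1}(b), the exponent is congruent to $(n-1)/2$ mod $2$, which finishes (b). Part (c) runs the same way with $\leg{2}{p_i}=(-1)^{(p_i^2-1)/8}$, quoted from the Legendre property summary, and Lemma~\ref{jlaux2}(b) collapses the exponent sum to $(n^2-1)/8$ mod $2$.

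For (d), the condition $\gcd(n,m)=1$ guarantees $p_i\neq q_j$ for all $i,j$. Applying multiplicativity in both the top and bottom arguments (parts (a),(b) of the previous lemma) gives
\[
\leg{n}{m}\leg{m}{n}=\prod_{i,j}\leg{p_i}{q_j}\leg{q_j}{p_i}=\prod_{i,j}(-1)^{\frac{p_i-1}{2}\frac{q_j-1}{2}}.
\]
The second equality uses quadratic reciprocity for distinct odd primes, taken from the Legendre summary. The exponent factors as $\bigl(\sum_i \frac{p_i-1}{2}\bigr)\bigl(\sum_j\frac{q_j-1}{2}\bigr)$. Applying Lemma~\ref{jlaux1}(b) to each factor gives $\frac{n-1}{2}\cdot\frac{m-1}{2}$ mod $2$, and since only the parity of the exponent matters, the identity follows.

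I expect the main obstacle to be logical rather than computational. Parts (c) and (d) rest on the Legendre-symbol facts for $2$ and on quadratic reciprocity, and so far these appear only in the property summary, not as proved results. I will invoke them as stated there. The remaining care point is justifying that the product splitting $\leg{p_i}{m}=\prod_j\leg{p_i}{q_j}$ is legitimate even when the $q_j$ repeat. This is immediate from the definition, since the Jacobi symbol is defined via the prime factorisation with multiplicity.
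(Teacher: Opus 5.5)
Your proposal is correct and follows the paper's route. Parts (a)--(c) are reduced to the Legendre case through the prime factorization of $n$, with the exponent sums collapsed by Lemma~\ref{jlaux1}(b) and Lemma~\ref{jlaux2}(b); for (d) the paper writes ``Omitted'' here, but its very next lemma proves it exactly as you do, by expanding into $\prod_i\prod_j\leg{p_i}{q_j}\leg{q_j}{p_i}$ and applying reciprocity for distinct primes, and your ordering concern is harmless because the Legendre facts for $2$ and for reciprocity are proved later (Gauss lemma and Eisenstein sections) without using Jacobi symbols.
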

\begin{proof}
(a) The result holds for a Legendre symbol i.e. when $n$ is an
odd prime number. In our case $n$ is an odd integer.
If it is a prime number it holds by way of the Legendre symbol
property. Otherwise it is the product of odd prime numbers,
and let $n=p_1 \ldots p_k$.
Moreover $a \equiv b \mod n$ implies
$a \equiv b \mod{p_i}$ for all $i=1, \ldots , k$.
Then from a Legendre property
$\leg{a}{p_i} = \leg{b}{p_i}$.
Then
\[
\leg{a}{n} = \leg{a}{p_1}
             \leg{a}{p_2}  \ldots
             \leg{a}{p_k}
           =
             \leg{b}{p_1}
             \leg{b}{p_2}  \ldots
             \leg{b}{p_k}
           = \leg{b}{n}.
\]
$ $ \\ $ $
(b) Again recalling Legendre symbol properties,
the result holds for a Legendre symbol i.e. when $n$ is an
odd prime number. In our case $n$ is an odd integer.
If it is a prime number it holds by way of the Legendre symbol
property. Otherwise it is the product of odd prime numbers,
and let $n=p_1 \ldots p_k$.
\begin{eqnarray*}
\leg{-1}{n}&=& \leg{-1}{p_1} \leg{-1}{p_2}  \ldots \leg{-1}{p_k} \\
           &=&
             (-1)^{\frac{p_1 -1}{2}}
             (-1)^{\frac{p_2 -1}{2}} \ldots
             (-1)^{\frac{p_k -1}{2}} \\
          &=&(-1)^{\frac{p_1 -1}{2} + \frac{p_2 -1}{2} + \ldots + \frac{p_k -1}{2}} \\
          &=&
(-1)^{\frac{p_1 -1}{2} + \frac{p_2 -1}{2} + \ldots + \frac{p_k -1}{2} \bmod 2} \\
          &=& (-1)^{\frac{p_1 p_2 \ldots p_k -1}{2}}         \\
          &=& (-1)^{\frac{n -1}{2}} .
\end{eqnarray*}
$ $ \\ $ $
(c) Similarly to  part (b). Use induction with base case $n=p_1 p_2$ so that
a claim similar to the one used above can be shown i.e.
\[
\frac{p_1^2 -1}{8} +
\frac{p_2^2 -1}{8} \bmod 2
=
\frac{p_1^2 p_2^2 -1}{8}
=
\frac{    n^2     -1}{8}.
\]
$ $ \\ $ $
(d) Omitted.
\end{proof}

 The symbol $\leg{p}{q}$ is the Jacobi symbol as
$q$ might not be a prime number and in fact it is
a compositive number in the general case.
The symbol $\leg{p_i}{q_j}$ is a Legendre symbol though as
all $q_j$ are prime numbers.

\begin{lem}
(a)
Let $p,q$ be odd integer numbers such that
$\gcd(p,q)=1$.
The following applies.
\begin{equation}
\label{nreci}
\leg{p}{q}  =  \prod_i \prod_j \leg{p_i}{q_j},
\end{equation}
where
$p=p_1 p_2 \ldots p_k$,
$q=q_1 q_2 \ldots q_l$, for $k \geq i \geq 1 $,
$l \geq j \geq 1$
and $p_i , q_j$ prime numbers.
\end{lem}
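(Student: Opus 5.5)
The identity in Eq.(\ref{nreci}) follows from two multiplicativity properties of the Jacobi symbol, applied once in the denominator and once in the numerator. I will first factor $p=p_1 p_2\ldots p_k$ and $q=q_1 q_2 \ldots q_l$ into (not necessarily distinct) odd primes using the fundamental theorem of arithmetic (Theorem~\ref{fund}). Since $\gcd(p,q)=1$, no $p_i$ equals any $q_j$, so $\gcd(p,q_j)=1$ and $\gcd(p_i,q_j)=1$ for all $i,j$. This guarantees that every Legendre symbol appearing below is $\pm 1$ and that the Jacobi symbol $\leg{p}{q}$ is defined as in Definition~\ref{jacsym}.

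\textbf{Step 1: split the denominator.} Definition~\ref{jacsym} gives directly
\[
\leg{p}{q}=\prod_{j=1}^{l}\leg{p}{q_j}.
\]
Equivalently, one can apply part (a) of the preceding lemma, $\leg{N}{n}\leg{N}{m}=\leg{N}{nm}$, repeatedly. Each factor $\leg{p}{q_j}$ is a genuine Legendre symbol, because $q_j$ is an odd prime.

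\textbf{Step 2: split the numerator inside each Legendre symbol.} For a fixed $j$, I will use complete multiplicativity of the Legendre symbol in its numerator, Lemma~\ref{thm61}(c): $\leg{ab}{q_j}=\leg{a}{q_j}\leg{b}{q_j}$. By induction on $k$ (grouping $p=p_1\cdot(p_2\cdots p_k)$), this gives
\[
\leg{p}{q_j}=\prod_{i=1}^{k}\leg{p_i}{q_j}.
\]
The base case $k=1$ is trivial. The inductive step is one application of Lemma~\ref{thm61}(c) followed by the induction hypothesis. Substituting this into Step 1 yields $\leg{p}{q}=\prod_j\prod_i\leg{p_i}{q_j}$, and the two finite products may be interchanged to match the order written in Eq.(\ref{nreci}).

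The main obstacle is Step 2, because the paper proves Lemma~\ref{thm61}(c) through Euler's criterion, with the congruence $\leg{ab}{q_j}\equiv(ab)^{(q_j-1)/2}$ and the argument that two values in $\{-1,1\}$ differing by a multiple of $q_j\ge 3$ must be equal. That argument only works when $q_j$ is odd, so I will state explicitly that every $q_j$ is odd because $q$ is odd. I will also note that the coprimality hypothesis rules out any zero Legendre symbol, so no degenerate case arises.
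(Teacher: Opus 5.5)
Your proof is correct and is essentially the paper's argument: both factor $p$ and $q$ into odd primes, then combine the definition of the Jacobi symbol with multiplicativity in the numerator. The only difference is the order of the two splittings. The paper first splits the numerator at the Jacobi level, $\leg{p}{q}=\prod_i\leg{p_i}{q}$, and then expands each denominator. You expand the denominator first, so you only need the multiplicativity of the Legendre symbol from Lemma~\ref{thm61}(c).
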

Furthermore,

\noindent
(b) \begin{equation}
\label{nreci2}
\leg{p}{q}  \leg{q}{p} =
(-1)^{\frac{p-1}{2} \frac{q-1}{2}}.
\end{equation}

\begin{proof}
Note that $p_i \neq q_j$ since $\gcd(p,q)=1$.
\begin{eqnarray*}
\leg{p}{q}  &=& \leg{p_1 p_2 \ldots p_k}{q}
             =  \prod_{i=1}^{k} \leg{p_i}{q}
             =  \prod_{i=1}^{k} \leg{p_i}{q_1 q_2 \ldots q_l}  \\
            &=& \prod_{i=1}^{k}  \prod_{j=1}^{l} \leg{p_i}{q_j}.
\end{eqnarray*}
Likewise one can show the following.
\begin{eqnarray*}
\leg{q}{p}  &=&
            \prod_{i=1}^{k}  \prod_{j=1}^{l} \leg{q_j}{p_i}.
\end{eqnarray*}
Later on, these two results are multiplied together, and
the following is derived.
\begin{eqnarray*}
\leg{p}{q}  \leg{q}{p}   &=&
            \prod_{i=1}^{k}  \prod_{j=1}^{l} \leg{p_i}{q_j} \leg{q_j}{p_i},
\end{eqnarray*}
and the quadratic reciporicity result is derived using Legendre
symbol properties and in particular
\[
 \leg{p_i}{q_j}
 \leg{q_j}{p_i} =
(-1)^{\frac{p_i -1}{2} \frac{q_j -1}{2}}
\Leftrightarrow
 \leg{p_i}{q_j} =
 \leg{q_j}{p_i}
(-1)^{\frac{p_i -1}{2} \frac{q_j -1}{2}}.
\]
and then
\[
\leg{p}{q}  =\prod_{i=1}^{k}  \prod_{j=1}^{l} \leg{p_i}{q_j}
            =\prod_{i=1}^{k}  \prod_{j=1}^{l}
 \leg{q_j}{p_i}
(-1)^{\frac{p_i -1}{2} \frac{q_j -1}{2}} =
\leg{q}{p}
\prod_{i=1}^{k}  \prod_{j=1}^{l}
(-1)^{\frac{p_i -1}{2} \frac{q_j -1}{2}} =
\leg{q}{p}
 (-1)^{\sum_i \sum_j \frac{p_i -1}{2} \frac{q_j -1}{2}},
\]
and then utilizing the previous problem we have.
\begin{eqnarray*}
\sum_i \sum_j \frac{p_i -1}{2} \frac{q_j -1}{2}
&=& \sum_i  \frac{p_i -1}{2} \sum_j \frac{q_j -1}{2}  \\
&=& \frac{p_1 p_2 \ldots p_k}{2} \frac{q_1 q_2 \ldots q_l}{2} \\
&=& \frac{p-1}{2} \frac{q-1}{2} \pmod 2 .
\end{eqnarray*}
\end{proof}

\begin{lem}
\label{jac1}
Function $\text{Jacobi}(a,n)$ calculates
\[
\leg{a}{n}
\]
in polynomial time for  $n \in \mb{N}$ and $a \in \mb{Z}$.
It is a recursive algorithm.
\end{lem}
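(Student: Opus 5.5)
The plan is to describe the recursive procedure $\text{Jacobi}(a,n)$ explicitly and then prove two things: that it returns $\leg{a}{n}$, and that its recursion depth, and hence its total cost, is polynomial in $\lg a+\lg n$. The algorithm works as follows. If $n=1$, return $1$. Otherwise first reduce $a \leftarrow a \bmod n$, which is justified by property (a) of the preceding lemma ($a\equiv b \pmod n \Rightarrow \leg{a}{n}=\leg{b}{n}$). If now $a=0$, return $0$ (here $\gcd(a,n)=n>1$). If $a$ is even, write $a=2a'$ and return $(-1)^{(n^2-1)/8}\,\text{Jacobi}(a',n)$. This step uses multiplicativity in the numerator, $\leg{NM}{n}=\leg{N}{n}\leg{M}{n}$ (part (b) of the earlier Jacobi lemma), together with $\leg{2}{n}=(-1)^{(n^2-1)/8}$ (part (c)). If $a$ is odd and $a>1$, apply reciprocity: return $(-1)^{\frac{a-1}{2}\frac{n-1}{2}}\,\text{Jacobi}(n,a)$, using Eq.~(\ref{nreci2}). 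If $a=1$, return $1$.

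Correctness will be proved by strong induction on $n$, and then on $a$ for fixed $n$. Every branch is justified by the cited identity, and every recursive call has a strictly smaller argument pair: the $a$-halving step keeps $n$ and decreases $a$, while the reciprocity step swaps to a modulus $a<n$ that is odd and positive. Since all calls are on odd positive moduli, the Jacobi symbol is always defined. One subtle point has to be handled. The reciprocity identity was stated only for $\gcd(a,n)=1$. When the gcd is not $1$, I would first observe that both sides are $0$. This holds because $\gcd(a,n)=\gcd(n \bmod a, a)$, so the zero propagates down the recursion to an $a=0$ base case. Alternatively, one can simply run Euclid's algorithm (Theorem~\ref{gcd}) first and return $0$ if $\gcd(a,n)>1$.

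For the running time I will mirror the Euclid analysis in the lemma following Theorem~\ref{gcd}. Each reciprocity-plus-reduction step replaces $(a,n)$ by $(n \bmod a, a)$, which is exactly a Euclid step. The $2$-removal steps only shrink $a$ further, and there are at most $\lg a$ of them between consecutive Euclid steps, so the total across the whole run is at most $\lg a+\lg n$. The Fibonacci-type bound then gives $O(\lg n)$ reciprocity steps. Each step costs one division, a parity test, and sign computations that depend only on $n \bmod 8$ and $a \bmod 4$. The total is therefore $O(\lg n)$ arithmetic operations on numbers of at most $\lg n$ bits, i.e.\ polynomial time.

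I expect the main obstacle to be bounding the number of halving steps together with the Euclid steps. A plain Euclid potential function would not account for the divisions by $2$. My first attempt is the potential $\lg a+\lg n$: it drops by $1$ at each halving, and it never increases at a Euclid step since $n\bmod a<a<n$. That yields a linear bound on the total number of steps directly, avoiding the Fibonacci argument altogether.
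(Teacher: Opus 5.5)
Your proposal is correct and takes essentially the paper's route: reduce $a$ modulo $n$, strip factors of $2$ via $\leg{2}{n}=(-1)^{(n^2-1)/8}$, flip with reciprocity, and compare the step count with Euclid's algorithm. It is also more complete than the paper's sketch, which does not spell out the $\gcd(a,n)>1$ case or why the interleaved halvings leave the Euclid bound intact, and whose recursive pseudocode has no reciprocity branch for odd $a>1$.

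One repair to your last paragraph: saying the potential ``never increases'' at a Euclid step does not by itself bound the number of Euclid steps. The potential $\lg a+\lg n$ in fact drops by more than $1$ at each Euclid step, because $n \bmod a < n/2$. Your Fibonacci argument already covers this anyway: removing factors of $2$ only shrinks the remainders, so successive moduli still satisfy $n_{i-1}\ge n_i+n_{i+1}$.
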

\begin{proof}
We distinguish multiple cases.
$ $ \\ $ $
{\bf Case 1.} Say $a$ is even. Then $a=2^k b$, where
$b$ is odd. Therefore we use the following
\[
\leg{a}{n} = \leg{\frac{a}{2}}{n} \leg{2}{n}
           = \ldots \leg{b}{n} \cdot \left(\leg{2}{n} \right)^k .
\]
$ $ \\ $ $
{\bf Case 2.} If $a$ is odd and $a<n$
use $a \equiv b \pmod n \Rightarrow \leg{a}{n}=\leg{b}{n}$
and for $a,n$ such that
$\gcd(a,n)=1$
$\leg{a}{n} \leg{n}{a} = (-1)^{\frac{n-1}{2} \frac{a-1}{2} }$,
to obtain
\[
\leg{a}{n} = \ldots \leg{n^\prime}{a^\prime} , n^\prime >a^\prime ,
\]
for some $n^\prime >a^\prime$ dependent on $a,n$, and this becomes
Case 3.
$ $ \\ $ $
{\bf Case 3.} If $a$ is odd and $a>n$
\[
\leg{a}{n} = \leg{a \bmod n}{n},
\]
and continue until $a$ is  1 or 2.
$ $ \\ $ $
{\bf Recursive formulation.}  It combines properties
of the Jacobi symbol. Running time is similar to that
of Euclid's gcd algorithm i.e. $O(\lg{n})$ steps,
a polynomial time algorithm.
%\newpage
\SetKwComment{Comment}{/* }{ */}
%\SetKwRepeat{Do}{do}{while}
%  \Return{$\mathbf{PseudoPrime}$}
%  \Comment*[r]{$n$ is either prime or composite}
% \eIf(\tcc*[f]{$a \geq n$}){$a^{n-1} \not\equiv \pmod{n}$} {
\begin{algorithm}
\KwIn{$a,n$, where $n \in \mb{N}$ and $a \in \mb{Z}$}
\KwOut{$\leg{a}{n}$}
 \eIf{$a \geq n$} {
  $A = a \bmod n $;
 }{
  $A = a $;
 }
 $N = n ; s=1$;

 \If{$N  == 1$} {
   \Return{$1$};
 }
 \If{$(A  == 0) || (A == 1)$} {
   \Return{$A$};
 }
 \If{$N \bmod 2 == 0$} {
   \Return{$0$};
 }
 \If{$A < 0$}{
   \Return{$\mathrm{Jacobi}(A \bmod N, N)$}
 }
 \If{$A == 2$}{
   \eIf{$((N \bmod 8 ==3) || (N \bmod 8 == 5))$} {
      \Return{$-1$};
   }{
      \Return{$1$};
   }
 }
 \If{$A \bmod 2 == 0$}{
    $i=0$;

    \While{$A \bmod 2 == 0$} {
         $i++$;

         $A=A/2$;
    }
    \Return{$\mathrm{Jacobi}(2,N)^{i} \cdot \mathrm{Jacobi}(A,N)$};
 }
\caption{Jacobi(a,n) : Jacobi symbol recursive calculation}
\label{jsymrec}
\end{algorithm}
\end{proof}

\newpage
\begin{lem}
\label{jac2}
Function $\text{Jacobi}(a,n)$ calculates
\[
\leg{a}{n}
\]
in polynomial time for  $n \in \mb{N}$ and $a \in \mb{Z}$.
It is an iterative algorithm.
\end{lem}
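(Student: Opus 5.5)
We give an iterative version of Algorithm~\ref{jsymrec}, modeled on Euclid's algorithm (Theorem~\ref{gcd}), and prove it correct and polynomial-time. The state is a pair $(A,N)$ with $N$ odd and positive, together with a sign accumulator $s\in\{\pm 1\}$. We maintain the invariant
\[
\leg{a}{n} = s\cdot\leg{A}{N}.
\]
Initially $A=a \bmod n$, $N=n$, $s=1$; the invariant holds by property (a) of the Jacobi symbol ($a\equiv b \pmod n \Rightarrow \leg{a}{n}=\leg{b}{n}$). If $n$ is even we return $0$ (or report invalid input), as the recursive version does.

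The loop body, executed while $A\neq 0$, has two phases.

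\emph{Removing factors of two.} While $A$ is even, set $A\leftarrow A/2$. Multiply $s$ by $-1$ exactly when $N\bmod 8\in\{3,5\}$. This preserves the invariant by multiplicativity, $\leg{2A'}{N}=\leg{2}{N}\leg{A'}{N}$, together with $\leg{2}{N}=(-1)^{(N^2-1)/8}$.

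\emph{Reciprocity.} Now $A$ is odd, so swap $(A,N)\leftarrow(N,A)$. Multiply $s$ by $-1$ exactly when $A\equiv N\equiv 3 \pmod 4$, by the reciprocity law (\ref{nreci2}). Then reduce $A\leftarrow A \bmod N$. The new $N$ is the old odd $A$, so $N$ stays odd and positive. Reciprocity needs $\gcd(A,N)=1$; when the gcd exceeds $1$, both sides of the invariant are $0$, since a shared prime $p_i$ gives $\leg{\cdot}{p_i}=0$. So the invariant survives in that case too, though this must be stated carefully because the earlier lemmas assume coprimality.

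At exit $A=0$, and we return $s$ if $N=1$ and $0$ otherwise. Correctness then follows from the invariant: $\leg{0}{1}=1$ (empty product), and $\leg{0}{N}=0$ for $N>1$. Since the pair after each swap-and-reduce is exactly a Euclid step on $(N,A)$, the final $N$ equals $\gcd(a,n)$.

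\emph{Running time.} Each swap-and-reduce performs one Euclid division, and halving only shrinks $A$. So the number of reciprocity steps is at most that of Euclid's algorithm, about $1.44\lg n+2$ by the Fibonacci lemma. The total number of halvings is at most $\sum\lg A=O(\lg n)$ per pass and $O(\lg^2 n)$ overall. In fact, halving after a Euclid step only makes $A$ smaller, so the Euclid-style bound still applies. Each operation is polynomial in $\lg n$, so the algorithm is polynomial.

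The main obstacle is rigor in the reciprocity phase. The paper's reciprocity (\ref{nreci2}) and the sign rule $(-1)^{\frac{A-1}{2}\frac{N-1}{2}}$ are stated only for coprime odd positive arguments, and part (d) of the earlier lemma was omitted. I would rely on (\ref{nreci2}) as proved via the prime factorization, and handle $\gcd>1$ separately through the zero-symbol argument above.
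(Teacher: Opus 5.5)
Your proposal is correct, and it is a genuinely different and much more complete argument than the paper's. The paper's proof consists only of the pseudocode of Algorithm~\ref{jsymiter} followed by ``The result follows''. It has no invariant, no treatment of the case $\gcd(A,N)>1$, and no termination or running-time analysis.

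You instead use the Crandall--Pomerance loop (essentially Algorithm~\ref{jsymiter1} of Lemma~\ref{jac3}) and prove it through the invariant $\leg{a}{n}=s\leg{A}{N}$. You check each update against multiplicativity in the numerator, the formula for $\leg{2}{N}$, and reciprocity (\ref{nreci2}). Relying on (\ref{nreci2}) is legitimate: it follows factor by factor from Legendre reciprocity and Lemma~\ref{jlaux1}. You also handle the zero-symbol case separately and read the answer off $\leg{0}{N}$ at exit.

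What this buys is an actual correctness proof, and that matters here because Algorithm~\ref{jsymiter} as printed is wrong. The assignments $A=N \bmod A$; $N=A$ set $N$ to the new remainder rather than the old $A$. In addition, the early exit on $A=1$ returns $1$ instead of $s$. On $a=8$, $n=5$ it outputs $1$, whereas $\leg{8}{5}=\leg{3}{5}=-1$.

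One step of yours should be tightened. You claim the number of swap-and-reduce steps is at most the number of Euclid steps on $(n,a)$. This is not justified as stated, because once halvings intervene the sequence of pairs is no longer Euclid's. Argue directly instead. Let $N_t$ be the value of $N$ at the start of pass $t$, and $A'_t$ the odd value of $A$ after halving. Then $A'_t<N_t$, $N_{t+1}=A'_t$, and $N_{t+2}\le N_t \bmod A'_t$. Since $N_t \bmod A'_t<A'_t$ and $N_t \bmod A'_t\le N_t-A'_t$, you get $N_{t+2}<N_t/2$. Hence there are at most $2\lg n+O(1)$ passes. Each pass uses at most $\lg n$ halvings and $O(1)$ further arithmetic operations on numbers of at most $\lg n+1$ bits.

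Similarly, your remark that the final $N$ equals $\gcd(a,n)$ needs the observation that halving preserves $\gcd(A,N)$ because $N$ is odd. Correctness, however, does not depend on that remark.
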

\vspace{-0.2cm}
\begin{proof}
{\bf Non-recursive formulation.} See next page.
\newpage
{\small
\begin{algorithm}[H]
\KwIn{$a,n$, where $n \in \mb{N}$ and $a \in \mb{Z}$}
\KwOut{$\leg{a}{n}$}
 \eIf{$a<0$} {
   $A= a \bmod n$;
 }{
   $A = a $;
 }
 \If{$A \geq n$} {
  $A = A \bmod n $;
 }
 $N = n$; $s = 1$;

 \If{$N  == 1$} {
   \Return{$1$};
 }
 \If{$(A  == 0) ||(A == 1)$} {
   \Return{$A$};
 }
 \If{$N \bmod 2 == 0$} {
   \Return{$0$};
 }
 \If{$A == 2$}{
   \eIf{$((N \bmod 8 ==3) || (N \bmod 8 == 5))$} {
      \Return{$-1$};
   }{
      \Return{$1$};
   }
 }
 \While{$A \geq 2$}{

    \While{$A \bmod 4 == 0$} {
         $A=A/4$;
    }
    \If{$A \bmod 2 == 0$}{
        \If{$((N \bmod 8 ==3) || (N \bmod 8 == 5))$} {
           $s = -s$;
%        }{
%           $s = s$;
        }

        $A = A /2$;
    }
    \If{$A==1$}{
        \Return{$1$};
    }
    \If{$N>A$}{
        \If{$((N \bmod 4 ==3) \& \& (A \bmod 4 == 3))$} {
           $s = -s$;
        }
        $A = N \bmod A ; N=A $;
    }
 }
    \Return{$s \cdot A$};
\caption{Jacobi(a,n) : Jacobi symbol iterative calculation}
\label{jsymiter}
\end{algorithm}
}
$ $ \\ $ $
The result follows.
\end{proof}

\newpage
\begin{lem}
\label{jac3}
Function $\text{Jacobi}(a,n)$ 
(\cite{RC2e}, page 98)
calculates
\[
\leg{a}{n}
\]
in polynomial time for  $n \in \mb{N}$ and $a \in \mb{Z}$.
It is equivalent to the code of Jacobi(a,n) of 
Lemma~\ref{jac2}.
It is an iterative algorithm.
\end{lem}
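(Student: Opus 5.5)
The plan is to present the standard iterative Jacobi algorithm (as in the cited reference), with state $(a,n,t)$, where $t=\pm1$ is an accumulated sign. We then prove it correct by a loop invariant and bound its running time as for Euclid's algorithm. After reducing $a \bmod n$ and setting $t=1$, the loop runs while $a\neq 0$. Inside the loop we first strip factors of two: while $a$ is even, set $a=a/2$ and negate $t$ when $n\bmod 8\in\{3,5\}$. We then swap $a$ and $n$, negate $t$ if $a\equiv n\equiv 3 \pmod 4$, and set $a=a\bmod n$. On exit we return $t$ if $n=1$ and $0$ otherwise.

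\textbf{Invariant.} The key claim is that at the top of every iteration $t\leg{a}{n}$ equals the original $\leg{a}{n}$, with $n$ odd and positive. Each update is justified by one fact from the Jacobi symbol properties listed earlier.
\begin{itemize}
\item Halving uses multiplicativity, $\leg{2b}{n}=\leg{2}{n}\leg{b}{n}$, together with $\leg{2}{n}=(-1)^{(n^2-1)/8}$. That sign is $-1$ exactly when $n\equiv\pm3\pmod 8$.
\item The swap uses the reciprocity law $\leg{a}{n}\leg{n}{a}=(-1)^{\frac{a-1}{2}\frac{n-1}{2}}$ from the previous lemma, valid for odd coprime $a,n$. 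The sign is $-1$ exactly when both are $3 \bmod 4$.
\item The reduction uses $a\equiv b\pmod n\Rightarrow\leg{a}{n}=\leg{b}{n}$.
\end{itemize}

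\textbf{The non-coprime case (main obstacle).} Reciprocity was only stated for $\gcd=1$, so this is where the argument needs care. I would handle it by observing that the pair $(a,n)$ evolves exactly as in Euclid's algorithm, since dividing by 2 does not change the gcd with odd $n$. Hence $\gcd(a,n)$ is invariant, and at termination $a=0$ and $n=\gcd$. If this gcd exceeds 1, the original symbol is $0$ by the remark opening the Jacobi section, and the algorithm returns $0$. If the gcd is $1$, every swap is legitimate and the final state gives $t\leg{0}{1}$, read as $t$, which is what is returned. To make this rigorous I would argue by cases on whether the initial gcd is $1$, rather than applying reciprocity blindly.

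\textbf{Termination and equivalence.} Each round strictly decreases $a+n$ (or $n$), and successive remainders shrink at least as fast as in Theorem~\ref{gcd}. By the Fibonacci bound lemma there are $O(\lg n)$ rounds, with $O(\lg a)$ halvings in total, and each step is polynomial in the bit length. For equivalence with Lemma~\ref{jac2}, I would check that both algorithms apply the same three sign rules in the same order. The only cosmetic differences are that Lemma~\ref{jac2} removes factors of $4$ in bulk, which is harmless since $\leg{4}{n}=1$, and that it exits early when $a=1$.
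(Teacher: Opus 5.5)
Your correctness argument is sound and does considerably more than the paper. The paper's proof of this lemma consists only of the Crandall--Pomerance pseudocode, with no invariant, no treatment of $\gcd(a,n)>1$, no running-time analysis, and no argument for the equivalence with Lemma~\ref{jac2}. Your invariant, together with the observation that $\gcd(a,n)$ is preserved so that the final modulus equals $\gcd(a,n)$, is the right way to avoid applying reciprocity to non-coprime pairs.

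Be aware that the paper's listing ends the loop body with $A = N \bmod A$ after the swap, which reverses the operands. As printed, it returns $0$ on $(a,n)=(7,19)$ although $\leg{7}{19}=1$. Your step $a=a\bmod n$ is the one in the cited source, so say explicitly that this is the algorithm you are proving.

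Two small tightenings. First, state that $n$ must be odd and positive. Second, the halvings change the pairs, so the appeal to Euclid's remainders needs its own justification. Replace it by the direct estimate $n_{i+2}\le n_i \bmod n_{i+1} < n_i/2$ for the successive moduli $n_i$, which gives $O(\lg n)$ rounds.

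The equivalence step, however, fails as you state it. The printed code of Lemma~\ref{jac2} differs from yours in ways that are not cosmetic:
\begin{itemize}
\item Its in-loop test ``if $A=1$ return $1$'' discards the accumulated sign. On $(8,11)$ it divides out $4$, then one factor $2$ (setting $s=-1$ since $11\equiv 3 \pmod 8$), reaches $A=1$ and returns $1$. But $\leg{8}{11}=-1$, and your algorithm returns $-1$.
\item Its update ``$A=N\bmod A$; $N=A$'' is sequential, so $N$ is overwritten by the new remainder. On $(7,19)$ it reaches $A=N=5$; the guard $N>A$ then fails forever and the loop does not terminate.
\item It returns $0$ for even $N$, an input your algorithm is not designed for; on $(3,4)$ yours returns $1$.
\end{itemize}
So ``the same three sign rules in the same order'' is false for the code as written. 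What you can prove is equivalence, on odd $n>0$, with a corrected version of Lemma~\ref{jac2}: one that returns $s$ at the early exit and uses the simultaneous update $(A,N)\leftarrow(N\bmod A,\,A)$. Your write-up should say that the lemma's equivalence claim holds only in that sense.
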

\vspace{-0.2cm}
\begin{proof}
{\bf Alternative non-recursive formulation (\cite{RC2e}).} 
$ $ \\ $ $
{\small
\begin{algorithm}[H]
\KwIn{$a,n$, where $n \in \mb{N}$ and $a \in \mb{Z}$}
\KwOut{$\leg{a}{n}$}
 \eIf{$a<0$} {
   $A= a \bmod n$;
 }{
   $A = a $;
 }
 \If{$A \geq n$} {
  $A = A \bmod n $;
 }
 $N = n$; $s = 1$;

 \While{$A \neq 0$}{
     \While{$A \bmod 2 == 0$}{
        $A=A/2$;

        \If{$((N \bmod 8 ==3) || (N \bmod 8 == 5))$} {
          $ s = -s $;
        }
     }
     temp=A ;
     A=N ;
     N=temp;

     \If{$((N \bmod 4 ==3) \& \& (A \bmod 4 == 3))$} {
           $s = -s$;
     }
     $A = N \bmod A$ ;
 }
    \If{$N=1$}{
       \Return{$s$};
    }
    \Return{$0$};
\caption{Jacobi(a,n) : Jacobi symbol iterative calculation}
\label{jsymiter1}
\end{algorithm}
}
\end{proof}

\newpage

\section{Quadratic residues}

\begin{dfn}[\bf{Quadratic residue}]
For $p\in \mb{N}$, where $p$ is an odd prime number,
and for any $a \in \mb{Z}_p$ 
we say that $a$ is a quadratic residue $\pmod p$ if
and only if the congruence
\[
 x^2 \equiv a \pmod p
\]
has a solution for $0 < x < p$.
%For $p\in \mb{N}$, where $p$ is an odd prime number
%and for any $a \in \mb{Z}$ such that $\ndv{p}{a}$
%we say that $a$ is a quadratic residue $\pmod p$ if
%and only if the congruence
%\[
% x^2 \equiv a \pmod p
%\]
%has a solution.
\end{dfn}

The definition extends to the general case of an integer $n$
rather than of odd prime $p$.

\begin{dfn}[\bf{Quadratic residue}]
For $n\in \mb{N}$,
and for any $a \in \mb{Z}_n$ 
we say that $a$ is a quadratic residue $\pmod n$ if
and only if the congruence
\[
 x^2 \equiv a \pmod n
\]
has a solution for $0 < x < n$.
%For $n\in \mb{N}$,
%and for any $a \in \mb{Z}$ such that $\gcd(a,n)=1$,
%we say that $a$ is a quadratic residue $\pmod n$ if
%and only if the congruence
%\[
% x^2 \equiv a \pmod n
%\]
%has a solution.
\end{dfn}

Let $a \equiv b \pmod p$, for $p$ an odd prime number,
then
$x^2 \equiv b  \pmod p$
has a solution if and only if
$x^2 \equiv a  \pmod p$ does also.
%Moreover $\dv{p}{b}$ if and only if $\dv{p}{a}$.
Quadratic residuocity (with respect to odd prime number $p$) is
relevant to the introduction of the Legendre symbol.
Later we extend it to the introduction of the Jacobi symbol.

\begin{exa}
For $\mb{Z}_7$ find the q.r (quadratic residues) 
and the q.nr. (quadratic non-residues) mod 7.
\end{exa}

\begin{solution}
$1^2 , 2^2 , 3^2 , 4^2 , 5^2 , 6^2 \pmod 7$
are respectively $1, 4, 2, 2, 4, 1 \pmod 7$.
Thus $1, 2, 4$ are the quadratic residues, 
and $3, 5, 6$
are the quadratic non-residues and $0$ sometimes
is counted, sometimes not!
\end{solution}

\begin{lem}
Let $m,n \in \mb{Z}$ such that $\gcd (m,m)=1$.
The following applies.
\begin{equation}
\label{qrmn}
a \text{ is  q.r.  mod}\  mn
\quad
\Leftrightarrow
\quad
a \text{ is  q.r. mod} \  m
\quad \wedge \quad
b \text{ is  q.r. mod} \ n.
\end{equation}
\end{lem}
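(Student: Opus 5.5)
The statement has typos: it should read $\gcd(m,n)=1$, and the right-hand side should say $a$ (not $b$) is a q.r. mod $n$. I will prove that version: for coprime $m,n$, $a$ is a quadratic residue mod $mn$ if and only if $a$ is a quadratic residue mod $m$ and mod $n$. The plan is a direct two-direction argument, with the Chinese remainder theorem (Theorem~\ref{crt}, or Corollary~\ref{crt21} for two moduli) doing the work in the harder direction.

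For the forward direction ($\Rightarrow$), suppose $x^2 \equiv a \pmod{mn}$. Then $\dv{mn}{x^2-a}$. Since $\dv{m}{mn}$ and $\dv{n}{mn}$, transitivity (Theorem~\ref{podiv}(b2)) gives $x^2 \equiv a \pmod m$ and $x^2\equiv a \pmod n$. So the same $x$, reduced mod $m$ and mod $n$, witnesses residuosity for each modulus. This is exactly the $\Rightarrow$ half of Eq.(\ref{crtbyp}). Coprimality is not needed here.

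For the reverse direction ($\Leftarrow$), suppose $x_1^2\equiv a \pmod m$ and $x_2^2 \equiv a \pmod n$. Because $\gcd(m,n)=1$, Corollary~\ref{crt21} gives an $x$ with $x\equiv x_1 \pmod m$ and $x\equiv x_2 \pmod n$. By multiplicativity (Corollary~\ref{pmod}, item 8), $x^2 \equiv x_1^2 \equiv a \pmod m$ and $x^2\equiv x_2^2\equiv a \pmod n$. The $\Leftarrow$ half of Eq.(\ref{crtbyp}) then yields $x^2\equiv a\pmod{mn}$. Finally, replace $x$ by its least residue mod $mn$; this is legitimate by Proposition~\ref{moddiv}, since squaring respects congruence.

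The only real obstacle is the side condition in the definition of quadratic residue, which requires a solution with $0<x<n$. If a witness reduces to $x\equiv 0$, then $a\equiv 0$. In that case $a=0$ is a residue for every modulus via $x=0$, so I would treat $a\equiv 0$ separately, or simply adopt the convention that $x=0$ is allowed. Everything else is routine.
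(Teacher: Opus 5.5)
Your argument is correct and is essentially the paper's. The paper builds the common root by hand from a B\'ezout relation $Am+Bn=1$, setting $X = x - A(x-y)m = y + B(x-y)n$, which is exactly the solution Corollary~\ref{crt21} supplies. It then combines $\dv{m}{X^2-a}$ and $\dv{n}{X^2-a}$ using $\gcd(m,n)=1$, just as you do via Eq.~(\ref{crtbyp}).

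Your remark on the $0<x<n$ side condition goes beyond the paper, which ignores it. Under that literal definition the forward direction genuinely fails: take $m=3$, $n=4$, $a=9$; then $3^2\equiv 9 \pmod{12}$, but no $0<x<3$ has $x^2\equiv 0 \pmod 3$. So the troublesome case is $a\equiv 0$ modulo $m$ or $n$ rather than modulo $mn$, and allowing $x=0$ (or assuming $\gcd(a,mn)=1$) is required, not optional.
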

\begin{proof}
$\Rightarrow$.
Let $a$ be a q.r. mod $mn$.
Then there exists an $x$ such that
\[
x^2 \equiv a \pmod{mn}
\Rightarrow
\dv{mn}{x^2 -a}.
\]
The latter implies $\dv{m}{x^2 -a}$ and $\dv{n}{x^2 -a}$
since  $\gcd (m,m)=1$.
This proves the $\Rightarrow$ part.
$ $ \\ $ $
Let $a$ be q.r. mod $m$ and $n$.respectively.
Then, there exist $x,y$ such that
\[
x^2 \equiv a \pmod{m}
\quad \wedge \quad
y^2 \equiv a \pmod{n} ,
\]
and therefore there exist $M,N$ integer such that
$x^2 - a = m M$
and
$y^2 - a = n N$.
By the extension of Euclid's theorem by way of
 $\gcd (m,m)=1$ we have integer $A,B$ such that
\[
 A m + B n =1
\]
Then we derive the following.
\begin{eqnarray*}
 A m + B n &=& 1 \Rightarrow \\
  Am (x-y) + Bn (x-y) &=&x-y \Rightarrow \\
   C m     +  Dn      &=&x-y \Rightarrow \\
              Dn +y   &=&x -Cm \Rightarrow \\
\end{eqnarray*}
where $C= A(x-y)$ and $D=B(x-y)$.
Consider now $X= x- Cm$ and $Y=y + Dn$.
They are $X=Y$.
Furthermore,
\[
X^2 = (x-Cm)^2 \equiv x^2 \equiv a \pmod m ,
\]
and
\[
Y^2 = (y+Dn)^2 \equiv y^2 \equiv a \pmod n .
\]
Since $X=Y$ and thus $X^2 =Y^2$ we have shown
that
\[
\dv{m}{X^2 -a}
\quad \wedge \quad
\dv{n}{X^2 -a},
\]
and since $\gcd(m,n)=1$, this implies
\[
\dv{nm}{X^2 -a},
\]
i.e. $X^2 \equiv a \pmod{mn}$, and thus $a$ is
q.r. mod $mn$ as needed.
\end{proof}

%FIRST
\bigskip %THEOREM 57 %%
\begin{thm}
\label{thm57}
If $p$ is an odd prime then $(p-1)/2$ of the units
$\pmod p$ are quadratic residues, $(p-1)/2$ are quadratic
non-residues and there is nothing left unaccounted for.
\end{thm}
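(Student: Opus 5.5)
The plan is to count the image of the squaring map on $\mb{U}_p=\{1,2,\ldots,p-1\}$. Every quadratic residue $a$ with $\gcd(a,p)=1$ is of the form $x^2 \bmod p$ for some unit $x$. A solution $x$ cannot be divisible by $p$, since then $\dv{p}{a}$.

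The squaring map $x\mapsto x^2 \bmod p$ sends $\mb{U}_p$ into $\mb{U}_p$. By the lemma above on $x^2\equiv a \pmod p$ (equivalently Theorem~\ref{thm47}), each unit $a$ has either zero or exactly two square roots $x$ and $p-x$. These two roots are distinct because $p$ is odd and $\ndv{p}{a}$. Hence the fibres of the squaring map over its image all have size exactly $2$.

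Since the domain has $p-1$ elements, partitioned into fibres of size $2$, the image has exactly $(p-1)/2$ elements. These are precisely the quadratic residues, and they can be listed explicitly as $1^2,2^2,\ldots,\left(\frac{p-1}{2}\right)^2 \bmod p$. The listed values are pairwise distinct: $i^2\equiv j^2$ with $1\le i<j\le (p-1)/2$ would force $\dv{p}{(j-i)(j+i)}$, yet both factors lie strictly between $0$ and $p$.

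The remaining $(p-1)-(p-1)/2=(p-1)/2$ units are by definition non-residues. Every unit is either a residue or not, and $0$ is excluded from the units, so nothing is left unaccounted for.

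An alternative route, which I would mention as a check, uses Proposition~\ref{prrmodp}. Take a primitive root $g$ and write $\mb{U}_p=\{g^0,\ldots,g^{p-2}\}$. Then $g^k$ is a residue if and only if $k$ is even: the even case is immediate, and the converse follows from Euler's criterion, as in the proof of the ``simpler form''. Since $p-1$ is even, exactly half of the exponents are even.

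The main (mild) obstacle is the distinctness argument, that is, ensuring no two of the $(p-1)/2$ listed squares coincide. This is handled by the factorization above together with $p$ prime.
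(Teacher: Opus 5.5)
Your proof is correct and is essentially the paper's argument. The paper pairs $b$ with $-b$ for $b=1,\ldots,(p-1)/2$ and takes the squares of these as the $(p-1)/2$ quadratic residues, which is exactly your count of two-element fibres of the squaring map; the one real addition is that you justify the distinctness of $1^2,\ldots,\left(\frac{p-1}{2}\right)^2 \bmod p$ via $p \mid (j-i)(j+i)$, a step the paper only asserts.
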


\begin{proof}
Consider  $\pm 1 , \pm 2, \ldots , \pm (p-1)/2 $ and
take the square of those
elements. These elements account for all the units $\pmod p$.
If $b^2 \equiv a \pmod p$, then $(-b)^2 \equiv a \pmod p$ as well.
The $(p-1)/2$ distinct values (of the squares) are the
quadratic residues. Everything else is a quadratic non-residue or 0.
\end{proof}

\begin{lem}
Let $n=pq$ where $p,q$ are prime numbers and $p\neq q$.
If a is a quadratic residue mod $n$, then $a$
has four square roots in $\mb{Z}_p^{x}$ and thus one quarter
of the elements of  $\mb{Z}_p^{x}$ are quadratic residues
mod $n$.
\end{lem}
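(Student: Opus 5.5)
The statement as written has typos: it says $\mb{Z}_p^{x}$ where it clearly means $\mb{Z}_n^{x}=\mb{U}_n$. I also assume $p,q$ are odd, since for $p=2$ each unit has only one square root mod $2$. I will prove that every quadratic residue $a\in\mb{U}_n$ has exactly four square roots in $\mb{U}_n$. Then I will count: the squaring map $\mb{U}_n\to\mb{U}_n$ is exactly $4$-to-$1$ onto its image. So the number of quadratic residues in $\mb{U}_n$ is $\phi(n)/4$, one quarter of $|\mb{U}_n|$.

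\textbf{Reduction via CRT.} The approach is the Chinese remainder theorem, via the function $g(A)=(A \bmod p, A \bmod q)$ used in the proof of Theorem~\ref{thm51}. Since $\gcd(p,q)=1$, Corollary~\ref{crt21} shows that $g$ is a bijection from $\mb{Z}_n$ to $\mb{Z}_p\times\mb{Z}_q$. Proposition~\ref{uniteq} shows that $g$ restricts to a bijection $\mb{U}_n\to\mb{U}_p\times\mb{U}_q$. Both multiplication and squaring respect reduction mod $p$ and mod $q$. By Proposition (\ref{crtbyp}), $x^2\equiv a \pmod n$ holds if and only if $x^2\equiv a \pmod p$ and $x^2\equiv a\pmod q$. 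Hence the square roots of $a$ mod $n$ correspond bijectively to pairs $(x_1,x_2)$, where $x_1$ is a square root of $a$ mod $p$ and $x_2$ is a square root of $a$ mod $q$.

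\textbf{Counting roots mod each prime.} Since $a$ is a quadratic residue mod $n$, the equivalence in Eq.~(\ref{qrmn}) gives that $a$ is a quadratic residue mod $p$ and mod $q$. Also $\ndv{p}{a}$ and $\ndv{q}{a}$, because $\gcd(a,n)=1$. By Theorem~\ref{thm47} (or the lemma on Eq.~(\ref{x2modp})), $x^2\equiv a\pmod p$ therefore has exactly two roots $\pm x_1$. Likewise, $x^2\equiv a \pmod q$ has exactly two roots $\pm x_2$. This is the one place oddness of $p$ and $q$ matters: it guarantees $x_1\not\equiv -x_1$. So there are $2\cdot 2=4$ pairs, and by CRT exactly four square roots of $a$ in $\mb{U}_n$. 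Every root is a unit, since $x^2\equiv a$ with $\gcd(a,n)=1$ forces $\gcd(x,n)=1$, as argued in the earlier lemma on $a^2\equiv 1$.

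\textbf{Counting residues.} Partition $\mb{U}_n$ by the value of $x^2 \bmod n$. Each fibre over a quadratic residue has exactly $4$ elements, so $4\cdot\#\{\text{q.r.}\}=|\mb{U}_n|=\phi(n)=(p-1)(q-1)$. Alternatively, one can combine Theorem~\ref{thm57} on each factor: there are $\frac{p-1}{2}\cdot\frac{q-1}{2}$ pairs that are residues in both coordinates, which gives the same count.

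The main obstacle is the bookkeeping in the CRT step: checking that distinct pairs give distinct roots mod $n$ (uniqueness in Corollary~\ref{crt21}), and that the four roots are genuinely distinct. I would handle both directly through the bijectivity of $g$.
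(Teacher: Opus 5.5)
Your proof is correct and follows the paper's route: the Chinese remainder theorem pairs the two square roots of $a$ modulo $p$ with the two modulo $q$, giving four square roots modulo $n$. You also supply what the paper's sketch leaves implicit: the need for $p,q$ odd and $a$ a unit, the distinctness of the four roots via the CRT bijection, and the observation that squaring on $\mb{U}_n$ is $4$-to-$1$ onto its image, which is what actually yields the one-quarter claim.
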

\begin{proof}
Let $\mb{QR}_n$ be
the set of quadratic residues mod $n$.
We define function $\mb{Z}^{x} \mapsto \mb{QR}_n$
with $ x \mapsto x^2 \pmod n$.
$ $ \\ $ $
If $a \in \mb{QR}_n$ then there exists an $x$ such
that
\[
x^2 \equiv a \pmod n.
\]
Then since $n=pq$ as defined, we also have the following.
\[
x^2 \equiv a \pmod p,
\]
\[
x^2 \equiv a \pmod q.
\]
Thus $x$ is also a square root of $a$ mod $p$ and mod $q$.
Consider now a $b,c$ such that
\[
b^2 \equiv a \pmod p,
\]
\[
c^2 \equiv a \pmod q,
\]
i.e. one of the square roots of $a$ mod $p$,
and
 one of the square roots of $a$ mod $q$.
By the Chinese Remainder Theorem,
there is a $A$ such that
\[
A   \equiv B \pmod p,
\]
\[
A   \equiv C \pmod q,
\]
which imply
\[
A^2 \equiv a \pmod{pq}
\]
since $\gcd(p,q)=1$. (See also previous problem.)
$ $ \\ $ $
Integer $a$ has two square roots mod $p$.
Integer $a$ has two square roots mod $q$.
Thus combining the choices to denote $B$, and $C$ we have
four possibilities for a pair $(B,C)$ above.
Thus $a$ has four square roots mod $pq=n$.

\end{proof}

\begin{prp}
Let $p>2$ be an odd prime number. Let $a<p$ or in general
$\ndv{p}{a}$ and thus $\gcd(p,a)=1$.
Then for $k \geq 2$, $a$ is a q.r
mod $p^k$  if and only if $a$ is a q.r. mod $p$.
\end{prp}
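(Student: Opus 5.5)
The forward direction is immediate. If $x^2 \equiv a \pmod{p^k}$, then $\dv{p^k}{x^2-a}$, and since $\dv{p}{p^k}$ we get $\dv{p}{x^2-a}$. So $a$ is a q.r.\ mod $p$.

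For the converse I would use Hensel-style lifting, by induction on $k \geq 1$. The claim to carry is: if $a$ is a q.r.\ mod $p^k$, then $a$ is a q.r.\ mod $p^{k+1}$. The base case $k=1$ is the hypothesis.

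For the inductive step, let $x^2 \equiv a \pmod{p^k}$. Then $\ndv{p}{x}$, since otherwise $\dv{p}{x^2}$ and hence $\dv{p}{a}$, contradicting $\gcd(a,p)=1$. Write $x^2 = a + t p^k$ for an integer $t$, and look for a lift of the form $y = x + s p^k$. Expanding,
\[
y^2 = x^2 + 2 x s p^k + s^2 p^{2k} \equiv a + (t + 2xs) p^k \pmod{p^{k+1}},
\]
because $2k \geq k+1$ when $k \geq 1$. So it suffices to choose $s$ with $2x s \equiv -t \pmod p$.

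This linear congruence is where the hypotheses are used, and it is the only real obstacle. Since $p$ is odd and $\ndv{p}{x}$, we have $\gcd(2x,p)=1$. By Theorem~\ref{tlincon1} (equivalently Theorem~\ref{modn} with an inverse of $2x$ mod $p$), there is a unique solution $s$ mod $p$. With this $s$, $y^2 \equiv a \pmod{p^{k+1}}$, which completes the induction and gives the result for all $k \geq 2$.

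Oddness of $p$ is essential exactly at the step that inverts $2$. The analogous statement fails for $p=2$, for example $a=3$ is a q.r.\ mod $2$ but not mod $4$, so the argument should make the use of $p>2$ explicit there.
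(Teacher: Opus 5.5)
Your proof is correct and follows the same route as the paper. The forward direction is reduction mod $p$, and the converse is induction on $k$: you lift a root $x$ of $x^2\equiv a \pmod{p^k}$ to $x+sp^k$ and solve the linear congruence $2xs\equiv -t \pmod p$. Your explicit argument that $p\nmid x$ (otherwise $p\mid a$) also fills in a step the paper covers only with the remark ``$b<p$''. That remark need not hold for a square root modulo $p^k$.
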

\begin{proof}
$\Rightarrow$.
If $a$ is a q.r. mod $p^k$, then  there exists a $b$ such
that
$b^2 \equiv a \pmod {p^k}$. That is,
$\dv{p^k}{b^2 -a}$. Then $\dv{p}{b^2 -a}$ and the
result follows for the forward
direction:  $a$ is then a q.r $\pmod p$ as well.
$ $ \\ $ $
$\Leftarrow$.
For the converse, let $b^2 \equiv a \pmod p$. The
proof resembles a prior proof and is by induction on $k$.
$ $ \\ $ $
{\bf Base case.} Obviously true for $k=1$.
$ $ \\ $ $
{\bf Inductive step: from $k$ to $k+1$.}
Let $a$ be a q.r $\pmod {p^k}$ for
$k \geq 1$. We shall show that $a$ is a q.r. $\pmod {p^{k+1}}$.
$ $ \\ $ $
By the induction hypothesis (inductive assumption) he have
$b^2 \equiv a \pmod{p^k}$ i.e.
$b^2 -a = p^k r$, for some integer $r$.
Let  us form $c=b+d p^k$, where $c,d$ are yet to be
determined in full. We observe first that $2k \geq k+1$ for
$k \geq 1$. (It will be used to obtain the third derivation
below.)
We then have the following.
\begin{eqnarray*}
 c^2 -a  &\equiv& (b+d p^k)^2 -a  \\
         &\equiv& b^2 + 2bd p^k + d^2 p^{2k} -a \pmod {p^{k+1}} \\
         &\equiv& r p^k + 2bd p^k   \\
         &\equiv& p^k (r+2bd) \pmod {p^{k+1}} \\
\end{eqnarray*}
For $c^2 -a \equiv 0 \pmod {p^{k+1}}$ we
need $\dv{p}{r+2bd}$ in other words
$(2b) d \equiv -r \pmod p$. Since $p$ is an
odd prime $b<p$ and $\gcd(2b,p)=1$,
there is a solution for $d$ of the modular equation
$(2b) d \equiv -r \pmod p$.
Thus $c^2 -a  \equiv 0 \pmod {p^{k+1}}$ as
needed and this completes the
inductive step.
$ $ \\ $ $
In conclusion if $p$ is a prime and $a$ is a q.r $\pmod p$.
Let $b^2 \equiv a \pmod {p^k}$ or $r= (b^2 -a )/p^k$, for some
integer $r$.
The $c=b+dp^k$ is such that $c^2 \equiv a \pmod {p^{k+1}}$
if and only if $c$ is defined as follows after defining $d$.
Let $t\equiv (2b)^{-1} \pmod p$ and thus $d \equiv -r t \pmod p$.
We have from above $c=b+d p^k = b  - \frac{b^2- a}{p^k} p^k t $,
and thus 
$c^2 \equiv \left( b - t (b^2 - a) \right)^2 
     \equiv a \pmod {p^{k+1}}$.
Thus a square root $b$ of $a$ mod $p^k$ can be used to
construct a square root $c$ of $a$ mod $p^{k+1}$.
\end{proof}

\begin{prp}
For $k \geq 3$, $a>0$ is a q.r
mod $2^k$  if and only if $a$ is an odd integer number
and $a \equiv 1 \pmod 8$.
\end{prp}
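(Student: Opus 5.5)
The statement concerns quadratic residues that are units, so $a$ is odd. It asks that $a$ be a square of an odd number modulo $2^k$ exactly when $a\equiv 1 \pmod 8$. Two directions need checking.

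\emph{Only if.} Suppose $x^2\equiv a \pmod{2^k}$ with $k\ge 3$ and $a$ odd. Then $x$ must be odd, say $x=2m+1$. This gives $x^2=4m(m+1)+1\equiv 1 \pmod 8$, since $m(m+1)$ is even. This is the same computation as in Lemma~\ref{jlaux2}. Because $8 \mid 2^k$, we get $a\equiv x^2\equiv 1 \pmod 8$. Oddness of $a$ is part of the hypothesis here, and indeed an even $a$ with an odd root is impossible. I would remark that the statement implicitly restricts to $\gcd(a,2)=1$.

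\emph{If.} Here I will imitate the Hensel-type induction of the previous proposition, for odd $p$. I induct on $k\ge 3$ with the hypothesis that there exists an odd $b$ with $b^2\equiv a\pmod{2^k}$.

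\emph{Base case $k=3$.} Take $b=1$, since $a\equiv 1\pmod 8$.

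\emph{Inductive step.} Write $b^2-a=2^k r$. The lift $b+d2^k$ used for odd $p$ fails here, because the cross term $2bd2^k$ is already divisible by $2^{k+1}$. Instead I will use $c=b+d\,2^{k-1}$ with $d\in\{0,1\}$. Then
\[
c^2-a = 2^k r + bd\,2^k + d^2 2^{2k-2}.
\]
Since $k\ge 3$ gives $2k-2\ge k+1$, this reduces to $c^2-a\equiv 2^k(r+bd)\pmod{2^{k+1}}$. As $b$ is odd, choosing $d\equiv r \pmod 2$ makes $r+bd$ even. Hence $c^2\equiv a\pmod{2^{k+1}}$, and $c$ is still odd because $k-1\ge 1$.

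The main obstacle is exactly this modification of the lifting step: using $2^{k-1}$ instead of $2^k$ and verifying $2k-2\ge k+1$. This is where $k\ge 3$ is essential. For $k=2$, for example, $5$ is not a square modulo $4$ even though it is odd, which shows the restriction cannot be dropped.
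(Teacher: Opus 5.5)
Your proof is correct and follows the paper's route for the ``if'' direction. Both use induction from $k=3$ with base root $b=1$ and the lift $c=b+d\,2^{k-1}$, justified by $2k-2\ge k+1$; the paper takes $d=q$, the full quotient in $b^2-a=q2^k$, which works because $q(1+b)$ is even. Your ``only if'' argument covers a direction the paper never proves, and you are right that it needs $a$ odd, since for instance $4=2^2$ is a quadratic residue mod $2^k$.

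One correction to your closing remark: $5\equiv 1^2 \pmod 4$ \emph{is} a square mod $4$. What fails at $k=2$ is that $5\not\equiv 1 \pmod 8$, so $5$ is a square mod $4$ but not mod $8$.
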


In fact if $a \equiv 1 \pmod 8$ there are four square
roots of $a$ mod $2^k$: if $b$ is one of them, so is
$-b$ mod $2^k$, so is $b+2^{k-1}$ and also $-(b+2^{k-1})$.

\begin{proof}
$ $ \\ $ $
{\bf Base case $k=3$.}
We establish the base case $k=3$ i.e. show that an
odd integer $a$ with $a \equiv 1 \pmod 2^3$ is a quadratic
residue. For $1,2, \ldots , 2^3 -1$, there is only one integer
$a=1$ such that $a \equiv 1 \pmod 8$, and this is obviously $a=1$.
It is straightforward to confirm that $1$ is a q.r. mod $8$.
One has four square roots: 1,3,5, 7 mod $8$.
We can describe the four square roots as $b=1 \pmod 8$,
$-b \equiv 7 \pmod 8$, $b+4 =5$, and $-(b+4) \equiv 3 \pmod 8$.
$ $ \\ $ $
{\bf Inductive step from $2^k$ to $2^{k+1}$, $k \geq 3$.}
$ $ \\ $  $
By the inductive hypothesis,
say $a$ is a q.r mod $2^{k}$, $k\geq 3$ and also $a \equiv 1 \pmod 8$.
% Note that $2^{2k} \geq 2^{k+1}$ for $k\geq 3$,
% and also $2^{2k} \geq 2^{k+2} $.
Then there exists a $b$ such that
\begin{equation}
\label{twok1}
b^2 \equiv a \pmod{2^{k}}
\end{equation}
or equivalently $\dv{2^{k}}{b^2 -a}$,
or equivalently $b^2 -a =q 2^k$, for some integer $k$.
We will find a square root $c$ of $a$ mod $2^{k+1}$ then.
\begin{equation}
\label{twok2}
c^2 \equiv a \pmod{2^{k+1}}
\end{equation}
$ $ \\ $ $
{\bf Case 1: $b^2 \equiv a \pmod{2^{k+1}}$}. If it is
so we are done: $b$ is a square root of $a$ mod $2^k$ and
also mod $2^{k+1}$.
$ $ \\ $ $
{\bf Case 2: $b^2 \not\equiv \pmod{2^{k+1}}$}.
We have $a$ is odd; this mean $b$ is also odd and thus $b+1$
is an even number and multiple of two.
Just like a prior problem we form $c$ from $b$ as follows
\[
 c = b + q 2^{k-1} .
\]
We note that $2k-2 \geq k+1$ for $k \geq 3$.
Also $b+1$ is even i.e. $b+1=2B$ for some integer $B$.
We then obtain the following.
\begin{eqnarray*}
 c     &=& b + q 2^{k-1}  \\
 c^2   &=& (b + q 2^{k-1})^2  \\
 c^2 -a   &=& b^2 -a + q^2 2^{2k-2} + 2bq 2^{k-1} \\
 c^2 -a   &\equiv& b^2 -a + q^2 2^{2k-2} + 2bq 2^{k-1} \pmod{2^{k+1}}\\
 c^2 -a   &\equiv& b^2 -a +  0           + 2bq 2^{k-1} \pmod{2^{k+1}}\\
 c^2 -a   &\equiv& 2^k q  +  0           + bq 2^{k}    \pmod{2^{k+1}}\\
 c^2 -a   &\equiv& 2^k q  \cdot (b+1)                  \pmod{2^{k+1}}\\
 c^2 -a   &\equiv& 2^{k+1} q  B                        \pmod{2^{k+1}}\\
 c^2 -a   &\equiv&            0                        \pmod{2^{k+1}}\\
 c^2      &\equiv& a \pmod{2^{k+1}} \\
\end{eqnarray*}
A square root mod $2^{k+1}$ of $a$ has been found.
It is straightforward to show that $-c$ is also a square root mod $2^{k+1}$.
Furthermore,
\[
(c+2^{k} )^2 = c^2 + 2^{k+1} c + 2^{2k} \equiv c^2 \equiv a \pmod{2^{k+1}} ,
\]
indicates that $c+2^{k}$ is a third square root of $a$, and likewise
$-(c+2^{k})$ is a fourth square root of $a$ mod $2^{k+1}$.
\end{proof}

\begin{lem}
Let $p$ be an odd prime number.
Let $g$ be a primitive root of $\mb{U}_p$ and let $k$
be an even and positive natural number.
Then $g^k$ is a q.r. if and only if $k$ is an even
number.
\end{lem}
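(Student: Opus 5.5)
The statement as written is slightly garbled: it assumes $k$ even and then concludes ``if and only if $k$ is even''. I will read it as intended: for a primitive root $g$ of $\mb{U}_p$ and any integer $k \geq 0$, $g^k$ is a quadratic residue mod $p$ if and only if $k$ is even. The plan is to prove both directions using the primitive root structure, in the style of the earlier lemma on $\leg{a}{p}=\leg{b}{p}=-1$.

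For the direction ``$k$ even $\Rightarrow$ q.r.'', write $k=2l$. Then $x=g^l$ satisfies $x^2 = g^{2l} = g^k$. Also $\gcd(g^l,p)=1$, so $g^k$ is a (nonzero) quadratic residue. This step is immediate.

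For the converse, suppose $g^k$ is a q.r. and aim to show $k$ is even. I would use the ``Simpler form of Euler's criterion'', which gives $(g^k)^{(p-1)/2} \equiv 1 \pmod p$, that is, $g^{k(p-1)/2} \equiv 1 \pmod p$. Since $\ord_p(g)=p-1$, Theorem~\ref{thm52} then gives $\dv{p-1}{k(p-1)/2}$. Writing $k(p-1)/2 = m(p-1)$ and cancelling $p-1 \neq 0$ yields $k=2m$, so $k$ is even.

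As an alternative for this direction, avoiding Euler's criterion: take $x$ with $x^2\equiv g^k$ and write $x\equiv g^j$, which is possible because $g$ generates $\mb{U}_p$. Then $g^{2j}\equiv g^k$, and Lemma~\ref{lem52a} gives $2j\equiv k \pmod{p-1}$. Since $p-1$ is even, this forces $k$ to be even.

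The only real subtlety is the divisibility step, namely converting $g^{N}\equiv 1$ into $\dv{p-1}{N}$. Theorem~\ref{thm52} or Lemma~\ref{lem52a} handles it, so no step should be a serious obstacle. I would also note that the argument needs $p$ odd: the parity of $(p-1)/2$ must make sense as an integer, and $p-1$ must be even.
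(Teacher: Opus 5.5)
Your proof is correct, and your alternative argument for the converse is exactly the paper's proof. The paper writes the square root as $g^{i}$, obtains $g^{k}\equiv g^{2i}$, hence $\dv{p-1}{k-2i}$, and concludes that $k$ is even because $p-1$ is even. The paper passes from $g^k\equiv g^{2i}$ to $k\equiv 2i \pmod{p-1}$ without citing a result; you make that step explicit with Lemma~\ref{lem52a}.

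Your main route differs slightly. It applies the easy direction of Euler's criterion, which is really just Fermat's little theorem applied to the square root $x$, to get $g^{k(p-1)/2}\equiv 1$ and hence $\dv{p-1}{k(p-1)/2}$. This avoids taking a discrete logarithm of $x$ and uses only $\ord_p(g)=p-1$. It is the same argument the paper uses for the reverse direction of its simpler form of Euler's criterion, and there is no circularity, since you only need the forward direction.

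Your reading of the garbled hypothesis, dropping the assumption that $k$ is even, is also the one the paper's proof implicitly adopts.
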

\begin{proof}
$ $ \\ $ $
$\Leftarrow$.
$ $ \\ $ $
Since $k$ is an even number, then $k/2$ is an integer.
Therefore
\[
 g^k \equiv ( g^{\frac{k}{2}} )^2 \pmod p,
\]
after which we conclude $g^k$ is a quadratic residue
mod $p$.
$ $ \\ $ $
$\Rightarrow $
$ $ \\ $ $
If $g^k$ is a qudratic residue then there exists
an $a$ such that
\[
 a^2 \equiv g^k \pmod p .
\]
Since $g$ is a generator there exists an $i$ such that
$ a \equiv g^i \pmod p$.
Therefore
\[
  g^k \equiv (g^i )^2 \equiv g^{2i} \pmod p .
\]
Then $k-2i \equiv 0 \mod{\phi(p)}$,
or equivalently
$\dv{p-1}{k-2i}$.
Since $p$ is an odd prime, $p-1$ is even.
So is $2i$ i.e. $k$ must also be even.
\end{proof}

\begin{cor}
\label{thm58}
 If $g$ is a primitive root $\pmod p$ the $g^k$ is a quadratic
residue if $k$ is even.
\end{cor}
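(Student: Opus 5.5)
The plan is to deduce this directly from the lemma immediately preceding it, which states that for an odd prime $p$ and a primitive root $g$ of $\mb{U}_p$, $g^k$ is a quadratic residue if and only if $k$ is even. The corollary only asserts one direction: $k$ even implies $g^k$ is a quadratic residue mod $p$. It therefore suffices to reproduce the ``$\Leftarrow$'' half of that lemma, which needs neither the generator property of $g$ nor any counting of residues.

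Concretely, I will write $k=2j$ with $j$ an integer and set $x = g^{j} \bmod p$. Then $x^2 = g^{2j} = g^k$, so $x^2 \equiv g^k \pmod p$ has a solution. I also need $x$ to satisfy the paper's definition of quadratic residue, which requires a solution with $0<x<p$. Since $\gcd(g,p)=1$, we also have $\gcd(g^j,p)=1$, so $g^j \not\equiv 0 \pmod p$, and the least residue of $g^j$ lies in $1,\ldots,p-1$. Hence $g^k$ is a q.r. mod $p$; equivalently $\leg{g^k}{p}=1$.

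I will note one edge case: if $k$ is even but negative, $g^j$ should be read as a power of the inverse $g^{-1} \bmod p$, which exists because $g$ is a unit. Alternatively, I can reduce $k$ modulo $p-1$. Because $p-1$ is even, this preserves parity, and by Lemma~\ref{lem52a} with $\ord_p(g)=p-1$ it does not change $g^k \bmod p$. Either way the argument goes through unchanged.

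I expect no real obstacle here. The only point needing care is the positivity and nonzero requirement $0<x<p$ in the definition, and the unit property of $g$ handles it.
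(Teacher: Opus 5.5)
Your argument is correct, but it is not the route the paper takes for this corollary.

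\textbf{The paper's route.} The paper's proof is a one-line identification: it lists $g^2, g^4, \ldots, g^{p-1}$ and asserts that these are the $(p-1)/2$ quadratic residues counted in Theorem~\ref{thm57}. Unpacked, that uses three facts:
\begin{itemize}
\item each $g^{2i}=(g^i)^2$ is a square, which is your observation;
\item the $(p-1)/2$ even powers are pairwise incongruent mod $p$, because $\ord_p(g)=p-1$;
\item Theorem~\ref{thm57} gives exactly $(p-1)/2$ residues, so the even powers exhaust them.
\end{itemize}

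\textbf{Your route.} You exhibit the square root $g^{k/2}$ directly, which is the $\Leftarrow$ direction of the preceding lemma. For the stated direction this is more elementary and more general. It needs neither the distinctness nor the count. It applies to every even $k$ at once, rather than only to $2\le k\le p-1$ as in the paper's list. It also uses only that $g$ is a unit mod $p$, not that $g$ is a primitive root.

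\textbf{What the paper's route buys.} The counting argument gives the converse for free. Once the even powers fill all $(p-1)/2$ residue classes, the odd powers $g, g^3, \ldots, g^{p-2}$ must be the non-residues. Your argument does not address this, and the corollary as stated does not require it.

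\textbf{One small caution.} For negative $k$, use your first option (powers of $g^{-1}$). Lemma~\ref{lem52a} is stated only for exponents in $\mb{N}$, so it does not literally justify reducing a negative exponent modulo $p-1$.
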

\begin{proof}
The $g^2 , g^4 , g^6, \ldots , g^{p-1}$ are the $(p-1)/2$ quadratic
residues of Theorem~\ref{thm57}.
\end{proof}

\begin{prp}
Let $p$ be an odd prime number.
Let $g$ be a generator of $\mb{Z}_p^*$ and let $k$
be an even and positive natural number.
As shown earlier, $\mb{Z}_p^*$ has $(p-1)/2$
q.r. and thus $\mb{Z} = \mb{Z}_p^* \cup \{ 0 \}$
has $(p+1)/2$. Since $\mb{Z}_p^*$ is cyclic it
has a generator $g$.
The following then apply.
\begin{itemize}
\item[(a)] Show that $g$ is a q.nr.
\item[(b)] Show that $g^2 , g^4 , \ldots , g^{p-1}$ 
are   q.r and distinct.
\item[(c)] Show that $g^1 , g^3 , \ldots , g^{p-2}$ 
are   q.nr and distinct.
\end{itemize}

\end{prp}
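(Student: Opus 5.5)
The plan is to use the fact that $g$ is a generator of $\mb{Z}_p^*$, so $g$ has order $p-1$. Consequently $g^1, g^2, \ldots, g^{p-1}$ are pairwise distinct modulo $p$ and exhaust $\mb{Z}_p^*$; this is the earlier lemma that $g^i \not\equiv g^j \pmod p$ for $0 \le i < j < p-1$, read cyclically with $g^{p-1}\equiv g^0 \equiv 1$. From this, distinctness in (b) and (c) comes for free, because both lists are sublists of a list of distinct residues. The real content is therefore the residuosity claims.

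For (b), I would observe that $g^{2j} \equiv (g^j)^2 \pmod p$ for $j=1,\ldots,(p-1)/2$. So each even power is a square, hence a q.r. This is the $\Leftarrow$ direction of the earlier lemma that $g^k$ is a q.r. if and only if $k$ is even (Corollary~\ref{thm58}). There are exactly $(p-1)/2$ even exponents in $\{1,\ldots,p-1\}$, which matches the count of q.r. in Theorem~\ref{thm57}. Hence the even powers are precisely the quadratic residues in $\mb{Z}_p^*$.

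For (c), I would argue in either of two ways. The first uses the partition: the odd powers $g^1, g^3, \ldots, g^{p-2}$ are the remaining $(p-1)/2$ elements of $\mb{Z}_p^*$. Since all $(p-1)/2$ quadratic residues are already accounted for by the even powers, and the list is injective, the odd powers must be q.nr.

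The second way is direct, via Euler's criterion (Eq.~(\ref{legeq0})). I would compute $(g^k)^{(p-1)/2} = g^{k(p-1)/2}$. For odd $k$ the exponent $k(p-1)/2$ is not a multiple of $p-1$, so $g^{k(p-1)/2}\not\equiv 1$ because $\ord_p(g)=p-1$ (Theorem~\ref{thm52}). The criterion then gives $\leg{g^k}{p}=-1$.

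Part (a) is the special case $k=1$ of (c). It can also be seen directly from the earlier lemma that $g^{\phi(p)/2}\equiv -1 \pmod p$ together with Euler's criterion.

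The only delicate point I expect is making the distinctness and exhaustion argument precise. I need to use that $g^{p-1}\equiv 1$ is the sole power in the range $1..p-1$ equal to $1$, so that the sets of even-index and odd-index powers are disjoint. Once that is settled, everything else is counting against Theorem~\ref{thm57}.
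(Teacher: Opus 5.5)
Your proposal is correct and matches the paper's proof. Even powers are squares of $g^{j}$, distinctness follows from $\ord_p(g)=p-1$, and the odd powers are q.nr.\ by counting against the $(p-1)/2$ quadratic residues. Your second route for (c), showing $g^{k(p-1)/2}\not\equiv 1$ for odd $k$, is the paper's argument for (a) applied to every odd exponent; its small advantage is that it needs only the easy (Fermat) direction of Euler's criterion and not the count of quadratic residues.
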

\begin{proof}
(a)
Say that $g$ is a q.r. and let $a$ be such that
$\gcd(a,p)=1$ and $ g \equiv a^2 \pmod p$.
It is the case that $a^{p-1} \equiv 1 \pmod p$
by Fermat's Little Theorem.
Therefore
\[
g^{\frac{p-1}{2}} \equiv (a^2 )^{\frac{p-1}{2}}
                  \equiv a^{p-1} \equiv 1 \pmod p .
\]
The latter implies that $g$ can be a generator
since $ord_p (g) \leq \frac{p-1}{2}$ instea of
$ord_p (g) = \phi (p) = p-1$ as it should be.
$ $ \\ $ $
(b) It is immediate from the previous problem.
The fact that they are distinct is also obvious.
Let
$g^{2i} \equiv g^{2j} \pmod p$ for $2i \neq 2j \leq p-1$,
and $i-j < p-1$ leads to
$2i - 2j \equiv 0 \pmod{ \phi (p)} \equiv 0 \pmod{p-1}$
This implies $\dv{p-1}{2i-2j}$ i.e. $p-1 \leq |2i-2j|$.
The only possibility for these to happen is $i=j$.
$ $ \\ $ $
(c) From (b) and the previous problem.
By Euler's criterion we know that the number of
q.r. is at most $(p-1)/2$ and by (b) it is exactly
$(p-1)/2$. Thus the remaining are the q.nr.
\end{proof}

\section{Gauss lemma}

\begin{thm}[Gauss lemma]
Let $p>2$ be an odd prime number number.
Let $a \in \mb{Z}_p^*$ be such that $a>0$ and $\gcd(a,p)=1$.
Consider
\[
I (a) = \{ 1 \leq i \leq (p-1)/2 : ia  \pmod p \}
      = \{ 1 \leq i \leq (p-1)/2 : R(i) \},
\]
and
\[
T = \{ 1 \leq i \leq (p-1)/2 : R(i) < 0 \},
\]
where $R(i)$ is  the absolute least residue of $ai \pmod p$,
and thus $ -(p-1)/2 \leq r(i) \leq (p-1)/2$,
with
\[
 R(i) \equiv i \cdot a \equiv a(i) r(i) \pmod p,
\]
and $a(i) \equiv \pm 1 \pmod p$.
Then the following holds
\[
 \leg{a}{p} = (-1)^{|T|}.
\]
\end{thm}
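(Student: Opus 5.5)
The plan is the classical argument: compute the product $\prod_{i=1}^{(p-1)/2} ia$ in two ways, and then compare the result with Euler's criterion (Eq.~(\ref{legeq0})), which says $\leg{a}{p} \equiv a^{\frac{p-1}{2}} \pmod p$.

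First, for each $i$ with $1 \leq i \leq (p-1)/2$, let $R(i)$ denote the absolute least residue of $ia$. So $R(i) \equiv ia \pmod p$ and $-(p-1)/2 \leq R(i) \leq (p-1)/2$. Since $\gcd(a,p)=1$ and $0<i<p$, we have $R(i) \neq 0$. Write $R(i) = a(i)\, r(i)$ with $a(i) = \pm 1$ and $1 \leq r(i) \leq (p-1)/2$. By definition, $T$ is exactly the set of indices with $a(i) = -1$.

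The key step, which I expect to be the only real obstacle, is to show that $i \mapsto r(i)$ is a permutation of $\{1, \ldots, (p-1)/2\}$. Since the domain and codomain both have $(p-1)/2$ elements, injectivity suffices. Suppose $r(i) = r(j)$ with $i \neq j$. Then either $ia \equiv ja \pmod p$ or $ia \equiv -ja \pmod p$. Cancelling $a$ (the Cancellation law, valid because $\gcd(a,p)=1$) gives $i \equiv j$ or $i+j \equiv 0 \pmod p$. The first is impossible because $0<|i-j|<p$. The second is impossible because $2 \leq i+j \leq p-1$. The same style of argument appears in the proof of Fermat's little theorem (Theorem~\ref{flt}).

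With the permutation in hand, multiply the congruences $ia \equiv a(i) r(i) \pmod p$ over all $i$. This gives
\[
a^{\frac{p-1}{2}} \left(\tfrac{p-1}{2}\right)! \equiv \Big(\prod_i a(i)\Big) \left(\tfrac{p-1}{2}\right)! = (-1)^{|T|} \left(\tfrac{p-1}{2}\right)! \pmod p .
\]
Since $p$ is prime, $\gcd\big(((p-1)/2)!,\,p\big) = 1$, so the factorial can be cancelled, leaving $a^{\frac{p-1}{2}} \equiv (-1)^{|T|} \pmod p$.

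Finally, Euler's criterion turns this into $\leg{a}{p} \equiv (-1)^{|T|} \pmod p$. Both sides lie in $\{1,-1\}$, so their difference is $0$ or $\pm 2$. Because $p>2$ divides this difference, it must be $0$, and therefore $\leg{a}{p} = (-1)^{|T|}$.
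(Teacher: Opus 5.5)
Your proposal is correct and follows essentially the same route as the paper: both show that $i \mapsto r(i)$ is a permutation of $\{1,\ldots,(p-1)/2\}$ by ruling out $ia \equiv ja$ and $ia \equiv -ja \pmod p$, multiply the congruences $ia \equiv a(i)\, r(i)$, cancel $\left(\frac{p-1}{2}\right)!$, and invoke Euler's criterion. Your last step, that two values in $\{1,-1\}$ which are congruent modulo $p>2$ must be equal, spells out a point the paper passes over quickly.
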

\begin{proof}
{\bf Diversion first.}
In the remainder, $ 1 \leq i,j \leq (p-1)/2$ and thus
$2 \leq i+j \leq p-1$, and $i-j$ or $j-i$ is at least
$-(p-1)/2$ and at most $(p-1)/2$.
Consider set $A$ and set $B$ defined as follows.
\[
  A= \{ 1, 2, \ldots , \frac{p-1}{2} \},
\]
\[
  B= \mb{Z}_p^{*} -A =
\{ \frac{p-1}{2} +1 , \frac{p-1}{2} +2 , \ldots , p-1 \}=
\{ -1, -2, \ldots , -\frac{p-1}{2} \}.
\]
Let $a \in \mb{Z}_p^{*}$ per problem's statement. Consider the set
\[
I(a) =  a \cdot  A= \{ a, 2a, \ldots ,  \frac{p-1}{2} a \} \pmod p .
\]
For every $i \in A$, i.e. $1 \leq i \leq (p-1)/2$ we
have the following
\[
 a \cdot i \equiv R(i) = a(i) \cdot r(i) \pmod p
\]
where $a(i) = \pm 1$ and  $r(i)$ defines
a mapping $ A \mapsto A$ that is an injective function (injection)
and
$r(i)     \equiv r(j) \pmod p \Rightarrow i \equiv j \pmod p$
(or equivalently,
$i \not\equiv j \pmod p     \Rightarrow r(i) \not\equiv r(j) \pmod p$
).
% $r(i) \not\equiv r(j) \pmod p$  for $ i \not\equiv j \pmod p$,
$ $ \\ $ $
The
$r(i) \equiv r(j) \pmod p \Rightarrow i \equiv j \pmod p$,
is derived by the following observations,
noting that
if $r(i) \equiv r(j) \pmod p$, then $ai \equiv aj \pmod p$,
or $ai \equiv -aj \pmod p$
$ $ \\ $ $
{\bf Observation 1:
$ai \equiv aj \pmod p$ implies $i=j$.}
$ $ \\ $ $
If $R(i) \equiv R(j) \pmod p$ with $ai \equiv aj \pmod p$,
then $i \equiv j \pmod p$
since $ai \equiv aj \pmod p$ implies $(i-j)a = k p$ for some $k$ and
since $\gcd (p,a)=1$ we have $\dv{p}{i-j}$; for the range
of $i,j$ this can be true only for $i=j$.
$ $ \\ $ $
{\bf Observation 2:
$ai \not\equiv -aj \pmod p$.}
If $ai \equiv -aj \pmod p$ then $a(i+j) = kp$ for some $k$
and since $\gcd (p,a)=1$ we have $\dv{p}{i+j}$, We know
however that $i+j \leq p-1$ and thus $\ndv{p}{i+j}$.
$ $ \\ $ $
Therefore $r(.)$ is a permutation of $A$.
We then have the following
\begin{eqnarray*}
 a \cdot i &\equiv&  a(i) r(i) \pmod p \Leftrightarrow \\
 \prod_i a \cdot i &\equiv&  \prod_i a(i) r(i) \pmod p \Leftrightarrow \\
 a^{\frac{p-1}{2}} \left( \frac{p-1}{2} \right)!  &\equiv&
\prod_i a(i) \prod_i r(i) \pmod p \Leftrightarrow \\
 a^{\frac{p-1}{2}} \left( \frac{p-1}{2} \right) !  &\equiv&
\prod_i a(i)  \left( \frac{p-1}{2} \right) ! \pmod p .
\end{eqnarray*}
Since $\gcd (p,  \frac{p-1}{2}! )=1$ and by
Euler's criterion $\leg{a}{p}  \equiv  a^{\frac{p-1}{2}} \pmod p$,
we obtain the following.
\begin{eqnarray*}
 a^{\frac{p-1}{2}} \left( \frac{p-1}{2}\right) !  &\equiv&
\prod_i a(i)  \left( \frac{p-1}{2}\right) ! \pmod p \Leftrightarrow  \\
 \leg{a}{p}         &  =   &  \prod_i a(i)   \pmod p \Leftrightarrow
 \leg{a}{p}            =      (-1)^{|T|}     \bmod p  \Leftrightarrow
 \leg{a}{p}            =      (-1)^{|T|}             .
\end{eqnarray*}
\end{proof}

\begin{thm}[Gauss lemma version two]
Let $p>2$ be an odd prime number.
Let $a \in \mb{Z}_p^*$  be such that $a>0$ and
$\gcd(a,p)=1$. 
Consider
\[
I (a) = \{ 1 \leq i \leq (p-1)/2 : ia  \pmod p \}.
      = \{ 1 \leq i \leq (p-1)/2 : R(i) \},
\]
and
\[
T = \{ 1 \leq i \leq (p-1)/2 : R(i) < 0 \},
\]
where $R(i)$ is  the absolute least residue of $ai \pmod p$,
and thus $ -(p-1)/2 \leq r(i) \leq (p-1)/2$,
with 
\[
 R(i) \equiv i \cdot a \equiv a(i) r(i) \pmod p,
\]
and $a(i) \equiv \pm 1 \pmod p$.
Then the following holds.
\[
 \leg{a}{p} = (-1)^{|T|}.
\]
\end{thm}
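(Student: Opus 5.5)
We argue as in the first version of Gauss's lemma: compare the product of the multiples $a,2a,\ldots,\frac{p-1}{2}a$ with $\left(\frac{p-1}{2}\right)!$, and then invoke Euler's criterion (Eq.~(\ref{legeq0})), $\leg{a}{p}\equiv a^{\frac{p-1}{2}}\pmod p$.

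Set $A=\{1,2,\ldots,\frac{p-1}{2}\}$. For each $i\in A$ let $R(i)$ be the absolute least residue of $ia$, so that $R(i)=a(i)\,r(i)$ with $a(i)=\pm 1$ and $r(i)=|R(i)|\in A$. Here $r(i)\neq 0$, because $\gcd(a,p)=1$ and $0<i<p$. By definition, $a(i)=-1$ exactly when $i\in T$, so $\prod_{i\in A} a(i)=(-1)^{|T|}$.

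The central step is to show that $i\mapsto r(i)$ is a permutation of $A$. Since $A$ is finite, it suffices to prove injectivity. Suppose $r(i)=r(j)$. Then either $ia\equiv ja$ or $ia\equiv -ja\pmod p$. In the first case, cancelling $a$ gives $p\mid i-j$; since $|i-j|<\frac{p-1}{2}$, this forces $i=j$. In the second case we get $p\mid a(i+j)$, hence $p\mid i+j$, which is impossible because $2\le i+j\le p-1$. The only care needed is to handle the sign cases correctly.

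Now multiply the congruences $ia\equiv a(i)r(i)\pmod p$ over all $i\in A$. This gives
\[
a^{\frac{p-1}{2}}\left(\frac{p-1}{2}\right)!\equiv (-1)^{|T|}\prod_{i\in A} r(i)=(-1)^{|T|}\left(\frac{p-1}{2}\right)!\pmod p.
\]
Since $\gcd\left(p,\left(\frac{p-1}{2}\right)!\right)=1$, the Cancellation law gives $a^{\frac{p-1}{2}}\equiv(-1)^{|T|}\pmod p$. By Euler's criterion this yields $\leg{a}{p}\equiv(-1)^{|T|}\pmod p$. Both sides lie in $\{\pm1\}$, and $p>2$, so congruence modulo $p$ forces equality: $\leg{a}{p}=(-1)^{|T|}$.
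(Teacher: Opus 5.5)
Your proposal is correct and follows essentially the same route as the paper's proof: you show that the absolute values of the least residues of $a,2a,\ldots,\frac{p-1}{2}a$ form a permutation of $1,\ldots,\frac{p-1}{2}$ via the same two sign cases ($ia\equiv ja$ and $ia\equiv -ja \pmod p$), multiply, cancel $\left(\frac{p-1}{2}\right)!$, and apply Euler's criterion. Your final remark, that a congruence modulo $p>2$ between two elements of $\{\pm 1\}$ forces equality, makes explicit a step the paper leaves implicit.
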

\begin{proof}
Pick two elements of $I(a)$.
If two multiples of $a$ say $ia$ and $ja$, $1 \leq i \neq j \leq (p-1)/2$,
 are congruent $\pmod p$ i.e.
$ia \equiv  ja \pmod p$  then $i \equiv  j \pmod p$, which leads
to $i=j$.
Likewise if $ia \equiv -ja \pmod p$  then $i \equiv -j \pmod p$,
or $(i+j) \equiv 0 \pmod p$, which leads to $\dv{p}{i+j}$ which
is impossible since $p$ is prime and $i+j \leq p-1 < p$.
Thus the absolute values of the multiples should be distinct.
That is $|a|, |2a|, \ldots , |(p-1)/2 a|$ are distinct, and form
a permutation of $1, \ldots , \frac{p-1}{2}$.
Multiplying the multiples the first way we get $a^{(p-1)/2} ((p-1)/2)!$.
Multiplying them together the other way we get $ (-1)^{|T|} ((p-1)/2)!$.
Equating the two we get $a^{(p-1)/2} \equiv (-1)^{|T|} \pmod p$, i.e.
$\leg{a}{p} =(-1)^{|T|}$.
\end{proof}

\begin{exa}
Let $a=2 \in \mb{Z}_7^*$ and $p=7$. Use Gauss's lemma
to calclulate $\leg{a}{p}$.
\end{exa}
\begin{solution}
\[
I(a) = \{ 2 \cdot 1, 2 \cdot 2 , 2 \cdot 3  \}
     = \{ 2  , 4 , 6   \}
     = \{  2, 4-7, 6-7 \pmod 7 \}
     = \{  2, -3 , -1  \pmod 7 \}
\]
\[
T =   \{   -3 , -1 \pmod 7         \} =
      \{  R(i) : R(i) < 0 , 1 \leq i \leq \frac{p-1}{2} \}
\]
and $|T|=2$.
Furthermore,
\[
\prod_{x \in I(a)} R(i) =
\prod_{1 \leq i \leq \frac{p-1}{2}} a i =
   a^{\frac{p-1}{2}}   \cdot  \frac{p-1}{2}!  = 2^3 \cdot 3!  =
   =2^{\frac{p-1}{2}} \cdot 3 !
   = \leg{2}{p} \cdot 3! .
\]
Moreover,
\[
\prod_{x \in I(a)} R(i)   \equiv
\prod_{x \in I(a)} a(i) r(i)  \equiv
   (-1)^{|T|}  \frac{p-1}{2}!
  = (-1)^{2  } \cdot 3!
  = (-1)^{|T|} \cdot 3!
\]
Equating the two products and cancelling
$ \frac{p-1}{2}!$ since $\gcd (p, \frac{p-1}{2}! )=1$, the
result follows.
\end{solution}

\begin{prp}[Gauss lemma reformulation]
\label{thm62}
Let $p$ be an odd prime.
For $a \in \mb{Z}_p$ consider $M (a) = \{ a, 2a, \ldots , ((p-1)/2) a \}$.
Let $q$ be the number of values of $M(a)$ that are greater than $p/2$.
Then
\[
 \leg{a}{p} = (-1)^q
\]
\end{prp}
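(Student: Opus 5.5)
The plan is to reduce Proposition~\ref{thm62} to the Gauss lemma just proved, with the set $T$ there identified with the elements of $M(a)$ whose least non-negative residue exceeds $p/2$. Here $a\in\mb{Z}_p$ must be read as $\ndv{p}{a}$, since for $p\mid a$ the symbol is $0$ while $(-1)^q=\pm1$, so the statement is false in that case. ``Greater than $p/2$'' refers to the least non-negative residue of $ka \bmod p$. By Lemma~\ref{thm61}(a) we may replace $a$ by its least residue in $\{1,\ldots,p-1\}$ without changing $\leg{a}{p}$ or any of the residues, so $a>0$ and $\gcd(a,p)=1$ as the Gauss lemma requires.

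For $1\le k\le (p-1)/2$, let $s_k$ be the least non-negative residue of $ka$. We have $s_k\neq 0$ because $\ndv{p}{k}$ and $\ndv{p}{a}$. Since $p$ is odd, $s_k\neq p/2$. If $s_k<p/2$, then $s_k\le (p-1)/2$ and the absolute least residue is $R(k)=s_k>0$. If $s_k>p/2$, then $s_k\ge (p+1)/2$, so $R(k)=s_k-p\in[-(p-1)/2,-1]$ is negative. Hence $k\in T$ exactly when $s_k>p/2$, and so $|T|=q$.

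Applying the Gauss lemma gives $\leg{a}{p}=(-1)^{|T|}=(-1)^q$.

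For a self-contained version, one can redo the earlier argument directly. The numbers $r_k=\min(s_k,p-s_k)$ are a permutation of $1,\ldots,(p-1)/2$; this uses Observations 1 and 2 from the Gauss lemma proof, namely $ia\not\equiv \pm ja \pmod p$ for $i\neq j$. Multiplying the congruences $ka\equiv \pm r_k \pmod p$ over all $k$ gives $a^{(p-1)/2}\left(\frac{p-1}{2}\right)!\equiv(-1)^q\left(\frac{p-1}{2}\right)! \pmod p$. Cancelling the factorial, which is coprime to $p$, and applying Euler's criterion (\ref{legeq0}) gives $\leg{a}{p}\equiv(-1)^q\pmod p$. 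Both sides are $\pm1$ and $p>2$, so they are equal.

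The only real obstacle is the bookkeeping: the earlier statement's notation ($R(i)$, $r(i)$, $a(i)$, $I(a)$) is loose, so the main care needed is to state precisely that ``negative absolute least residue'' and ``least non-negative residue exceeding $p/2$'' coincide. That is the short parity argument above using $p$ odd.
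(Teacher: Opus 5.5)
Your proof is correct and is essentially the paper's argument: the paper proves the reformulation directly. It notes that the reduced multiples $ka$ have distinct absolute values, multiplies them in two ways, cancels $\left(\frac{p-1}{2}\right)!$ and invokes Euler's criterion, which is exactly your self-contained version, while your main route cites the preceding Gauss lemma after verifying $|T|=q$. Your two added precisions are correct and are left implicit in the paper: one needs $p \nmid a$ (for $a \equiv 0$ the left side is $0$ while $(-1)^q = 1$), and ``greater than $p/2$'' must refer to least non-negative residues.
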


The original set of values of $M(a)$ can be reduced to belong to an interval
$(-p/2,p/2)$. Then the number $q$ of values greater than $p/2$ becomes equal
to the number of negative values.

\begin{proof}
If two multiples of $a$ say $ia$ and $ja$ are congruent $\pmod p$ i.e.
$ia \equiv  ja \pmod p$  then $i \equiv  j \pmod p$. Likewise if
$ia \equiv -ja \pmod p$  then $i \equiv -j \pmod p$.
Thus the absolute values of the multiples should be distinct.
That is $|a|, |2a|, \ldots , |(p-1)/2 a|$ are distinct.
Multiplying the multiples the first way we get $a^{(p-1)/2} ((p-1)/2)!$.
Multiplying them together the other way we get $ (-1)^q ((p-1)/2)!$.
Equating the two we get $a^{(p-1)/2} \equiv (-1)^q \pmod p$, i.e.
$\leg{a}{p} =(-1)^q$.
\end{proof}

\begin{exa}
Let $a=2 \in \mb{Z}_7^*$ and $p=7$. Use Gauss's lemma (other form)
to calclulate $\leg{a}{p}$.
\end{exa}
\begin{solution}
For $\mb{Z}_7 $, all the multiples of 2 are
$\{ 2,4,6, 1, 3, 5 \}$ and $M(2) =\{ 2,4,6 \}$.
Two of them are greater than $p/2$ i.e. greater than or equal to 4.
This leads to $\leg{a}{p} = (-1)^2 =1 $.
%If we multiply the first three multiples of the former or all the
%elements of the latter, we get $2^3 \cdot 3! $.
%Using the equivalent form
%of utilizing negatives, we get $\{ 2,-3,-1,1,3,-2 \}$ or
%$M(2) =\{ 2,-3,-1 \}$.
%Notice that each one of $1, \ldots , (p-1)/2$ appears once in its positive
%or negative form among the first $(p-1)/2$ numbers and once in the next batch
%of numbers.
%If we multiply the first three multiples, we get $(-1)^2 \cdot 3! $.
%Thus $2^3 \equiv (-1)^2 \pmod 7$.
%The $2^3$ (Euler's identity) is an indicator
% of the quadratic residuosity of $2$.
%We can find this by counting the negatives in the first 3 multiples of 2.
\end{solution}

A more comprhensive proof is provided later.
\bigskip %THEOREM 63 %%
\begin{thm}
\label{thm63}
If $p$ is an odd prime then
\[
\leg{2}{p} = 1 \mathbf{\; if \;} p \equiv \pm 1 \pmod 8 , \quad \quad \quad
\leg{2}{p} =-1 \mathbf{\; if \;} p \equiv \pm 3 \pmod 8 \\
\]
\end{thm}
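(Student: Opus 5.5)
I will apply the reformulation of Gauss's lemma, Proposition~\ref{thm62}, with $a=2$. Here $M(2)=\{2,4,\ldots,p-1\}$ consists of the even numbers $2j$ for $1\le j\le (p-1)/2$. All of them already lie in $[1,p-1]$, so no reduction mod $p$ is needed. Hence $q$ is simply the number of $j$ with $2j>p/2$, that is, with $j>p/4$. This gives
\[
q=\frac{p-1}{2}-\floor{\frac{p}{4}},
\]
since the $j\le p/4$ are exactly $1,\ldots,\floor{p/4}$. By Proposition~\ref{thm62} we then have $\leg{2}{p}=(-1)^q$.

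It remains to compute the parity of $q$. Write $p=8k+r$ with $r\in\{1,3,5,7\}$ and split into these four cases.
\begin{itemize}
\item[] For $r=1$: $(p-1)/2=4k$ and $\floor{p/4}=2k$, so $q=2k$ is even.
\item[] For $r=3$: $(p-1)/2=4k+1$ and $\floor{p/4}=2k$, so $q=2k+1$ is odd.
\item[] For $r=5$: $(p-1)/2=4k+2$ and $\floor{p/4}=2k+1$, so $q=2k+1$ is odd.
\item[] For $r=7$: $(p-1)/2=4k+3$ and $\floor{p/4}=2k+1$, so $q=2k+2$ is even.
\end{itemize}
Thus $\leg{2}{p}=1$ when $p\equiv\pm1\pmod 8$ and $\leg{2}{p}=-1$ when $p\equiv\pm3\pmod 8$, which is the statement.

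The only delicate step is the count of $q$: checking the boundary (since $p$ is odd, $p/4$ is never an integer, so the strict versus non-strict inequality $2j>p/2$ does not matter) and confirming that elements of $M(2)$ need no reduction. As a cross-check, one can note that $q\equiv (p^2-1)/8 \pmod 2$ in each case, which matches the formula $\leg{2}{p}=(-1)^{(p^2-1)/8}$ listed in the earlier Fact.
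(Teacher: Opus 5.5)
Your proof is correct and follows the paper's own argument: apply the Gauss lemma reformulation (Proposition~\ref{thm62}) with $a=2$, count the $(p-1)/2-\floor{p/4}$ elements of $M(2)$ exceeding $p/2$, and decide the parity of that count by the residue of $p$ modulo $8$. Your $\floor{p/4}$ equals the paper's $\floor{(p-1)/4}$ since $p$ is odd. You also work out all four residue cases explicitly, whereas the paper does only $p\equiv 1 \pmod 8$ and calls the rest similar.
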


\begin{proof}
Let $a=2$ and consider $M(2)$.
There are $(p-1)/2$ multiples and $\floor{(p-1)/4}$ are less than $p/2$
and thus $(p-1)/2 - \floor{(p-1)/4}$ are greater than $p/2$ or negative.
If $p \equiv 1 \pmod 8$ i.e.
$p=8k+1$ the $(p-1)/2 =4k$. Then $0 < 2i \leq (p-1)/2$ if and only if
$0 < 2i \leq 4k$ i.e. $0 < i \leq 2k$. Then $q=2k$.
So $\leg{2}{p} = (-1)^{2k} =1$.
Other cases are proven similarly.
\end{proof}

\subsection{Eisenstein's theorem and the Legendre symbol}

\begin{thm}[Eisenstein]
If $p>2$ is an odd prime number and $a \neq p$ is odd and
thus $\gcd(a,p)=1$, then show
\begin{equation}
%was thm64
\label{eisenstein}
 \leg{a}{p} = (-1)^{\sum_{i=1}^{\frac{p-1}{2}} \floor{\frac{ia}{p}}} ,
\end{equation}
using Gauss's Lemma.
\end{thm}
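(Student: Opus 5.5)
We will reduce the statement to Gauss's lemma in the form of Proposition~\ref{thm62}: if $q$ denotes the number of elements of $M(a)=\{a,2a,\ldots,\frac{p-1}{2}a\}$ whose least non-negative residue mod $p$ exceeds $p/2$, then $\leg{a}{p}=(-1)^q$. It therefore suffices to show
\[
\sum_{i=1}^{\frac{p-1}{2}} \floor{\frac{ia}{p}} \equiv q \pmod 2 .
\]

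For each $1\le i\le \frac{p-1}{2}$ we divide with remainder (Theorem~\ref{divis}), writing $ia = p\floor{ia/p} + t_i$ with $0<t_i<p$; here $t_i\neq 0$ because $\gcd(a,p)=1$ and $i<p$. We split the remainders into those with $t_i<p/2$, call them $r_1,\ldots,r_{m}$, and those with $t_i>p/2$, call them $s_1,\ldots,s_q$, so $m+q=\frac{p-1}{2}$. As in the proof of Gauss's lemma (Observations 1 and 2), the numbers $r_1,\ldots,r_m,\,p-s_1,\ldots,p-s_q$ are distinct elements of $\{1,\ldots,\frac{p-1}{2}\}$, hence are a permutation of it. Summing,
\[
\sum_{j=1}^{\frac{p-1}{2}} j = \sum r_k + qp - \sum s_k .
\]
Summing the division identities over $i$ gives
\[
a\sum_{j=1}^{\frac{p-1}{2}} j = p\sum_i \floor{\frac{ia}{p}} + \sum r_k + \sum s_k .
\]

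Subtracting the first identity from the second yields
\[
(a-1)\frac{p^2-1}{8} = p\sum_i \floor{\frac{ia}{p}} - qp + 2\sum s_k .
\]
This is the step that uses the hypothesis that $a$ is odd: $a-1$ is even and $\frac{p^2-1}{8}$ is an integer (Lemma~\ref{jlaux2}), so the left-hand side is even. Reducing mod 2, and using that $p$ is odd, gives $\sum_i \floor{ia/p}\equiv q \pmod 2$. Combined with Proposition~\ref{thm62}, this gives the claim.

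The main obstacle is the bookkeeping in this last identity, namely getting the signs right so that the $\sum s_k$ terms appear with coefficient $2$ and cancel mod 2. The permutation fact itself is already available from the earlier Gauss lemma argument.
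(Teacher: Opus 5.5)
Your proof is correct and takes essentially the same route as the paper. Both arguments sum the division identities $ia = p\floor{ia/p} + t_i$ over $1\le i\le \frac{p-1}{2}$, use the Gauss-lemma fact that the sign-adjusted residues permute $\{1,\ldots,\frac{p-1}{2}\}$, and reduce mod $2$ using that $a$ and $p$ are odd. The only difference is bookkeeping: you first subtract the exact identities to get $(a-1)\frac{p^2-1}{8} = p\sum_i\floor{ia/p} - qp + 2\sum s_k$, whereas the paper reduces mod $2$ at once and cancels the residue sum against $\sum i$ via $\pm i \equiv i \pmod 2$.
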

\begin{proof}
For each $i=1,2, \ldots , (p-1)/2$
we have by the division theorem the following.
\begin{equation}
\label{eisena}
 ia = q(i) p + r(i),
\end{equation}
where
$q(i) = \floor{ia/p}$ and $0 \leq r(i) < p$.
If $0 \leq r(i) \leq (p-1)/2$ then let $s(i)=r(i)$.
If $(p-1)/2 < r(i) <p $ then let $s(i)=r(i)-p$ or
equivalently $r(i) = s(i)+p$.
Then, $R(i)$ of Gauss's Lemma is $R(i)=s(i)$ for
all $i=1,2, \ldots , (p-1)/2$.
By Gauss's Lemma we obtain the following.
\[
\leg{a}{p} = (-1)^{|T|} ,
\]
where $|T|$ is the number of $R(i) < 0$ or equivalently
the number of $r(i)> (p-1)/2$.
We now add-up Eq.(\ref{eisena}) for all relevant $i$.
\begin{eqnarray}
\label{eisen1}
 \sum_{i=1}^{\frac{p-1}{2}} (ia) &=&
  p \sum_{i=1}^{\frac{p-1}{2}} \floor{\frac{ia}{p}} +
    \sum_{i=1}^{\frac{p-1}{2}} r(i) \Leftrightarrow \\
 a \cdot \sum_{i=1}^{\frac{p-1}{2}} i &=&
 p \cdot \sum_{i=1}^{\frac{p-1}{2}} \floor{\frac{ia}{p}} +
         \sum_{i=1}^{\frac{p-1}{2}}  s(i) +p |T|
\Leftrightarrow \nonumber\\
 a \cdot \sum_{i=1}^{\frac{p-1}{2}} i \pmod 2 &\equiv&
 p \cdot \sum_{i=1}^{\frac{p-1}{2}} \floor{\frac{ia}{p}} +
         \sum_{i=1}^{\frac{p-1}{2}}  s(i) +p |T|  \pmod 2
\Leftrightarrow \nonumber\\
\label{eisen2}
         \sum_{i=1}^{\frac{p-1}{2}} i \pmod 2 &\equiv&
         \sum_{i=1}^{\frac{p-1}{2}} \floor{\frac{ia}{p}} +
         \sum_{i=1}^{\frac{p-1}{2}}  s(i) +  |T|  \pmod 2
\end{eqnarray}
We note above that $a,p$ are odd and thus $a \equiv 1 \pmod 2$
and $p \equiv 1 \pmod 2$.
Furthermore, in Eq.(\ref{eisen2}) the $\sum_i s(i)$ values are
$\sum_i (\pm i)$. But mod $2$ we have $i \equiv i \pmod 2$ and
also $i \equiv -i \pmod 2$. Thus the two sums cancel out mod $2$.
So we can rewrite Eq.(ref{eisen2}) as follows.
\begin{eqnarray}
\label{eisen3}
 \sum_{i=1}^{\frac{p-1}{2}} (ia) &=&
  p \sum_{i=1}^{\frac{p-1}{2}} \floor{\frac{ia}{p}} +
    \sum_{i=1}^{\frac{p-1}{2}} r(i) \Leftrightarrow \\
\label{eisen4}
         \sum_{i=1}^{\frac{p-1}{2}} i \pmod 2 &\equiv&
         \sum_{i=1}^{\frac{p-1}{2}} \floor{\frac{ia}{p}} +
         \sum_{i=1}^{\frac{p-1}{2}}  s(i) +  |T|  \pmod 2
\Leftrightarrow \\
                                            0 &\equiv&
         \sum_{i=1}^{\frac{p-1}{2}} \floor{\frac{ia}{p}} +
                                        0 +  |T|  \pmod 2
\Leftrightarrow \nonumber\\
\label{eisen5}
                                       |T|    &\equiv&
         \sum_{i=1}^{\frac{p-1}{2}} \floor{\frac{ia}{p}} \pmod 2 .
\end{eqnarray}
We plus the right hand expression of Eq.(\ref{eisen5}) into Gauss's
Lemma and the result follows.
\end{proof}

The sum of the exponent has a nice
geometric interpretation.
It is the number of lattice points under the
line $y=\frac{a}{p}x$ that are
over the $x$ axis between
$x=0$ and $x=p/2$.

\subsection{Applications of Eisenstein's theorem: Legendre symbol}

\begin{prp}
Let $p,q$ be odd primes and let $p \neq q > 0$, then the
following holds.
\[
 \leg{p}{q} \leg{q}{p} = (-1)^{\frac{p-1}{2} \frac{q-1}{2}}
\]
\end{prp}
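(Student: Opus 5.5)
We will prove quadratic reciprocity by combining Eisenstein's theorem, stated just above, with a lattice-point count. Since $p\neq q$ are odd primes, $q$ is odd, $q\neq p$, and $\gcd(q,p)=1$, so Eisenstein's theorem applies with $a=q$:
\[
\leg{q}{p} = (-1)^{S(q,p)}, \qquad S(q,p)=\sum_{i=1}^{\frac{p-1}{2}} \floor{\frac{iq}{p}} .
\]
By symmetry, $\leg{p}{q} = (-1)^{S(p,q)}$ with $S(p,q)=\sum_{j=1}^{\frac{q-1}{2}} \floor{\frac{jp}{q}}$. Multiplying the two gives
\[
\leg{p}{q}\leg{q}{p}=(-1)^{S(q,p)+S(p,q)},
\]
so the whole statement reduces to the identity
\[
S(q,p)+S(p,q)=\frac{p-1}{2}\cdot\frac{q-1}{2}.
\]

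To prove this identity, we count the lattice points $(x,y)$ with $1\le x\le \frac{p-1}{2}$ and $1\le y\le \frac{q-1}{2}$. There are exactly $\frac{p-1}{2}\cdot\frac{q-1}{2}$ of them. We split this rectangle by the diagonal line $y=\frac{q}{p}x$, following the geometric remark after Eisenstein's theorem.

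The key observation is that no lattice point of the rectangle lies on the line. If $py=qx$ held, then $\dv{p}{qx}$, and since $\gcd(p,q)=1$ this gives $\dv{p}{x}$ by Corollary~\ref{egcd3}. That is impossible because $1\le x<p$. Hence every point of the rectangle lies strictly below or strictly above the line.

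We count the two sides column by column and row by row.

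\emph{Below the line.} For fixed $x$, the points with $1\le y<\frac{qx}{p}$ number $\floor{\frac{qx}{p}}$. We must check that these all stay inside the rectangle, i.e. that $\floor{\frac{qx}{p}}\le\frac{q-1}{2}$. This holds because $\frac{qx}{p}\le \frac{q(p-1)}{2p}<\frac q2$, and the floor of a number less than $\frac q2$ is at most $\frac{q-1}{2}$ since $q$ is odd. Summing over $x$ gives $S(q,p)$ points below the line.

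\emph{Above the line.} Counting by rows $y$, the points with $x<\frac{py}{q}$ number $\floor{\frac{py}{q}}$, by the same argument with the roles of $p$ and $q$ exchanged. Summing over $y$ gives $S(p,q)$ points above the line.

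The two counts are disjoint and together exhaust the rectangle, which proves the identity and hence the proposition.

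The main obstacle is the bookkeeping at the boundary. We must confirm both that no point lies on the diagonal and that the floor values never exceed the side lengths $\frac{q-1}{2}$ and $\frac{p-1}{2}$. Both rely on the oddness and distinctness of $p$ and $q$, which are exactly the hypotheses of the proposition.
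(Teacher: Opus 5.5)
Your proposal is correct and follows essentially the same route as the paper: you apply Eisenstein's theorem in both directions, then split the lattice points of the $\frac{p-1}{2}\times\frac{q-1}{2}$ rectangle along the line $y=\frac{q}{p}x$, noting that no lattice point of the rectangle lies on that line. You are in fact more careful than the paper in two places: you check explicitly that $\floor{qx/p}\le\frac{q-1}{2}$, so each column count stays inside the rectangle, and you pair each exponent with the correct Legendre symbol (the paper's version swaps these through typos).
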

\begin{proof}
The proof uses Eisenstein theorem and its lattice point
interpretation of the sum of its exponent.
$ $ \\ $ $
Let $p,q$ be odd primes.
Let $r =  \sum_{i=1}^{(p-1)/2} \floor{\frac{iq}{p}}$
be the number of lattice points below
$y=\frac{q}{p}x$ and over the $x$ axis and between
$x=0$ and $x=p/2$.
$ $ \\ $ $
Similarly,
let $s =  \sum_{i=1}^{(q-1)/2} \floor{\frac{ip}{q}}$
be the number of lattice points below
$x=\frac{p}{q}y$ and over to the right of the $y$ axis
and between $y=0$ and $y=q/2$.
$ $ \\ $ $
The line  $y=\frac{q}{p}x$ and $x=\frac{p}{q}x$
are the same.
$ $ \\ $ $
None of the two set of points are
double counted as they lie on different
areas of the dividing line.
No point lies on the line as then $xq=py$
and thus $x$ is a multiple
of $p$ and $y$ a multiple of $q$.
$ $ \\ $ $
The number of points are all inside the rectangle
defined by $x=p/2$ and $y=q/2$
and the two axes. The total number of points
is $(p-1)/2 \cdot (q-1)/2$.
Thus $r+s = (p-1)/2 \cdot (q-1)/2$.
$ $ \\ $ $
By Eisenstein's Theorem
$\leg{p}{q} = (-1)^r$ and $\leg{q}{q}= (-1)^s$.
Thus
\[
\leg{p}{q} \leg{q}{q} = (-1)^r (-1)^s = (-1)^{r+s} =
 (-1)^{\frac{p-1}{2} \frac{q-1}{2}}
\]
\end{proof}

\begin{cor}
The following conjectured by Euler was proven by Gauss.
\begin{equation}
\label{gauss1a}
\leg{p}{q} =
\begin{cases}
- \leg{q}{p}  & p \equiv q \equiv 3 \pmod 4  \\
\leg{q}{p}    & \text{otherwise}
\end{cases}
\end{equation}
\end{cor}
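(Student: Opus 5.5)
The plan is to derive the corollary directly from the reciprocity law just proved in the preceding proposition, namely $\leg{p}{q}\leg{q}{p} = (-1)^{\frac{p-1}{2}\frac{q-1}{2}}$ for distinct odd primes $p,q$. Since $p \neq q$ are primes, neither divides the other, so both Legendre symbols take values in $\{+1,-1\}$. In particular $\leg{q}{p}^2 = 1$, so $\leg{q}{p}$ is its own inverse. Multiplying both sides of the reciprocity identity by $\leg{q}{p}$ then gives
\[
\leg{p}{q} = \leg{q}{p}\,(-1)^{\frac{p-1}{2}\frac{q-1}{2}} .
\]
After this, the corollary reduces to determining the sign $(-1)^{\frac{p-1}{2}\frac{q-1}{2}}$.

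The second step is a case analysis on $p,q \bmod 4$. Every odd integer is congruent to $1$ or $3 \pmod 4$. If $p \equiv 1 \pmod 4$, write $p = 4k+1$; then $\frac{p-1}{2} = 2k$ is even, so the exponent $\frac{p-1}{2}\frac{q-1}{2}$ is even. The same holds if $q \equiv 1 \pmod 4$. In these cases the sign is $+1$ and $\leg{p}{q} = \leg{q}{p}$, which is the ``otherwise'' branch. If $p \equiv q \equiv 3 \pmod 4$, write $p = 4k+3$ and $q = 4l+3$; then $\frac{p-1}{2} = 2k+1$ and $\frac{q-1}{2} = 2l+1$ are both odd, so their product is odd. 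In this case the sign is $-1$ and $\leg{p}{q} = -\leg{q}{p}$. The same parity observation already appeared in the lemma computing $\leg{-1}{p}$, so it can simply be cited.

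There is no real obstacle here. The only point requiring care is to justify that the symbols are $\pm 1$, not $0$, so that the division (equivalently, multiplication by $\leg{q}{p}$) is legitimate. This follows from $p \neq q$ being primes, which gives $\gcd(p,q)=1$.
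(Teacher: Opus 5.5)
Your proposal is correct and follows the same route as the paper: from the preceding reciprocity proposition you isolate the sign $(-1)^{\frac{p-1}{2}\frac{q-1}{2}}$, then decide the parity of the exponent by a case analysis on $p,q \bmod 4$. Your explicit remark that $\leg{q}{p} = \pm 1$ (because $p \neq q$), which is what lets you move that symbol to the other side, is a step the paper leaves implicit.
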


\begin{proof}
Cases analysis for $p,q$.
If $p=4k+3$ and $q=4l+3$, then the product
$\frac{p-1}{2} \frac{q-1}{2}$ is an odd number.
In any other case, if one or both of $p,q$ is $4k+1$
the product is an even number. The result then follows.
\end{proof}

Equivalently, if either $p$ or $q$ is of the form $4k+1$ then
$\leg{p}{q}= \leg{q}{p}$, otherwise $\leg{p}{q}=- \leg{q}{p}$.

\begin{lem}
For $p$ an odd prime $>2$ the following applies.
\begin{equation}
\label{leg2p}
\leg{2}{p} = (-1)^{\frac{p^2 -1}{8}} .
\end{equation}

An Eisenshtein-based proof involving the $n$-th
roots of unity and in particular $8$-th roots of unity.
$w_8^8 =1$, $z= \sqrt{i} =w_8 $, where $i^2 =- 1$,
and $z^2 =i$ and $z^4 = i^2 =-1$, and thus $z^8 = w_8^8 =1$.
Moreover $w = z + 1/z$ is such that $w^2 = z^2 + 2 + 1/z^2 = 2+i-i=2$,
and thus $\sqrt{w}=\sqrt{2}$.

\end{lem}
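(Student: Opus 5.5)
The plan follows the hint in the statement and uses an eighth root of unity $z$ with $z^4=-1$. Set $w=z+z^{-1}$, so that $w^2=z^2+2+z^{-2}=2$, since $z^{-2}=-z^2$. I work in the ring $\mb{Z}[z]$ and compute $w^p$ modulo $p$ in two ways. Congruence means a difference in $p\mb{Z}[z]$; since $\mb{Z}[z]$ is a free $\mb{Z}$-module with basis $1,z,z^2,z^3$, we have $p\mb{Z}[z]\cap\mb{Z}=p\mb{Z}$. This lets us pull integer congruences back at the end.

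The first computation uses Euler's criterion, Eq.(\ref{legeq0}). We have
\[
w^{p}=w\,(w^2)^{\frac{p-1}{2}}=w\,2^{\frac{p-1}{2}}\equiv \leg{2}{p}\,w \pmod p .
\]

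The second computation is the freshman's dream. The binomial coefficients $\binom{p}{i}$ with $0<i<p$ are divisible by $p$, exactly as in the proof of Fermat's theorem. So $w^p=(z+z^{-1})^p\equiv z^p+z^{-p}\pmod p$, and $z^p+z^{-p}$ depends only on $p\bmod 8$:
\begin{itemize}
\item If $p\equiv\pm1\pmod 8$, then $z^p+z^{-p}=z+z^{-1}=w$.
\item If $p\equiv\pm3\pmod 8$, then $z^{3}+z^{-3}=-z^{-1}-z=-w$, using $z^4=-1$.
\end{itemize}
Writing $\varepsilon=1$ in the first case and $\varepsilon=-1$ in the second, this gives $w^p\equiv\varepsilon w\pmod p$.

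Combining the two computations gives $\leg{2}{p}w\equiv\varepsilon w\pmod p$. I multiply both sides by $w$, using $w^2=2$, to get $2\leg{2}{p}\equiv 2\varepsilon\pmod p$ as integers. Since $p$ is odd, I cancel the $2$ with the cancellation law. Both sides lie in $\{\pm1\}$ and $p>2$, so $\leg{2}{p}=\varepsilon$.

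Finally I check that $\varepsilon=(-1)^{(p^2-1)/8}$. For $p\equiv\pm1\pmod 8$ we have $p^2\equiv1\pmod{16}$, so the exponent is even. For $p\equiv\pm3\pmod 8$ we have $p^2\equiv9\pmod{16}$, so the exponent is odd. This also recovers Theorem~\ref{thm63}.

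The main obstacle is rigour rather than computation: justifying congruences modulo $p$ inside $\mb{Z}[z]$ and descending back to $\mb{Z}$. The basis argument above handles this. It also avoids dividing by $w$, since I multiply by $w$ instead. If the ring-theoretic setup is unwelcome, the fallback is the purely elementary route: apply Gauss's lemma with $a=2$ and count the $i\le (p-1)/2$ with $2i>p/2$. That count is $(p-1)/2-\floor{(p-1)/4}$, and its parity can be checked in the four residue classes of $p \bmod 8$.
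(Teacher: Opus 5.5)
Your proposal is correct and follows the paper's own route: Euler's criterion gives $w^{p}=w\,2^{(p-1)/2}\equiv\leg{2}{p}\,w$, the binomial congruence gives $(z+z^{-1})^{p}\equiv z^{p}+z^{-p}\pmod p$, and the split on $p\bmod 8$ gives $z^{p}+z^{-p}=\pm w$. The only difference is in rigour, and it is in your favour: you work in $\mb{Z}[z]$ with $p\mb{Z}[z]\cap\mb{Z}=p\mb{Z}$ and multiply by $w$ (using $w^{2}=2$), which makes precise the step the paper writes loosely as $\leg{2}{p}\equiv (z^p+z^{-p})w^{-1}\pmod p$ with an unjustified $w^{-1}$.
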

\begin{proof}
By the reciprocity theorem (Euler criterion), we have the following;
note that in the third step we utilize the binomial theorem $(a+b)^n$
for $a=z$, $b=1/z$ and $n=p$.
\begin{eqnarray*}
\leg{2}{p} &=&  2^{\frac{p-1}{2}} = ( \sqrt{2} )^{p-1} = w^{p-1}  \\
           &=& w^p w^{-1} = (z+\frac{1}{z})^p w^{-1} \\
           &\equiv& (z^p + z^{-p} ) w^{-1} \pmod p.
\end{eqnarray*}
From the exponent $ \frac{p^2 -1}{8}$ we distinguish the following
cases: (1) $p = \pm 1 \pmod 8$ and
       (2) $p = \pm 3 \pmod 8$.
$ $ \\ $ $
{\bf Case 1: $p=8k \pm 1$.}
Let $p=8k+1$. Noting that $z^8 = z^{8k} =1$ we
have the following.
\[
z^p + z^{-p} = z^{8k+1} +z^{-8k-1} = z + 1/z =w.
\]
{\bf Case 1b.} Let $p=8k-1$.
Similarly we have the following.
\[
z^p + z^{-p} = z^{8k-1} +z^{-8k+1} = 1/z + z =w.
\]
We then conclude the following
\[
\leg{2}{p} \equiv (z^p + z^{-p} ) w^{-1}
           \equiv  w \cdot w^{-1}
           \equiv  1
           \equiv (-1)^{\frac{p^2 -1}{8}} \pmod p,
\]
as needed.
$ $ \\ $ $
{\bf Case 2: $p=8k \pm 3$.}
Let $p=8k+3$. Noting that $z^8 = z^{8k} =1$ we
have the following.
\[
z^p + z^{-p} = z^{8k+3} +z^{-8k-3}  =  z^3 + 1/z^3 = -w.
\]
This is because
\[
w=z + 1/z \Rightarrow w^3 = (z+1/z)^3 = z^3 + 1/z^3 + 3z +3/z
          \Rightarrow z^3 + 1/z^3 = w^3 - 3w = w^2 w -3w = 2w-3w=-w.
\]
The $p=8k-3$ is proven similarly.
We then conclude the following
\[
\leg{2}{p} \equiv (z^p + z^{-p} ) w^{-1}
           \equiv  -w \cdot w^{-1}
           \equiv  -1
           \equiv (-1)^{\frac{p^2 -1}{8}} \pmod p,
\]
as needed.
\end{proof}

\section{Pythagorean triplets}

Let $a^2 + b^2 = c^2$ be the pythagorean identity with $a \leq b \leq c$.
A triplet $(a,b,c)$ is a solution to the identity. It is called primitive
if $\gcd(a,b)=1$. If $(a,b,c)$ is a triplet so is $(ma,mb,mc)$ for every
integer $m$.

One way to generate triplets is to use the identity
$(x+y)^2 = x^2 +2xy + y^2$
and
$(x-y)^2 = x^2 -2xy + y^2$
which implye
$(x+y)^2 - (x-y)^2 = 4 xy$.
Set $x=X^2$ and $y=Y^2$ we have

Thus
\bigskip %THEOREM 70 %%
\begin{thm}
\label{thm70}
Thus $a=2XY$, $b=X^2 -Y^2$ and $c=X^2 +Y^2$ or equivalently
$(a,b,c)=(2XY,X^2 - Y^2 , X^2 + Y^2 )$ is a pythagorean triplet,
since
\[
 (X^2 +Y^2 )^2 = (2XY)^2 + (X^2 -Y^2 )^2
\]
Moreover $Z(a,b,c)$ are other triplets, for $X,Y,Z \in \mb{Z}$.
\end{thm}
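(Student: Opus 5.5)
The statement is a polynomial identity, so the plan is a direct algebraic verification followed by a scaling remark. I would not use any number-theoretic machinery here. The identity $(x+y)^2-(x-y)^2=4xy$ introduced just before the theorem already suggests the route.

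First, substitute $x=X^2$ and $y=Y^2$ into that identity. This gives
\[
(X^2+Y^2)^2-(X^2-Y^2)^2=4X^2Y^2=(2XY)^2 .
\]
Rearranging yields $(X^2+Y^2)^2=(2XY)^2+(X^2-Y^2)^2$, which is exactly the displayed claim. As a sanity check, I would also expand both sides directly. The left side is $X^4+2X^2Y^2+Y^4$. The right side is $4X^2Y^2+X^4-2X^2Y^2+Y^4$, which is the same polynomial.

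Second, for the claim about $Z(a,b,c)$, I would multiply the identity $a^2+b^2=c^2$ by $Z^2$. This gives $(Za)^2+(Zb)^2=(Zc)^2$ for every integer $Z$. This is the same observation the paper made for $(ma,mb,mc)$ before the theorem.

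The only point needing care is matching the paper's convention $a\le b\le c$ for a triplet. With $a=2XY$ and $b=X^2-Y^2$, the order of $a$ and $b$ depends on $X$ and $Y$. I would handle this by taking integers $X>Y>0$, so that all three entries are positive, and then relabeling the two legs if necessary; the equation is symmetric in $a$ and $b$. Beyond this ordering and positivity bookkeeping, I expect no real obstacle.
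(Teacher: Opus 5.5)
Your proposal is correct and follows the paper's route: substitute $x=X^2$, $y=Y^2$ into $(x+y)^2-(x-y)^2=4xy$, then scale by $Z$, as the paper's earlier remark on $(ma,mb,mc)$ does. Your extra bookkeeping on positivity and the ordering $a\le b\le c$ goes slightly beyond the paper, which simply asserts the identity for all integers $X,Y,Z$ without addressing that convention.
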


\bigskip %THEOREM 71 %%
\begin{thm}
\label{thm71}
If $p$ is a prime factor of $n$ with $p \equiv 3 \pmod 4$ and
$p$ divides $n$ an odd number of times. Then $n$ cannot be expressed
as a sum of squares.
\end{thm}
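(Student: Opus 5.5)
We argue by contradiction. Suppose $n=x^2+y^2$ with $x,y\in\mb{Z}$, and write $p^k\,\|\,n$ with $k$ odd. The plan is to strip common factors of $p$ from $x$ and $y$ until the exponent of $p$ in the reduced sum is odd but the summands are no longer both divisible by $p$. The earlier proposition for $p\equiv 3 \pmod 4$ (if $a^2+b^2\equiv 0 \pmod p$ then $a\equiv b\equiv 0 \pmod p$) then gives the contradiction.

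\textbf{Step 1: Set up a descent.} Let $p^j$ be the largest power of $p$ dividing both $x$ and $y$, and write $x=p^jx_1$, $y=p^jy_1$, so that $p$ does not divide both $x_1$ and $y_1$. Then $n=p^{2j}(x_1^2+y_1^2)$. Since $p^k\,\|\,n$, we get $2j\le k$, and $p^{k-2j}\,\|\,x_1^2+y_1^2$, which uses unique factorization (Theorem~\ref{fund}) to compare exponents of $p$.

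\textbf{Step 2: Use oddness of $k$.} Because $k$ is odd and $2j$ is even, $k-2j$ is odd and hence $k-2j\ge 1$. So $p \mid x_1^2+y_1^2$, that is, $x_1^2+y_1^2\equiv 0\pmod p$.

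\textbf{Step 3: Apply the earlier proposition.} Since $p\equiv 3\pmod 4$, the proposition gives $x_1\equiv y_1\equiv 0\pmod p$. This contradicts the maximality of $j$. Hence no such $x,y$ exist.

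The main obstacle is the careful bookkeeping of the $p$-adic exponent in Step 1: we need $p^{2j}$ to divide $n$ exactly as stated, and the residual exponent must remain odd. The case $x=y=0$ must also be excluded, which holds since $n\neq 0$ because $p\mid n$ with a finite odd multiplicity. If the earlier proposition is felt to be insufficiently justified, I would reprove the key fact directly. If $p\nmid x_1$, then $(y_1x_1^{-1})^2\equiv -1\pmod p$, so $-1$ is a quadratic residue mod $p$. This contradicts the lemma that $\leg{-1}{p}=-1$ when $p\equiv 3\pmod 4$.
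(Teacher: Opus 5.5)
Your proof is correct and is essentially the paper's argument. The paper takes a minimal counterexample, uses that $-1$ is a non-residue mod $p$ to force $p \mid a$ and $p \mid b$, and passes to $n/p^2$. You carry out the same descent in one step by extracting the maximal common power $p^j$, which also makes explicit the bookkeeping the paper leaves implicit: the odd residual exponent and $n \neq 0$.
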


\begin{proof}
Let us assume the theorem is false. Let $n$ be the smallest $n$ that
is a counterexample and thus $n=a^2 +b^2 $.
Since $\dv{p}{n}$ we have $a^2 + b^2 \equiv 0 \pmod p$ i.e.
$b^2 \equiv -a^2 \pmod p$.
If $\ndd{p}{a}$ then $\ndd{p}{b}$ and thus $a^{-1}$ and $b^{-1}$ exist.Thus
$(ba^{-1})^2 \equiv -1 \pmod p$. This means $-1$ is a q.r $\pmod p$.
Which contradicts the non-being so since $p \equiv 3 \pmod 4$.

Thus $\dv{p}{a}$ and $\dv{p}{b}$ and thus $\dv{p^2}{n}$.
The $a=p a_1 , b=p b_1$ $n = p^2 n_1$. We get
that $n_1^2 = a_1^2 + b_1^2$.
\end{proof}
A linear congruential generator (LCG) is one such that
$x_{i+1} \equiv a x_i + b \pmod  n$, where $\gcd(a,n)=1$.
If $\gcd(a-1,n)=1$ the $x_0$ should be chosen so that
$\gcd(x_0 - b (1-a)^-1 , n)=1$

The period of LCG is its modulus $n$ if and only if
$\gcd(b,n)=1$, $a\equiv 1 \pmod p$ for every
prime $p$ such that $\dv{p}{n}$,
and $a \equiv 1 \pmod 4$ if $\dv{4}{n}$.

A Blum-Blum-Shub (BBS) sequence is one where $n=pq$ and $p,q$ are primes
such that $p\equiv q \equiv 3 \pmod p$. A seed $x_0$ is chosen so that
$\gcd( x_0 , n)=1$. Then $x^2_{i+1} \equiv x^2_i \pmod n$.
The output is $x_i \pmod 2$. For a long
period $\gcd( \phi (p-1), \phi (q-1))$ is
small compared to $n$.

\section{Public Key Cryptography}

A very brief overview of applications of number theory
is given in this section

\subsection{Diffie-Hellman key exchange}

It uses $\mb{U}_p$ exponentiation.
Choose a large prime $p$, and an element $g\in \mb{U}_p$,
where $g$ is preferably a primitive root.
This information is  public.
The following two pieces of information
are secret for each party involved.
Alice chooses a secret exponent $1\leq a \leq p-1$.
Bod   chooses a secret exponent $1\leq b \leq p-1$.
Alice publishes $g^a \pmod p $ and
Bob $g^b$ $\pmod p$.
The other party picks the other's published info and
compute $g^a g^b \pmod p$. Only they know both
multiplicands and thus their product.
% 5 x 3 ie multiply 5  three times
% 5 is multiplicand , 3 is multiplier
The only way to retrieve from $g^a , g^b , g , p$ the
$a$ or $b$ is by a slow discrete logarithm process.

\subsection{RSA}

Let $p,q$ are two large primes $p\neq q$. Let $n=pq$ and choose
an $e$ such that
\[
\gcd( e, \phi(n)) = \gcd ( e , (p-1)(q-1) ) =1
\]

$ $\\ $ $
{\bf 1. Public key.} It is the pair $(e,n)$.
The modular  equation
\[
  ed \equiv 1 \pmod  {(p-1)(q-1)}
\]
is then solved. Because
$ \gcd ( e , (p-1)(q-1) ) =1$, there exists one and only one
solution $\bmod \phi(n)$.
$ $\\ $ $
{\bf 2. Private key.} It is the  pair $(p,q)$ or for convenience the
triplet  $(d,p,q)$
($d$ is not needed as it can be recomputed from the public key
and private $p$,$q$).

The following information is public or private for a a party, say
Alice. (Similarly for the other part, Bob.)
$ $ \\  $ $
\bigskip\noindent
{\bf Alice's Public Information:}  $(e_A ,n_A)$
$ $ \\  $ $
\bigskip\noindent
{\bf Alices' Private Information:} $(d_A ,p_A ,q_A )$.

\medskip\noindent
\noindent
{\bf Communication from Alice to Bob: sending encrypted message $M$.}
In order for Alice to send Bob a message $M$, Alice must
retrieve Bob's Public information $(e_B , n_B )$.
Then for  message $M$ Alice
computes $C \equiv M^{e_B} \pmod n_B$.
It transmits $C$ to Bob, not $M$.

\medskip\noindent
{\bf Communication received by Bob: recovery of $M$}
Bob received $C$ from Alice.
He retrieves his private information $(d_B , p_B , q_B )$.
He then performs the following computation (note $n_B = p_B q_B$).
\[
 C^{d_B} \equiv (M^{e_B})^{d_B} \equiv M^{e_B d_B} \equiv M \pmod{n_B}
\]

The correctness of the decruption process $B$ follows by way of
Euler's theorem.
Bob chose $n_B = p_B q_B$ where
\[
  e_B d_B \equiv 1 \pmod  {(p_B -1)(q_B -1)}
\Leftrightarrow
  e_B d_B \equiv 1 \pmod  {\phi (n_B )}
\]
Therefore there exists a $k \in \mb{Z}$ such that
\[
 e_B d_B -1 =  k \phi (n_B ) .
\]
Then from the decryption process, we have
\[
M^{e_B d_B} \equiv
M^{e_B d_B -1 } \cdot M \equiv
M^{k \phi (n_B ) } \cdot M \equiv
(M^{\phi (n_B ) })^k \cdot M \equiv
(1)^k \cdot M \equiv M \pmod{n_B},
\]
where we used Euler's theorem for
\[
M^{\phi (n_B )} \equiv 1 \pmod{n_B} .
\]

\bigskip
RSA's difficulty relies on the perceived difficulty of factoring $n$
into $p,q$ and thus computing $d$. Equivalently on computing $d$ from
$e,n$ alone without factoring $n$.

For RSA message $M$ must be close to the size of $\phi (n)$.
Thus padding may need to be performed if $M$ is small (or an attacker
may rely on brute force techniques).
Because of these, RSA is primarily being used to transmit secret keys,
and  other methods are used for transmitting messages such as $M$.

\chapter{Primality testing}

\section{Carmichael numbers}

\begin{dfn}[Carmichael numbers]
A composite natural number $n>1$, $n \in \mb{N}$,
is a Carmichael number 
if it satisfies the following.
\[
 a^{n} \equiv a \pmod n, \quad \forall  a \in \mb{Z} .
\]
\end{dfn}

\begin{dfn}[Carmichael numbers; an alternative definition]
A composite natural number $n>1$, $n \in \mb{N}$,
is a Carmichael number 
if it satisfies the following.
\[
 a^{n-1} \equiv 1 \pmod n, \quad \forall  a \in \mb{Z} : \gcd(a,n)=1.
\]
\end{dfn}

\begin{exa}
Integer $n=561$ is a Carmichael number.
Show that $n=561$ is not a prime number by finding an $a$
such that
\[
 a^{n-1} \not\equiv 1 \pmod n, 
\]
that's. a violation of Fermat's little theorem is established.
How much is 
\[
 a^{n-1} \pmod n, 
\]
then?
\end{exa}

\begin{solution}
$ $ \\ $ $
Consider $n=561$. It is obviously a composite number
as $n$ is divisible by 3. Thus $n$ is not a prime number.
For a prime number $n$,
$a^{n-1} \equiv 1 \pmod n$ for all $a=1,2, \ldots , n-1$,
by way of Fermat's little theorem.
$ $ \\ $ $
Consider $a=2$. It can be shown that 
$2^{560} \equiv 1 \pmod{561}$.
Obviously then
$2^{561} \equiv 2 \pmod{561}$.
$ $ \\ $ $
Consider now $a=3$.
$3^{560} \equiv 375 \pmod{561} \not\equiv 1 \pmod{561}$,
and a  violation of Fermat's little
Theorem is obtained for the premise $n=561$ is
a prime number.
Note however that $3^{561} \equiv 3 \pmod{561}$.
Even though $561$ is a Carmichael number and for
every $a$ with  $\gcd(a,561)=1$, we have
$ a^{560} \equiv  1 \pmod{561}$, note that
$a=3$ has $\gcd( 3, 561) =3 \neq 1$.
%%%%%%%%%%%%%%%%%%%%%%%%%%%%%%%%%%%%%%%%%%%
\end{solution}

\subsection{Korselt's theorem}

It is also known as Korselt's criterion.
It characterizes Carmichael numbers.
A positive (composite) integer $n$ is a
Carmichael number if and only if it is
square free and for all prime divisors $p$ of
$n$ we have $\dv{p-1}{n-1}$.
One can directly and obviously use the Korselt theorem 
to show that all Carmichael numbers are odd.

The proof is deferred after the proof of two technical
lemmas.

\begin{lem}
All Carmichael numbers are odd.
\end{lem}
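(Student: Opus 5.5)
The plan is to argue directly from the alternative definition of a Carmichael number, without appealing to Korselt's theorem, whose proof the paper has deferred. We argue by contradiction. Suppose $n$ is a Carmichael number and $n$ is even. Since $n$ is composite, $n\geq 4$, so in particular $n>2$. The key choice is to take $a=-1$, or equivalently $a=n-1$. This is admissible because $\gcd(n-1,n)=1$: any common divisor divides $n-(n-1)=1$. The alternative definition then gives
\[
(-1)^{n-1}\equiv 1 \pmod n .
\]

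Because $n$ is even, $n-1$ is odd, so $(-1)^{n-1}=-1$. The congruence therefore reads $-1\equiv 1 \pmod n$, that is, $\dv{n}{2}$. By Theorem~\ref{podiv}(f2) this forces $n\leq 2$, which contradicts $n\geq 4$. Hence $n$ is odd.

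If instead one wants to start from the first definition, $a^n\equiv a \pmod n$ for all $a$, the same choice $a=-1$ works with no coprimality check at all. Here $n$ even gives $(-1)^n=1$, so the congruence becomes $1\equiv -1 \pmod n$, and again $\dv{n}{2}$, which is impossible for a composite $n$.

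There is no real obstacle in this argument. The only points needing care are confirming that $n>2$, which follows from compositeness (the smallest composite is $4$), and checking the parity of the exponent in whichever definition is used. I would include the one-line coprimality verification for $a=n-1$ so that the alternative definition applies legitimately.
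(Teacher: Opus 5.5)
Your proof is correct and is essentially the paper's second, Korselt-free argument: the paper also substitutes $a=-1$ into $a^n\equiv a \pmod n$ to obtain $1\equiv -1 \pmod n$, hence $n\mid 2$, which is impossible for a composite $n$. The paper's other proof instead goes through Korselt's theorem, while your main variant applies the coprime definition $a^{n-1}\equiv 1 \pmod n$ to the same $a=-1$ with odd exponent, and your check that $\gcd(n-1,n)=1$ justifies using that definition.
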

\begin{proof}
$ $ \\ $ $
(Using Korselt's theorem.)
$ $ \\ $ $
Say $n$ is a Carmichael number and it is even.
Then $n=2m$, where $m$ is odd since a 
Carmichael number is square-free and thus $m$
can not be even. Then $m$ has at least one
prime factor, let it that be $p$, and it must be odd
(including other ones). We thus have 
$\dv{p}{m} \Rightarrow \dv{p}{n}$. Then by Korselt's
theorem $\dv{p-1}{n-1}$ would imply that an
even number $p-1$ divides an odd number $n-1$, a contradiction.
$ $ \\ $ $
(Another proof without using Korselt's theorem.)
$ $ \\ $ $
Say $n$ is even.
Consider $a=(-1)$ and by Carmichael numbers definition
we have the following.
\[
(-1)^n \equiv (-1) \pmod n
\Rightarrow
1 \equiv (-1) \pmod n
\Rightarrow
2 \equiv 0 \pmod n
\]
This implies $\dv{n}{2}$. Thus $n \leq 2$. For a positive
$n$ this means $n=1$ or $n=2$. A Carmichael number is $n>1$.
Thus the only case left is $n=2$. But a Carmichael number
is composite and two is prime. Thus $n$ cannot be even,
and must be odd.
\end{proof}

\begin{lem}
Let $n= p_1 p_2 $,  $p_1 \neq p_2$,
where $p_1 , p_2 \in \mb{Z}_+$ are prime greater than 2.
Let $ a\in \mb{Z}$ such that
\[
a^{n-1} \equiv 1 \pmod{p_1}
\]
and
\[
a^{n-1} \equiv 1 \pmod{p_2}.
\]
Then,  the following applies.
\[
a^{n-1} \equiv 1 \pmod{p_1 p_2} .
\]
\end{lem}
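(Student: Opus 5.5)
The plan is to reduce the statement to a divisibility fact and then invoke the coprime-product corollary proved earlier in the chapter on greatest common divisors. First I will rewrite the two hypotheses using Lemma~\ref{eqrlem}. The congruence $a^{n-1} \equiv 1 \pmod{p_1}$ says that $\dv{p_1}{a^{n-1}-1}$. In the same way, $a^{n-1} \equiv 1 \pmod{p_2}$ says that $\dv{p_2}{a^{n-1}-1}$. So the common integer $c = a^{n-1}-1$ is divisible by both primes.

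Next I will check that $\gcd(p_1,p_2)=1$. Both $p_1$ and $p_2$ are primes, and $p_1 \neq p_2$. The only positive divisors of $p_2$ are $1$ and $p_2$. Since $p_1 \neq p_2$, we have $\ndv{p_2}{p_1}$, and so $\gcd(p_1,p_2)=1$ by the earlier corollary stating that $\ndv{p}{a}$ if and only if $\gcd(a,p)=1$ for a prime $p$. The hypothesis that the primes exceed $2$ plays no role here; it only matches the context of Carmichael numbers.

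With coprimality in hand, Corollary~\ref{thm25} (if $\gcd(a,b)=1$ and $\dv{a}{c}$ and $\dv{b}{c}$ then $\dv{ab}{c}$) applied with $c=a^{n-1}-1$ gives $\dv{p_1p_2}{a^{n-1}-1}$. Translating back through Lemma~\ref{eqrlem} yields $a^{n-1}\equiv 1 \pmod{p_1p_2}$. Equivalently, this is the $\Leftarrow$ direction of the proposition around Eq.~(\ref{crtbyp}), which I could cite directly instead.

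There is no real obstacle. The only point needing care is the coprimality step, where distinctness of the primes is exactly what is used. I would also remark that the argument extends by induction to any squarefree $n=p_1\cdots p_k$, which is the form needed for Korselt's theorem.
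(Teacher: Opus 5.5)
Your proposal is correct and is essentially the paper's argument: the paper writes $a^{n-1}-1 = Kp_1 = Lp_2$, takes B\'ezout coefficients with $xp_1+yp_2=1$, and multiplies through by $a^{n-1}-1$ to get $a^{n-1}-1=(xL+yK)p_1p_2$. That is just the proof of Corollary~\ref{thm25} written out inline rather than cited. (In your coprimality step, $p_2\nmid p_1$ actually follows from $p_1$ being prime and $p_2\neq 1,p_1$, not from the divisors of $p_2$, but this is only a wording slip.)
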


The result can be generalized by using induction on  integer
$n$ having more than two prime factors.

\begin{proof}
$ $ \\ $ $
If $a$ is as given then there exist $K,L$ such that
\[
a^{n-1} - 1 = K p_1 , \quad
a^{n-1} - 1 = L p_2 .
\]
Since $p_1 , p_2$ are prime different from each
other $\gcd( p_1 ,  p_2 ) =1$, and therefore,
there exists $x,y$ integer such that
\[
 x p_1  + y p_2 = 1.
\]
Multiply both sides by $ a^{n-1} - 1$.
We derive the following.
\[
 ( a^{n-1} - 1) ( x p_1  + y p_2 ) = a^{n-1} - 1 
\Rightarrow
 xL p_1 p_2 + yK p_1 p_2 = a^{n-1} - 1,
\]
from which we conclude
$a^{n-1} \equiv 1 \pmod{p_1 p_2}$.
\end{proof}

\begin{thm}[Korselt]
\label{korseltT}
A positive (composite) integer $n$ is a
Carmichael number if and only if it is
square free and for all prime divisors $p$ of
$n$ we have $\dv{p-1}{n-1}$.
\end{thm}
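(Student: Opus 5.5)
We prove the two directions separately, working throughout with the first definition of a Carmichael number ($a^n\equiv a \pmod n$ for all $a\in\mb{Z}$, $n$ composite). The sufficiency direction is a direct application of Fermat's little theorem and the Chinese remainder theorem. The necessity direction uses primitive roots mod $p$ and mod $p^2$.

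\emph{Sufficiency ($\Leftarrow$).} Let $n=p_1p_2\cdots p_k$ with distinct primes $p_i$, and assume $\dv{p_i-1}{n-1}$ for every $i$. Fix $a\in\mb{Z}$ and a prime $p=p_i$. If $\dv{p}{a}$, then $a^n\equiv 0\equiv a \pmod p$. Otherwise Fermat's little theorem (Theorem~\ref{flt}) gives $a^{p-1}\equiv 1\pmod p$. Writing $n-1=(p-1)t$, we get $a^{n-1}\equiv 1$, hence $a^n\equiv a \pmod p$. So $\dv{p_i}{a^n-a}$ for every $i$. Since the $p_i$ are pairwise coprime, Corollary~\ref{thm25}, iterated (or equivalently the preceding lemma and its generalization by induction), gives $\dv{n}{a^n-a}$. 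Since $n$ is composite by hypothesis, $n$ is Carmichael.

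\emph{Necessity ($\Rightarrow$), square-freeness.} Suppose $n$ is Carmichael and $\dv{p^2}{n}$ for some prime $p$. Take $a=p$. Then $p^n\equiv p\pmod{p^2}$ follows from $\dv{n}{p^n-p}$. But $n\ge 2$ gives $p^n\equiv 0\pmod{p^2}$, so $\dv{p^2}{p}$, which is a contradiction. This step is immediate and needs no primitive roots.

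\emph{Necessity, divisibility $\dv{p-1}{n-1}$.} Let $p$ be a prime divisor of $n$. By Proposition~\ref{prrmodp} there is a primitive root $g$ mod $p$. Since $n$ is square-free, $\gcd(p,n/p)=1$, so by the Chinese remainder theorem (Theorem~\ref{crt}) we may choose $a$ with $a\equiv g\pmod p$ and $a\equiv 1\pmod{n/p}$. Then $\gcd(a,n)=1$, and the second definition (equivalent to the first by cancelling $a$, since $a$ is a unit) gives $a^{n-1}\equiv 1\pmod n$, hence $g^{n-1}\equiv 1\pmod p$. By Theorem~\ref{thm52}, $\ord_p(g)=p-1$ divides $n-1$.

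The main obstacle is this last step. It relies on the existence of a primitive root mod $p$, and on CRT to lift $g$ to a unit mod $n$. The lift is exactly where square-freeness is needed, so the two parts of the necessity argument must be done in the order given. One must also take care with the equivalence between the two definitions, which only holds for units $a$; we sidestep the issue by applying the first definition only to the unit $a$ constructed via CRT.
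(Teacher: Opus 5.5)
Your proof is correct. The converse direction is the paper's argument. Your forward direction is essentially the alternative argument appended at the end of the paper's proof: square-freeness from $a=p$, then a generator mod $p$.

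The paper's main forward argument is different. After proving $n$ odd, it takes a primitive root $b_i$ modulo each full prime power $p_i^{a_i}$ and glues these by CRT into a single unit $a$. From $a^{n-1}\equiv 1 \pmod n$ it reads off $\dv{p_i^{a_i-1}(p_i-1)}{n-1}$. This gives both $a_i=1$ (since $p_i^{a_i-1}$ divides both $n$ and $n-1$) and $\dv{p_i-1}{n-1}$ in one stroke.

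That route costs more machinery: primitive roots modulo odd prime powers, plus the oddness lemma, because there are none modulo $2^a$ for $a\ge 3$. In return it only ever uses the Carmichael condition at units. So it proves the forward direction from the weaker alternative definition ($a^{n-1}\equiv 1 \pmod n$ for $\gcd(a,n)=1$). That is exactly what is needed when the paper later invokes Korselt's theorem to show the two definitions are equivalent.

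Your square-freeness step evaluates the condition at the non-unit $a=p$, so it genuinely requires the first definition and could not serve that purpose. In exchange, you need only primitive roots modulo a prime.

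Two minor points. First, since you work with the first definition, the CRT lift is superfluous: $p \mid g^n-g=g(g^{n-1}-1)$ and $p\nmid g$ already give $g^{n-1}\equiv 1 \pmod p$. If you do want a unit modulo $n$, lift with the coprime moduli $p^e$ and $n/p^e$ ($p^e$ the exact power of $p$ in $n$). This works without square-freeness, so the two necessity steps are not forced into the order you claim. Second, your opening sentence mentions primitive roots mod $p^2$, which your argument never uses.
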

\begin{proof}
$ $ \\ $ $
$\Rightarrow$. We first prove the sufficient condition.
Let $n \in \mb{Z}_+$, with $n > 1$ that is compositive
and a Carmichael number. The following then apply
by definition
\[
 a^{n-1} \equiv 1 \pmod n, \quad \forall  a \in \mb{Z} : \gcd(a,n)=1,
\]
or
\[
 a^{n} \equiv a \pmod n, \quad \forall  a \in \mb{Z} .
\]
We will then show the following:
\begin{itemize}
\item[(1)]  $n$ is odd,
\item[(2a)] for every prime divisor $p$ of $n$  we have 
$\ndv{p^2}{n}$, that is, $n$ is square-free, and
\item[(2b)] for every prime divisor $p$ of $n$  we have 
$\dv{p-1}{n-1}$.
\end{itemize}
$ $ \\ $ $
Part (1) has been proven in another problem. Set $a=-1$ for
example.
We then show (2a) and (2b) as follows.
Let by the fundamental theorem of arithmetic
\begin{equation}
\label{korn}
  n = p_{1}^{a_1} p_{2}^{a_2} \ldots p_{k}^{a_k},
\end{equation}
be the prime decomposition of $n$, where $a_i \geq 1$ for
all $i=1, \ldots , k$. We will show that
$a_i =1 $ for all $i=1, 2, \ldots , k$. 
In the remainder $i$ runs  $i=1, 2, \ldots , k$. 
So does $j \neq i $ if needed.
Note that 
$\gcd( p_i , p_j )=1$ for all $i,j$.
By prior results on primitive roots, there exist a primitive
root $b_i$ mod $p_i^{a_i}$ for all $i$.
Then 
\begin{equation}
\label{kor0}
 ord_{p_i^{a_i}} ( b_i ) = \phi (p_i^{a_i} ) = p_i^{a_i -1} (p_i -1 ),
\end{equation}
where $\gcd (b_i , p_i )=1$.
Since $\gcd( p_i , p_j )=1$ by the C.R.T. we have that there exists
an $a$ such that
\begin{equation}
\label{kor1}
  a \equiv b_i \pmod{p_i^{a_i}}  \ \forall i
\end{equation}
Note that $\gcd( a, p_i )=1$ since otherwise
 $\gcd (b_i , p_i ) >1$, a contradiction to $\gcd (b_i , p_i )=1$.
Since all $p_i$ are prime numbers and relatively prime to each
other we have the following by a prior problem.
\begin{equation}
\label{kor2}
\gcd (a,n)=1.
\end{equation}
This will be used to
derive the second equation below.
By the Carmichael number definition we have the following for 
all $a \in \mb{Z}$ and all $i$.
\begin{eqnarray}
a^n &\equiv& a \pmod{n} \xRightarrow{\text{by Eq.(\ref{kor2})}} \nonumber \\
a^{n-1} &\equiv& 1 \pmod{n} \xRightarrow{\dv{p_i^{a_i}}{n}} \nonumber \\
a^{n-1} &\equiv& 1 \pmod{p_i^{a_i}} \xRightarrow{\dv{p_i}{n}} \nonumber \\
\label{kor3}
b_i^{n-1} &\equiv& 1 \pmod{p_i^{a_i}} 
\end{eqnarray}
By way of Eq.~(\ref{kor0}) and Eq.~(\ref{kor3}) we 
obtain the following.
\begin{equation}
\label{kor4}
\dv{ p_i^{a_i -1} (p_i -1 )}{n-1} .
\end{equation}
From Eq.(\ref{korn}) we have  $\dv{p_i^{a_i}}{n}$
and given $a_i \geq 1$,
$\dv{p_i^{a_i -  1}}{n}$.
Furthermore from Eq.(\ref{kor4}) we have
$\dv{p_i^{a_i -  1}}{n-1}$.
Combining the two we obtain
$\dv{p_i^{a_i -  1}}{1}$.
This is only possible for $a_i =1$ and this is true for all $i$.
This establishes the square-free consition (2a).
$ $ \\ $ $
Continuing, $a_i = 1$ and Eq.(\ref{kor4} imply
$\dv{p_i -1}{n-1}$ for all $i$, conditional on $\dv{p_i}{n}$
and this establishes condition (2b).
The sufficient part of Korstelt's theorem has been shown.
$ $ \\ $ $
$\Leftarrow$ We now show the necessary part.
Assuming (1), (2a) and (2b) we proceed to show that $n$ is
a Carmichael number.
$ $ \\ $ $
By the fundamental theorem of arithmetic,  since 
$\ndv{p^2}{n}$ for every prime factor $p$ of $n$ we have
the following
\begin{equation}
\label{kor5}
 n = p_1 \ldots p_k ,
\end{equation}
and of course $p_i > 2$ for all $i=1 , \ldots  , k$.
Moreover $\dv{p_i -1}{n-1}$ by way of condition~(2b).
We consider two cases.
$ $ \\ $ $
{\bf Case 1.} Pick $a \in \mb{Z}$ with $\gcd (a,p_i )=1$.
By Fermat's little Theorem 
\begin{eqnarray}
a^{p_i -1 } &\equiv& 1 \pmod{p_i} \quad \forall i 
\xLeftrightarrow{\dv{p_i -1}{n-1} : (n-1)=(p_i -1)K} \nonumber \\
a^{(p_i -1)K } &\equiv& 1 \pmod{p_i} \quad \forall i 
                                                     \nonumber \\
a^{n-1} &\equiv& 1 \pmod{p_i} \quad \forall i 
\xLeftrightarrow{\gcd(a, p_i )=1} \nonumber \\
a^{n} &\equiv& a \pmod{p_i} \quad \forall i 
 \nonumber
\end{eqnarray}
$ $ \\ $ $
{\bf Case 2.} Pick $a \in \mb{Z}$ with $\gcd (a,p_i )=d > 1$.
Since $\dv{d}{a}$ and $\dv{d}{p_i}$ for all $i$, we from
the latter $d \leq p_i$ and since $d >1$ it can only be
$d=p_i$ for a prime $p_i$ for all $i$. Then
$\dv{p_i}{a}$ for all $i$. Trivially then,
\[
a^n \equiv p_i^n \equiv p_i \equiv a \equiv 0 \pmod{p_i}
\]
Therefore $a^n \equiv a \pmod{p_i}$ forall $i$ using
a prior problem leads to
$a^n \equiv a \pmod{n}$ since all $p_i$ are relatively prime
to each other by way each one b eing a prime number.
$ $ \\ $ $
{\bf An alternative proof of 
the sufficient condition $\Rightarrow$ follows.}
$ $ \\ $ $
For a Carmichael number $n$ we have for all a
$ a^n \equiv a \pmod n$.
Pick a $p$, such that $\dv{p}{n}$. Then
by way of $n$ being a Carmichael number
$ p^n \equiv p \pmod n$, or
$\dv{n}{p^n -p}$.
Since $\dv{p}{n}$ the prior result shows that
$\dv{p}{p^n -p}$.
Say $\dv{p^2}{n}$. Then
$\dv{p^2}{p^n -p}$ which would imply
$\dv{p}{p^{n-1}-1}$.
Obviously $\dv{p}{p^{n-1}}$, $n>1$.
The last two imply $\dv{p}{1}$ and 
$p=1$ a contradiction
to the primality of $p$.
Therefore $\ndv{p^2}{n}$ for $\dv{p}{n}$.
$ $ \\ $ $
In order to show that for $\dv{p}{n}$ we
have $\dv{p-1}{n-1}$ we start with the
definition of a Carmichael number.
For $a \in \mb{Z}$ we have $a^n \equiv a \pmod n$,
and for $a$ such that $\gcd(a,n)=1$ we
further have than $a^{n-1} \equiv 1 \pmod n$.
For any $p$ such that $\dv{p}{n}$ we have
as a result $a^{n-1} \equiv 1 \pmod p$.
Consider a generator $a$ of $\mb{Z}_p^*$.
We have $a^{p-1} \equiv 1 \pmod p$. From this
equivalence and the $a^{n-1} \equiv 1 \pmod n$
we conclude $(n-1)-(p-1) = k \phi (p) = k (p-1)$
that results into $\dv{p-1}{n-1}$ as needed.
\end{proof}

\begin{cor}
Show that the two definitions of Carmichael numbers
are equivalent.
\end{cor}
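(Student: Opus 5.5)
We show that, for a composite $n>1$, Definition~1 ($a^{n}\equiv a \pmod n$ for all $a\in\mb{Z}$) holds exactly when Definition~2 ($a^{n-1}\equiv 1\pmod n$ for all $a$ with $\gcd(a,n)=1$) holds. The two directions are of very different difficulty.

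\textbf{Definition 1 implies Definition 2.} Take $a$ with $\gcd(a,n)=1$. From $a^n\equiv a\pmod n$ we get $\dv{n}{a(a^{n-1}-1)}$. Since $\gcd(a,n)=1$, Corollary~\ref{egcd3} gives $\dv{n}{a^{n-1}-1}$. Alternatively, $a$ is a unit modulo $n$, so we may multiply by $a^{-1}$. This direction needs nothing more.

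\textbf{Definition 2 implies Definition 1.} This is the main obstacle. The hypothesis says nothing about $a$ sharing a factor with $n$, so we first extract the structure of $n$ from Definition~2 alone and then check every $a$ prime by prime. Korselt's theorem (Theorem~\ref{korseltT}) is proved starting from Definition~1, so we cannot cite it directly. However, its forward argument only ever uses the congruence $a^{n-1}\equiv 1\pmod n$ for the CRT-constructed $a$ with $\gcd(a,n)=1$ (Eq.~(\ref{kor2})). That is exactly what Definition~2 supplies.

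We then rerun that argument as follows.
\begin{itemize}
\item First show $n$ is odd. Take $a=-1$, which is coprime to $n$. If $n$ were even, $n-1$ would be odd and $(-1)^{n-1}=-1\equiv 1\pmod n$ would force $\dv{n}{2}$, so $n=2$, which is prime. This also gives the odd primes $p_i$ needed for primitive roots.
\item Write $n=\prod p_i^{a_i}$. Pick primitive roots $b_i$ modulo $p_i^{a_i}$ (Theorem~\ref{prrmodpa}) and glue them by the Chinese remainder theorem (Theorem~\ref{crt}) into a single $a$ with $\gcd(a,n)=1$.
\item Definition~2 gives $a^{n-1}\equiv 1\pmod n$, hence $b_i^{n-1}\equiv 1 \pmod{p_i^{a_i}}$. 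By Theorem~\ref{thm52}, $\dv{p_i^{a_i-1}(p_i-1)}{n-1}$.
\item Since $p_i$ divides $n$ and $n-1$ only when $a_i-1=0$, we get $a_i=1$, so $n$ is square-free. We also get $\dv{p_i-1}{n-1}$.
\end{itemize}

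With these conditions in hand, verifying Definition~1 is routine and follows the converse part of Korselt's proof. Fix a prime $p=p_i$ dividing $n$. If $\ndv{p}{a}$, Fermat's little theorem (Theorem~\ref{flt}) together with $\dv{p-1}{n-1}$ gives $a^{n-1}\equiv 1\pmod p$, hence $a^n\equiv a\pmod p$. If $\dv{p}{a}$, both sides are $0$ modulo $p$. Because $n$ is a square-free product of distinct primes, the congruences modulo each $p_i$ combine to one modulo $n$, using Corollary~\ref{thm25} or the preceding lemma on $p_1p_2$ extended by induction. This gives $a^n\equiv a\pmod n$ for all $a$.
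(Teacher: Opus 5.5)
Your proof is correct and follows the paper's argument. Definition~1 $\Rightarrow$ Definition~2 is the same cancellation step, and for the converse you likewise use square-freeness plus $p-1 \mid n-1$, then a prime-by-prime Fermat check combined over the distinct primes of $n$. The only difference is that the paper simply cites Korselt's theorem at that point, whereas you rerun its forward argument (including oddness via $a=-1$) from Definition~2 alone. That is a worthwhile refinement: the paper's proof of Korselt's theorem is written as starting from $a^n \equiv a \pmod n$, although, as you observe, it only uses $a^{n-1}\equiv 1 \pmod n$ for $a$ coprime to $n$.
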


\begin{proof}
$ $ \\ $ $
$\Leftarrow$.
$ $ \\ $ $
Let
\[
 a^{n} \equiv a \pmod n, \quad \forall  a \in \mb{Z} .
\]
If $\gcd(a,n)=1$ we obtain
the following.
\[
a^{n} \equiv a \pmod n
\Rightarrow
a (a^{n-1} -1) \equiv 0 \pmod n
\Rightarrow
\dv{n}{a(a^{n-1} -1)} 
\xRightarrow{\gcd(a,n)=1}
\dv{n}{(a^{n-1} -1)} 
\Rightarrow
a^{n-1}  \equiv 1 \pmod n
\]
$\Rightarrow$.
$ $ \\ $ $
Let $n$ be a Carmichael number. By Korselt's theorem it
is squarefree. We need to show 
$a^{n} \equiv a \pmod n$ for all $a \in  \mb{Z}$.
Since $n$ is squarefree it is the products of mutually
distinct primes. It thus suffices to
show $ a^{n} \equiv a \pmod p$ for all prime divisors
$p$ of $n$.
$ $ \\ $ $
{\bf Case 1: $a$ is divided by $p$.}
The $a\equiv 0 \pmod p$ and  $ a^{n} \equiv a \pmod p$  is
obviously true.
$ $ \\ $ $
{\bf Case 2: $\ndv{p}{a}$.}
Since $p$ is a prime number we use Fermat's little theorem.
$a^{p-1} \equiv 1 \pmod p$.
Furthermore by Korselt's theorem $\dv{p-1}{n-1}$ and therefore
$a^{n-1} \equiv 1 \pmod p$.
\end{proof}

\begin{cor}
(a)
Show that a Carmichael composite odd number
$n$ is the product of at least three prime numbers.
$ $ \\ $ $
\noindent
(b) Moreover every prime factor is less than $\sqrt{n}$.
\end{cor}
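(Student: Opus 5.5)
We rely throughout on Korselt's theorem (Theorem~\ref{korseltT}). A Carmichael number $n$ is square free, so it can be written as $n=p_1 p_2\cdots p_k$ with distinct primes $p_i$. Each $p_i$ is odd, since all Carmichael numbers are odd, and for every $i$ we have $\dv{p_i-1}{n-1}$. Because $n$ is composite, $k\geq 2$. Part (a) therefore reduces to excluding $k=2$, and part (b) follows from the same divisibility argument applied to an arbitrary prime factor.

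The key step is the following observation. Let $p$ be a prime factor of $n$ and write $n=pm$, so that $m=n/p$ is the product of the remaining primes. Then
\[
n-1 = pm-1 = (p-1)m + (m-1).
\]
Since $\dv{p-1}{n-1}$ and $\dv{p-1}{(p-1)m}$, Theorem~\ref{podiv}(d2) gives $\dv{p-1}{m-1}$.

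For (a), suppose $k=2$ and $n=pq$ with $p<q$. Applying the observation to the larger prime $q$ gives $m=p$, so $\dv{q-1}{p-1}$. Since $p-1>0$, Theorem~\ref{podiv}(f2) yields $q-1\leq p-1$, that is $q\leq p$. This contradicts $p<q$, so $k\geq 3$.

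For (b), take any prime factor $p$ of $n$ and let $m=n/p$. By (a), $m$ is a product of at least two odd primes, so $m>1$ and hence $m-1>0$. The observation gives $\dv{p-1}{m-1}$, and Theorem~\ref{podiv}(f2) then gives $p-1\leq m-1$, so $p\leq m$. Equality $p=m$ is impossible: $m$ is composite because it has at least two prime factors, while $p$ is prime. Hence $p<m$. Multiplying by $p$ gives $p^2<pm=n$, so $p<\sqrt n$.

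The only delicate point is the case split in (a) and the requirement $m-1>0$ in (b). The latter is exactly why (a) has to be established before (b). Apart from that, the argument consists of the identity $n-1=(p-1)m+(m-1)$ followed by the basic divisibility facts of Theorem~\ref{podiv}.
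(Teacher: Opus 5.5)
Your proof is correct and is essentially the paper's argument. The identity $n-1=(p-1)m+(m-1)$, which gives $\dv{p-1}{m-1}$, is the paper's step $\dv{q-1}{n/q-1}$ in (b) and also its alternative proof of (a) via $\dv{q-1}{p-1}$; the only cosmetic difference is that you exclude $p=m$ because $m$ is composite, whereas the paper uses square-freeness. A minor point: (b) does not actually depend on (a), since $m>1$ already follows from $n$ being composite and $p\neq m$ follows from $\gcd(p,m)=1$.
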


\begin{proof}
$ $ \\ $ $
(a)
{\bf Proof 1.}
If $n$ is a Carmichael number then by Korselt's
theorem it is square free and for all prime divisors
$p$ of $n$ we have $\dv{p-1}{n-1}$.
All $p,q,n$ below are odd integer numbers.
$ $ \\ $ $
Let $n=pq$ be the product of just two primes,
where $p < q$ and $p \neq q$ since $n$ is 
squarefree. By Carmichael properties
$\dv{p-1}{n-1}$ and
$\dv{q-1}{n-1}$.
\begin{eqnarray*}
 q    &\equiv& 1         \pmod{q-1} \Rightarrow\\
 n=pq &\equiv& p \cdot 1 \pmod{q-1} \\
 n-1  &\equiv& p  -    1 \pmod{q-1} 
\end{eqnarray*}
Since $p<q$ we have $p-1 < q-1$. This means
that $p-1$ is the remainder of the division of 
$n-1$ by $q-1$ i.e. $\ndv{q-1}{n-1}$ that
contradicts the stated
$\dv{q-1}{n-1}$.
$ $ \\ $ $
(b) From the previous case
$\dv{q-1}{p-1}$ implies, since $n=pq$, that
$\dv{q-1}{n/q-1}$.
Therefore $q \leq n/q$. If $q=n/q$ it means
$n=q^2$ contradicting $n$ bein square free.
Therefore $q < \sqrt{n}$.
Furthermore $p< q < \sqrt{n}$ as well.
Note that then $pq < n$ and therefore there must
be a third prime factor $r$ for $pqr=n$, providing
sort of a third proof to (a).
\end{proof}

\begin{exa}
Prove part (a) of the Corollary above using another method.
\end{exa}
\begin{solution}
Reach the same conclusion as before, assuming $p<q$ 
by noting the following.
\begin{eqnarray*}
 n    & =    & pq               \\
 n-1  & =    & p(q-1) + (p-1)   \\
 \frac{n-1}{q-1}  & =    & p + \frac{p-1}{q-1}
\end{eqnarray*}
The $ \frac{n-1}{q-1}$ is an integer
since $\dv{q-1}{n-1}$, $p$ is obviously an
integer, and therefore $  \frac{p-1}{q-1}$
must be an integer i.e. $\dv{q-1}{p-1}$.
This would imply $q-1 \leq p-1$ i.e. $q \leq p$
that contradicts $p<q$.
\end{solution}

\newpage

\section{Lucas theorem}

\begin{thm}[Lucas theorem]
Show that $n>2$ is a prime number if
and only if the following holds for some integer $a$
such that $1 < a < n$.
\begin{eqnarray}
\label{lucas1}
 a^{n-1}           &\equiv&     1 \pmod n  \quad \wedge \quad \\
\label{lucas2}
 a^{\frac{n-1}{p}} &\not\equiv& 1 \pmod n,
\end{eqnarray}
for every prime divisor $p$ of $n-1$.
\end{thm}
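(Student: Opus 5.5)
The plan is to prove the two directions separately. The converse direction (the conditions imply primality) carries the content; the forward direction is an existence statement that follows from the primitive root theory of Chapter~3.

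\emph{Conditions imply $n$ prime.} Suppose some $a$ with $1<a<n$ satisfies Eq.~(\ref{lucas1}) and Eq.~(\ref{lucas2}).

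First, Eq.~(\ref{lucas1}) gives $a\cdot a^{n-2}\equiv 1 \pmod n$. Hence $a$ is a unit mod $n$, so $\gcd(a,n)=1$ and $k=\ord_n(a)$ is defined.

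Second, by Theorem~\ref{thm52}, Eq.~(\ref{lucas1}) gives $\dv{k}{n-1}$. This is the step needing care: I claim $k=n-1$. If $k<n-1$, write $n-1=km$ with $m>1$, and let $p$ be a prime factor of $m$. Then $\dv{k}{(n-1)/p}$, so $a^{(n-1)/p}\equiv 1\pmod n$, again by Theorem~\ref{thm52}. This contradicts Eq.~(\ref{lucas2}) for the prime divisor $p$ of $n-1$. Hence $\ord_n(a)=n-1$.

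Third, Corollary~\ref{cor52} gives $\dv{n-1}{\phi(n)}$, so $n-1\le\phi(n)$. But $\phi(n)\le n-1$ always, since $n$ itself is excluded among $1,\ldots,n$. So $\phi(n)=n-1$, meaning every integer $1\le i\le n-1$ is coprime to $n$. Then $n$ has no divisor $d$ with $1<d<n$, so $n$ is prime.

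\emph{$n$ prime implies the conditions.} If $n$ is prime, Proposition~\ref{prrmodp} gives a primitive root $g$ mod $n$. Take $a=g \bmod n$.

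For $n>2$ we have $g\not\equiv 1$, since otherwise its order would be $1\ne n-1$. Also $g\not\equiv 0$, so $1<a<n$.

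Eq.~(\ref{lucas1}) is Fermat's little theorem (Theorem~\ref{flt}).

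For Eq.~(\ref{lucas2}), fix a prime $p$ dividing $n-1$. Then $(n-1)/p$ is a positive integer smaller than $n-1=\ord_n(g)$. By minimality of the order, $g^{(n-1)/p}\not\equiv 1\pmod n$.

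The main obstacle is the second step of the converse, reducing ``order $<n-1$'' to a violation at a single prime $p$. The divisibility $\dv{k}{(n-1)/p}$ must be argued explicitly through the factor $m=(n-1)/k$, and this is the same mechanism as Lemma~\ref{prrmodpp}. The remaining steps are routine applications of earlier results.
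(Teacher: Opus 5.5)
Your proof is correct and follows essentially the same route as the paper. A primitive root of $\mb{U}_n$ witnesses the forward direction, and for the converse you force $\ord_n(a)=n-1$ by picking a prime $p$ dividing $(n-1)/\ord_n(a)$, then conclude $\dv{n-1}{\phi(n)}$, so $\phi(n)=n-1$ and $n$ is prime; your write-up is also cleaner than the paper's in checking $1<a<n$ for the primitive root and in testing $g^{(n-1)/p}$ where the paper's forward direction slips to $g^{p}$.
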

\begin{proof}
\end{proof}

If no such $a$ exists then $n$ is composite or 1 or 2;
the latter two cases are eliminated by way of $n>2$.

\begin{proof}
$ $ \\ $ $
$\Rightarrow$
If $n$ is a prime number then the  $\mb{U}_n$  multiplicative 
group is a cyclic one with $n-1$ elements. It then has a generator $g$. 
Therefore $g^{n-1} \equiv 1 \pmod{n}$, but $g^i \not\equiv 1 \pmod{n}$ for
$i<n-1$. Every element $a$ of $\mb{U}_n$ can be written in the form
$a \equiv g^k \pmod n$. Then $a^{n-1} \equiv (g^{n-1})^k \equiv 1 \pmod n$.
Then $ord_n (a)$ divides $n-1$ and of course $ord_n (a) < n-1$ unless
$a$ is a generator. Thus we focus on generators such as $g$ in
the remainder.  Moreover, consider a prime divisor $p$ of $n-1$.
We must have for a generator $g$, $g^p \not\equiv 1 \pmod n$ for every $p$
since otherwise if $g^p \equiv 1 \pmod n$, $p \neq n-1$,
then we have the following.
\[
\dv{p}{n-1} \Rightarrow
\frac{n-1}{p} >1  \text{and is integer} \Rightarrow
g^{p} \equiv 1 \pmod p \Rightarrow
ord_n (g) = p < n-1,
\]
a contradiction to $g$ being a generator and thus
$ord_n (g) = n-1$.
$ $ \\ $ $
$\Leftarrow$
Consider that there exists an element $a$ such that
\[
 a^{n-1} \equiv           1 \pmod n .
\]
Then $\dv{ord_n (a)}{n-1}$. 
Moreover, $ \gcd(a,n)=d$ shows $d=1$.
Otherwise if $d>1$, $\dv{d}{a^{n-1}}$ and $\dv{d}{n}$ and
therefore $\dv{d}{1}$ and $d \leq 1$. This ($ \gcd(a,n)=1$)
implies by Euler's theorem
\[
 a^{\phi (n)} \equiv 1 \pmod n ,
\]
and thus $\dv{ord_n(a)}{\phi(n)}$.
Let $ord_n (a)=k$.
Then from the first equation above we have
the following: $\dv{k}{n-1}$.  There are two possibilities:
$k=n-1$ and $n$ is prime, or $k < n-1$.
For the latter case,  $(n-1)/k =p K >1$ is an integer 
and either a prime $p$ (and $K=1$) or the product of 
a prime $p$ and a composite $K>1$. 
Consider then
\[
a^{\frac{n-1}{p}} .
\]
By the second condition of the problem we have that for every
prime divisor of $n-1$ and thus for the prime $p$
\[
a^{\frac{n-1}{p}}  \not\equiv 1 \pmod{n} .
\]
However we also have the following.
\[
a^{\frac{n-1}{p}}   \equiv
a^{k \cdot K}       \equiv
(a^{k})^K           \equiv
      1^K           \equiv
      1  \pmod n ,
\]
leading to a contradiction.
The only case left is $k=n-1$. If the order of
an element is  $n-1$ this means $n$ is prime.
This is because it implies
$\dv{n-1}{\phi(n)}$ and $\phi(n) \leq n-1$, i.e.
$\phi(n) = n-1$ and thus $n$ is a prime number.
\end{proof}

\begin{thm}
Let $n>5$ be an odd composite number. The following
statements are equivalent.
\begin{enumerate}
\item 
\[
\forall a \in \mb{Z} , \gcd(a,n)=1 \Rightarrow
a^{\frac{n-1}{2}} \equiv \pm 1 \pmod n .
\]
\item $n$ is square-free and for every prime number
$p$ such that $\dv{p}{n}$ we have $\dv{p-1}{\frac{n-1}{2}}$ which
is equivalent to
\[
Composite(n), SquareFee(n), \forall p, Prime(p) \wedge \dv{p}{n}
\Rightarrow \dv{p-1}{\frac{n-1}{2}}.
\]
\end{enumerate}
\end{thm}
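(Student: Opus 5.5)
The plan is to mimic the proof of Korselt's theorem (Theorem~\ref{korseltT}), with $n-1$ replaced by $\frac{n-1}{2}$ and the target congruence $a^{n-1}\equiv 1$ replaced by $a^{\frac{n-1}{2}}\equiv \pm 1$. Throughout, $n$ is odd, so $\frac{n-1}{2}$ is an integer.

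\textbf{From 1 to 2.} Write $n=p_1^{a_1}\cdots p_k^{a_k}$. For each $i$, Theorem~\ref{prrmodpa} gives a primitive root $b_i$ mod $p_i^{a_i}$. The natural first move would be to glue the $b_i$ together with the Chinese remainder theorem, as in the Korselt proof. That does not work directly here, because knowing $a^{\frac{n-1}{2}}\equiv \pm1 \pmod n$ only says that the sign is the same for every $i$; it does not force the value $1$ modulo each $p_i^{a_i}$.

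The trick is to exploit that $n$ is composite and has at least two distinct prime factors. That will be true once square-freeness is established; the case $n=p^a$ must be handled separately, see below. Fix $i$, and use Theorem~\ref{crt} to choose $a$ with
\[
a\equiv b_i \pmod{p_i^{a_i}}, \qquad a\equiv 1 \pmod{p_j^{a_j}} \quad \text{for all } j\neq i.
\]
Then $\gcd(a,n)=1$, so $a^{\frac{n-1}{2}}\equiv \pm1 \pmod n$. Reducing modulo $p_j^{a_j}$ gives $1$, so the sign must be $+1$. Hence $b_i^{\frac{n-1}{2}}\equiv 1 \pmod{p_i^{a_i}}$, and Theorem~\ref{thm52} yields
\[
p_i^{a_i-1}(p_i-1)\ \Big|\ \frac{n-1}{2}.
\]
Since $p_i^{a_i-1}$ divides both $n$ and $n-1$, we get $a_i=1$. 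This gives square-freeness together with $p_i-1 \mid \frac{n-1}{2}$.

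The remaining case $n=p^a$ with $a\ge 2$ needs its own argument. Take $a$ to be a primitive root $b$ mod $p^a$. Then $b^{\frac{n-1}{2}}\equiv \pm1$ gives $b^{n-1}\equiv 1$, so $p^{a-1}(p-1)\mid n-1$. Again $p^{a-1}$ divides both $n$ and $n-1$, a contradiction. This case is where I expect the most care to be needed: the sign argument above needs a second prime, and the prime-power case must be excluded separately as just described.

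\textbf{From 2 to 1.} Here $n=p_1\cdots p_k$ with the $p_i$ distinct and $p_i-1 \mid \frac{n-1}{2}$. For $\gcd(a,n)=1$, Fermat's little theorem (Theorem~\ref{flt}) gives $a^{\frac{n-1}{2}}\equiv 1 \pmod{p_i}$ for every $i$. Combining these with the earlier lemma on moduli $p_1p_2$, extended by induction to any number of distinct primes, gives $a^{\frac{n-1}{2}}\equiv 1 \pmod n$, so in particular $\equiv \pm1$.

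The hypothesis $n>5$ plays no real role here. Every step uses only that $n$ is odd and composite, and the case analysis above covers that.
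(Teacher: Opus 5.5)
Your proof is correct. For $(1)\Rightarrow(2)$ it takes a more direct route than the paper.

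\textbf{The paper's route.} The paper squares to get $a^{n-1}\equiv 1 \pmod n$, so $n$ is a Carmichael number. It then invokes Korselt's theorem (Theorem~\ref{korseltT}) to obtain square-freeness and $\dv{p-1}{n-1}$. The missing factor of $2$ is recovered by a parity case analysis on $q=(n-1)/(p-1)$. If $q$ were odd, Euler's criterion gives $a^{\frac{n-1}{2}}\equiv \leg{a}{p} \pmod p$. The CRT element $a\equiv g \pmod p$, $a\equiv 1 \pmod{n/p}$ then has $a^{\frac{n-1}{2}}\equiv -1 \pmod p$ and $\equiv 1 \pmod{n/p}$, contradicting (1). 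That CRT step needs $\gcd(p,n/p)=1$, i.e.\ the square-freeness already obtained from Korselt.

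\textbf{Your route.} You use essentially the same test element, but up front and modulo the prime-power components $p_i^{a_i}$, which are automatically coprime. The components with $a\equiv 1$ pin the sign to $+1$. The order of the primitive root then gives $\dv{\phi(p_i^{a_i})}{\frac{n-1}{2}}$ directly, which yields square-freeness and $\dv{p_i-1}{\frac{n-1}{2}}$ at once. This bypasses Korselt's theorem and the Legendre-symbol machinery, needing only primitive roots modulo odd prime powers, the CRT, and Theorem~\ref{thm52}. The price is a separate prime-power case; in the paper it is absorbed into the proof of Korselt's theorem, and you handle it correctly.

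\textbf{One wording point.} Your case split is really ``at least two distinct prime factors'' versus ``$n=p^a$ with $a\ge 2$'', and neither case requires square-freeness. So the phrase ``that will be true once square-freeness is established'' suggests a circularity that is not actually in your argument.

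\textbf{The rest.} The converse direction matches the paper's Fermat-plus-CRT argument. You are also right that the hypothesis $n>5$ is vacuous, since every odd composite is at least $9$.
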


\begin{proof}
$ $ \\ $ $
{\bf (1) $\Rightarrow$ (2).} 
$ $ \\ $ $
If $ \gcd(a,n)=1$ and $a^{\frac{n-1}{2}} \equiv \pm 1 \pmod n$
then $a^{n-1} \equiv 1 \pmod n$ and $n$ is a Carmichael number.
By Korselt's theorem $n$ is square-free. Moreover for every
prime divisor $p$ of $n$ we have by Korselt's theorem also
that $\dv{p-1}{n-1}$. We thus need to show that
$\dv{p-1}{\frac{n-1}{2}}$ to complete the proof.
$ $ \\ $ $
{\bf Show $\dv{p-1}{\frac{n-1}{2}}$.}
Let $n-1=(p-1)q$ by Korselt's theorem and the fact that
$\dv{p-1}{n-1}$.
There are two possibilies: (a) $q$ is even, or (b) $q$ is
odd. 
$ $ \\ $ $
{\bf Case (a): $q$ is even.} Then $q/2$ is integer
and 
\[
n-1=(p-1) q  \Rightarrow
\frac{n-1}{2} = (p-1) \frac{q}{2}
\Rightarrow
\dv{p-1}{\frac{n-1}{2}},
\]
since $q/2$ is an integer.
$ $ \\ $ $
{\bf Case (b): $q$ is odd.}
We will show that this case is not possible. Therefore
case (a) is the only possible case that leads to the
desired outcome : $\dv{p-1}{\frac{n-1}{2}}$ as proven.
$ $ \\ $ $
If $q$ is odd,
then $n-1=(p-1)q$ and $n-1$ is even for odd $n$,
and thus we have the following.
\[
a^{\frac{n-1}{2}} \equiv a^{\frac{(p-1)q}{2}} 
                  \equiv \left( a^{\frac{p-1}{2}} \right)^q 
                  \equiv \left( \leg{a}{p}         \right)^q  \pmod p .
\]
Since $q$ is odd, the latter part works as follows.
\[
\leg{a^2}{p} = 1 \Rightarrow
\left( \leg{a}{p} \right)^q  \equiv \leg{a}{p} \pmod p
\]
By transitivity from the previous derivations we have
\[
a^{\frac{n-1}{2}} \equiv \leg{a}{p}  \pmod p .
\]
For a generator $g$ of $\mb{U}_p$ we have 
$g^{\phi(p)}  \equiv g^{p-1} \equiv 1 \pmod p$,
and also
$ g^{\frac{p-1}{2}} \equiv -1 \pmod p$,
by way of Lagrange's theorem.
Then, by the Chinese Remainder Theorem for $n=pP$ and noting
because $n$ is square-free $\gcd(p,P)=\gcd(p,\frac{n}{p})=1$ 
we can find an $a$
such that the following hold.
\[
  a \equiv g \pmod p ,
\]
and
\[
  a \equiv 1 \pmod{\frac{n}{p}} .
\]
Then
\begin{equation}
\label{sse8a}
a^{\frac{n-1}{2}} \equiv \leg{a}{p} \equiv \leg{g}{p} 
 \equiv g^{\frac{p-1}{2}} \equiv -1 \pmod p .
\end{equation}
Moreover $a \equiv 1 \pmod{\frac{n}{p}}$ and therefore
\begin{equation}
\label{sse8b}
a^{\frac{n-1}{2}} \equiv 1 \pmod{\frac{n}{p}}.
\end{equation}
By way of the consequent of part (1) we have
two possibilities: (i) $a^{\frac{n-1}{2}} \equiv 1 \pmod n$,
(ii) $ a^{\frac{n-1}{2}} \equiv -1 \pmod n$.
$ $ \\ $ $
{\bf Case (b1). $a^{\frac{n-1}{2}} \equiv 1 \pmod n$.}
We have by part (1) $a^{\frac{n-1}{2}} \equiv 1 \pmod n$ as
one possibility. If it is true  then since $\dv{p}{n}$ that
$a^{\frac{n-1}{2}} \equiv 1 \pmod p$.
By way of Eq.(\ref{sse8a}) we then conclude that
$\dv{p}{2}$ that leads to $p=2$ an impossibility since $n$
is odd, and $p$ is a factor of $n$.
The other case left from part (1) is the following one.
$ $ \\ $ $
{\bf Case (b2). $a^{\frac{n-1}{2}} \equiv -1 \pmod n$.}
Then considering $\frac{n}{p}=P$,  since $\dv{P}{n}$ we 
then conclude that
$a^{\frac{n-1}{2}} \equiv  -1  \pmod P$.
By way of Eq.(\ref{sse8b}) we also conclude that
$\dv{P}{2}$ which leads to $P=2$, an impossibility since $n$
is odd.
Therefore case (b) is impossible only case (a) is possible
and by case (a) we have already concluded $\dv{p-1}{\frac{n-1}{2}}$.
Proof completed.
$ $ \\ $ $ 
{\bf (2) $\Rightarrow$ (1).}
$ $ \\ $ $
Since $n$ is square free let $n=p_1 p_2 \ldots p_k$.
By Fermat's little theorem for every $a$ such that 
$\gcd(a,p_i )=1$ we have $a^{p_i -1} \equiv 1 \pmod{p_i}$.
Furthermore $\dv{p_i}{n}$ implies $\dv{p_i -1}{((n-1)/2)}$
or equivalently $(n-1)/2 = (p_i -1 ) k_i$ for some integer
$k_i$.
Then
\[
a^{\frac{n-1}{2}} \equiv
a^{(p_i -1 ) k_i} \equiv
(a^{p_i -1})^{k_i} \equiv
1 \pmod{p_i}.
\]
By the CRT given that $\gcd(p_i , p_j )=1$ we have
that
\[
a^{\frac{n-1}{2}} \equiv 1 \pmod{n}.
\]
\end{proof}

\newpage
\section{Preliminary results for primality testing}

\noindent
Let $\mb{Z}_p = \mb{Z}/p\mb{Z} = \left\{ 0, 1, \ldots , n-1 \right\} $.
Let $\mb{U}_p$ be the units of $\mb{Z}_p$, where $p$
is a prime number greater than two.
It is also represented by $\mb{Z}_p^x$ or
$(\mb{Z}/p\mb{Z})^x$.
Formally
\[
\mb{U}_p = \left\{ a : a \in \mb{Z}_p , a \text{\ has an inverse} \right\} .
\]

\begin{lem}
\label{uplem1}
$\mb{U}_p$ is a group under multiplication mod $p$.
\end{lem}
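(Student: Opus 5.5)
The plan is to check the four group axioms (closure, associativity, identity, inverses) directly for $\mb{U}_p$ under multiplication modulo $p$. Throughout I will use the characterization, established earlier (Theorem~\ref{modn} and the Proposition on inverses), that $a \in \mb{Z}_p$ is a unit exactly when $\gcd(a,p)=1$. For prime $p$ this means $\mb{U}_p=\{1,2,\ldots,p-1\}$. Since $1\cdot 1\equiv 1 \pmod p$, the element $1$ lies in $\mb{U}_p$, so the set is nonempty.

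For closure I will reuse the argument of the Theorem in the Euler's theorem section showing $ab \in \mb{U}_n$ for $a,b\in\mb{U}_n$. If $a a^{-1}\equiv 1$ and $b b^{-1}\equiv 1 \pmod p$, then $(ab)(b^{-1}a^{-1}) \equiv a(bb^{-1})a^{-1}\equiv 1 \pmod p$. Hence $ab \bmod p$ has an inverse and lies in $\mb{U}_p$. Equivalently, if $p\nmid a$ and $p\nmid b$ then $p \nmid ab$, by the Corollary that a prime dividing a product divides one of the factors.

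Associativity and commutativity will be inherited from $\mb{Z}$. By the multiplicativity property in Corollary~\ref{pmod}, multiplication of residue classes is well defined, so the ring laws of Proposition~\ref{abeltimes} restrict to the subset $\mb{U}_p$. The identity is $1$, again by Proposition~\ref{abeltimes}.

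For inverses, I will take $a\in\mb{U}_p$ with inverse $x$, so $ax\equiv 1\pmod p$. Reading the same congruence as $xa\equiv 1$ shows that $x \bmod p$ has inverse $a$, so $x\in\mb{U}_p$. Existence of $x$ with $1\le x\le p-1$ is guaranteed by the Theorem stating that $ax\equiv1 \pmod p$ is solvable when $p\nmid a$; alternatively, Fermat gives $x=a^{p-2}$ explicitly.

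I expect no step to be a genuine obstacle. The only point requiring care is well-definedness: the product must be taken on residue classes, so the operation does not depend on representatives. This is exactly what Corollary~\ref{pmod}(8) provides. Together with commutativity, the argument shows that $\mb{U}_p$ is in fact an abelian group, of order $\phi(p)=p-1$.
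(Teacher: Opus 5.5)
Your proof is correct and follows the same route as the paper. The key step, closure via $(ab)(b^{-1}a^{-1})\equiv 1 \pmod p$, is exactly the paper's argument, and the remaining axioms are inherited from $\mb{Z}_p$. Your explicit checks that $1\in\mb{U}_p$ and that the inverse $x$ of $a$ again lies in $\mb{U}_p$ fill in a point the paper glosses over when it says all other properties are inherited from $\mb{Z}_p$: closure under inverses is not inherited, since $\mb{Z}_p$ is not a group under multiplication.
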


\begin{proof}
$ $ \\ $ $
It suffices to show that $\mb{U}_p$ is closed under
under multiplication mod $p$.
Consider $a,b \in \mb{U}_p$.
For prime $p$ and $ 1 \leq a,b <p$ we have
$\gcd(a,p)=\gcd(b,p)=1$ and thus $a^{-1} , b^{-1}$ exist.
Then
\[
(ab) \in \mb{U}_p ,
\]
since $(ab)^{-1}$ exists and it is $b^{-1} a^{-1}$ 
by way of
\[
(ab) (b^{-1} a^{-1} ) =a (b  (b^{-1} a^{-1} ) ) =a a^{-1} = 1,
\]
and
\[
(b^{-1} a^{-1} ) (ab) = (b^{-1} ( a^{-1} a ) b = b^{-1} b = 1.
\]
All other properties (e.g. associativity) hold for 
$\mb{U}_p$ by way of holding for $\mb{Z}_p$.
\end{proof}

\begin{cor}
The result is true for composite $n$ as well;
$\mb{U}_n$ is then all $i$ such that $\gcd(i,n)=1$.
\end{cor}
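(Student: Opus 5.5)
The statement to prove is the Corollary: for composite $n$, the set $\mb{U}_n=\{ i \in \mb{Z}_n : \gcd(i,n)=1\}$ is a group under multiplication mod $n$. The plan mirrors the proof of Lemma~\ref{uplem1}. The one change is that primality of $p$ was used only to guarantee that every $1 \leq a < p$ is a unit. For general $n$ we replace this with the characterization already proved: $a$ has an inverse mod $n$ if and only if $\gcd(a,n)=1$ (Theorem~\ref{modn}, equivalently the Proposition in the section on inverses).

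\textbf{Closure.} Take $a,b \in \mb{U}_n$. I would prove closure in two interchangeable ways.
\begin{itemize}
\item By inverses: $a^{-1}$ and $b^{-1}$ exist, and $(ab)(b^{-1}a^{-1}) \equiv 1 \pmod n$ by associativity and commutativity in $\mb{Z}_n$ (Proposition~\ref{abeltimes}). So $ab$ has an inverse, hence $ab \bmod n \in \mb{U}_n$. This is exactly the earlier theorem stating that $ab \in \mb{U}_n$ and $a^{-1}\in \mb{U}_n$, which may simply be cited.
\item By gcds: if $\gcd(ab,n)>1$, pick a prime $q$ dividing both $ab$ and $n$. Then $q$ divides $a$ or $b$, contradicting $\gcd(a,n)=\gcd(b,n)=1$.
\end{itemize}
In either route we also need that reducing $ab$ mod $n$ does not change its gcd with $n$. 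This follows from Theorem~\ref{gcddiv}, since $\gcd(n, ab - kn)=\gcd(n,ab)$.

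\textbf{Remaining axioms.}
\begin{itemize}
\item Identity: $1 \in \mb{U}_n$, because $\gcd(1,n)=1$.
\item Inverses: if $a \in \mb{U}_n$ then $a^{-1}$ has inverse $a$, so $a^{-1}$ is itself a unit and lies in $\mb{U}_n$.
\item Associativity (and commutativity): these are inherited from $(\mb{Z}_n,\cdot)$ by Proposition~\ref{abeltimes}, since $\mb{U}_n \subseteq \mb{Z}_n$.
\end{itemize}

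\textbf{Main obstacle.} The only real care point is bookkeeping, not mathematics. Closure must be stated for the residue $ab \bmod n$, which means checking that the unit property depends only on the residue class mod $n$. I would handle this with Theorem~\ref{gcddiv} as above, or by noting that inverses are defined on congruence classes. The cardinality $|\mb{U}_n|=\phi(n)$ is not needed for the group property and would only be mentioned as a remark.
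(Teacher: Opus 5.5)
Your proposal is correct and follows essentially the paper's route. The paper gives no separate proof of this corollary; it intends the closure argument of Lemma~\ref{uplem1}, namely exhibiting $(ab)^{-1}=b^{-1}a^{-1}$, to be rerun with $\gcd(i,n)=1$ replacing primality as the criterion for being a unit, with the remaining axioms taken from $\mb{Z}_n$. Your explicit treatment of the identity and of inverses, and your check that membership depends only on the residue class mod $n$, make precise what the paper's ``all other properties hold by way of holding for $\mb{Z}_p$'' leaves implicit (inverses in particular are not inherited from $(\mb{Z}_n,\cdot)$).
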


\begin{cor}
Use Lemma~\ref{uplem1}  to prove Fermat's little theorem.
\end{cor}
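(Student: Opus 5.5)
The plan is to deduce Fermat's little theorem from Lemma~\ref{uplem1} through the group-theoretic route: the order of every element of a finite group divides the group order (Lagrange's theorem for groups). Fix a prime $p$ and an integer $a$ with $\gcd(a,p)=1$. Replacing $a$ by its least non-negative residue modulo $p$ (Proposition~\ref{moddiv}) changes nothing modulo $p$ (Corollary~\ref{pmod}, exponentiativity). So we may assume $1\le a\le p-1$, and hence $a\in\mb{U}_p$. Since $p$ is prime, every $1\le i\le p-1$ is relatively prime to $p$ and is therefore a unit. Thus $|\mb{U}_p|=p-1$.

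The first step is to consider the cyclic subgroup $H=\{1,a,a^2,\ldots\}$ generated by $a$ inside the group $\mb{U}_p$ of Lemma~\ref{uplem1}. Because $\mb{U}_p$ is finite, the powers of $a$ must repeat: $a^i\equiv a^j$ with $i<j$. Cancelling $a^i$ (legitimate, since $a$ is invertible) gives $a^{j-i}\equiv 1\pmod p$. Hence a least positive $k$ with $a^k\equiv1$ exists, namely $k=\ord_p(a)$, and $H=\{1,a,\ldots,a^{k-1}\}$ has exactly $k$ distinct elements. The distinctness follows by the same cancellation argument, as in Lemma~\ref{lem52a}.

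The second step, which I expect to be the main obstacle since it is the only part not already in the paper, is to show that $k$ divides $p-1$ without invoking Euler's theorem, which would make the argument circular. I will do this with cosets. For $x\in\mb{U}_p$, set $xH=\{xh:h\in H\}$. Each coset has exactly $k$ elements, because $xh\equiv xh'$ implies $h\equiv h'$ after multiplying by $x^{-1}$. Two cosets are either equal or disjoint: if $xh_1\equiv yh_2$, then $y\equiv x h_1h_2^{-1}\in xH$, so $yH\subseteq xH$, and by symmetry the reverse inclusion also holds. Every $x$ lies in its own coset $xH$, since $1\in H$. Therefore the cosets partition $\mb{U}_p$ into blocks of size $k$, and $p-1=mk$ where $m$ is the number of distinct cosets.

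Finally, $a^{p-1}=(a^k)^m\equiv1^m\equiv1\pmod p$, which is Fermat's little theorem (Theorem~\ref{flt}). As an optional extension, multiplying by $a$ gives $a^p\equiv a\pmod p$; this congruence also holds trivially when $p\mid a$.
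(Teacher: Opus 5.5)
Your proof is correct and follows the same route as the paper. Both reduce $a$ to a residue in $\{1,\ldots,p-1\}\subseteq\mb{U}_p$, use that $\ord_p(a)$ divides $|\mb{U}_p|=p-1$, and raise $a^{\ord_p(a)}\equiv 1$ to the power $(p-1)/\ord_p(a)$. The only difference is that you prove $\ord_p(a)\mid p-1$ with the coset partition, whereas the paper cites the group-theoretic Lagrange theorem without proving it; the paper's own Theorem~\ref{poly2lag} is the statement about polynomial roots, not this one. Your version is therefore the more self-contained of the two.
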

\begin{proof}
If $p$ is a prime number then
\[
\mb{U}_p = \left\{ 1, 2, \ldots , p-1 \right\} .
\]
If $\ndv{p}{a}$ then if $a > p-1$, consider $b$ such that
\[
 b= a \bmod p \Leftrightarrow  a \equiv b \pmod p,
\text{\ where\ } b \in \{ 1,2, \ldots ,p-1 \} .
\]
By Lagrange's theorem 
$\dv{ord_p (b)}{ord( \mb{U}_p )}$
i.e.
$\dv{ord_p (b)}{p-1}$, or $p-1= ord_p (b) l$.
If $ord_p (b)=k <p-1$, then
\[
b^k \equiv 1 \pmod p
\Rightarrow
b^{p-1} = b^{kl} = (b^{k})^l \equiv 1 \pmod p .
\]
Since $ a \equiv b \pmod p$, we have
\[
a^{p-1} \equiv b^{p-1} \equiv 1 \pmod p,
\]
and the result follows.
\end{proof}

\begin{lem}
\label{lemA}
Let $\mb{U}_p$ be the units of $\mb{Z}_p$, where $p$ is a prime
number greater than 2.
Let 
\[
A = \left\{ a \in \mb{U}_p : a^{\frac{p-1}{2}} \equiv 1 \pmod p \right\} .
\]
Show that $A$ is a subgroup of $\mb{U}_p$ under modular multiplication
mod $p$.
\end{lem}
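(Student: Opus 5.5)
The plan is to verify the subgroup criterion directly, using that $\mb{U}_p$ is already a group under multiplication mod $p$ (Lemma~\ref{uplem1}). Since $\mb{U}_p$ is finite, it suffices to show that $A$ is non-empty and closed under multiplication mod $p$. For completeness we will also check inverses directly rather than rely on the finite-group shortcut, which the paper has not stated. Associativity is inherited from $\mb{U}_p$, so nothing needs to be done there.

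First, $p$ is an odd prime, so $(p-1)/2$ is a positive integer and the defining condition of $A$ makes sense. Clearly $1\in A$, since $1^{\frac{p-1}{2}}=1$; thus $A$ is non-empty and contains the identity.

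For closure, take $a,b\in A$. Then $ab\in\mb{U}_p$ by Lemma~\ref{uplem1}. Using commutativity of multiplication mod $p$ together with the multiplicativity property of congruences (Corollary~\ref{pmod}, part 8), we get
\[
(ab)^{\frac{p-1}{2}} \equiv a^{\frac{p-1}{2}} b^{\frac{p-1}{2}} \equiv 1\cdot 1 \equiv 1 \pmod p ,
\]
so $ab\in A$.

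For inverses, take $a\in A$ and let $a^{-1}\in\mb{U}_p$ be its inverse, so $aa^{-1}\equiv 1\pmod p$. Raising to the power $(p-1)/2$ gives
\[
a^{\frac{p-1}{2}}\,(a^{-1})^{\frac{p-1}{2}}\equiv 1 \pmod p .
\]
Substituting $a^{\frac{p-1}{2}}\equiv 1$ yields $(a^{-1})^{\frac{p-1}{2}}\equiv 1$, hence $a^{-1}\in A$.

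There is no real obstacle; the only point needing care is that $(xy)^k\equiv x^ky^k$ requires commutativity, which holds in $\mb{Z}_p$. One could add a remark that, by Euler's criterion, $A$ is precisely the set of quadratic residues and so has $(p-1)/2$ elements, but this is not needed for the subgroup claim.
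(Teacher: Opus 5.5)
Your proof is correct and follows the same route as the paper: the paper also checks that $A$ is closed under multiplication mod $p$ (calling it easy) and inherits the remaining group properties from $\mb{U}_p$, and your explicit checks of the identity and of inverses fill in what the paper leaves implicit. The paper then goes beyond the statement, using quadratic residues and a primitive root to show $|A|=(p-1)/2$ and that $t^{\frac{p-1}{2}}\equiv -1 \pmod p$ for $t\notin A$; as you note, this is not needed for the subgroup claim.
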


\begin{proof}
$ $ \\ $ $
Consider $c,d \in A$.
It is easy to show that $cd \bmod p \in A$.
Thus the order of $A$ divides the order of $\mb{U}_p$ which is
$p-1$. Since $p$ is a prime number greater than two, the $p$ is
odd and thus $(p-1)/2$ is an integer.
We have shown that the order of $A$ is a divisor of $p-1$.
We will show that the order of $A$ can either be $p-1$ or $(p-1)/2$
first by showing that $A$ has at least $(p-1)/2$ elements.
We can then conclude that the order of $A$ is $(p-1)/2$ by showing
that it cannot be $p-1$.
$ $ \\ $ $
Consider then $b=a^2 \pmod p$. 
We show that $b^2 \in A$.
In order to show inclusion to $A$ we must show that
$ b^{\frac{p-1}{2}} \equiv 1 \pmod p$.
This is shown as follows.
\[
b^{\frac{p-1}{2}} = (a^2)^{\frac{p-1}{2}} = a^{p-1} \equiv 1 \pmod p ,
\]
by Fermat's little theorem as $\gcd(a,p)=1$ for prime $p$ and
$ 1 \leq a < p$.
Furthermore, $b$ is a quadratic residue mod $p$ since there exists an
$a$ such that $a^2 \equiv b \pmod p$.
Thus all quadratic residues are in $A$. There are $(p-1)/2$ of them.
The order of $A$ is thus at least $(p-1)/2$. It can thus
be $(p-1)/2$ or $p-1$.
We proceed to dismissing $p-1$ as a possibility.
$ $ \\ $ $.
A generator $g$ of $\mb{U}_p$ is not in $A$ since
$g^{\phi(p)} = g^{p-1} \equiv 1 \pmod p$ and
$g^{(p-1)/2} \not\equiv 1 \pmod p$. Thus $|A| < p-1$,
and therefore it must be $(p-1)/2$ if it can't be $p-1$.
%
%The set
%\[
%B = \left\{ a^2 \bmod p : a \in A \right\} ,
%\]
%contains quadratic residues mod $p$.
%Note that if $a \in A$ then $(a-p) \in A$ by using the
%binomial theorem.
%Furthermore $b \equiv a^2 \pmod p$ we have
%$(p-a)^2 \equiv a^2 \equiv b  \pmod p$
Moreover consider $t \not\in A$.
We have $t^{p-1} \equiv 1 \pmod p$ but since
$t \not\in A$, we have
$ t^{\frac{p-1}{2}} \not\equiv 1 \pmod p $,
and it should be
$ t^{\frac{p-1}{2}} \equiv -1 \pmod p $,
as the only two square roots of 
$x^2 \equiv 1 \pmod $ by Lagrange's theorem for
prime $p$ are $+1$ and $-1$ mod $p$.
Note that
$ u = t^{\frac{p-1}{2}} $ is such that
$u^2 \equiv 1 \pmod p$ and thus if $u  \not\equiv 1 \pmod p $
then it must be $u=t^{\frac{p-1}{2}} \equiv -1 \pmod p $.
\end{proof}

\subsection{Probabilistic Turing machines}

\begin{dfn}[Probabilistic Turing machine]
A probabilistic Turing machine is a non deterministic
polynomial time Turing machine $T$ such that
for each configuration of $T$ there are at most two
possible follow-up configurations (non-deterministic steps), 
and $T$ chooses which 
one of the two to pursue (take) by flipping a coin.
\end{dfn}

Thus the paths  of different configurations resembles
a binary tree. For $k$ coin flips and paths of length $k$
a given path is pursued with probability equal $1/2^k$.

\subsection{Class BPP}

\begin{dfn}[Class BPP]
Let $0 \leq \epsilon < 1/2$. A (probabilistic polynomial
time) Turing Machine $M$ decides
language $L$ with error probability $\epsilon$ if
\begin{itemize}
\item for a $w \in L$ we have 
      $Pr[\mbox{M\ accepts\ w\ }] \geq 1-\epsilon$,
\item for a $w \not\in L$ we have 
      $Pr[\mbox{M\ rejects\ w\ }] \geq 1-\epsilon$,
      or equivalently
      $Pr[\mbox{M\ accept \ w\ }] \leq \epsilon$.
\end{itemize}
BPP is the class of languages that are decided by a probabilistic
polynomial time Turing machine with error probability $\epsilon$,
as defined.
\end{dfn}

\newpage

\section{Fermat primality testing}

\subsection{A Fermat little theorem-based compositeness test}

\begin{thm}
\label{tfct1a}
Algorithm~\ref{fct1a} is a probabilistic algorithm
for testing whether $n$ is composite.
\end{thm}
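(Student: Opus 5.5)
Algorithm~\ref{fct1a} is not displayed before the statement, so I will assume it has the standard Fermat form. On input an odd $n>2$ it draws $a$ uniformly from $\{2,\ldots,n-2\}$ (or from $\mb{Z}_n^*$ after a gcd check). It outputs ``composite'' if $\gcd(a,n)>1$ or $a^{n-1}\not\equiv 1 \pmod n$. Otherwise it outputs ``probably prime.'' The claim that this is a probabilistic compositeness test splits into three parts, which I will prove in order: (i) it runs in polynomial time in $\lg n$; (ii) it never errs when it says ``composite'' (one-sided error); (iii) for every composite $n$ that is not a Carmichael number, it outputs ``composite'' with probability at least $1/2$ per trial.

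For (i), the gcd costs $O(\lg n)$ division steps by the Fibonacci bound on Euclid's algorithm. The power $a^{n-1} \bmod n$ costs $O(\lg n)$ modular multiplications by repeated squaring, as in the example computing $3^{49}\bmod 19$. For (ii), if $n$ is prime then $\gcd(a,n)=1$ for $1\le a<n$, and Fermat's little theorem (Theorem~\ref{flt}) gives $a^{n-1}\equiv 1\pmod n$. So a prime is never declared composite, and a ``composite'' verdict comes with a witness $a$ that certifies it.

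The main step is (iii). Let $B=\{a\in\mb{U}_n : a^{n-1}\equiv 1\pmod n\}$. Closure under multiplication is immediate, and inverses stay in $B$. So $B$ is a subgroup of $\mb{U}_n$ by the same argument as Lemma~\ref{uplem1} and its corollary for composite $n$. If $n$ is not Carmichael, the alternative definition supplies some $a_0\in\mb{U}_n\setminus B$, so $B$ is a proper subgroup. By Lagrange's theorem for groups (used already in the corollary reproving Fermat's little theorem), $|B|$ divides $\phi(n)$, hence $|B|\le \phi(n)/2$.

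To avoid quoting coset theory, I would also give the direct count. The map $b\mapsto a_0 b$ is injective on $B$, since $a_0$ is a unit, and its image lies in $\mb{U}_n\setminus B$, because $(a_0b)^{n-1}\equiv a_0^{n-1}\not\equiv1$. Therefore at least half of $\mb{U}_n$ consists of witnesses. Every non-unit $a$ is caught by the gcd test. So the probability of answering ``probably prime'' on a composite non-Carmichael $n$ is at most $1/2$, and $k$ independent trials drive it to at most $2^{-k}$.

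The obstacle is the Carmichael numbers. By Korselt's theorem (Theorem~\ref{korseltT}) and the definition, for them $B=\mb{U}_n$, and the test succeeds only by hitting $\gcd(a,n)>1$, which can have tiny probability. I would therefore state the theorem's guarantee as one-sided: error only on composites, bounded by $1/2$ except on Carmichael numbers. I would present Carmichael numbers as the reason for the stronger tests that follow.
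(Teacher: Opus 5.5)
Your proof is correct, and its part (ii) is exactly the paper's proof of this theorem. The paper argues only three things: a ``Composite'' verdict is always right (the $\gcd$ test is trivially sound, and for prime $n$ Theorem~\ref{flt} forces $a^{n-1}\equiv 1 \pmod n$); ``PseudoPrime'' is inconclusive; and $a=1,n-1$ are excluded because they always pass. The running time is proved in a separate lemma. The error bound is in Proposition~\ref{fctp}, where the non-witnesses form a subgroup $C_n$ of $\mb{U}_n$ and $|C_n|\le|\mb{U}_n|/2$ follows from the group-theoretic Lagrange theorem. The paper uses that theorem repeatedly but never proves it; its Theorem~\ref{poly2lag} is the polynomial root bound, not the subgroup-order statement. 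Your injection $b\mapsto a_0b$ from $B$ into $\mb{U}_n\setminus B$ needs neither the subgroup property nor Lagrange, so your route to (iii) is more self-contained. It is the same coset trick the paper itself uses for the Euler test in Proposition~\ref{ssp2}. The paper's version gives only a qualitative reading of ``probabilistic test'' (one-sided soundness). Yours justifies the name quantitatively and identifies Carmichael numbers as the exact exceptions.

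Two small corrections. For Carmichael $n$, $B=\mb{U}_n$ is simply the (alternative) definition, so Korselt's theorem is not needed. Also, $a$ is drawn from $\{2,\ldots,n-2\}$ rather than from $\mb{U}_n$. You should note that the discarded values $1$ and $n-1$ both lie in $B$ (since $n-1$ is even), so the failure probability is $(|B|-2)/(n-3)$. This is at most $1/2$ because $2|B|\le\phi(n)\le n-1$. That sample space is nonempty only for $n\ge 5$, which is why the paper assumes $n>4$ rather than your $n>2$.
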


\begin{proof}
$ $ \\ $ $
We note that in Algorithm~\ref{fct1a} we check for the
compositeness of $n$ for a uniformly at random drawn $a$
in the range $2,3, \ldots , n-2$ in two ways:
(a) we first perform the trivial check of whether $\gcd(a,n) >1$, 
and
(b) then check whether $a^{n-1} \not\equiv 1 \pmod n$.
If any of the two checks succeeds we not only conclude that
$n$ is composite but $a$ serves as a witness (proof) of the
compositeness of $n$.
The latter (b) is a consequence of Fermat's little theorem.
If $n$ is a prime number, then any $a$ picked through line 1 is
such that $\gcd(a,n)=1$. Therefore by Fermat's little theorem
we expect $a^{n-1} \equiv 1 \pmod n$. 
If the control expression of  line 2 evaluates to true i.e.  
there is a common divisor other than one between $a$, which is
$2 \leq a < n-1$ and $n$ indeed $n$ is a composite number.
If the control expression of  line 4 evaluates to true i.e.  
$a^{n-1} \not\equiv 1 \pmod n$ this also guarantees by Fermat's
little theorem  that the answer Composite is the correct answer. 
However, an
answer through line 5 does not guarantee that $n$ is prime:
we checked only for one $a$ in Algorithm~\ref{fct1a} and not for all
$a$. 
We prefer to call this algorithm FermatCompositenessTest
for the following reason: when the algorithm declares $n$ as
Composite the answer is always correct. Otherwise it declares
$n$ as PseudoPrime. This means $n$ can be Prime or Composite.
One test for one value of $a$ is not enough to determine
reliably the primeness of $a$.
Note that neither $1$ nor $n-1$ are picked as choices of $a$.
For both such cases the answer for $a^{n-1}$ is equivalent to
$1$ mod $n$ and provides no useful information.

\SetKwComment{Comment}{/* }{ */}
\SetKwRepeat{Do}{do}{while}
\begin{algorithm}
\KwIn{$n>4$ is odd; $a$ is in $\{ 2, 3, \ldots , n-2 \}$}
\KwOut{$n$ is composite or pseudoprime}
 Pick $ a \in \{ 2, 3, \ldots , n-2 \}$ uniformly at random  \;
 \If{$\gcd(a,n) \neq 1$} {
  \Return{$\mathbf{Composite}$}
 }
 \eIf(\tcc*[f]{Compositeness check}){$a^{n-1} \not\equiv 1 \pmod{n}$} {
  \Return{$\mathbf{Composite}$} 
  \Comment*[r]{$a$ is a witness of $n$'s compositeness}
 }{
  \Return{$\mathbf{PseudoPrime}$}
  \Comment*[r]{$n$ is either prime or composite}
 }
%\While{ $\mathbf{NotDone}$}{
%        $ t \gets t+1$ \;
%        \For(\tcc*[f]{Synchronous update step}){$i=1,2, \ldots , n$}{
%          $PR^{(t)}_i =\frac{1-d}{n} +
%               d \cdot \sum\limits_{j\rightarrow i}
%                \frac{PR^{(t-1)}_j }{outd(j)} $
%        }
%        \If(\tcc*[f]{Termination check}){Done}{
%           $\mathbf{Break}$ \;
%        }
%}
 \caption{FermatCompositenessTest(n)}
 \label{fct1a}
\end{algorithm}
\end{proof}

We strengthen the capability and reliability 
of Algorithm~\ref{fct1a} by
introducing Algorithm~\ref{fct1b} a Fermat little theorem
based primality testing algorithm.

\newpage
\subsection{A Fermat little theorem-based primality test}

\begin{thm}
\label{tfct1b}
Algorithm~\ref{fct1b} is a probabilistic algorithm
for testing whether $n$ is composite or prime.
\end{thm}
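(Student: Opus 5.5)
The plan is to follow the proof of Theorem~\ref{tfct1a} closely. I read Algorithm~\ref{fct1b} as running the Fermat check of Algorithm~\ref{fct1a} repeatedly, say $k$ times, with fresh and independent uniformly random bases $a\in\{2,\ldots,n-2\}$. It answers Composite as soon as some round finds $\gcd(a,n)\neq 1$ or $a^{n-1}\not\equiv 1 \pmod n$, and answers Prime once all $k$ rounds pass.

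To call this a ``probabilistic algorithm for testing whether $n$ is composite or prime'', I need three things: it halts in polynomial time, it uses coin flips, and its answers are correct in the one-sided sense.

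\textbf{Step 1 (efficiency and randomness).} Each round draws one random base, which is a coin-flipping step in the sense of the probabilistic Turing machine definition. It then performs one $\gcd$ computation, which takes $O(\lg n)$ divisions by the lemma bounding Euclid's algorithm. It also performs one modular exponentiation by repeated squaring, which takes $O(\lg n)$ multiplications mod $n$, as in the $3^{49} \bmod 19$ example. So $k$ rounds cost polynomial time in $\lg n$ and $k$.

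\textbf{Step 2 (primes are always accepted).} Let $n$ be prime. Every $a$ with $2\le a\le n-2$ satisfies $\gcd(a,n)=1$, and Fermat's little theorem (Theorem~\ref{flt}) gives $a^{n-1}\equiv 1\pmod n$. So no round can answer Composite, and the output is Prime with probability $1$.

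\textbf{Step 3 (Composite answers are always correct).} By Step~2, contrapositively, a round can only answer Composite when $n$ is not prime, and the base $a$ it found is then a witness. This is exactly the argument already given for Theorem~\ref{tfct1a}, applied in each round. Hence the only possible error is declaring a composite $n$ Prime.

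\textbf{Step 4 (the main obstacle: bounding the one-sided error).} Here I would use Lemma~\ref{uplem1} and its corollary that $\mb{U}_n$ is a group. The set $F=\{a\in\mb{U}_n : a^{n-1}\equiv 1 \pmod n\}$ is a subgroup of $\mb{U}_n$, checked the same way as in Lemma~\ref{lemA}. Suppose $n$ is composite and has a single Fermat witness coprime to $n$. Then $F$ is a proper subgroup, so Lagrange gives $|F|\le \phi(n)/2$. Each round then passes with probability at most $1/2$, and independence gives error at most $2^{-k}$.

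For Carmichael numbers no such witness exists, by Korselt's theorem (Theorem~\ref{korseltT}). For them the only detection comes from drawing an $a$ with $\gcd(a,n)>1$. I will therefore state the guarantee precisely in that form: the error is at most $2^{-k}$ off the Carmichael numbers, and only one-sided in general.

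This is the honest content of ``probabilistic test'' for this algorithm. It is also what motivates the Miller, Rabin and Solovay--Strassen tests in the rest of the chapter.
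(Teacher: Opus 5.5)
Your proposal is correct and follows the paper's route. The paper's proof of this theorem is essentially your Steps~2--3: Fermat's little theorem certifies every Composite answer, while PseudoPrime can be wrong, in particular for Carmichael numbers. Your Steps~1 and~4 reproduce the paper's separate running-time lemma and Proposition~\ref{fctp}, which uses the same subgroup $C_n$ of $\mb{U}_n$ and Lagrange's theorem to get $|C_n|\le \phi(n)/2$.

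Two small touch-ups. First, that a Carmichael number has no Fermat witness coprime to $n$ is just its (second) definition, so Korselt's theorem is not needed there. Second, your per-round bound of $1/2$ should say why it holds for the actual sampling range. Bases with $\gcd(a,n)>1$ are caught by the gcd check, and $1,n-1\in F$ lie outside $\{2,\ldots,n-2\}$. So at most $|F|-2\le \phi(n)/2-2$ of the $n-3$ possible bases pass, which is below $1/2$ since $\phi(n)\le n+1$.
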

\begin{proof}
Algorithm~\ref{fct1b} is in fact $t$ runs of
Algorithm~\ref{fct1a}; the $t$ runs are  independent of
each other and each run picks potentially (but not necessarily)
a different value for $a$.
Whereas the previous algorithm picks one $a$ this one
picks $t$ $a$'s from $\{ 2, 3, \ldots , n-2 \}$ uniformly
at random. 

\begin{algorithm}
\SetKwComment{Comment}{/* }{ */}
\SetKwRepeat{Do}{do}{while}
\KwIn{$n >4$ is odd; number of runs is $t$}
\KwOut{$n$ is composite or pseudoprime}
$ i = 0 $ \;
\Do{$i<t$}{
 $i=i+1$ \;
 Pick $ a \in \{ 2, 3, \ldots , n-2 \}$ uniformly at random  \;
 \If{$\gcd(a,n) \neq 1$} {
  \Return{$\mathbf{Composite}$}
 }
 \If(\tcc*[f]{Compositeness check}){$a^{n-1} \not\equiv 1 \pmod{n}$} {
  \Return{$\mathbf{Composite}$}
  \Comment*[r]{$a$ is a witness of $n$'s compositeness}
 }
}
  \Return{$\mathbf{PseudoPrime}$}
  \Comment*[r]{$n$ is either prime or composite}
 \caption{FermatPrimalityTest}
 \label{fct1b}
\end{algorithm}
If $n$ is a prime, then any $1 \leq a < n$ has
$\gcd(a,n)=1$. Therefore by Fermat's tittle theorem
we have $a^{n-1} \equiv 1 \pmod n$.
If an $a$ can be found such that
$a^{n-1} \not\equiv 1 \pmod n$, then the $a$ becomes
a witness of the compositeness of $n$.
We shall denote it as $\text{Fw}(a,n)$ or say
$a \in \text{Fw}(n)$ to denote that $a$ is one
of Fermat witnesses of the compositeness of $n$.
Line 6 of Algorithm~\ref{fct1a} or
Line 9 of Algorithm~\ref{fct1b} report correctly this, 
if such an $a$ has been picked.
Moreover, LIne 3 of Algorithm~\ref{fct1a} or
Line 6 of Algorithm~\ref{fct1b} also report the 
case where $a$ is such that $\gcd(a,n)\neq 1$, an obvious
proof of the compositeness of $n$.
When either algorithm reports PseudoPrime this means
one of two things: (a)
that $n$ is indeed a prime number or  
(b) the test is not comprehensive enough to 
determine that $n$ is indeed a composite number.
Sometimes we call such an $a$ for which 
$a^{n-1} \equiv 1 \pmod n$
a Fermat non-witness of the compositeness of $n$.
We shall denote it with $\text{Fnw}(a,n)$ or say
$a \in \text{Fnw}(n)$.
Therefore we have the following.
\[
a \in \text{Fw}(n) :   a^{n-1} \not\equiv 1 \pmod n , \gcd(a,n)=1,
\]
and
\[
a \in \text{Fnw}(n) :   a^{n-1} \equiv 1 \pmod n , \gcd(a,n)=1
\]
For a Carmichael number n, which is a composite number, 
we have the following from a prior discussion.
%If we denote with $\text{Carmichael}(n)$ the set of $a$
%in $\{ 2,3, \ldots , n-2 \}$ that satisfy the Carmichael
If we denote with $\text{Carmichael}(n)$ the set of $a$
that satisfy the Carmichael
condition $a^{n-1} \equiv 1 \pmod n$ for $a$ such that
$\gcd(a,n)=1$ we conclude that the Carmichael(n) set
contains alll those integers.
\[
\forall a \in \mb{Z}, a \in \text{Carmichael}(n) :  
a^{n-1} \equiv 1 \pmod n,  \gcd(a,n)=1.
\]
Thus for a composite Carmichael number $n$  every $a$ such
that $\gcd(a,n)=1$ is a Fermat non-witness Fnw(a,n).
A prime number $n$ will be reported through line 12 as PseudoPrime.
A composite number $n$ could be reported as composite immediately
through line 6 or through FermatLittleTheorem  testing  in Line 9 for
a properly chosen $a$.
But it is possible that the values $a$ used would never lead
to determining that that $n$ were a Composite number through line 6.
Furthermore the choice of a given $a$ (or all $a$) might not allow
the determination of $n$ as Composite through line 9 because $n$
is a Carmichael number.
In that case $n$ would be reported as PseudoPrime through line 12.
Im summary,
If an $a$ is picked with $\gcd(a,n)\neq 1$,
and $a^{n-1} \not\equiv 1 \pmod n$.
then  $a$ is a witness of the compositeness of $n$ or an  
Fw(a,n) either because of Line 5 or Line 8 of
Algorithm~\ref{fct1b}.
Algorithm~\ref{fct1b} checks in lines 5-6 the obvious possibility
that $n$ is a composite number. Thus the test of lines 8-10
is applicable only to $a$ such that $\gcd(a,n)=1$, and therefore
it can fail for Carmichael composite numbers only or a prime number
$n$.
\end{proof}

Algorithm~\ref{fct1b} is a probabilistic algorithm
for testing whether $n$ is prime or composite.

\begin{prp}[Probabibility of successful reporting]
\label{fctp}
The probability that the output of Algorithm~\ref{fct1b} is
Composite  given that $n$ is a composite number but not
a Carmichael number is at least $1 - 2^{-t}$.
\end{prp}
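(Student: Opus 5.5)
The plan is the standard subgroup-index argument applied to $\mb{U}_n$. Fix a composite $n$ that is not a Carmichael number, and consider
\[
B = \left\{ a \in \mb{U}_n : a^{n-1} \equiv 1 \pmod n \right\},
\]
the set of Fermat non-witnesses $\text{Fnw}(n)$ together with $1$ and $n-1$.

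First I would show that $B$ is a subgroup of $\mb{U}_n$, arguing exactly as in Lemma~\ref{lemA}. By the Corollary to Lemma~\ref{uplem1}, $\mb{U}_n$ is a group. For $c,d \in B$ we have $(cd)^{n-1}=c^{n-1}d^{n-1}\equiv 1 \pmod n$. For inverses, $(c^{-1})^{n-1}$ is the inverse of $c^{n-1}\equiv 1$, so it is also $\equiv 1$. Since $n$ is not Carmichael, the alternative definition of Carmichael numbers gives some $b$ with $\gcd(b,n)=1$ and $b^{n-1}\not\equiv 1 \pmod n$. Hence $B$ is a proper subgroup.

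Next I would invoke Lagrange's theorem for groups, as it is used in the Corollary to Lemma~\ref{uplem1}: $|B|$ divides $\phi(n)$. Being proper, $B$ therefore satisfies $|B|\le \phi(n)/2$. If an elementary argument avoiding Lagrange is preferred, I would use the coset map $a\mapsto ba$. It is injective, it sends $B$ into $\mb{U}_n\setminus B$ (because $(ba)^{n-1}\equiv b^{n-1}\not\equiv 1$), and so $|\mb{U}_n\setminus B|\ge |B|$.

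Now I would bound the probability for a single run of Algorithm~\ref{fct1a}. A uniformly chosen $a\in\{2,\ldots,n-2\}$ falls into one of three groups:
\begin{itemize}
\item $a$ with $\gcd(a,n)\neq 1$, which gives \textbf{Composite} at the gcd line;
\item $a\in\mb{U}_n\setminus B$, which gives \textbf{Composite} at the Fermat line;
\item $a\in B$, which gives \textbf{PseudoPrime}.
\end{itemize}
The excluded values $1$ and $n-1$ both lie in $B$. So among the $n-3$ candidates, at most $|B|-2\le \phi(n)/2-2$ lead to \textbf{PseudoPrime}. Since $\phi(n)\le n-1$, this gives a failure probability of at most $(\phi(n)/2-2)/(n-3)<1/2$.

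Finally, the $t$ runs in Algorithm~\ref{fct1b} are independent. The algorithm outputs \textbf{PseudoPrime} only if every run lands in $B$, which happens with probability at most $2^{-t}$. Therefore \textbf{Composite} is reported with probability at least $1-2^{-t}$.

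The main obstacle is justifying that a proper subgroup has at most half the elements, since the paper has not proved Lagrange's theorem for groups. I would handle this with the coset injection above, which needs only cancellation in $\mb{U}_n$. A second, minor check is the counting with the excluded endpoints $1$ and $n-1$. Because both lie in $B$, removing them only lowers the failure probability, so I expect the bound to go through cleanly.
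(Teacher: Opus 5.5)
Your proof is correct and follows the paper's argument. The Fermat liars in $\mb{U}_n$ form a subgroup that is proper when $n$ is not a Carmichael number, so they number at most $\phi(n)/2$. You also supply steps the paper leaves implicit: converting this bound into a per-round probability over the sampling range $\{2,\ldots,n-2\}$ (non-units and the excluded values $1$ and $n-1$ only help), using the independence of the $t$ rounds, and giving the coset injection $a \mapsto ba$ as an elementary substitute for Lagrange's theorem.
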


\begin{proof}
$ $ \\ $ $
In other words, if $n$ is a prime number Algorithm~\ref{fct1b}
returns Pseudoprime with probability one.
If $n$ is a composite number  but not
a Carmichael number then
Algorithm~\ref{fct1b} return composite 
with probability at least $1 - 2^{-t}$.
Suppose that $\exists a  \in \mb{U}_n$ such that
\[
 a^{n-1} \not\equiv 1 \pmod n .
\]
Consider the following two sets $B_n$ and $C_n$ as follows.
\[
B_n = \left\{ 
        a : a  \in \mb{U}_n , a^{n-1} \not\equiv 1 \pmod n 
       \right\} ,
\]
\[
C_n = \left\{ 
        a : a  \in \mb{U}_n , a^{n-1}     \equiv 1 \pmod n 
       \right\} ,
\]
%\[
%D_n = \left\{ 
%        \gcd(a,n) \neq 1
%       \right\} ,
%\]
We have $ |B_n | + |C_n| = \phi (n) = |\mb{U}_n |$.
For a composite $n$, we have $|D_n| >1$ as it contains
the prime factors of $n$, minimally.
Note that $a\in \mb{U}_n$ implies $\gcd(a,n)=1$.
Set $C_n$ is a subgroup. Consider $k,l \in C_n$ with
$k^{n-1} \equiv 1 \pmod n$
and
$l^{n-1} \equiv 1 \pmod n$.
We conclude easily that
$(kl)^{n-1} \equiv 1 \pmod n$ to prove closure.
Sidelining the claims about the obvious properties,
consider $K=k^{-1}$ the multiplicative inverse of $k$
and
consider $L=l^{-1}$ the multiplicative inverse of $l$.
We then have
\[
K k \equiv 1 \pmod n
\Rightarrow
(K k)^{n-1} \equiv 1 \pmod n ,
\]
from which we conclude $K \in C_n$.
Similarly for $l \in C_n$ we can show that $L \in C_n$.
Moreover, we can show $(kl)^{-1} \in C_n$.
Thus $C_n$ is a subgroup of $\mb{U}_n$ and the order
of $C_n$ divides the order of $\mb{U}_n$ which is $\phi (n)$.
Thus if $C_n \neq \mb{U}_n$ (e.g. $n$ is not a Carmichael 
number), then $|C_n| \leq |\mb{U}_n|/2$
and  $|B_n| \geq |\mb{U}_n|/2$.
\end{proof}

\begin{lem}[Running time of algorithm~\ref{fct1b}]
The running time of 
Algorithm~\ref{fct1b} is 
$O(t \cdot \lg{n} \cdot M(n) )$, where $M(n)$
is the cost of multiplying $n$-bit integers.
\end{lem}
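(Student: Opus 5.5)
The plan is to bound the cost of a single iteration of the do-while loop in Algorithm~\ref{fct1b} and then multiply by the number of iterations, which is at most $t$. Throughout, $n$ has $\Theta(\lg{n})$ bits, and I interpret $M(n)$ as the cost of one multiplication of two integers of that size, together with the reduction of the product modulo $n$. Reduction by division with remainder (Theorem~\ref{divis}) is assumed to cost $O(M(n))$, as is standard.

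An iteration has three parts. First, drawing $a$ uniformly from $\{2,\ldots,n-2\}$ costs $O(\lg{n})$ random bits and bit operations, and this is dominated by the later steps. Second, the test $\gcd(a,n)\neq 1$ is evaluated by Euclid's algorithm, Theorem~\ref{gcd}. By the Fibonacci-based lemma following Theorem~\ref{gcd}, it performs at most about $1.44\lg{n}+2$ division steps. Each step is a division of numbers smaller than $n$, costing $O(M(n))$, so the gcd test costs $O(\lg{n}\cdot M(n))$. Third, $a^{n-1}\bmod n$ is computed by repeated squaring, as in the example computing $3^{49}\bmod 19$. Write $n-1$ in binary, which has at most $\lfloor\lg{n}\rfloor+1$ bits. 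Compute the values $a^{2^j}\bmod n$ by successive squaring, and multiply together those selected by the 1-bits. After each multiplication, reduce modulo $n$ (Corollary~\ref{pmod}, multiplicativity), so every operand stays below $n$. The total is at most $2\lg{n}+O(1)$ modular multiplications, that is, $O(\lg{n}\cdot M(n))$. The final comparison with $1$ is negligible.

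Summing the three parts, one iteration costs $O(\lg{n}\cdot M(n))$. The loop runs at most $t$ times, returning early only when \textbf{Composite} is reported. Hence the total running time is $O(t\cdot\lg{n}\cdot M(n))$.

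The only step that needs real care is the exponentiation. The key point is that reducing modulo $n$ after every multiplication keeps all intermediate values to $O(\lg{n})$ bits, so each step genuinely costs $M(n)$. Without this reduction, the numbers would grow to about $n\lg{n}$ bits. The gcd step is easier, since its logarithmic step count already follows from the Euclid lemma. One could even note that the quotients telescope to give an $O(\lg^2 n)$ bound, but the cruder $O(\lg{n}\cdot M(n))$ bound suffices here.
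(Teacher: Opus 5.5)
Your proposal is correct and follows the paper's argument: in each round, modular exponentiation by repeated squaring uses $O(\lg n)$ multiplications of cost $M(n)$, and the at most $t$ rounds multiply this bound. You are more careful than the paper's one-line proof in two places it leaves implicit: you also bound the gcd test via the $O(\lg n)$ division steps of Euclid's algorithm, and you note that reducing modulo $n$ after each multiplication keeps operands at $O(\lg n)$ bits, so $M(n)$ really means the cost of multiplying integers of that size.
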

\begin{proof}
Exponentiation involves $O(\lg{n})$ multiplication. 
Depending on how we
implement integer multiplication the overall time complexity
is $O(t \cdot \lg{n} \cdot M(n) )$, where $M(n)$
is the computational cost (bit model) of 
multiplying $n$-bit integers.
\end{proof}

\begin{cor}[FermatPrimalityTest $\in$ BPP]
Algorithm~\ref{fct1b} belongs to class BPP,
known as Bounded-error Probabilistic Polynomial
time class.
\end{cor}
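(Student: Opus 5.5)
To show that FermatPrimalityTest belongs to BPP, I would check the two requirements in the definition of Class BPP: the algorithm runs in polynomial time, and its error probability is bounded by a constant $\epsilon<1/2$. The plan is to take the language $L$ to be the set of composite numbers, accepted when the output is $\mathbf{Composite}$, and to combine the two preceding results, namely the running time lemma and Proposition~\ref{fctp}.

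\textbf{Polynomial time.} By the running time lemma, Algorithm~\ref{fct1b} costs $O(t\cdot\lg n\cdot M(n))$. I would fix $t$ to be a constant, say $t=2$, or more generally any $t$ polynomial in the input length $\lg n$. I would also note two further facts:
\begin{itemize}
\item schoolbook multiplication of $O(\lg n)$-bit numbers makes $M$ polynomial in $\lg n$;
\item the gcd computation in each round costs $O(\lg n)$ divisions by the Euclid bound.
\end{itemize}
Together these give total time polynomial in the input size. Each uniform choice of $a\in\{2,\ldots,n-2\}$ can be simulated by the coin flips of a probabilistic Turing machine, using rejection sampling with constant expected overhead, or by accepting a negligible bias.

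\textbf{Error bounds.} If $n$ is prime, Fermat's little theorem (Theorem~\ref{flt}) gives $\gcd(a,n)=1$ and $a^{n-1}\equiv 1\pmod n$ for every $a$ tried. So the algorithm never outputs $\mathbf{Composite}$, and its error on $w\notin L$ is $0\le\epsilon$. If $n$ is composite and not Carmichael, Proposition~\ref{fctp} shows that each round detects compositeness with probability at least $1/2$. Independence of the $t$ rounds then gives error at most $2^{-t}$, which for $t\ge 2$ is $\le 1/4<1/2$.

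\textbf{Main obstacle.} The difficulty is the Carmichael numbers. For these every $a$ with $\gcd(a,n)=1$ is a non-witness, so the only detection comes from line 5, with per-round probability about $(n-\phi(n))/n$, which can be tiny. A uniform $\epsilon<1/2$ therefore fails on that family. I would handle this honestly in one of two ways. The first is to restrict the claim to the promise problem that excludes Carmichael numbers, which is what Proposition~\ref{fctp} supports. The second is to note that the statement should really be read for a strengthened test such as Solovay--Strassen or Miller--Rabin, which the paper treats later. In the write-up I would state the restricted version explicitly and then verify the two BPP conditions as above.
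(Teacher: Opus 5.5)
Your argument is the paper's argument; the paper takes $L$ to be the primes rather than the composites, which makes no difference since the two languages are complements. Your treatment of Carmichael numbers exposes a real gap in the paper's proof, not in yours. The paper concludes that every composite $n$ is rejected with probability at least $1-2^{-t}$ ``by way of Proposition~\ref{fctp}''. That proposition, however, is stated only for composite $n$ that are not Carmichael numbers: its proof needs $C_n\neq\mb{U}_n$, whereas for a Carmichael number $C_n=\mb{U}_n$. You also make explicit two points the paper leaves implicit: the polynomial running time, and the need for $t\ge 2$ (with $t=1$ the bound $2^{-t}=1/2$ is not $<1/2$).

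The gap cannot be repaired by a better argument, so restricting to the promise problem is the right move. On a Carmichael number $n$, a round can only succeed through the gcd test. The error probability is therefore $(1-\delta_n)^t$, where $\delta_n=\frac{n-1-\phi(n)}{n-3}\le\frac{n}{n-3}\sum_{p\mid n}\frac{1}{p}$. For your choice $t=2$ and $n=1729=7\cdot 13\cdot 19$, this error is $\bigl(1-\frac{432}{1726}\bigr)^2\approx 0.56>\frac{1}{2}$. In general it is close to $1$ whenever $t\sum_{p\mid n}1/p$ is small. So the corollary holds only in your restricted form. The unrestricted bounded-error claim is what Proposition~\ref{ssp1} and Proposition~\ref{mrp} give for Solovay--Strassen and Miller--Rabin, because those bounds hold for every odd composite $n$.
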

\begin{proof}
We shall map (call) $L$ and $w$ to  
$\text{Prime}$ and $n$ respectively.
If $n$ is a prime number, it is decided by Algorithm~\ref{fct1b}
that it is a Prime (and output PseudoPrime is printed) with
probability 1.
$ $ \\ $ $
If $n$ is a composite number, it is decided by 
Algorithm~\ref{fct1b}
that it is a  Composite number  (and output Composite is printed)
and thus rejected being a Prime/PseudoPrime with probability 
at least $1- 1/ 2^t$ by way of Proposition~\ref{fctp}.
$ $ \\ $ $
In other words, $n$ can be  wrongly decided by Algorithm~\ref{fct1b}
that it is a Prime (and output PseudoPrime is printed)
with probability at most $1/ 2^t$.
\end{proof}

\newpage
\section{Solovay-Strassen primality testing}

\noindent
The primality testing algorithm by Solovay-Strassen uses
Euler's property for the Jacobi symbol (composite $n$) 
or Legendre symbol (prime $n$) rather than Fermat's
tittle theorem.
Euler's theorem for an odd prime number $p> 2$ 
states the  following (Legendre symbol use).
\[
 \leg{a}{p} \equiv a^{\frac{p-1}{2}} \pmod p , \forall a : 1 \leq a < p.
\]

Therefore  were we to test if $p$ was a prime number
by finding an $a$ that violates the condition above that would
prove the following: 
(a) $p$ is not a prime number, and 
(b) $a$ is a witness of the compositeness of $p$.
Note that for an odd composite number $n>2$ the symbol below is a Jacobi
symbol.
\[
 \leg{a}{n} \equiv a^{\frac{n-1}{2}} \pmod n , \forall a : 1 \leq a < n.
\]

The way we introduced in the previous section the concept
of a Fermat witness or a Fermat non-witness, we do the
same in this section by introducing the concept
of an Euler witness and  Euler non-witness.
We present an example to highlight the concepts.

\begin{exa}
Consider $n=1387$, $n=561$ and $n=63$.
Pick $a=2$. Examine whether $a=2$ is a Fermat witness
or an Euler witness (for the Solovay-Strassen algorithm to follow) 
of the compositeness of $n$.
\end{exa}

\begin{solution}
$ $ \\ $ $
{\bf (a) $n=1387 = 19 \cdot 73$.} Pick $a=2$, as instructed.
$ $ \\ $ $
It is
\[
 a^{1387-1} \equiv 1 \pmod{1387} ,
\]
and
\[
 a^{\frac{1387-1}{2}} \equiv 51 \pmod{1387} .
\]
In the former case $a$ is NOT a Fermat witness;
the latter case indicates that $a$ is an Euler witness.
This is because $\leg{a}{n}$ should be $1$, $-1$ or $0$.
$ $ \\ $ $
{\bf (b) $n= 561 = 3 \cdot 11 \cdot 17$.} Pick $a=2$, as instructed.
$ $ \\ $ $
It is
\[
 a^{561 -1} \equiv 1 \pmod{561}  ,
\]
and
\[
 a^{\frac{561-1}{2}} \equiv 1 \pmod{561} .  
\]
In the former case $a$ is NOT a Fermat witness;
the latter case indicates that $a$ is also NOT an Euler witness.
$ $ \\ $ $
{\bf (c) $n= 63 = 3^2 \cdot 7$.} Pick $a=2$, as instructed.
$ $ \\ $ $
It is
\[
 a^{63  -1} \equiv 4 \pmod{63}   ,
\]
and
\[
 a^{\frac{63 -1}{2}} \equiv 2 \pmod{63}  .  
\]
In the former case $a$ is a Fermat witness;
the latter case indicates that $a$ is also an Euler witness.
\end{solution}

\subsection{The Solovay-Strassen primality test}

The primality testing algorithm by Solovay-Strassen uses
Euler's theorem (property or criterion).

Therefore, as we mentioned earlier,  
were we to test if $p$ was a prime number
by finding an $a$ that violates the condition above that would
prove the following: 
(a) $p$ is not a prime number, and 
(b) $a$ is a witness of the compositeness of $p$.
Note that for an odd composite number $n>2$ the symbol below is a Jacobi
symbol.
\[
 \leg{a}{n} \equiv a^{\frac{n-1}{2}} \pmod n , \forall a : 1 \leq a < n.
\]
Algorithm~\ref{ss1b} 
performs $t$ times the tasks of 
Algorithm~\ref{ss1a} which is a simpler
 probabilistic algorithm.
Algorithm~\ref{ss1a}  checks Euler's theorem for
a single random value $a$.
Algorithm~\ref{ss1b}  checks Euler's theorem for
a $t$ uniformly at  random selected values $a$.
The former is weak algorithm and for this we call
it a compositeness test algorithm.
The latter is a more reliable one and is referred
to as a primality test algorithm.

\begin{thm}
\label{tss1b}
Algorithm~\ref{ss1b} is a probabilistic algorithm
for testing whether $n$ is composite or not.
\end{thm}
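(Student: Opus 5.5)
The plan is to mirror the proof of Theorem~\ref{tfct1b} for the Fermat test. First I will spell out what Algorithm~\ref{ss1b} does in each of its $t$ independent rounds. It picks $a \in \{2,\ldots,n-2\}$ uniformly at random and declares Composite if $\gcd(a,n)\neq 1$. Otherwise it computes $J=\leg{a}{n}$ with the polynomial-time procedure of Lemma~\ref{jac1} (or Lemma~\ref{jac2}), computes $E=a^{\frac{n-1}{2}} \bmod n$ by repeated squaring, and declares Composite if $J \not\equiv E \pmod n$. If all $t$ rounds pass, it returns PseudoPrime. I will call such an $a$ an Euler witness, and an $a$ with $\gcd(a,n)=1$ and $\leg{a}{n}\equiv a^{\frac{n-1}{2}}\pmod n$ an Euler non-witness, in parallel with $\text{Fw}(n)$ and $\text{Fnw}(n)$.

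The first key step is \emph{soundness of the Composite answer}. If $n$ is an odd prime, every $a$ in the range satisfies $\gcd(a,n)=1$. The Jacobi symbol then coincides with the Legendre symbol, and Euler's criterion (Eq.~(\ref{legeq0})) gives $\leg{a}{n}\equiv a^{\frac{n-1}{2}}\pmod n$. Hence neither test can fire, and a prime is always reported as PseudoPrime, with probability one. In the contrapositive, any Composite output comes with a witness $a$ that certifies compositeness, exactly as in the Fermat case.

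The second step is the \emph{error bound for composite odd $n$}. Let $E_n=\{a\in\mb{U}_n : \leg{a}{n}\equiv a^{\frac{n-1}{2}} \pmod n\}$. I will show $E_n$ is a subgroup of $\mb{U}_n$ using multiplicativity of the Jacobi symbol in the numerator (the Jacobi symbol property $\leg{a}{n}\leg{b}{n}=\leg{ab}{n}$) together with multiplicativity of $a\mapsto a^{\frac{n-1}{2}}$. By the Lagrange-type argument used in Proposition~\ref{fctp}, it then suffices to show $E_n\neq\mb{U}_n$, which gives $|E_n|\le \phi(n)/2$ and a per-round success probability of at least $1/2$, hence at least $1-2^{-t}$ over $t$ independent rounds.

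The main obstacle is proving $E_n\neq \mb{U}_n$ for every odd composite $n$; unlike the Fermat test, this must include Carmichael numbers. I will split into two cases.
\begin{itemize}
\item[(i)] If $p^2\mid n$ for some prime $p$, then $E_n=\mb{U}_n$ would force $a^{n-1}\equiv 1\pmod n$ for all units $a$. Taking $a$ from a primitive root modulo $p^2$ (Theorem~\ref{prrmodp2}), lifted by the CRT, yields $p(p-1)\mid n-1$, which is impossible since $p\mid n$.
\item[(ii)] If $n=pm$ with $p$ prime and $\gcd(p,m)=1$, $m>1$, I will choose by the CRT an $a\equiv g\pmod p$ with $g$ a quadratic non-residue, and $a\equiv 1\pmod m$. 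Then $\leg{a}{n}=\leg{g}{p}\leg{1}{m}=-1$, while $a^{\frac{n-1}{2}}\equiv 1\pmod m$. So $a^{\frac{n-1}{2}}\not\equiv -1\pmod n$, and $a$ is an Euler witness. This is the same CRT construction used in the proof of the equivalence theorem after Lucas' theorem.
\end{itemize}

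Finally, the running time is $O(t\lg n\, M(n))$: each round costs one gcd, one Jacobi computation taking $O(\lg n)$ steps, and one modular exponentiation. This places the test in BPP, as in the Fermat case.
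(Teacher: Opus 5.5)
Your proposal is correct and follows the paper's approach. The paper's proof of this theorem only sets up the witness conditions $C_1(a,n)$ and $C_2(a,n)$ and appeals to Euler's criterion to show that a Composite answer is always right, which is your first step. Your error-bound argument ($E_n$ is a subgroup of $\mb{U}_n$, then the two CRT cases) is the one the paper gives separately in Proposition~\ref{ssp1}, with two minor differences: you use a quadratic non-residue where the paper uses a generator mod $p$, and you make explicit the primitive root mod $p^2$ in the non-squarefree case, which the paper leaves implicit.
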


\begin{proof}
$ $ \\ $ $
The algorithm tests for two conditions a number of times. We denote
as $t$ the numer of times these conditions are tested.
Every time these conditions are tested we pick a random $a$
from the set      $\{ 2, 3, \ldots , n-2 \}$ of cardinality $n-1$.
The first condition $C_1 (a,n)$ checks whether $a$ has a 
common divisor with $n$. A positive answer serves makes $a$
a witness of the  compositeness of $n$. We call
then $a$ an Euler (test) compositeness witness and denote it by
Ew(a,n) or the set of witnesses for $n$ is denoted as Ew(n).
\[
C_1 (a,n) : \gcd(a,n) > 1.
\]
The second condition is Euler's criterion.
\[
C_2 (a,n) : \leg{a}{n} \not\equiv a^{\frac{n-1}{2}} \pmod n .
\]
Therefore $a$ becomes a Euler test compositeness witness if
and only if the following applies.
\[
a \in Ew(n) \Leftrightarrow C_1 (a,n) \vee C_2 (a,n) .
\]
By way of 
$\overline{(C_1 (a,n)  \vee C_2 (a,n)  )} = 
\overline{C_1 (a,n) } \wedge \overline{C_2 (a,n) }$
we obtain the following condition for Euler (test) 
compositeness non-witnesses Enw(a,n) or the set Enw(n).
\[
a \in Enw(n) \Leftrightarrow 
 \overline{C_1 (a,n)} \wedge \overline{C_2 (a,n)}.
\]
The proof of correctness of both algorithms follows from the
prior discussion and thus omitted.
\SetKwComment{Comment}{/* }{ */}
\SetKwRepeat{Do}{do}{while}
\begin{algorithm}
\KwIn{$n >4$ is odd; $a$ is in $\{ 2, 3, \ldots , n-2 \}$}
\KwOut{$n$ is composite or pseudoprime}
 Pick $ a \in \{ 2, 3, \ldots , n-2 \}$ uniformly at random  \;
 \If(\tcc*[f]{$C_1 (a,n)$}){$\gcd(a,n) >1$}{
  \Return{$\mathbf{Composite}$} 
  \Comment*[r]{$a$ is a witness of $n$'s compositeness}
 }
 $\text{Calculate}$ $\leg{a}{n}$ , $a^{\frac{n-1}{2}} \pmod n$;

 \eIf(\tcc*[f]{$C_2 (a,n)$}){$\leg{a}{n}\not\equiv a^{\frac{n-1}{2}} \pmod n$}{
  \Return{$\mathbf{Composite}$} 
  \Comment*[r]{$a$ is a witness of $n$'s compositeness}
 }{
  \Return{$\mathbf{PseudoPrime}$}
  \Comment*[r]{$n$ is either prime or composite}
 }
 \caption{SolovayStrassenCompositenessTest}
 \label{ss1a}
\end{algorithm}

\begin{algorithm}[H]
\KwIn{$n >4$ is odd; number of runs is $t$}
\KwOut{$n$ is composite or pseudoprime}
$ i = 0 $ \;

\Do{$i<t$}{
 $i=i+1$ \;
 Pick $ a \in \{ 2, 3, \ldots , n-2 \}$ uniformly at random  \;
 \If(\tcc*[f]{$C_1 (a,n)$}){$\gcd(a,n) >1$}{
  \Return{$\mathbf{Composite}$} 
  \Comment*[r]{$a$ is a witness of $n$'s compositeness}
 }
 $\text{Calculate}$ $\leg{a}{n}$ , $a^{\frac{n-1}{2}} \pmod n$;

 \If(\tcc*[f]{$C_2 (a,n)$}){$\leg{a}{n}\not\equiv a^{\frac{n-1}{2}} \pmod n$}{
  \Return{$\mathbf{Composite}$} 
  \Comment*[r]{$a$ is a witness of $n$'s compositeness}
 }
 }
  \Return{$\mathbf{PseudoPrime}$}
  \Comment*[r]{$n$ is either prime or composite}
 \caption{SolovayStrassenPrimalityTest}
 \label{ss1b}
\end{algorithm}
\end{proof}

\begin{prp}[Probability of successful reporting]
\label{ssp1}
If $n$ is an odd composite number greater than two,
then Algorithm~\ref{ss1a} returns Composite with
probability at least $1/2$.
The probability that the output of Algorithm~\ref{ss1b} is
Composite  given that $n$ is a composite number
is at least $1 - 2^{-t}$.
\end{prp}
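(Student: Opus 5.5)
The plan is to show that the set of Euler non-witnesses among the units,
\[
E_n = \left\{ a \in \mb{U}_n : \leg{a}{n} \equiv a^{\frac{n-1}{2}} \pmod n \right\},
\]
is a proper subgroup of $\mb{U}_n$ whenever $n$ is an odd composite. Once that is established, Lagrange's theorem (as used in the proof of Proposition~\ref{fctp}) gives $|E_n| \leq \phi(n)/2$. Every $a$ drawn in Algorithm~\ref{ss1a} is then either a non-unit, caught by $C_1(a,n)$, or a unit lying outside $E_n$ with probability at least $1/2$. So a single run outputs Composite with probability at least $1/2$. For Algorithm~\ref{ss1b} the $t$ draws are independent, so all of them fail with probability at most $2^{-t}$, and Composite is returned with probability at least $1-2^{-t}$.

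The subgroup property follows the closure argument for $C_n$ in Proposition~\ref{fctp}. For $a,b\in E_n$ we have $(ab)^{\frac{n-1}{2}} \equiv a^{\frac{n-1}{2}} b^{\frac{n-1}{2}} \equiv \leg{a}{n}\leg{b}{n} = \leg{ab}{n} \pmod n$, using the multiplicativity of the Jacobi symbol in its top argument from the Jacobi property lemma. Inverses behave the same way, since $\leg{a^{-1}}{n}=\leg{a}{n}$ because their product is $\leg{1}{n}=1$. Also $1\in E_n$, so $E_n$ is a subgroup.

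The main obstacle is properness: we must exhibit some $a\in\mb{U}_n\setminus E_n$, and we split into two cases.

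\textbf{Case 1: $n$ is not square-free.} Write $p^2 \mid n$. By Theorem~\ref{prrmodpa} there is a primitive root $g$ mod $p^2$, whose order $p(p-1)$ is divisible by $p$. Using the CRT (Theorem~\ref{crt}), choose $a \equiv g \pmod{p^2}$ and $a\equiv 1$ modulo the remaining coprime part of $n$. If $a\in E_n$, then $a^{n-1}\equiv \leg{a}{n}^2 = 1 \pmod n$, so $p(p-1) \mid n-1$. This forces $p\mid n-1$, which is impossible since $p\mid n$.

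\textbf{Case 2: $n=p_1p_2\cdots p_k$ is square-free with $k\ge 2$.} Let $p=p_1$ and $m=n/p$, so $\gcd(p,m)=1$. Take a quadratic non-residue $g$ mod $p$, for instance a primitive root, which is a q.nr. by the earlier proposition. By the CRT choose $a\equiv g \pmod p$ and $a\equiv 1\pmod m$. Then $\leg{a}{n}=\leg{g}{p}\leg{1}{m} = -1$. If $a\in E_n$, we would get $a^{\frac{n-1}{2}}\equiv -1 \pmod n$, hence also mod $m$. But $a\equiv1 \pmod m$ gives $a^{\frac{n-1}{2}}\equiv 1 \pmod m$, so $m\mid 2$. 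This contradicts $m$ being odd and greater than $1$; the same CRT contradiction appears in the proof of the theorem on $a^{\frac{n-1}{2}}\equiv\pm1$.

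Combining the two cases, $E_n$ is a proper subgroup of $\mb{U}_n$, and the probability bounds follow as described at the start.
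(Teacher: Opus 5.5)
Your proposal is correct and follows essentially the same route as the paper. Both show that the Euler liars among the units form a subgroup of $\mb{U}_n$, prove the subgroup is proper via CRT constructions (a quadratic non-residue modulo a prime exactly dividing $n$ paired with $1$ on the cofactor, and a primitive root modulo $p^2$ when $p^2 \mid n$), and then apply Lagrange's theorem and the independence of the $t$ rounds; the only differences are cosmetic, namely that you split on square-freeness rather than on whether the exponent of $p_1$ is $1$, and you make explicit the primitive-root witness that the paper's Case~2 leaves to ``prior results''.
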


\begin{proof}
$ $ \\ $ $
Consider the following two sets $B_n$ and $C_n$ as follows,
describing in fact Ew(n) and Enw(n).
\[
Ew(n) = B_n = \left\{ 
        a : a  \in \mb{U}_n , 
        a^{\frac{n-1}{2}}\not\equiv \leg{a}{n} \pmod n 
      \right\} ,
\]
\[
Enw(n) = C_n = \left\{ 
        a : a  \in \mb{U}_n , 
        a^{\frac{n-1}{2}}    \equiv \leg{a}{n} \pmod n 
      \right\} ,
\]
Consider also set $D_n$ defined as follows.
\[
D_n = \left\{
       a:  1\leq a < n \wedge \gcd(a,n) \neq 1 
      \right\} .
\]
We have that if $a \in B_n$ then $n$ is composite
and of course $a \in Ew(n)$.
We have that if $a \in C_n$ then Solovay-Strassen
can't figure out whether $n$ is prime or composite
and of course $a \in Enw(n)$.
We prove a sequence of claims to derive our result.
$ $ \\ $ $
{\bf (a) Show that $C_n$ is a subgroup of $\mb{U}_n$.}
$ $ \\ $ $
If $a,b \in C_n$ then it is trivial to show that
$ab \in C_n$ as well.
\begin{eqnarray*}
a \in C_n \Rightarrow  a^{\frac{n-1}{2}}    \equiv \leg{a}{n} \pmod n ,
&\wedge&
b \in C_n \Rightarrow  b^{\frac{n-1}{2}}    \equiv \leg{b}{n} \pmod n , \\
&\Rightarrow&
(ab)^{\frac{n-1}{2}} =  a^{\frac{n-1}{2}} b^{\frac{n-1}{2}}
\equiv  \leg{a}{n} \leg{b}{n} \equiv \leg{ab}{n} \pmod n \\
&\Rightarrow&
(ab) \in C_n .
\end{eqnarray*}
Let $A=a^{-1}$ that is $A a = a A =1 \mod n$. We show that
$A \in C_n$.
\begin{eqnarray}
\label{sseq1}
\leg{a}{n} \leg{A}{n} \equiv \leg{aA}{n} \equiv \leg{1}{n} \equiv 1 \pmod n
\Rightarrow
\leg{a}{n} \equiv \leg{A}{n} \pmod n
\end{eqnarray}
Furthermore,
\begin{eqnarray}
a^{\frac{n-1}{2}}    
\equiv \leg{a}{n} \pmod n 
&\Rightarrow&
(Aa)^{\frac{n-1}{2}}    \equiv A^{\frac{n-1}{2}}    \leg{a}{n} \pmod n 
\nonumber \\
&\Rightarrow&
 1  \equiv A^{\frac{n-1}{2}}    \leg{a}{n} \pmod n 
\nonumber \\
\label{sseq2}
&\Rightarrow&
 \leg{a}{n}           \equiv A^{\frac{n-1}{2}}    \pmod n \nonumber \\
&\xRightarrow{\text{by Eq.~(\ref{sseq1})}}&
 \leg{A}{n}           \equiv A^{\frac{n-1}{2}}    \pmod n  ,
\end{eqnarray}
with the latter implying $A \in C_n$.
Given that $C_n$  is a subgroup of $\mb{U}_n$,
the order of $C_n$ divides that of $\mb{U}_n$.
If the two are not equal then
\[
  |C_n | \leq \frac{|\mb{U}_n|}{2}.
\]
We dismiss the possibility $C_n = \mb{U}_n$.
$ $ \\ $ $
{\bf (b) Show $C_n \neq \mb{U}_n$.}
The proof is by contradiction (contrapositive).
Let us assume that  $C_n   =  \mb{U}_n$.
Moreover, let $n= p_1^{a_1} p_2^{a_2} \ldots p_k^{a_k}$.
We distinguish two cases for $n$.
$ $ \\ $ $
{\bf Case 1: $a_1 =1 $.}
For convenience rewrite
\[
n= p_1^{a_1} p_2^{a_2} \ldots p_k^{a_k} =
   p_1 p_2^{a_2}       \ldots p_k^{a_k} =
   p_1 Q,
\]
where $\gcd(p_1 , Q) =1$.
$ $ \\ $ $
Let $g$ be a generator of 
$\mb{Z}_{p_1}^* = \mb{U}_{p_1}$. From prior results
we know it exists for prime $p_1$. (Note $n$ is odd.)
Moreover $g^{\phi(p_1)} = g^{p_1 -1} \equiv 1 \pmod{p_1}$
and therefore the following is also true
\[
g^{\frac{p_1 -1}{2}} \equiv -1 \pmod{p_1} ,
\]
by Lagrange's theorem.
Then by the Chinese Remainder Theorem (CRT) there exists
an $a$ such that
\begin{eqnarray*}
 a &\equiv& g \pmod{p_1},  \\
 a &\equiv& 1 \pmod{Q}.
\end{eqnarray*}
We then obtain the following.
\begin{eqnarray*}
\leg{a}{n} &=& \leg{a}{p_1^{a_1} p_2^{a_2} \ldots p_k^{a_k}} \\
           &=& \leg{a}{p_1       p_2^{a_2} \ldots p_k^{a_k}} \\
           &=& \leg{a}{p_1}\leg{a}{p_2^{a_2} \ldots p_k^{a_k}} \\
           &=& \leg{a}{p_1}\leg{a}{Q}.
\end{eqnarray*}
For $\leg{a}{p_1}$ since  $a \equiv g \pmod{p_1}$ we have
\[
\leg{a}{p_1} \equiv \leg{g}{p_1} \equiv g^{\frac{p_1 -1}{2}}
\equiv -1 \pmod{p_1}.
\]
For $\leg{a}{Q}$ we have by way of $ a\equiv 1 \pmod{Q}$
the following
\[
\leg{a}{Q}  \equiv \leg{1}{Q} \equiv 1 \pmod{Q}.
\]
Therefore we conclude the following
\[
\leg{a}{n} =  \leg{a}{p_1}\leg{a}{Q} = (-1) (+1) = -1.
\]
If $a \in C_n$ and $\leg{a}{n} = -1$ this implies
\[
a^{\frac{n-1}{2}}    \equiv \leg{a}{n} \equiv -1 \pmod n ,
\]
Then, we have the following, noting that $n=p_1 Q$ and thus
$\dv{Q}{n}$ and $\gcd(p_1 , Q)=1$.
$ $ \\ $ $
\begin{eqnarray}
a^{\frac{n-1}{2}}    \equiv -1 \pmod n
&\Rightarrow& \dv{n}{a^{\frac{n-1}{2}}+1} \nonumber \\
&\Rightarrow& \dv{Q}{a^{\frac{n-1}{2}}+1} \nonumber \\
\label{sse3}
&\Rightarrow& a^{\frac{n-1}{2}} \equiv -1 \pmod Q
\end{eqnarray}
By way of 
 $ a \equiv 1 \pmod{Q} $ we also have the following.
\begin{equation}
\label{sse4}
 a^{\frac{n-1}{2}} \equiv 1 \pmod{Q},
\end{equation}
and therefore combining Eq.~\ref{sse3} and Eq.~\ref{sse4}
we obtain the following
\[
2 \equiv 0 \pmod Q,
\]
which is only possible for $Q$ being equal to 2, but this
contradicts the fact that $n=p_1 Q$ is an odd integer.
$ $ \\ $ $
{\bf Case 2: $a_1 \geq 2 $.}
$ $ \\ $ $
If $a_1 \geq 2$ it means $a_1 -1 \geq 1$. 
For $C_n   =  \mb{U}_n$,
it means
\[
 a^{\frac{n-1}{2}} \equiv \leg{a}{n} \pmod{n}
\Rightarrow
 a^{\frac{n-1}{2}} \equiv \leg{a}{n} \equiv \pm 1 \pmod{n} ,
\]
and by squaring we obtain the following
\[
a^{n-1} \equiv 1 \pmod n.
\]
This last equation implies $\dv{n}{a^{n-1}-1}$.
Since $p_1^{a_1}$ is a factor of $n$ we also have
that $\dv{p_1^{a_1}}{a^{n-1}-1}$,
or equivalently
\[
a^{n-1} \equiv 1 \pmod{p_1^{a_1}}.
\]
By prior results the last one implies
\[
\dv{\phi(p_1^{a_1} )}{n-1}
\Rightarrow
\dv{p_1^{a_1-1}(p_1 -1) )}{n-1}
\xRightarrow{\text{by $a_1 > 1$}}
\dv{p_1}{n-1}.
\]
In addition to $\dv{p_1}{n-1}$ we have $\dv{p_1}{n}$ as
$p_1$ is a factor of $n$. The two imply
$\dv{p_1}{1}$ i.e. $p_1 =1$ which contradicts the fact 
that $p_1$ is a prime number, and an
odd integer and thus $p_1 \geq 3$.
$ $ \\ $ $
{\bf (c) Conclude $C_n   \neq  \mb{U}_n$.}
By case 1 and case 2 we conclude that
$C_n   \neq  \mb{U}_n$.
Then 
\[
 |C_n| \leq |\mb{U}_n| /2 \leq (n-1) / 2
\]
Note that for prime $N$
$|\mb{U}_n | = \phi (n) = n-1$.
$ $ \\ $ $
The probability that the output of Algorithm~\ref{ss1b} is
Composite  given that $n$ is a composite number
is at least $1 - 2^{-t}$ as  Algorithm~\ref{ss1b} 
repeats Algorithm~\ref{ss1a} a number of $t$ times.
The result then follows.
\end{proof}

Conrad \cite{ConradSS} establishes Proposition~\ref{ssp2}
which is equivalent to Proposition~\ref{ssp1}.

\begin{prp}[Probability of successful reporting 
alternative of \cite{ConradSS}]
\label{ssp2}
If $n$ is an odd composite number greater than two,
then Algorithm~\ref{ss1a} returns Composite with
probability at least $1/2$.
\end{prp}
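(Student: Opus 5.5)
The plan is the standard subgroup argument, organized around a nontrivial homomorphism rather than the CRT-based construction of Proposition~\ref{ssp1}. Let $\mb{U}_n$ be the units mod $n$ and let
\[
E_n=\left\{ a\in\mb{U}_n : a^{\frac{n-1}{2}}\equiv \leg{a}{n} \pmod n\right\},
\]
the Euler liars (the set $C_n$ in the previous proof). We will show that $E_n$ is a proper subgroup of $\mb{U}_n$. Lagrange's theorem then gives $|E_n|\le \phi(n)/2$. Every $a$ in $\{2,\ldots,n-2\}$ with $\gcd(a,n)>1$ is reported Composite, as is every unit outside $E_n$. Since $1$ and $n-1$ are both liars and are excluded from the sampling range, the liars available to Algorithm~\ref{ss1a} number at most $\phi(n)/2-2<(n-3)/2$. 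So the probability of returning PseudoPrime is below $1/2$.

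\textbf{Step 1: $E_n$ is a subgroup.} Both $a\mapsto a^{(n-1)/2}$ and $a\mapsto\leg{a}{n}$ are multiplicative on $\mb{U}_n$; the latter holds by the Jacobi symbol properties stated earlier. Hence $a\mapsto a^{(n-1)/2}\leg{a}{n}^{-1}$ is a group homomorphism $\mb{U}_n\to\mb{U}_n$, and $E_n$ is its kernel. This is cleaner than checking closure and inverses separately, as was done in part (a) of Proposition~\ref{ssp1}.

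\textbf{Step 2: $E_n\neq\mb{U}_n$.} We split on whether $n$ is squarefree.
\begin{itemize}
\item If $p^2\mid n$ for some prime $p$, suppose every unit is a liar. Then $a^{n-1}\equiv1\pmod n$ for all $a\in\mb{U}_n$, so $n$ would be a Carmichael number. This contradicts Korselt's theorem (Theorem~\ref{korseltT}), which forces $n$ to be squarefree. (Alternatively, one can use a primitive root mod $p^2$ together with Theorem~\ref{prrmodp2} to conclude $p\mid n-1$, which is impossible since $p\mid n$.)
\item If $n=pQ$ with $\gcd(p,Q)=1$ and $Q>1$, which holds because $n$ is composite and squarefree, we use the Chinese remainder theorem (Theorem~\ref{crt}). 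Choose $a$ with $a\equiv g\pmod p$, where $g$ is a quadratic nonresidue mod $p$, and $a\equiv1\pmod Q$. Then $\leg{a}{n}=\leg{g}{p}\leg{1}{Q}=-1$. If $a$ were a liar, we would have $a^{(n-1)/2}\equiv-1\pmod n$, hence $\equiv-1\pmod Q$. But $a\equiv1\pmod Q$ gives $a^{(n-1)/2}\equiv1\pmod Q$, so $Q\mid 2$, which is impossible because $n$ is odd.
\end{itemize}

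Step 2 is the main obstacle. Its delicate point is justifying the Jacobi symbol value $\leg{a}{n}=\leg{a}{p}\leg{a}{Q}$ via multiplicativity in the denominator, using the properties listed in the Jacobi symbol Fact. The counting at the end needs only the index-$2$ bound from Lagrange's theorem.
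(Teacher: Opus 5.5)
Your proof is correct, but it is organized differently from the paper's proof of this proposition; it is closer to part (b) of the proof of Proposition~\ref{ssp1}.

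\textbf{The paper's route.} The core of the paper's argument is a direct coset injection rather than Lagrange's theorem.
\begin{itemize}
\item It fixes a single Euler witness $b$. If both $c$ and $bc$ were liars, cancelling $\leg{c}{n}=\pm1$ would make $b$ a liar, so $c\mapsto cb$ maps the liars $C_n$ injectively into the witnesses $B_n$.
\item It then counts over all $1\le a<n$ via $n-1=|B_n|+|C_n|+|D_n|\ge 2|C_n|+1$. A prime factor of $n$ gives $|D_n|\ge1$, and this yields $|C_n|\le (n-2)/2$.
\item It simply asserts $|B_n|\neq 0$ and does not prove that a witness exists. That fact is left to the earlier proposition.
\end{itemize}

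\textbf{Your route.} You treat the liars as the kernel of $a\mapsto a^{(n-1)/2}\leg{a}{n}^{-1}$ and apply the index bound.
\begin{itemize}
\item This is the same coset mechanism, but it gives the sharper bound $|E_n|\le\phi(n)/2$.
\item You actually prove properness, so your argument is self-contained where the paper's leans on Proposition~\ref{ssp1}. For non-squarefree $n$ you use Korselt's theorem. This is legitimate, since the paper's proof of the forward direction of Theorem~\ref{korseltT} only uses $a^{n-1}\equiv1\pmod n$ for units. For squarefree $n$ you use the CRT with a quadratic nonresidue.
\item Your final count over the actual sampling range $\{2,\dots,n-2\}$, discarding the two liars $\pm1$, spells out the step from a bound on $|C_n|$ to a probability. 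The paper leaves that step implicit.
\end{itemize}

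In short, the paper's version is more elementary, needing only one injection once a witness is granted. Yours is complete and gives a tighter constant.
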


\begin{proof}
$ $ \\ $ $
Consider the following three sets $B_n$, $C_n$, $D_n$ 
defined as follows, with the first two 
describing in fact Ew(n) and Enw(n).
\[
Ew(n) = B_n = \left\{ 
        a : a  \in \mb{U}_n , 
        a^{\frac{n-1}{2}}\not\equiv \leg{a}{n} \pmod n 
      \right\} ,
\]
\[
Enw(n) = C_n = \left\{ 
        a : a  \in \mb{U}_n , 
        a^{\frac{n-1}{2}}    \equiv \leg{a}{n} \pmod n 
      \right\} ,
\]
Consider also set $D_n$ defined as follows.
\[
D_n = \left\{
       a:  1\leq a < n \wedge \gcd(a,n) \neq 1 
      \right\} .
\]
We have $|B_n | \neq 0$, $1 \in C_n $ and 
$n-1 \equiv -1 \pmod n \in C_n$ as well.
$ $ \\ $ $
We  show through group theory
$C_n   \neq  \mb{U}_n$,
then implying
\[
 |C_n| \leq |\mb{U}_n| /2 \leq (n-1) / 2
\]
The part that show that $C_n$ is a subgroup
of $\mb{U}_n$ is borrowed from the previous proof.
$ $ \\ $ $
{\bf Step 1: Pick a $b \in B_n$ and show that $C_n b \subseteq  B_n$.}
Given that $|B_n | \neq 0$, pick $b \in B_n$ with $\gcd(b,n)=1$
and consider set
\[
 C_n b = \left\{
         c \cdot b : c \in C_n
         \right\}. 
\]
We will show below that $C_n \subseteq  B_n$.
$ $ \\ $ $
Let $c \in C_n$ with $\gcd(c,n)=1$.
Then
\[
  a^{\frac{n-1}{2}} \equiv \leg{a}{n} \pmod{n}.
\]
Consider the $b \in B_n$ with $\gcd(b,n)=1$.
Then we have $\gcd(bc,n)=1$.
We now prove that $bc \not\in C_n$.
We prove it by contrapositive/contradiction as
follows.
$ $  \\ $ $
Let $bc \in C_n$. Then 
\[
 (bc)^{\frac{n-1}{2}} \equiv \leg{bc}{n} \pmod n.
\]
We then derive the following
\begin{eqnarray}
(bc)^{\frac{n-1}{2}}& \equiv& \leg{bc}{n} \pmod n \nonumber \\
\label{sse7a}
                    & \equiv& \leg{b}{n} \leg{c}{n} \pmod n 
\end{eqnarray}
Moreover, we have the following.
\begin{eqnarray}
(bc)^{\frac{n-1}{2}}& \equiv& 
  b^{\frac{n-1}{2}} c^{\frac{n-1}{2}} \pmod n \nonumber \\
\label{sse7b}
                    & \equiv& b^{\frac{n-1}{2}} \leg{c}{n} \pmod n 
\end{eqnarray}
Since $\gcd(c,n)=1$ we have $\leg{c}{n} \neq 0$ and thus
by way of Eq.~\ref{sse7a} and Eq.~\ref{sse7b} we obtain
\[
 b^{\frac{n-1}{2}} \equiv \leg{b}{n} \pmod n ,
\]
which implies $ b \in C_n$ contradicting the assumption
that $b \in B_n$ instead.
Therefore it can' be $bc \in C_n$. 
Thus it must be $bc \in B_n$ for all $c \in C_n$.
Therefore we have proved
that
\[
 C_n b  \subseteq B_n .
\]
We know pick two different $c_1 \neq c_2 \in C_n$.
\[
c_1 b \equiv c_2 b \pmod n 
\Rightarrow
(c_1 - c_2 \equiv 0 \pmod n ,
\]
given that $\gcd(b,n)=1$. Moreover for $c_1 , c_2 < n$,
we conclude that $c_1 = c_2 $.
Therefore we have the following.
\[
 |C_n | = |C_n b | \leq |B_n |.
\]
$ $ \\ $ $
If $n$ is compositive for an odd $n$ as the statement 
emphatically states, then $n$ has a prime factor $p$
for which $\gcd(p,n) >1$. Then $p \in D_n$ and thus
$|D_n | \geq 1$.
We then obtain the following
\begin{eqnarray}
n-1 &=&    |B_n | + |C_n | + |D_n |  \\
    &\geq& |C_n | + |C_n | + |D_n |  \\
    &\geq& |C_n | + |C_n | +  1      \\
    &\geq& 2|C_n |  +  1      \\
|C_n| &\leq& \frac{n-2}{2}.
\end{eqnarray}
The latter $|C_n| \leq (n-2)/2 \leq (n-1)/2$, as needed.
\end{proof}

\begin{lem}[Running time of algorithm~\ref{ss1b}]
The running time of
Algorithm~\ref{ss1b} is
$O(t \cdot \lg{n} \cdot M(n) + t \lg^3{n} )$, where $M(n)$
is the cost of multiplying $n$-bit integers, and 
$O(\lg^3{n})$ is the cost of computing the Jacobi symbol
$\leg{a}{n}$ for $n$ and an $a<n$.
\end{lem}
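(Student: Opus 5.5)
The plan is to bound the cost of a single iteration of the \textbf{do}-loop of Algorithm~\ref{ss1b}, then multiply by the number of iterations $t$. Each iteration performs four kinds of work, and I will bound them separately.
\begin{itemize}
\item[(i)] Drawing a uniformly random $a \in \{2,\ldots,n-2\}$ costs $O(\lg n)$ random bits and bit operations.
\item[(ii)] The test $\gcd(a,n)>1$ is done with Euclid's algorithm (Theorem~\ref{gcd}). By the Fibonacci-based lemma following it, this takes at most about $1.44\lg n+2$ division steps. Each step is a division of $O(\lg n)$-bit numbers costing $O(\lg^2 n)$ with schoolbook arithmetic, so this test costs $O(\lg^3 n)$.
\item[(iii)] The power $a^{(n-1)/2} \bmod n$ is computed by repeated squaring, as in the $3^{49} \bmod 19$ example. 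This uses at most $2\lg n$ modular multiplications of $O(\lg n)$-bit residues. Each multiplication costs $M(n)$ plus a reduction mod $n$ of comparable cost, so this step costs $O(\lg n\cdot M(n))$. This is the same accounting as in the running-time lemma for Algorithm~\ref{fct1b}.
\item[(iv)] The Jacobi symbol $\leg{a}{n}$ is computed by Algorithm~\ref{jsymiter1} (Lemma~\ref{jac3}).
\end{itemize}

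The step that needs real argument is (iv), because Lemmas~\ref{jac1}--\ref{jac3} only assert polynomial time with an ``Euclid-like'' $O(\lg n)$ number of steps. I would make this precise by tracking the pair $(A,N)$ through the outer loop. Each pass performs the swap followed by $A \leftarrow N \bmod A$, which is exactly one Euclidean step on the pair. The inner halving loop removes factors of $2$ from $A$, and the total number of halvings is bounded by the total bit-length lost, so it is $O(\lg n)$ over the whole run. The outer loop therefore also runs $O(\lg n)$ times, by the same Fibonacci bound used in step (ii). Each pass does one reduction mod and some parity tests on $O(\lg n)$-bit numbers, costing $O(\lg^2 n)$. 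Hence (iv) costs $O(\lg^3 n)$.

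Adding the four costs gives $O(\lg n\cdot M(n)+\lg^3 n)$ per iteration; the gcd cost in (ii) is absorbed into the $\lg^3 n$ term. The loop executes at most $t$ times and may exit early on a \textbf{Composite} return. The total is therefore $O(t\lg n\cdot M(n)+t\lg^3 n)$, as claimed.
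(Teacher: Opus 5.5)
Your proposal is correct and follows the same route as the paper. Per round you charge $O(\lg n)$ modular multiplications for $a^{(n-1)/2} \bmod n$ plus an $O(\lg^3 n)$ Jacobi-symbol cost, then multiply by $t$. The paper simply takes the $O(\lg^3 n)$ Jacobi cost as given in the statement, whereas you also justify it (and the gcd test). That extra argument is sound: halvings only shrink $A$, and the larger entry still satisfies $N_i \ge N_{i+1}+N_{i+2}$, so the Fibonacci step bound carries over.
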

\begin{proof}
Exponentiation involves $O(\lg{n})$ multiplications. 
Depending on how we
implement integer multiplication the overall time complexity
is $O(t \cdot \lg{n} \cdot M(n) +t \lg^3{n} )$, where $M(n)$
is the computational cost (bit model) of
multiplying $n$-bit integers, and $\lg^3{n}$ is the 
cost of computing the Jacobi symbol
$\leg{a}{n}$ for $n$ and an $a<n$.
\end{proof}

\begin{cor}[SolovayStrassenPrimalityTest $\in$ BPP]
Algorithm~\ref{ss1b} belongs to class BPP.
\end{cor}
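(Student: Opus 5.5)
To place Algorithm~\ref{ss1b} in BPP, I would follow the pattern of the corresponding corollary for Algorithm~\ref{fct1b}. I map the language $L$ to $\text{Prime}$ and the input $w$ to $n$, and I fix $t$ to a constant such as $t=2$ so that the error bound $2^{-t}$ is a constant strictly below $1/2$. Two things then need checking: that each run is polynomial time on a probabilistic Turing machine, and that the two error conditions of the BPP definition hold with $\epsilon = 2^{-t}$.

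\textbf{Running time.} I invoke the running time lemma for Algorithm~\ref{ss1b}, which gives $O(t \cdot \lg{n} \cdot M(n) + t\lg^3{n})$. For constant $t$ this is polynomial in the input length $\lg n$, since $M(n)$ is polynomial in the bit length. The polynomiality of the Jacobi symbol step comes from Lemma~\ref{jac1} or Lemma~\ref{jac2}. The random choice of $a$ in $\{2,\ldots,n-2\}$ is realized by coin flips, as in the definition of a probabilistic Turing machine.

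\textbf{Completeness.} Suppose $n$ is an odd prime. Then every $a$ in the sampled range has $\gcd(a,n)=1$, so condition $C_1(a,n)$ never fires. By Euler's criterion, Eq.(\ref{legeq0}), we have $\leg{a}{n}\equiv a^{\frac{n-1}{2}} \pmod n$, so $C_2(a,n)$ never fires either. Hence the algorithm outputs PseudoPrime with probability $1 \geq 1-\epsilon$.

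\textbf{Soundness.} Suppose $n$ is an odd composite. Proposition~\ref{ssp1}, or equivalently Proposition~\ref{ssp2}, shows that a single run reports Composite with probability at least $1/2$. The $t$ runs use independent choices of $a$, so a wrong PseudoPrime answer has probability at most $2^{-t} = \epsilon < 1/2$.

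\textbf{Main obstacle.} The only delicate point is the independence and uniformity of the sampling. Proposition~\ref{ssp1} bounds the fraction of non-witnesses $C_n$ within $\mb{U}_n$, whereas the algorithm samples from $\{2,\ldots,n-2\}$, a range that also contains elements of $D_n$ and excludes $1$ and $n-1$, both of which lie in $C_n$. I would handle this with the counting in the proof of Proposition~\ref{ssp2}, namely $|C_n|\leq (n-2)/2$. Removing $1$ and $n-1$ from the range leaves at most $(n-6)/2$ non-witnesses among $n-3$ candidates, a fraction below $1/2$. Multiplying over the $t$ independent trials then completes the argument.
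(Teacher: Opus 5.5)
Your proposal is correct and follows essentially the paper's proof: map $L$ to $\text{Prime}$ and $w$ to $n$, note that primes are accepted with probability $1$ by Euler's criterion, and invoke Proposition~\ref{ssp1} (or \ref{ssp2}) to bound by $2^{-t}$ the probability of a wrong PseudoPrime answer on a composite. Your extra steps make explicit details the paper leaves implicit: fixing a constant $t$ so that $\epsilon<1/2$, citing the running-time lemma, and using $1,n-1\in C_n$ and $|C_n|\leq (n-2)/2$ to check that the non-witness fraction within the actual sampling range $\{2,\ldots,n-2\}$ stays below $1/2$.
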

\begin{proof}
It follows from Proposition~\ref{ssp1} or Proposition~\ref{ssp2}.
We shall map (call) $L$ and $w$ to
$\text{Prime}$ and $n$ respectively.
If $n$ is a prime number, it is decided by Algorithm~\ref{ss1b}
that it is a Prime (and output PseudoPrime is printed) with
probability 1.
$ $ \\ $ $
If $n$ is a composite number, it is decided by
Algorithm~\ref{ss1b}
that it is a  Composite number  (and output Composite is printed)
and thus rejected being a Prime/PseudoPrime with probability
at least $1- 1/ 2^t$ by way of Proposition~\ref{ssp1}.
$ $ \\ $ $
In other words, $n$ can be  wrongly decided by Algorithm~\ref{fct1b}
that it is a Prime (and output PseudoPrime is printed)
with probability at most $1/ 2^t$.
\end{proof}

\newpage

\section{Riemann hypothesis associated primality tests}

\subsection{Perfect powers}

\noindent
One of the first steps in Miller's algorithm it is
to detect whether an integer $n$ is a perfect power  or
not. We say that natural number  $n$ is a perfect power 
if there exist positive integers $x,m$ such that $n= x^m$.

\begin{dfn}[Perfect power]
Given a natural integer number $n$  we say
$n$ is a perfect power if there exist
a natural integer number $x$ and
a natural integer number $m>1$
such that
\[
       n = x^m
\]
\end{dfn}

\begin{lem}[Perfect power algorithm]
One can use binary search and thus time polylogarithmic
in $n$ to determine $x,m$. This is NaivePerfectPower(n).
\end{lem}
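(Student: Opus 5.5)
The plan is to search over the exponent $m$ and, for each fixed $m$, binary search for the base $x$. If $n=x^m$ with $x\geq 2$, then $2^m\leq n$, so $m\leq \lg{n}$. Hence only $m=2,3,\ldots,\floor{\lg{n}}$ need to be examined, which is $O(\lg{n})$ candidate exponents. The trivial cases $n=0,1$ (which are $0^m$ and $1^m$) are handled separately at the start.

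For a fixed $m$, the function $x\mapsto x^m$ is strictly increasing on the natural numbers. So I would binary search for $x$ in the interval $[1,n]$, or more tightly $[1,2^{\lceil \lg{n}/m\rceil}]$. At each probe $x$, compute $x^m$ and compare it with $n$: go left if $x^m>n$, go right if $x^m<n$, and stop with success if $x^m=n$. Monotonicity guarantees that if an integer $m$-th root exists, the search finds it, and otherwise the search terminates empty-handed. Correctness of the whole procedure then follows: NaivePerfectPower(n) reports $(x,m)$ exactly when $n=x^m$ with $m>1$.

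For the running time, each binary search takes $O(\lg{n})$ probes. Each probe computes $x^m$ by repeated squaring using $O(\lg m)$ multiplications. The step I expect to need care is keeping these intermediate numbers small so that the cost stays polylogarithmic. To handle it, I would abort the exponentiation as soon as a partial product exceeds $n$, so every operand has $O(\lg{n})$ bits and each multiplication costs $M(\lg n)$, or $O(\lg^2 n)$ naively. Summing gives $O(\lg{n}\cdot\lg{n}\cdot\lg\lg{n}\cdot M(\lg n))$, which is polylogarithmic in $n$, as claimed.
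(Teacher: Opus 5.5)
Your proposal is correct and follows the paper's argument: bound the exponent via $n = x^m \ge 2^m$, so $m \le \lfloor \lg n \rfloor$, and then binary search for the base $x$ for each candidate $m \in \{2,\ldots,\lfloor \lg n\rfloor\}$. You go further than the paper by actually justifying the polylogarithmic running time, which the paper only asserts: aborting the exponentiation once a partial product exceeds $n$ keeps every operand at $O(\lg n)$ bits.
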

\begin{proof}
$ $ \\ $ $
Since $n=x^m \geq 2^m $ we observe
that $m \leq \lg{n}$ or
$m \leq \lfloor \lg{n} \rfloor$.
Thus the integer candidate values for $m$
are $2,3, \ldots , \lfloor \lg{n} \rfloor$.
If $A=\lg{n}$ the number of values $m$ we
need to search for is at most $A-1$.

\SetKwComment{Comment}{/* }{ */}
\SetKwRepeat{Do}{do}{while}
\begin{algorithm}[H]
\KwIn{$n$ greater than 2}
\KwOut{$x,m$ such that $x^m = n$ or $\mathbf{NotaPPower}$}
$ A = \lg{n} $ 
 \Comment*[r]{Bound for $m$ is $A$}

\For{$i=2,3, \ldots , \lfloor  A \rfloor$}{

  $ m = i $ ;
 
  $x=BinarySearch([1,y],m)$
  \Comment*[r]{Determine $1< x \leq n$ such that $x^m <n$, $(x+1)^m \geq n$}

 \If{$x^{ m } == n$}{
  \Return{$\mathbf{(x, m )}$} 
  \Comment*[r]{Perfect Power}}

 \If{$(x+1)^{ m } == n$}{
  \Return{$\mathbf{(x+1, m )}$} 
  \Comment*[r]{Perfect Power}}

 }

 \Return{$\mathbf{NotaPPower}$}

 \caption{NaivePerfectPower(n) : Determine if $n$ is a perfect power}
 \label{ppalg0}
\end{algorithm}
%With the next problem the binary search operation of Line 4 is replaced
%with an arithmetic operation that finds $\floor{y^{1/m}}$ for a given
%$m$. That, is then set to $x$.
\end{proof}

\newpage

\subsection{Miller primality test under GRH : Miller1}

Miller (\cite{M76}) describes two deterministic
algorithms that work correctly under the Generalized
Riemann Hypothesis (GRH).  Miller cites the
Extended Riemann Hypothesis for a Dirichlet function;
the Generalized Riemann Hypothesis for Dirichlet $L$-functions
is equivalent to the Extended Riemann hypothesis for the
problem in hand.
The first algorithm, that will be referred to as Miller1,
appears on page 303 of \cite{M76}.
The second algorithm, that will be referred to as Miller2,
appears on page 308 of \cite{M76}.
Miller1 is the deterministic algorithm 
turned by Rabin \cite{R80} into a probabilistic algorithm, 
and analyzed in \cite{R80}.
This probabilistic algorithm will be referred to as 
Rabin-Miller.
Monier \cite{Monier} proved that the conditions used
in Miller1 and Miller2 are equivalent.
The probabilistic version of Miller2 will be referred
to as the Miller-Rabin algorithm. Because of the equivalence
of the conditions of Miller1 and Miller2, the Rabin  analysis
originally for the probabilistic algorithm of the modified
Miller1 applies also to the modified Miller2.
Miller1 and also Algorithm~\ref{Miller2} known as 
Miller2, if the  GRH is incorrect, might generate an 
incorrect answer.
This uncertainty is not present in 
Miller-Rabin or Rabin-Miller.

\begin{thm}[Miller \cite{M76}, pages 303-304]
\label{tMiller1}
Algorithm~\ref{Miller1}, Miller1, is a deterministic algorithm
for testing whether $n$ is prime or composite, under the GRH.
\end{thm}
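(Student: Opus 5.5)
Algorithm~\ref{Miller1} is not reproduced above, so we take it in the form of \cite{M76}, page 303. Given odd $n$, it first runs NaivePerfectPower$(n)$ and declares $n$ composite if $n=x^m$ with $m>1$. Otherwise it tests every $a$ with $2\le a\le c\ln^2 n$ (for an explicit constant $c$) and declares $n$ composite if some $a$ gives one of the following.
\begin{enumerate}
\item[(i)] $\dv{a}{n}$ (or more generally $\gcd(a,n)>1$).
\item[(ii)] $a^{n-1}\not\equiv 1 \pmod n$.
\item[(iii)] $1<\gcd\bigl(a^{(n-1)/2^k}-1 \bmod n,\ n\bigr)<n$ for some $k$ with $\dv{2^k}{n-1}$.
\end{enumerate}
If no $a$ succeeds, it declares $n$ prime. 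The proof has two halves: soundness (a prime is never rejected) and completeness under the GRH (every composite is rejected).

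\emph{Soundness.} This is elementary. If $n=p$ is prime, it is not a perfect power. No $a<p$ shares a factor with $p$, so (i) fails. Fermat's little theorem (Theorem~\ref{flt}) gives $a^{p-1}\equiv 1$, so (ii) fails. Any gcd with $p$ is $1$ or $p$, so (iii) fails. Hence the output is ``prime''.

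\emph{Completeness.} Let $n$ be composite, odd, and not a perfect power, and suppose no $a$ in the range triggers (i)--(iii). We derive a contradiction in two cases.

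\textbf{Case A: some $p^2\mid n$.} Use the cyclic group $\mb{U}_{p^2}$ (Theorem~\ref{prrmodp2}). Write $\chi$ for the homomorphism $\mb{U}_n\to\mb{U}_{p^2}\to\mb{Z}/p$ that records the component of order $p$. This is a nontrivial character of $\mb{U}_n$ of order $p$. Since $\gcd(p,n-1)=1$, the condition $a^{n-1}\equiv 1\pmod{p^2}$ forces $\chi(a)=1$. So every $a$ passing (ii) lies in $\ker\chi$.

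\textbf{Case B: $n$ squarefree with at least two prime factors $p\neq q$.} Write $\nu_2$ for the $2$-adic valuation and take $p,q$ maximizing $\nu_2(p-1)$. Compare the $2$-parts of $\ord_p(a)$ and $\ord_q(a)$. Failing (iii) for all $k$ forces these $2$-parts to coincide for every small $a$. Define a character $\chi$ built from the Legendre symbols $\leg{\cdot}{p}$ and $\leg{\cdot}{q}$, or their $2$-power analogues via primitive roots mod $p$ and $q$ (Proposition~\ref{prrmodp}). Using the CRT (Theorem~\ref{crt}), show that $\chi$ is nontrivial on $\mb{U}_n$ while all small $a$ lie in $\ker\chi$. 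If the $2$-parts of $p-1$ and $q-1$ differ, (ii) alone can be used as in the Korselt argument (Theorem~\ref{korseltT}). This case analysis is fiddly, and I would follow Miller's original reduction closely.

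\emph{Analytic step.} In both cases we now have a nontrivial character $\chi$ mod $n$ that equals $1$ on all $a\le c\ln^2 n$. This contradicts Ankeny's theorem, as made explicit by Miller under the ERH/GRH: every nontrivial character mod $n$ has a nonresidue below $c\ln^2 n$. We cannot prove this from the notes, so we import it as the single assumption the theorem is conditioned on.

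\emph{Running time.} There are $O(\lg^2 n)$ values of $a$. Each needs $O(\lg n)$ modular multiplications and gcds, and the perfect-power step is polylogarithmic. The whole algorithm is therefore polynomial.

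The main obstacle is Case B: producing the nontrivial character killed by every surviving $a$. I would attempt it first for $n=pq$ and then extend to more prime factors by the CRT.
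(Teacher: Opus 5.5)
The paper's proof contains no argument for this theorem: it prints the pseudocode, takes $c=2$ from Bach, and defers correctness to \cite{M76}. Your attempt to reconstruct Miller's proof is therefore a genuinely different and more substantive route, and several pieces of it are right. Soundness via Fermat is right, provided the loop is capped at $a\le n-1$. As printed, $2\ln^2 n\ge n$ for $n=5,7,11,13$, so $a=n$ is tested and (i) rejects these primes; your ``no $a<p$'' tacitly assumes the cap. You also read (iii) correctly as $a^{(n-1)/2^k}$. The printed $a^{2^k l}=a^{n-1}$ is a typo that would make (iii) vacuous once (ii) passes. Case A is correct, and so is the reduction to ``a nontrivial character of $\mb{U}_n$ equal to $1$ on every unit $a<2\ln^2 n$,'' which Bach's bound rules out under GRH.

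The gap is Case B. You never construct the character, and the one concrete fallback you give is false. Take $n=561=3\cdot 11\cdot 17$: the $2$-parts of $16$ and $2$ differ, yet every unit passes (ii) because $561$ is Carmichael. Moreover, the Korselt proof in the notes glues primitive roots together via the CRT, which yields no \emph{small} witness.

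The missing idea is this: in a cyclic group of order $2^e u$ with $u$ odd, $\nu_2(\ord(x))=e$ exactly when $x$ is a non-square. Let $e=\max_{r\mid n}\nu_2(r-1)$, attained at $p$. Your observation, which is correct, is that any $a$ passing (ii) and (iii) has $\nu_2(\ord_r(a))$ independent of the prime $r\mid n$. There are then two subcases.
\begin{itemize}
\item If $e$ is attained only at $p$, then for any other prime $q\mid n$ we have $\nu_2(\ord_p(a))=\nu_2(\ord_q(a))\le\nu_2(q-1)<e$. Hence $\leg{a}{p}=1$, and $\chi=\leg{\cdot}{p}$ works.
\item If also $\nu_2(q-1)=e$, then $\nu_2(\ord_p(a))=e\iff\nu_2(\ord_q(a))=e$. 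Hence $\leg{a}{p}=\leg{a}{q}$, and $\chi=\leg{\cdot}{p}\leg{\cdot}{q}$ works.
\end{itemize}
Both characters are nontrivial on $\mb{U}_n$: by the CRT, take $a\equiv g\pmod p$ for a non-residue $g$ and $a\equiv 1\pmod{n/p}$. Nothing here uses $\dv{p-1}{n-1}$, so no separate Korselt-type case is needed. With this, together with Case A and Bach's bound, the completeness half goes through.
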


% Dirichlet functions (Bach result)  ==> GRH
% Dedekind  functions                ==> ERH
\begin{proof}
$ $ \\ $ $
Algorithm~\ref{Miller1} appears on pages 303-304 of \cite{M76}.
Miller1 and also Algorithm~\ref{Miller2} known as 
Miller2, if the  GRH is incorrect, might generate an 
incorrect answer.
Furthermore, a constant $c$ cited by Miller in \cite{M76} is
never explicitly calculated. Under GRH, Bach \cite{Bach}
calculated the constant $c$ to be (upper bounded by) $2$.
In Miller1 and Miller2 we thus use $c=2$ from Bach \cite{Bach}.

\SetKwComment{Comment}{/* }{ */}
\SetKwRepeat{Do}{do}{while}
\begin{algorithm}[H]
\KwIn{$n>4$ is odd; $n-1=2^k l$, $l$ odd}
\KwOut{$n$ is $\mathbf{Composite}$ or $\mathbf{Prime}$}
$ n-1 = 2^k l $ ;
$ f(n) = c (\ln{n})^2 $ 
 \Comment*[r]{Bach \cite{Bach} calculated $c=2$}

\If(\tcc*[f]{Line 1}){$\text{PerfectPower}(n)$}{
 \Return{$\mathbf{Composite}$} 
 \Comment*[r]{$n$ is a perfect power $n=p^s$, $s \geq 2$}
}

\For{$a=2,3, \ldots , 2(\ln{n})^2$}{

 \If(\tcc*[f]{Line 2 (i)}){$(\dv{a}{n}) || (\gcd(a,n) >1)$}{
  \Return{$\mathbf{Composite}$} 
  \Comment*[r]{Just $\dv{a}{n}$ in \cite{M76}}
 }

 \If(\tcc*[f]{Line 2 (ii)}){$a^{n-1} \not\equiv 1 \pmod n$}{
  \Return{$\mathbf{Composite}$} \; 
 }

 \If(\tcc*[f]{Line 2 (iii)}){$\exists m, 0\leq m <k: \gcd(( a^{2^k l} \bmod n)-1, n) \neq 1, n$}{
  \Return{$\mathbf{Composite}$}  \;
 }

 }

 \Return{$\mathbf{Prime}$}
 \Comment*[r]{Line 3}

 \caption{Miller1(n):  primality test algorithm}
 \label{Miller1}
\end{algorithm}
Line references are to the pseudocode of \cite{M76}, pages 303-304.
The proof of correctness is in \cite{M76}.
Note that the line 16 return statement of Algorithm~\ref{Miller1} 
should be read as follows:
either $n$ is a prime number or the Generalized 
Riemann Hypothesis is false.
\end{proof}

\newpage

\subsection{A second Miller primality test under GRH: Miller2}

\begin{thm}[Miller \cite{M76}, page 308]
\label{tMiller2}
Algorithm~\ref{Miller2}, Miller2, is a deterministic algorithm
for testing whether $n$ is prime or composite, under the GRH.
\end{thm}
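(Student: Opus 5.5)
The plan is to prove correctness of Miller2 in two directions: every Composite output is correct unconditionally, and every Prime output is correct under GRH. Miller2 is the strong-pseudoprime form of the test, so I first fix its conditions. Write $n-1=2^k l$ with $l$ odd. For each base $a$ with $2\le a\le 2(\ln n)^2$ (Bach's constant $c=2$, as in the proof of Theorem~\ref{tMiller1}), the algorithm declares Composite if $\gcd(a,n)>1$. It also declares Composite if neither $a^l\equiv 1 \pmod n$ nor $a^{2^j l}\equiv -1 \pmod n$ holds for some $0\le j<k$. If no base in the range triggers either condition, it declares Prime.

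The unconditional direction is short. A nontrivial $\gcd(a,n)$ exhibits a factor. For prime $n$ and $\gcd(a,n)=1$, Fermat's little theorem (Theorem~\ref{flt}) gives $a^{2^k l}\equiv 1 \pmod n$. Now consider the sequence $a^l, a^{2l},\ldots,a^{2^k l}$ and look at the last term not congruent to $1$. Its square is $1$, and modulo a prime the only square roots of $1$ are $\pm 1$ (Theorem~\ref{thm47}, or Lagrange's theorem~\ref{poly2lag}). So that term is $-1$, or else $a^l\equiv 1$. Hence a prime never triggers Composite, and every Composite answer is correct.

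For the converse I would first invoke Monier's equivalence, cited just before Theorem~\ref{tMiller1}. It says that for each $a$ the Miller2 witness condition holds exactly when the Miller1 conditions, lines 2(i)--(iii), hold. Given that, a composite $n$ that passes every base $a\le 2(\ln n)^2$ in Miller2 also passes every base in Miller1. By Theorem~\ref{tMiller1}, under GRH Miller1 would then report Composite for such $n$, a contradiction. This reduces the theorem to the cited equivalence together with Theorem~\ref{tMiller1}.

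The main obstacle is making this reduction honest, because it leans entirely on results established only in \cite{M76}, \cite{Monier} and \cite{Bach}. If a direct argument is wanted, I would proceed as follows. Let $n$ be odd composite and not a perfect power. Show that the non-witnesses lie in a proper subgroup $H$ of $\mathbb{U}_n$, in the same spirit as the subgroup $C_n$ in Proposition~\ref{ssp1}. For a Carmichael $n$, use Korselt's theorem (Theorem~\ref{korseltT}) and the Chinese remainder theorem to build an element outside $H$. This parallels Case 1 of Proposition~\ref{ssp1}: take $a\equiv g \pmod{p_1}$ and $a\equiv 1$ modulo the cofactor. Then apply Bach's GRH bound, which says every proper subgroup of $\mathbb{U}_n$ misses some $a\le 2(\ln n)^2$. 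That yields a witness in the tested range. The step I expect to be hardest is the Bach bound, which I would cite rather than prove.
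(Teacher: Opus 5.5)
The paper gives no argument of its own here. It lists the algorithm, takes $c=2$ from Bach, and defers correctness to \cite{M76}.

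Your unconditional direction is fine, except for one edge case. It needs every tested $a$ to satisfy $a<n$, and that fails for $n\in\{5,7,11,13\}$: there $2(\ln n)^2\ge n$, so $a=n$ is tested and $\gcd(a,n)=n>1$ fires on a prime.

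The real gap is in your main route for the converse. You describe Miller2 as testing every $a$ with $2\le a\le 2(\ln n)^2$. But Algorithm~\ref{Miller2} tests only the \emph{primes} $p_1,\ldots,p_t\le f(n)$, whereas Algorithm~\ref{Miller1} tests every integer in that range. Monier's equivalence holds base by base. So a composite $n$ passing Miller2 only tells you that the prime bases are non-witnesses, and Theorem~\ref{tMiller1} cannot be applied. Nor can you recover the composite bases by multiplicativity, because non-witnesses are not closed under products. For $n=65$ ($k=6$, $l=1$) we have $8^2\equiv 18^2\equiv -1\pmod{65}$, so $8$ and $18$ are non-witnesses. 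Yet $8\cdot 18\equiv 14$ has $14\not\equiv\pm1$ and $14^2\equiv 1\pmod{65}$, so $14$ is a witness.

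So everything rests on your direct route, and there the content lies in the choice of $H$, which you leave unspecified.
\begin{itemize}
\item If $n$ is not Carmichael, take $H=\{a\in\mb{U}_n: a^{n-1}\equiv 1\pmod n\}$.
\item If $n$ is Carmichael (hence squarefree with at least two prime factors, by Theorem~\ref{korseltT}), let $j$ be the largest index $0\le j<k$ for which some $b\in\mb{U}_n$ has $b^{2^j l}\equiv -1\pmod n$. Such a $j$ exists since $(-1)^l=-1$. Let $H=\{a\in\mb{U}_n: a^{2^j l}\equiv\pm1\pmod n\}$.
\end{itemize}
In the Carmichael case, maximality of $j$ puts every non-witness in $H$. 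Also $b^{2^j l}\equiv -1\pmod{p_1}$ forces $p_1-1\nmid 2^j l$. Hence your CRT element ($a\equiv g\pmod{p_1}$, $a\equiv 1$ modulo the cofactor) satisfies $a^{2^j l}\not\equiv 1\pmod{p_1}$ and $a^{2^j l}\equiv 1$ modulo the cofactor, so it lies outside $H$.

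Finally, you must account for the prime-only bases. If a prime below $2(\ln n)^2$ divides $n$, the gcd test fires. Otherwise the least positive integer outside $H$ is itself prime, since $H$ is closed under products. So Bach's bound gives a \emph{prime} witness in the tested range.
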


\noindent
\begin{proof}
$ $ \\ $ $
\SetKwComment{Comment}{/* }{ */}
\SetKwRepeat{Do}{do}{while}
\begin{algorithm}[H]
\KwIn{$n >4$ is odd $n-1=2^k l$, $l$ odd}
\KwOut{$n$ is $\mathbf{Composite}$ or $\mathbf{Prime}$}
$ n-1 = 2^k l $ ;

$ f(n) = c (\ln{n})^2 $ 
\Comment*[r]{Bach \cite{Bach} calculated $c=2$}

\If(\tcc*[f]{Line 1}){$\text{PerfectPower}$(n)}{
 \Return{$\mathbf{Composite}$} 
 \Comment*[r]{$n$ is a perfect power $n=p^s$, $s \geq 2$}
}

$\text{Generate primes}\ p_1 , \ldots , p_t \leq f(n)$ ;
 \Comment*[r]{Line 2}

\For(\tcc*[f]{Line 2(i)}){$i=1,2, \ldots , t$}{
 $a = p_i$ ;

 \If(\tcc*[f]{Line 2(ii); or say $\dv{a}{n}$}){$\gcd(a,n) >1$}{
  \Return{$\mathbf{Composite}$} 
  \Comment*[r]{$a$ is a witness of $n$'s compositeness}
 }
  Calculate $a^{2^i \cdot l} \pmod n$ for $0\leq i \leq k$.

 \If(\tcc*[f]{Line 2(iii)}){$a^{2^k l} \equiv a^{n-1} \not\equiv 1 \pmod n$}{
  \Return{$\mathbf{Composite}$} 
  \Comment*[r]{$a$ is a witness of $n$'s compositeness}
 }

 \If(\tcc*[f]{Line 2(iv)}){$a^{l} \equiv \equiv 1 \pmod n$}{
   continue;
 }

 Find max $j = \max \left\{ i: a^{2^i \cdot l} \pmod n \neq 1 \right\}$ ;

 \If(\tcc*[f]{Test (v)}){$a^{2^j l} \equiv \equiv -1 \pmod n$}{
   continue;
 }

 \Return{$\mathbf{Composite}$}
 \Comment*[r]{$a$ is a witness of $n$'s compositeness}

 }
 \Return{$\mathbf{Prime}$}
 \caption{Miller2(n) :  primality test algorithm}
 \label{Miller2}
\end{algorithm}
The proof of correctness is in \cite{M76}.
The cases (ii)-(v) and corresponding line references
are from \cite{Bach}, page 308.
Note that the line 25 return statement should be read as follows:
either $n$ is a prime number or the Generalized 
Riemann Hypothesis is false.
% Dirichlet functions (Bach result)  ==> GRH
% Dedeking  functions                ==> ERH
\end{proof}

\newpage

\section{Probabilistic primality tests}

\subsection{The Miller-Rabin primality test}

The following lemma is established. It is used
to establish Proposition~\ref{mrp2} and
prove the correctness of Algorithm~\ref{MillerRabin}.
A condition expressed by Eq.(\ref{mre0}) will have
cases $P_{2a} , P_{2b}$ merged into Case $P_2$.
Therefore the condition of Eq.(\ref{mre0}) 
will appear as in Eq.(\ref{mrepri})
in Lemma~\ref{mrl1}.
\begin{equation}
\label{mre0}
\text{n is prime} \Rightarrow \forall x ,  1 \leq x < n :
\begin{cases}
 x^l       \equiv 1 \pmod n & \text{Case } P_1(x,n) \\
     \vee             &                  \\
 x^l       \equiv -1 \pmod n & \text{Case } P_{2a} (x,n,m) \\
     \vee             &                  \\
 x^{2^m l} \equiv -1 \pmod n ,\quad  
\exists m: 0 <m< k &\text{Case } P_{2b}\\
\end{cases}
\end{equation}

\begin{lem}
\label{mrl1}
Let $n \in \mb{Z}_+$ be an odd integer with $n>2$.
Let $n-1 = 2^k \cdot l$, where $k\geq 1$, and $l$ odd.
For all $x$ such that $1\leq  x < n$
we have the following.
\begin{equation}
\label{mrepri}
\text{n is prime} \Rightarrow \forall x , 1 \leq x < n :
\begin{cases}
 x^l       \equiv 1 \pmod n  & \text{Case } P_1 (x,n) \\
            \vee             &                  \\
 x^{2^m l} \equiv -1 \pmod n ,\quad  \exists m: 0 \leq m < k 
                             & \text{Case } P_2 (x,n,m) \\
\end{cases}
\end{equation}
\end{lem}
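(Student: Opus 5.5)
Fix a prime $n$ and an $x$ with $1 \leq x < n$. Since $n$ is prime and $x<n$, we have $\gcd(x,n)=1$, so Fermat's little theorem (Theorem~\ref{flt}) gives $x^{n-1} = x^{2^k l} \equiv 1 \pmod n$. The plan is to look at the chain of repeated squarings
\[
y_0 = x^{l},\quad y_1 = x^{2l},\quad \ldots,\quad y_k = x^{2^k l} \pmod n,
\]
where each term is the square of the previous one, $y_{i+1} \equiv y_i^2 \pmod n$, and the last term is $y_k \equiv 1 \pmod n$.

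We split into two cases according to $y_0$. If $y_0 \equiv 1 \pmod n$, we are in Case $P_1(x,n)$ and there is nothing more to show. Otherwise $y_0 \not\equiv 1$ while $y_k \equiv 1$, so the set of indices $i$ with $y_i \equiv 1 \pmod n$ is nonempty and does not contain $0$. Let $j$ be its least element; then $1 \leq j \leq k$. Put $m = j-1$, so $0 \leq m < k$, $y_m \not\equiv 1$, and $y_m^2 \equiv y_j \equiv 1 \pmod n$.

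The key step is to force $y_m \equiv -1 \pmod n$. It uses the fact, proved earlier via Lagrange's theorem (Theorem~\ref{poly2lag}) or directly from $\dv{n}{(y-1)(y+1)}$ with $n$ prime, that the only solutions of $y^2 \equiv 1 \pmod n$ are $y \equiv \pm 1$. This fact is exactly the argument given in the proofs of Wilson's theorem and Theorem~\ref{thm47}. Since $y_m \not\equiv 1$, we get $y_m \equiv -1 \pmod n$, which is Case $P_2(x,n,m)$. When $m=0$ this recovers the case $P_{2a}$ of Eq.(\ref{mre0}), and when $0<m<k$ it is $P_{2b}$, so the two merge into $P_2$ as the statement intends.

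The main obstacle is only bookkeeping. The minimal-index argument must place $m$ in the range $0 \leq m < k$, and the hypotheses $k \geq 1$ and $l$ odd make the decomposition $n-1 = 2^k l$ well defined for odd $n>2$. No deeper number theory is needed beyond Fermat's theorem and the square-roots-of-one fact.
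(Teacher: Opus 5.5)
Your proof is correct and is essentially the paper's argument: Fermat's little theorem gives $x^{2^k l}\equiv 1 \pmod n$, and primality of $n$ (via $n \mid (y-1)(y+1) \Rightarrow y\equiv \pm 1$) determines which term of the squaring chain $x^{l}, x^{2l}, \ldots$ gives $P_1$ or $P_2$. The paper packages the same idea by factoring $x^{n-1}-1=\bigl(x^{2^{k-1}l}+1\bigr)\cdots\bigl(x^{l}+1\bigr)\bigl(x^{l}-1\bigr)$ and noting that the prime $n$ must divide one factor; your least-index choice of $m$ is just an explicit way of locating that factor.
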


\noindent

\begin{proof}
$ $ \\ $ $
If $n$ is a prime number then Fermat's little theorem is invoked.
For any $x$ such that $1\leq x < n$ and we note that then
$\gcd(x,n)=1$ we have the following.
\[
 x^{n-1} \equiv 1 \pmod n  \Rightarrow  x^{n-1} -1 \equiv 0 \pmod n .
\]
We note that $n-1=2^k l$ as stated therefore
\[
  n-1 = \frac{n-1}{2} \cdot 2 = 2^{k-1} l  + 2^{k-1} l 
                              = 2 \cdot ( 2^{k-1} l ) .
\]
Therefore we have the following
\begin{eqnarray}
\label{mre1a} 
x^{n-1} -1  = x^{2^k l} -1       =  x^{2 \cdot 2^{k-1} l} -1  
%                                 &=&  \left( x^{2^{k-1} l} \right)^2 - 1^2
%  \nonumber \\
                                 &=&\left( x^{2^{k-1} l} + 1 \right)
                                    \left( x^{2^{k-1} l} - 1 \right)
\end{eqnarray}
We continue unrolling the right-most term a total of $k$ times.
\begin{eqnarray}
\label{mre1b} 
x^{n-1} -1  &=&\left( x^{2^{k-1} l} + 1 \right)
               \left( x^{2^{k-1} l} - 1 \right)
  \nonumber \\
            &=&\left( x^{2^{k-1} l} + 1 \right)
               \left( x^{2^{k-2} l} + 1 \right)
               \left( x^{2^{k-2} l} - 1 \right)
  \nonumber \\
            &=&\left( x^{2^{k-1} l} + 1 \right)
               \left( x^{2^{k-2} l} + 1 \right)
               \left( x^{2^{k-3} l} + 1 \right)
               \left( x^{2^{k-3} l} - 1 \right)
  \nonumber \\
           &\ldots&
  \nonumber \\
            &=&\left( x^{2^{k-1} l} + 1 \right)
               \left( x^{2^{k-2} l} + 1 \right)
                   \ldots
               \left( x^{2 l} + 1 \right)
               \left( x^{l} + 1 \right)
               \left( x^{l} - 1 \right) .
\end{eqnarray}
Moreover, $x^{n-1} -1 \equiv 0 \pmod n$ implies the following.
\begin{eqnarray}
\label{mre1c} 
x^{n-1} -1                         &\equiv&0 \pmod n \Leftrightarrow 
\nonumber\\
  \left( x^{2^{k-1} l} + 1 \right)
  \left( x^{2^{k-2} l} + 1 \right)
         \ldots
  \left( x^{2 l} + 1 \right)
  \left( x^{l} + 1 \right)
  \left( x^{l} - 1 \right)         &\equiv&0 \pmod n 
\end{eqnarray}
We perform a case analysis.
$ $ \\ $ $
One possibility that generates $P_1$ is that 
$\dv{n}{ x^{l} - 1}$, the right-most term. 
Then $x^{l}\equiv 1\pmod n$.
If this is not the case, then $n$  might divide the second
term from the right that is,
$\dv{n}{x^{l} + 1 }$. Then $x^{l}\equiv -1 \pmod n$.
This is what was called case $P_{2a}$ or it is the first
case of $P_{2}$ for $m=0$.
If this is not the case, then $n$ might divide the third
term from the right that is,
$\dv{n}{x^{2l} + 1 }$. Then $x^{2l}\equiv -1 \pmod n$.
This is the first case of $P_{2b}$ with $m=1$.
Continuing like this, if it is not the case that
$n$ divides $x^{2^{k-2} l} + 1$, then
$\dv{n}{x^{2^{k-1}l} + 1 }$. Then $x^{2^{k-1}l}\equiv -1 \pmod n$.
This is the last case of case $P_{2b}$ with $m=k-1$.

Note that $x^{2^{k-1}l}$ is the square of $x^{2^{k-2}l}$
and so on, and $x^{2l}$ is the square of $x^{l}$ mod $n$.
Thus the sequence implied is as follows if listed
right to left.
\begin{equation}
\label{millers}
x^l          \bmod n ,
x^{2l}       \bmod n ,
x^{2^2 l}    \bmod n ,
             \ldots
x^{2^{k-1}l} \bmod n
\end{equation}
The terms as shown in this order can be generated by squaring
mod $n$ starting with the left-most term $x^l$ mod $n$.
If $x^l$ is equivalent to one mod $n$ so are all subsequent terms.
If $x^{2^m l}$ is equivalent to $1$ mod $n$ so are all 
subsequent terms $x^{2^j l}$, $m \leq j < k$ because of squaring.
If $x^{2^m l}$ is $-1$ mod $n$        all subsequent terms
$x^{2^j l}$, $m <    j < k$, are $1$ mod $n$.
For the sequence
This implies that in the sequence of Eq.~(\ref{millers})
there is only one $-1$ followed by (a suffix) of ones to the end
of it or the whole sequence is a sequence of ones..

\noindent
In conclusion if $n$ is prime then we have $P_1$ or $P_2$ that is
\begin{equation}
\label{mrcpri}
   \text{n is prime} \Rightarrow \forall x, 1\leq x < n: P_1 (x,n) \vee \exists
0 \leq m < k :P_2 (x,n,m) .
\end{equation}

\noindent
If $n$ is a composite number  then we have 
$\overline{ P_1 \vee P_2 } = \overline{P_1} \wedge \overline{P_2}$.
\begin{equation}
\label{mrccom}
   \text{n is composite} \Rightarrow  \exists  x , 1 \leq x < n :
\overline{P_1 (x,n)} \wedge \forall m,  0 \leq m < k :\overline{P_2 (x,n,m)} .
\end{equation}

This is equivalent to the following one.
\begin{equation}
\label{mrecom}
\text{n is composite} \Leftrightarrow
\exists x , 1 \leq x < n  :
\begin{cases}
 x^l       \not\equiv 1 \pmod n  & \text{Case } \overline{P_1 (x,n)} \\
           \wedge             &                  \\
 x^{2^m l} \not\equiv -1 \pmod n ,\quad  \forall m, 0 \leq m < k : 
                             & \text{Case } \overline{P_2 (x,n,m)} \\
\end{cases}
\end{equation}
Later one we will rename 
\begin{eqnarray}
\label{cp1}
C_1 (x,n)   &=& \overline{P_1 (x,n)} \\
\label{cp2}
C_2 (x,n,m) &=& \overline{P_2 (x,n,m)} .
\end{eqnarray}
\end{proof}

We refine   Lemma~\ref{mrl1} to match the corresponding
lines of Algorithm~\ref{MillerRabin}.

\begin{prp}
\label{mrp2}
Let $n \in \mb{Z}_+$ be an odd integer with $n>4$.
Let $n-1 = 2^k \cdot l$, where $k\geq 1$, and $l$ odd.
Show then, 
%that in Algorithm~\ref{MillerRabin}, 
integer $n$ is declared a $\textrm{PseudoPrime}$ number if and only
if for every $a$ such that $2 \leq a < n-1$ and  $\ndv{n}{a}$
either
\begin{eqnarray}
\label{mrcp1}
 P_1 (a,n) = \overline{C_1 (a,n)} &:& a^{l} \equiv 1 \pmod n , 
\end{eqnarray}
or
\begin{eqnarray}
\label{mrcp2}
 P_2 (a,n,m) = \overline{C_2 (a,n,m)} &:& a^{2^m l} \equiv -1 \pmod n ,
\end{eqnarray}
for some integer $m$ such that $0 \leq m  <   k$.
$\textrm{PseudoPrime}$ means it is either a prime number or
$a^{n-1} \equiv 1 \pmod n$.
\end{prp}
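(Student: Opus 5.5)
The statement is essentially a specification of what Algorithm~\ref{MillerRabin} (not yet displayed) does when run over every base, so the plan is to prove it by tying the declaration \emph{PseudoPrime} to the conjunction, over all admissible bases $a$, of the disjunction $P_1(a,n)\vee\exists m\,P_2(a,n,m)$ from Lemma~\ref{mrl1}. I will read the algorithm as follows: for a base $a$ with $2\le a<n-1$ it first rejects when $\gcd(a,n)>1$. It then computes $b=a^l \bmod n$ and, if $b\not\equiv \pm 1$, squares up to $k-1$ times looking for $-1$. It declares \emph{Composite} exactly when the whole sequence of Eq.~(\ref{millers}) contains no $-1$ and does not start with $1$. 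That is, a base is a witness precisely when $C_1(a,n)\wedge\forall m\,C_2(a,n,m)$ holds, matching Eq.~(\ref{mrecom}) with the renaming (\ref{cp1})--(\ref{cp2}).

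First I prove ``if'': assume every admissible $a$ satisfies Eq.~(\ref{mrcp1}) or Eq.~(\ref{mrcp2}) for some $0\le m<k$. For such an $a$, either the first term $a^l$ of the sequence (\ref{millers}) is $1$, or some term $a^{2^m l}$ equals $-1$. In both cases the loop exits through a ``continue'' branch, so $a$ never triggers a return of \emph{Composite}. Since this holds for every base tried, the algorithm reaches its final return \emph{PseudoPrime}. I will also note the side remark from the statement: in either case, squaring the remaining $k-m$ times gives $a^{n-1}=a^{2^k l}\equiv 1\pmod n$, which justifies the gloss that PseudoPrime means ``prime or $a^{n-1}\equiv1$''.

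Conversely, suppose $n$ is declared PseudoPrime. Then no base led to a \emph{Composite} return. Take any $a$ with $2\le a<n-1$ and $\ndv{n}{a}$ that the algorithm examined. Not returning means $a^l\equiv 1$, which is $P_1$, or the squaring loop encountered $-1$ at some index $m<k$, which is $P_2(a,n,m)$. The delicate point is the hypothesis $\ndv{n}{a}$ versus the algorithm's $\gcd(a,n)>1$ test. I will handle it by observing that a base with $1<\gcd(a,n)<n$ can satisfy neither $P_1$ nor $P_2$: either congruence would make $a$ a unit modulo $n$. So such a base is rejected in either formulation, and the two conditions agree.

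The main obstacle is the interpretive step: the statement quantifies over all $a$, while the algorithm samples only $t$ bases. I will therefore state explicitly that the equivalence is for the exhaustive version, with the loop ranging over all $2\le a<n-1$ (equivalently, $t$ covering every base). With that understanding the two directions above are just the unrolling of the sequence (\ref{millers}) already carried out in the proof of Lemma~\ref{mrl1}. The key fact is that, once a term of the sequence equals $\pm1$, every later term equals $1$, so ``the first non-$1$ term is $-1$'' is exactly $P_1\vee P_2$.
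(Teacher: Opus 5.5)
Your handling of the per-base branching and of non-units is fine. Your side remark that $P_1$ or $P_2(a,n,m)$ implies $a^{n-1}=a^{2^k l}\equiv 1 \pmod n$ is exactly the paper's ``$\Leftarrow$'' direction. The gap is in the other direction.

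You replace ``declared PseudoPrime'' by the output of an exhaustive variant of Algorithm~\ref{MillerRabin}, which is not the paper's sampling algorithm. Under that reading your ``only if'' just reads the algorithm's branches back off, and primality is never used anywhere in your proof. The statement, however, itself defines PseudoPrime as ``$n$ is prime, or $a^{n-1}\equiv 1\pmod n$''. So the ``only if'' must start from $n$ prime and show that every $a$ with $\ndv{n}{a}$ satisfies $P_1(a,n)$ or $P_2(a,n,m)$ for some $0\le m<k$. This is the real content, since it is what guarantees a prime is never rejected, and it cannot come from the algorithm's control flow. Only the prime case of that direction is provable: for $n=561$, $a=2$ one has $2^{560}\equiv 1$, yet $2^{35}\not\equiv 1$ and none of $2^{35},2^{70},2^{140},2^{280}$ is $\equiv -1 \pmod{561}$.

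The missing idea is the paper's Fermat argument from Lemma~\ref{mrl1}. For prime $n$ and $\ndv{n}{a}$, Fermat's little theorem gives $a^{2^k l}\equiv 1$. Factor $a^{2^{j}l}-1=(a^{2^{j-1}l}-1)(a^{2^{j-1}l}+1)$; a prime dividing a product divides one of the factors, so $\pm1$ are the only square roots of $1$ modulo $n$. Hence $a^{2^{k-1}l}\equiv\pm1$. If it is $-1$ you have $P_2$ with $m=k-1$; otherwise descend to $a^{2^{k-2}l}$, and so on. The descent ends with $a^l\equiv\pm1$, i.e.\ $P_1$ or $P_2$ with $m=0$.

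Your ``key fact'' that once a term of Eq.~(\ref{millers}) equals $\pm1$ all later terms equal $1$ holds modulo every $n$, so it cannot replace this step. Incidentally, the correct reformulation of $P_1\vee\exists m\,P_2$ is that the sequence is all $1$s or its \emph{last} non-$1$ term is $-1$. When $P_2$ holds with $m>0$, the first non-$1$ term is $a^l\not\equiv\pm1$.
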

We note that the conditions of Proposition~\ref{mrp2}
are Eq.~(\ref{mrcpri}) of       Lemma~\ref{mrl1} for
$x=a$.
Therefore the proof of correctness
directly follows from            Lemma~\ref{mrl1}.
The proof of correctness of Proposition~\ref{mrp2}
deals with the necessary and sufficient part, 
whereas   Lemma~\ref{mrl1} did no elaborate on the latter.

\begin{proof}
$ $ \\ $ $
$\Rightarrow$ (Sufficient condition.)
$ $ \\ $ $
This is   Lemma~\ref{mrl1} summarized.
Let $n$ be a prime number. Then by Fermat's
little theorem we have the following
\[
a^{n-1} \equiv 1 \pmod n ,
\]
for all $a \in \mb{Z}$ such that $\ndv{n}{a}$. 
Thefore we have the following.
\begin{eqnarray*}
a^{n-1}    &\equiv&   1 \pmod{n} \Leftrightarrow \\
a^{2^k l } &\equiv&   1 \pmod{n} \Leftrightarrow \\
( a^{2^{k-1}l} -1 ) ( a^{2^{k-1}l} +1 ) &\equiv& 0 \pmod{n} \Rightarrow \\
 a^{2^{k-1}l} &\equiv& \pm 1 \pmod{n} .
\end{eqnarray*}
The last modular equation gives one of the following.
$\dv{n}{a^{2^{k-1}l} -1 }$ or
$\dv{n}{a^{2^{k-1}l} +1 }$, for a prime number $p$.
We distinguish and examine the two cases separately.
$ $ \\ $ $
{\bf Case 1.} If $a^{2^{k-1}l} \equiv -1 \pmod n$, we are done.
This is the second case in the statement.
$ $ \\ $ $
{\bf Case 2.} If $a^{2^{k-1}l} \not\equiv -1 \pmod n$, then
$a^{2^{k-1}l} \equiv 1 \pmod n$, and we repeat the same
argument to conclude
$\dv{n}{a^{2^{k-2}l} -1 }$ or
$\dv{n}{a^{2^{k-2}l} +1 }$. 
$ $ \\ $ $
Likewise, at some point $a^{2^{m}l} \equiv -1 \pmod p$, for 
some $m$ such that $0 \leq m  <   k$, and we stop by
way of the second case of the statement again, or we
exhaust the $2^{m}$ to reach
$a^{l} \equiv -1 \pmod n$, 
or
$a^{l} \equiv 1 \pmod n$. 
In the former case, we reach the second case of the statement
for $m=0$; in the latter case we reach the first case of
the statement. In either case we are done. Sufficient
condition proved.
$ $ \\ $ $
$\Leftarrow$ (Necessary condition.)
$ $ \\ $ $
We distinguish two cases.
$ $ \\ $ $
{\bf Case 1.}
Let
$ a^{l}      \equiv   1 \pmod{n}$. Then we have the following.
\begin{eqnarray*}
a^{l}      &\equiv&   1 \pmod{n} \Rightarrow \\
a^{2^k l}  &\equiv&   1 \pmod{n} \Rightarrow \\
a^{n-1}    &\equiv&   1 \pmod{n}.
\end{eqnarray*}
$ $ \\ $ $
{\bf Case 2.}
Let
$ a^{2^m l}      \equiv   -1 \pmod{n}$. 
Then we also observe the following.
$ a^{2^m l}  \not\equiv   1 \pmod{n}$, 
since otherwise $n=2$, and $n$ is trivially prime.
We then have the following.
\begin{eqnarray*}
a^{2^m l}  &\equiv&   -1 \pmod{n} \Leftrightarrow \\
a^{n-1} = \left( a^{2^m l} \right)^{2^{k-m}} =
          \left( -1        \right)^{2^{k-m}}  
           &\equiv&   1 \pmod{n} ,
\end{eqnarray*}
since $m<k$ and thus $k-m>0$ and $2^{k-m} \geq 2$.
\end{proof}

\noindent
Algorithm~\ref{MillerRabin}
performs $t$ times the tasks of
Algorithm~\ref{MRtest} which is a simpler
 probabilistic algorithm.
Algorithm~\ref{MRtest}  checks for a given $a$
condition $C_1 (a,n)$ and for all values $m$
condition $C_2 (a,n,m)$. The range of values $m$
depends on $n$ since $n-1 = 2^k \cdot l$, where $k\geq 1$
and $l$ odd determines the range of $m$ as
follows: $0 \leq m < k$.
Algorithm~\ref{MillerRabin}  repeats 
Algorithm~\ref{MRtest} a number of times equal
to $t$ by drawing uniformly at random an $a$
from the range $\{ 2,3 , \ldots , n-2 \}$.
Algorithm~\ref{MRtest} is weak algorithm and 
for this we call
it a compositeness test algorithm.
The latter Algorithm~\ref{MillerRabin} is a more reliable 
one and is referred to as a primality test algorithm.

\begin{thm}
\label{tMillerRabin}
Algorithm~\ref{MillerRabin}
is a probabilistic algorithm
for testing whether $n$ is composite or not.
\end{thm}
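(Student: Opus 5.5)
The plan is to argue exactly as for Theorem~\ref{tfct1b} and Theorem~\ref{tss1b}. First I would describe the algorithm as $t$ independent runs of Algorithm~\ref{MRtest}, each with a uniformly random $a\in\{2,\ldots,n-2\}$. Then I would show that a report of Composite is always correct, and that a report of PseudoPrime is consistent with primality. This is a one-sided correctness argument; the quantitative bound on the error, the Rabin analysis, is not required by the statement and I would defer it.

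The first step writes $n-1=2^k l$ with $l$ odd, $k\ge 1$, which is possible since $n>4$ is odd. For a chosen $a$, the single run of Algorithm~\ref{MRtest} performs these checks:
\begin{itemize}
\item[(i)] whether $\gcd(a,n)>1$; if so, return Composite, since $a$ and $n$ share a nontrivial factor and $1<a<n$;
\item[(ii)] whether $C_1(a,n)$ holds, i.e.\ $a^l\not\equiv 1 \pmod n$, and whether $C_2(a,n,m)$ holds for every $0\le m<k$, i.e.\ $a^{2^m l}\not\equiv -1\pmod n$ for all such $m$, by computing the squaring sequence (\ref{millers}); if both hold, return Composite.
\end{itemize}
If neither (i) nor (ii) fires, the run returns PseudoPrime.

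The key step is the soundness of (ii). By Lemma~\ref{mrl1} (equivalently the forward direction of Proposition~\ref{mrp2}), if $n$ is prime then every $x$ with $1\le x<n$ satisfies $P_1(x,n)$ or $P_2(x,n,m)$ for some $0\le m<k$. Taking the contrapositive, as in (\ref{mrccom}) and (\ref{cp1})--(\ref{cp2}), if some $a$ satisfies $C_1(a,n)\wedge\forall m\, C_2(a,n,m)$, then $n$ is composite and $a$ is a witness. Hence a Composite output from any iteration is always correct, and a prime $n$ passes every iteration, so it is reported PseudoPrime with probability one. Conversely, a PseudoPrime output means every sampled $a$ satisfied $P_1\vee P_2$. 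By the reverse direction of Proposition~\ref{mrp2}, each such $a$ then satisfies $a^{n-1}\equiv 1\pmod n$, so $n$ is prime or a strong pseudoprime to every sampled base, which is exactly what the label means.

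Finally I would note that the algorithm is a randomized polynomial-time procedure. Each run costs one modular exponentiation plus at most $k\le\lg n$ squarings and a gcd, so the total is $O(t\lg n\, M(n))$. This makes it a probabilistic test in the same sense as Algorithms~\ref{fct1b} and~\ref{ss1b}. The main obstacle is not this correctness argument but any claim about the error probability, e.g.\ at most $4^{-t}$ or $2^{-t}$. That would need a subgroup argument analogous to Proposition~\ref{ssp1}: the non-witnesses would have to be contained in a proper subgroup of $\mb{U}_n$, including for Carmichael $n$, via the CRT construction used there. I would leave that to the subsequent proposition rather than fold it into this theorem.
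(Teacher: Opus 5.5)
Your proposal is correct and follows essentially the same route as the paper. The paper also views Algorithm~\ref{MillerRabin} as $t$ repetitions of Algorithm~\ref{MRtest}: the $\gcd$ check, then $C_1(a,n)$ together with $C_2(a,n,m)$ for all $0\le m<k$. It certifies every Composite output through the contrapositive of Lemma~\ref{mrl1} and leaves the error bound to Proposition~\ref{mrp}.

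One small correction on the part you deferred. The paper's $4^{-t}$ bound does not come from a Proposition~\ref{ssp1}-style proper-subgroup argument, which by itself yields only $2^{-t}$. It comes from $MRnw(n)\subseteq C(n)$ combined with the explicit count $|C(n)|\le\phi(n)/4$ in Propositions~\ref{thaux0}--\ref{thaux3}.
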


%For MillerRabin
%An $a$ is drawn uniformly at random from
%$\{ 2,3, \ldots , n-2 \}$.
%In other words we repeat the following experiment as many times as needed.
%returns Composite with
%If $n$ is a composite number $n>4$ and
%odd, then Algorithm~\ref{MRtest} returns Composite with
%probability, approximately, at least $3/4$.

\begin{proof}
$ $ \\ $ $
Let $P_1 (a,n)$ and $P_2 (a,n,m)$ be the two conditions
of Proposition~\ref{mrp2} as indicated in
Eq.~(\ref{mrcpri}) of Lemma~\ref{mrl1} for $x=a$.
Let $C_1 (a,n)$ and $C_2 (a,n,m)$ be the two conditions of
Eq.(\ref{cp1}) and Eq.(\ref{cp2}) established respectively 
from Eq.~(\ref{mrecom}) of Lemma~\ref{mrl1} for $x=a$.
They are also in negation form part of
Eq.(\ref{mrcp1}) and Eq.(\ref{mrcp2}) in Proposition~\ref{mrp2}.
$ $ \\ $ $ 
In line 2 of Algorithm~\ref{MRtest},
or line 4 of Algorithm~\ref{MillerRabin},
an $a$  is picked uniformly at random such that $2 \leq a < n-1$.
$ $ \\ $ $
{\bf Step 1.}
The condition  $\ndv{n}{a}$ of Proposition~\ref{mrp2}
translates into $\gcd(a,n)>1$ since $a<n$,
and this maps to lines 3-5 of Algorithm~\ref{MRtest},
or lines 5-7 of Algorithm~\ref{MillerRabin}.
If $n$ is composite the algorithm terminates with the
correct answer Composite and $a$ becomes a witness of
the compositeness of $n$.
$ $ \\ $ $
We are left with a sequence of steps to determine other
wise the compositeness of $n$ for an $a$ such that
$\gcd(a,n)=1$.
$ $ \\ $ $
{\bf Step 2.} 
If 
\[
C_1 (a,n) \equiv \overline{P_1 (a,n)}:  \quad a^{l} \not\equiv 1 \pmod n ,
\]
and
\[
C_2 (a,n,m) \equiv \overline{P_2 (a,n,m)}:
 a^{2^m l} \not\equiv -1 \pmod n , \  \forall m, 0\leq m < k, 
\]
report $n$ to be composite and $a$ to be the witness of its
compositeness. This is a consequence of Eq.(\ref{mrecom})
of Lemma~\ref{mrl1}.
$ $ \\ $ $
{\bf Step 3.} Otherwise $n$ is either prime or composite.
$ $ \\ $ $
A single execution of Steps 1, 2, and 3 is 
Algorithm~\ref{MRtest}.
Repeat Step 1,2, and 3 a total of $t$ times and this
leads to Algorithm~\ref{MillerRabin}.
$ $ \\ $ $
Algorithm~\ref{MillerRabin} tests for the two conditions
$C_1 (a,n)$ and $C_2 (a,n,m)$ a number of times. 
We denote the number of times these conditions are tested as $t$.
Every time these conditions are tested we pick a random $a$
from the interval/set $\{ 2, 3, \ldots , n-2 \}$ of cardinality $n-1$.
If for the chosen $a$, we have $\overline{P_1 (a,n)}$ and
$\overline{P_2 (a,n,m)}$ for all applicable $m$, 
then $n$ is composite and $a$ is
the Rabin-Miller witness of the compositeness of $n$ and
denoted as MRw(a,n), or the set of witnesses for $n$ is 
denoted as MRw(n). Otherwise $a$ is a non-witness denoted
MRnw(a,n) and the corresponding set MRnw(n).
To simplify things we used
$C_1 (a,n) = \overline{P_1 (a,n)}$
and
$C_2 (a,n,m) = \overline{P_2 (a,n,m)}$.
\[
MRw(a,n) \equiv a \in MRw(n) \Leftrightarrow C_1 (a,n) \wedge 
   \left( \wedge_{m=0}^{k-1} C_2 (a,n,m) \right) .
\]
We obtain the following condition for 
non witnesses MRnw(a,n) or  set MRnw(n).
\[
MRnw(a,n) \equiv
a \in MRnw(n) 
\Leftrightarrow 
 \overline{C_1 (a,n)} \vee \left(
                \vee_{m=0}^{k-1} \overline{C_2 (a,n,m)} \right)
\Leftrightarrow 
           P_1 (a,n)  \vee \left(
                \vee_{m=0}^{k-1}           P_2 (a,n,m)  \right) .
\]

\SetKwComment{Comment}{/* }{ */}
\SetKwRepeat{Do}{do}{while}

\begin{algorithm}
\KwIn{$n >4$ is odd $n-1=2^k l$, $l$ odd}
\KwOut{$n$ is $\textbf{Composite}$ or $\textbf{Pseudoprime}$}
 $n-1 = 2^k \cdot l$ \;

 Pick $ a \in \{ 2, 3, \ldots , n-2 \}$ uniformly at random  \;

 \If{$\gcd(a,n) >1$}{
  \Return{$\mathbf{Composite}$} 
  \Comment*[r]{$a$ is a witness of $n$'s compositeness}
 }

 $ x = a^{l} \pmod n$;

 \eIf(\tcc*[f]{$C_1 (a)$}){$a^{l} \not\equiv 1 \pmod n$}{
   \;
 }{
  \Return{$\mathbf{PseudoPrime}$}
 }

 Condition = TRUE \;

 \For(\tcc*[f]{$C_2(a,n,m)$}){$m=0,1, \ldots , k-1$}{
  \eIf{$a^{2^m l} \not\equiv -1$}{
     Condition = Condition $\wedge$ TRUE;
  }{
     Condition = Condition $\wedge$  FALSE;
   
%     \Return{$\mathbf{PseudoPrime}$}
     break
     \Comment*[r]{\text{or\ } return($\mathbf{PseudoPrime}$)}
  }
 }
 \eIf{$\text{Condition}$}{
  \Return{$\mathbf{Composite}$}
  \Comment*[r]{$a$ is a witness of $n$'s compositeness}
 }{
  \Return{$\mathbf{PseudoPrime}$}
 }
 \caption{MillerRabinCTest (n): Miller-Rabin compositeness test}
 \label{MRtest}
\end{algorithm}

%Note that line 20 of Algorithm~\ref{MillerRabin}
%is equivalent to a return of PseudoPrime; whether 
%it is done through line 20 or line 29 is the difference
%of executing the conditional of line 23.
%Similarly, line 18 of Algorithm~\ref{MRtest}
Line 18 of Algorithm~\ref{MRtest}
is equivalent to a return of PseudoPrime; whether 
it is done through line 18 or line 24 is the difference
of executing the conditional of line 21.

There are two conditions that can be checked together:
these are $C_1 (a,n)$ and $C_2 (a,n,0)$ that map
to $a^l \not\equiv 1 \pmod n$ and
to $a^l \not\equiv -1  \pmod n$ respectively.
The former  is checked in line 9 of Algorithm~\ref{MillerRabin}
and the latter in line 15, $m=0$ of Algorithm~\ref{MillerRabin}
The condition $C_2 (a,n,m)$ for all $m$
such that $0\leq m < k$ is checked in lines 15-22
of Algorithm~\ref{MillerRabin}. It includes the check 
$C_2 (a,n,0)$. Similarly for Algorithm~\ref{MRtest}.

If $a^l \not\equiv 1 \pmod n$ is not true i.e. $C_1 (a,n)$
fails, there is no reason to continue with $C_2 (a,n,m)$ checking.
Algorithm~\ref{MRtest} exits through line 10.
Algorithm~\ref{MillerRabin} continues its execution. 
%one could have exited similarly to line 10 of Algorithm~\ref{MRtest}.
If $a^l \not\equiv 1 \pmod n$ is true i.e. $C_1 (a,n)$ succeeds
we continue with $C_2 (a,n,m)$. For a composite $n$, $C_2 (a,n,m)$ must
check true for all $m = 0, \ldots, k-1$. Thus if Condition is true,
this is the case and Algorithm~\ref{MillerRabin} exits through line 24,
otherwise an $m=j$ that results in a failure of $C_2 (a,n,j)$,
forces an exit through line 20 of the for loop for lines 15-22;
in that case Condition is false, and the algorithm continues
until the limit of $t$ iterations is reached. 
Therefore, if the algorithm is unsuccessful in proving the
compositeness of $n$, Algorithm~\ref{MillerRabin} exits through line 29.
Pseudoprime then means that $n$ can be composite (no proof
of the compositeness of $n$ was found) or it can 
ble prime (and this is the reason no
proof of the compositeness of $n$ was found).
$ $ \\ $ $

\begin{algorithm}[H]
\KwIn{$n >4$ is odd $n-1=2^k l$, $l$ odd; number of runs is $t$}
\KwOut{$n$ is $\textbf{Composite}$ or $\textbf{Pseudoprime}$}
$ n-1 = 2^k l $ ;
$ i = 0 $ ;

\Do{$i<t$}{
 $i=i+1$ \;
 Pick $ a \in \{ 2, 3, \ldots , n-2 \}$ uniformly at random  \;

 \If{$\gcd(a,n) >1$}{
  \Return{$\mathbf{Composite}$} 
  \Comment*[r]{$a$ is a witness of $n$'s compositeness}
 }

 $ x = a^{l} \pmod n$;

 \eIf(\tcc*[f]{$C_1 (a,n)$}){$a^{l} \not\equiv 1 \pmod n$}{
    \;
 }{
    continue;
 }

 Condition = TRUE \;

 \For(\tcc*[f]{$C_2(a,n,m)$}){$m=0,1, \ldots , k-1$}{
  \eIf(\tcc*[f]{$x=x^2 \bmod n; x \not\equiv -1 \pmod n$}){$a^{2^m l} \not\equiv -1$}{
     Condition = Condition $\wedge$ TRUE;
  }{
     Condition = Condition $\wedge$  FALSE;
   
     break;
  }
 }
 \eIf{$\text{Condition}$}{
  \Return{$\mathbf{Composite}$}
 }{
  continue \;
 }

 }
 \Return{$\mathbf{PseudoPrime}$}
 \Comment*[r]{$n$ is either prime or composite}
 \caption{Miller-Rabin primality test algorithm}
 \label{MillerRabin}
\end{algorithm}
Moreover, note that the powers of line 16 are generated
from $a^l$ i.e. the statement of line 8 through repeated doubling
as indicated in the comment of line 16. We do not need to generate
powers from scratch (starting from $a$) every time. Powers
are mod $n$ always. Therefore a $-1$ mod $n$ can be rewritten
$n-1$ mod $n$.
\end{proof}

\newpage

\begin{prp}[Probability of successful reporting]
\label{mrp}
If $n$ is an odd composite number greater than four
then Algorithm~\ref{MillerRabin} returns Composite with
probability at least $3/4$.
The probability that the output of Algorithm~\ref{MillerRabin} is
Composite  given that $n$ is a composite number
is at least $1 - 4^{-t}$.
\end{prp}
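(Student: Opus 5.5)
The plan is to bound the number of Miller--Rabin non-witnesses, the set MRnw$(n)$, among the units by $\phi(n)/4$. The standard route (Monier/Rabin) is a direct count through the Chinese Remainder Theorem; the paper has the needed tools: primitive roots mod $p^a$ (Theorem~\ref{prrmodpa}), Korselt's theorem, and the CRT.

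Write $n=p_1^{e_1}\cdots p_r^{e_r}$ and $n-1=2^k l$ with $l$ odd. For each $i$ write $p_i-1=2^{k_i}l_i$ with $l_i$ odd, and let $\kappa=\min_i k_i$. Note that $\kappa\le k$. A unit $a$ is a non-witness if $a^l\equiv 1$, or if $a^{2^m l}\equiv -1 \pmod n$ for some $0\le m<k$.

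The first step is to show that $a^{2^m l}\equiv -1\pmod n$ forces $m<\kappa$. Reducing mod $p_i$, the order of $a$ mod $p_i$ divides $2^{m+1}l$ but not $2^m l$, so $2^{m+1}\mid p_i-1$.

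The second step counts solutions. Since $\mathbb{U}_{p_i^{e_i}}$ is cyclic of order $p_i^{e_i-1}(p_i-1)$ and $l$ is odd and coprime to $p_i$, the number of solutions of $x^{l}\equiv1$ mod $p_i^{e_i}$ is $\gcd(l,p_i-1)$. The number of solutions of $x^{2^m l}\equiv -1$ is $2^m\gcd(l,p_i-1)$ when $m<k_i$. Combining over $i$ by the CRT,
\[
|\mathrm{MRnw}(n)| \le \prod_i \gcd(l,p_i-1)\Bigl(1+\sum_{m=0}^{\kappa-1}2^{mr}\Bigr).
\]
We will compare this with $\phi(n)=\prod_i p_i^{e_i-1}(p_i-1)$, using $p_i-1\ge 2^{k_i}\gcd(l,p_i-1)\ge 2^{\kappa}\gcd(l,p_i-1)$. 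This gives
\[
\frac{|\mathrm{MRnw}(n)|}{\phi(n)}\le \frac{1}{\prod_i p_i^{e_i-1}}\cdot 2^{-\kappa r}\Bigl(1+\frac{2^{\kappa r}-1}{2^r-1}\Bigr).
\]

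The case split follows, and the main obstacle is making each case land at $1/4$.

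If some $e_i\ge 2$, then $\prod_i p_i^{e_i-1}\ge p_i\ge 3$. In this case it is cleaner to argue directly that non-witnesses satisfy $a^{n-1}\equiv1 \pmod{p_i^2}$. There are at most $\gcd(n-1,p_i(p_i-1))\le p_i-1$ such $a$ mod $p_i^2$, so the fraction is at most $1/p_i\le 1/5$ once $p_i\ge5$; the case $p_i=3$, i.e. $9\mid n$, gives $\le 2/9$.

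If $n$ is squarefree with $r\ge3$, the bracketed factor is at most $2^{-\kappa r}\cdot 2^{\kappa r}/(2^r-1)\cdot(1+\text{small})$. Concretely, since $\kappa\ge1$, we have $2^{-\kappa r}(1+\frac{2^{\kappa r}-1}{2^r-1})\le 2^{1-r}\le 1/4$.

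If $r=2$, the same expression gives $1/2$. Here I extra need the slack $p_i-1\ne 2^{k_i}\gcd(l,p_i-1)$ for some $i$, or $k_1\ne k_2$. Such slack exists, since otherwise $p_i-1\mid n-1$ for both $i$, and the Corollary following Korselt's theorem (no Carmichael-type $n$ with two prime factors) forbids this. The slack gains a factor of $2$ (odd part at least $3$, or an extra power of $2$), yielding $1/4$.

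Finally, non-units are witnesses via the gcd test, and $1$ and $n-1$ are excluded from the sampling range. So a random $a\in\{2,\dots,n-2\}$ is a non-witness with probability at most $1/4$. Independence of the $t$ rounds of Algorithm~\ref{MillerRabin} then gives error at most $4^{-t}$.
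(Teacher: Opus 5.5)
Your proof is correct, but it takes a different route from the paper. The paper proves this proposition by citing Rabin and Monier, together with Monier's equivalence of the Miller1/Miller2 conditions. Its self-contained argument, Theorem~\ref{mrbound} via Propositions~\ref{thaux0}--\ref{thaux3} (after Crandall--Pomerance), never counts the non-witnesses themselves. Instead it embeds $\mathrm{MRnw}(n)$ in the larger set $C(n)=\{a\in\mathbb{U}_n : a^{2^{b(n)-1}l}\equiv\pm1 \pmod n\}$, where $b(n)$ is exactly your $\kappa$. It counts $|C(n)|=2\cdot2^{(b(n)-1)r}\prod_i\gcd(p_i-1,l)$ by the same index/CRT computation you use, and then bounds $\phi(n)/|C(n)|$ case by case, using the factor $\prod_i p_i^{a_i-1}$ in $\phi(n)$ for non-squarefree $n$.

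You count $\mathrm{MRnw}(n)$ exactly (Monier's formula). Your factor $1+\sum_{m<\kappa}2^{mr}$ never exceeds the paper's $2\cdot2^{(\kappa-1)r}$, and the two agree when $\kappa=1$. That is why your $r\ge3$ case collapses to the single estimate $2^{1-r}$. Your Fermat count modulo $p^2$ is also a shorter way to dispose of $p^2\mid n$. The squarefree $r=2$ case rests on the same fact in both proofs: $q-1\nmid n-1$ for the larger prime $q$.

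Your route also covers a case the paper's does not. Proposition~\ref{thaux3} needs $n>10$, because $|C(9)|=|\mathrm{MRnw}(9)|=2>\phi(9)/4$, so the paper's self-contained argument never reaches $n=9$; your $9\mid n$ case does. Be explicit there, though. Your $2/9$ is a density among all residues mod $n$; among units it is $1/3$, so that case gives no bound of the form $\phi(n)/4$. Conclude instead that at most $2n/9-2$ elements of $\{2,\dots,n-2\}$ are non-witnesses, since $1$ and $n-1$ are non-witnesses lying outside the sampling range. Then use $(2n/9-2)/(n-3)\le 1/4$.
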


\begin{proof}
It was proved by Rabin \cite{R80} and independently
by Monier \cite{Monier} that for a composite odd $n$,
$|MRw(a)|$ is at least $3\phi (n)/4$ and $|MRnw(a)|$ 
is thus at most $\phi(n)/4$.
But the result is applicable to Miller1.
Algorithm~\ref{MillerRabin} uses the conditions of
Miller2.
Monier \cite{Monier} proved that the conditions used
in Miller1 and Miller2 are equivalent.
So we can use for this algorithm the claims of
Rabin \cite{R80} and also of Monier \cite{Monier}.
Later, we introduce RabinMiller which was
analyzed by Rabin \cite{R80}.
\end{proof}

\begin{lem}[Running time of algorithm~\ref{MillerRabin}]
The running time of
Algorithm~\ref{MillerRabin} is
$O(t \cdot \lg{n} \cdot M(n) )$, where $M(n)$
is the cost of multiplying $n$-bit integers.
\end{lem}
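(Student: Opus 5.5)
The plan is to bound the cost of a single iteration of the \texttt{do}--\texttt{while} loop of Algorithm~\ref{MillerRabin} by $O(\lg{n}\cdot M(n))$. Multiplying by the at most $t$ iterations then gives the claimed bound, in the same way as the running-time lemmas for Algorithm~\ref{fct1b} and Algorithm~\ref{ss1b}. The preliminary step $n-1=2^k l$ is done once: repeatedly halving $n-1$ while it is even takes $k\le \lg{n}$ shifts of $O(\lg n)$-bit numbers, which is dominated by the rest.

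Inside one iteration I account for three pieces in turn.

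First, drawing $a$ uniformly from $\{2,\ldots,n-2\}$ costs $O(\lg n)$ random bits.

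Second, the test $\gcd(a,n)>1$ uses Euclid's algorithm (Theorem~\ref{gcd}). By the Fibonacci bound in the lemma following it, this takes $O(\lg n)$ division steps. Each division of $O(\lg n)$-bit numbers costs $O(M(n))$, or at worst $O(\lg^2 n)$ schoolbook, and either is within $O(\lg n\cdot M(n))$ since $M(n)=\Omega(\lg n)$.

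Third, I compute $x=a^l \bmod n$ by repeated squaring, as in the example computing $3^{49}\bmod 19$. Since $l<n$, this uses at most $2\lg{n}$ modular multiplications of $O(\lg n)$-bit numbers, each $O(M(n))$, where modular reduction is counted in $M(n)$ up to a constant. This gives $O(\lg n\cdot M(n))$.

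The for loop over $m=0,\ldots,k-1$ is the one place to be careful. As the comment on line 16 indicates, each $a^{2^m l}$ must be obtained from the previous one by a single squaring $x\gets x^2\bmod n$ and not recomputed from $a$. With that, the loop costs $k$ squarings plus $k$ comparisons with $n-1$, i.e.\ $O(k\,M(n))$. Since $2^k\le n-1$ gives $k\le\lg n$, this is $O(\lg n\cdot M(n))$. In fact $l$ and $2^k$ together account for $\lg(n-1)$ exponent bits, so the whole chain from $a$ to $a^{2^{k-1}l}$ uses $O(\lg n)$ multiplications.

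Summing the three pieces gives $O(\lg n\cdot M(n))$ per iteration. With at most $t$ iterations, and early returns only reducing the work, the total is $O(t\cdot\lg n\cdot M(n))$. The main obstacle is therefore only the bookkeeping point that powers are reused by successive squaring. If each $a^{2^m l}$ were recomputed from scratch, the bound would degrade by a factor of $k$, to $O(t\lg^2 n\, M(n))$.
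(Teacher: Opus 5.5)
Your proposal is correct and follows the same route as the paper. The paper simply notes that each of the $t$ rounds is dominated by an exponentiation using $O(\lg n)$ modular multiplications of cost $M(n)$ each, and your point that the powers $a^{2^m l}$ must come from successive squaring of $a^l$ matches the paper's remark about line 16 of Algorithm~\ref{MillerRabin}. Your extra accounting for the $\gcd$ test goes beyond the paper and is fine, provided you count it as $O(\lg n)$ divisions at $O(M(n))$ each, or as the standard $O(\lg^2 n)$ total for schoolbook Euclid, and not as $O(\lg^2 n)$ per division.
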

\begin{proof}
Exponentiation involves $O(\lg{n})$ multiplications.
Depending on how we
implement integer multiplication the overall time complexity
is $O(t \cdot \lg{n} \cdot M(n) )$, where $M(n)$
is the computational cost (bit model) of
multiplying $n$-bit integers.
\end{proof}

\newpage

\subsection{A reintepretation of the Miller-Rabin test under GRH}

\begin{thm}
\label{tBachMillerRabin}
Algorithm~\ref{BachMillerRabin}, BachMillerRabin, 
is a deterministic algorithm
for testing whether $n$ is prime or composite, under the GRH.
\end{thm}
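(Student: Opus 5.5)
The plan is to read BachMillerRabin as Algorithm~\ref{MillerRabin} with the random choice of $a$ replaced by a deterministic sweep over every base $a$ with $2 \leq a \leq \min\{ n-2, \lfloor 2(\ln n)^2 \rfloor \}$, the bound supplied by Bach \cite{Bach}. For each such $a$ we would perform the same tests: the $\gcd(a,n)>1$ check, condition $C_1(a,n)$, and conditions $C_2(a,n,m)$ for $0 \leq m < k$. The answer is Composite as soon as some $a$ is a Miller--Rabin witness, that is $a \in MRw(n)$, and Prime otherwise. The correctness argument then splits into the two directions of the characterisation in Lemma~\ref{mrl1} and Proposition~\ref{mrp2}, together with a running time count.

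\emph{Soundness.} Suppose the algorithm answers Composite. Then either some $a$ in the range has $\gcd(a,n)>1$ with $1<a<n$, so $n$ is composite outright, or some $a$ satisfies $C_1(a,n) \wedge \bigwedge_{m=0}^{k-1} C_2(a,n,m)$. In the second case Eq.~(\ref{mrepri}) of Lemma~\ref{mrl1}, read contrapositively, shows that $n$ is not prime. This direction uses no hypothesis at all.

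\emph{Completeness under GRH.} If $n$ is prime, Lemma~\ref{mrl1} gives $P_1(a,n)$ or some $P_2(a,n,m)$ for every $a$, so no base is ever a witness and the answer is Prime. If $n$ is odd and composite, we must show that some witness $a \leq 2(\ln n)^2$ exists; this is the main obstacle. We would first observe that $MRnw(n) \cup D_n$ is contained in a proper subgroup $H$ of $\mb{U}_n$. Following Miller's argument in \cite{M76}, we would split into two cases:
\begin{itemize}
\item if $p^2 \mid n$, take $H=\{ x : x^{n-1}\equiv 1 \pmod{p^2}\}$, which is proper by the argument of Case~2 in the proof of Proposition~\ref{ssp1};
\item otherwise use the Chinese Remainder Theorem with the $2$-adic valuations of the orders modulo two distinct prime factors, as in Rabin and Monier.
\end{itemize}
Then we invoke Bach's theorem: under GRH, every proper subgroup of $\mb{U}_n$ misses some $a \leq 2(\ln n)^2$. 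Constructing $H$ explicitly and checking that it is proper is the delicate step, and for it we would cite \cite{M76,Monier}. The analytic bound itself is taken from \cite{Bach} as a black box.

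\emph{Running time.} There are $O(\lg^2 n)$ bases, and each costs $O(\lg n\cdot M(n))$ by the argument of the running-time lemma for Algorithm~\ref{MillerRabin}. The total is therefore $O(\lg^3 n \cdot M(n))$, which is polynomial and deterministic, as required.
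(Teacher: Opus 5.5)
Your proposal is correct and follows the paper's route: correctness rests on Bach's GRH bound that every odd composite $n$ has a Miller--Rabin witness $a\le 2(\ln n)^2$. The paper simply quotes that bound, whereas you also derive soundness from Lemma~\ref{mrl1} and unpack the bound through a proper subgroup $H\supseteq MRnw(n)$; drop $D_n$ from that containment, since its elements are non-units that cannot lie in a subgroup of $\mb{U}_n$, and the gcd test already catches them. Your cap $a\le n-2$ is a genuine improvement, because the loop as written in the paper runs up to $a=n$ for small primes such as $n=7$ (where $2(\ln 7)^2>7$) and would wrongly return Composite through $\gcd(n,n)>1$.
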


Bach \cite{Bach} showed that for a Dirichlet function and thus
under the Extended/Generalized Riemann Hypothesis (ERH/GRH)
there is a MRw(a,n) witness $a$ which is at most $2 (\log{n})^2$ for
a composite $n$. ($\log{n}$ is $\ln{n}$; in the remainder we might
use as an upper bound $\lg{n}$ or $\ln{n}$ interchangeably.)

\begin{proof}
$ $ \\ $ $
\SetKwComment{Comment}{/* }{ */}
\SetKwRepeat{Do}{do}{while}
\begin{algorithm}[H]
\KwIn{$n >4$ is odd $n-1=2^k l$, $l$ odd; number of runs is $t$}
\KwOut{$n$ is composite or pseudoprime}
$ n-1 = 2^k l $ ;
$ i = 0 $ ;

\For{$a=2,3, \ldots , 2(\ln{n})^2$}{

 \If{$\gcd(a,n) >1$}{
  \Return{$\mathbf{Composite}$} 
  \Comment*[r]{$a$ is a witness of $n$'s compositeness}
 }

 \eIf(\tcc*[f]{$C_1 (a,n)$}){$a^{l} \not\equiv 1 \pmod n$}{
    \;
 }{
    continue;
 }

 Condition = TRUE \;

 \For(\tcc*[f]{$C_2(a,n,m)$}){$m=0,1, \ldots , k-1$}{
  \eIf{$a^{2^m l} \not\equiv -1$}{
     Condition = Condition $\wedge$ TRUE;
  }{
     Condition = Condition $\wedge$  FALSE;
   
     break;
  }
 }
 \eIf{$\text{Condition}$}{
  \Return{$\mathbf{Composite}$} \;
 }{
  continue \;
 }

 }
 \Return{$\mathbf{Prime}$}
 \Comment*[r]{$n$ is either prime or composite}
 \caption{BachMillerRabin(n) : 
Bach-based GRH Rabin-Miller primality test algorithm}
 \label{BachMillerRabin}
\end{algorithm}
Note that the line 25 return statement should be read as follows:
either $n$ is a prime number or the Generalized 
Riemann Hypothesis is false.
\end{proof}

\subsection{Equivalence of Miller1 and Miller2 conditions}

Algorithm~\ref{Miller1} was utilized by Rabin 
to derive the probabilistic primality testing
algorithm of \cite{R80} that  we will refer to as Rabin-Miller to
distinguish it from Miller-Rabin.

\noindent
Algorithm~\ref{Miller1} does not use the 
conditions of Eq.(\ref{mrcp1}) and Eq.(\ref{mrcp2}),
where
\begin{eqnarray}
\label{dmrcp1} % d for duplicate !
 P_1 (a,n) = \overline{C_1 (a,n)} &:& a^{l} \equiv 1 \pmod n , 
\end{eqnarray}
\begin{eqnarray}
\label{dmrcp2}
 P_2 (a,n,m) = \overline{C_2 (a,n,m)} &:& a^{2^m l} \equiv -1 \pmod n ,
\end{eqnarray}

\noindent
It uses instead the following conditions of Eq.(\ref{rmecom})
and Eq.(\ref{rmepri}), where
\begin{eqnarray}
\label{xmre2a}
 R_1 (a,n) = \overline{M_1 (a,n)} &:& a^{n-1} \not\equiv 1 \pmod n , 
\end{eqnarray}
\begin{eqnarray}
\label{xmre2b}
 R_2 (a,n,m) = \overline{M_2 (a,n,m)} &:& d_m \neq 1 \wedge d_n \neq n.
\end{eqnarray}
where $d_m = \gcd( a^{2^m \cdot l}-1 , n)$, $m \in \mb{N}$.

Then instead of using Eq.(\ref{mrepri}) and Eq.(\ref{mrecom})
it establishes  a new set of conditions.
We first rewrite Eq.(\ref{mrepri}) and Eq.(\ref{mrecom}), the
conditions used by Miller 2.
\begin{equation}
%\label{mrepri}
\text{n is prime} \Rightarrow \forall a , 1 \leq a < n :
{\small
\begin{cases}
 a^l       \equiv 1 \pmod n  & \text{Case } P_1 (a,n) \\
            \vee             &                  \\
 a^{2^m l} \equiv -1 \pmod n ,\quad  \exists m: 0 \leq m < k
                             & \text{Case } P_2 (a,n,m) \\
\end{cases} \tag{\ref{mrepri}} \nonumber
}
\end{equation}

\begin{equation}
%\label{mrecom}
\text{n is composite} \Leftrightarrow
\exists a , 1 \leq a < n  :
{\small
\begin{cases}
 a^l       \not\equiv 1 \pmod n  & \text{Case } \overline{P_1 (a,n)} \\
           \wedge             &                  \\
 a^{2^m l} \not\equiv -1 \pmod n ,\quad  \forall m, 0 \leq m < k :
                             & \text{Case } \overline{P_2 (a,n,m)} \\
\end{cases} \tag{\ref{mrecom}} \nonumber
}
\end{equation}

Then we write the new set of conditions used by Miller1.
\begin{equation}
\label{rmepri}
\text{n is prime} \Rightarrow
\forall a: \quad \ndv{n}{a} \wedge \quad
{\small
\begin{cases}
 a^{n-1}   \equiv 1 \pmod n  & 
           \text{Case } M_1(a,n) = \overline{R_1 (a,n)} \\
           \wedge            &                  \\
 \forall m, 0 \leq m < k: d_m = 1 \vee d_m = n  
                             & 
           \text{Case } M_2 (a,n,m) = \overline{R_2 (a,n,m)} \\
\end{cases}
}
\end{equation}

\begin{equation}
\label{rmecom}
\text{n is composite} \Leftrightarrow
\exists a : \quad \dv{n}{a} \quad \vee   \quad
{\small
\begin{cases}
 a^{n-1}\not\equiv 1 \pmod n  & \text{Case } R_1 (a,n) \\
           \vee              &                  \\
 \exists m, 0 \leq m < k: d_m \neq 1 \wedge d_m \neq n 
                             & \text{Case } R_2 (a,n,m) \\
\end{cases}
}
\end{equation}

\begin{prp}[Equivalence of condition in Miller1 and Miller 2]
Let $n$ be an odd number $n>4$ and that
$n-1=2^k \cdot l$, $k \geq 1$ and $l$ is odd.
The conditions of Eq.(\ref{rmepri}) and Eq.(\ref{rmecom})
utilized by MillerRabin are equivalent to 
the conditions of Eq.(\ref{mrepri}) and Eq.(\ref{mrecom})
utilized by RabinMiller.
\end{prp}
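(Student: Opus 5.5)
The plan is to prove the equivalence witness-by-witness: for a fixed $a$ with $1\le a<n$ and $\gcd(a,n)=1$, show that $a$ is a Miller2 witness (that is, $\overline{P_1(a,n)}$ and $\overline{P_2(a,n,m)}$ for all $0\le m<k$, as in Eq.(\ref{mrecom})) if and only if $a$ is a Miller1 witness ($R_1(a,n)$ or $R_2(a,n,m)$ for some $m$, as in Eq.(\ref{rmecom})). The case $\gcd(a,n)>1$ is handled identically by both tests through the divisibility/gcd check, so it can be set aside. Throughout we work with the squaring sequence of Eq.(\ref{millers}),
\[
x_0=a^l,\; x_1=a^{2l},\;\ldots,\; x_k=a^{2^k l}=a^{n-1} \pmod n ,
\]
and we read $d_m=\gcd(x_m-1,n)$, with the convention that Miller1 uses $m$ up to $k-1$.

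First, we show that a Miller1 witness is a Miller2 witness. If $R_1$ holds, so that $x_k\not\equiv1$, then no $x_m$ can equal $\pm1$ for $m<k$, and $x_0\ne1$, because in either case squaring would force $x_k\equiv1$. If instead $R_2(a,n,m)$ holds, so that $1<d_m<n$, we argue by contradiction. Suppose $P_1$ or some $P_2(a,n,j)$ held. Then the sequence has the shape ``$x_0\equiv1$'' or ``$x_j\equiv-1$ followed by ones.'' For $m\le j$ we would have $x_m^{2^{j-m}}\equiv-1$; then any prime $p$ dividing $d_m$ satisfies $x_m\equiv1\pmod p$, which forces $-1\equiv1\pmod p$, impossible since $n$ is odd. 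So $d_m=1$. For $m>j$, or in the case $P_1$, we get $x_m\equiv1\pmod n$, so $d_m=n$. Both outcomes contradict $1<d_m<n$.

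Second, we show the converse. Assume $\overline{P_1}$ and $\overline{P_2(a,n,m)}$ for all $m$. If $x_k\not\equiv1$, then $R_1$ holds and we are done. Otherwise let $j$ be the largest index with $x_j\not\equiv1$; we have $j<k$ since $x_0\ne1$, and $x_{j+1}\equiv1$. Then $n\mid(x_j-1)(x_j+1)$ while $n\nmid x_j-1$, and also $n\nmid x_j+1$ because $x_j\not\equiv-1$ by $\overline{P_2}$. Hence $d_j=\gcd(x_j-1,n)$ is neither $n$ nor $1$: if it were $1$, then $n\mid x_j+1$. So $R_2(a,n,j)$ holds.

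The main obstacle is the careful index bookkeeping: the paper's Miller1 writes $a^{2^k l}$ in the gcd where $a^{2^m l}$ is meant, and the range of $m$ (especially whether $m=k$ is included) must be fixed consistently with Eq.(\ref{xmre2b}). A secondary subtlety is the use of oddness of $n$, which excludes $2\mid d_m$ in the first direction. Once these conventions are fixed, both directions reduce to the factorization $x_{j+1}-1=(x_j-1)(x_j+1)$ already used in the proof of Lemma~\ref{mrl1}. Quantifying over $a$ then yields the equivalence of Eq.(\ref{rmepri})/(\ref{rmecom}) with Eq.(\ref{mrepri})/(\ref{mrecom}).
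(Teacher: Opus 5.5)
Your proof is correct. It uses the same mechanism as the paper: the squaring chain $x_0,\dots,x_k$ and the factorization $x_{j+1}-1=(x_j-1)(x_j+1)$ at the last index before the chain reaches $1$. It is organized differently, witness-by-witness in both directions, and it is more complete.

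The paper's detailed argument runs in one direction only. Assuming Miller1's conditions ($d_m\in\{1,n\}$ for all $m$ and $d_k=n$), it shows that $(d_0,\dots,d_k)$ is a block of $1$'s followed by a block of $n$'s. At the transition ($d_{j-1}=1$, $d_j=n$) it derives $a^{2^{j-1}l}\equiv-1\pmod n$ by working modulo each $p_i^{a_i}$ and recombining with the CRT. This is your second direction; you replace the prime-power argument by Euclid's lemma, since $\gcd(x_j-1,n)=1$ and $n\mid(x_j-1)(x_j+1)$ give $n\mid x_j+1$.

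The reverse implication, that $P_1$ or $P_2(a,n,j)$ forces every $d_m$ into $\{1,n\}$, is only asserted in the paper. It rests on the ``one-to-one map'' and on the claim that $d_j=1$ is equivalent to $a^{2^jl}\equiv-1\pmod n$. That claim fails out of context: for $n=15$, $a=2$ one has $d_0=1$ but $2^7\equiv 8\pmod{15}$. Also, for this direction, the remark ``on the left of a $1$ we can only have $1$'' only excludes $d_m=n$, not $1<d_m<n$. Your first direction supplies exactly this step and pinpoints where oddness of $n$ is needed.

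Because your equivalence is pointwise in $a$, it gives equality of the two witness sets. That is what the paper actually needs to transfer Rabin's $3/4$ bound from one test to the other.

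Two small points:
\begin{itemize}
\item The case $\gcd(a,n)>1$ need not be set aside. Neither of your directions uses coprimality, and such an $a$ satisfies $R_1$ together with $\overline{P_1}$ and every $\overline{P_2}$ anyway.
\item In your second direction, $x_0\not\equiv1$ is what makes $j$ exist, whereas $j<k$ follows from $x_k\equiv1$.
\end{itemize}
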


\begin{proof}
$ $ \\ $ $
For both set of conditions we prove the equivalence
for $n$ being a prime number. The case $n$ is composite
can be proved by symmetry using De Morgan's Law.
Consider the sequence of the $d$ values forming the following
row vector in that case.
\[
 ( d_0 , d_1 , \ldots , d_{k-1}, d_k ).
\]
We will show first that if a $d_j$ value is $n$ then all
the values to its right will also be $n$ that is
$d_i = n$ for all $i>j$.
Thus $1$s can extend only to the left of the leftmost $n$.
As a conclusion the row vector can be: (a) all $n$, or
(b) a sequence of consecutive 1s  followed by consecutive 
$n$s,
and (c) the rightmost position is $n$.
%% Moreover $d_j = n$ is equivalent to $a^{2^j l} \equiv 1  \pmod n$.
%% Moreover $d_j = 1$ is equivalent to $a^{2^j l} \equiv -1 \pmod n$.
%
%
%
$ $ \\ $ $
{\bf Condition $M_1 (a,n)$.} By way of $M_1 (a,n)$ we have the
following
\begin{eqnarray}
\label{mpc2a}
a^{n-1}   \equiv 1 \pmod n
&\Rightarrow&
\dv{n}{a^{n-1}-1}
\Rightarrow
\dv{n}{a^{2^k l}-1} \nonumber \\
&\Rightarrow&
a^{2^k l} -1 \equiv 0 \pmod n
\Rightarrow
\gcd(a^{2^k l}-1,n)=n\nonumber \\
&\Rightarrow &
M_a(a,k) : d_k = n.
\end{eqnarray}
Therefore we obtain that the right-most position
of the row vector has $d_k = n$.
\[
 ( d_0 , d_1 , \ldots , d_{k-1}, n=d_k ).
\]
$ $ \\ $ $
{\bf Condition $M_2 (a,n,m)$ case (i): 
for some  $0\leq j<k$ : $d_j =n$.} 
We prove that a $n$ in some position $j$ is followed
to the right with a block of $n$s.
Consider that for some $m$,
call it $j$, we have for $M_2 (a,n,j)$ the following:
$d_j = n$.
This implies
$d_j = n = \gcd(a^{2^j l}-1,n)$, and also
$a^{2^j l} \equiv 1 \pmod n$.
Since $a^{2^{j+1}l} = ( a^{2^j l})^2$,
$a^{2^{j+1} l} \equiv 1 \pmod n$,
and therefore
$\gcd(a^{2^{j+1} l}-1,n)=d_{j+1} = n$, thus having
$d_{j+1} =n$.
By induction, all $i  >   j$ have $d_i =n $ as well.
Therefore we obtain the following refinement for case (i).
\[
 ( d_0 , d_1 , \ldots , d_{j-1}, d_j =n, d_{j+1}=n, \ldots , d_{k-1}=n, d_k =n ).
\]
If $j=0$ we have an all $n$ row-vector.
\[
 ( d_0 =n , d_1 = n  , \ldots ,  d_{k-1}=n, d_k =n ).
\]
We conclude that $d_j = n$ is equivalent to 
$a^{2^j l} \equiv 1  \pmod n$.
$ $ \\ $ $
{\bf Condition $M_2 (a,n,m)$ case (ii): if the left-most
$n$ is at position $j>0$ i.e.  $d_j =n$ there is a 
$d_{j-1} =1$.}
Consider that for some $m$,
call it $j$, we have for $M_2 (a,n,j)$ the following:
$d_j = n$ and it is the leftmost $n$ entry of the row vector.
The latter implies
$\gcd(a^{2^j l}-1,n)=d_j = n$ or $a^{2^j l} \equiv 1 \pmod n$.
Let us consider $m=j-1$. We have $d_{j-1}=1$.
$ $ \\ $ $
Let us set $x= a^{2^{j-1} l}$.
Then $x^2 = a^{2^{j}l}$. Since $a^{2^j l} \equiv 1 \pmod n$,
we have $x^2 \equiv 1 \pmod n$.
\begin{eqnarray*}
x^2         &\equiv& 1 \pmod n \\
(x-1)(x+1)  &\equiv& 0 \pmod n \\
(x-1)(x+1)  &\equiv& 0 \pmod{p_i^{a_i}} \\
(x+1)       &\equiv& 0 \pmod{p_i^{a_i}} \\
x           &\equiv&-1 \pmod{p_i^{a_i}},
\end{eqnarray*}
and given $\gcd(p_u^{a_u}, p_v^{a_v})=1$, we have
$ x        \equiv-1 \pmod{n}$.
We need to explain the step that led to the dismisal
of $(x-1)$. We claimed that
$\gcd(p_i^{a_i}, x-1)=1$.
This is because of the following.
\begin{eqnarray*}
1 = d_{j-1}  &=& \gcd(a^{2^{j-1}l}-1,n) \\
             &=& \gcd(x-1,n)            \\
             &=& \gcd(x-1,p_i^{a_i}) \forall i  \\
\end{eqnarray*}
In conclusion $a^{2^{j-1}l} \equiv -1 \pmod n$.
On the left of a 1 we can only have 1. If there
was an $n$ then to its right there should have been
an $n$ and not an $1$.
We conclude that $d_j = 1$ is equivalent to 
$a^{2^j l} \equiv -1 \pmod n$.
Now that we examined the cases for $M_1 (a,n)$ and
$M_2 (a,n,m)$, we move to $P_1 (a,n)$ and $P_2 (a,n,m)$.
$ $ \\ $ $
{\bf Mapping $M_1 , M_2$ to $P_1 , P_2$.}
$ $ \\ $ $
A row vector that has $n$ in the left-most position
means
$d_0 = n$ which is equivalent to 
$a^{2^0 l} \equiv a^{l} \equiv 1  \pmod n$.
This is case $P_1 (a,n)$.
A row vector that has $1$ in the left-most position $m=0$
and $n$ in position $m=1$, by prior discussion (case (ii))
$a^{l} \equiv -1 \pmod n$ which is $P_2 (a,n,0)$.
If the right-most 1 is in position $j-1$ then
$a^{2^{j-1}l} \equiv -1 \pmod n$ which is $P_2(a,n, j-1)$.
The rightmost $m$ for a $d_j =1$ is $j=k-1$. Since $d_k =n$
this maps to $a^{2^{k-1}l} \equiv -1 \pmod n$ which is $P_2 (a,n, k-1)$.
There is a 1-1 map of $M_1 (a,n) \mapsto P_1 (a,n)$
and $M_2 (a,n,m) \mapsto P_2 (a,n,m)$.
This concludes the proof.
$ $ \\ $ $
{\bf Supplement 1:} Show $a^{2^{j+1}l} \equiv 1 \pmod n$
if $a^{2^{j}l} \equiv 1 \pmod n$.
Let $n=p_1^{a_1} \ldots p_r^{a_r}$ be the prime decomposition
of $n$.
$a^{2^{j}l} \equiv 1 \pmod n$ implies for every $i=1,\ldots , r$
that
$a^{2^{j}l} \equiv 1 \pmod{p_i^{a_i}}$.
\begin{eqnarray*}
a^{2^{j}l}        &\equiv& 1 \pmod{p_i^{a_i}} \\
(a^{2^{j}l}-1)^2  &\equiv& 0 \pmod{p_i^{a_i}} \\
a^{2^{j+1}l}+1 -2a^{2^{j+1}}l  &\equiv& 0 \pmod{p_i^{a_i}} \\
a^{2^{j+1}l}+1 -2  &\equiv& 0 \pmod{p_i^{a_i}} \\
a^{2^{j+1}l}  &\equiv& 1 \pmod{p_i^{a_i}},
\end{eqnarray*}
and given $\gcd(p_u^{a_u}, p_v^{a_v})=1$, we have
$a^{2^{j+1}l}  \equiv 1 \pmod{n}$ i.e. $d_{j+1}=n$.
$ $ \\ $ $
% a/c b/c (a,b)=1
%(a,b)=1 => ax+by=1
% a/c => c=aA; b/c => c=bB;
% ax+by=1 => cax+cby = c =>
%  bBax +aAby=c => ab/c.
\end{proof}

\subsection{The Rabin-Miller primality test}

Rabin \cite{R80} converted  Algorithm~\ref{Miller1} 
into a  probabilistic primality test algorithm.

\begin{prp}[Rabin \cite{R80}]
Algorithm~\ref{RabinMiller} is a probabilistic algorithm
for testing whether $n$ is prime or composite.
\end{prp}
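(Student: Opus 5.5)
The plan is to mirror the structure used for Theorem~\ref{tMillerRabin}: Algorithm~\ref{RabinMiller} is $t$ independent repetitions of a single-witness test built from the Miller1 conditions $R_1(a,n)$ and $R_2(a,n,m)$ of Eq.~(\ref{rmecom}). Two claims need to be established. First, \emph{one-sided correctness}: whenever the algorithm outputs Composite, $n$ is indeed composite. Second, \emph{error bound}: when $n$ is composite, a uniformly random $a\in\{2,\ldots,n-2\}$ is a witness with probability at least $3/4$, so the repeated test errs with probability at most $4^{-t}$.

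For correctness I would go through each branch that returns Composite. If $\gcd(a,n)>1$, then $a$ has a common factor with $n$, which is trivially a certificate of compositeness. If $a^{n-1}\not\equiv 1\pmod n$ with $\gcd(a,n)=1$, Fermat's little theorem (Theorem~\ref{flt}) rules out primality. If for some $0\le m<k$ we have $d_m=\gcd(a^{2^m l}-1,n)\notin\{1,n\}$, then $d_m$ is a nontrivial divisor of $n$, so $n$ is composite. These three cases are exactly Eq.~(\ref{rmepri}) read contrapositively. Consequently, a prime $n$ is always reported as PseudoPrime, with probability one.

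For the error bound I would not redo Rabin's counting argument. Instead I would use the equivalence proposition just proved, that the Miller1 conditions (\ref{rmepri})/(\ref{rmecom}) and the Miller2 conditions (\ref{mrepri})/(\ref{mrecom}) coincide for each fixed $a$. This gives that the set of Rabin--Miller witnesses equals $MRw(n)$. Proposition~\ref{mrp}, via Rabin \cite{R80} and Monier \cite{Monier}, then shows that non-witnesses number at most $\phi(n)/4\le (n-1)/4$, so a single random draw detects a composite $n$ with probability at least $3/4$. Independence of the $t$ draws then bounds the failure probability by $4^{-t}$. The running time is $O(t\lg n\, M(n))$, as in the Miller--Rabin lemma, with the extra $k\le\lg n$ gcd computations adding only $O(\lg^3 n)$ per round.

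The main obstacle is the $3/4$ bound itself. If I had to prove it internally rather than cite Rabin, I would first treat the case where $n$ is not square-free. There, any $p^2\mid n$ forces $a^{n-1}\equiv1 \pmod{p^2}$ to constrain $a$ to a subgroup of index at least $p\ge3$, which I would then sharpen to a factor of $4$. Square-free $n$ with at least two prime factors would be handled by a CRT-based count of the elements $a$ whose sequence $a^{2^m l}$ passes through $-1$ simultaneously modulo every prime factor. I expect the index-$4$ counting in this second case to be the delicate part, so I would defer to \cite{R80} there.
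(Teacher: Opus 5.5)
Your proposal is correct and follows essentially the paper's route: one-sided correctness of the Composite branches, plus the Rabin/Monier bound on non-witnesses transferred to the Miller1 conditions through the Miller1/Miller2 equivalence, which the paper uses implicitly when it identifies $W(n)$ with $MRw(n)$ (you are in fact more explicit than the paper about why a Composite output is always right). Two small bookkeeping points. First, cite the bound in Rabin's form $|MRnw(n)|\le (n-1)/4$, since the $\phi(n)/4$ form fails at $n=9$, where $MRnw(9)=\{1,8\}$. Second, because $a$ is drawn from $\{2,\ldots,n-2\}$, which has only $n-3$ elements, use that $1$ and $n-1$ are always non-witnesses; then the non-witness fraction in the sample space is at most $\bigl((n-1)/4-2\bigr)/(n-3)=(n-9)/(4(n-3))<1/4$.
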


\begin{proof}
$ $ \\ $ $
Let $W(a,n)$ \cite{R80} indicate that $a$ is a witness of the compositeness
of $n$. Then denote by $W(n)$ the set of all $W(a,n)$ witnesses.
Later on we we will use the terminology $MRw(a,n)$ and $MRw(n)$.
for $W(a,n)$ and $W(n)$ respectively.
\[
  W(n) = \left\{ a : W(a,n) \right\} .
\]
It was proved in \cite{R80} that for $n>4$ and $n$ composite
\[
  |W(n)| \geq \frac{3(n-1)}{4}
\]
In a theorem to follow, it will be shown that
\[
 |MRnw(n)| \leq \frac{\phi(n)}{4} \leq \frac{n-1}{4},
\]
that implies
\[
 |MRw(n)| \geq \frac{3(n-1)}{4}.
\]
The proof of correctness appears in \cite{M76} and the
derivations above \cite{R80}.
We present later a  proof that is based on \cite{RC2e},
with the latter originating from the paper of \cite{Monier}.

\SetKwComment{Comment}{/* }{ */}
\SetKwRepeat{Do}{do}{while}
\begin{algorithm}[H]
\KwIn{$n >4$ is odd $n-1=2^k l$, $l$ odd; number of runs is $t$}
\KwOut{$n$ is $\mathbf{Composite}$ or $\mathbf{PseudoPrime}$}
$ n-1 = 2^k l $ ;
$ i = 0 $ ;

\If(\tcc*[f]{Line 1}){PerfectPower(n)}{
 \Return{$\mathbf{Composite}$}
 \Comment*[r]{\cite{R80} is unclear about this test}
}

\Do{$i<t$}{
 $i=i+1$ \;
 Pick $ a \in \{ 2, 3, \ldots , n-2 \}$ uniformly at random  \;

 \If(\tcc*[f]{Line 2 (i)}){$(\dv{a}{n}) || (\gcd(a,n) >1)$}{
  \Return{$\mathbf{Composite}$}
  \Comment*[r]{\cite{R80} is unclear about this test}
 }

 $ x = a^{l} \pmod n$;

 Calculate using repeated doubling $a^{2^i l} \bmod n$, for $i=1 , \ldots , k$
  \Comment*[r]{$a^{2^k l} \bmod n$ is $a^{n-1} \bmod n$}
 
 \If(\tcc*[f]{Line 2 (ii)}){$a^{n-1} \not\equiv 1 \pmod n$}{
  \Return{$\mathbf{Composite}$} 
  \Comment*[r]{$a$ is a $W(a,n)$}
 }

\If(\tcc*[f]{Line 2 (iii)}){$\exists m, 0\leq m <k: \gcd(( a^{2^k l} \bmod n)-1, n) \neq 1, n$}{
  \Return{$\mathbf{Composite}$} 
  \Comment*[r]{$a$ is a $W(a,n)$}
 }

 }
 \Return{$\mathbf{PseudoPrime}$}
 \Comment*[r]{Line 3}

 \caption{Rabin-Miller primality test algorithm}
 \label{RabinMiller}
\end{algorithm}

\end{proof}

\begin{prp}[Probability of successful reporting]
\label{rmp}
If $n$ is an odd composite number greater than four
then Algorithm~\ref{RabinMiller} returns Composite with
probability at least $3/4$.
The probability that the output of Algorithm~\ref{RabinMiller} is
Composite  given that $n$ is a composite number
is at least $1 - 4^{-t}$.
\end{prp}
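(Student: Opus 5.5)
The plan is to reduce Proposition~\ref{rmp} to the counting bound for Miller--Rabin non-witnesses, using the equivalence of the Miller1 and Miller2 conditions proved just above. A single iteration of the loop of Algorithm~\ref{RabinMiller} fails to return Composite exactly when the random $a$ satisfies $\gcd(a,n)=1$, $a^{n-1}\equiv 1 \pmod n$, and $d_m\in\{1,n\}$ for all $0\le m<k$. That is, $M_1(a,n)\wedge M_2(a,n,m)$ holds for every $m$. By the preceding equivalence proposition, this is the same as $P_1(a,n)\vee\bigvee_m P_2(a,n,m)$, i.e. $a\in MRnw(n)$. So it suffices to show $|MRnw(n)|\le \phi(n)/4$ for odd composite $n>4$. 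The perfect-power test and the gcd test can only add further Composite answers, so they do not hurt.

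To prove the bound, I would write $n-1=2^k l$ and let $2^{e}$ be the largest power of $2$ such that $a^{2^{e}l}\equiv -1$ is solvable. Such an $e$ exists because $a=-1$ gives $e\ge 0$. Every non-witness then lies in the set $G=\{a\in\mb{U}_n : a^{2^{e}l}\equiv\pm1\pmod n\}$. This set is a subgroup of $\mb{U}_n$, and it contains the smaller subgroup $G_1=\{a: a^{2^e l}\equiv 1\}$. Using the prime factorization $n=\prod p_i^{a_i}$ and the Chinese Remainder Theorem together with the cyclicity of each $\mb{U}_{p_i^{a_i}}$ (Theorem~\ref{prrmodpa}), I would count $|G_1|=\prod_i \gcd(2^e l,\phi(p_i^{a_i}))$ and $|G|=2|G_1|$.

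I would then bound the index $[\mb{U}_n:G]$ by cases.
\begin{itemize}
\item[(i)] If some $a_i\ge2$, then $p_i\nmid n-1$. The factor $\gcd(2^e l,\phi(p_i^{a_i}))$ therefore loses the factor $p_i^{a_i-1}\ge 3$. Combined with a direct check, this should give $|G|\le\phi(n)/4$, with the small case $n=9$ checked separately.
\item[(ii)] If $n$ is squarefree with $r\ge3$ prime factors, each local index $[\mb{U}_{p_i}:\{a^{2^{e+1}l}=1\}]$ is at least $2$ by maximality of $e$. This gives index $\ge 2^{r}/2\ge4$.
\item[(iii)] If $n=pq$ with two primes, I would use Korselt-type reasoning. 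Either $n$ is not Carmichael, which supplies an extra factor of $2$ in some local index, or $p-1\mid n-1$ and $q-1\mid n-1$ cannot both hold, by the corollary that Carmichael numbers have at least three prime factors.
\end{itemize}

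Once $|MRnw(n)|\le\phi(n)/4\le (n-1)/4$ holds, a uniformly random $a\in\{2,\dots,n-2\}$ is a witness with probability at least $3/4$; the endpoints $1,n-1$ are non-witnesses anyway. The $t$ iterations are independent, so all of them fail with probability at most $4^{-t}$.

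The main obstacle is case (iii), together with the bookkeeping in (i). Getting the factor $4$ rather than $2$ there requires carefully comparing the $2$-adic valuations of the $p_i-1$ with $e$. I would follow Monier's argument as presented in \cite{RC2e} for this step.
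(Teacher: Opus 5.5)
Your route is essentially the paper's. The paper's proof of this proposition only cites Rabin and Monier, but the count carried out later in the subsection ``Proving the Rabin bound'' (Propositions~\ref{thaux0}--\ref{thaux3}) is exactly yours:
your $e$ equals $b(n)-1$, since $x^{2^e l}\equiv -1\pmod n$ is solvable iff $2^{e+1}$ divides every $p_i-1$;
your $G$ is the paper's $C(n)$, and $|G|=2|G_1|$ is Eq.~(\ref{mrc4});
the case split is the same.
Your explicit reduction of the Miller1-style test to $MRnw(n)$ via the equivalence proposition, and your remark that $\pm1$ lie outside the sampling range, are steps the paper leaves implicit.

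Three points need repair.

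First, case (ii) is wrong as written. The local subgroups must be $K_i=\{a\in\mb{U}_{p_i^{a_i}} : a^{2^e l}\equiv 1\}$, not the ones with exponent $2^{e+1}$. The index $[\mb{U}_{p_i^{a_i}}:K_i]$ is even because of the \emph{definition} of $e$, not its maximality: $-1$ is a $2^e l$-th power modulo each $p_i^{a_i}$, so the image of $x\mapsto x^{2^e l}$ in that cyclic group has even order. With exponent $2^{e+1}$ the local index can be $1$: for $n=105$ one has $l=13$, $e=0$, and $\{a\in\mb{U}_3 : a^{26}\equiv 1\}=\mb{U}_3$. With the $K_i$ you get $[\mb{U}_n:G]=\frac12\prod_i[\mb{U}_{p_i^{a_i}}:K_i]\ge 2^{r-1}$. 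Together with the extra factor $p_i^{a_i-1}$, this also settles case (i) except for $n=9$.

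Second, your claim that it suffices to prove $|MRnw(n)|\le\phi(n)/4$ for every odd composite $n>4$ is false at $n=9$. There $MRnw(9)=\{1,8\}$ while $\phi(9)/4=3/2$. The separate check for $n=9$ must therefore be of the probability, not of the count. That probability is $1$, since the only non-witnesses are $\pm1$.

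Third, case (iii) is missing a link. Write $v_2$ for the $2$-adic valuation. The index $[\mb{U}_p:K_p]=(p-1)/(2^e\gcd(l,p-1))$ is even. It equals $2$ only if $v_2(p-1)=e+1$ and the odd part of $p-1$ divides $l$. Every $p_i\equiv 1\pmod{2^{e+1}}$, so $2^{e+1}\mid n-1$, and hence index $2$ forces $p-1\mid n-1$. For $n=pq$ with $p<q$ we have $n-1\equiv p-1\pmod{q-1}$ with $0<p-1<q-1$, so $q-1\nmid n-1$. Thus the $q$-factor is at least $4$ and the index is at least $\frac12\cdot 2\cdot 4=4$. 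There is no real dichotomy on whether $n$ is Carmichael, since $pq$ never is.
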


\begin{proof}
It was proved by Rabin \cite{R80} and independently
by Monier \cite{Monier} that for a composite odd $n$,
$|MRw(a)|$ is at least $3\phi (n)/4$ and $|MRnw(a)|$ 
is thus at most $\phi(n)/4$.
\end{proof}

\begin{lem}[Running time of algorithm~\ref{RabinMiller}]
The running time of
Algorithm~\ref{RabinMiller} is
$O(t \cdot \lg{n} \cdot M(n) )$, where $M(n)$
is the cost of multiplying $n$-bit integers.
\end{lem}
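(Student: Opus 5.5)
The running-time claim for Algorithm~\ref{RabinMiller} will follow the same pattern as the running-time lemmas for Algorithm~\ref{fct1b}, Algorithm~\ref{ss1b} and Algorithm~\ref{MillerRabin}. The plan is to bound the cost of one iteration of the do-while loop by $O(\lg{n} \cdot M(n))$, multiply by the $t$ iterations, and then check that the one-time preprocessing (computing $k,l$ and the PerfectPower test) stays within this bound.

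\textbf{Per-iteration cost.} For the chosen $a$:
\begin{itemize}
\item Computing $\gcd(a,n)$ with Euclid's algorithm takes $O(\lg{n})$ divisions by the bound on division steps proved earlier. Each division is $O(M(n))$.
\item $x=a^l \bmod n$ is computed by repeated squaring, using $O(\lg{l}) \subseteq O(\lg{n})$ modular multiplications.
\item The $k \le \lg{n}$ successive squarings that produce $a^{2^i l} \bmod n$ for $i=1,\ldots,k$ cost $O(\lg{n})$ more multiplications. The last of these values is $a^{n-1} \bmod n$, so the test in Line 2(ii) is free.
\item Line 2(iii) needs, for each $0\le m<k$, one gcd of $(a^{2^m l} \bmod n)-1$ with $n$. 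Done naively this is $k$ gcds, each costing $O(\lg{n}\,M(n))$, giving $O(\lg^2{n}\,M(n))$.
\end{itemize}
This last step is the main obstacle. To avoid the extra factor I will use the equivalence proposition just proved. The row vector $(d_0,\ldots,d_k)$ has the form ones followed by $n$'s. Moreover, $d_j=n$ iff $a^{2^j l}\equiv 1 \pmod n$, and $d_{j-1}=1$ at the boundary iff $a^{2^{j-1}l}\equiv -1 \pmod n$. So Line 2(iii) can be decided by scanning the already computed powers for values $\equiv \pm 1 \pmod n$, using $O(k)$ comparisons. At most one genuine gcd is needed, to certify a failure. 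This gives $O(\lg{n}\cdot M(n))$ per iteration.

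If that reduction seems too delicate, the fallback is to accept the extra gcd cost. In that case the bound would hold only under the convention that gcd cost is dominated by the exponentiation, or with $M(n)$ treated generously. I will state explicitly which of the two routes is taken.

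\textbf{Preprocessing.} Writing $n-1=2^k l$ takes $O(\lg{n})$ shifts. The PerfectPower test (NaivePerfectPower) tries at most $\lg{n}$ exponents $m$. Each exponent needs a binary search with $O(\lg{n})$ probes, and each probe is one exponentiation of cost $O(\lg{n}\,M(n))$. This is polylogarithmic and done once, independent of $t$, so it is absorbed as an additive lower-order term for $t\ge 1$; I will note this, as the analogous lemmas do. Summing over the $t$ iterations gives $O(t\cdot \lg{n}\cdot M(n))$.
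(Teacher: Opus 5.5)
Your per-iteration accounting is sound and more careful than the paper's proof. The paper only observes that exponentiation uses $O(\lg n)$ multiplications and multiplies by $t$, silently ignoring the gcds of Lines 2(i) and 2(iii) and the PerfectPower line. Replacing the $k$ gcds of Line 2(iii) by a scan for $\pm 1$ is legitimate, but justify it pointwise rather than through the shape of $(d_0,\ldots,d_k)$. That vector is ``ones followed by $n$'s'' only for non-witnesses; in general it is merely a divisor chain $d_0\mid d_1\mid\cdots\mid d_k$. Also, the proof of the paper's equivalence proposition is phrased for prime $n$.

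The fact you need is this: for odd $n$ with $\gcd(a,n)=1$ and $a^{n-1}\equiv 1 \pmod n$, every $d_m$ with $0\le m<k$ lies in $\{1,n\}$ if and only if $a^l\equiv 1$ or $a^{2^m l}\equiv -1 \pmod n$ for some $0\le m<k$. For the forward direction, take the least $j$ with $a^{2^j l}\equiv 1$. If $j=0$ you are done. Otherwise set $x=a^{2^{j-1}l}$: then $x^2\equiv 1$ and $\gcd(x-1,n)=1$, which forces $x\equiv -1$. The converse uses only that $n$ is odd. (Incidentally, plain Euclid costs $O(\lg^2 n)$ bit operations per gcd. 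So the naive route already fits when $M(n)=\Theta(\lg^2 n)$, and your reduction is only needed for subquadratic multiplication.)

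The genuine gap is the preprocessing. By your own count, NaivePerfectPower costs up to $\lg n$ exponents, times $O(\lg n)$ probes, times $O(\lg n\, M(n))$ per probe, i.e.\ $O(\lg^3 n\, M(n))$. This is not an additive lower-order term. The target bound $O(t\lg n\, M(n))$ is itself polylogarithmic in $n$, so ``polylogarithmic and done once'' proves nothing. For bounded $t$ (say $t=1$) the preprocessing dominates, and it is absorbed only when $t=\Omega(\lg^2 n)$. The analogous lemmas you appeal to cannot help, since FermatPrimalityTest, SolovayStrassenPrimalityTest and the Miller--Rabin algorithm have no perfect-power step.

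To close the gap you must do one of the following:
\begin{itemize}
\item State the weaker bound $O(t\lg n\, M(n)+\lg^3 n\, M(n))$.
\item Drop the PerfectPower line and analyze only the loop, which is in effect what the paper's proof does. The line is not needed for the algorithm's guarantee, because the Rabin--Monier bound $|MRnw(n)|\le \phi(n)/4$ already covers odd prime powers $n>9$.
\item Invoke an essentially linear-time perfect-power test such as Bernstein's. Its $(\lg n)^{1+o(1)}$ cost is $O(\lg n\, M(n))$ because $M(n)\ge \lg n$.
\end{itemize}
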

\begin{proof}
Exponentiation involves $O(\lg{n})$ multiplications.
Depending on how we
implement integer multiplication the overall time complexity
is $O(t \cdot \lg{n} \cdot M(n) )$, where $M(n)$
is the computational cost (bit model) of
multiplying $n$-bit integers.
\end{proof}

\newpage

\subsection{Examples on witnesses}

\begin{exa}
Consider $n=9$, a composite and odd integer $n>4$.
It is $n-1 = 2^k \cdot l =  2^3 \cdot 1$, with $l$
being an odd integer. 
Determine the MRw(n) and MRnw(n).
\end{exa}

\begin{solution}
We note that $n=9$, we have $n-1=2^k \cdot l$, where
$k=3$ and $l=1$.
The first column is $a$.
The second column shows     $a^{l} \bmod n$.
The third  column indicates whether $C_1 (a,n)$ is the case i.e.
$a^{l} \not\equiv 1 \pmod n$. 
A $+$ indicates so, a $-$ indicates the opposite i.e.
$a^{l}     \equiv 1 \pmod n$. 
The fourth column indicates whether $C_2 (a,n,0)$ is the case i.e.
$a^{l} \not\equiv -1 \pmod n$.
A $+$ indicates so, a $-$ indicates the opposite i.e.
$a^{l}     \equiv -1 \pmod n$. 
$ $ \\ $ $
The fifth  column shows   $a^{2^1 \cdot l} \bmod n$.
The sixth  column indicates with a $+$
whether $C_2 (a,n,1)$ is the case i.e.
$a^{2^1 \cdot l} \not\equiv -1 \pmod n$;
otherwise a $-$ is shown.
The seventh column shows   $a^{2^2 \cdot l} \bmod n$.
The eighth  column indicates with a $+$ 
whether $C_2 (a,n,2)$ is the case i.e.  
$a^{2^2 \cdot l} \not\equiv -1 \pmod n$; 
otherwise a $-$ is shown.
In the ninth column a $+$ indicates that $a$
is a witness, and an $-$ that $a$ is a non-witness 
to the compositeness of $n$.
$ $ \\ $ $
For $a \in MRw(n)$ we need only $+$'s for the corresponding row.
For $a \in MRnw(n)$ we need at least one $-$ or equivalently
an $n$ in the MRw(n) column.
\[
\begin{array}{llcclclcc}
a  &a^{    l}:      &C_1(a,n)   &C_2(a,n,0)
   &a^{2^1 \cdot 1}:&C_2(a,n,1)
   &a^{2^2 \cdot 1}:&C_2(a,n,2) & MRw(n)\\
1 & 1& - & + & 1 & +  & 1 & + & n     \\
2 & 2& + & + & 4 & +  & 7 & + & y     \\
3 & 3& + & + & 0 & +  & 0 & + & y     \\
4 & 4& + & + & 7 & +  & 4 & + & y     \\
5 & 5& + & + & 7 & +  & 4 & + & y     \\
6 & 6& + & + & 0 & +  & 0 & + & y     \\
7 & 7& + & + & 4 & +  & 7 & + & y     \\
8 & 8& + & - & 1 & +  & 1 & + & n     \\
\end{array}
\]

We observe the number of MRw(a,n) is $\geq \frac{3(n-1)}{4} = 6$.
The number of MRnw(a,n) is $\leq \frac{(n-1)}{4} = 2$.
\end{solution}

\begin{exa}
Consider $n=13$, a non-composite (!!) and odd integer $n>2$.
It is $n-1 = 2^2 \cdot 3$. Using the prior definition
we have $k=2$ and $l=3$, with $l$ being a odd number.
Determine the MRw(n) and MRnw(n).
\end{exa}

\begin{solution}
$ $ \\ $ $
We note that $n=13$, we have $n-1=2^2 \cdot 3$, where
$k=2$ and $l=3$.
The first column is $a$.
The second column shows     $a^{l} \bmod n$.
The third  column indicates whether $C_1 (a,n)$ is the case i.e.
$a^{l} \not\equiv 1 \pmod n$. 
A $+$ indicates so, a $-$ indicates the opposite i.e.
$a^{l}     \equiv 1 \pmod n$. 
The fourth column indicates whether $C_2 (a,n,0)$ is the case i.e.
$a^{l} \not\equiv -1 \pmod n$.
A $+$ indicates so, a $-$ indicates the opposite i.e.
$a^{l}     \equiv -1 \pmod n$. 
$ $ \\ $ $
The fifth  column shows   $a^{2^1 \cdot l} \bmod n$.
The sixth  column indicates with a $+$
whether $C_2 (a,n,1)$ is the case i.e.
$a^{2^1 \cdot l} \not\equiv -1 \pmod n$;
otherwise a $-$ is shown.
%The seventh column shows   $a^{2^2 \cdot l} \bmod n$.
%The eighth  column indicates with a $+$ 
%whether $C_2 (a,n,2)$ is the case i.e.  
%$a^{2^2 \cdot l} \not\equiv -1 \pmod n$; 
%otherwise a $-$ is shown.
%In the nhe ninth column a $+$ indicates that $a$
In the seventh column a $+$ indicates that $a$
is a witness, and an $-$ that $a$ is a non-witness 
to the compositeness of $n$.
$ $ \\ $ $
For $a \in MRw(n)$ we need only $+$'s for the corresponding row.
For $a \in MRnw(n)$ we need at least one $-$ or equivalently
an $n$ in the MRw(n) column.
There are no witnesses to the compositeness of $13$; we know
that $13$ is a prime number not a composite number!
We observe the number of MRw(a,n) is $0$.
The number of MRnw(a,n) is thus $n-1=12$.
\[
\begin{array}{llcclcc}
a  &a^{    l}:      &C_1(a,n)   &C_2(a,n,0)
   &a^{2^1 \cdot 1}:&C_2(a,n,1)
                              & MRw(n)\\
1 & 1& - & + & 1 & +  & n     \\
2 & 8& + & + &12 & -  & n     \\
3 & 1& - & + & 1 & +  & n     \\
4 &12& + & - & 1 & +  & n     \\
5 & 8& + & + &12 & -  & n     \\
6 & 8& + & + &12 & -  & n     \\
7 & 5& + & + &12 & -  & n     \\
8 & 5& + & + &12 & -  & n     \\
9 & 1& - & + & 1 & +  & n     \\
10&12& + & - & 1 & +  & n     \\
11& 5& + & + &12 & -  & n     \\
12&12& + & - & 1 & +  & n     \\
\end{array}
\]
\end{solution}

\newpage
\subsection{Solovay-Strassen and Miller-Rabin failure probability bounds}

Both in  the $t$-round
probabilistic version of the Solovay-Strassen test 
and
probabilistic version of the Rabin-Miller     test 
we derive bounds on the probability of
\[
P_{SolStr} (E_1 / E_2 ) \leq \frac{1}{2^t},
\]
\[
P_{RabMil} (E_1 / E_2 ) \leq \frac{1}{4^t},
\]
where
\[
E_1 : \text{event that $t$ runs of the algorithm produce NO witnesses},
\]
\[
E_2 : \text{$n$ is a composite number},
\]
and of course
\[
E_2^\prime : \text{$n$ is a prime number}.
\]
However we are interested in the probability
\[
P_{SolStr} (E_2 / E_1 ) ,
\]
and
\[
P_{RabMil} (E_2 / E_1 ) ,
\]
instead.

\begin{prp}[\cite{ConradSS}]
For terms as previously defined,
by using Bayes theorem we conclude (\cite{ConradSS}) the following.
\[
P_{SolStr} (E_2^\prime / E_1 ) \geq  1 - \frac{\ln{n}-1}{2^t}
\]
and
\[
P_{RabMil} (E_2^\prime / E_1 )  \geq  1 - \frac{\ln{n}-1}{4^t} , 
\]
or equivalently
\[
P_{SolStr} (E_2 / E_1 ) \leq  \frac{\ln{n}-1}{2^t}
\]
and
\[
P_{RabMil} (E_2 / E_1 )  \leq \frac{\ln{n}-1}{4^t} , 
\]
\end{prp}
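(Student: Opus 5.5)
We will apply Bayes' theorem to the events $E_1$, $E_2$, $E_2^\prime$, with a prior probability on primality. The natural model, and the one of \cite{ConradSS}, is that $n$ is a uniformly random odd integer of the relevant size, say near $N$. By the prime number theorem (Theorem~\ref{pnt}), the density of primes near $N$ is about $1/\ln N$. Restricting to odd integers doubles this, so $P(E_2^\prime) \approx 2/\ln n$. We then write
\[
P(E_2 / E_1) = \frac{P(E_1/E_2)P(E_2)}{P(E_1/E_2)P(E_2) + P(E_1/E_2^\prime)P(E_2^\prime)} .
\]
The key input is that $P(E_1/E_2^\prime)=1$: for prime $n$ no witnesses exist, by Fermat's little theorem and Euler's criterion for Solovay--Strassen, and by Lemma~\ref{mrl1} for Rabin--Miller.

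The steps are as follows.
\begin{enumerate}
\item Bound the numerator using $P(E_1/E_2)\le 2^{-t}$, from Proposition~\ref{ssp1}, or $P(E_1/E_2)\le 4^{-t}$, from Proposition~\ref{rmp}.
\item Drop the first term of the denominator, since it is nonnegative. This gives
\[
P(E_2/E_1)\le \frac{P(E_2)}{P(E_2^\prime)}\,\epsilon^t = \frac{1-P(E_2^\prime)}{P(E_2^\prime)}\,\epsilon^t, \qquad \epsilon\in\{1/2,\,1/4\}.
\]
\item Substitute $P(E_2^\prime)=2/\ln n$. The ratio becomes $\frac{\ln n}{2}-1=\frac{\ln n -2}{2}$, which is at most $\ln n - 1$. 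This yields the stated bounds $(\ln n-1)/2^t$ and $(\ln n-1)/4^t$, together with their complements $P(E_2^\prime/E_1)=1-P(E_2/E_1)$.
\end{enumerate}
If instead one takes $n$ uniform among all integers, with $P(E_2^\prime)=1/\ln n$, the ratio is exactly $\ln n - 1$. That is presumably the source of the constant in the statement, so the bound holds under either convention.

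The main obstacle is making the prior rigorous. The prime number theorem gives only an asymptotic density, not an exact value $P(E_2^\prime)=1/\ln n$. We will state the result under the standard heuristic model of \cite{ConradSS}, with $P(E_2^\prime)\ge 1/\ln n$ for numbers in the range considered. The only property actually used is this lower bound, since the ratio $(1-P)/P$ is decreasing in $P$.

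A minor further point is independence of the $t$ rounds. Each round draws $a$ independently and uniformly, so the single-round bound $1/2$ (respectively $1/4$) multiplies to give the bound on $P(E_1/E_2)$.
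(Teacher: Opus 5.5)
Your proposal is correct and is essentially the paper's proof: Bayes' theorem with $P(E_1/E_2^\prime)=1$, $P(E_1/E_2)\le 2^{-t}$ (resp.\ $4^{-t}$), and the prime-number-theorem prior $P(E_2^\prime)\approx 1/\ln n$. The differences are cosmetic. You bound $P(E_2/E_1)=x/(1+x)\le x$ directly, with $x=P(E_1/E_2)\,(1-P(E_2^\prime))/P(E_2^\prime)$, which is the same inequality as the paper's $1/(1+x)\ge 1-x$ and does not need the paper's restriction $x<1$. Your monotonicity remark also shows that any prior at least $1/\ln n$ suffices, including the odd-integer prior $2/\ln n$.
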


\begin{proof}
$ $ \\ $ $
{\bf Bayes theorem simple form.}
The simple form of Bayes theorem for two event $A$ and $B$ with $P(B) \neq 0$ takes the
following form.
\[
  P(A/B) = \frac{P(B/A)P(A)}{P(B)}.
\]
A proof is based on conditional probability. Therefore for events $A,B$ we
have the following.
\[
P(A \cap B) = P(A/B) P(B), \quad \quad P(A \cap B) = P(B/A) P(A).
\]
Equating the right-hand sides and solving for $P(A/B)$ given $P(B) \neq 0$ gives
the desired result.
$ $ \\ $ $
{\bf Bayes theorem for two mutually exclusive and exhaustive events $A$ and $A^\prime$.}
We denote $A^\prime$ the complement of $A$ with $A \cap A^\prime = \emptyset $ and
$A \cup A^\prime = \Omega $ that is $A$ and $A^\prime$ are mutually exclusive and exhaustive.
Then
\begin{equation}
\label{Bayes1}
  P(A/B) = \frac{P(B/A)P(A)}{P(B)} = \frac{P(B/A)P(A)}{P(B/A)P(A) + P(B/A^\prime ) P(A^\prime)}.
\end{equation}
Likewise we can also have the following.
\begin{equation}
\label{Bayes2}
  P(A^\prime /B) = \frac{P(B/A^\prime )P(A^\prime )}{P(B/A)P(A) + P(B/A^\prime ) P(A^\prime)}.
\end{equation}
We map $A, A^\prime , B$ to the events $ E_2 , E_2^\prime , E_1$.
\begin{eqnarray*}
A         &\mapsto& E_2         , \quad \text{$n$ is Composite event},  \\
A^\prime  &\mapsto& E_2^\prime  , \quad \text{$n$ is Prime event},  \\
B         &\mapsto& E_1         . \quad \text{NoWitness event in $t$ rounds}.
\end{eqnarray*}
$ $ \\ $ $
\begin{lem}
\label{lb10}
For complementary events $E_1 \cap E_2$ and $E_1 \cap E_2^\prime$ we have the following.
\begin{equation}
\label{lb11}
P (E_2 / E_1 ) = 1 - P (E_2^\prime / E_1 )
\end{equation}
\end{lem}
For complementary events $E_1 \cap E_2$ and $E_1 \cap E_2^\prime$  we
have
\begin{eqnarray*}
P(E_1 ) &=& P(E_1 \cap E_2 ) + P(E_1 \cap E_2^\prime)  \\
\frac{P(E_1 )}{P(E_1 )} &=& \frac{P(E_1 \cap E_2 ) + P(E_1 \cap E_2^\prime)}{P(E_1 )}  \\
          1             &=& \frac{P(E_1 \cap E_2 )}{P(E_1 )} + \frac{P(E_1 \cap E_2^\prime)}{P(E_1 )}  \\
          1             &=& P(E_2 / E_1 ) + P(E_2^\prime /E_1 ),
\end{eqnarray*}
and after solving for $ P(E_2 / E_1 )$ the lemma follows.
$ $ \\ $ $
Furthermore we have the following for the Solovay-Strassen algorithm
\begin{equation}
\label{lbSS}
P( E_1 / E_2 ) \leq \frac{1}{2^t}.
\end{equation}
By Equation~\ref{Bayes2} we have the following with the substitutions obtained through the
three mappings for $A, A^\prime $ and $B$.
\begin{equation}
\label{Bayes3}
  P(E_2^\prime / E_1 ) = 
  \frac{P(E_1 /E_2^\prime)P(E_2^\prime)}{P(E_1 / E_2)P(E_2) + P(E_1 /E_2^\prime ) P(E_2^\prime )}.
\end{equation}
By the prime number theorem, quoting the density of primes, we have the following.
\begin{equation}
\label{lb13}
  P( E_2^\prime ) \approx \frac{1}{\ln{n}}.
\end{equation}
Furthermore,
\begin{equation}
\label{lb14}
  P( E_2 ) = 1 - P( E_2^\prime ) \approx 1 - \frac{1}{\ln{n}}.
\end{equation}
Moreover,
\begin{equation}
\label{lb15}
  P( E_1 / E_2^\prime ) = \frac{P( E_1 \cap E_2^\prime}{P(E_2^\prime )} = 1,
\end{equation}
since no witness for the compositeness of $n$ is to be generated for $n$ a 
prime number.
Therefore Eq.~(\ref{Bayes3}) by way of Eq.(\ref{lb13}),  Eq.(\ref{lb14}),  Eq.(\ref{lb15}
yields the following.
\begin{eqnarray}
\label{Bayes4}
  P(E_2^\prime / E_1 ) &=& 
  \frac{P(E_1 /E_2^\prime)P(E_2^\prime)}
       {P(E_1 / E_2)P(E_2) + P(E_1 /E_2^\prime ) P(E_2^\prime )} 
  \nonumber\\
                       &=&  
  \frac{ 1 \cdot \frac{1}{\ln{n}}}
       {P(E_1 /E_2 ) \cdot ( 1 - \frac{1}{\ln{n}})+ 1 \cdot \frac{1}{\ln{n} }} 
  \nonumber\\
                       &=&  
  \frac{ 1 }
       {P(E_1 /E_2 ) \cdot ( \ln{n} - 1)+ 1  }  \text{by way of Eq.~(\ref{lbSS})}
  \nonumber\\
                       &\geq&  
  \frac{ 1 }
       {1 + \frac{\ln{n}-1}{2^t}   }  
  \nonumber\\
                       &\geq&  
       {1 - \frac{\ln{n}-1}{2^t}   }  ,
\end{eqnarray}
where in the last equation one can use $1/(1+x) \geq 1 -x $ for $0<x< 1$.
Therefore one can also write
\begin{equation}
\label{SS1a}
  P(E_2^\prime / E_1 ) \geq 1 - \frac{\ln{n}-1}{2^t}  \geq  1- \frac{\ln{n}}{2^t},
\end{equation}
and by using Lemma~\ref{lb10} one can obtain the following.
\begin{equation}
\label{SS1b}
  P(E_2 / E_1 ) \leq \frac{\ln{n}-1}{2^t} \leq \frac{\ln{n}}{2^t}.
\end{equation}
The corresponding results for the 
Rabin-Miller probabilistic primality test then become 
as follows.
\begin{equation}
\label{MR1a}
  P(E_2^\prime / E_1 ) \geq 1 - \frac{\ln{n}-1}{4^t}  \geq  1- \frac{\ln{n}}{4^t},
\end{equation}
\begin{equation}
\label{MR1b}
  P(E_2 / E_1 ) \leq \frac{\ln{n}-1}{4^t} \leq \frac{\ln{n}}{4^t}.
\end{equation}
This completes the proof of the lemma.
\end{proof}

\noindent
Let $n \in \mb{Z}_+$ be an odd composite integer with $n> 10$,
such that $n-1 = 2^k l$, where $l$ is an odd integer.
Furthermore let 
$n = p_1^{a_1}  p_2^{a_2} \ldots p_r^{a_r}$ be the prime
decomposition of $n$, and
let $d(n)=r$ indicate the number of distinct prime divisors
of $n$.

Let $b(n)$ be the largest integer value such that
$\dv{2^{b(n)}}{p-1}$, for every prime number $p$ such that
$\dv{p}{n}$.

It has already been shown that $n$ is a prime number if and only
if for every $a$ such  $\ndv{n}{a}$
either
\[
 P_1 (a,n) : a^{l} \equiv 1 \pmod n ,
\]
or 
\[
 P_2 (a,n,m) :  a^{2^m l} \equiv -1 \pmod n ,
\]
for some integer $m$ such that $0 \leq m  <   k$.

\noindent
Show that if $ a \in MRnw(n)$
then the following holds
\begin{equation}
\label{mrc1}
a^{2^{b(n)-1} \cdot l} \equiv \pm 1 \pmod n .
\end{equation}

\begin{proof}
$ $ \\ $ $
We reconsider the following from an earlier discussion.
If $a \in MRw(n)$, that is, $a$ is a $MRw(a,n)$ or $a$ is a Miller-Rabin
witness of the compositeness of $n$, then the following apply.
\begin{equation}
\label{mrpc1b}
\text{n is composite} \Leftrightarrow
\exists a: \quad \dv{n}{a} \quad \vee \quad
\begin{cases}
 a^l       \not\equiv 1 \pmod n  & \text{Case } C_1(a) = \overline{P_1 (a,n)} \\
           \text{and}         &                  \\
\forall m, 0 \leq m < k : a^{2^m l} \not\equiv -1 \pmod n 
                             & \text{Case } C_2 (a,n,m) = \overline{P_2 (a,n,m)} \\
\end{cases}
\end{equation}
On the other hand if $a \not\in MRw(n)$ that is,
$a \in MRnw(n)$ or $a$ a Miller-Rabin NON-witness of the
compositeness of $n$ the following apply then.
Integer $n$ can be a prime number or not: we classify it as a
PseudoPrime then.
\begin{equation}
\label{mrc1a}
\text{n is prime} \Leftrightarrow
\forall a : \quad \ndv{n}{a} \quad \wedge \quad
\begin{cases}
 a^l       \equiv 1 \pmod n  & \text{Case } P_1 (a,n) \\
           \text{or}         &                  \\
\exists m, 0 \leq m < k : a^{2^m l} \equiv -1 \pmod n 
                             & \text{Case } P_2 (a,n,m) \\
\end{cases}
\end{equation}
Consider  $a \in MRnw(n)$. There can be one of two cases
then: $P_1 (a,n)$ or $P_2 (a,n,m)$ for some $m$.
$ $ \\ $ $
{\bf Case 1.} If $P_1 (a,n)$ is the case then
 $a^{l} \equiv 1 \pmod n$ and we are done since then
$ a^{2^{b(n)-1} \cdot l} \equiv 1 \pmod n$.
$ $  \\ $ $ 
{\bf Case 2.} Otherwise $P_2 (a,n,m)$ is the case for
some $m$ such that $0\leq m < k$. Call that $m$, $j$ that
is for $m=j$ we have the following:
$ a^{2^{j} \cdot l} \equiv -1 \pmod n$.
For any prime number $p$ dividing $n$ 
(e.g. $p_1 , p_2 , \ldots , p_r$) we also 
have the following.
\begin{equation}
\label{mrc1c}
 a^{2^{j} \cdot l} \equiv -1 \pmod p .
\end{equation}
Consider $u= ord_p (a)$. The $u$ is the smallest positive
integer such that $a^u \equiv 1  \pmod p$.
From Eq.~(\ref{mrc1c}), and the definition of $u$
we conclude $\dv{u}{2^{j} \cdot l}$.
From Eq.~(\ref{mrc1c}) we obtain
$a^{2^{j+1} \cdot l} \equiv -1 \pmod p$, and
from the definition of $u$ we further obtain
that $\dv{u}{2^{j+1} \cdot l}$,
Furthermore $\ndv{u}{2^{j} \cdot l}$ since otherwise
it would have been $  a^{2^{j} \cdot l} \equiv 1 \pmod p$.
Therefore $u$ has $2^{j+1}$ as the largest power of two dividing
it ($u$) that is $\dv{2^{j+1}}{u}$.
Furthermore for prime $p$ we have by Fermat's little theorem
$a^{p-1} \equiv 1 \pmod p$, which implies $\dv{u}{p-1}$,
and therefore $\dv{2^{j+1}}{p-1}$.
$ $ \\ $ $
The latter implies $j+1 \leq b(n)$.
$ $ \\ $ $
We then have two cases.
$ $ \\ $ $
{\bf Case 2a: $j+1 = b(n)$.}
Then $j=b(n)-1$ and therefore by Eq.~(\ref{mrc1c})
we have
\[
a^{2^{b(n)-1} \cdot l} \equiv -1 \pmod p .
\]
$ $ \\ $ $
{\bf Case 2b: $j+1 < b(n)$.}
Then $j < b(n)-1$. By Eq.~(\ref{mrc1c}) and squaring
we obtain the following.
\[
a^{2^{j+1} \cdot l} \equiv 1  \pmod p ,
\]
which implies
\[
a^{2^{b(n)-1} \cdot l} \equiv 1 \pmod p .
\]
The latter concludes the proof of this proposition.
\end{proof}

\subsection{Proving the Rabin bound}

We present a  proof of Proposition~\ref{mrp}
that is based on \cite{RC2e} 
originating from the paper of \cite{Monier}.
Proposition~\ref{mrp} will utilize the result
of Theorem~\ref{mrbound}.
Moreover,  Theorem~\ref{mrbound} relies
on a series of propositions itself.

\begin{thm}[ MillerRabin bound]
\label{mrbound}
In a follow-up problem the following will be shown.
\[
 |C(n)| \leq \phi(n) /4 ,
\]
which would then imply
\[
 |MRw(n) | \geq (n-1) - \phi(n) /4 .
\]
\end{thm}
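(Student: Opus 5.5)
We will bound the set $C(n)=MRnw(n)$ of Miller--Rabin non-witnesses for an odd composite $n>10$, following Monier/\cite{RC2e}. Write $n=p_1^{a_1}\cdots p_r^{a_r}$, $n-1=2^k l$ with $l$ odd, and $b=b(n)$ the exponent defined just above. The key input is the preceding result: every $a\in C(n)$ satisfies $a^{2^{b-1}l}\equiv\pm1\pmod n$. So $C(n)\subseteq \bar C(n):=\{a\in\mb{U}_n : a^{2^{b-1}l}\equiv\pm1 \pmod n\}$, and it suffices to show $|\bar C(n)|\le\phi(n)/4$ except in cases handled separately.

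\textbf{Counting via CRT.} First, the map $a\mapsto a^{m}$ on the cyclic group $\mb{U}_{p_i^{a_i}}$ (cyclic by Theorem~\ref{prrmodpa}) has kernel of size $\gcd(m,\phi(p_i^{a_i}))$. Put $m=2^{b-1}l$. Since $l$ is odd and $\phi(p_i^{a_i})=p_i^{a_i-1}(p_i-1)$ is divisible by exactly $2^{b_i}$ with $b_i\ge b$, we get $\gcd(m,\phi(p_i^{a_i}))=2^{b-1}\gcd(l,p_i^{a_i-1}(p_i-1))$. Second, note that $p_i\nmid l$ because $p_i\mid n$ and $l\mid n-1$. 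Hence $\gcd(l,\phi(p_i^{a_i}))=\gcd(l,p_i-1)=:g_i\le (p_i-1)/2^{b_i}$.

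The number of solutions of $a^m\equiv 1 \pmod{p_i^{a_i}}$ is $2^{b-1}g_i$. The equation $a^m\equiv -1$ has the same number of solutions, because $-1$ is an $m$-th power: take $x$ of order $2^{b}$ (possible since $2^b\mid \phi$); then $x^m$ has order $2$ since $l$ is odd. By the Chinese Remainder Theorem a simultaneous $+1$ or $-1$ mod $n$ corresponds to choosing the same sign in every component. Therefore
\[
|\bar C(n)| = 2\prod_{i=1}^r 2^{b-1}g_i .
\]

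\textbf{Comparing with $\phi(n)$.} We have $\phi(n)=\prod p_i^{a_i-1}(p_i-1)\ge \prod p_i^{a_i-1}\,2^{b_i}g_i$. This gives
\[
\frac{\phi(n)}{|\bar C(n)|}\ \ge\ 2^{\,r(b-1)\cdot 0}\cdot \frac{1}{2}\prod_i p_i^{a_i-1}\,2^{\,b_i-b+1}\ \ge\ 2^{r-1}\prod_i p_i^{a_i-1}.
\]
The cases then split as follows. If $r\ge3$, the ratio is at least $4$. If $r=2$ and some $a_i\ge2$, the ratio is at least $2p_i\ge6$. If $r=1$ (a prime power $p^a$ with $a\ge2$), the cruder bound $|C(n)|\le\gcd(n-1,\phi(n))=p-1$ applies, since non-witnesses satisfy $a^{n-1}\equiv1$. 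This gives ratio $\ge p^{a-1}\ge 4$ once $p^{a}\ne 9$; for $n=9$ the table in the examples shows $|C|=2=\phi(9)/3$, so the statement's hypothesis $n>10$ is what excludes it.

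\textbf{Main obstacle.} The hard case is $r=2$, $n=pq$ squarefree, where the bound above only gives ratio $2$. There we must use the slack in $b_i\ge b$ and in $g_i\le (p_i-1)/2^{b_i}$. If $b_1\ne b_2$, the factor $2^{b_i-b+1}\ge4$ for the larger one, which yields ratio $\ge4$. If $b_1=b_2$ and both $g_i=(p_i-1)/2^{b}$, then $(p_i-1)\mid 2^{b-1}\cdot\ldots$ forces $p_i-1\mid n-1$, i.e.\ $n$ satisfies Korselt's condition with two primes. This is impossible by the Corollary that Carmichael numbers have at least three prime factors (its proof only uses $p-1\mid n-1$, $q-1\mid n-1$). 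So some $g_i\le (p_i-1)/(3\cdot2^b)$, giving ratio $\ge 6$. I expect carefully justifying this last step, i.e.\ that $g_i=(p_i-1)/2^{b_i}$ for both $i$ with $b_1=b_2$ implies $p_i-1\mid n-1$, to be the delicate point. Finally $|MRw(n)|\ge (n-1)-\phi(n)/4$ follows since the candidates number $n-1$.
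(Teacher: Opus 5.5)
Your proposal is correct and follows essentially the paper's route (Propositions~\ref{thaux0}--\ref{thaux3}, after Monier and Crandall--Pomerance). The common steps are: the non-witnesses lie in $\{a\in\mathbb{U}_n : a^{2^{b(n)-1}l}\equiv\pm1\pmod n\}$; the cyclic-group/CRT count gives $2\prod_i 2^{b(n)-1}\gcd(l,p_i-1)$; and the case split on $d(n)$ is the same, with $n=9$ excluded. Your $b_1\neq b_2$ versus $b_1=b_2$ split in the squarefree two-prime case is only a cosmetic variant of the paper's Cases~3a/3b. The step you flagged closes in one line: every $p_i\equiv1\pmod{2^{b}}$ forces $n\equiv1\pmod{2^{b}}$, so $b\le k$, and then $g_i=(p_i-1)/2^{b}$ gives $p_i-1=2^{b}g_i\mid 2^k l=n-1$. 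That is the correct form of your ``$(p_i-1)\mid 2^{b-1}\cdot\ldots$''.
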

\begin{proof}
It follows by way of
Proposition~\ref{thaux0} shown earlier,
Proposition~\ref{thaux1},
Proposition~\ref{thaux2}, and
Proposition~\ref{thaux3} below.
\end{proof}

\begin{prp}
\label{thaux0}
Let $n \in \mb{Z}_+$ be an odd composite integer with $n> 10$,
such that $n-1 = 2^k l$, where $l$ is an odd integer.
Furthermore let 
$n = p_1^{a_1}  p_2^{a_2} \ldots p_r^{a_r}$ be the prime
decomposition of $n$, and
let $d(n)=r$ indicate the number of distinct prime divisors
of $n$.
Let $b(n)$ be the largest integer value such that
$\dv{2^{b(n)}}{p-1}$, for every prime number $p$ such that
$\dv{p}{n}$.
\noindent
$ $ \\ $ $
(a) Let
\begin{equation}
\label{mrc2}
C(n) = \left\{ a \in \mb{U}_n : a^{2^{m} \cdot l } \equiv \pm 1 \pmod n
       \right\}
\end{equation}
Show that if $ a \in MRnw(n)$
then $a \in C(n)$.
$ $ \\ $ $
\noindent
(b) Furthermore show that $ MRnw(n) \subseteq C(n)$.
\end{prp}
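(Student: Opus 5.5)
The set $C(n)$ in Eq.~(\ref{mrc2}) has to be read with the exponent fixed at $m=b(n)-1$. That is, $C(n)=\{a\in\mb{U}_n : a^{2^{b(n)-1}l}\equiv\pm1 \pmod n\}$, which is the quantity appearing in Eq.~(\ref{mrc1}). If $m$ were a free parameter, the statement would either be trivial or fail to pin down the set used later in Theorem~\ref{mrbound}. With this reading, part (a) is essentially the claim established just before the proposition, and part (b) is its set-theoretic restatement. The plan is to take any $a\in MRnw(n)$ and verify two things: that $a\in\mb{U}_n$, and that $a^{2^{b(n)-1}l}\equiv\pm1\pmod n$.

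\textbf{Membership in $\mb{U}_n$.} For $a\in MRnw(n)$ we have $P_1(a,n)$ or $P_2(a,n,m)$ for some $0\le m<k$, as in Eq.~(\ref{mrc1a}). In either case some power of $a$ is $\equiv\pm1\pmod n$. So any common divisor $d$ of $a$ and $n$ divides $\pm1$, which gives $\gcd(a,n)=1$, the same argument used in the proof of Lucas' theorem.

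\textbf{The congruence, when $P_1(a,n)$ holds.} Then $a^l\equiv1\pmod n$, and raising to the power $2^{b(n)-1}$ gives the result. This needs $b(n)\ge1$, which holds because every prime divisor of the odd number $n$ is odd.

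\textbf{The congruence, when $P_2(a,n,j)$ holds.} Then $a^{2^jl}\equiv-1\pmod n$. As in the preceding argument, for each prime $p\mid n$ let $u=\ord_p(a)$. From $a^{2^jl}\equiv-1\not\equiv1$ and $a^{2^{j+1}l}\equiv1\pmod p$, Theorem~\ref{thm52} gives $u\mid 2^{j+1}l$ but $u\nmid 2^jl$, so $2^{j+1}\mid u$. Fermat's little theorem gives $u\mid p-1$, hence $2^{j+1}\mid p-1$ for every prime $p\mid n$.

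The main obstacle is the bookkeeping here. The earlier proof concluded the congruence only modulo each $p$, and sign consistency across primes is not automatic. I will avoid this by working modulo $n$ directly. From the divisibility above, $j+1\le b(n)$. If $j=b(n)-1$, the hypothesis itself is $a^{2^{b(n)-1}l}\equiv-1\pmod n$. If $j<b(n)-1$, squaring $a^{2^jl}\equiv-1\pmod n$ gives $a^{2^{j+1}l}\equiv1\pmod n$, and further squarings give $a^{2^{b(n)-1}l}\equiv1\pmod n$.

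Either way $a\in C(n)$. Since $a\in MRnw(n)$ was arbitrary, this proves (a), and (b), $MRnw(n)\subseteq C(n)$, follows at once.
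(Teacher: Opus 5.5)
Your proof is correct and follows the paper's route. The paper's proof simply defers to the preceding argument, which shows $a^{2^{b(n)-1}l}\equiv\pm1\pmod n$ for $a\in MRnw(n)$ using $\ord_p(a)$, the bound $j+1\le b(n)$, and the split $j+1=b(n)$ versus $j+1<b(n)$, and it reads $C(n)$ at $m=b(n)-1$ exactly as you do. Your two additions tighten that argument without changing it: you check $\gcd(a,n)=1$ explicitly, and you draw the final congruence modulo $n$ rather than modulo each prime $p$, which for non-squarefree $n$ the paper's mod-$p$ conclusion does not by itself give.
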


\begin{proof}
$ $ \\ $ $
Both (a) and (b) follow trivially from the prior discussion.
\end{proof}

\noindent
\begin{prp}[Auxiliary result 1]
\label{thaux1}
Let $n = p_1^{a_1}  p_2^{a_2} \ldots p_r^{a_r}$ be the prime
decomposition of composite $n$, 
let $d(n)=r$ indicate the number of distinct prime divisors
of $n$.
Furthermore, let $n-1 = 2^k l$, where $l$ is an odd integer.
%Let $b(n)$ be the largest integer value such that
%$\dv{2^{b(n)}}{p-1}$, for every prime number $p$ such that
%$\dv{p}{n}$.
Let $y \in \mb{Z}$ such that $\gcd(y,n)=1$ and let
$b \in \mb{Z}_+$.
$ $  \\ $ $
(a)
Then,  the following modular equation
\begin{equation}
\label{mrc3}
 x^{v} \equiv y \pmod{p_i^{a^i}} ,
\end{equation}
has a solution $x$ mod $p_i^{a_i}$ if
\[
 d_i = \dv{\gcd(v, \phi (p_i^{a_i} ))}{ind_{g_i}(y)},
\]
and if there is at least one solution, conclude then
that the number of solutions is $d_i$.
$ $ \\ $ $
(b) 
Using CRT then  the number of solutions of
\begin{equation}
\label{mrc3g}
 x^{v} \equiv y \pmod{n} ,
\end{equation}
is given by the following expression
\begin{equation}
\label{mrc3s}
 \prod_{i=1}^{r} d_i 
 =
 \prod_{i=1}^{r}  \gcd(v, \phi (p_i^{a_i} ))
 =
 \prod_{i=1}^{r}  \gcd(v, (p_i^{a_i-1} (p_i -1) )) .
\end{equation}
\end{prp}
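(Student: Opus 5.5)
The plan is to reduce the modular equation to a linear congruence in the exponents, using the cyclic structure of $\mb{U}_{p_i^{a_i}}$. Because $n$ is odd, each $p_i$ is an odd prime. By Theorem~\ref{prrmodpa} there is a primitive root $g_i$ mod $p_i^{a_i}$. Hence every unit mod $p_i^{a_i}$ can be written uniquely as $g_i^{e}$ with $0\le e<\phi(p_i^{a_i})$; this exponent is $ind_{g_i}(\cdot)$.

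Since $\gcd(y,n)=1$ we have $\gcd(y,p_i)=1$, so $ind_{g_i}(y)$ is defined. Any solution $x$ of Eq.~(\ref{mrc3}) must itself be a unit mod $p_i^{a_i}$: a common factor $p_i$ of $x$ and $p_i^{a_i}$ would divide $y$. So we may write $x\equiv g_i^{z}$.

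By Lemma~\ref{lem52a}, with $ord_{p_i^{a_i}}(g_i)=\phi(p_i^{a_i})$, the equation $g_i^{vz}\equiv g_i^{ind_{g_i}(y)} \pmod{p_i^{a_i}}$ is equivalent to the linear congruence
\[ vz \equiv ind_{g_i}(y) \pmod{\phi(p_i^{a_i})}. \]
Distinct residues $z$ mod $\phi(p_i^{a_i})$ give distinct $x$ mod $p_i^{a_i}$, so counting solutions $x$ is the same as counting solutions $z$.

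By Theorem~\ref{tlincon} (or Theorem~\ref{modeq1}) the linear congruence is solvable exactly when $d_i=\gcd(v,\phi(p_i^{a_i}))$ divides $ind_{g_i}(y)$. When it is solvable, Theorem~\ref{tlincond} for $d_i>1$ and Theorem~\ref{tlincon1} for $d_i=1$ show that it has exactly $d_i$ solutions mod $\phi(p_i^{a_i})$. This proves (a).

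For (b), I will use Eq.~(\ref{crtbyp}) and Theorem~\ref{crt} with the pairwise coprime moduli $p_i^{a_i}$. An integer $x$ satisfies $x^v\equiv y \pmod n$ if and only if $x^v\equiv y \pmod{p_i^{a_i}}$ for every $i$. The CRT map $\mb{Z}_n\to\prod_i \mb{Z}_{p_i^{a_i}}$ is a bijection, so solutions mod $n$ correspond exactly to tuples of solutions, one for each $p_i^{a_i}$. The count is therefore the product of the $d_i$. Substituting $\phi(p_i^{a_i})=p_i^{a_i-1}(p_i-1)$ from Corollary~\ref{cor11} gives Eq.~(\ref{mrc3s}). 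This holds under the hypothesis, implicit in the statement, that each local equation is solvable; if some $d_i\nmid ind_{g_i}(y)$, the count is $0$.

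I expect the main obstacle to be the bookkeeping in the reduction to exponents. The points to state carefully are that any solution $x$ is forced to be a unit, that indices are well defined only mod $\phi(p_i^{a_i})$, and that the map $z\mapsto g_i^z$ is a bijection from $\mb{Z}_{\phi(p_i^{a_i})}$ onto $\mb{U}_{p_i^{a_i}}$. Everything after that is a direct citation of the linear congruence results and of the CRT.
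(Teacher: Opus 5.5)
Your proposal is correct and follows essentially the same route as the paper. Both pass to indices with respect to a primitive root $g_i$ mod $p_i^{a_i}$, reduce to the linear congruence $v\,z \equiv ind_{g_i}(y) \pmod{\phi(p_i^{a_i})}$, count its $d_i$ solutions via the linear-congruence theorems, and multiply the local counts using the CRT. Your additional remarks make explicit details the paper leaves implicit: that any solution $x$ is forced to be a unit, that $z\mapsto g_i^{z}$ is a bijection onto $\mb{U}_{p_i^{a_i}}$, and that the product formula in (b) presupposes solvability modulo every $p_i^{a_i}$, with the count being $0$ otherwise.
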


\begin{proof}
$ $ \\ $ $
(a) Let $g_i$ be a generator of $\mb{U}_{p_i^{a_i}}$.
Let $ind_{g_i} (x)$ and $ind_{g_i} (y)$ be the indices of 
$x, y$ respectively.
\begin{eqnarray}
x^v  &\equiv&  y \pmod{n} 
   \Leftrightarrow \nonumber \\
x^v  &\equiv&  y \pmod{p_i^{a_i}} \ \forall i=1, \ldots , r 
   \Leftrightarrow \nonumber \\
g_i^{ind_{g_i}(x) v}  &\equiv&  g_i^{ind_{g_i}(y) } \pmod{p_i^{a_i}} 
   \Leftrightarrow \nonumber \\
\label{mrc3a}
{ind_{g_i}(x) v}  &\equiv&  {ind_{g_i}(y) } \pmod{\phi( p_i^{a_i} )} 
\end{eqnarray}
Viewing the last equation Eq.(\ref{mrc3a}) as a modular equation and 
invoking the results associated with 
Eq.(\ref{tlincond}) we obtain the following.
A solution  for Eq.(\ref{mrc3a}) exists if and only if
\[
 \dv{d_i}{ind_{g_i}(y) } ,
\]
where
\[
 d_i = \gcd( v, \phi( p_i^{a_i} ) ).
\]
If one solution exists, then the number of solution
mod $p_i^{a_i}$ is equal to $d_i$.
$ $ \\ $ $
(b) Follows by the CRT and
the fact that
$\phi (p_i^{a_i} ) =(p_i^{a_i-1} (p_i -1) )$.
\end{proof}

%HERE4

\noindent
\begin{prp}[Auxiliary result 2]
\label{thaux2}
Let $n = p_1^{a_1}  p_2^{a_2} \ldots p_r^{a_r}$ be the prime
decomposition of composite $n$, 
let $d(n)=r$ indicate the number of distinct prime divisors
of $n$.
Furthermore, let $n-1 = 2^k l$, where $l$ is an odd integer.
Let $b(n)$ be the largest integer value such that
$\dv{2^{b(n)}}{p-1}$, for every prime number $p$ such that
$\dv{p}{n}$.

\noindent
We reintroduce Eq.~(\ref{mrc2}).
\begin{equation}
%\label{mrc2}
C(n) = \left\{ a \in \mb{U}_n : a^{2^{m} \cdot l } \equiv \pm 1 \pmod n
       \right\}
\end{equation}

\noindent
Show the following.
\begin{equation}
\label{mrc4}
 c= |C(n)| =
      2 
  \cdot 
      \prod_{i=1}^{r} 2^{(b(n)-1)} \gcd(p-1,l) 
           =
      2 
  \cdot 
      2^{(b(n)-1)\cdot d(n)} 
  \cdot 
      \prod_{i=1}^{r} \gcd(p-1,l) .
\end{equation}
\end{prp}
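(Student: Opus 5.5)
Throughout, I read the exponent in $C(n)$ as $m=b(n)-1$, i.e. $C(n)=\{a\in\mb{U}_n : a^{v}\equiv \pm1 \pmod n\}$ with $v=2^{b(n)-1} l$. I read $b(n)$ as the largest $b$ with $\dv{2^{b}}{p-1}$ for \emph{every} prime $\dv{p}{n}$, i.e. the minimum over $p\mid n$ of the exponent of $2$ in $p-1$. Since $n$ is odd, $1\not\equiv -1 \pmod n$, so
\[
|C(n)| = N(1)+N(-1),
\]
where $N(y)$ denotes the number of solutions of $x^{v}\equiv y \pmod n$. The plan is to compute both terms with Proposition~\ref{thaux1} and to show that each equals $\prod_{i=1}^{r} 2^{b(n)-1}\gcd(p_i-1,l)$.

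The first step evaluates the local gcds $d_i=\gcd\bigl(v,\phi(p_i^{a_i})\bigr)=\gcd\bigl(2^{b(n)-1}l,\ p_i^{a_i-1}(p_i-1)\bigr)$.
\begin{itemize}
\item Since $\dv{p_i}{n}$, we have $\ndv{p_i}{n-1}$, hence $\ndv{p_i}{l}$. So the factor $p_i^{a_i-1}$ contributes nothing to $d_i$, and $d_i=\gcd\bigl(2^{b(n)-1}l,\ p_i-1\bigr)$.
\item Because $l$ is odd, this gcd splits into its $2$-part and its odd part. The odd part is $\gcd(l,p_i-1)$, which equals $\gcd(l,p_i-1)$ as a whole since $l$ is odd.
\item The $2$-part is $2^{\min(b(n)-1,\ \nu_2(p_i-1))}=2^{b(n)-1}$, because $b(n)\le \nu_2(p_i-1)$ by the definition of $b(n)$.
\end{itemize}
Hence $d_i=2^{b(n)-1}\gcd(p_i-1,l)$ for every $i$.

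The second step checks solvability, using the criterion $\dv{d_i}{ind_{g_i}(y)}$ of Proposition~\ref{thaux1}(a). For $y=1$ we have $ind_{g_i}(1)=0$, so the congruence is solvable modulo every $p_i^{a_i}$. For $y=-1$, the primitive-root results for odd prime powers give $ind_{g_i}(-1)=\phi(p_i^{a_i})/2=p_i^{a_i-1}(p_i-1)/2$. The exponent of $2$ in this number is $\nu_2(p_i-1)-1\ge b(n)-1$, and the odd part of $d_i$ divides $p_i-1$ but is coprime to $p_i$ and odd, so it divides the odd part of $(p_i-1)/2$. Therefore $\dv{d_i}{ind_{g_i}(-1)}$, and $x^v\equiv -1$ is solvable modulo each $p_i^{a_i}$. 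Proposition~\ref{thaux1}(b), i.e. the CRT count, then gives
\[
N(1)=N(-1)=\prod_{i=1}^{r} 2^{b(n)-1}\gcd(p_i-1,l).
\]
Summing the two counts yields Eq.~(\ref{mrc4}), and pulling out the powers of two gives the factor $2^{(b(n)-1)d(n)}$.

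I expect the main obstacle to be the solvability for $y=-1$, where the minimality built into $b(n)$ is used in an essential way. A secondary point to state carefully is that $b(n)\ge1$, which holds because every $p_i$ is odd.
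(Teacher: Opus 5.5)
Your argument is correct and follows the paper's proof. Both write $|C(n)|$ as the number of solutions of $a^{v}\equiv 1$ plus the number of solutions of $a^{v}\equiv -1 \pmod n$, with $v=2^{b(n)-1}l$; both compute $d_i=\gcd(v,\phi(p_i^{a_i}))=2^{b(n)-1}\gcd(p_i-1,l)$ using that $p_i$ is coprime to $v$ and that $b(n)\le \nu_2(p_i-1)$; and both combine the local counts of Proposition~\ref{thaux1} by the CRT. The only difference is the $-1$ case: the paper counts solutions of $a^{2v}\equiv 1$ minus solutions of $a^{v}\equiv 1$ modulo $p_i^{a_i}$, i.e.\ $2^{b(n)}\gcd(l,p_i-1)-2^{b(n)-1}\gcd(l,p_i-1)$, which tacitly uses that $\pm 1$ are the only square roots of $1$ in the cyclic group $\mb{U}_{p_i^{a_i}}$, whereas you check directly that $d_i$ divides $ind_{g_i}(-1)=\phi(p_i^{a_i})/2$, which makes the solvability of $a^{v}\equiv -1$ explicit.
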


\begin{proof}
$ $ \\ $ $
For $b(n)$ let us define $v= 2^{b(n)-1} \cdot l$.
Equation~(\ref{mrc2}) leads us to count the number
of $a$ of $C(n)$ that satisfy
modular equation
\begin{equation}
\label{mrc3p}
 a^v \equiv  1 \pmod n \Leftrightarrow
 a^v \equiv  1 \pmod{p_i^{a_i}}  \quad \forall i=1,2, \ldots , r
\end{equation}
or modular equation
\begin{equation}
\label{mrc3q}
 a^v \equiv -1 \pmod n \Leftrightarrow
 a^v \equiv -1 \pmod{p_i^{a_i}}  \quad \forall i=1,2, \ldots , r
\end{equation}
$ $ \\ $ $
{\bf Case 1: Number of solutions of Eq.(\ref{mrc3p}).}
We start with the former.
Eq.(\ref{mrc3p}) by Eq.(\ref{mrc3}) has a number of solutions equal to
\[
 d_i = \gcd( v, \phi( p_i^{a_i} ) ),
\]
as long as 
\[
 \dv{d_i}{ind_{g_i}(y) } =
 \dv{d_i}{ind_{g_i}(1) }
\]
which is the case 
for $ind_{g_i}(1) = \phi( p_i^{a_i} )$.
Then
\begin{eqnarray}
\forall i=1,2, \ldots r \quad :
\gcd( v, \phi( p_i^{a_i} ) )        
   &=&
\gcd( v,  p_i^{a_i -1} (p_i -1) )  \nonumber \\
   &=&
\gcd( v,  (p_i -1) )  \nonumber \\
   &=&
\gcd( 2^{b(n)-1} \cdot l ,  (p_i -1) )   \nonumber \\
\label{mrc3b}
   &=&
2^{b(n)-1}\cdot \gcd( l ,  (p_i -1) ) .
\end{eqnarray}
The term $p_i^{a_i -1}$ was removed from the first
equation above, because
$\ndv{p_i}{v}$. If it were
$\dv{p_i}{v}$ then since $\dv{v}{n-1}$ we
would have had $\dv{p_i}{n-1}$. Since
$\dv{p_i}{n}$ the last two divisibility results
would lead to $\dv{p_i}{1}$ i.e. $p_i =1$.
This violates the assumption of the unique factorization
of $n$ where $p_i$ are prime numbers and in fact greater
than two.
$ $ \\ $ $
By the CRT and combining for $i=1 , 2, \ldots , r$,
Eq.~(\ref{mrc3b}) we obtain 
that  for modular equation
\[
 a^v \equiv  1 \pmod n 
\]
the number of its solutions is as follows.
\[
\prod_{i=1}^{r} 2^{b(n)-1}\cdot \gcd( l ,  (p_i -1) ) 
=
2^{(b(n)-1)r }\cdot
\prod_{i=1}^{r} \gcd( l ,  (p_i -1) ) 
=
2^{(b(n)-1)d(n) }\cdot
\prod_{i=1}^{r} \gcd( l ,  (p_i -1) )  .
\]
$ $ \\ $ $
{\bf Case 2: Number of solutions of Eq.(\ref{mrc3q}).}
We continue with the latter
Eq.(\ref{mrc3q}) to establish the number of 
solutions of
\[
 a^v \equiv -1 \pmod n 
\]
by establishing the number of solutions of
\[
 a^v \equiv -1 \pmod{p_i^{a_i}}  \quad \forall i=1,2, \ldots , r .
\]
Note that the latter implies
\[
 a^{2v} \equiv 1 \pmod{p_i^{a_i}}  \quad \forall i=1,2, \ldots , r .
\]
Therefore we find the number of solutions of
\[
 a^{2v} \equiv 1 \pmod{p_i^{a_i}}  \quad
\wedge \quad
 a^{v} \not\equiv 1 \pmod{p_i^{a_i}}  
\quad \forall i=1,2, \ldots , r .
\]
We start with the former.
The $d_i$ is slightly different.
\[
 d_i = \gcd( 2v, \phi( p_i^{a_i} ) ),
\]
We note that we need
\[
 \dv{d_i}{ind_{g_i}(y) } =
 \dv{d_i}{ind_{g_i}(1) }
\]
which is the case as before for Case 1.
Then
\begin{eqnarray}
\forall i=1,2, \ldots r \quad :
\gcd( 2v, \phi( p_i^{a_i} ) )        
   &=&
\gcd( 2v,  p_i^{a_i -1} (p_i -1) )  \nonumber \\
   &=&
\gcd( 2v,  (p_i -1) )  \nonumber \\
   &=&
\gcd( 2^{b(n)} \cdot l ,  (p_i -1) )   \nonumber \\
\label{mrc3c}
   &=&
2^{b(n)}\cdot \gcd( l ,  (p_i -1) ) .
\end{eqnarray}
% a = a \wedge \bar{b} + a \wedge b =>
% a - a \wedge b = a \wedge \bar{b} =>
\[
 a^{2v} \equiv 1 \pmod{p_i^{a_i}}  \quad
\wedge \quad
 a^{v} \not\equiv 1 \pmod{p_i^{a_i}}  
\quad \forall i=1,2, \ldots , r .
\]
Therefore the number of solutions of
\[
 a^{2v} \equiv 1 \pmod{p_i^{a_i}}  \quad
\wedge \quad
 a^{v} \not\equiv 1 \pmod{p_i^{a_i}}  
\quad \forall i=1,2, \ldots , r 
\]
is equal to 
\[
2^{b(n)}\cdot \gcd( l ,  (p_i -1) ) 
-
2^{b(n)-1}\cdot \gcd( l ,  (p_i -1) ) 
=
2^{b(n)-1}\cdot \gcd( l ,  (p_i -1) ) ,
\]
just like case 1.
This concludes the case and the result.
\end{proof}

\noindent
\begin{prp}[Auxiliary result 3]
\label{thaux3}
Let $n = p_1^{a_1}  p_2^{a_2} \ldots p_r^{a_r}$ be the prime
decomposition of composite $n$,  and
let $d(n)=r$ indicate the number of distinct prime divisors
of $n$.
Furthermore, let $n-1 = 2^k l$, where $l$ is an odd integer.
Let $b(n)$ be the largest integer value such that
$\dv{2^{b(n)}}{p-1}$, for every prime number $p$ such that
$\dv{p}{n}$.
$ $ \\ $ $
\noindent
For
$
C(n) = \left\{ a \in \mb{U}_n : a^{2^{m} \cdot l } \equiv \pm 1 \pmod n
       \right\}
$
Then, we have the following.
\begin{equation}
\label{mrc5}
 c= |C(n)|  \leq \frac{\phi(n)}{4},
\end{equation}
for all odd composite $n>10$.
\end{prp}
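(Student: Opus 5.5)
We start from the count of Proposition~\ref{thaux2}, read with $m=b(n)-1$ in the definition of $C(n)$. Write $\phi(n)=\prod_{i=1}^{r} p_i^{a_i-1}(p_i-1)$. We compare this with
\[
|C(n)| = 2\cdot\prod_{i=1}^{r} 2^{b(n)-1}\gcd(l,p_i-1)
\]
factor by factor. For each $i$ we have $\gcd(l,p_i-1)\le (p_i-1)_{\mathrm{odd}}$, where $(p_i-1)_{\mathrm{odd}}$ denotes the odd part of $p_i-1$, because $l$ is odd. We also have $2^{b(n)}\le 2^{e_i}$, where $2^{e_i}$ is the exact power of $2$ in $p_i-1$; here $b(n)$ is taken as the minimum of the $e_i$, which is the meaningful reading of the definition. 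Hence $2^{b(n)-1}\gcd(l,p_i-1)\le (p_i-1)/2$. The ratio
\[
\frac{\phi(n)}{|C(n)|}=\frac{1}{2}\prod_{i=1}^{r} p_i^{a_i-1}\,\frac{p_i-1}{2^{b(n)-1}\gcd(l,p_i-1)}
\]
is therefore at least $\tfrac12\prod_i 2\,p_i^{a_i-1}=2^{r-1}\prod_i p_i^{a_i-1}$. The proof then splits into cases on $r=d(n)$ and on whether $n$ is squarefree.

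\textbf{Case $r\ge 3$.} The ratio is at least $2^{r-1}\ge 4$, which gives the bound immediately.

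\textbf{Case $n$ not squarefree.} Some $a_i\ge2$, so the ratio is at least $2^{r-1}p_i\ge 3$, and this alone is not enough. We therefore sharpen the estimate using that the $a$ counted lie in a subgroup of $\mb{U}_n$ of exponent dividing $2v$ with $p_i\nmid 2v$, as in the proof of Proposition~\ref{thaux2}. The factor $p_i^{a_i-1}$ is then lost entirely, so the ratio is at least $2^{r-1}p_i^{a_i-1}$. If $p_i\ge5$ or $a_i\ge3$, or if $r\ge2$, this is at least $4$ once combined with the factor $2$ from $p_i-1$. The residual case $n=9$ is excluded by $n>10$. For $n=p^a$ with $p=3$ and $a\ge3$ we have $p^{a-1}\ge 9$. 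Strictly, when $r=1$ we get ratio $\ge p^{a-1}\ge 3$ only when $a=2$ and $p=3$, i.e.\ $n=9$, which is excluded; otherwise $p^{a-1}\ge 5$ or $9$, both at least $4$.

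\textbf{Case $r=2$, squarefree, $n=p_1p_2$.} This is the main obstacle, since the crude ratio is only $2$ and we must gain one more factor of $2$. The plan is to show that equality $2^{b(n)-1}\gcd(l,p_i-1)=(p_i-1)/2$ cannot hold for both $i$. If it held, then $e_i=b(n)$ for both $i$ and $(p_i-1)_{\mathrm{odd}}\mid l$, so $p_i-1\mid 2^{b(n)}l$. Moreover $b(n)\le k$, since $p_1p_2\equiv1 \pmod{2^{b(n)}}$ forces $2^{b(n)}\mid n-1$. Together these give $p_i-1\mid n-1$ for both $i$, i.e.\ Korselt's condition for a two-prime number. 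The Corollary to Theorem~\ref{korseltT} shows this is impossible, since from $p_2-1\mid n-1$ we get $p_2-1\mid p_1-1$, and symmetrically, forcing $p_1=p_2$. Hence at least one factor loses an extra factor of $2$ (the ratios of odd parts or of powers of $2$ are integers, so any strict inequality costs at least a factor $2$), and the ratio becomes at least $4$. This yields $|C(n)|\le\phi(n)/4$ in all cases, and with Proposition~\ref{thaux0} it gives Theorem~\ref{mrbound}.
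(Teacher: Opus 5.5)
Your proposal is correct and follows essentially the paper's proof. You use the same ratio $\phi(n)/|C(n)|$ from Proposition~\ref{thaux2} and the same split into $r\ge 3$, non-squarefree $n$, and squarefree $n=pq$. In the last case you rely on the same key fact, $q-1\nmid n-1$: you derive it as a contradiction with Korselt's condition, where the paper uses its subcases 3a/3b, and your integrality remark makes the paper's loose ``prime divisor of $q-1$ not dividing $n-1$'' step precise.

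The ``sharpening'' detour in the non-squarefree case is unnecessary. Your uniform bound $2^{r-1}\prod_i p_i^{a_i-1}$ already gives at least $6$ when $r\ge 2$, and $p^{a-1}\ge 4$ when $r=1$ and $n\neq 9$.
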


\begin{proof}
$ $ \\ $ $
From Eq.(\ref{mrc4}) we have the following.
\begin{eqnarray}
\label{mrc4a}
 c= |C(n)| 
    &=&
      2 
  \cdot 
      \prod_{i=1}^{r} 2^{(b(n)-1)} \gcd(p-1,l) 
           \\
\label{mrc4b}
 \frac{\phi(n)}{|C(n)|} &=&
 \frac{1}{2} \cdot
 \prod_{i=1}^{r}
   \frac{p_i^{a^i -1} (p_i -1)}{
      2^{(b(n)-1)} 
      \gcd(p-1,l)}
\end{eqnarray}
$ $ \\ $ $
We note a few thing related to Eq.(\ref{mrc4b}).
Since $\dv{2^{b(n)-1}}{2^{b(n)}}$
and
$\dv{2^{b(n)}}{p-1}$ for each  $i$, we have that
$\frac{(p_i -1)}{2^{b(n)-1}}$ is an integer and also an
even number. Moreover $\dv{\gcd(l, p_i -1)}{p_i -1}$
and $\gcd(l, p_i -1)$ is an odd number,
since $l$ is odd.
Thus the product term of Eq.(\ref{mrc4b}) is an integer
and a multiple of two and multiplied by the outside
$1/2$ still yields an integer.
We now perform a case analysis.
$ $ \\ $ $
{\bf Case 1: $r=d(n) \geq 3$.}
Then
\[
 \frac{\phi(n)}{|C(n)|} \geq 
   \frac{1}{2} \cdot 2 \cdot 2 \cdot 2  = 4.
\]
$ $ \\ $ $
{\bf Case 2: $r=d(n) =2$, and $n$ is not squarefree.}
Then for some $p_i$ we have $a_i \geq 2$. In the numerator
of Eq.(\ref{mrc4b}) $p_i^{a_i -1}$ contributes a
                    $p_i^{a_i -1} \geq p_i \geq 3$.
\[
 \frac{\phi(n)}{|C(n)|} \geq 
   \frac{1}{2} \cdot \left( 3 \cdot 2 \right) \cdot 2  = 6.
\]
$ $ \\ $ $
{\bf Case 3: $r=d(n) =2$, but $n$ is squarefree.}
Let $n=pq$ where $p< q < n$. We have that
$\dv{2^{b(n)}}{p-1}$
and
$\dv{2^{b(n)}}{q-1}$.
We distinguish two subcases.
$ $ \\ $ $
{\bf Case 3a: $\dv{2^{b(n)+1}}{q-1}$.}
This means $q-1 = 4 \cdot 2^{b(n)-1} \cdot R$.
Then we have as follows.
\[
 \frac{\phi(n)}{|C(n)|} \geq 
   \frac{1}{2} \cdot 2 \cdot \frac{q-1}{2^{b(n)-1} \gcd(l,q-1)}
 \geq \frac{1}{2} \cdot 2 \cdot 4 = 4.
\]
$ $ \\ $ $
{\bf Case 3b: $\ndv{2^{b(n)+1}}{q-1}$ but $\dv{2^{b(n)1}}{q-1}$.}
Then we have as follows.
\begin{equation}
\label{mrc4p}
 \frac{\phi(n)}{|C(n)|} \geq 
   \frac{1}{2} \cdot 2 \cdot \frac{q-1}{2^{b(n)-1} \gcd(l,q-1)}
   \frac{1}{2} \cdot 2 \cdot \frac{2\cdot Q}{\gcd(l,q-1)}
\end{equation}
Note that $n-1 = 2^k l$, where $b(n) \leq k$.
Moreover,  $q-1= 2^{b(n)} Q$, where $Q$ can be even or odd.
Furthermore,
\[
 n-1 =pq -1 = p(q-1)+(p-1)
\Rightarrow
 n-1 \equiv p-1 \pmod{q-1}
\Rightarrow
\ndv{q-1}{n-1}.
\]
The latter implies that there is a prime divisor of $q-1$
that does not divide $n-1$. Since $b(n) \leq k$, this divisor
is not a two. So it must be an odd number and it is at least 3.
Therefore $q-1 =2 Q = 2 \cdot 3 \cdot P$.
Therefore we refine the previous bound of Eq.(\ref{mrc4p})
as follows.
\begin{equation}
\label{mrc4q}
 \frac{\phi(n)}{|C(n)|} \geq 
   \frac{1}{2} \cdot 2 \cdot \frac{2\cdot Q}{\gcd(l,q-1)}
   \geq
   \frac{1}{2} \cdot 2 \cdot \frac{2\cdot 3 \cdot P}{\gcd(l,q-1)}
   \geq 6.
\end{equation}
$ $ \\ $ $
{\bf Case 4: $d(n)=1$ and $n$ is not squarefree i.e. $n=p^{a}$,
for some $a \geq 2$ and $p\geq 3$.}
A minimal such  value is for $a=2$ and $p=3$ and it is $n=3^2 =9$.
We then have he following.
\[
 \frac{\phi(n)}{|C(n)|} \geq 
   \frac{1}{2}  \cdot \frac{p^{a-1} (p-1)}{2^{b(n)-1} \gcd(l, p-1)}
\]
Note that $p-1=2^k \cdot l$. Moreover $b(n)=k$.
Then $\gcd(l,p-1) = \gcd(l, 2^k \cdot l) =l$.
Therefore
\[
    \frac{1}{2}\cdot  \frac{p^{a-1} (p-1)}{2^{b(n)-1} \gcd(l, p-1)}
   =
   \frac{1}{2}  \cdot \frac{p^{a-1} \cdot  2^k \cdot  l}{2^{k-1} l}
   =  p^{a-1}.
\]
If $p=3$ and $a=2$ this $p^{a-1}$ is a $3$. 
We discard this possibility by requiring
$n>9$ and for composite and odd $n$ this becomes $n>10$.
In all other cases $p^{a-1}$ is at least  $5$.
\end{proof}

\chapter{Multiplicative functions}

\section{Multiplicative functions}

\begin{dfn}
A function $f$ on $\mb{Z}_+$ is multiplicative if
\[
f(ij)=f(i)f(j) \quad \forall i \in \mb{Z}_+ ,  \forall j \in \mb{Z}_+ ,
\text{where} \ \gcd(i,j)=1.
\]
\end{dfn}

\begin{lem}
If $f$, as previously defined, is multiplicative and 
not zero then $f(1)=1$.
\end{lem}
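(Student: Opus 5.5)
The plan is to exploit the one coprime factorization that is always available, namely $n = n \cdot 1$ with $\gcd(n,1)=1$ for every $n \in \mb{Z}_+$. Applying the defining property of a multiplicative function with $i=n$ and $j=1$ gives
\[
 f(n) = f(n \cdot 1) = f(n) f(1) \quad \forall n \in \mb{Z}_+ .
\]
This identity is the only use of multiplicativity needed.

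Next I will use the hypothesis that $f$ is not zero. I interpret this as $f$ not being identically zero, so there is some $n_0 \in \mb{Z}_+$ with $f(n_0) \neq 0$. Specializing the identity above to $n=n_0$ gives $f(n_0) = f(n_0) f(1)$, which rearranges to $f(n_0)\,(1 - f(1)) = 0$.

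Since $f(n_0) \neq 0$, and the values of $f$ lie in a setting without zero divisors (integers, rationals, reals or complex numbers, as is implicit throughout the paper), I can cancel $f(n_0)$ and conclude $1-f(1)=0$, that is, $f(1)=1$. I could equally take $n=1$, which gives $f(1)=f(1)^2$ and hence $f(1)\in\{0,1\}$, and then rule out $f(1)=0$: that case would force $f(n)=f(n)f(1)=0$ for all $n$, contradicting that $f$ is not zero.

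The only delicate point is the meaning of ``not zero'': it must mean not identically zero rather than nowhere zero. I also need the codomain to have no zero divisors so that the cancellation is legitimate. Beyond fixing these conventions I expect no real obstacle.
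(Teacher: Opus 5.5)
Your proof is correct and follows the paper's argument. The paper also uses $f(k)=f(k\cdot 1)=f(k)f(1)$ with some $f(k)\neq 0$ and divides by $f(k)$. It likewise mentions your alternative route: $i=j=1$ gives $f(1)=f(1)^2$, and $f(1)=0$ would force $f(k)=0$ for all $k$.
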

\begin{proof}
This derives for $i=j=1$ and $f(1)=f(1)f(1)$,
provided   that   $f(1) \neq 0$.
Otherwise, since $f$ is not zero then there 
exists a $k$ such that $f(k) \neq 0$. Then 
$f(k \cdot 1) =f(k)f(1)$ and dividing
by $f(k) \neq 0$ we reach the same conclusion.
Note that if $f(1)=0$ then $f(k \cdot 1 ) = f(k) f(1) = 0$,
and therefore $f(k)=0$ for all $k$.
\end{proof}

\begin{prp}
If $n$ is as follows,
\[
  n = \prod_{i=1}^{k} p_i^{a_i},
\]
then $f(n) = \prod_i f(  p_i^{a_i} )$,
where $f$ is a multiplicative function.
\end{prp}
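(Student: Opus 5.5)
The plan is induction on the number $k$ of distinct primes in the factorization $n=\prod_{i=1}^{k} p_i^{a_i}$, with the $p_i$ pairwise distinct and $a_i\geq 1$. The only facts needed are the definition of a multiplicative function and the observation that prime powers of distinct primes are relatively prime. For the degenerate case $n=1$, i.e. $k=0$ and the empty product, I will use the preceding lemma: $f(1)=1$ whenever $f$ is not identically zero. If $f$ is identically zero, the statement is trivial once $k\geq 1$, since both sides vanish.

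Base case $k=1$: here $n=p_1^{a_1}$ and the claim reads $f(n)=f(p_1^{a_1})$, which holds trivially.

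Inductive step: assume the claim for integers with $k-1$ distinct prime factors. Write $n=m\cdot p_k^{a_k}$ with $m=\prod_{i=1}^{k-1}p_i^{a_i}$. I will show $\gcd(m,p_k^{a_k})=1$. Suppose a prime $q$ divides both. Since $q\mid p_k^{a_k}$, Lemma~\ref{pfactor} gives $q\mid p_k$, so $q=p_k$. Since also $q\mid m$, the same lemma gives $q\mid p_i$ for some $i<k$, so $p_k=p_i$, contradicting distinctness. By Theorem~\ref{prfactor}, any common divisor greater than one would have a prime factor, so the gcd is $1$. Multiplicativity then gives $f(n)=f(m)f(p_k^{a_k})$, and the induction hypothesis applied to $m$ finishes the step.

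The only point needing care is this coprimality argument; everything else is immediate from the definition.
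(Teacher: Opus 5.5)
Your proposal is correct and follows the paper's proof: induction on the number of distinct primes, splitting off the last prime power and applying multiplicativity. Your explicit argument that $\gcd(m,p_k^{a_k})=1$, via Lemma~\ref{pfactor} and Theorem~\ref{prfactor}, fills in a step that the paper only asserts by noting that the primes are pairwise coprime.
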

\begin{proof}
One can use induction on $k$. If $k=1$
Then obviously, $f(n) = f( p_1^{a_1}  = \prod_{i=1}^{1} f(p_1^{a_1} )$.
Now consider that the result is true of $k$ or less and we want
to show it for $k+1$., given that $\gcd( p_l , p_m )=1$ for all
$l,m =1 , \ldots , k+1$ we have
\begin{eqnarray*}
f(n) &=& f( \prod_{i=1}^{k+1} p_i^{a_i} ) \\
     &=& f( \prod_{i=1}^{k} p_i^{a_i} \cdot p_{k+1}^{a_{k+1}}) \\
     &=& f( \prod_{i=1}^{k} p_i^{a_i}) \cdot f( p_{k+1}^{a_{k+1}})
\end{eqnarray*}
and then applying the inductive hypothesis for $k=k$ on the first
term of the right hand side.
\end{proof}

\begin{prp}
\label{prpgmult}
If $f$ is a multiplicative function, and
\[
  g(n) = \sum_{\dv{d}{n}} f(d),
\]
then $g(n)$ is also multiplicative.
\end{prp}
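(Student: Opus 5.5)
The plan is to prove directly that $g(mn)=g(m)g(n)$ whenever $\gcd(m,n)=1$, by setting up a bijection between divisors of $mn$ and pairs of divisors of $m$ and $n$. Fix $m,n\in\mb{Z}_+$ with $\gcd(m,n)=1$. The key claim is that the map
\[
\{(d_1,d_2): \dv{d_1}{m},\ \dv{d_2}{n}\} \longrightarrow \{d : \dv{d}{mn}\},\qquad (d_1,d_2)\mapsto d_1d_2,
\]
is a bijection, with all divisors taken positive. Once this holds, $\gcd(d_1,d_2)=1$ follows automatically, since any common divisor of $d_1,d_2$ divides both $m$ and $n$. Multiplicativity of $f$ then gives $f(d_1d_2)=f(d_1)f(d_2)$, and so
\[
g(mn)=\sum_{\dv{d}{mn}} f(d)=\sum_{\dv{d_1}{m}}\sum_{\dv{d_2}{n}} f(d_1)f(d_2)=\Big(\sum_{\dv{d_1}{m}} f(d_1)\Big)\Big(\sum_{\dv{d_2}{n}} f(d_2)\Big)=g(m)g(n).
\]

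Well-definedness of the map is immediate: $d_1d_2$ divides $mn$ by Theorem~\ref{podiv}.

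For surjectivity, given $\dv{d}{mn}$, I take $d_1=\gcd(d,m)$ and $d_2=\gcd(d,n)$. To show $d=d_1d_2$, I will argue with prime factorizations using Theorem~\ref{fund} together with the min-exponent formula for the gcd (the GCD UF theorem). For each prime $p$, at most one of $m,n$ is divisible by $p$, and the exponent of $p$ in $d$ is at most its exponent in $mn$. Comparing exponents prime by prime then gives $d=\gcd(d,m)\gcd(d,n)$.

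For injectivity, suppose $d_1d_2=d_1'd_2'$ with $d_1,d_1'\mid m$ and $d_2,d_2'\mid n$. Then $\dv{d_1}{d_1'd_2'}$, and $\gcd(d_1,d_2')=1$ because $\gcd(m,n)=1$. Corollary~\ref{egcd3} gives $\dv{d_1}{d_1'}$, and by symmetry $\dv{d_1'}{d_1}$, so $d_1=d_1'$ by (f3) of Theorem~\ref{podiv}. Cancelling then gives $d_2=d_2'$.

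I expect the main obstacle to be the surjectivity step, i.e.\ proving cleanly that every divisor of $mn$ splits as a product of a divisor of $m$ and a divisor of $n$. My first choice is the exponent comparison described above. As a fallback, I would use Corollary~\ref{egcd3} directly: set $d_1=\gcd(d,m)$, write $d=d_1e$, and show that $\dv{e}{n}$ because $\gcd(e,m/d_1)=1$.

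Finally, I will note the degenerate case $f\equiv 0$, where $g\equiv 0$ is trivially multiplicative. In all other cases $g(1)=f(1)=1$, consistent with the preceding lemma.
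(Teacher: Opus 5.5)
Your proof is correct and follows the same route as the paper. Both arguments write each divisor of $mn$ as $d_1d_2$ with $d_1\mid m$, $d_2\mid n$, $\gcd(d_1,d_2)=1$, then factor the sum using the multiplicativity of $f$; the difference is that you prove this divisor correspondence is a bijection (surjectivity via $\gcd(d,m)\gcd(d,n)$, injectivity via Corollary~\ref{egcd3}), whereas the paper only asserts it.
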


\begin{proof}
For $f$ multiplicative and $p,q$ such that $\gcd(p,q)=1$
we have $f(pq)=f(p)f(q)$. If $\dv{d}{pq}$ and given $\gcd(p,q)=1$
then either $\dv{d}{p}$ or $\dv{d}{q}$ or $d=d_1 d_2$ and
$\dv{d_1}{p}$ and $\dv{d_2}{q}$ and $\gcd(d_1 , d_2 )=1$.
By the multiplicativity of $f$, $f(d_1 d_2 ) = f(d_1 ) f(d_2 )$.
Then we have the following.
\begin{eqnarray*}
g(pq) &=& \sum_{\dv{d}{pq}} f(d) \\
      &=& \sum_{\dv{d_1}{p}, \dv{d_2}{q}} f(d_1 d_2) \\
      &=& \sum_{\dv{d_1}{p}, \dv{d_2}{q}} f(d_1 ) f( d_2) \\
      &=& \sum_{\dv{d_1}{p}} f(d_1 ) \sum_{\dv{d_2}{q}}  f( d_2) \\
      &=& g(p)                                g(q).
\end{eqnarray*}
\end{proof}

\begin{prp}
If $f$, $g$ are multiplicative functions, and
\[
  M(n) = \sum_{\dv{d}{n}} f(d) g(n/d),
\]
then $M(n)$ is also multiplicative.
\end{prp}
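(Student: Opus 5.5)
The plan is to mirror the proof of Proposition~\ref{prpgmult}. The key tool is a bijection between divisors of a product of coprime integers and pairs of divisors. Let $m,n \in \mb{Z}_+$ with $\gcd(m,n)=1$; we want $M(mn)=M(m)M(n)$.

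\textbf{Step 1 (divisor bijection).} We show that the map $(d_1,d_2)\mapsto d_1 d_2$ is a bijection from pairs with $\dv{d_1}{m}$ and $\dv{d_2}{n}$ onto the positive divisors $d$ of $mn$.

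For surjectivity, take $d$ with $\dv{d}{mn}$ and set $d_1=\gcd(d,m)$, $d_2=\gcd(d,n)$. By the Fundamental Theorem of Arithmetic, $d$ and $mn$ share no primes outside $m$ or $n$, and no prime lies in both $m$ and $n$. Comparing exponents prime by prime (as in the GCD UF theorem) then gives $d=d_1d_2$.

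For injectivity, suppose $d_1d_2=d_1'd_2'$ with $d_1,d_1'\mid m$ and $d_2,d_2'\mid n$. Then $\dv{d_1}{d_1'd_2'}$, and since $\gcd(d_1,d_2')=1$, Corollary~\ref{egcd3} gives $\dv{d_1}{d_1'}$. By symmetry $\dv{d_1'}{d_1}$, so $d_1=d_1'$, and hence $d_2=d_2'$.

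Finally, any such $d_1,d_2$ satisfy $\gcd(d_1,d_2)=1$. Likewise $mn/(d_1d_2)=(m/d_1)(n/d_2)$ with $\gcd(m/d_1,n/d_2)=1$, since these are divisors of the coprime integers $m$ and $n$.

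\textbf{Step 2 (factor the sum).} Using the bijection, we rewrite the sum:
\[
M(mn)=\sum_{\dv{d_1}{m}}\sum_{\dv{d_2}{n}} f(d_1d_2)\, g\!\left(\frac{m}{d_1}\cdot\frac{n}{d_2}\right).
\]
By multiplicativity of $f$ and $g$, together with the coprimality facts from Step 1, each summand equals
\[
f(d_1)\,g(m/d_1)\cdot f(d_2)\,g(n/d_2).
\]
The double sum therefore separates as a product:
\[
\left(\sum_{\dv{d_1}{m}} f(d_1)\,g(m/d_1)\right)\left(\sum_{\dv{d_2}{n}} f(d_2)\,g(n/d_2)\right)=M(m)\,M(n).
\]

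\textbf{Main obstacle.} The only nontrivial point is Step 1: establishing that the divisor correspondence is a genuine bijection, rather than merely asserting that every divisor splits, which is what the proof of Proposition~\ref{prpgmult} does somewhat informally. Once Step 1 is in place, Step 2 is a routine regrouping of a finite sum.
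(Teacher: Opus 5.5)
Your proof is correct and follows the same route as the paper: write each divisor of $mn$ as $d_1 d_2$ with $d_1 \mid m$ and $d_2 \mid n$, use $\gcd(d_1,d_2)=1$ and $\gcd(m/d_1,n/d_2)=1$ to apply multiplicativity of $f$ and $g$, and factor the double sum into $M(m)M(n)$. The only difference is that you justify the divisor correspondence as a genuine bijection, with surjectivity from comparing prime exponents and injectivity from Corollary~\ref{egcd3}, whereas the paper just asserts the split.
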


\begin{proof}
For $f,g$ multiplicative and $p,q$ such that $\gcd(p,q)=1$
we have $f(pq)=f(p)f(q)$ and also $g(pq)=g(p)g(q)$.
If $\dv{d}{pq}$ and given $\gcd(p,q)=1$
then either $\dv{d}{p}$ or $\dv{d}{q}$ or $d=d_1 d_2$ and
$\dv{d_1}{p}$ and $\dv{d_2}{q}$ and $\gcd(d_1 , d_2 )=1$.
(The latter case absorbs the two former ones that imply then
$d_2 =1$ or $d_1 =1$ respectively.)
Moreover $\gcd( p/d_1 , q/d_2 )=1$.
Then we have the following.
\begin{eqnarray*}
M(pq) &=& \sum_{\dv{d}{pq}} f(d) g(\frac{pq}{d}) \\
      &=& \sum_{\dv{d_1}{p} ,\ \dv{d_2}{q}}
            f(d_1 d_2 ) g(\frac{pq}{d_1 d_2}) \\
      &=& \sum_{\dv{d_1}{p} ,\ \dv{d_2}{q}}
            f(d_1 ) f( d_2 ) g(\frac{p}{d_1}) \ g(\frac{q}{d_2}) \\
      &=& \sum_{\dv{d_1}{p}}  f(d_1 ) g(\frac{p}{d_1})
          \sum_{\dv{d_2}{q}}  f(d_2 ) \ g(\frac{q}{d_2}) \\
      &=& M(p) M(q).
\end{eqnarray*}
\end{proof}

\section{Totient function redefined}

\begin{dfn}[Euler's totient function]
For $n \in \mb{Z}_+$, we define $\phi (n)$ to be
the number of positive integers less than $n$ that
are relatively prime to $n$.
\[
 \phi (n) = \sum_{1 \leq i < n ,\  \gcd(i,n)=1} 1.
\]
\end{dfn}

\begin{prp}
\label{prpphi}
For any $n \in \mb{Z}_+$ we have the following.
\[
 n = \sum_{\dv{d}{n}} \phi (d ).
\]
\end{prp}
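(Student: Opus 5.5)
The most economical route is to derive the statement from Proposition~\ref{prpgmult}, exploiting the multiplicativity of $\phi$. Set $g(n)=\sum_{\dv{d}{n}}\phi(d)$. By Theorem~\ref{thm51}, $\phi(mn)=\phi(m)\phi(n)$ whenever $\gcd(m,n)=1$, so $\phi$ is multiplicative. Proposition~\ref{prpgmult} then shows that $g$ is multiplicative as well. Since $n$ itself is trivially multiplicative, it is enough to verify $g(p^a)=p^a$ for every prime power $p^a$ and then combine the prime-power cases using the factorization $n=\prod_i p_i^{a_i}$ and the proposition preceding Proposition~\ref{prpgmult}. The case $n=1$ is immediate, because $g(1)=\phi(1)=1$.

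For the prime-power step, the divisors of $p^a$ are exactly $1,p,\ldots,p^a$. Corollary~\ref{cor11} gives $\phi(p^j)=p^j-p^{j-1}$ for $j\ge 1$, and $\phi(1)=1$. Hence
\[
g(p^a)=1+\sum_{j=1}^{a}\left(p^j-p^{j-1}\right)=1+(p^a-1)=p^a,
\]
since the sum telescopes.

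The only point needing care is a convention clash. In the current section $\phi(n)$ is defined as a count over $1\le i<n$, which gives $\phi(1)=0$ rather than $1$. The telescoping above and the earlier results both require $\phi(1)=1$, which corresponds to counting $1\le i\le n$. I would therefore state explicitly that the convention $\phi(1)=1$ is adopted, as in the earlier chapter; for $n>1$ the two definitions agree. I expect this convention issue to be the main obstacle rather than any real mathematical difficulty.

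As an alternative that avoids multiplicativity altogether, one can partition $\{1,\ldots,n\}$ according to $d=\gcd(a,n)$. The class with $\gcd(a,n)=d$ has exactly $\phi(n/d)$ elements, because writing $a=dm$ reduces the condition to $1\le m\le n/d$ with $\gcd(m,n/d)=1$. Summing the class sizes over all $d\mid n$ and reindexing by $d\mapsto n/d$ gives the identity. This is essentially the argument already given in the earlier section on the totient function, and it could be cited directly.
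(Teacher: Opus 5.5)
Your argument is correct, but your main route is not the paper's. The paper's proof is essentially your alternative: it partitions $\{1,\ldots,n\}$ by $d=\gcd(i,n)$, notes that each class has $\phi(n/d)$ elements, and reindexes $d\mapsto n/d$.

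That counting argument needs no multiplicativity. This matters for the paper's logical order, because the very next proposition deduces the multiplicativity of $\phi$ \emph{from} this identity via Proposition~\ref{prpgmult}. You run the implication the other way. You import multiplicativity from the CRT-based Theorem~\ref{thm51} and the prime-power formula, then reduce to the telescoping sum $g(p^a)=p^a$. Corollary~\ref{cor11} is stated only for $k>1$, so for $j=1$ you also need $\phi(p)=p-1$, which the paper records elsewhere.

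Your route is non-circular precisely because you cite Theorem~\ref{thm51} rather than the later proposition of this section. If your proof replaced the paper's, that later deduction would merely recover one of its own inputs. Your route buys a short local computation that works for any divisor sum of a multiplicative function. The paper's buys independence from the CRT and from any closed formula for $\phi$.

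Your convention warning is well taken, and it applies to the paper's proof as well. With the section's literal definition (counting $1\le i<n$) one has $\phi(1)=0$. Since $d=1$ divides every $n$, the right-hand side would then equal $n-1$ for \emph{every} $n$, not merely fail at $n=1$. The paper's partition silently uses $\phi(1)=1$ for the singleton class $\{n\}$. Multiplicativity in the paper's sense also forces $\phi(1)=1$, since $\phi(2)=\phi(1)\phi(2)\neq 0$.
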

\begin{proof}
Let $n_d$ be the number of integers $i$ such that
$\gcd(i,n)=d$ or equivalently $\gcd(i/d,n/d) =1$.
By definition the number of $i/d$ is $\phi (n/d)$
and so is $n_d$ i.e. $n_d = \phi (n/d)$.
If $\dv{d}{n}$ then there exists an integer in $\mb{Z}_+$,
$d_1 $, such that $n=d d_1$.
Moreover, $\dv{d_1}{n}$.
\begin{eqnarray*}
 n    &=& \sum_{\dv{d}{n}} n_d \\
      &=& \sum_{\dv{d}{n}} \phi ( \frac{n}{d} ) \\
      &=& \sum_{\dv{d}{n}, d d_1 =n} \phi ( d_1 ) \\
      &=& \sum_{\dv{d_1}{n}} \phi ( d_1 ) \\
      &=& \sum_{\dv{d}{n}} \phi ( d ) \\
\end{eqnarray*}
In the last derivation we renamed variable $d_1$ into
a new variable $d$.
\end{proof}

\begin{prp}
For $n \in \mb{Z}_+$ we have the following.
\begin{enumerate}
\item For $n$ a prime number $\phi (n)=n-1$. Moreover
if $\phi (n)=n-1$ then $n$ is a prime number.
\item  For $\gcd (p,q)=1$, Euler's $\phi$ function is
multiplicative and therefore
\[
  \phi (pq) = \phi (p) \phi (q), \quad \gcd(p,q)=1.
\]
\end{enumerate}
\end{prp}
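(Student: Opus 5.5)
For part 1, I would work directly from the redefined $\phi$ as a count. If $n$ is prime, each $i$ with $1 \leq i < n$ satisfies $\gcd(i,n)=1$: a common divisor $d>1$ would be a divisor of the prime $n$, so $d=n$, and $n$ cannot divide $i<n$. Hence every term in the sum defining $\phi(n)$ contributes, and $\phi(n)=n-1$.

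For the converse, suppose $\phi(n)=n-1$ with $n>1$. Then every $i$ in $1,\ldots,n-1$ is coprime to $n$. If $n$ were composite, it would have a factor $a$ with $1<a<n$, by the Lemma on composites. That $a$ lies in the range and has $\gcd(a,n)=a>1$, so $\phi(n)\le n-2$, a contradiction. The degenerate case $n=1$ gives $\phi(1)=0$ under this definition, which is not $n-1=0$ only in a vacuous sense, so I would note that $n=1$ must be excluded or treated separately.

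For part 2, I would show that $\phi$ is multiplicative using the Möbius-free tool already available: Proposition~\ref{prpphi} together with Proposition~\ref{prpgmult}. The cleanest route, however, is the bijection already used in Theorem~\ref{thm51}. Define $A\mapsto(A\bmod p, A\bmod q)$ from units mod $pq$ to pairs of units mod $p$ and mod $q$. Proposition~\ref{uniteq} shows that $A$ is a unit mod $pq$ if and only if it is a unit mod $p$ and a unit mod $q$. Injectivity follows from Corollary~\ref{thm25}: if $p\mid A_1-A_2$ and $q\mid A_1-A_2$, then $pq\mid A_1-A_2$. Surjectivity follows from the Chinese remainder theorem (Theorem~\ref{crt}). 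So the two sets have equal cardinality, $\phi(p)\phi(q)$, and therefore $\phi(pq)=\phi(p)\phi(q)$.

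The main care point is making sure the residue counts here match the sum definition of $\phi$, that is, counting over $1\le i<n$ versus over residues. For $n>1$ these agree, since $0$ is never a unit. The case where $p=1$ or $q=1$ is trivial only if $\phi(1)$ is taken to be $1$, so I would state that convention explicitly.
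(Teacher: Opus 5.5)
Your proposal is correct, but it argues differently from the paper in both parts. For the converse in part 1, the paper uses the divisor-sum identity of Proposition~\ref{prpphi}: a divisor $d\neq 1,n$ gives $n\ge\phi(1)+\phi(d)+\phi(n)\ge 2+\phi(n)$. You instead exhibit the proper factor $a$ directly as a non-coprime element of $\{1,\ldots,n-1\}$. That is more elementary, and it does not rely on $\phi(1)=1$, a value the paper uses tacitly but which its sum over $1\le i<n$ does not produce.

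For part 2, the paper applies Proposition~\ref{prpgmult} ``forward and backwards'' to $n=\sum_{d\mid n}\phi(d)$. In effect it invokes the converse ``if $g(n)=\sum_{d\mid n}f(d)$ is multiplicative then so is $f$'', which it only sketches. That converse can be made rigorous by strong induction: for coprime $p,q$, isolating the term $d=pq$ gives $g(pq)=f(pq)-f(p)f(q)+g(p)g(q)$. Alternatively, use $\phi(n)=\sum_{d\mid n}\mu(d)\,n/d$ together with multiplicativity of Dirichlet products. Your CRT bijection $\mb{U}_{pq}\to\mb{U}_p\times\mb{U}_q$ is the argument of Theorem~\ref{thm51}. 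It is fully rigorous from Proposition~\ref{uniteq}, Corollary~\ref{thm25} and Theorem~\ref{crt}, and it shows more, since the map respects multiplication. The paper's route instead stays within the arithmetic-function framework and yields the general principle that $f$ is multiplicative exactly when $f*U$ is.

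One correction to your side remark: under the literal definition, $n=1$ is not a vacuous case. It is a genuine counterexample to the converse in part 1, since $\phi(1)=0=n-1$ while $1$ is not prime. So your restriction to $n>1$ is necessary, not cosmetic; alternatively one adopts the convention $\phi(1)=1$, which you rightly also need when $p$ or $q$ equals $1$.
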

\begin{proof}
$ $ \\ $ $
(a)
Say $n$ is a prime number. Then all $1,2, \ldots , n-1$
are relatively prime to $n$ and therefore $\phi (n) = n-1$,
as required.
$ $ \\  $ $
Assume now that there is a composite number $n$ such that
$\phi (n) = n-1$. If $n$ is a composite number there exists
a $d \neq 1, n$ such that $\dv{d}{n}$.
Then in
\[
  n = \sum_{\dv{d}{n}} \phi (d ),
\]
there are three contributions: $\phi (1) , \phi(d) , \phi (n)$
that re non-zero. For $\phi (1)=1$, and $\phi (d) \geq 1$ imply
$\phi (n) \leq n -2$ and this contradicts the
$\phi (n) = n-1$. Thus $n$ is not compositive and it must be
prime number.
$ $ \\ $ $
(b)
In Proposition~(\ref{prpgmult}) we had
\[
   g(n) = \sum_{\dv{d}{n}} f(d),
\]
and in Proposition~(\ref{prpphi}) we had
\[
  n = \sum_{\dv{d}{n}} \phi (d ).
\]
Consider $g(n)=n$. Then $g$ is a multiplicative function.
Consider $f(d)=\phi(d)$.
By way of Proposition~(\ref{prpgmult}), with the stated
substitutions, if we trace the proof there forward and backwards we
conclude that  if $g(n)$ is multiplicative then
$f(n)$ is multiplicative.
This establishes that $\phi (n)$ is a multiplicative function.
\end{proof}

\section{M\"obius function}

\begin{dfn}[M\"obius function]
\[
\mu (n)     =
\begin{cases}
 1     & \text{if } n=1,\\
 0     & \text{if n contains a square factor},  \\
(-1)^k & \text{if n is the product of k distinct primes}.
\end{cases}
\]
\end{dfn}

\begin{prp}
Let $n \in \mb{Z}_+$. Then we have the following.
\begin{enumerate}
\item $\mu (n)$ is multiplicative that is, for $p,q \in \mb{Z}_+$
with $\gcd(p,q)=1$ the following applies.
\[
 \mu (pq) = \mu (p) \mu (q).
\]
\item  We have the following equality for the M\"obius function.
\[
 I(n) = \sum_{\dv{d}{n}} \mu (d),
\]
where
\[
 I  (n)     =
\begin{cases}
 1     & \text{if } n=1,\\
 0     & \text{if } n >1.  \\
\end{cases}
\]
\end{enumerate}
\end{prp}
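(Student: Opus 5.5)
For part (1) I will argue directly from the prime factorizations. Let $p,q\in\mb{Z}_+$ with $\gcd(p,q)=1$. If $p=1$ or $q=1$ the identity $\mu(pq)=\mu(p)\mu(q)$ is immediate from $\mu(1)=1$. Otherwise, by the Fundamental Theorem of Arithmetic (Theorem~\ref{fund}) write $p=\prod p_i^{a_i}$ and $q=\prod q_j^{b_j}$. Since $\gcd(p,q)=1$, no $p_i$ equals any $q_j$, so the factorization of $pq$ is the disjoint union of the two factorizations. Hence $pq$ has a square factor exactly when $p$ or $q$ does, and in that case both sides are $0$. If neither has one, then $p$ is a product of $k$ distinct primes and $q$ a product of $l$ distinct primes. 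Therefore $pq$ is a product of $k+l$ distinct primes, and $\mu(pq)=(-1)^{k+l}=(-1)^k(-1)^l=\mu(p)\mu(q)$.

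For part (2) I will use Proposition~\ref{prpgmult} with $f=\mu$, which is multiplicative by part (1). This shows that $g(n)=\sum_{\dv{d}{n}}\mu(d)$ is multiplicative. The function $I$ is also multiplicative: if $\gcd(p,q)=1$, then $pq=1$ iff $p=q=1$, so $I(pq)=I(p)I(q)$. By the proposition expressing a multiplicative function through its values on prime powers, $f(n)=\prod_i f(p_i^{a_i})$, it then suffices to check $g$ and $I$ at $n=1$ and at prime powers $n=p^a$ with $a\ge 1$.

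At $n=1$ we have $g(1)=\mu(1)=1=I(1)$. At $n=p^a$ with $a\geq 1$, the divisors are $1,p,\ldots,p^a$. Since $\mu(p^j)=0$ for $j\ge 2$, this gives $g(p^a)=\mu(1)+\mu(p)=1-1=0=I(p^a)$. Hence for $n=\prod p_i^{a_i}>1$ we get $g(n)=\prod g(p_i^{a_i})=0$, and $g=I$.

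I do not expect a serious obstacle. The only care needed is in part (1), in justifying that coprimality makes the two sets of primes disjoint, so that neither squarefreeness nor the prime count is disturbed. As a backup for part (2) that does not rely on multiplicativity, I would write the squarefree divisors of $n$, which has $k$ distinct primes, as subsets of those primes. This gives $\sum_{\dv{d}{n}}\mu(d)=\sum_{j=0}^k\binom{k}{j}(-1)^j=(1-1)^k$, which is $0$ for $k\ge1$.
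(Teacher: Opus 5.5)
Your proposal is correct and follows the paper's proof essentially step for step. Part (1) compares the prime factorizations of coprime $p,q$: the square-factor case gives $0=0$ and the squarefree case gives $(-1)^{k+l}$. Part (2) uses Proposition~\ref{prpgmult} to reduce to prime powers, where the sum is $\mu(1)+\mu(p)=0$, and it has the same $(1-1)^k$ binomial argument as a fallback. Your version is slightly tighter than the paper's, since you check explicitly that $I$ is multiplicative and you restrict the binomial count to squarefree divisors, so it covers every $n>1$ rather than only squarefree $n$.
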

\begin{proof}
$ $ \\ $ $
(1)  Consider $p=q=1$.
Then $\mu (pq)= \mu(p) \mu (q)=1$.
Consider $\dv{r^2}{p}$. Since $\gcd(p,q)=1$ then
$\dv{r^2}{pq}$. Likewise for the $\dv{r^2}{q}$ case.
Then $\mu (pq)= \mu(p) \mu (q)=0$.
Therefore we may assume that both $p,q$ are square free.
Thus $p = p_1 \ldots p_k$ and $q = q_1 \ldots q_l$.
Moreover $\gcd(p,q)=1$ and thus $\gcd(p_i , q_j)=1$.
\begin{eqnarray*}
\mu (pq) &=&
\mu (  p_1 \ldots p_k q_1 \ldots q_l) \\
         &=& (-1)^{k+l} \\
         &=& (-1)^{k} (-1)^{l} \\
         &=& \mu (  p_1 \ldots p_k  )
             \mu (q_1 \ldots q_l)\\
         &=& \mu (p) \mu (q),
\end{eqnarray*}
as needed.
$ $ \\ $ $
(2) If $n=1$ then $I(n)=1$.
Moreover
\[
 I(n) = \sum_{\dv{d}{n}} \mu (d)
\]
is true as the left-hand side is $I(n)=I(1)=1$
and the right-hand side for $n=1$ has only one term
for $d=1$ and $\mu (d) = \mu (1)=1$.
Thus the expression is true.
Moreover if $n>1$
by way of Proposition~(\ref{prpgmult}),
and the multiplicativity of $\mu(n)$
consider only the case where $n$ is a prime power,
$n=p^k$, with the general case proven by induction.
We have the following.
\[
I(p^k ) = \sum_{\dv{d}{p^k}} \mu (d)
        = \mu(1) + \mu(p) + \mu (p^2 ) + \ldots + \mu(p^k )
        =   1    +  (-1)  +    0       + \ldots +  0
        =   1 +(-1) =0
\]
Noting that $p^2 , p^3 , \ldots $ are not square-free,
the result follows.
An alternative proof of the $n \neq 1$ is as follows.
As prime powers greater than 1 lead to a zero, sum the
only possibility left is for $n= p_1 \ldots p_k$.
Then we have the following.
\begin{eqnarray*}
\sum_{\dv{d}{n}} \mu (d) &=&
    \mu (p_1 ) + \mu (p_2 ) + \ldots + \mu( p_k )+
    \mu (p_1 p_2 ) + \ldots + \mu (p_1 p_2 p_3  ) +
        \ldots + \ldots + \mu (p_1 \ldots p_k ) \\
    &=& \sum_{i=0}^{k} {k \choose i} (-1)^i = (1-1)^k = 0.
\end{eqnarray*}
\end{proof}

The following result will be proved again using
M\"obius inversion formula as a corollary of the
corresponding theorem.

\begin{lem}
For any $n \in \mb{Z}_+$ we have the following.
\[
 \phi (n) = \sum_{\dv{d}{n}}  \mu (d) \frac{n}{d} .
\]
\end{lem}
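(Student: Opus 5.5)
I would prove $\phi(n)=\sum_{\dv{d}{n}}\mu(d)\,\frac{n}{d}$ by exploiting multiplicativity, reducing to prime powers. This avoids the M\"obius inversion theorem, which the paper has promised but not yet proved.

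The first step is to observe that the right-hand side is a Dirichlet-type convolution $M(n)=\sum_{\dv{d}{n}} f(d)g(n/d)$ with $f=\mu$ and $g(m)=m$. Here $\mu$ is multiplicative by the preceding proposition, and $g(m)=m$ is trivially multiplicative. By the earlier proposition on products $\sum_{\dv{d}{n}} f(d)g(n/d)$ of multiplicative functions, $M$ is multiplicative. The left-hand side $\phi$ is multiplicative by Theorem~\ref{thm51}, or equivalently by the proposition in the section ``Totient function redefined''. Both sides take the value $1$ at $n=1$. Writing $n=\prod_i p_i^{a_i}$ and applying the proposition that a multiplicative $f$ satisfies $f(n)=\prod_i f(p_i^{a_i})$, it therefore suffices to verify the identity for $n=p^a$ with $a\ge 1$.

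For $n=p^a$, the divisors are $1,p,\ldots,p^a$. By the definition of $\mu$, only $d=1$ and $d=p$ contribute, since $\mu(p^j)=0$ for $j\ge 2$. The sum thus equals $p^a-p^{a-1}=p^{a-1}(p-1)$, which is $\phi(p^a)$ by Corollary~\ref{cor11}.

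As an alternative or cross-check, I could give a direct counting proof. Write
\[
\phi(n)=\sum_{i=1}^{n} I(\gcd(i,n))=\sum_{i=1}^{n}\sum_{\dv{d}{\gcd(i,n)}}\mu(d),
\]
using the identity $I(m)=\sum_{\dv{d}{m}}\mu(d)$ from the previous proposition. Interchanging the sums gives $\sum_{\dv{d}{n}}\mu(d)\cdot\#\{1\le i\le n:\dv{d}{i}\}=\sum_{\dv{d}{n}}\mu(d)\,n/d$. One small point needs care: the definition of $\phi$ in this chapter counts $1\le i<n$, whereas the sum runs to $n$. The discrepancy matters only for $n=1$, where $\phi(1)=1$ by convention.

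The main obstacle is bookkeeping rather than a genuine difficulty. It lies in confirming that the multiplicativity results cited are stated in a usable form, and in checking the $n=1$ convention. The prime-power computation itself is immediate.
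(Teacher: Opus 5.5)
Your main argument is correct, but it is not the paper's route. The paper's proof is exactly your ``cross-check'': it writes $\phi(n)=\sum_{i=1}^{n} I(\gcd(i,n))$, expands $I(\gcd(i,n))=\sum_{d\mid\gcd(i,n)}\mu(d)$, and interchanges the sums, counting the $n/d$ multiples of $d$ in $\{1,\dots,n\}$. Your version of that argument is actually tidier. You flag the $i<n$ versus $i\le n$ mismatch at $n=1$, which the paper passes over silently. You also stop at $\sum_{d\mid n}\mu(d)\,n/d$, whereas the paper adds an incorrect last line pulling $n/d$ outside the sum over $d$.

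The trade-off between the two routes is this. Your reduction to prime powers is short and structural. However, it imports the multiplicativity of $\phi$ (Theorem~\ref{thm51}, a CRT argument) together with the value $\phi(p^a)=p^{a-1}(p-1)$. Those two facts already give $\phi(n)=n\prod_{p\mid n}(1-1/p)$ directly. So in your route the lemma follows from the product formula, rather than being a tool for deriving it as the paper's next proposition attempts. The counting proof needs only $\sum_{d\mid m}\mu(d)=I(m)$ and elementary counting. Combined with the fact that $\sum_{d\mid n}f(d)g(n/d)$ is multiplicative when $f$ and $g$ are, it \emph{yields} the multiplicativity of $\phi$ rather than assuming it.

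If you keep your main route, cite Theorem~\ref{thm51} and not the proposition in ``Totient function redefined''. That proposition justifies multiplicativity only by an unproved converse of Proposition~\ref{prpgmult}. The natural proof of that converse goes through M\"obius inversion, the very result you set out to avoid.
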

\begin{proof}
We have by definition the following.
\[
 \phi (n) = \sum_{1 \leq i < n ,\  \gcd(i,n)=1} 1
\Rightarrow
   n = \sum_{\dv{d}{n}} \phi (d).
\]
We also have the following equality for the
M\"obius function.
\[
 I(n) = \sum_{\dv{d}{n}} \mu (d),
\]
\begin{eqnarray*}
 \phi (n) &=& \sum_{i=1, \ \gcd(i,n)=1}^{n}   1  \\
          &=& \sum_{i=1}^{n}   I( \gcd(i,n) )   \\
          &=& \sum_{i=1}^{n} \   \sum_{\dv{d}{\gcd(i,n)}} \mu (d ) \\
          &=& \sum_{i=1}^{n} \   \sum_{\dv{d}{i}, \ \dv{d}{n}} \mu (d )
\end{eqnarray*}
For a given and fixed $d$ such $\dv{d}{n}$
the number of values $i$ such that $\dv{d}{i}$ i.e. that
are multiple of $d$ is exactly $n/d$.
For $\dv{d}{i}$ we have $i=da$ and since $i\leq n$ we have
$da\leq n$ and therefore $a \leq n/d$.
We can then rewrite the last double sum as follows.
\begin{eqnarray*}
 \phi (n) &=& \sum_{i=1}^{n} \   \sum_{\dv{d}{i}, \ \dv{d}{n}} \mu (d ) \\
          &=& \sum_{\dv{d}{n}} \sum_{a=1}^{\frac{n}{d}} \mu (d ) \\
          &=& \sum_{\dv{d}{n}} \mu (d )  \sum_{a=1}^{\frac{n}{d}} 1 \\
          &=& \sum_{\dv{d}{n}} \mu (d )  \frac{n}{d}  \\
          &=& \frac{n}{d}  \sum_{\dv{d}{n}} \mu (d ) .
\end{eqnarray*}
\end{proof}

\begin{prp}
For $n \in \mb{Z}_+$ we have the following.
\begin{enumerate}
\item For $n$ a prime power $n=p^k$ where $k>1$ we
have
\[
 \phi (n) = \phi (p^k ) = p^k - p^{k-1} = n (1-1/p).
\]
\item  For $n$ a composite number with prime decomposition
as follows
\[
  n = \prod_{i=1}^{k} p_i^{a_i},
\]
we have the following.
\[
  \phi (n) = n \prod_{\dv{p}{n}} \left( 1 - \frac{1}{p} \right)
           = p_1^{a_1}  \left( 1 - \frac{1}{p_1} \right)
             p_2^{a_2}  \left( 1 - \frac{1}{p_2} \right)
             \ldots
             p_k^{a_k}  \left( 1 - \frac{1}{p_k} \right) .
\]
\end{enumerate}
\end{prp}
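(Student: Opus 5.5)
The plan is to prove part (1) directly by counting, and then obtain part (2) from the multiplicativity of $\phi$ established in the preceding proposition, using the unique prime factorization from Theorem~\ref{fund}. An alternative route would go through the M\"obius formula $\phi(n)=\sum_{\dv{d}{n}}\mu(d)\frac{n}{d}$ of the previous lemma. I will treat that as a cross-check rather than the main argument.

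For part (1), let $n=p^k$. The integers $i$ with $1\le i\le p^k$ that fail to be relatively prime to $p^k$ are exactly those with $\gcd(i,p^k)>1$. Since the only positive divisors of $p^k$ are powers of $p$, this happens if and only if $\dv{p}{i}$. The multiples of $p$ in $\{1,\dots,p^k\}$ are $p,2p,\dots,p^{k-1}\cdot p$, and there are $p^{k-1}$ of them. Hence
\[
\phi(p^k)=p^k-p^{k-1}=p^k\left(1-\frac{1}{p}\right).
\]
This agrees with Corollary~\ref{cor11}. One caveat: the definition of $\phi$ counts $1\le i<n$ rather than $i\le n$, but this does not matter, because $n$ itself is not coprime to $n$ when $n>1$. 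The same formula also holds for $k=1$.

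For part (2), write $n=\prod_{i=1}^{k}p_i^{a_i}$ with distinct primes $p_i$. The prime powers $p_i^{a_i}$ are pairwise coprime, so I will apply multiplicativity repeatedly, using induction on $k$ exactly as in the earlier proposition on $f(\prod p_i^{a_i})=\prod f(p_i^{a_i})$. This gives $\phi(n)=\prod_i\phi(p_i^{a_i})$. Substituting part (1) yields $\prod_i p_i^{a_i}(1-1/p_i)$, and collecting the prime powers gives $n\prod_{\dv{p}{n}}(1-1/p)$.

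The only delicate point is justifying multiplicativity of $\phi$. The preceding proposition's argument, which reverses Proposition~\ref{prpgmult}, is somewhat informal, so I would rather rely on Theorem~\ref{thm51}, whose proof uses the CRT bijection. As a check, the M\"obius formula gives the same result: $\sum_{\dv{d}{n}}\mu(d)/d$ runs only over squarefree $d$, and it factors as $\prod_i(1-1/p_i)$ by expanding the product, in the same way as the binomial computation used for $I(n)$.
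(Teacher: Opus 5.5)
Your proof is correct and follows the paper's own argument. Part (1) counts the $p^{k-1}$ multiples of $p$ up to $p^k$, part (2) uses induction with multiplicativity of $\phi$, and the paper gives the Möbius expansion $\sum_{d\mid n}\mu(d)\,n/d=n\prod_{p\mid n}(1-1/p)$ as a second route, just like your cross-check. Relying on the CRT-based Theorem~\ref{thm51} for multiplicativity, instead of the informal reversal of Proposition~\ref{prpgmult}, is a sound choice and does not change the structure of the argument.
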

\begin{proof}
$ $ \\ $ $
(1) For $n=p^k$ the only divisors of $n$ are multiples of $p$
and there are $n/p = p^{k-1}$ of them. Therefore the number of
integers relatively prime to (i.e. non divisors of )   $n$ are
$p^k - p^{k-1}$, as needed.
$ $ \\ $ $
(2)
Use induction and the multiplicativity of $\phi (n)$.
This completes the proof.
$ $ \\ $ $
Or use induction as follows, but we will need a result
to be proven later.
For $n=1$, there is no product and thus $\phi (1)=1$.
For $n \geq 2$, given that
\[
  n = \prod_{i=1}^{k} p_i^{a_i},
\]
we need to show
\begin{eqnarray*}
  \phi (n) &=& n \prod_{\dv{p}{n}} \left( 1 - \frac{1}{p} \right)  \\
           &=& n \prod_{i=1}^{k}  \left( 1 - \frac{1}{p_i} \right) \\
           &=& n \left( 1- \sum_i \frac{1}{p_i}
                         + \sum_{i,j} \frac{1}{p_i p_j } +
                 \ldots + (-1)^k \sum \frac{1}{p_1 \ldots p_k} \right).
\end{eqnarray*}
The parenthesized expression is
\[
\sum_{\dv{d}{n}} \mu (d) \cdot \frac{1}{d}.
\]
Therefore
\begin{eqnarray*}
           &=& n \prod_{\dv{p}{n}} \left( 1 - \frac{1}{p} \right)  \\
           &=& n \left( 1- \sum_i \frac{1}{p_i}
                         + \sum_{i,j} \frac{1}{p_i p_j } +
                 \ldots + (-1)^k \sum \frac{1}{p_1 \ldots p_k} \right). \\
           &=& n \sum_{\dv{d}{n}} \mu (d) \frac{1}{d} \\
           &=&  \sum_{\dv{d}{n}} \mu (d) \frac{n}{d} \\
           &=&   \phi (n),
\end{eqnarray*}
where the last derivation remains to be shown.
\end{proof}

\section{Dirichlet product}

\begin{dfn}
A function $f$ on $\mb{Z}_+$ is arithmetic
if its range is a subset of the composite numbers.
\end{dfn}

\begin{dfn}[Dirichlet product]
For two arithmetic functions $f,g$ the Dirichlet product
is defined as follows
\[
 (f*g) (n) = \sum_{\dv{d}{n}} f(d) g(\frac{n}{d})
           = \sum_{ab=n} f(a) g(b).
\]
\end{dfn}

\begin{thm}
The Dirichlet product is commutative and associative that is
$f*g=g*f$ and $(f*g)*h = f*(g*h)$ for three arithmetic functions
$f,g,h$.
\end{thm}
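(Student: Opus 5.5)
The plan is to work directly from the symmetric form of the definition, $(f*g)(n)=\sum_{ab=n} f(a)g(b)$, where the sum ranges over ordered pairs $(a,b)$ of positive integers with $ab=n$. Both properties then reduce to reindexing finite sums over factorizations of $n$, using only commutativity and associativity of ordinary multiplication (and addition) in the codomain of the arithmetic functions.

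\textbf{Commutativity.} The map $(a,b)\mapsto(b,a)$ is a bijection of the finite set $\{(a,b)\in\mb{Z}_+^2 : ab=n\}$ onto itself. Hence
\[
(f*g)(n)=\sum_{ab=n} f(a)g(b)=\sum_{ba=n} f(b)g(a)=\sum_{ab=n} g(a)f(b)=(g*f)(n).
\]
Equivalently, in divisor notation we substitute $d\mapsto n/d$, which permutes the divisors of $n$. This holds for every $n$, so $f*g=g*f$.

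\textbf{Associativity.} First expand the left-hand side. Writing $(f*g)*h$ at $n$ as $\sum_{mc=n}(f*g)(m)h(c)$ and expanding $(f*g)(m)=\sum_{ab=m}f(a)g(b)$ gives a double sum over pairs $(m,c)$ with $mc=n$ and $(a,b)$ with $ab=m$. The map $((a,b),(m,c))\mapsto(a,b,c)$ is a bijection onto the set of ordered triples with $abc=n$; its inverse is $(a,b,c)\mapsto((a,b),(ab,c))$. Therefore
\[
((f*g)*h)(n)=\sum_{abc=n} f(a)g(b)h(c).
\]
The same expansion applied to $f*(g*h)$, now grouping $bc=m'$ with $am'=n$, gives the identical triple sum. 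Hence the two sides agree for every $n$.

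\textbf{Main obstacle.} The only real care needed is checking that the regrouping is a genuine bijection of index sets, so that no factorization is double counted or missed; the explicit inverse maps above settle this. One remark: the paper's definition of "arithmetic" speaks of the range being a subset of the composite numbers. The argument only needs values in a commutative ring (e.g. $\mb{Z}$ or $\mb{C}$), so it goes through regardless of that wording.
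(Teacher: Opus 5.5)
Your proof is correct and takes the same route as the paper: commutativity by swapping the factor pair $(a,b)$, and associativity by reducing both $((f*g)*h)(n)$ and $(f*(g*h))(n)$ to the triple sum $\sum_{abc=n} f(a)g(b)h(c)$. Your explicit bijections between index sets make rigorous the regrouping the paper does in one line. Your reading of the paper's ``composite numbers'' as values in a commutative ring such as $\mb{C}$ is presumably the intended one.
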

\begin{proof}
Commutativity follows from the definition.
\[
 (f*g) (n) = \sum_{\dv{d}{n}} f(d) g(\frac{n}{d})
           = \sum_{ab=n} f(a) g(b).
           = \sum_{\dv{d}{n}} g(d) f(\frac{n}{d})
 (g*f) (n).
\]
Associativity follows similarly, after observing.
\[
 ((f*g)*h) (n) = ( \sum_{ab=d} f(a) g(b)) \sum{dc=n} h(c)
               = \sum_{abc=n} f(a) g(b) h(c).
\]
\end{proof}

\section{Unit function}

\begin{dfn}[Unit function]
\[
 U  (n)     = 1,  \ \forall n \in \mb{Z}_+ .
\]
\end{dfn}

\begin{cor}
For all arithmetic functions $f$ we have
\[
  f* I = I * f = f
\]
\end{cor}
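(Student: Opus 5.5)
The plan is to compute $(f*I)(n)$ directly from the definition of the Dirichlet product and use the fact that $I$ vanishes everywhere except at $1$. Here $I$ is the function introduced in the M\"obius section, with $I(1)=1$ and $I(n)=0$ for $n>1$. Note that it is $I$, not the constant unit function $U$ just defined, that plays the role of identity; with $U$ the claim would fail, since $(f*U)(n)=\sum_{\dv{d}{n}} f(d)$.

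\textbf{Step 1.} By definition,
\[
(f*I)(n)=\sum_{\dv{d}{n}} f(d)\, I\!\left(\frac{n}{d}\right).
\]
The factor $I(n/d)$ is nonzero only when $n/d=1$, that is, when $d=n$. Every other term vanishes, and the term with $d=n$ contributes $f(n)\cdot I(1)=f(n)$. Hence $(f*I)(n)=f(n)$ for every $n\in\mb{Z}_+$, so $f*I=f$.

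\textbf{Step 2.} The other order $I*f=f$ follows from the commutativity of the Dirichlet product proved in the preceding theorem. Alternatively, one can repeat the computation $\sum_{\dv{d}{n}} I(d) f(n/d)$, where only $d=1$ survives and gives $f(n)$.

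\textbf{Obstacle.} There is no genuine mathematical obstacle. The only care needed is to fix which function is meant by $I$ in the statement, and to ensure the sum is over positive divisors, so that $n/d=1$ is attained exactly once.
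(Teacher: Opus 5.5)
Your proposal is correct and matches the paper's proof, which also expands $(f*I)(n)=\sum_{d\mid n} f(d)\,I(n/d)$ and notes that only the term $d=n$ survives. You additionally justify the order $I*f=f$, either by commutativity or by the symmetric computation; the paper leaves that direction implicit.
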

\begin{proof}
\[
 (f*I) (n) = \sum_{\dv{d}{n}} f(d) I(\frac{n}{d}) = f(n).
\]
Function $I(\frac{n}{d})$ is 1 for $n=d$ and 0 otherwise.
Therefore the sum has one non-zero term for $d=n$ and then
$f(d)=f(n)$.
\end{proof}

\section{Dirichlet and M\"obius inversions}

\begin{thm}[Dirichlet inverse]
Given an arithmetic function $f$, where $f(1) \neq 0$,
there exists the inverse $f^{-1}$ of $f$ such that
\[
   f * f^{-1} = f^{-1} * f = I.
\]
Moreover the calculation of $f^{-1}$ is recursive
as follows for $n \geq 1$,
\begin{eqnarray*}
  f^{-1} (1) &=& \frac{1}{f(1)} \\
  f^{-1} (n) &=& - \frac{1}{f(1)} \sum_{\dv{d}{n}, d \neq n}
         f( \frac{n}{d}) f^{-1} (d) , \quad n >1 .
\end{eqnarray*}
\end{thm}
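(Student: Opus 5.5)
We define $g=f^{-1}$ by the stated recursion and verify directly that it is a two-sided inverse. Set $g(1)=1/f(1)$, which is legitimate because $f(1)\neq 0$. For $n>1$, assume $g(d)$ is already defined for every proper divisor $d$ of $n$ (all such $d<n$), and set
\[
 g(n) = -\frac{1}{f(1)} \sum_{\dv{d}{n},\, d\neq n} f\left(\frac{n}{d}\right) g(d).
\]
This is a strong induction on $n$, using the well-ordered set principle as in earlier proofs. Every term on the right involves only $g(d)$ with $d<n$, so $g$ is well defined on all of $\mb{Z}_+$.

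Next we check that $(g*f)(n)=I(n)$ for every $n$.
\begin{itemize}
\item For $n=1$: the only divisor is $d=1$, so $(g*f)(1)=g(1)f(1)=1=I(1)$.
\item For $n>1$: split off the term $d=n$ from $(g*f)(n)=\sum_{\dv{d}{n}} g(d) f(n/d)$. This gives
\[
(g*f)(n) = g(n)f(1) + \sum_{\dv{d}{n},\, d\neq n} f\left(\frac{n}{d}\right) g(d).
\]
By the defining recursion, $g(n)f(1)$ is exactly the negative of the remaining sum, so the total is $0=I(n)$.
\end{itemize}
Since the Dirichlet product is commutative (the preceding theorem), $f*g=g*f=I$ follows immediately.

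Uniqueness is not explicitly claimed, but it justifies the notation $f^{-1}$. Suppose $h*f=I$. By associativity and the preceding corollary,
\[
h = h*I = h*(f*g) = (h*f)*g = I*g = g.
\]
Alternatively, the equations $(h*f)(n)=I(n)$ force $h(1)=1/f(1)$, and then force $h(n)$ to satisfy the same recursion, so $h=g$ by induction.

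There is no real obstacle in this argument. The only point needing care is to make the recursion a genuine definition, by noting that proper divisors are strictly smaller than $n$. One must also use the convention $(g*f)(n)=\sum_{\dv{d}{n}} g(d)f(n/d)$, with $g$ evaluated at $d$, so that it matches the form of the stated recursion. The corollary $f*I=f$ is applied with the identity function $I$ of the M\"obius section, not with $U$.
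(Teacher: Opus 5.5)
Your proof is correct and uses the same approach as the paper: strong induction on $n$, isolating the $d=n$ term $f(1)f^{-1}(n)$ of the convolution and solving for it. The paper presents this as solving $(f*f^{-1})(n)=I(n)$ for $f^{-1}(n)$. You instead define $g$ by the recursion, verify $g*f=I$, and add commutativity and uniqueness. That makes existence more explicit, but it is the same computation.
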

\begin{proof}
Use induction.
For $n=1$ we have
\[
   (f * f^{-1})(1) =  I(1),
\]
or equivalently $1 = f(1) f^{-1} (1)$ and the result follows.
for $f(1) \neq 0$. For $n>1$ let us assume that $f^{-1} (i)$
has been calculated for $i<n$.
Then
\[
   (f * f^{-1})(n) =  I(n)
\]
and thus by definition of the Dirichlet product
%I(n) is 1 n=1 and 0 otherwise ; we have n>1
\begin{eqnarray*}
  0  &=& \sum_{\dv{d}{n}} f( \frac{n}{d}) f^{-1} (d)  \\
     &=& f(1) f^{-1} (n) +
        \sum_{\dv{d}{n}, d \neq n} f( \frac{n}{d}) f^{-1} (d) \\
  f^{-1} (n) &=& - \frac{1}{f(1)} \sum_{\dv{d}{n}, d \neq n}
         f( \frac{n}{d}) f^{-1} (d) .
\end{eqnarray*}
\end{proof}

\begin{thm}[M\"obius inversion]
Let $f$ be any arithmetic function and if
\label{mobinf}
\[
 g(n) = \sum_{\dv{d}{n}}   f (d)
\]
then
\[
 f(n) = \sum_{\dv{d}{n}} \mu(\frac{n}{d})   g (d)
      = \sum_{\dv{d}{n}} \mu (d) g(\frac{n}{d}) .
\]
\end{thm}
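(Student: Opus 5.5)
The plan is to phrase everything in terms of the Dirichlet product and use the identity $\sum_{d \mid n}\mu(d) = I(n)$ established earlier for the M\"obius function. Write $U$ for the unit function $U(n)=1$. The hypothesis $g(n)=\sum_{d\mid n} f(d)$ says exactly that $g = f * U$, since $U(n/d)=1$ for every divisor. The identity for $\mu$ says $(\mu * U)(n) = \sum_{d\mid n}\mu(d)U(n/d) = I(n)$, so $\mu * U = I$. Equivalently, because $U(1)=1\neq 0$, $\mu$ is the Dirichlet inverse $U^{-1}$ given by the Dirichlet inverse theorem; by uniqueness of the recursively defined inverse, $\mu$ coincides with it.

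With this in place the computation is a short chain. Using associativity and commutativity of the Dirichlet product (the theorem proved above), we get
\[
g * \mu = (f * U) * \mu = f * (U * \mu) = f * I = f,
\]
where the last step is the corollary $f * I = f$. Unwinding the definition of the product gives $f(n) = \sum_{d\mid n} g(d)\mu(n/d)$. Commutativity, or the substitution $d \mapsto n/d$ which permutes the divisors of $n$, then yields the second form $\sum_{d\mid n}\mu(d) g(n/d)$.

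As a check, and as an alternative self-contained route, I would also do the direct double-sum computation:
\[
\sum_{d\mid n}\mu(d)g(n/d)=\sum_{d\mid n}\mu(d)\sum_{e\mid n/d}f(e)=\sum_{e\mid n}f(e)\sum_{d\mid n/e}\mu(d)=\sum_{e\mid n} f(e) I(n/e)=f(n).
\]
The step requiring care is the interchange of summation. The pairs $(d,e)$ with $d\mid n$ and $e \mid n/d$ are exactly the pairs with $de \mid n$, which are the same as the pairs with $e\mid n$ and $d\mid n/e$.

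The main obstacle is rigor in the associativity step. The paper's proof of associativity is sketchy, so I would justify it by writing both $((f*g)*h)(n)$ and $(f*(g*h))(n)$ as $\sum_{abc=n} f(a)g(b)h(c)$ over ordered triples of positive integers. With that, the whole argument reduces to the already-proved identity $\sum_{d\mid n}\mu(d)=I(n)$.
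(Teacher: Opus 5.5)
Your proposal is correct and matches the paper's proof essentially step for step. The paper also writes $g = f * U$ and uses $\mu * U = I$ with associativity and commutativity to get $g * \mu = f * I = f$, then gives the same direct double-sum interchange as a second argument; your justification of associativity via the sum over ordered triples $abc = n$ usefully tightens a step the paper only sketches.
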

\begin{proof}
$ $ \\ $ $
{\bf 1. By use of Dirichlet products.}
An easy proof follows by using Dirichlet products.
Function $g(n)$ is in fact
\[
  g(n) =  (f * U) (n)
\]
Then we have the following
\[
 g * \mu = (f*U) * \mu ) = f * ( U  * \mu ) = f * (\mu * U)
 = f * I = f,
\]
with the latter being equivalent to
\[
\sum_{\dv{d}{n}} g(d) \mu(\frac{n}{d}) = f(n),
\]
after noting that for the M\"obius function
\[
 I(n) = (\mu * U) (n),
\]
and the $*$ in this proof indicates a Dirichlet product.
%%%%
%%%%%
$ $ \\ $ $
{\bf 2. By direct methods: sums.}
For
\[
 g(n) = \sum_{\dv{d}{n}}   f (d),
\]
we note
the following that will be used in the sums to follow.
\[
n = d \frac{n}{d} = d n_1 = d_1 \frac{d}{d_1} n_1 = d_1 d_2 n_1,
\]
where
\[
\frac{n}{d}= n_1 \Rightarrow n = d n_1 ,
\]
and
\[
\frac{d}{d_1} = d_2 \Rightarrow d = d_1 d_2 .
\]
Furthermore, note that
\[
 I(n) = \sum_{\dv{d}{n}} \mu (d).
\]
\begin{eqnarray*}
\sum_{\dv{d}{n}} \mu (d) g(\frac{n}{d}) &=&
\sum_{\dv{d}{n}} \mu (\frac{n}{d}) g (d) \\
 &=&
\sum_{\dv{d}{n}} \mu (\frac{n}{d}) \sum_{\dv{d_1}{d}}   f (d_1 ) \\
 &=&
\sum_{n_1 d = n} \mu (n_1 ) \sum_{d_1 d_2 = n}   f (d_1 ) \\
 &=&
\sum_{n_1 d_1 d_2 = n} \mu (n_1 )  f (d_1 ) \\
 &=&
\sum_{d_1 n_1 d_2 = n}   f (d_1 )
\sum_{n_1 d_2 = \frac{n}{d_1}} \mu (n_1 )   \\
 &=&
\sum_{\dv{d_1}{n}}   f (d_1 )
\sum_{\dv{n_1}{\frac{n}{d_1}}} \mu (n_1 )   \\
                                        &=&
\sum_{\dv{d_1}{n}    }   f (d_1 ) I(\frac{n}{d_1})   \\
                                        &=&
\sum_{d_1 = n        }   f (d_1 )  = f(n).
\end{eqnarray*}
%%  For
%%  \[
%%   g(n) = \sum_{\dv{d}{n}}   f (d),
%%  \]
%%  we note
%%  the following that will be used in the sums to follow.
%%  \[
%%  \dv{d_1}{n} \Rightarrow  n = d_1 n_1 ,  n_1 = \frac{n}{d_1}
%%  \]
%%  \[
%%  \dv{d_1}{d} \Rightarrow  d = d_1 d_2 ,  d_2 = \frac{d}{d_1},
%%  \]
%%  \[
%%  \dv{d  }{n} \Rightarrow  n = d D ,       D = \frac{n}{d}=
%%                                           \frac{\frac{n}{d_1}}{\frac{d}{d_1}} =
%%                                           \frac{n_1}{d_2}.
%%  \]
%%  \begin{eqnarray*}
%%  \sum_{\dv{d}{n}} \mu (d) g(\frac{n}{d}) &=&
%%  \sum_{\dv{d}{n}} \mu (\frac{n}{d}) g (d) \\
%%   &=&
%%  \sum_{\dv{d}{n}} \mu (\frac{n}{d}) \sum_{\dv{d_1}{d}}   f (d_1 ) \\
%%                                          &=&
%%  \sum_{\dv{d_1}{n}}   f (d_1 )
%%  \sum_{\dv{d_1}{d},\ \dv{d}{n} }
%%  \mu (\frac{n}{d}) \\
%%                                          &=&
%%  \sum_{\dv{d_1}{n}}   f (d_1 )
%%  \sum_{\dv{d_2}{n_1}} \mu (\frac{n_1}{d_2}) \\
%%                                          &=&
%%  \sum_{\dv{d_1}{n}}   f (d_1 )
%%  \sum_{\dv{d_2}{n_1}} \mu (d_2 ) \\
%%                                          &=&
%%  \sum_{\dv{d_1}{n}}   f (d_1 )  I(n_1 ) \\
%%                                          &=&
%%  \sum_{d_1 = n}   f (d_1 )          \\
%%                                          &=&
%%                     f(n).
%%  \end{eqnarray*}
\end{proof}
A corollary of the M\"obius inversion formula establishes
a result for $\phi (n)$ as follows.
\begin{cor}
For every $n \in \mb{Z}_+$ we have the following.
\[
  \phi (n) = \sum_{\dv{d}{n}} \mu (d) \frac{n}{d}.
\]
\end{cor}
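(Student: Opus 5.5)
The corollary will follow directly from Theorem~\ref{mobinf} once a suitable pair $(f,g)$ is identified. The natural choice is $f=\phi$ and $g(n)=n$: Proposition~\ref{prpphi} already gives
\[
 n = \sum_{\dv{d}{n}} \phi(d),
\]
which is exactly the hypothesis $g(n)=\sum_{\dv{d}{n}} f(d)$ of the M\"obius inversion theorem with $g$ the identity function. No multiplicativity is required here, because Theorem~\ref{mobinf} holds for any arithmetic function $f$.

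Applying Theorem~\ref{mobinf} gives
\[
 \phi(n) = \sum_{\dv{d}{n}} \mu(d)\, g\!\left(\frac{n}{d}\right) = \sum_{\dv{d}{n}} \mu(d)\,\frac{n}{d},
\]
which is the claimed identity. As a check, one may also write this as $\phi = \mu * \mathrm{id}$ in Dirichlet-product notation, using $\mathrm{id} = \phi * U$ and $U*\mu = I$, which is precisely the first proof of Theorem~\ref{mobinf} specialized to this case.

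The main obstacle is bookkeeping. Proposition~\ref{prpphi} counts integers $1\le i\le n$ with $\gcd(i,n)=d$, which implicitly uses the convention $\phi(1)=1$. The definition of $\phi$ via $1\le i<n$ conflicts with this at $n=1$, so I would state explicitly that $\phi(1)=1$, consistent with the earlier remark in the paper. With that convention fixed, the inversion applies verbatim.
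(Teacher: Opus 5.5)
Your proposal is correct and is essentially the paper's own proof: you take $f=\phi$ and $g(n)=n$, use Proposition~\ref{prpphi} to supply the hypothesis $g(n)=\sum_{d\mid n} f(d)$, and apply Theorem~\ref{mobinf}. Your remark that $\phi(1)=1$ must be fixed is worth keeping: the definition via $1\le i<n$ would give $\phi(1)=0$, and then Proposition~\ref{prpphi} fails at $n=1$. The paper leaves this convention implicit.
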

\begin{proof}
From the M\"obius inversion formula and $f(n),g(n)$
arithemtic functions as follows, we have the following.
If
\[
 g(n) = \sum_{\dv{d}{n}}   f (d)
\]
then
\[
 f(n) = \sum_{\dv{d}{n}} \mu(\frac{n}{d})   g (d)
      = \sum_{\dv{d}{n}} \mu (d) g(\frac{n}{d}) .
\]
Let $f(n) = \phi (n)$ and let $g(n)=n$.
From Proposition~(\ref{prpphi}) we have
\[
 n = \sum_{\dv{d}{n}} \phi (d )  \Leftrightarrow
 g(n) = \sum_{\dv{d}{n}} f (d ) .
\]
The precondition of  Proposition~(\ref{mobinf})
is true. Therefore the conclusion is as follows.
\begin{eqnarray*}
 f(n) &=& \sum_{\dv{d}{n}} \mu (d) g(\frac{n}{d})  \Leftrightarrow\\
 \phi(n) &=& \sum_{\dv{d}{n}} \mu (d) g(\frac{n}{d})  \Leftrightarrow\\
 \phi(n) &=& \sum_{\dv{d}{n}} \mu (d)   \frac{n}{d} .
\end{eqnarray*}
\end{proof}

\begin{thm}[M\"obius inversion converse]
Let $f$ be any arithmetic function and if
\label{mobinfcon}
\[
 f(n) = \sum_{\dv{d}{n}} \mu(\frac{n}{d})   g (d)
      = \sum_{\dv{d}{n}} \mu (d) g(\frac{n}{d}) ,
\]
then
\[
 g(n) = \sum_{\dv{d}{n}}   f (d).
\]
\end{thm}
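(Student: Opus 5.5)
The plan mirrors the Dirichlet-product proof of the M\"obius inversion theorem just above, run in the opposite direction. Write $U$ for the unit function ($U(n)=1$ for all $n$) and $I$ for the identity of the Dirichlet product. The hypothesis then reads $f = g * \mu$, since $\sum_{\dv{d}{n}} \mu(\frac{n}{d}) g(d) = (g*\mu)(n)$, and the two displayed forms agree by commutativity of the Dirichlet product. The goal $g(n)=\sum_{\dv{d}{n}} f(d)$ is exactly the statement $g = f * U$.

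The steps are as follows. First, convolve the hypothesis on the right with $U$ to get $f * U = (g*\mu)*U$. Second, apply associativity (the theorem on commutativity and associativity of the Dirichlet product) to rewrite this as $g*(\mu*U)$. Third, invoke the identity $\sum_{\dv{d}{n}}\mu(d) = I(n)$, proved in the proposition on the M\"obius function, which says precisely that $\mu * U = I$. Finally, use the corollary $g * I = g$ to conclude $f*U = g$, that is, $g(n) = \sum_{\dv{d}{n}} f(d)$.

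Alternatively, I could give a direct summation proof in the style of part 2 of the previous proof. I would substitute $f(d)=\sum_{\dv{e}{d}}\mu(\frac{d}{e})g(e)$ into $\sum_{\dv{d}{n}} f(d)$ and reorder as a sum over $e \mid n$ of $g(e)\sum_{\dv{m}{\frac{n}{e}}}\mu(m)$. The inner sum is $I(\frac{n}{e})$, which kills every term except $e=n$.

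No step is a real obstacle, since the result is a formal consequence of the group-like structure already established. The only points needing care are, first, using associativity for the three-fold product (the paper's associativity argument was sketchy, so I may cite it with the explicit form $\sum_{abc=n} f(a)g(b)h(c)$), and second, in the direct approach, correctly reindexing the double sum over pairs $e\mid d\mid n$ by writing $d=em$ with $m \mid \frac{n}{e}$.
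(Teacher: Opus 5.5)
Your proposal is correct and is essentially the paper's own proof: the paper writes the hypothesis as $f = g*\mu$ and concludes via $f*U = (g*\mu)*U = g*(\mu*U) = g*I = g$, which is exactly your chain of steps. Your alternative direct summation, reindexing $d = em$ with $m \mid \frac{n}{e}$ and using $\sum_{m \mid n/e}\mu(m) = I(\frac{n}{e})$, is also valid, though it is not needed.
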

\begin{proof}
$ $ \\ $ $
Function $f(n)$ is in fact
\[
  f(n) =  (g * \mu ) (n) ,
\]
and we want to show the following.
\[
  g = f * U \Leftrightarrow
  g(n) =  (f * U) (n) .
\]
Then we have the following
\[
 f *  U  = (g * \mu ) * U =  g * (\mu * U) = g * I = g,
\]
with the latter being equivalent to
\[
g(n) = \sum_{\dv{d}{n}} f(d) U (\frac{n}{d}) =  \sum_{\dv{d}{n}} f(d).
\]
\end{proof}

Consider the two functions
\[
 a(n) = \sum_{\dv{d}{n}} 1,
\]
and
\[
 b(n) = \sum_{\dv{d}{n}} d.
\]
Both can be inverted and
\[
 1 = \sum_{\dv{d}{n}} \mu ( \frac{n}{d} ) a(d),
\]
and
\[
n = \sum_{\dv{d}{n}} \mu ( \frac{n}{d} ) b(d).
\]

% 6. Appendix
%%%%%%%%%%%%%%%%%%%%%%%%%%%%%%%%%%%%%%%%%%%%%%%%%%%%%%%%%%%%%%%%%%%%
%\appendix

% 7. Backmatter
%%%%%%%%%%%%%%%%%%%%%%%%%%%%%%%%%%%%%%%%%%%%%%%%%%%%%%%%%%%%%%%%%%%%
\backmatter

% 8. Bibliography
%%%%%%%%%%%%%%%%%%%%%%%%%%%%%%%%%%%%%%%%%%%%%%%%%%%%%%%%%%%%%%%%%%%%
%    Bibliography styles amsplain or harvard are also acceptable.
%\bibliographystyle{amsalpha}
%\bibliography{}

\begin{thebibliography}{A}

\bibitem{AS}
     M.~Abramowitz and I.~A.~Stegun.
     Handbook of mathematical functions
with formulas, graphs, and mathematical tables.
New York: Dover Publications.  Ninth printing.


\bibitem{Bach}
E.~Bach.
Explicit bounds for primality testing and related problems.
Mathematics of Computation, Vol 55, No. 191 (Jul., 1999), pp. 355-380.



\bibitem{ConradSS}
K.~Conrad.
The Solovay-Strassen test.\\
{https://kconrad.math.uconn.edu/blurbs/ugradnumthy/solovaystrassen.pdf
}
[Accessed: 2026/05/04]


\bibitem{ConradMR}
K.~Conrad.
The Miller-Rabin     test.\\
{https://kconrad.math.uconn.edu/blurbs/ugradnumthy/millerrabin.pdf}
[Accessed: 2026/05/05]


\bibitem{RC2e}
R.~Crandall and C.~Pomerance.
Prime Numbers: A computational perspective.
Second edition. Springer, 2005.

%\bibitem{KL51}
%     S.~Kullback and R.~A.~Leibler.
%     On information and sufficiency.
%The Annals of Mathematical Statistics. 22 (1):79-86, 1952.
%%: 493507. doi:10.1214/aoms/1177729330.
%%ISSN 0003-4851. JSTOR 2236576.



\bibitem{M76}
G.~L.~Miller.
Riemann's hypthesis and tests for primality.
Journal of computer and system sciences, 13, 300-317 (1976),
Academic Press.


\bibitem{Monier}
L.~Monier.
Evaluation and comparison of two efficient probabilistic
primality testing algorithms.
Theoretical Computer Science, 12(1980), 97-108, North-Holland
Publishing Company.

\bibitem{R80}
M.~O.~Rabin.
Probabilistic algorithm for testing primality.
Journal of Number Theory, 12, 128-138 (1980), Academic Press.


\bibitem{SS77}
R.~Solovay and V.~Strassen.
A fast Monte-Carlo test for primality.
SIAM Journal of computing, 6 (1977), 84-85.


\bibitem{Wiki26}
Wikipedia.
{https://en.wikipedia.org/wiki/Floor\_and\_ceiling\_functions}
[Accessed: 2026/05/28]










\end{thebibliography}
%    See note above about multiple indexes.

% ALEXG : \include{biblio}
\bibliographystyle{amsalpha}

%  9. Index
%%%%%%%%%%%%%%%%%%%%%%%%%%%%%%%%%%%%%%%%%%%%%%%%%%%%%%%%%%%%%%%%%%%%

%\printindex
%\include{index}
%\printindex
%\addcontentsline{toc}{chapter}{Index}

\end{document}